\numberwithin{equation}{section}
\newtheorem{theorem}{Theorem}[section]
\newtheorem{lemma}[theorem]{Lemma}
\newtheorem{prop}[theorem]{Proposition}
\newtheorem{conj}[theorem]{Conjecture}
\newtheorem{cor}[theorem]{Corollary}
\newtheorem{defi}[theorem]{Definition}
\newtheorem{remark}[theorem]{Remark}
\newtheorem{example}[theorem]{Example}
\newtheorem{algorithm}[theorem]{Algorithm}
\newcommand{\mbC}{\mathbb{C}}
\newcommand{\mbZ}{\mathbb{Z}}
\newcommand{\sL}{\mathfrak{sl}_2}
\newcommand{\Uqa}{U_q\widehat{\mathfrak{sl}}_2}
\newcommand{\Uq}{U_q\mathfrak{sl}_2}
\newcommand{\Ker}{\mathrm{Ker}}
\newcommand{\Ygn}{\mathrm{Y}\mathfrak{sl}_2}
\newcommand{\Ygngl}{\mathrm{Y}\mathfrak{gl}_2}
\newcommand{\ev}{\mathrm{ev}}
\newcommand{\drin}{\Delta^{(D)}}
\newcommand{\uR}{\mathcal{R}}
\newcommand{\ri}{\mathrm{i}}
\newcommand{\rH}{\mathrm{H}}
\newcommand{\rM}{\mathrm{M}}
\newcommand{\rN}{\mathrm{N}}
\newcommand{\rR}{\mathrm{R}}
\newcommand{\rV}{\mathrm{V}}
\newcommand{\rW}{\mathrm{W}}
\newcommand{\rU}{\mathrm{U}}
\newcommand{\Hom}{\mathrm{Hom}}
\newcommand{\arc}{\mathrm{Arc}}
\newcommand{\uarc}{\mathrm{UArc}}
\newcommand{\conf}{\mathrm{Conf}}
\newcommand{\uconf}{\mathrm{UConf}}
\newcommand{\cconf}{\mathrm{CatConf}}
\newcommand{\ucconf}{\mathrm{UCatConf}}
\newcommand{\iconf}{\mathrm{IConf}}
\newcommand{\nconf}{\mathrm{NConf}}
\newcommand{\rconf}{\mathrm{RConf}}
\newcommand{\sconf}{\mathrm{SConf}}
\newcommand{\cont}{\mathrm{Cont}}
\newcommand{\mcl}{\mathcal}
\newcommand{\mclf}{\mathcal{F}}
\newcommand{\mcls}{\mathcal{S}}
\newcommand{\ren}{\mathrm{RE}}
\newcommand{\len}{\mathrm{LE}}
\newcommand{\Id}{\mathrm{Id}}
\newcommand{\supp}{\mathrm{supp}}
\newcommand{\rL}{\mathrm{L}}
\newcommand{\pvt}{\mathrm{Pvt}}
\newcommand{\walg}{\mathcal{A}_{\{0,2\}}}
\newcommand{\soc}{\mathrm{Soc}}
\newcommand{\lb}{\left(}
\newcommand{\rb}{\right)}
\newcommand{\al}{\alpha}
\newcommand{\bt}{\beta}
\newcommand{\la}{\lambda}
\newcommand{\si}{\sigma}
\newcommand{\be}{\begin{equation}}
\newcommand{\ee}{\end{equation}}
\newcommand{\benonum}{\begin{equation*}}
\newcommand{\eenonum}{\end{equation*}}
\newcommand{\bse}{\begin{subequations}}
\newcommand{\ese}{\end{subequations}}
\newcommand{\ba}{\begin{aligned}}
\newcommand{\ea}{\end{aligned}}
\newcommand{\beanonum}{\begin{align*}}
\newcommand{\eeanonum}{\end{align*}}
\DeclareMathSymbol{\shortminus}{\mathbin}{AMSa}{"39}
\newlength\mylen
\newlist{mycases}{enumerate}{1}
\setlist[mycases,1]{label=\textbf{Case~\arabic*.}, 
  labelwidth=\dimexpr-\mylen-\labelsep\relax,leftmargin=0pt,align=right}
\newcolumntype{M}[1]{>{\centering\arraybackslash}m{#1}}
\def\env@cases{%
  \let\@ifnextchar\new@ifnextchar
  \left\lbrack
  \def\arraystretch{1.2}%
  \array{@{}l@{\quad}l@{}}%
}
\begin{document}

\title{On representations of quantum affine $\mathfrak{sl}_2$.}

\author{Andrei Grigorev and Evgeny Mukhin} 
\address{EM: Department of Mathematical Sciences,
Indiana University Indianapolis,
402 N. Blackford St., LD 270, 
Indianapolis, IN 46202, USA;}
\email{emukhin@iu.edu} 

\address{AG: Department of Mathematical Sciences,
Indiana University Indianapolis,
402 N. Blackford St., LD 270, 
Indianapolis, IN 46202, USA;\hfill \break
National Research University Higher School of Economics, Department of Mathematics, Russian Federation, 6 Usacheva st., Moscow 119048;}
\email{aagrigor@iu.edu, andrei.al.grigorev@gmail.com}

\begin{abstract}
    We study tensor products of two-dimensional evaluation $\Uqa$-modules at generic values of $q$, $\Uqa$-homomorphisms between them, and closely related subjects.
    \end{abstract}
\maketitle

\centerline{
{\textit{Keywords:\ }}{Quantum affine $\sL$, evaluation modules, arc configurations, socle.}}

  \centerline{
{\textit{AMS Classification numbers:\ }}{17B37, 05E10 (primary), 81R10.}}

\tableofcontents
\section{Introduction.}
The representation theory of quantum groups has been intensively studied for the last four decades with many amazing findings and beautiful results.  Many efforts have been made to understand irreducible finite-dimensional modules over affine quantum groups for generic values of $q$ and to study the root of unity phenomenon. Many authors study super-symmetric versions, shifted quantum groups, quantum toroidal algebras, etc. However, many questions are still unanswered even in the simplest cases.

The current project was started by the question posed by C. Stroppel to the second author a couple of years ago: {\it Given a tensor product $w$ of several two-dimensional evaluation modules of $\Uqa$, what is  $h(w)=\dim\big(\Hom_{\Uqa}(\mbC,w)\big)?$} 
Although there are many other amusing questions one can ask about  tensor products $w$ of evaluation modules of $\Uqa$, see Section \ref{subsec:questions}, the problem of finding ways to compute $h(w)$ is an overarching motif of our work. 

One can restrict to evaluation parameters of the form $q^a$, $a\in 2\mathbb{Z}$, see Proposition \ref{prop:hom_factorize_lattice}. We denote the tensor product of $2$-dimensional evaluation modules with evaluation parameters $q^{a_i}$ simply by the word $w=a_1a_2...a_{k}$, where $a_i$ are even integers. We call $k$ the length of $w$. We call the word $w$ connected if the set of all distinct letters $a_i$ is of the form $\{a,a+2,\dots,a+2m\}$ for some $a,m$.
One can further reduce to the case of connected words of even length, see Proposition \ref{prop:no_gaps}. 
Then for each connected word of even length we have the number $h(w)\in \mathbb{Z}_{\geq 0}$. For example, one can compute
$$
h(220200242424)=4.
$$
The number $h(w)$ depends on the order of the letters in $w$. For example, if $a_1\geq a_i$ for all $i$ then $h(w)=0$ (for our choice of the coproduct). On the other hand $h(w)$ for a word of length $2n$ can be as large as $(2-\epsilon)^n$, see Proposition \ref{prop:max_bound}.

The statistics $h(w)$ contains information about dimensions of all homomorphism spaces between any tensor products of $2$-dimensional evaluation modules $\Hom_{\Uqa} (w_1,w_2)$, see Lemma \ref{lemma:hom_move} and Lemma \ref{lemma:dual}. 

The singular vectors of weight zero in a tensor product of $2n$ copies of the $\Uq$ 2-dimensional module have a basis which is identified with Catalan arc configurations - the sets of non-intersecting arcs in half-plane connecting $2n$ points on a line. Such objects are related to Temperley-Lieb algebras and used in the categorification of $\Uq$, see \cite{Bernstein1999},  \cite{Frenkel2007}.
One could ask if we can construct a basis of $\Uqa$ singular vectors in $w$ in a similar fashion. Clearly, in the affine case the arcs should connect letter $a$ and $a+2$ since the only words of length 2 which have a trivial submodule are of the form $a(a+2)$.
The non-intersecting property also has to be relaxed as we have too few arcs.  Here is an example of an arc configuration.
\begin{center} \begin{tikzpicture}
    \node at (0, 0) {$2$};
    \node at (0.6,0) {$2$};
    \node at (1.2,0) {$0$};
    \node at (1.8,0) {$2$};
    \node at (2.4,0) {$0$};
    \node at (3,0)   {$0$};
    \node at (3.6,0) {$2$};
    \node at (4.2,0) {$4$};
    \node at (4.8,0) {$2$};
    \node at (5.4,0) {$4$};
    \node at (6.0,0) {$2$};
    \node at (6.6,0) {$4$};
    \draw[-] (0 ,0.3) to [out=30,in=150] (5.4 ,0.3);
    \draw[-] (0.6,0.3) to [out=30,in=150] (4.2,0.3);
    \draw[-] (1.2 ,0.3) to [out=30,in=150] (1.8,0.3);
    \draw[-] (2.4 ,0.3) to [out=30,in=150] (4.8,0.3);
    \draw[-] (3.0,0.3) to [out=30,in=150] (3.6,0.3);
    \draw[-] (6.0,0.3) to [out=30,in=150] (6.6,0.3);
\end{tikzpicture}
\end{center}

 Indeed, we prove a number of results which support this idea: 
\begin{enumerate}
    \item $h(w)\neq 0$ if and only if there exists an arc configuration, see Theorem \ref{thm:supp};
    \item $h(w)$ is at least the number of irreducible arc configurations, see Theorem \ref{thm:lower_bound_irr};
    \item $h(w)$ is at most the number of steady arc configurations, see Theorem \ref{thm:upper_bound}. 
\end{enumerate}
The irreducible arc configurations do not have intersecting arcs connecting the same letters $a$ and $a+2$, and if an arc connecting $a$ with $a+2$ intersects with an arc connecting $a+2$ with $a+4$ then the latter should be on the left, see Definition \ref{def:irr_arc_conf}. The example of the arc configuration we give above is irreducible. The definition of the steady arc configurations is slightly more technical, see Definition \ref{def:steady_arc_conf}. For the example of the word above, the number of irreducible and steady arc configurations are both equal to $4$ which shows $h(w)=4$.

Alternatively, one can show that numbers $h(w)$ respect several symmetries:
$$
h(a_1a_2...a_{k})=h(a_2...a_{k}(a_1+4))=h((-a_k)...(-a_1))=h((a_1+b)...(a_k+b)).
$$
It is also easy to find the number of trivial components of the Jordan-Holder series of a word by the $q$-character method, see Lemma \ref{lemma:upper_bound_qchar} and Proposition \ref{prop:char_bound_computed}. Finally, for a short exact sequence 
$
\rU \hookrightarrow \rV \twoheadrightarrow \rW
$
we always have $h(\rV)\leq h(\rU)+ h(\rW)$.
Such tools are sufficient to find $h(w)=4$ in our example and many others, see Section \ref{subsec:exact_comps}.  One can treat even more cases if the two approaches are combined and enhanced by using the isomorphisms between words, see Section \ref{subsec:implicit_bounds}.

Neither irreducible nor steady arc configurations provide sharp bounds in general.
We expect that the condition on arcs which produces exactly $h(w)$ arc configurations is not described by pairwise interactions of arcs, and that it is not local. As an illustration of the non-local nature of the problem, we study the isomorphisms between different words. We certainly have $ab=ba$ unless $|a-b|=2$. However, there are other isomorphism which cannot be deduced from pairs of letters. For example, $0020\cong 0200$, $000200\cong 002000$. We expect $0^{k+1}2^k0^k\cong 0^k2^k0^{k+1}$ for all $k$, see Section \ref{subsec:comm}. It is a very interesting and challenging question to classify words up to isomorphisms. We conjecture such a classification for words whose letters are $0$ and $2$, see Conjecture \ref{conj:02isoms}.

Since the number of trivial composition factors in $w$ is easy to compute, the main difficulty in computing $h(w)$ is that the words are not semi-simple modules.  We try to develop a theory of indecomposable modules introducing graphs which enhance the socle filtration, see Section \ref{sec:mod_graphs}. 

Essentially $h(w)$ for a word $w$ is a nullity of linear map $f_0\in\Uqa$ sending the subspace of $\Uq$ singular vectors of weight zero to the subspace of $\Uq$ singular vectors of weight two.  For a word of length $2n$, given a basis, $f_0$ becomes a matrix of size $\frac{3}{n+2} \binom{2n}{n-1} \times \frac{1}{n+1}\binom{2n}{n}$. A straightforward Mathematica program computes $h(w)$ for words up to length 12 and even 14 (using the Yangian version). Moreover, a computer can  easily calculate the upper and lower bounds up to length 22 and often these bounds coincide. So, one can generate data and try to approach the problem completely combinatorially or using machine learning. We give all essential data (up to symmetries) for words up to length 10 in Appendix \ref{app:tables}.

\medskip

We believe that $h(w)$ can be computed directly from the combinatorial set of all arc configurations, see Conjecture \ref{conj:drop_adm}. However, we did not manage to find such an algorithm; there may exists a more suitable combinatorics.

\medskip

If letters $a_i$ in $w$ are non-decreasing, the word $w$ is known as $\Uqa$ Weyl module. The Weyl modules have been studied for a long time, see \cite{chari2002braid}. Our bounds recover the results of \cite{BritoChari} on maps between Weyl modules in the case of $\mathfrak{sl}_2$, see Proposition \ref{prop:weyl_mod_homs}.
Arbitrary words are also called $\Uqa$ mixed Weyl modules, much less is known about mixed case. 

\medskip 

We also hope that this paper can be of some educational value as we have collected many facts and methods scattered in the literature in the simplest non-trivial case of $\Uqa$. For that purpose, we provide a lot of examples and give detailed proofs. At the end we give a list of questions for possible further work, see Section \ref{subsec:questions}.

\medskip 

The paper is constructed as follows. In 
Sections  \ref{sec:prelim} and \ref{sec:basic_repth} we collect the known facts about $\Uqa$ and its finite-dimensional representations. In Section \ref{sec:triv_submod} we start the study of numbers of $h(w)$. 
We develop the language of arc configurations and prove a few properties of $h(w)$ statistics. In Section \ref{sec:bounds} we prove the upper and lower bounds for $h(w)$ and prove that $h(w)\neq 0$ if and only if the set of arc configurations is non-empty. In Section \ref{sec:degs_limits} we study our problem from the point of view of hyperplane arrangements. In this section we study numbers $h(w)$ using degenerations from generic words. In Section \ref{sec:classification} we address the problem of classifying words up to isomorphism. In Section \ref{sec:examples} we show how our theory works in various explicit examples. In particular, the example in the introduction can be rather easily computed by hand. Section \ref{sec:extensions} is devoted to the study of extensions. In particular, we classify all extensions between two evaluation modules. In Section \ref{sec:mod_graphs} we attempt to visualize words using graphs build on socle filtration, which we call submodule graphs.  In Section \ref{sec:further} we discuss combinatorics corresponding to singular vectors and list a number of unanswered questions related to words.

\bigskip

\section{Preliminaries.}\label{sec:prelim}
 In this section we recall well-known facts about the affine quantum group $\Uqa$ (at level zero) and the corresponding Yangian $\Ygn$.

\subsection{The quantum group $\Uqa$.} Fix a non-zero complex number $q$ which is not a root of unity. We fix a value of $\log q$, so that $q^a$ is defined for all $a\in\mbC$.

The quantum group $\Uq$ is the unital associative algebra with generators $e, f, K^{\pm 1}$ and relations
\begin{equation*}
        KK^{-1} = K^{-1}K = 1,\;\; KfK^{-1} = q^{-2}f, \;\;KeK^{-1} = q^{2}e,\;\; [e,f] = \frac{K - K^{-1}}{q -q^{-1}}.
    \end{equation*}

    The affine quantum group $\mathfrak{sl}_2$ in Drinfeld-Jimbo realization is given as follows.

\begin{defi} The affine quantum group 
    $\Uqa$  at level zero is the unital associative algebra with generators $e_i, f_i, K^{\pm1}$, $i=0,1$, and defining relations
    \begin{subequations}\label{eq:JD_pres_rels}
    \begin{equation}
        K K^{-1}=K^{-1}K = 1,
    \end{equation}
    \begin{equation}
    Kf_{1}K^{-1} = q^{-2}f_{1},\;\; Ke_{1}K^{-1} = q^{2}e_{1},\;\; Kf_{0}K^{-1} = q^{2}f_0,\;\;Ke_{0}K^{-1} = q^{-2}e_0,
    \end{equation}
    \begin{equation}
    [e_1 , f_1] = -[e_0,f_0]=\frac{K-K^{-1}}{q - q^{-1}},
    \end{equation}
    \begin{equation}
    [e_1, f_0] = [e_0, f_1] = 0,
    \end{equation}
    \begin{align}
        e_{i}^3e_{j} - (q^2 + 1 + q^{-2})e_{i}^2e_{j}e_{i} + (q^2 + 1 + q^{-2})e_{i}e_{j}e_{i}^2 - e_{j}e_{i}^{3} &= 0,\\
        f_{i}^3f_{j} - (q^2 + 1 + q^{-2})f_{i}^2f_{j}f_{i} + (q^2 + 1 + q^{-2})f_{i}f_{j}f_{i}^2 - f_{j}f_{i}^{3}&= 0,\;\; i,j\in\{0,1\},\;i\neq j.
    \end{align}
    \end{subequations}
\end{defi}

We use the Hopf algebra structure on $\Uqa$ with comultiplication $\Delta$, antipode $S$, and counit $\epsilon$ given by
\begin{equation}\label{eq:Uqa_Com_An_Coun}
    \begin{aligned}[c]
    \Delta: \Uqa &\longrightarrow \Uqa \otimes \Uqa, \\
    K^{\pm1} &\longmapsto K^{\pm1}\otimes K^{\pm1},\\ 
    e_0 &\longmapsto e_{0}\otimes K^{-1} + 1\otimes e_{0},\\
    f_0 &\longmapsto f_0\otimes 1 + K \otimes f_0,\\
    e_1 &\longmapsto e_{1}\otimes K + 1\otimes e_{1},\\
    f_1 &\longmapsto f_1\otimes 1 + K^{-1} \otimes f_1,\\
\end{aligned}\qquad
\begin{aligned}[c]
        S: \Uqa &\longrightarrow \Uqa,\\
    K^{\pm1} &\longmapsto K^{\mp1},\\ 
    e_0 &\longmapsto -e_0K,\\
    f_0 &\longmapsto -K^{-1}f_0,\\
    e_1 &\longmapsto -e_1K^{-1},\\
    f_1 &\longmapsto -Kf_1,\\
    \end{aligned}\qquad
    \begin{aligned}[c]
    \varepsilon: \Uqa &\longrightarrow \mbC,\\
    K^{\pm1} &\longmapsto 1,\\ 
    e_0 &\longmapsto 0,\\
    f_0 &\longmapsto 0,\\
    e_1 &\longmapsto 0,\\
    f_1 &\longmapsto 0.
    \end{aligned}
\end{equation}

There are two embeddings of $\Uq$ to $\Uqa$ given by
\begin{equation}
    \begin{aligned}[c]
    \iota_{1}: \Uq &\longrightarrow \Uqa ,\\
    K^{\pm1} &\longmapsto K^{\pm1},\\ 
    e &\longmapsto e_{1},\\
    f &\longmapsto f_{1},
\end{aligned}\qquad
\begin{aligned}[c]
    \iota_{0}: \Uq &\longrightarrow \Uqa, \\
    K^{\pm1} &\longmapsto K^{\mp1},\\ 
    e &\longmapsto e_{0},\\
    f &\longmapsto f_{0}.
\end{aligned}
\end{equation}

Denote the images of embeddings $\iota_{j}$ by $U_{q}^{j} \sL$, $j=0,1$. The algebras $U_{q}^{j} \sL$ are Hopf subalgebras of $\Uqa$. We often write simply $\Uq \subset \Uqa$ instead of $U_{q}^{1}\sL$.

Conversely, there exists a family of evaluation  surjective  homomorphisms from $\Uqa$ to $\Uq$ parameterized by complex number $a\in \mbC$,
    \begin{equation}\label{eq:Uqa_eval_hom}
    \begin{aligned}
        &\hspace{0pt}\ev_{a}:\Uqa  \longrightarrow \Uq,\\
    &\hspace{30pt}K^{\pm 1} \longmapsto K^{\pm 1},\\
      &\hspace{7.5pt}e_0 \longmapsto q^{a}f,\hspace{10pt} f_{0}\longmapsto q^{-a}e,\\
      &\hspace{7.5pt}e_1 \longmapsto e,\hspace{22pt}  f_{1}\longmapsto f. 
    \end{aligned}
    \end{equation}

The evaluation map $ev_a$ is not a homomorphism of Hopf algebras. 
    
Clearly, $\ev_a\circ \iota_1=id$. 

\medskip

Note that in our notation we use simply $a$ instead of $q^a$. As a result, $\ev_{a} = \ev_{a + \frac{2\pi \ri}{\log(q)}}$. A similar ambiguity will occur often. We hope that it does not lead to any confusion.

\medskip

The algebra $\Uqa$ can alternatively be written in a new Drinfeld form. In the new Drinfeld realization the algebra $\Uqa$ has generators $\{x_{r}^{\pm}, h_{n}, K^{\pm1},\}$ collected in series $\psi^\pm(z), x^\pm(z)$ of a formal variable $z$
    \begin{subequations}
    \begin{equation*}
        \psi^{\pm}(z) = \sum_{r = 0}^{\infty}\psi_{\pm r}^{\pm}z^{\pm r} = K^{\pm 1}\exp\left(\pm (q-q^{-1})\sum_{s=1}^{\infty}h_{\pm s}z^{\pm s}\right),\qquad
        x^{\pm}(z) = \sum_{r \in \mathbb{Z}}x_{r}^{\pm}z^{r}.
    \end{equation*}
    \end{subequations}    

The defining relations are 

\begin{align}
    &\psi^{\pm}(z)\psi^{\pm}(w) = \psi^{\pm}(w)\psi^{\pm}(z),\hspace{10pt}\psi^{\pm}(z)\psi^{\mp}(w) = \psi^{\mp}(w)\psi^{\pm}(z),\\
    &(w - zq^{\pm 2})x^{\pm}(z)x^{\pm}(w) + (z - wq^{\pm 2})x^{\pm}(w)x^{\pm}(z) = 0,\\
    &[x^{+}(z) , x^{-}(w)] = \frac{1}{q-q^{-1}}\left(\delta(z/w)\psi^{+}(z) - \delta(z/w)\psi^{-}(w)\right),\label{eq:def_new_drinf}\\
    &\psi^{+}(z)x^{\pm}(w) = \left(q^2\frac{w-q^{-2}z}{w-q^2z}\right)^{\pm 1}x^{\pm}(w)\psi^{+}(z),\\
    &\psi^{-}(z)x^{\pm}(w) = \left(q^2\frac{w-q^{-2}z}{w-q^2z}\right)^{\pm 1}x^{\pm}(w)\psi^{-}(z).
\end{align}

\text{Here }$\delta(z/w) = \sum_{n \in \mathbb{Z}}\left(\frac{z}{w}\right)^{n}.$

An isomorphism between the Drinfeld-Jimbo and new Drinfeld realizations is given on generators by
    \begin{equation}
    \begin{aligned}
      &\hspace{35pt}K^{\pm1} \longmapsto K^{\pm1},\\
      &\hspace{0pt}e_0 \longmapsto x_{1}^{-},\hspace{30pt} f_{0}\longmapsto x_{-1}^{+},\\
      &\hspace{0pt}e_1 \longmapsto K^{-1}x_{0}^{+},\hspace{8pt}  f_{1}\longmapsto x_{0}^{-}K. 
    \end{aligned}
    \end{equation}

The comultipilication \eqref{eq:Uqa_Com_An_Coun} written in the new Drinfeld realization has the property
\begin{equation}\label{eq:psi_triangular}
    \Delta(\psi^\pm(z))=\psi^\pm(z)\otimes \psi^\pm(z)  \operatorname{mod}(U^{\pm}\otimes U^{\mp}), 
\end{equation}
where $U^\pm = \underset{k>0}{\bigoplus}U_{\pm 2k}$ with $U_{l} = \{u\in \Uqa : KuK^{-1} = q^{l}u\}$ being the subspace of elements of weight $l$.

{There is a family of shift automorphisms $\tau_a$ of Hopf algebra $\Uqa$ parameterized by a complex number $a$,
\begin{equation}\label{eq:Uqa_shift}
    \begin{aligned}
        &\hspace{0pt}\tau_{a}:\Uqa  \longrightarrow \Uqa,\\
    &\hspace{25pt}K^{\pm 1} \longmapsto K^{\pm 1},\\
      &\hspace{0pt}e_0 \longmapsto q^{a}e_0,\hspace{10pt} f_{0}\longmapsto q^{-a}f_{0},\\
      &\hspace{0pt}e_1 \longmapsto e_{1},\hspace{22pt}  f_{1}\longmapsto f_{1}, 
    \end{aligned}
    \end{equation}

    or, in new Drinfeld realization, 
    \begin{equation}
        \begin{aligned}
            \psi^{\pm}(z) &\longmapsto \psi^{\pm}(q^{a}z),\\
            x^{\pm}(z) &\longmapsto x^{\pm}(q^{a}z).
        \end{aligned}
    \end{equation}

Clearly,  $\ev_b\circ\tau_a =\ev_{a+b}$.}

\subsection{The $\Uqa$-modules and $q$-characters.}\label{subsec:uqamod_and_q-char}
The category of finite-dimensional $\Uq$-modules is semi-simple. We denote the irreducible $\Uq$-module of type 1 and of dimension $m+1$ by $\mathrm{L}_m$, $m\in\mbZ_{\geq 0}$. The module $\mathrm{L}_m$ has a basis $\{v_0,\dots,v_m\}$ such that 
\begin{equation*}
K\,v_i=q^{2(m-i)}v_i,\qquad f\,v_i=[i+1]_qv_{i+1},\qquad e\,v_i=[m-i+1]_qv_{i-1}.
\end{equation*}
Here, $v_{-1}=v_{m+1}=0$ and 
$$
[i]_q=\frac{q^{i}-q^{-i}}{q-q^{-1}}.
$$
In general, given a $\Uq$-module $\rV$, a vector $v\in \rV$ is called a weight vector of weight $a$ if $K\,v=q^a v$. A weight vector $v\in\rV$ is called singular if $e\,v=0$. The module $\rm{L}_m$ is generated by a singular vector $v_0$ of weight $m$. 

\medskip

The category of finite-dimensional $\Uqa$-modules is not semi-simple. All $\Uqa$-modules in this paper are finite-dimensional and are of type $1$. In all statements we assume the finite-dimensionality and type $1$ without mentioning.

For $a\in \mbC$, define a $\Uqa$-module structure on $\mathrm{L}_{m}$ by a map
\begin{equation}\label{eq:eval_mod_def}
    \rho_{\mathrm{L}_{m}} \circ \ev_{a}: \Uqa \longrightarrow \mathrm{End}_{\mbC}\left(\mathrm{L}_{m}\right).
\end{equation}
The resulting $\Uqa$-module is called the evaluation module with the evaluation parameter $a$. Motivated by $q$-characters (see below) we denote the module defined by \eqref{eq:eval_mod_def} by the string $[\alpha, \beta]$, where $\alpha=a-m+1$, $\beta=a+m-1$. 

We note that in the notation $[\alpha,\beta]$ we always assume that $\beta-\alpha\in 2\mbZ_{\geq 0}$, then $m=\frac{1}{2}(\beta-\alpha)+1$ is the highest weight and $m+1$ is the dimension. The evaluation parameter is $\frac{1}{2}(\al + \bt)$.

For a tensor product of evaluation modules instead of $[\alpha_{1}{,}\beta_{1}] {\otimes} [\alpha_{2}{,}  \beta_{2}]$ we write simply $[\alpha_{1}{,}\beta_{1}][\alpha_{2}{,}\beta_{2}]$. 

Two strings $[\alpha_{1}, \beta_{1}]$ and $[\alpha_{2}, \beta_{2}]$ are in general position if either one string contains the other,
\bse\label{eq:general_position}
\begin{equation*}
        \{\alpha_{1},\dots, \beta_{1}\}\subseteq \{\alpha_{2},\dots,\beta_{2}\} \textit{, or } \{\alpha_{2},\dots, \beta_{2}\} \subseteq \{\alpha_{1},\dots, \beta_{1}\},
\end{equation*}
or the union of the strings is not a string,
\begin{equation*}
        \{\alpha_{1},\dots, \beta_{1}\} \cap \{\alpha_{2}-2,\alpha_{2} ,\dots, \beta_{2},\beta_{2} + 2\} = \varnothing.
\end{equation*}
\ese
A tensor product of evaluation modules is irreducible if and only if the corresponding strings are in pairwise general position.

Two irreducible products of evaluation modules are isomorphic if and only if they differ only by a permutation of factors.

Every irreducible $\Uqa$-module is a tensor product of evaluation modules, see \cite{chari1991quantum}.

A vector $v$ in a  $\Uqa$-module $\rV$ is called an $\ell$-weight vector of $\ell$-weight $\phi^\pm(z)\in\mbC[[z^{\pm 1}]]$ if $\psi^\pm(z)v=\phi^\pm(z)v$.  

A vector $v$ is called a generalized $\ell$-weight vector of $\ell$-weight $\phi^\pm(z)$ if $(\psi^\pm(z)-\phi^\pm(z))^{\dim \rV}v=0$.

An $\ell$-weight vector $v$ is called $\ell$-singular if $x^+(z)v=0$. 

If $\phi^\pm(z)$ is an $\ell$-weight of a vector in a  $\Uqa$-module then there exists a rational function $\phi(z)$ such that $\phi(0)\phi(\infty)=1$ and $\phi^+(z)$ and $\phi^-(z)$ are expansions of $\phi(z)$ at $z=0$ and $z=\infty$, respectively.

For a rational function $\phi(z)$ denote the space of generalized vectors of $\ell$-weight $\phi$ in $\rV$ by $\rV[\phi]$.

Any $\Uqa$-module $\rV$ admits decomposition $\rV =\mathop{\oplus}\limits_{\phi}\rV[\phi]$.

The $q$-character of $\rV$ is the formal sum of $\ell$-weights of $\rV$ given by
\begin{equation*}
\chi_q(\rV)=\sum_{\phi} \dim\lb \rV[\phi]\rb \phi.
\end{equation*}
We use the notation $1_a$, $a\in\mbC$, for a rational function
 \begin{equation*}
 1_a= q\,\frac{1 - q^{a-1}z}{1 - q^{a+1}z}.
 \end{equation*}
 The $\ell$-weights in  $\Uqa$-modules are monomials in $1_a^{\pm 1}$. Thus, the $q$-character is a Laurent polynomial in  commutative variables $1_a$ with non-negative integer 
 coefficients.

Denote $A_a = 1_{a-1}1_{a+1}$.

The rational function $1_a$ is an affine analogue of fundamental weight and $A_a$ as an affine analogue of a root, see Proposition \ref{prop:ellroot_action}.

For a short exact triple of $\Uqa$ modules $\rV\hookrightarrow \rU\twoheadrightarrow \rW$, 
\begin{equation*}
\chi_q(\rU)=\chi_q(\rV)+\chi_q(\rW).
\end{equation*}
Due to triangularity \eqref{eq:psi_triangular}, the current $\Delta(\psi)(z) - \psi(z)\otimes \psi(z)$ is nilpotent in a product $\rV\otimes \rW$ of  $\Uqa$-modules. Thus
\begin{equation*}    \chi_{q}(\rV\otimes \rW) = \chi_{q}(\rV)\chi_{q}(\rW).
\end{equation*}
The $q$-character is an injective ring homomorphism from the Grothendieck ring of finite-dimensional $\Uqa$-modules to the ring of Laurent polynomials in variables $1_a$, $a\in\mbC$. 
A monomial of a $q$-character  $1_{a_1}^{n_1}\dots1_{a_{k}}^{n_{k}}$ is called dominant if all $n_{k}$ are non-negative integers. An irreducible $\Uqa$-module is generated by an $\ell$-singular vector.
An $\ell$-weight of an $\ell$-singular vector is a dominant monomial.

\medskip 

We call a $\Uqa$-module $\rV$ thin if all $\ell$-weight subspaces of $\rV$ are one-dimensional. In other words $\rV$ is thin if coefficients of all monomials in $\chi_q(\rV)$ are one.

The following proposition is useful.
\begin{prop}\label{prop:ellroot_action}
Let $\rV$ be an $\Uqa$-module. Then for any $\ell$-weight $\phi$, vector $v\in \rV[\phi]$ and $r\in\mbZ$,
\begin{equation}
x^{\pm}_rv \in \mathop{\oplus}\limits_{a}\rV[A_{a}^{\pm 1}\phi].
\end{equation}
Here $a$ runs over $\mbC/\big(\frac{2\pi\ri}{\log(q)}\mbZ\big)$.
\end{prop}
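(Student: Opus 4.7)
The plan is to deduce the statement from the commutation relations between $\psi^\pm(z)$ and $x^\pm(w)$ in the new Drinfeld presentation, together with the joint generalized eigenspace decomposition of $\rV$ under the commuting family $\{\psi^\pm_r\}$. The starting observation is that substituting $w = q^{-a}$ into the scalar factor $\eta(z,w) := q^2(w - q^{-2}z)/(w - q^2 z)$ appearing in those relations produces exactly $A_a(z) = q^2(1 - q^{a-2}z)/(1 - q^{a+2}z)$ (and similarly for expansions at $z = \infty$). So any shift of $\ell$-weight produced by the action of $x^+_r$ will be of the form $A_a\phi$ for various $a$.

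The key preliminary identity is that for $v \in \rV[\phi]$ and $d = \dim \rV[\phi]$,
\begin{equation*}
(\psi^+(z) - \eta(z,w)\phi^+(z))^d\, x^+(w)\, v \;=\; \eta(z,w)^d\, x^+(w)\,(\psi^+(z) - \phi^+(z))^d\, v \;=\; 0.
\end{equation*}
This is proved by a short induction: the relation $\psi^+(z) x^+(w) = \eta(z,w)\, x^+(w)\, \psi^+(z)$ implies $(\psi^+(z) - \eta\phi^+)\, x^+(w)\, u = \eta\, x^+(w)(\psi^+(z) - \phi^+)\, u$ for any $u$, and applying this $d$ times uses $(\psi^+(z)-\phi^+(z))^d v = 0$ on $\rV[\phi]$. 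Since all $\psi^\pm_r$ commute, $\rV$ decomposes as $\bigoplus_{\phi'}\rV[\phi']$, and each projector $\pi_{\phi'}$ is a polynomial in the $\psi^\pm_r$, hence commutes with $\psi^+(z)$. Applying $\pi_{\phi'}$ gives
\begin{equation*}
(\psi^+(z) - \eta(z,w)\phi^+(z))^d\, u'(w) = 0, \qquad u'(w) := \pi_{\phi'}\, x^+(w)\, v.
\end{equation*}

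On $\rV[\phi']$ write $\psi^+(z) = \phi'^+(z)\,\mathrm{Id} + N'(z)$ with $N'(z)$ an operator-valued formal series in $z$ whose coefficients are commuting nilpotents. A binomial expansion, combined with choosing the largest $k$ such that $(N'(z))^k u'(w) \neq 0$ and taking $d$ sufficiently large, reduces the displayed equation to a scalar relation of the shape $(\phi'^+(z) - \eta(z,w)\phi^+(z))^D \cdot (\mathrm{nonzero\ vector\ series}) = 0$. In the appropriate formal series ring this forces $u'(w)$ to be ``supported'' (in the sense of a formal distribution in $w$) at those $w_0$ solving $\phi'^+(z) = \eta(z,w_0)\phi^+(z)$. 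Comparing this rational identity in $z$ forces $w_0 = q^{-a}$ for some $a \in \mbC/(\tfrac{2\pi\ri}{\log q}\mbZ)$, and then $\phi' = A_a\phi$. The same argument with $\psi^-(z)$ handles the expansion at $z = \infty$, and with the commutation relation $\psi^\pm(z) x^-(w) = \eta(z,w)^{-1} x^-(w) \psi^\pm(z)$ gives the analogous statement for $x^-_r$ with $A_a^{-1}$ in place of $A_a$.

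The main obstacle is the last step: turning the formal equation into a genuine support statement about $u'(w)$. This requires treating $u'(w) = \sum_r \pi_{\phi'}(x^+_r v)\,w^r$ as a formal distribution in $w$ and carefully tracking the expansions of $\eta(z,w)$ and $\phi^\pm(z)$ near $z=0$ and $z = \infty$. The finite-dimensionality of $\rV$ is essential, since it guarantees that the span of $\{x^+_r v\}_{r\in\mbZ}$ is finite-dimensional, so the $w$-dependence of $x^+(w)v$ is effectively concentrated on a finite set of points of the form $w = q^{-a}$, which is precisely what converts the formal vanishing into the claimed direct-sum decomposition.
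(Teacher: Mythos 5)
Your proposal takes a genuinely different route from the paper's. The paper's proof uses a single application of the relation $\psi^+(z)x^+(w)=\eta(z,w)x^+(w)\psi^+(z)$ together with bases of $\rV[\phi]$ and $\rV[\phi']$ that triangularize $\psi^+(z)$; picking the \emph{minimal} triangular index $j_0$ in the source and the \emph{maximal} index $s_0$ in the target kills all nilpotent contributions in one stroke and leaves the equation $\big((w-q^2z)\phi'^+(z)-(q^2w-z)\phi^+(z)\big)\lambda_{s_0}(w)=0$, where the crucial feature is that $\lambda_{s_0}(w)=\sum_p\lambda_{s_0,p}w^p$ has coefficients in $\mbC$, not in $\mbC[[z]]$. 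The resulting two-term recursion $\lambda_{s_0,p}F(z)=\lambda_{s_0,p+1}z$ then immediately forces $F(z)=q^az$ for some $a$, i.e. $\phi'=A_a\phi$. Your route raises the relation to the $d$th power, applies $\pi_{\phi'}$, and uses the splitting $\psi^+(z)=\phi'^+(z)\mathrm{Id}+N'(z)$. That block is correct: applying $N'(z)^{k_0}$ (with $k_0$ maximal so that $N'(z)^{k_0}u'(w)\neq 0$) to $\big(\psi^+(z)-\eta(z,w)\phi^+(z)\big)^d u'(w)=0$ does annihilate every $j\geq 1$ term in the binomial expansion and yields the scalar relation $\big(\phi'^+(z)-\eta(z,w)\phi^+(z)\big)^d\,N'(z)^{k_0}u'(w)=0$ with the right factor nonzero.

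The gap is the final ``support'' step, which you rightly flag as the main obstacle but do not close, and it is not a routine verification. After first extracting $\phi'^+(0)=q^2\phi^+(0)$ from the $z^0$-coefficient and clearing the denominator $1-q^2z/w$, the relation becomes a $d$-term recursion $\big(c(z)-w^{-1}\big)^d\sum_pG_p(z)w^p=0$ in the $w$-modes $G_p(z)\in\rV[\phi'][[z]]$, where $c(z)=\dfrac{\phi'^+(z)-q^2\phi^+(z)}{z\big(q^2\phi'^+(z)-\phi^+(z)\big)}\in\mbC[[z]]$. The general solution is $G_p(z)=c(z)^p h_p(z)$ with $h_p$ polynomial of degree $<d$ in $p$, and the conclusion $\phi'=A_a\phi$ is precisely the statement that $c(z)$ is a \emph{constant}, which nothing so far forces. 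Your appeal to finite-dimensionality is the right idea but must be made to bite: for instance, if $c(z)$ were nonconstant then the $z^n$-coefficient of $G_p$ would be $c(0)^p$ times a polynomial in $p$ of degree that grows without bound as $n\to\infty$, contradicting that $\{G_p\}_{p\in\mbZ}$ spans a finite-dimensional subspace of $\rV[\phi'][[z]]$. Either supply an argument of this type, or adopt the paper's min/max basis reduction, which makes the unknown coefficients scalars from the outset and so avoids the issue entirely.
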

Proposition \ref{prop:ellroot_action} is well-known, cf. \cite{mukhin2014affinization}, Proposition 3.9. However, we did not find the precise statement we need in the literature. We provide a proof in Appendix \ref{app:etc}.

\medskip

An evaluation module $\rV=[\alpha, \beta]$ is thin and $\ell$-weights are given by 
\begin{equation}\label{eq:eval_ell_weights}
    q^{m-2j}\frac{\lb 1 - q^{\alpha-1}z\rb\lb 1 - q^{\beta + 3}z\rb}{\lb 1 - q^{\beta + 1 - 2j}z\rb \lb 1 - q^{\beta + 3 - 2j}z \rb}, \;\;j=0,\dots,m.
\end{equation}

In particular, the $q$-character of an evaluation module $\rV=[\alpha, \beta]$ is given by 
    \begin{equation}\label{eq:eval_q_char}
        \chi_q(\rV) = \sum_{j=0}^{m}q^{m-2j}\frac{\lb 1 - q^{\alpha-1}z\rb\lb 1 - q^{\beta + 3}z\rb}{\lb 1 - q^{\beta + 1 - 2j}z\rb \lb 1 - q^{\beta + 3 - 2j}z \rb}= \sum_{j = 0}^{m}1_{\alpha}1_{\alpha + 2}\dots1_{\beta - 2j}1_{\beta - 2j+4}^{-1}1_{\beta - 2j+6}^{-1}\dots1_{\beta}^{-1}1_{\beta + 2}^{-1},
    \end{equation}
    where $m  = (\beta-\alpha)/2+1$.    For example, the $q$-character of two-dimensional evaluation module $[a,a]$ is
    \begin{equation*}        \chi_q([a,a]) = 1_{a} + 1_{a+2}^{-1}.
    \end{equation*}The dominant monomial of $\chi_q([\al,\beta])$,  $1_\al1_{\al+2}\dots 1_{\bt-2}1_\bt$, is reflected in our notation for evaluation modules.
    
In particular, $q$-characters of irreducible  $\Uqa$-modules are explicit.
Moreover, given $\chi_q(\rV)$, one can explicitly find the composition factors of $\rV$. In other words, we can find the class of $\rV$ in the Grothendieck ring of the category of finite-dimensional representations of $\Uqa$.

A monomial in a $q$-character is called right-negative if it contains $1_a^{-m}$, for some $a\in\mbC$, $m\in\mbZ_{>0}$, and does not contain $1_{a+2k}$ with $k\in \mbZ_{>0}$ in any power.  

A product of two right negative monomials is right negative. All non-dominant monomials in the $q$-character \eqref{eq:eval_q_char} of an evaluation module are right negative. In particular, if a product of two monomials in $q$-characters of evaluation modules is dominant, then at least one of the monomials is dominant.

\begin{example}\label{ex:q_char_mult_1}
        Let $\al_1,\bt_1$  $\al_2 ,\bt_2$ be four even integers such that $\al_1 < \al_2\leq \bt_1+2\leq \bt_2$. Then the dominant monomials in $\chi_{q}([\al_1,\bt_1][\al_2,\bt_2])$ are
        \begin{multline*}
            1_{\al_1}\dots 1_{\beta_1} 1_{\al_2 } \dots 1_{\bt_2},\ 1_{\al_1}\dots 1_{\beta_1-2} 1_{\al_2 } \dots \widehat{1_{\bt_{1}+2}} \dots 1_{\bt_2}, \ \\ 1_{\al_1}\dots 1_{\beta_1-4} 1_{\al_2 } \dots \widehat{1_{\bt_{1}}}\widehat{1_{\bt_{1}+2}} \dots 1_{\bt_2},
            \ \dots \ ,\ 1_{\al_1}\dots 1_{\alpha_2-4} 1_{\beta_1 + 4},\dots, 1_{\beta_2}.
        \end{multline*}
\end{example}

\subsection{Drinfeld coproduct and universal $R$-matrix.} The quantum group $\Uqa$ is known to be quasi-triangular. It means that there exists an invertible $\uR\in \Uqa\hat\otimes \Uqa$ such that
$$\uR \Delta(x) \uR^{-1} = \Delta^{op}(x), \qquad \forall x\in\Uqa,
$$
and 
\begin{equation}\label{eqn:Rmat_comult}
    (\Delta\otimes 1)(\uR) = \uR_{13}\uR_{23},\;\; (1\otimes\Delta)(\uR) = \uR_{13}\uR_{12}.
\end{equation}
Here $\Delta^{op}(x) = {\mathop\sum_{i=1}^{m}} b_{i} \otimes a_i$, whenever $\Delta(x) = {\mathop\sum_{i=1}^{m}} a_{i} \otimes b_i$.  The element $\uR$ is called the universal $R$-matrix.

If the series defining $\uR$ converges in $\mathrm{End}(\rV \otimes \rW)$, then $\check{\uR} :\rV \otimes \rW \to \rW\otimes \rV$, where $\check{\uR}=P\uR$, is a $\Uqa$-homomorphism. Here $P:\rV \otimes \rW \rightarrow \rW\otimes \rV$ is the permutation operator.

Equations \eqref{eqn:Rmat_comult}  allow to compute the action of $\uR$ on $\rV \otimes (\rW_1\otimes \rW_2)$ (respectively, $(\rW_1\otimes \rW_2)\otimes \rV$) as long as the action of $\uR$ on $\rV \otimes \rW_1$, $\rV\otimes \rW_2$ (respectively, $\rW_1 \otimes \rV$, $\rW_2 \otimes \rV$) is defined.

The universal $\uR$-matrix admits the decomposition
\begin{equation}\label{eq:Rmat_gauss}
    \uR = q^{h_{0}\otimes h_0}\uR_{-}\uR_0\uR_{+}.
\end{equation}
Here $\uR_{\pm}-1$ is in a completion of $U^{\pm}\otimes U^{\mp}$  and $h_0 = \log(K)/\log(q)$. We list explicit formulas for $\uR_{+}, \uR_{-}, \uR_0$ in Appendix \ref{app:etc}. The (normalized) action of $\check\uR$ in $[\al_1,\beta_1]\otimes [\al_2,\beta_2]$ is given in Proposition \ref{prop:Rmat_lambdas}, cf. also \eqref{eq:Rmat2}.

Drinfeld coproduct $\drin$ is an algebra homomorphism from $\Uqa$ to a completion of $\Uqa\otimes \Uqa$ given by 
\begin{align}\label{eq:dr_comult}
     \psi^{\pm}(z)&\mapsto \psi^{\pm}(z){\otimes}\psi^{\pm}(z),\notag\\ 
    x^{+}(z)&\mapsto x^{+}(z)\otimes \psi^{-}(z) + 1\otimes x^{+}(z),\\
    x^{-}(z)& \mapsto x^{-}(z)\otimes 1 + \psi^{+}(z)\otimes x^{-}(z). \notag
\end{align}
If $\rV,\rW$ are $\Uqa$-modules and the series converge in $\rV\otimes \rW$, then the Drinfeld coproduct gives a $\Uqa$-module structure on $\rV\otimes \rW$. We denote this module $\rV\otimes_D\rW$. Note that in some cases $\rV\otimes_D\rW$ is not defined.

It is known that the Drinfeld coproduct is related to the comultiplication \eqref{eq:Uqa_Com_An_Coun} by 
\begin{equation}\label{eq:Rp_vs_drinf}
    \drin(x) = \uR_{+}\Delta(x)\uR_{+}^{-1}, \qquad \forall x\in\Uqa,
\end{equation}
see \cite{enriquez2007weight}.

    Thus, if the action of $\uR_+$ on $\rV \otimes \rW$ and $\rV\otimes_{D}\rW$ is well-defined, then $\rV \otimes \rW \cong \rV \otimes_D \rW$. The isomorphism is given by the action of $\uR_{+}^{-1}$ on $\rV \otimes \rW$. If $\rV\otimes_{D}\rW$ is well-defined and irreducible, then $\rV \otimes \rW \cong \rV \otimes_D \rW$. We expect that this is a general fact.

    \begin{conj}\label{conj:drinf_prod}
    If $\rV\otimes_{D}\rW$ is well-defined, then $\rV \otimes \rW \cong \rV \otimes_D \rW$.
    \end{conj}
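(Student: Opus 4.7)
The natural point of departure is equation \eqref{eq:Rp_vs_drinf}, which gives $\drin(x)=\uR_+\Delta(x)\uR_+^{-1}$. As the paper already observes, whenever $\uR_+$ acts as a well-defined operator on $\rV\otimes\rW$, this relation immediately produces the isomorphism $\uR_+^{-1}:\rV\otimes\rW\to \rV\otimes_D\rW$; invertibility is automatic because $\uR_+-1$ strictly raises weight in the first tensor factor and both $\rV,\rW$ are finite dimensional. Thus the entire conjecture reduces to the claim that well-definedness of the Drinfeld coproduct on $\rV\otimes\rW$ already forces the $\uR_+$-action to be well-defined.

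The plan is to translate both hypotheses into conditions on rational functions of the evaluation parameters and then compare them term by term. On a finite-dimensional module the currents $\psi^\pm(z)$ act as fixed rational functions in $z$, while the matrix coefficients of $x^\pm(z)$ are supported on finite sets of spectral points. The Drinfeld coproduct of \eqref{eq:dr_comult} is well-defined exactly when, applied to any $v\otimes w$, the mixed series $x^+(z)\otimes\psi^-(z)$ and $\psi^+(z)\otimes x^-(z)$ truncate to finite expressions — a concrete pole-cancellation requirement relating $\psi^\pm|_\rV$ to $x^\mp|_\rW$. I would then carry out the parallel analysis for the Gauss factor $\uR_+$ using its explicit series form recalled in Appendix \ref{app:etc}, which is built from exactly the same currents organized by $\ell$-weight shift, and show that the two resulting regularity conditions coincide. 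The isomorphism is then produced from \eqref{eq:Rp_vs_drinf}.

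If a direct comparison of poles proves intractable, an alternative is specialization. For generic shifts in the evaluation parameters both tensor products coincide with the irreducible product, and the isomorphism is already known to be given by $\uR_+^{-1}$. One would then move along a one-parameter family of evaluation data, arguing in the spirit of Section \ref{sec:degs_limits} that the isomorphism survives any limit in which $\rV\otimes_D\rW$ stays well-defined, since the matrix entries of $\uR_+^{-1}$ in a fixed $\ell$-weight basis are rational in the parameters and their denominators should be bounded by those of $\drin$. The main obstacle, in either approach, is precisely this equivalence of well-definedness conditions: the forward implication is tautological from \eqref{eq:Rp_vs_drinf}, but the reverse must rule out a scenario where the series defining $\drin(x^\pm(z))$ truncate on each vector only through delicate cancellations in the mixed term $\psi^\mp(z)\otimes x^\pm(z)$, while individual summands of $\uR_+$ themselves carry uncancellable poles. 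Controlling such cancellations in full generality is, I expect, where the conjecture draws its difficulty and why it remains unproved in the paper.
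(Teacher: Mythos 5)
This statement is a \emph{conjecture} in the paper, and the authors do not supply a proof; they only establish a special case (Proposition~\ref{prop:equiv_drinf}, proved in Appendix~\ref{app:etc}) under three additional hypotheses: $\rV,\rW$ thin, $\rV,\rW$ $q$-character separated, and the action of the full universal $R$-matrix already well-defined on $\rV\otimes\rW$. So there is no ``paper's own proof'' to compare against, and your attempt should be judged as a strategy sketch for an open problem rather than a reconstruction of a known argument.

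Your reduction is correct and is the same one the paper makes tacitly: by \eqref{eq:Rp_vs_drinf}, whenever $\uR_+$ acts on $\rV\otimes\rW$ it is automatically invertible (it is unipotent, since $\uR_+ - 1$ lies in a completion of $U^+\otimes U^-$ and hence acts nilpotently on a finite-dimensional tensor product), and $\uR_+^{-1}$ then furnishes the desired isomorphism $\rV\otimes\rW\xrightarrow{\ \sim\ }\rV\otimes_D\rW$. You also correctly isolate the genuine difficulty, which is the converse implication: showing that well-definedness of $\drin$ on $\rV\otimes\rW$ forces well-definedness of $\uR_+$. Note that the paper's Proposition~\ref{prop:equiv_drinf} sidesteps precisely this step by simply \emph{assuming} convergence of $\uR$; its proof in the appendix establishes, under the thin and $q$-character-separated hypotheses, that the triangular factor $\uR_0$ acts with nonzero diagonal entries and then recovers $\uR_\pm$ from the LDU decomposition — an argument that is quite specific to the thin case and does not generalize in an obvious way. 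Your two proposed routes (direct pole-matching between the series for $\drin$ and the series for $\uR_+$, or degeneration from the generic regime where both products are irreducible and coincide) are plausible lines of attack, but neither is carried out, and you yourself flag the scenario that blocks both: $\drin$ could be well-defined on every vector only through cancellations between the two summands in $x^+(z)\otimes\psi^-(z)+1\otimes x^+(z)$, while the individual $\ell$-weight-graded pieces of $\uR_+$ still diverge. Nothing in your sketch rules this out, and nothing in the paper does either. So the proposal is not a proof, but it is an accurate diagnosis of why the statement remains conjectural.
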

We prove the conjecture in a special case of thin modules.

\begin{defi}\label{def:qchar_sep}
    A pair of $\Uqa$-modules is called $q$-character separated if for any pair of $\ell$-weights $1_{a_1}^{m_1}\dots 1_{a_k}^{m_k}$, $1_{b_1}^{n_1}\dots 1_{b_l}^{n_l}$ in q-characters $\chi_q(\rV)$ and $\chi_q(\rW)$ respectively, $a_i \neq b_j$ for any $i,j$.
\end{defi}

\begin{prop}\label{prop:equiv_drinf}
    Let $\Uqa$-modules $\rV$ and $\rW$ be thin and $q$-character separated.

    Let the action of universal R-matrix $\uR$ be defined on $\rV\otimes \rW$.

    Then $\rV \otimes_D \rW$ is well-defined and $\rV \otimes_D \rW \cong \rV \otimes \rW$.
\end{prop}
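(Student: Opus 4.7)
The strategy is to invoke the operator identity \eqref{eq:Rp_vs_drinf}, namely $\drin(x) = \uR_+ \Delta(x) \uR_+^{-1}$, and exhibit $\uR_+$ itself as the desired intertwiner $(\rV\otimes \rW, \Delta) \to \rV\otimes_D \rW$; this establishes both the well-definedness of the Drinfeld product and the asserted isomorphism in one stroke.

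First I would observe that $\uR_+$ is automatically a well-defined invertible operator on $\rV\otimes \rW$, independent of any hypothesis on $\uR$. Since $\uR_+ - 1$ lies in a completion of $U^+\otimes U^-$, each nonzero term strictly raises the $\Uq$-weight in the first tensor factor while lowering it by the same amount in the second. As $\rV$ and $\rW$ are finite-dimensional with bounded weight range, only finitely many weight shifts contribute; hence $\uR_+|_{\rV\otimes\rW}$ reduces to a finite sum, invertible because its diagonal part is the identity.

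Next I would verify that $\rV\otimes_D \rW$ is well-defined, i.e. that the formal currents in \eqref{eq:dr_comult} specialize to genuine operators on $\rV\otimes \rW$. The action of $\drin(\psi^\pm(z))$ on a tensor of $\ell$-weight vectors is a product of two rational scalar functions, hence unproblematic. The delicate piece is
\begin{equation*}
\drin(x^+(z))(v\otimes w) = x^+(z) v \otimes \phi_w^-(z)\, w + v \otimes x^+(z) w,
\end{equation*}
where $\psi^-(z) w = \phi_w^-(z) w$ by thinness of $\rW$. By Proposition \ref{prop:ellroot_action} combined with finite-dimensionality, $x^+(z) v = \sum_{v'} g_{vv'}(z)\, v'$ is a finite sum over $\ell$-weight vectors $v'\in \rV$, where each $g_{vv'}(z)$ is a rational function of $z$ whose poles may occur only at points dictated by the $\ell$-weights $\phi_v, \phi_{v'}$ of $\rV$. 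On the other hand, the poles of the rational function $\phi_w^-(z)$ lie at the points dictated by the $\ell$-weight $\phi_w$ of $\rW$. The $q$-character separation hypothesis forces these two pole sets to be disjoint, so the product $g_{vv'}(z)\,\phi_w^-(z)$ is an honest rational function with no delta-type contributions coming from colliding factors $(1-\alpha z)$ against $(1-\alpha z)^{-1}$. Hence the formal expansion in \eqref{eq:dr_comult} assembles to a well-defined rational operator on $\rV\otimes \rW$. The argument for $\drin(x^-(z))$ is symmetric.

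Finally, with both $\Delta$ and $\drin$ defined on the same underlying space $\rV\otimes \rW$, the identity \eqref{eq:Rp_vs_drinf} becomes an equality of operators, yielding
\begin{equation*}
\drin(x)\,\uR_+(u) \;=\; \uR_+\, \Delta(x)\, \uR_+^{-1}\, \uR_+(u) \;=\; \uR_+\, \Delta(x)(u) \qquad \forall\, x \in \Uqa,\ u \in \rV\otimes\rW,
\end{equation*}
so $\uR_+$ is a $\Uqa$-module homomorphism $(\rV\otimes\rW, \Delta) \to \rV\otimes_D \rW$, and by invertibility it is the claimed isomorphism. The main obstacle is the middle step: converting the formal-series identity for $\drin$, which a priori lives in a completion of $\Uqa \otimes \Uqa$, into a concrete operator statement on $\rV\otimes \rW$. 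Thinness is what ensures $\psi^\pm(z)$ acts by a scalar rational function on each $\ell$-weight line, and $q$-character separation is exactly the non-collision condition preventing those rational functions from conflicting with the matrix coefficients of $x^\pm(z)$; without both hypotheses the expansion would, in general, fail to define a rational operator.
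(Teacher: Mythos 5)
Your overall strategy — exhibit $\uR_+$ as the intertwiner from $(\rV\otimes\rW,\Delta)$ to $\rV\otimes_D\rW$ via \eqref{eq:Rp_vs_drinf} — is exactly the one the paper uses. The gap is in your first step, the claim that $\uR_+$ is ``automatically a well-defined invertible operator on $\rV\otimes\rW$, independent of any hypothesis on $\uR$.'' The weight argument shows that only finitely many weight shifts contribute, but this does not reduce $\uR_+|_{\rV\otimes\rW}$ to a finite sum. In the product \eqref{eqn:uRmat}, every factor $\exp_q\big((q-q^{-1})x_r^+\otimes x_{-r}^-\big)$ shifts the $\Uq$-weight by the same amount, so for each fixed weight shift there remains an infinite sum over the mode index $r\geq 0$. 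On evaluation modules these sums are essentially geometric and resum to rational functions of the evaluation parameters with genuine poles; moreover, the Gauss decomposition $\uR = q^{h_0\otimes h_0}\uR_-\uR_0\uR_+$ can exhibit poles in the factors that cancel in $\uR$, so ``$\uR_+$ converges'' is a separate condition from ``$\uR$ converges.'' The hypothesis that $\uR$ acts on $\rV\otimes\rW$ is therefore not spurious, and you cannot drop it.

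The paper fills this gap as follows. It first shows that $\uR_0$ acts diagonally with nonzero entries; this is where $q$-character separation enters, forcing the eigenvalues $\lambda_{i,j}$ (which involve $\ell$-weight monomials of $\rW$ evaluated at points $q^{-a-1}$ with $a\in S_\rV$) to avoid all zeros and poles. Then, using that $\uR$ itself is defined, the factors $\uR_\pm$ are recovered by the triangular (LDU) decomposition of $q^{-h_0\otimes h_0}\uR$: their entries are rational functions of the entries of $\uR$ whose only denominators are the nonvanishing diagonal entries of $\uR_0$. Your second step — the direct verification that $\rV\otimes_D\rW$ is defined by tracking the disjointness of the pole locus of $x^+(z)v$ against the rational function $\phi^-_w(z)$ — is a reasonable stand-alone argument for that piece (the paper instead gets well-definedness of $\drin$ as a by-product of $\uR_+$ being defined via \eqref{eq:Rp_vs_drinf}). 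But without a correct justification for the existence of $\uR_+$, the isomorphism $\rV\otimes_D\rW\cong\rV\otimes\rW$ does not follow.
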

The proof is given in Appendix $\ref{app:etc}$. We expect that the condition of $\rV$ and $\rW$ being thin in Proposition \ref{prop:equiv_drinf} can be dropped.

Particular case of our interest is the following
    \begin{cor}\label{cor:sep_prod}
    Let $\rV = \otimes_{i=1}^{k} [\alpha_i,\beta_i],\; \rW = \otimes_{j=1}^{l}[\alpha^{\prime}_j, \bt^{\prime}_j]$ be such that $\{\alpha_i,\dots \bt_i\} \cap \{\alpha_j,\dots \bt_j\} = \varnothing$ and $\{\alpha_i^{\prime},\dots \bt_i^{\prime}\} \cap \{\alpha_j^{\prime},\dots \bt_j^{\prime}\} = \varnothing$ whenever $i\neq j$. Assume further that for any $i,j$ and  for any $a\in \{\al_i,\al_i+2,\dots, \bt_i-2,\bt_i\},\;\; b\in \{\al^{\prime}_j, \al_j^{\prime}+2, \dots, \bt^{\prime}_{j}-2,\bt^{\prime}_j\}$, we have $a-b\notin \{-2,0,2\}$. Then $\rV \otimes_{D} \rW  \cong \rV \otimes \rW$.
\end{cor}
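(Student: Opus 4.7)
The plan is to reduce the corollary to Proposition~\ref{prop:equiv_drinf} by verifying its three hypotheses: thinness of $\rV$ and $\rW$, $q$-character separation of $(\rV,\rW)$ per Definition~\ref{def:qchar_sep}, and existence of the action of $\uR$ on $\rV\otimes\rW$.

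For $q$-character separation, I would unpack the hypothesis $a-b\notin\{-2,0,2\}$ as follows. By formula~\eqref{eq:eval_q_char}, any variable $1_c$ appearing in $\chi_q([\al_i,\bt_i])$ has $c\in\{\al_i,\al_i+2,\dots,\bt_i,\bt_i+2\}$, and analogously for $[\al'_j,\bt'_j]$. A four-way case analysis on whether $c$ equals the boundary $\bt_i+2$ and whether $c'$ equals $\bt'_j+2$, combined with suitable choices of $a,b$ in the hypothesis (either interior values or the rightmost $\bt_i$, $\bt'_j$), yields $c\neq c'$ in every case; this gives separation. The same unpacking further shows that the $2$-extended ranges $\{\al_i-2,\al_i,\dots,\bt_i+2\}$ and $\{\al'_j-2,\al'_j,\dots,\bt'_j+2\}$ are disjoint for each pair of strings, which is strictly stronger than general position in the sense of~\eqref{eq:general_position}. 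Then by the coproduct identities~\eqref{eqn:Rmat_comult}, the action of $\uR$ on $\rV\otimes\rW$ reduces to its action on pairs $[\al_i,\bt_i]\otimes[\al'_j,\bt'_j]$; by Proposition~\ref{prop:Rmat_lambdas} such normalized actions are rational in evaluation parameters with poles only at non-general-position configurations, so $\uR$ is regular here and extends to $\rV\otimes\rW$.

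For thinness of $\rV$ (and, by the same argument, $\rW$), each factor $[\al_i,\bt_i]$ is individually thin by~\eqref{eq:eval_ell_weights}, and its $j$-th $\ell$-weight is characterized within the interior $\{\al_i,\dots,\bt_i\}$ by a unique "missing variable" at $c^*=\bt_i-2j_i+2$, with the extreme value $c^*=\al_i$ when $j_i=m_i$. The only within-$\rV$ interference at a variable $1_c$ with $c$ in this interior comes from an adjacent factor $[\al_{i'},\bt_{i'}]$ with $\bt_{i'}+2=\al_i$, and this interference occurs only at $c=\al_i$. Processing the factors in order along each maximal chain of adjacent strings and peeling off already-determined contributions then recovers each $j_i$ in turn, so the $\ell$-weights of $\rV$ are distinct and $\rV$ is thin. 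With all three hypotheses verified, Proposition~\ref{prop:equiv_drinf} yields $\rV\otimes_D\rW\cong\rV\otimes\rW$. I expect the main technical subtlety to lie in this thinness step, since the within-$\rV$ hypothesis only enforces disjointness (not general position) and permits adjacent strings at whose boundary the variables are shared between two factors, so a naive "read off from interior" recovery fails and the sequential peeling argument is required.
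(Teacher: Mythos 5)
Your proposal follows the same route as the paper's (very terse) proof: verify the three hypotheses of Proposition~\ref{prop:equiv_drinf} --- thinness, $q$-character separation, and existence of the $\uR$-action --- and invoke it. The paper merely asserts each of these follows from the stated conditions, so your elaboration is genuine added content, and the sequential-peeling argument for thinness is correct and is needed for exactly the reason you diagnose: adjacent disjoint strings share a boundary variable $1_{\bt_i+2}=1_{\al_{i+1}}$, so the $\ell$-weight index of each factor cannot in general be read off from the ``pure'' variables alone when strings can be as short as a single letter. The four-way case analysis for separation is also fine.

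However, one intermediate claim is false. The two-sided ``$2$-extended'' disjointness, $\{\al_i-2,\dots,\bt_i+2\}\cap\{\al'_j-2,\dots,\bt'_j+2\}=\varnothing$, does not follow from $a-b\notin\{-2,0,2\}$: taking $[\al_i,\bt_i]=[4,4]$ and $[\al'_j,\bt'_j]=[0,0]$, every $a-b$ equals $4\notin\{-2,0,2\}$, yet the two extended ranges meet at $2$. What general position in the sense of~\eqref{eq:general_position} actually requires (and what the hypothesis does deliver) is the one-sided statement $\{\al_i,\dots,\bt_i\}\cap\{\al'_j-2,\dots,\bt'_j+2\}=\varnothing$, equivalently $a-b\notin\{-2,0,2\}$ for $a,b$ ranging over the unextended strings. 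Since your $\uR$-regularity argument only uses general position, the conclusion survives, but you should replace the two-sided claim by this one-sided version; as written it is an overstatement that a referee would flag.
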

\begin{proof}
The first two conditions guarantee that $\rV$ and $\rW$ are thin. The last two conditions make sure $\rV$ and $\rW$ are $q$-character separated and that the $R$-matrix is well-defined. Corollary \ref{cor:sep_prod} follows from Proposition \ref{prop:equiv_drinf}.
\end{proof}

\subsection{$\check{R}$-matrix homomorphisms.}
The space of $\Uqa$-homomorphisms between tensor products of two evaluation modules in different orders is always one-dimensional. We describe it explicitly.

As $\Uq$-modules,
\begin{equation}
    [\alpha_{1},\beta_{1}][\alpha_{2},\beta_{2}] \underset{\Uq}{\cong} \mathrm{L}_m\otimes \mathrm{L}_n \underset{\Uq}{\cong}  \mathrm{L}_{m+n} \oplus \mathrm{L}_{m+n-2} \oplus \dots\oplus \mathrm{L}_{|m-n|+2} \oplus \mathrm{L}_{|m-n|},
\end{equation}
where $m=(\beta_1-\al_1)/2+1$, $n=(\beta_2-\al_2)/2+1$.

Embedding $\mathrm{L}_{m+n-2k} \subseteq \mathrm{L}_{m}\otimes \mathrm{L}_{n}$ is defined up to a constant. We fix the constant by choosing a singular vector
\begin{equation}\label{eq:ev_mod_prod_sl2_hw}
u_{k} = \sum_{i,j:\ i+j = k}(-1)^jq^{(n-j+1)j}[m-i]_{q}![n-j]_{q}!(f^{(i)}v_0)\otimes (f^{(j)}w_0)\in \mathrm{L}_m\otimes \mathrm{L}_n.
\end{equation}Here, $[n]_{q}! = \prod_{j=1}^{n}[j]_{q}$, $f^{(l)} = \frac{f^{l}}{[l]_q!}$, $v_0$ and $w_0$ are singular vectors in $\mathrm{L}_{m}$ and $\mathrm{L}_{n}$, respectively. 

Let $u_k^{op}\in \mathrm{L}_n\otimes \mathrm{L}_m$ be the vector of the form \eqref{eq:ev_mod_prod_sl2_hw} 
where $i$ is interchanged with $j$, and $m$ with $n$. 
We identify $\mathrm{L}_m\otimes \mathrm{L}_n$ and  $\mathrm{L}_n\otimes \mathrm{L}_m$ as vector spaces by identifying $f^{l}u_k$ with $f^{l}u_k^{\text{op}}$ for all $k,l$.

Any $\Uqa$-homomorphism is of the form
\begin{equation}\label{eq:interwinner_decomp}
    \check{R} = \lambda_{0}\mathrm{Id}_{\mathrm{L}_{m+n}} \oplus \lambda_{1}\mathrm{Id}_{\mathrm{L}_{m+n-2}}\oplus \dots \oplus \lambda_{\min\{m,n\}}\mathrm{Id}_{\mathrm{L}_{|m-n|}}.
\end{equation}

\begin{prop}\label{prop:Rmat_lambdas}
    The operator $\check{R}: [\alpha_1, \bt_1][\al_2, \bt_2] \longrightarrow [\al_2, \bt_2][\alpha_1, \bt_1]$,
    defined by \eqref{eq:interwinner_decomp} is an $\Uqa$-homomorphism if and only if
    \begin{equation}\label{eq:Rmat_lambdas}
        \lambda_{k} = c  \prod_{l = 0}^{k-1}\left(1 - z q^{m+n-2l}\right) \prod_{l = k}^{\min\{m,n\}-1}\left(z - q^{m+n-2l}\right).
    \end{equation}
    Here $c\in \mathbb{C}$, $z = q^{a - b}$, where $a = \alpha_1 + m - 1,\;\; b = \alpha_2 + n - 1$ are the evaluation parameters.
\end{prop}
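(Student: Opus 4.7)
The plan is to leverage the multiplicity-free $\Uq$-decomposition of the two tensor products in order to reduce the $\Uqa$-intertwining condition on $\check{R}$ to a single recursion on the scalars $\lambda_k$, which can then be solved explicitly. First, I would observe that as $\Uq$-modules via $\iota_1$, both $[\alpha_1,\beta_1]\otimes[\alpha_2,\beta_2]$ and $[\alpha_2,\beta_2]\otimes[\alpha_1,\beta_1]$ decompose as $\bigoplus_{k=0}^{\min\{m,n\}}\mathrm{L}_{m+n-2k}$, with the $k$-th isotypic component generated by $u_k$ (respectively $u_k^{op}$). The identification $f^l u_k \leftrightarrow f^l u_k^{op}$ matches these summands, and Schur's lemma then forces any $\Uq$-equivariant (in particular any $\Uqa$-equivariant) $\check{R}$ to take the form \eqref{eq:interwinner_decomp}.

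Second, being $\Uqa$-equivariant in addition requires $\check{R}\circ\Delta(e_0) = \Delta(e_0)\circ\check{R}$, and by $\Uq$-equivariance of both sides it suffices to test this identity on the singular vectors $u_k$. Under the evaluation homomorphisms, $\Delta(e_0)$ acts on $[\alpha_1,\beta_1]\otimes[\alpha_2,\beta_2]$ as $q^{a}(f\otimes K^{-1}) + q^{b}(1\otimes f)$, where $a = \alpha_1+m-1$ and $b = \alpha_2+n-1$ are the evaluation parameters. This operator lowers the $\Uq$-weight by $2$, so $\Delta(e_0)u_k$ lies in the weight-$(m+n-2k-2)$ subspace, whose natural basis is $\{f^{k+1-l}u_l\}_{l=0}^{k+1}$. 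A direct calculation starting from the definition \eqref{eq:ev_mod_prod_sl2_hw} of $u_k$, together with standard $q$-binomial manipulations, should produce
\begin{equation*}
    \Delta(e_0)\, u_k \;=\; A_k(z)\, f u_k \;+\; B_k(z)\, u_{k+1},
\end{equation*}
with explicit coefficients $A_k, B_k$ depending on $q,m,n,k$ and $z = q^{a-b}$, and with the remaining components vanishing. The same computation in $[\alpha_2,\beta_2]\otimes[\alpha_1,\beta_1]$ yields coefficients $A_k', B_k'$ obtained by swapping the two factors.

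Third, matching the coefficients of $f u_k^{op}$ and $u_{k+1}^{op}$ in $\check{R}\circ\Delta(e_0)(u_k) = \Delta(e_0)\circ\check{R}(u_k)$ produces $\lambda_k A_k = \lambda_k A_k'$ (which should come out automatic from the computation) and the recursion
\begin{equation*}
    \frac{\lambda_{k+1}}{\lambda_k} \;=\; \frac{B_k'(z)}{B_k(z)},
\end{equation*}
which must simplify to $(1 - zq^{m+n-2k})/(z - q^{m+n-2k})$. Telescoping from $0$ to $k$ and absorbing the overall scalar into $c$ then yields \eqref{eq:Rmat_lambdas}. Sufficiency follows from the quasi-triangularity of $\Uqa$ recalled around \eqref{eq:Rmat_gauss}: the universal R-matrix supplies a $\Uqa$-equivariant intertwiner, which must lie in the one-parameter family of $e_0$-equivariant maps identified above, so the two families coincide and therefore \eqref{eq:Rmat_lambdas} characterizes $\Uqa$-homomorphisms exactly.

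The main obstacle will be the second step, namely the explicit expansion of $\Delta(e_0) u_k$ in the basis $\{f^{k+1-l}u_l\}_{l=0}^{k+1}$: one must verify that the coefficients for $l<k$ vanish and extract the clean ratio $B_k'(z)/B_k(z)$ displayed above. This is essentially a $q$-hypergeometric identity whose proof is mechanical but whose bookkeeping of signs, powers of $q$, and ratios of $q$-factorials constitutes the technical content of the proposition.
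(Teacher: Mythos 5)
Your overall strategy — reduce the $\Uqa$-intertwining condition to a recursion on the scalars $\lambda_k$ by testing on $\Uq$-singular vectors — is the same as the paper's, and the telescoping at the end is also the same. However, the central computational claim is wrong, and the choice of generator is what causes the trouble.

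You claim that $\Delta(e_0)u_k = A_k f u_k + B_k u_{k+1}$ ``with the remaining components vanishing.'' This is false in general. What one can deduce from $[e_0,f_1]=0$ (so that $e_0$ commutes with $f$, hence kills the far components) is only that
\[
\Delta(e_0)u_k \in \operatorname{span}\bigl(u_{k+1},\, f u_k,\, f^{(2)} u_{k-1}\bigr),
\]
and the third term is generically nonzero. For example, with $m=n=2$, $k=1$, a direct expansion of $\Delta(e_0)u_1 = q^a(f\otimes K^{-1})u_1 + q^b(1\otimes f)u_1$ in the basis $\{f^2 v_0\otimes w_0,\ fv_0\otimes fw_0,\ v_0\otimes f^2 w_0\}$ and solving the resulting $3\times 3$ system gives a coefficient of $f^{(2)}u_0$ proportional to $q^{b}-q^{a+4}$, which is not identically zero. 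So the two-term ansatz is an error, not a missing verification. If you carry the third term, you get an additional backward recursion $\lambda_{k-1}C_k = \lambda_k C'_k$, whose consistency with the forward one must then be checked (and the equality $A_k = A'_k$ you hope is ``automatic'' also needs to be proved, since otherwise the $f u_k$ coefficient forces $\lambda_k = 0$). None of that is fatal, but it is substantially more bookkeeping than you budgeted for.

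The reason the paper uses $f_0$ rather than $e_0$ is precisely to avoid this. Since $[e_1,f_0]=0$, the operator $\Delta(f_0)$ preserves $\Uq$-singular vectors, so $\Delta(f_0)u_k = c_k u_{k-1}$ has a \emph{single} term; the intertwining condition then produces the single recursion $\lambda_{k-1}c_k = \lambda_k c_k^{\mathrm{op}}$, and the proof reduces to reading off $c_k$ from the leading coefficient of $u_k$. (In the notation of the Drinfeld–Jimbo presentation, $e_0$ is adjoint to $f_1$ rather than to $e_1$, so it preserves cosingular, not singular, vectors — if you insist on $e_0$ you would want to test on lowest-weight vectors instead.)

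Finally, your existence argument via the universal $R$-matrix is more delicate than you acknowledge: $\uR_0$ is given by an infinite series that can have poles at precisely the non-generic parameters relevant here. The paper instead observes that for generic parameters the two tensor products are irreducible with the same $q$-character (same unique dominant monomial), hence isomorphic, so a nonzero intertwiner exists; the condition $[\check R,e_0]=[\check R,f_0]=0$ is a linear system in the $\lambda_k$ depending algebraically on the parameters, so solvability for generic parameters implies solvability for all. That argument avoids all convergence issues and is the one you should use.
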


This statement is known  (see for example \cite{chari1995guide}). For completeness we provide a proof in Appendix \ref{app:etc}.

Due to \eqref{eqn:Rmat_comult}, Proposition \ref{prop:Rmat_lambdas} allows us to compute $R$-matrix acting on a tensor product of two arbitrary tensor products of evaluation modules.

\subsection{The Yangian $\Ygn$.} We define the Yangian $\Ygn$ and discuss sets of generators.

We start with the $\Ygngl$.
\begin{defi}
    The $\Ygngl$ is an associative unital algebra with generators $t_{ij}^{(r)}$,  $i,j = 1,2$, $r\in \mbZ_{>0}$, collected in series $t_{ij}(u)$ in a formal variable $u$, 
    \begin{equation*}
        t_{ij}(u) = \delta_{ij} + \sum_{r=1}^{\infty}t_{ij}^{(r)}u^{-r},
    \end{equation*}
    and defining relations 
    \begin{equation}
    (u - v)[t_{ij}(u), t_{kl}(v)] = t_{kj}(u)t_{il}(v) - t_{kj}(v)t_{il}(u).
    \end{equation}
\end{defi}
    In modes the defining relations take the form
    \begin{equation}
        [t_{ij}^{(r+1)}, t_{kl}^{(s)}] - [t_{ij}^{(r)}, t_{kl}^{(s+1)}] = t_{ij}^{(r)}t_{kl}^{(s)} - t_{ij}^{(s)}t_{kl}^{(r)}.
    \end{equation}
    The defining relations can also be written in a matrix form
    \begin{equation*}
      R(u-v)T_{1}(u) T_{2}(v) = T_{2}(v) T_{1}(u) R(u-v), \qquad   T(u) = \begin{pmatrix}
            t_{11}(u) & t_{12}(u)\\
            t_{21}(u) & t_{22}(u) 
        \end{pmatrix},
    \end{equation*} 
     where $T_1(u)=T(u)\otimes 1$, $T_2(u)=1\otimes T(u)$, $R(u) = 1 - Pu^{-1}$. 
     
We use the Hopf algebra structure on $\Ygn$ with comultiplication $\Delta$, antipode $S$, and counit $\epsilon$ given by
\begin{equation}
    \begin{aligned}[c]
    \Delta: \Ygngl &\longrightarrow \Ygngl \otimes \Ygngl,\qquad & S: \Ygngl &\longrightarrow \Ygngl, \qquad & \varepsilon:\Ygngl &\longrightarrow \mbC,\\
     t_{ij}(u) &\longmapsto \sum_{k = 1,2} t_{ik}(u) \otimes t_{kj}(u),&  T(u) &\longmapsto T^{-1}(u), & T(u) &\longmapsto 1.
\end{aligned}
\end{equation}

There is an embedding $\iota: U\sL \longrightarrow \Ygngl$ given by
\begin{equation}
    \begin{aligned}[c]
    h &\longmapsto t_{11}^{(1)} - t_{22}^{(1)},\\ 
    e &\longmapsto t_{12}^{(1)},\\
    f &\longmapsto t_{21}^{(1)}.
\end{aligned}\qquad
\end{equation}
Here $e,h,f$ are the standard generators of $U\sL$.
We often write simply $U\sL \subset \Ygngl$ for the image of this embedding.

There is a homomorphism of associative algebras depending on a parameter $a \in \mbC$ defined by
\begin{equation}
    \begin{aligned}[c]
    \ev_a : \Ygngl &\longrightarrow U\mathfrak{gl}_{2},\\
    t_{ij}(u) &\longmapsto \left(\delta_{ij}1 + \frac{E_{ij}}{u + a}\right).
\end{aligned}
\end{equation}
Here $E_{ij}$, $i,j=1,2$, are the standard generators of $U\mathfrak{gl}_{2}$.
\medskip

Let $$\mathrm{qdet}(u) = t_{11}(u)t_{22}(u{-}1) - t_{21}(u)t_{12}(u{-}1)$$ be the quantum determinant.  Coefficients of $\mathrm{qdet}(u)$ generate the center $\mathcal{Z}\Ygngl$ of $\Ygngl$ (see \cite{chari1995guide}).

It can be verified that
\begin{equation*}
    \Delta(\mathrm{qdet}(u)) = \mathrm{qdet}(u)\otimes \mathrm{qdet}(u), \qquad  S(\mathrm{qdet}(u)) = \mathrm{qdet}(u)^{-1}.
\end{equation*}

The algebra $\Ygn$ is the quotient Hopf algebra 
\begin{equation}
    \Ygn = \Ygngl / \left(\mathrm{qdet}(u) - 1\right).
\end{equation}
We denote $\bar{t}^{(r)}_{ij}\in \Ygn$ and $\bar{t}^{(r)}_{ij}(u)\in \Ygn[[u^{-1}]]$ the images of generators of $\Ygngl$  and corresponding formal series, respectively.

Composition of the inclusion $\iota$  with the quotient map   gives an injective homomorphism of Hopf algebras $\bar \iota:\ U\sL \longmapsto \Ygn$.

Let  $\varphi(u) \in\mathcal{Z}\Ygngl[[u^{-1}]]$ be such that
\begin{equation*}
\varphi(u)\varphi(u-1)\mathrm{qdet}(u) = 1, \qquad \varphi(\infty) = 1. 
\end{equation*}
We have $\Delta(\varphi(u))=\varphi(u)\otimes \varphi(u)$ and there is an injective homomorphism of Hopf algebras.
\begin{equation*}
\begin{aligned}
&\mathfrak{i}: \Ygn \longrightarrow \Ygngl,\\
&\hspace{8pt}\bar{t}_{ij}(u) \longmapsto \varphi(u)t_{ij}(u).
\end{aligned}\end{equation*}

Composition of an evaluation homomorphism $\ev_a$ with $\mathfrak{i}$ gives a homomorphisms of associative algebras $\Ygn \longrightarrow \mathrm{U}\mathfrak{gl}_{2}$ which we  denote by $\overline{\ev}_a$.

{There is a family of shift automorpisms $\tau_a$ of $\Ygngl$ parameterized by a complex number $a$,
\begin{equation}\label{eq:Ygn_shift}
    \begin{aligned}        &\hspace{0pt}\tau_{a}:\Ygngl  \longrightarrow \Ygngl,\\
    &\hspace{25pt}t_{ij}(u) \longmapsto t_{ij}(u+a).\\
    \end{aligned}\end{equation} 
Note that the ideal $(\mathrm{qdet}(u)-1)$ is preserved by $\tau_{a}$, therefore this automorphism descends to an automorphism of $\Ygn$ which we  denote by $\bar{\tau}_{a}$.

 \medskip

To answer questions about submodule structure of a representation of $\Ygn$ computationally, it is convenient to use finite sets of generators.
\begin{prop}
For any $i,j\in\{1,2\}$, the set $\{\bar{t}_{11}^{(1)}-\bar{t}_{22}^{(1)},\bar{t}_{12}^{(1)}, \bar{t}_{21}^{(1)}, \bar{t}_{ij}^{(2)}\}$ generates $\Ygn$.
\end{prop}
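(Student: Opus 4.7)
The plan is to prove by induction on $r\ge 1$ that every $\bar{t}_{kl}^{(r)}$ lies in the subalgebra $A\subset\Ygn$ generated by the four given elements. The main engine is the RTT currents relation $(u-v)[\bar{t}_{ij}(u),\bar{t}_{kl}(v)]=\bar{t}_{kj}(u)\bar{t}_{il}(v)-\bar{t}_{kj}(v)\bar{t}_{il}(u)$, which in modes (with the convention $\bar{t}_{ij}^{(0)}=\delta_{ij}$) becomes
\begin{equation*}
[\bar{t}_{ij}^{(a+1)},\bar{t}_{kl}^{(b)}]-[\bar{t}_{ij}^{(a)},\bar{t}_{kl}^{(b+1)}]=\bar{t}_{kj}^{(a)}\bar{t}_{il}^{(b)}-\bar{t}_{kj}^{(b)}\bar{t}_{il}^{(a)}.
\end{equation*}
The second ingredient is the relation $\mathrm{qdet}(u)=1$ in $\Ygn$, whose $u^{-r}$ coefficient expresses $\bar{t}_{11}^{(r)}+\bar{t}_{22}^{(r)}$ as a polynomial in generators of levels strictly less than $r$.

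Setting $a=0$ in the mode identity gives $[\bar{t}_{ij}^{(1)},\bar{t}_{kl}^{(b)}]=\delta_{kj}\bar{t}_{il}^{(b)}-\delta_{il}\bar{t}_{kj}^{(b)}$, so the $\sL$-triple $\{h,e,f\}=\{\bar{t}_{11}^{(1)}-\bar{t}_{22}^{(1)},\bar{t}_{12}^{(1)},\bar{t}_{21}^{(1)}\}\subset A$ acts on each level-$b$ slice as an adjoint representation on $\{\bar{t}_{12}^{(b)},\bar{t}_{11}^{(b)}-\bar{t}_{22}^{(b)},\bar{t}_{21}^{(b)}\}$ plus a trivial summand $\bar{t}_{11}^{(b)}+\bar{t}_{22}^{(b)}$ already determined by $\mathrm{qdet}$. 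This settles the base case $r=2$: applying $\mathrm{ad}\,e$ and $\mathrm{ad}\,f$ to the given $\bar{t}_{ij}^{(2)}\in A$ produces the entire adjoint triplet at level two regardless of which $(i,j)$ was chosen, and the trace $\bar{t}_{11}^{(2)}+\bar{t}_{22}^{(2)}$ is supplied by the $u^{-2}$ coefficient of $\mathrm{qdet}(u)-1$.

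For the inductive step with $n\ge 2$, assume $\bar{t}_{kl}^{(r)}\in A$ for all $r\le n$. Setting $a=1$, $b=n$ in the mode identity yields
\begin{equation*}
\delta_{kj}\bar{t}_{il}^{(n+1)}-\delta_{il}\bar{t}_{kj}^{(n+1)}=[\bar{t}_{ij}^{(2)},\bar{t}_{kl}^{(n)}]-\bar{t}_{kj}^{(1)}\bar{t}_{il}^{(n)}+\bar{t}_{kj}^{(n)}\bar{t}_{il}^{(1)},
\end{equation*}
whose right-hand side lies in $A$ by the inductive hypothesis. Three index choices isolate the adjoint-component generators at level $n+1$: $(i,j,k,l)=(1,2,2,2)$ gives $\bar{t}_{12}^{(n+1)}$, $(2,1,1,1)$ gives $\bar{t}_{21}^{(n+1)}$, and $(1,2,2,1)$ gives $\bar{t}_{11}^{(n+1)}-\bar{t}_{22}^{(n+1)}$; the trace $\bar{t}_{11}^{(n+1)}+\bar{t}_{22}^{(n+1)}$ is then read off the $u^{-(n+1)}$ coefficient of $\mathrm{qdet}(u)-1=0$, which involves only generators of levels $\le n$. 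Solving for $\bar{t}_{11}^{(n+1)}$ and $\bar{t}_{22}^{(n+1)}$ individually closes the induction.

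The only subtlety is the uniformity of the base case in the chosen pair $(i,j)$: in the weight-zero cases $(i,j)\in\{(1,1),(2,2)\}$ the vectors $\mathrm{ad}\,e(\bar{t}_{ii}^{(2)})=\mp\bar{t}_{12}^{(2)}$ and $\mathrm{ad}\,f(\bar{t}_{ii}^{(2)})=\pm\bar{t}_{21}^{(2)}$ land in the weight-$\pm 2$ parts, after which $\bar{t}_{11}^{(2)}-\bar{t}_{22}^{(2)}$ is recovered by combining $\bar{t}_{ii}^{(2)}$ with the trace produced by $\mathrm{qdet}$. Once this case check is in place, everything else is a routine application of the mode identity.
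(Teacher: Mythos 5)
Your proof is correct and follows essentially the same strategy as the paper: induct on the level $r$ using the RTT mode identity together with the $\mathrm{qdet}(u)=1$ relation to supply the trace at each level, with the $\sL$-triple commutators isolating the individual entries. The only difference is organizational — you frame the level-$b$ slice as adjoint-plus-trivial under $\sL$ and pick three targeted index specializations of the $a=1$ mode identity, whereas the paper unrolls a single commutator $[\bar{t}_{12}^{(2)},\bar{t}_{21}^{(N)}]$ and then applies level-one commutators, but the underlying computations are the same.
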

\begin{proof} Denote $Y_{ij}$ the subalgebra of $\Ygn$ generated by $\{\bar{t}_{11}^{(1)}-\bar{t}_{22}^{(1)},\bar{t}_{12}^{(1)}, \bar{t}_{21}^{(1)}, \bar{t}_{ij}^{(2)}\}$.

Write $\mathrm{qdet}(u)=\sum_{s=1}^\infty \mathrm{qdet}_s\  u^{-s}$. Then
\begin{equation}\label{qdet mode}
\mathrm{qdet}_s =\bar{t}_{11}^{(m)} + \bar{t}_{22}^{(m)} + \textit{lower terms}.
\end{equation}

    Let $i = j$.  From \eqref{qdet mode} with $m=2$, we conclude that $\bar{t}_{11}^{(2)}\in Y_{ij}$ and $\bar{t}_{22}^{(2)}\in Y_{ij}$. Observe that
    \begin{subequations}
    \begin{equation*}
         [\bar{t}_{12}^{(1)}, \bar{t}_{22}^{(2)}] = [\bar{t}_{12}^{(1)}, \bar{t}_{22}^{(2)}] - [\bar{t}_{12}^{(0)}, \bar{t}_{22}^{(3)}] = \bar{t}_{22}^{(0)}\bar{t}_{12}^{(2)} - \bar{t}_{22}^{(2)}\bar{t}_{12}^{(0)} = \bar{t}_{12}^{(2)} - \bar{t}_{22}^{(2)},
    \end{equation*}
    \begin{equation*}
         [\bar{t}_{21}^{(1)}, \bar{t}_{22}^{(2)}] = [\bar{t}_{21}^{(1)}, \bar{t}_{22}^{(2)}] - [\bar{t}_{21}^{(0)}, \bar{t}_{22}^{(3)}] = \bar{t}_{21}^{(0)}\bar{t}_{22}^{(2)} - \bar{t}_{21}^{(2)}\bar{t}_{22}^{(0)} = \bar{t}_{22}^{(2)} - \bar{t}_{21}^{(2)}.
    \end{equation*}
    \end{subequations}
    Hence $\bar{t}_{12}^{(2)}\in Y_{ij}$ and $\bar{t}_{21}^{(2)}\in Y_{ij}$.

    Let $i\neq j$. Without loss of generality, we assume $i = 1, j = 2$. Then 
    \begin{equation*}
        [\bar{t}_{21}^{(1)}, \bar{t}_{12}^{(2)}] = [\bar{t}_{21}^{(1)}, \bar{t}_{12}^{(2)}] - [\bar{t}_{21}^{(0)}, \bar{t}_{12}^{(3)}] = \bar{t}_{11}^{(0)}\bar{t}_{22}^{(2)} - \bar{t}_{22}^{(0)}\bar{t}_{11}^{(2)} = \bar{t}_{11}^{(2)} - \bar{t}_{22}^{(2)}.
    \end{equation*}
   From \eqref{qdet mode} with $m=2$, we conclude that $\bar{t}_{11}^{(2)}\in Y_{ij}$ and $\bar{t}_{22}^{(2)}\in Y_{ij}$. It follows that $\bar{t}_{21}^{(2)}\in Y_{ij}$.

Thus, in all cases we have $\bar{t}_{kl}^{(2)}\in Y_{ij}$ for all $k,l,i,j$.
    Suppose $\bar{t}_{kl}^{(s)}\in Y_{ij}$ for all $s \leq N$. Then 
    \begin{multline*}
        [\bar{t}_{12}^{(2)}, \bar{t}_{21}^{(N)}] = [\bar{t}_{12}^{(1)}, \bar{t}_{21}^{(N+1)}] + \bar{t}_{22}^{(1)}\bar{t}_{11}^{(N)} - \bar{t}_{22}^{(N)}\bar{t}_{11}^{(1)} = \left([\bar{t}_{12}^{(0)}, \bar{t}_{21}^{(N+2)}] + \bar{t}_{22}^{(0)}\bar{t}_{11}^{(N+1)} - \bar{t}_{22}^{(N+1)}\bar{t}_{11}^{(0)}\right) +\\+ \bar{t}_{22}^{(1)}\bar{t}_{11}^{(N)} - \bar{t}_{22}^{(N)}\bar{t}_{11}^{(1)} 
        = \bar{t}_{11}^{(N+1)} - \bar{t}_{22}^{(N+1)} + \textit{lower terms}.
    \end{multline*}
    
    Using \eqref{qdet mode} with $m=N+1$, $\bar{t}_{11}^{(N+1)}, \bar{t}_{22}^{(N+1)}\in Y_{ij}$. Then
    \begin{subequations}
    \begin{equation*}
        [\bar{t}_{12}^{(1)}, \bar{t}_{22}^{(N+1)}] = [\bar{t}_{12}^{(1)}, \bar{t}_{22}^{(N+1)}] - [\bar{t}_{12}^{(0)}, \bar{t}_{22}^{(N+2)}] = \bar{t}_{22}^{(0)}\bar{t}_{12}^{(N+1)} - \bar{t}_{22}^{(N+1)}\bar{t}_{12}^{(0)} = \bar{t}_{12}^{(N+1)},
    \end{equation*}
    \begin{equation*}
        [\bar{t}_{21}^{(1)}, \bar{t}_{22}^{(N+1)}] = [\bar{t}_{21}^{(1)}, \bar{t}_{22}^{(N+1)}] - [\bar{t}_{21}^{(0)}, \bar{t}_{22}^{(N+2)}] = \bar{t}_{21}^{(0)}\bar{t}_{22}^{(N+1)} - \bar{t}_{21}^{(N+1)}\bar{t}_{22}^{(0)} = -\bar{t}_{21}^{(N+1)}.
    \end{equation*}
    \end{subequations}
    Therefore, $\bar{t}_{kl}^{(N+1)}\in Y_{ij}$. 
    \end{proof}

Categories of finite-dimensional representations of $\Ygn$ and of $\Uqa$ are equivalent, see \cite{gautam2016yangians}. In the paper we mainly use $\Uqa$. We believe that the corresponding consideration for the Yangian are essentially equivalent. However, for brute force computer computations, it is often advantageous to use $\Ygn$.

\section{Known results on representations of $\Uqa$.}\label{sec:basic_repth}
We collect some useful facts about $\Uqa$-modules and supply some proofs.
\subsection{Properties of modules over a Hopf algebra.}\label{subsec:duals}

    We use properties of $\Uqa$-modules induced by the Hopf algebra structure.
    
    Let $\rH$ be a Hopf algebra over $\mbC$ with $\Delta, \eta, \epsilon, S$ being comultiplication, unit, counit and antipode respectively. All modules of $\rH$ we consider are assumed to be finite-dimensional.

    The trivial module of $\rH$ is vector space $\mbC$ with $a\in \rH$ acting on $\mbC$ by $\epsilon(a)$. Let $\rV$ be an $\rH$-module.   Since $(\epsilon \otimes \mathrm{id}) \circ \Delta = (\mathrm{id} \otimes \epsilon) \circ \Delta = \mathrm{id}$,  for an $\rH$-module $\rV$, we have $\rV \otimes \mbC \cong \mbC \otimes \rV \cong \rV$ as $\rH$-modules.

    Denote by $\rho_{\rV}$ the corresponding homomorphism $\rH\to\mathrm{End}_{\mbC}\left(\rV\right)$. The left dual module $\rV^{*}$ is the dual vector space on which $a\in \rH$ acts by $\rho_{\rV}(S(a))^*$. The right dual module ${}^{*}\rV$ is the dual vector space on which $a\in \rH$ acts by $\rho_{\rV}(S^{-1}(a))^*$. 

    We have $({}^*\rV)^*\cong {}^*(\rV^*)\cong \rV$. We often call left dual module simply dual module.
    
    The following properties are standard exercises in the theory of Hopf algberas. 

    \begin{lemma}\label{lemma:hom_move}
    Let $\rV$, $\rW$ and $\rU$ be $\rH$-modules. Then there are canonical isomorphisms of $\rH$-modules
    \begin{align}
         \left( \rV \otimes \rW \right)^{*} &\cong \rW^* \otimes \rV^{*},\\  {}^*\hspace{-2pt}\left(\rV \otimes \rW \right) &\cong {}^*\rW \otimes {}^{*}\rV.\hspace{15pt}{}
    \end{align}
In addition there are canonical isomorphisms vector spaces
    \begin{align}
        \mathrm{Hom}_{\rH}\left(\rV\otimes \rW, \rU\right) &\cong \mathrm{Hom}_{\rH}\left(\rV, \rU\otimes \rW^{*}\right),\label{eq:dual_product}\\
       \mathrm{Hom}_{\rH}\left( \rW \otimes \rV, \rU \right) &\cong \mathrm{Hom}_{\rH} \left( \rV, {}^{*}\rW\otimes \rU \right).\\       
    \end{align} 
\end{lemma}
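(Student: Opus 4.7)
The plan is to establish each isomorphism as a canonical map of vector spaces, and then check that it intertwines the $\rH$-actions. The only Hopf algebra inputs needed are that $\Delta$ is an algebra homomorphism, $S$ is an anti-algebra homomorphism, and the identities
\begin{equation*}
\Delta\circ S=(S\otimes S)\circ\Delta^{op},\qquad (\epsilon\otimes\mathrm{id})\circ\Delta=(\mathrm{id}\otimes\epsilon)\circ\Delta=\mathrm{id}.
\end{equation*}

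For the first pair of isomorphisms, I would use the canonical vector-space map $\phi\colon \rW^*\otimes \rV^*\to(\rV\otimes \rW)^*$ defined by $\phi(g\otimes f)(v\otimes w)=f(v)g(w)$. Writing $\Delta(a)=\sum a_{(1)}\otimes a_{(2)}$, the action of $a$ on $(\rV\otimes \rW)^*$ is $\rho_{\rV\otimes \rW}(S(a))^*$, and the identity $\Delta(S(a))=\sum S(a_{(2)})\otimes S(a_{(1)})$ shows that, under $\phi$, this matches the tensor product action on $\rW^*\otimes \rV^*$ coming from the standard $\Delta$. The right-dual version is the same computation with $S$ replaced by $S^{-1}$, using $\Delta\circ S^{-1}=(S^{-1}\otimes S^{-1})\circ\Delta^{op}$.

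For the two Hom adjunctions, the plan is to write down the standard unit/counit style bijections explicitly. Pick a basis $\{w_i\}$ of $\rW$ with dual basis $\{w^i\}$ and define
\begin{equation*}
\Phi\colon \Hom_{\mbC}(\rV\otimes \rW,\rU)\longrightarrow \Hom_{\mbC}(\rV,\rU\otimes \rW^*),\qquad \Phi(f)(v)=\sum_i f(v\otimes w_i)\otimes w^i,
\end{equation*}
with inverse $\Psi(g)(v\otimes w)=\sum_i (g(v))_i\, w^i(w)$ (shorthand for evaluating the $\rW^*$-factor of $g(v)$ against $w$). The task is to show that $f$ is $\rH$-linear if and only if $\Phi(f)$ is, which is a direct check using $\Delta$, $S$, and the counit axiom; the evaluation map $\rW\otimes \rW^*\to\mbC$ and coevaluation $\mbC\to \rU\otimes \rW^*\otimes \rW$ that implicitly appear here are $\rH$-linear precisely by the definition of the dual action and the antipode axioms. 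The second adjunction is completely parallel, using the right dual ${}^*\rW$ and the coevaluation $\mbC\to \rW\otimes{}^*\rW$; this is where $S^{-1}$ (rather than $S$) is needed to make the relevant evaluation $\rH$-linear.

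The only subtle point, and the one I would write out most carefully, is the bookkeeping of whether $S$ or $S^{-1}$ appears: pairing $\rW$ on the right (as in $\rV\otimes \rW$) calls for $\rW^*$ via the left-dual $S$, while pairing $\rW$ on the left calls for ${}^*\rW$ via $S^{-1}$. Once this convention is fixed, each verification reduces to a single application of the defining antipode identity $\sum S(a_{(1)})a_{(2)}=\sum a_{(1)}S(a_{(2)})=\epsilon(a)\cdot 1$, so no serious obstacle arises beyond this coherence check.
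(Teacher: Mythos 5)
Your proposal follows the same route as the paper: the explicit map $\phi$ (the paper's $\varphi$) for the two dual-of-tensor formulas, checked via $\Delta\circ S=(S\otimes S)\circ\Delta^{op}$, and evaluation/coevaluation for the two Hom adjunctions, with your basis sums $\sum_i f(v\otimes w_i)\otimes w^i$ being exactly the coordinate form of the paper's abstract coevaluation $\iota_\rW\colon\mbC\to\rW\otimes\rW^*$. One small prose slip worth fixing: for the left dual $\rW^*$ the $\rH$-linear evaluation is $\rW^*\otimes\rW\to\mbC$, not $\rW\otimes\rW^*\to\mbC$, and the coevaluation is $\mbC\to\rW\otimes\rW^*$ (the $\rU$ you wrote shouldn't appear) — this is in fact already consistent with your displayed formulas for $\Phi$ and $\Psi$, so the argument itself is unaffected.
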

We supply the proof of Lemma \ref{lemma:hom_move} in the Appendix \ref{app:etc}. 

The following corollary is immediate.
\begin{cor}\label{cor:hom_switch}
    Let $\rV$ and $\rW$ be $\rH$-modules. Then there are canonical isomorphisms of vector spaces
    \begin{equation}\label{eq:triv_mod_eq}
        \mathrm{Hom}_{\rH}\left(\rV, \rW\right) \cong  \mathrm{Hom}_{\rH}\left(\rW^{*}, \rV^{*}\right) \cong \mathrm{Hom}_{\rH}\left(\mbC, \rW\otimes \rV^{*}\right).
    \end{equation}
    \qed
\end{cor}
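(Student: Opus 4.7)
The plan is to deduce both isomorphisms directly from Lemma \ref{lemma:hom_move} together with the observations about the trivial module made immediately before it. The rightmost isomorphism is essentially a one-line specialization, while the leftmost isomorphism reduces to routine functoriality of dualization once one is careful about the Hopf action.

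For the isomorphism $\mathrm{Hom}_\rH(\rV, \rW) \cong \mathrm{Hom}_\rH(\mbC, \rW \otimes \rV^*)$, I would specialize \eqref{eq:dual_product} by setting the ``$\rV$'' there to $\mbC$, its ``$\rW$'' to our $\rV$, and its ``$\rU$'' to our $\rW$, obtaining
$$\mathrm{Hom}_\rH(\mbC \otimes \rV, \rW) \cong \mathrm{Hom}_\rH(\mbC, \rW \otimes \rV^*),$$
and then identify $\mbC \otimes \rV \cong \rV$ using the counit axiom as already recorded in the excerpt.

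For the isomorphism $\mathrm{Hom}_\rH(\rV, \rW) \cong \mathrm{Hom}_\rH(\rW^*, \rV^*)$, I would define $\Phi(f) = f^t$, where $f^t(\phi) = \phi \circ f$ for $\phi \in \rW^*$. A direct computation using the definition of the dual action $(a\cdot \phi)(v) = \phi(S(a)v)$ shows $f^t$ is $\rH$-linear whenever $f$ is. To produce an inverse, given an $\rH$-linear $g \colon \rW^* \to \rV^*$, I would apply the right-dual functor to get ${}^*g \colon {}^*(\rV^*) \to {}^*(\rW^*)$ and then compose with the canonical $\rH$-linear identifications ${}^*(\rV^*) \cong \rV$ and ${}^*(\rW^*) \cong \rW$ noted in the excerpt. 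Verifying that this construction is a two-sided inverse to $\Phi$ is a naturality check.

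The only potential pitfall is the temptation to identify $\rV$ with $\rV^{**}$ via the standard vector-space map $v \mapsto \mathrm{ev}_v$, which is not $\rH$-linear unless $S^2 = \mathrm{Id}$; passing through the right dual ${}^*(\rV^*) \cong \rV$ sidesteps this issue. Beyond that, no step is substantive, which is why the statement can be billed as ``immediate.''
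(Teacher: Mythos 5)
Your argument is correct. The right-hand isomorphism is exactly the intended specialization of \eqref{eq:dual_product} with the first argument taken to be $\mbC$, plus the counit identification $\mbC\otimes\rV\cong\rV$; nothing to add there.

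For the left-hand isomorphism you build the transpose map $\Phi(f)=f^t$ by hand and produce its inverse by applying the right-dual functor together with ${}^*(\rV^*)\cong\rV$. This is correct, and your caution about $\rV^{**}$ versus ${}^*(\rV^*)$ is well placed (for $\Uqa$ one has $S^2=\mathrm{ad}\,K$, so $\rV^{**}\cong\rV$ only after a twist). However, the paper's intended derivation almost certainly stays entirely inside Lemma \ref{lemma:hom_move}: apply the fourth isomorphism of that lemma with the inner object equal to $\mbC$ to get
$\Hom_{\rH}(\rW^*,\rV^*)\cong\Hom_{\rH}(\rW^*\otimes\mbC,\rV^*)\cong\Hom_{\rH}\bigl(\mbC,{}^*(\rW^*)\otimes\rV^*\bigr)\cong\Hom_{\rH}(\mbC,\rW\otimes\rV^*)$,
which matches the middle object you already produced. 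That route is slightly more economical and shows why the paper marks the corollary as immediate, but your explicit transpose construction is a valid and arguably more self-contained alternative; the two are really the same adjunction unwound at different levels.
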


We also use the following standard fact,
\begin{lemma}\label{lemma:exact_functors}
    Let $\rM$ be an $\rH$-module.
    \begin{itemize}
    \item Functor $\Hom(\rM, \cdot)$ is exact from the left.
    \item Functors $\rM\otimes \cdot$ and $\cdot\otimes \rM$ are exact 
    \end{itemize}
    In other words, given a short exact sequence of $\rH$-modules
    $$
     \rU \hookrightarrow \rV \twoheadrightarrow \rW,
    $$
    there are exact sequences
    \begin{align*}
          \Hom(\rM, \rU) &\hookrightarrow \Hom(\rM, \rV) \rightarrow \Hom(\rM, \rW),\\
         \rM\otimes \rU &\hookrightarrow \rM\otimes \rV \twoheadrightarrow \rM\otimes \rW,\\
         \rU \otimes \rM &\hookrightarrow  \rV\otimes \rM \twoheadrightarrow \rW \otimes \rM.
    \end{align*}
    \qed
\end{lemma}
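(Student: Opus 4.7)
The plan is to observe that all three claims are essentially statements about exactness at the level of underlying vector spaces, since $\rH$-linearity is preserved automatically once one fixes the natural candidate maps. Because $\mbC$ is a field, every vector space is flat, so tensoring with any finite-dimensional vector space (over $\mbC$) is an exact functor on vector spaces; and $\Hom_{\mbC}(\rM,-)$ is always left exact.

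For the $\Hom$ statement, I would take $\iota:\rU\hookrightarrow\rV$ and $\pi:\rV\twoheadrightarrow\rW$ in the given short exact sequence and define the induced maps $\iota_*:\Hom_{\rH}(\rM,\rU)\to\Hom_{\rH}(\rM,\rV)$ and $\pi_*:\Hom_{\rH}(\rM,\rV)\to\Hom_{\rH}(\rM,\rW)$ by postcomposition. Injectivity of $\iota_*$ follows from injectivity of $\iota$. For exactness in the middle, $\pi_*\iota_*=0$ is immediate from $\pi\iota=0$; conversely, if $\phi\in\Hom_{\rH}(\rM,\rV)$ satisfies $\pi\phi=0$ then $\phi(\rM)\subseteq\ker\pi=\iota(\rU)$, so $\phi$ factors as $\iota\tilde\phi$ for a unique $\mbC$-linear $\tilde\phi:\rM\to\rU$, which is automatically $\rH$-linear because $\iota$ is an $\rH$-linear injection.

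For the tensor product statements, it suffices to handle $\rM\otimes\cdot$; the case $\cdot\otimes\rM$ is symmetric. The maps $\Id_{\rM}\otimes\iota$ and $\Id_{\rM}\otimes\pi$ are $\rH$-linear with respect to $\Delta$, so I only need to verify exactness as a sequence of vector spaces. Since $\rM$ is free (hence flat) as a $\mbC$-module, tensoring over $\mbC$ preserves short exact sequences: $\Id_{\rM}\otimes\iota$ is injective, $\Id_{\rM}\otimes\pi$ is surjective, and $\ker(\Id_{\rM}\otimes\pi)=\rM\otimes\iota(\rU)=\mathrm{im}(\Id_{\rM}\otimes\iota)$.

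There is no real obstacle here; the lemma is a routine packaging of standard facts about modules over a Hopf algebra over a field, and the only minor care needed is to check that the set-theoretic factorization producing $\tilde\phi$ in the $\Hom$ part is indeed $\rH$-linear, which follows because $\iota$ is an $\rH$-linear monomorphism and therefore reflects the $\rH$-action on its image.
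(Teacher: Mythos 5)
Your proof is correct. The paper itself provides no proof — the lemma is stated with an immediate \qed as a standard fact about modules over a Hopf algebra over a field — and your argument (left exactness of $\Hom_{\rH}(\rM,-)$ via factoring through $\ker\pi$, plus flatness of $\rM$ over $\mbC$ for the tensor functors) is exactly the routine verification one would supply.
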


Given an algebra automorphism $\varphi:\rH \longrightarrow \rH$ and an $\rH$ module $\rV$, let $\rV_{\varphi}$ be the module twisted by $\varphi$. In other words, $\rV_{\varphi} = \rV$ as a vector space and any $a\in \rH$ acts on $\rV_{\varphi}$  by $\rho_{\rV}(\varphi(a))$.

The following lemma is obvious.

\begin{lemma}\label{lemma:twist} Let $\rV, \rW$ be  $\rH$-modules and $\varphi$ a Hopf algebra automorphism of $\rH$. Then 
\begin{align*}
    (\rV \otimes \rW)_{\varphi} &\cong \rV_{\varphi} \otimes \rW_{\varphi}, \\
    (\rV^*)_{\varphi} &\cong (\rV_{\varphi})^*, \\
    \Hom_{\rH}\lb \rV_{\varphi}, \rW_{\varphi}\rb &\cong \Hom_{\rH}\lb \rV, \rW\rb.
\end{align*}
\qed
\end{lemma}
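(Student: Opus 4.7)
The plan is to prove each of the three claimed isomorphisms by taking the identity map on the underlying vector spaces and then verifying $\rH$-linearity directly from the Hopf-algebra axioms satisfied by $\varphi$. Since $\varphi$ is a Hopf algebra automorphism, it is compatible with $\Delta$ and $S$, i.e.\ $(\varphi\otimes\varphi)\circ\Delta=\Delta\circ\varphi$ and $S\circ\varphi=\varphi\circ S$. These two facts will do all the work.

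For the first isomorphism, I would unfold the action on $(\rV\otimes\rW)_\varphi$: for $a\in\rH$, the action is $\rho_{\rV\otimes\rW}(\varphi(a))=(\rho_\rV\otimes\rho_\rW)(\Delta(\varphi(a)))$. Using $\Delta\circ\varphi=(\varphi\otimes\varphi)\circ\Delta$ and writing $\Delta(a)=\sum a_{(1)}\otimes a_{(2)}$, this becomes $\sum \rho_\rV(\varphi(a_{(1)}))\otimes \rho_\rW(\varphi(a_{(2)}))$, which is exactly the action on $\rV_\varphi\otimes \rW_\varphi$. Hence $\mathrm{id}$ is an $\rH$-module isomorphism.

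For the second isomorphism, the action of $a$ on $(\rV^*)_\varphi$ is $\rho_\rV(S(\varphi(a)))^*=\rho_\rV(\varphi(S(a)))^*=\rho_{\rV_\varphi}(S(a))^*$, which is the action on $(\rV_\varphi)^*$; again $\mathrm{id}$ works. For the third, I note that $f\colon \rV\to\rW$ is $\rH$-linear for the twisted structures iff $f\circ\rho_\rV(\varphi(a))=\rho_\rW(\varphi(a))\circ f$ for all $a\in\rH$. Since $\varphi$ is a bijection of $\rH$, the condition $f\circ\rho_\rV(b)=\rho_\rW(b)\circ f$ for all $b\in\rH$ is equivalent, so $\Hom_\rH(\rV_\varphi,\rW_\varphi)=\Hom_\rH(\rV,\rW)$ as subspaces of $\Hom_\mbC(\rV,\rW)$.

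There is essentially no obstacle: all three statements reduce to chasing definitions once the compatibility of $\varphi$ with $\Delta$ and $S$ is invoked. The only minor point to flag is the use of $S\circ\varphi=\varphi\circ S$, which follows automatically from $\varphi$ being a bialgebra automorphism (the antipode is unique), so no extra hypothesis is needed. The result is therefore stated cleanly as a ``standard exercise'' and the write-up can be reduced to a single short paragraph per item.
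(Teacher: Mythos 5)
Your proof is correct and is exactly the direct definition-chase that the paper implicitly takes for granted; the paper states the lemma as ``obvious'' with only a \qed and no written argument. Your additional observation that $S\circ\varphi=\varphi\circ S$ comes for free from uniqueness of the antipode is a sensible thing to flag, and the rest of the argument — including the fact that bijectivity of $\varphi$ makes the third isomorphism a literal equality of subspaces of $\Hom_{\mbC}(\rV,\rW)$ — is accurate.
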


We end this section with another simple fact.
\begin{lemma}\label{prop:aaut_hom_tp}
    Let $\hat{\omega}$ be an isomorphism of Hopf algebras $(\rH, \Delta)$ and $(\rH, \Delta^{\textit{op}})$. 
    Let $\rV,\rW$ be $\rH$-modules.
    Then
    \begin{align}
    (\rV\otimes \rW)_{\hat{\omega}} &\cong \rW_{\hat{\omega}} \otimes \rV_{\hat{\omega}},\label{eq:aaut_hom_tp_1}\\
        \Hom_{\rH}(\rV_{\hat{\omega}}, \rW_{\hat{\omega}}) &\cong \Hom_{\rH}(\rV, \rW)\label{eq:aaut_hom_tp_2}.
    \end{align}
\end{lemma}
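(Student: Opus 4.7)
The plan is to verify both claims by exhibiting explicit intertwiners on the underlying vector spaces; neither requires more than chasing the defining property of $\hat{\omega}$.

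For \eqref{eq:aaut_hom_tp_1}, I propose the flip map $P:\rV\otimes \rW \to \rW\otimes \rV$, $P(v\otimes w)=w\otimes v$, as the candidate isomorphism. The input is that $\hat{\omega}$ intertwines $\Delta$ and $\Delta^{\text{op}}$, i.e.\ $\Delta^{\text{op}}\circ\hat{\omega}=(\hat{\omega}\otimes\hat{\omega})\circ\Delta$. Writing $\Delta(a)=\sum a_{(1)}\otimes a_{(2)}$ in Sweedler notation, this gives $\Delta(\hat{\omega}(a))=\sum \hat{\omega}(a_{(2)})\otimes \hat{\omega}(a_{(1)})$. Then on $(\rV\otimes \rW)_{\hat{\omega}}$ the action is
\begin{equation*}
a\cdot(v\otimes w)=\sum \hat{\omega}(a_{(2)})v\otimes \hat{\omega}(a_{(1)})w,
\end{equation*}
whereas on $\rW_{\hat{\omega}}\otimes \rV_{\hat{\omega}}$ it is
\begin{equation*}
a\cdot(w\otimes v)=\sum \hat{\omega}(a_{(1)})w\otimes \hat{\omega}(a_{(2)})v.
\end{equation*}
Applying $P$ to the first expression produces the second, so $P$ is $\rH$-equivariant and is the desired isomorphism.

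For \eqref{eq:aaut_hom_tp_2}, I would take the identity map on the underlying vector space of $\Hom_{\mbC}(\rV,\rW)$. Only the algebra structure of $\hat{\omega}$ is relevant here: a linear map $f$ satisfies $f(\hat{\omega}(a)v)=\hat{\omega}(a)f(v)$ for all $a\in\rH$ iff $f(bv)=bf(v)$ for all $b\in \rH$, since $\hat{\omega}$ is a bijection on $\rH$. Hence $\rH$-linearity with respect to the original actions is equivalent to $\rH$-linearity with respect to the twisted actions, and the identity on $\Hom_{\mbC}(\rV,\rW)$ restricts to the claimed bijection. (This is exactly the argument behind the Hom statement in Lemma \ref{lemma:twist}, which goes through verbatim because it uses only the algebra part of the automorphism.)

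No step here is really the hard part; the only place one might slip is in tracking how Sweedler components get swapped when passing through $\hat{\omega}$, so the cleanest presentation is to record the identity $\Delta\circ\hat{\omega}=\tau\circ(\hat{\omega}\otimes\hat{\omega})\circ\Delta$ (with $\tau$ the swap) as an explicit lemma before the main calculation and then simply read off both statements.
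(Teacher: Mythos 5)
Your proof is correct and is exactly the "straightforward" verification the paper alludes to in referencing Lemma \ref{lemma:twist}: for \eqref{eq:aaut_hom_tp_1} the flip map $P$ intertwines the two twisted actions because $\Delta\circ\hat{\omega}=\tau\circ(\hat{\omega}\otimes\hat{\omega})\circ\Delta$, and for \eqref{eq:aaut_hom_tp_2} one only uses that $\hat{\omega}$ is an algebra bijection. This matches the paper's intent precisely, just with the Sweedler-notation computation written out.
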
}
\begin{proof}
Equations \eqref{eq:aaut_hom_tp_1}, \eqref{eq:aaut_hom_tp_2} are straightforward, cf. Lemma \ref{lemma:twist}.
\end{proof}

\subsection{Duals and shifts.} The dual of an evaluation module is an evaluation module with shifted evaluation parameter.
\begin{lemma}\label{lemma:dual}
    We have
\begin{equation}\label{eq:eval_dual}
    [\alpha, \beta]^* \cong [\alpha+2, \beta+2].
\end{equation}More generally,
\begin{equation*} 
([\alpha_1,\beta_1][\al_2,\bt_2]\dots [\al_n, \bt_n])^*\cong [\alpha_n+2,\beta_n+2]\dots [\al_2+2,\bt_2+2] [\al_1+2, \bt_1+2].
\end{equation*}
\end{lemma}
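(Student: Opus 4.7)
The plan is to prove the single-module identity $[\alpha,\beta]^*\cong[\alpha+2,\beta+2]$ by an explicit computation, and then deduce the tensor-product identity by induction using Lemma \ref{lemma:hom_move}. The inductive step is immediate: iterating $(\rV\otimes\rW)^*\cong\rW^*\otimes\rV^*$ and substituting \eqref{eq:eval_dual} into each factor produces the claimed reversed product with every parameter shifted by $+2$.

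For the single-module case, I would realize $\rV=[\alpha,\beta]$ as $\mathrm{L}_m$ with the $\Uqa$-action coming from $\ev_a$, where $a=(\alpha+\beta)/2$ and $m=(\beta-\alpha)/2+1$. The $\Uqa$-action on $\rV^*$ is by transposes of $\rho_{\rV}\circ S$; using the antipode formulas in \eqref{eq:Uqa_Com_An_Coun} together with \eqref{eq:Uqa_eval_hom}, on $\rV$ these operators are $-q^a fK$, $-q^{-a}K^{-1}e$, $-eK^{-1}$, $-Kf$ and $K^{\mp1}$ for $e_0$, $f_0$, $e_1$, $f_1$, $K^{\pm 1}$ respectively. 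Since duality exchanges highest and lowest weight vectors of $\mathrm{L}_m$, I would take the reversed-and-rescaled basis $w_i=c_iv_{m-i}^*$ of $\rV^*$, with constants $c_i$ fixed recursively by imposing $f\cdot w_i=[i+1]_q w_{i+1}$; a direct check then confirms the remaining $\Uq$-relations, so $\{w_i\}$ is the standard basis of $\mathrm{L}_m$. It remains to verify that $e_0\cdot w_i=q^{a+2}[i+1]_q w_{i+1}$ and $f_0\cdot w_i=q^{-(a+2)}[m-i+1]_q w_{i-1}$, which are exactly the defining relations of the evaluation module with parameter $a+2=\tfrac{1}{2}((\alpha+2)+(\beta+2))$. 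Substituting into the transposed actions, the $K$-eigenvalue encountered on $v_{m-i-1}^*$ combines with the scalar $q^a$ coming from $\ev_a$ to produce exactly $q^{a+2}$, and the signs cancel against the recursion defining $c_i$.

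The main obstacle is purely bookkeeping: tracking the constants $c_i$ and confirming that the $K$-twist appearing in $S(e_0)=-e_0K$ and $S(f_0)=-K^{-1}f_0$ shifts the evaluation parameter by $+2$ rather than $-2$ or $0$. This reduces to matching a single factor of $q^{\pm 2}$ produced by the transposed $K$ against the standard $\mathrm{L}_m$-action, together with a careful sign count; all ingredients are elementary and the calculation is short once the basis $\{w_i\}$ is in place.
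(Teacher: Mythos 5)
Your proposal is correct and follows essentially the same route as the paper: use the antipode formulas together with $\ev_a$ to compute how $e_0, f_0$ act on $[\alpha,\beta]^*$, and match the result to $\ev_{a+2}$. The paper's version is somewhat slicker because it avoids basis bookkeeping entirely: after observing that $S(K)=K^{-1}$ forces $\rV^*\cong\rV\cong\rL_m$ as $\Uq$-modules (the weight set of $\rL_m$ is symmetric), it establishes the operator identity $\rho_{\rV^*}(f_0)=\rho_{\rV^*}(q^{-a-2}e_1)$ directly by the chain
\[
\rho_{\rV}(S(f_0))^*=-\rho_{\rV}(K^{-1}f_0)^*=-q^{-a}\rho_{\rV,\Uq}(K^{-1}e)^*=q^{-a-2}\rho_{\rV,\Uq}(-eK^{-1})^*=\rho_{\rV}(q^{-a-2}S(e_1))^*,
\]
using only $K^{-1}e=q^{-2}eK^{-1}$, and similarly for $e_0$; since both $\rV^*$ and $[\alpha+2,\beta+2]$ are $\rL_m$ as $\Uq$-modules with $f_0,e_0$ acting by the same scalar multiples of $e_1,f_1$, they are isomorphic as $\Uqa$-modules. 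Your construction of the explicit rescaled basis $w_i=c_iv_{m-i}^*$ is the spelled-out version of the same computation; it works, but the constants $c_i$ and the intermediate sign/power tracking are all subsumed in the paper's two-line operator identity. The inductive step via Lemma \ref{lemma:hom_move} is identical in both.
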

\begin{proof}
    Denote $\rV = [\alpha, \alpha+2,\beta]$ and recall evaluation parameter $a = \frac{\alpha+\beta}{2}$. Note that $S(K) = K^{-1}$ which implies that the set of $\Uq$-weights for the module $\rV^*$ is the same as for the module $\rV$. This means that $\rV=\rV^*$ as $\Uq$-modules. It remains to note that
    \begin{multline*}
        \rho_{\rV^*}(f_0)= \rho_{\rV}(S(f_0))^* = -\rho_{\rV}(K^{-1}f_0)^* = -\rho_{\rV, \Uq}(K^{-1}q^{-a}e)^{*} = \\ = q^{-a-2}\rho_{\rV,\Uq}(-e K^{-1})^{*} = \rho_{\rV}(S(e_1))^{*} = \rho_{\rV^*}(q^{-a-2}e_1).
    \end{multline*}
    Similarly,
    \begin{equation*}
        \rho_{\rV^*}(e_0) = \rho_{\rV^*}(q^{a+2}f_1).
    \end{equation*}
    
    The general case follows from Lemma \ref{lemma:hom_move}.
\end{proof} 

Recall the shift automorphisms $\tau_a$ of Hopf algebra $\Uqa$, see Section \ref{subsec:uqamod_and_q-char}. 
\begin{lemma}\label{lemma:shift}
    The twist of an evaluation module by a shift automorphism $\tau_a$ is the evaluation module with shifted parameter, 
    \begin{equation*}    [\alpha, \beta]_{\tau_{a}} \cong [\alpha+a,\beta+a].
    \end{equation*}
    More generally,
    \begin{equation*} 
    ([\alpha_1,\beta_1][\al_2,\bt_2]\dots [\al_n, \bt_n])_{\tau_{a}}\cong [\al_1+a, \bt_1+a][\al_2+a,\bt_2+a]\dots  [\alpha_n+a,\beta_n+a].
\end{equation*}
In particular,
\begin{equation*}   
\Hom_{\rH}(\mbC, [\alpha_1,\beta_1]\dots [\al_n,\bt_n])= \Hom_{\rH}(\mbC, [\alpha_1+a,\beta_1+a]\dots [\al_n+a,\bt_n+a]).
\end{equation*}

\end{lemma}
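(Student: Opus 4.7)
The plan is to reduce everything to the identity $\ev_b\circ\tau_a=\ev_{a+b}$ stated earlier in the paper (just after \eqref{eq:Uqa_shift}), together with the general twist machinery of Lemma \ref{lemma:twist}.

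First, I would handle a single evaluation module. Recall that $[\alpha,\beta]$ is by definition the $\Uq$-module $\rL_m$ with $m=(\beta-\alpha)/2+1$, equipped with the $\Uqa$-action $\rho_{\rL_m}\circ\ev_c$, where $c=(\alpha+\beta)/2$ is the evaluation parameter. Twisting by $\tau_a$ replaces the action of $x\in\Uqa$ by the action of $\tau_a(x)$, so the twisted module carries the composite $\rho_{\rL_m}\circ\ev_c\circ\tau_a$. Using the identity $\ev_c\circ\tau_a=\ev_{a+c}$, this equals $\rho_{\rL_m}\circ\ev_{a+c}$, which is precisely the evaluation module with parameter $a+c$, i.e., $[\alpha+a,\beta+a]$. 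This gives the first isomorphism.

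Next, for the tensor product statement, I would invoke Lemma \ref{lemma:twist}: since $\tau_a$ is a Hopf algebra automorphism, $(\rV_1\otimes\cdots\otimes\rV_n)_{\tau_a}\cong (\rV_1)_{\tau_a}\otimes\cdots\otimes(\rV_n)_{\tau_a}$ as $\Uqa$-modules. Applying the single-module case to each factor, the right-hand side is $[\alpha_1+a,\beta_1+a]\cdots[\alpha_n+a,\beta_n+a]$, as claimed.

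For the final Hom equality, apply Lemma \ref{lemma:twist} once more: $\Hom_{\Uqa}(\rV,\rW)\cong\Hom_{\Uqa}(\rV_{\tau_a},\rW_{\tau_a})$. Taking $\rV=\mbC$ (the trivial module) and $\rW=[\alpha_1,\beta_1]\cdots[\alpha_n,\beta_n]$, the only remaining point is that $\mbC_{\tau_a}\cong\mbC$, which holds because $\tau_a$ is a Hopf algebra automorphism and therefore compatible with the counit ($\varepsilon\circ\tau_a=\varepsilon$), so the scalar action on the trivial module is preserved. Combining with the tensor product identity finishes the proof.

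There is essentially no hard step here; the only thing to watch is that each reduction uses the full Hopf (not just algebra) automorphism property of $\tau_a$, which is what ensures both that tensor products are preserved and that the trivial module is fixed by the twist.
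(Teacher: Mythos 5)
Your argument is correct and matches the paper's proof: the single-module case follows from $\ev_c\circ\tau_a=\ev_{a+c}$, and the tensor product and Hom statements follow from Lemma \ref{lemma:twist} together with $\mbC_{\tau_a}\cong\mbC$. You have simply spelled out the steps the paper states more tersely.
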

\begin{proof}
The first statement follows from the identity $\ev_{a} \circ \tau_{b} = \ev_{a+b}$.  The rest follows from Lemma \ref{lemma:twist} and $\mbC_{\tau_a}\cong \mbC$.
\end{proof}
\subsection{Lattice restrictions.}\label{subsec:lattice_restrictions}
Let  $\mathcal{L} = 2\mbZ + \frac{2\pi  \ri}{\log(q)}\mbZ$ be the lattice. 

If the evaluation parameters of two evaluation modules are not equal modulo $\mcl{L}$, then these modules are in general position. In particular, their tensor products in different orders are irreducible and isomorphic. Therefore, in any tensor product of evaluation modules we can order tensor factors by the class of evaluation parameter. In other words any product of evaluation modules $\rV$ can be written in the form

\begin{equation}\label{eq:eval_prod_abstr}
    \rV \cong \rV_{(a_1)} \otimes \rV_{(a_2)} \otimes \dots \otimes \rV_{(a_k)},
\end{equation}
where $\rV_{(a_j)}$ is product of all evaluation modules with parameters equal $a_j$ modulo $\mathcal{L}$ and $a_i$ are distinct modulo $\mathcal L$. Here $\rV_{(a_i)}$ can be a trivial module.
It is clear that the product is commutative $\rV_{(a_j)}\otimes \rV_{(a_{j+1})} \cong \rV_{(a_{j+1})}\otimes \rV_{(a_j)}$. 

To study spaces of homomorphisms of $\Uqa$-modules, it is sufficient to consider the case $k=1$, that is it is sufficient to consider tensor products of evaluation modules with evaluation parameters equal modulo $\mathcal L$. Moreover, by Lemma \ref{lemma:shift} all choices of $\mcl L$-coset are equivalent, so, one can assume $a_1=0$. 

Such a reduction is a known idea, see e.g. \cite{etingof2003elliptic}. We formulate the precise statement and give a proof which illustrates our methods.

\begin{prop}\label{prop:hom_factorize_lattice}
    Let $\rV$, $\rW$ be products of evaluation $\Uqa$-modules
    \begin{equation*}
        \rV = \rV_{(a_1)} \otimes  \dots \otimes \rV_{(a_k)},\;\; \rW = \rW_{(a_1)}  \otimes \dots \otimes \rW_{(a_k)},
    \end{equation*}
    as in \eqref{eq:eval_prod_abstr}. Then there is an isomorphism of vector spaces
    \begin{equation}\label{eq:hom_factorize_lattice}
        \Hom_{\Uqa}\left(\rV, \rW\right) \cong \bigotimes_{l=1}^{k}\Hom_{\Uqa}\left(\rV_{(a_l)},\rW_{(a_l)}\right).
    \end{equation}\end{prop}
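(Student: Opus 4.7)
The plan is to rewrite the Hom as a space of invariants, use $R$-matrix commutations to regroup tensor factors by $\mcl L$-coset, and then show that invariants in a tensor product of coset-separated modules factor as tensor products of invariants.

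First I would apply Corollary \ref{cor:hom_switch}:
\begin{equation*}
\Hom_{\Uqa}(\rV,\rW) \cong \Hom_{\Uqa}(\mbC, \rW \otimes \rV^{*}).
\end{equation*}
By Lemma \ref{lemma:dual}, $\rV^{*} \cong \rV_{(a_k)}^{*} \otimes \dots \otimes \rV_{(a_1)}^{*}$, and each dual factor $\rV_{(a_l)}^{*}$ still has evaluation parameters in the $\mcl L$-coset $a_l$ (the shift by $2$ lies in $\mcl L$). Tensor factors in $\rW \otimes \rV^{*}$ from distinct $\mcl L$-cosets are in pairwise general position, so they commute canonically via $R$-matrix isomorphisms, and
\begin{equation*}
\rW \otimes \rV^{*} \cong \bigotimes_{l=1}^{k}\lb \rW_{(a_l)} \otimes \rV_{(a_l)}^{*}\rb.
\end{equation*}
Reapplying Corollary \ref{cor:hom_switch} factor by factor on the right reduces the proposition to the following claim: for $\Uqa$-modules $\rU_1,\dots,\rU_k$ whose $\ell$-weights involve only variables $1_a$ with $a$ in pairwise distinct $\mcl L$-cosets, there is a canonical isomorphism
\begin{equation*}
\Hom_{\Uqa}\lb \mbC, \bigotimes_{l}\rU_l \rb \cong \bigotimes_{l}\Hom_{\Uqa}(\mbC, \rU_l).
\end{equation*}

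By induction on $k$ it suffices to prove the claim for $k=2$. The inclusion $\supseteq$ is immediate. For $\subseteq$, coset separation implies that a product of monomials in $\chi_q(\rU_1)\chi_q(\rU_2)$ equals $1$ only when both factors are $1$, so the generalized $\ell$-weight-$1$ subspace factors as $(\rU_1 \otimes \rU_2)[1] = \rU_1[1] \otimes \rU_2[1]$. Given an invariant $v$ in this space, fix a basis $\{u_i\}$ of $\rU_1[1]$ and write $v = \sum_i u_i \otimes w_i$. For each $x \in \{e_0,e_1,f_0,f_1\}$, since $K u_i = u_i$ and $K w_i = w_i$, the formulas \eqref{eq:Uqa_Com_An_Coun} reduce to
\begin{equation*}
\Delta(x)(v) = \sum_i (x u_i)\otimes w_i + \sum_i u_i \otimes (x w_i).
\end{equation*}
By Proposition \ref{prop:ellroot_action}, the first sum lies in $\bigoplus_a \rU_1[A_a^{\pm 1}]\otimes \rU_2[1]$ with $a$ in the coset of $\rU_1$, while the second lies in $\rU_1[1]\otimes \bigoplus_b \rU_2[A_b^{\pm 1}]$ with $b$ in the coset of $\rU_2$. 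By coset separation, these live in distinct generalized $\ell$-weight subspaces of $\rU_1 \otimes \rU_2$, so each sum vanishes individually. Linear independence of $\{u_i\}$ forces $x w_i = 0$ for every $i$, so $v \in \rU_1[1] \otimes \Hom_{\Uqa}(\mbC, \rU_2)$. Repeating with a basis of $\rU_2[1]$ yields $v \in \Hom_{\Uqa}(\mbC, \rU_1) \otimes \rU_2[1]$. Intersecting, we get $v \in \Hom_{\Uqa}(\mbC, \rU_1) \otimes \Hom_{\Uqa}(\mbC, \rU_2)$.

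The main obstacle is the $\ell$-weight disjointness step at the heart of the $k=2$ case. It requires that on weight-$1$ vectors the two legs of the coproduct produce terms in genuinely different generalized $\ell$-weight subspaces of $\rU_1 \otimes \rU_2$. This hinges on the $K$-invariance of weight-$1$ vectors to eliminate the $K^{\pm 1}$ crossings in \eqref{eq:Uqa_Com_An_Coun}, and on Proposition \ref{prop:ellroot_action} which controls the $\ell$-weight shifts of $x u_i$ and $x w_i$; the $\mcl L$-coset separation is precisely what guarantees that these shifts in $\rU_1$ and in $\rU_2$ cannot coincide, thereby decoupling the two sums.
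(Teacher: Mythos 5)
Your argument takes a genuinely different route from the paper's. The paper's proof, by induction on $k$, stays entirely within honest submodules and $q$-characters: after the reduction of Lemma \ref{lemma:hom_move} to a single Hom between tensor products with disjoint coset support, it observes that the image of any homomorphism $\varphi$ is an actual submodule whose $q$-character is forced to be a multiple of $1$ (the two $q$-characters share no monomial other than $1$), hence a direct sum of trivial modules, so that $\varphi$ factors through trivials. You instead regroup by coset via $R$-matrices, reduce to invariants $\Hom_{\Uqa}(\mbC,\bigotimes_l\rU_l)$, and analyze a singular vector $v$ directly through the Jimbo--Drinfeld coproduct. That is a more hands-on route, but it passes through generalized $\ell$-weight subspaces of a tensor product, which is subtler than the paper's submodule/$q$-character argument.

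This is where there is a genuine gap. You assert $(\rU_1\otimes\rU_2)[1]=\rU_1[1]\otimes\rU_2[1]$ on the strength of the observation that a product of $\ell$-weights from $\chi_q(\rU_1)$ and $\chi_q(\rU_2)$ equals $1$ only when both factors are $1$. That observation gives only the dimension equality $\dim(\rU_1\otimes\rU_2)[1]=\dim\rU_1[1]\cdot\dim\rU_2[1]$, not the subspace identity: since $\Delta(\psi^\pm(z))$ differs from $\psi^\pm(z)\otimes\psi^\pm(z)$ by strictly triangular corrections, cf.\ \eqref{eq:psi_triangular}, the generalized $\ell$-weight subspaces of $\rU_1\otimes\rU_2$ are a priori tilted relative to the boxes $\rU_1[\phi_1]\otimes\rU_2[\phi_2]$. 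The identity is nonetheless true under coset separation, but it needs its own argument. Writing $v=\sum_\mu v_\mu\in(\rU_1\otimes\rU_2)[1]$ with $v_\mu\in\rU_1(\mu)\otimes\rU_2(-\mu)$, the opposite triangularities of $\Delta(\psi^+(z))$ and $\Delta(\psi^-(z))$ show that the $\mu_{\max}$ (resp.\ $\mu_{\min}$) component of $(\Delta(\psi^\mp(z))-1)^N v$ equals $(\psi^\mp(z)\otimes\psi^\mp(z)-1)^N v_{\mu_{\max}}$ (resp.\ $v_{\mu_{\min}}$); since by coset separation $1$ is not a generalized eigenvalue of $\psi^\mp(z)\otimes\psi^\mp(z)$ on $\rU_1(\mu)\otimes\rU_2(-\mu)$ for $\mu\neq 0$, this forces $\mu_{\max}\leq 0\leq\mu_{\min}$, hence $v\in\rU_1(0)\otimes\rU_2(0)$ and then $v\in\rU_1[1]\otimes\rU_2[1]$. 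Once this is supplied the rest goes through, and indeed the final separation of $\sum_i(xu_i)\otimes w_i$ from $\sum_i u_i\otimes(xw_i)$ is cleaner than you present it: these already lie in the disjoint $\Uq$-biweight pieces $\rU_1(\pm2)\otimes\rU_2(0)$ and $\rU_1(0)\otimes\rU_2(\pm2)$, so at that step neither Proposition \ref{prop:ellroot_action} nor coset separation is needed.
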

\begin{proof}
    We prove \eqref{eq:hom_factorize_lattice}  by induction on $k$. Base $k=1$ is tautological. 

   Assume the statement for tensor products \eqref{eq:hom_factorize_lattice} with $k-1$ factors.  Denote $\bar \rV= \rV_{(a_2)} \otimes \dots \otimes \rV_{(a_k)}$, $\bar \rW =  \rW_{(a_2)} \otimes \dots \otimes \rW_{(a_k)}$. By Lemma \ref{lemma:hom_move},
    \begin{equation*}
    \Hom_{\Uqa}\left(\rV, \rW\right) \cong \Hom_{\Uqa}\lb \rW_{(a_1)}^{*} \otimes \rV_{(a_1)}, \bar{\rW}\otimes\bar{\rV} ^{*}\rb.
    \end{equation*}
    Note that the evaluation parameters of factors of $\rV_{(a_i)}^{*}$, $\rW_{(a_i)}^{*}$ equal to $a_i$ modulo $\mathcal L$.

    Let $\varphi \in \Hom_{\Uqa}\big( \rW_{(a_1)}^{*}\otimes \rV_{(a_1)} , \bar{\rW}\otimes\bar{\rV}^{*} \big)$. The image of $\varphi$ is a submodule of $\rW_{(a_1)}^{*}\otimes \rV_{(a_1)}$ isomorphic to a quotient module of $\bar{\rW}\otimes\bar{\rV}^{*}$. The only possible common monomial of $q$-characters,  $\chi_q(\rW_{(a_1)}^{*}\otimes \rV_{(a_1)})$ and $\chi_q(\bar{\rW}\otimes\bar{\rV}^{*})$ is $1$. 
    It follows that the $q$-character of the image of $\varphi$ is $1$ with some multiplicity.

    Let $\rU$ be an $\Uqa$-module such that $\chi_q(\rU)=m\cdot 1$ for $m\in\mbZ_{\geq 0}$. Then $U$ is a direct sum of trivial modules, $\rU=\oplus_{i=0}^m \mbC$. Indeed, $e_i,f_i$ act by zero in $\rU$ and $K$ is diagonalizable in any finite-dimensional $\Uq$-module.

     This implies that $\varphi$ belongs to the image of embedding
    \begin{equation}\label{eq:hom_factorize_lattice_pf1}
        \Hom_{\Uqa}\lb \rW_{(a_1)}^{*}\otimes \rV_{(a_1)},\mbC \rb \otimes \Hom_{\Uqa}\lb \mbC,\bar{\rW}\otimes \bar{\rV}^{*}\rb \longrightarrow \Hom_{\Uqa}\lb \rW_{(a_1)}^{*} \otimes \rV_{(a_1)}, \bar{\rW}\otimes\bar{\rV} ^{*}\rb,
    \end{equation}    given by composition. Hence, \eqref{eq:hom_factorize_lattice_pf1} is actually an isomorphism.
    
   Finally, by Lemma \ref{lemma:hom_move},
    \begin{equation*}
    \begin{aligned}        \Hom_{\Uqa}\lb \rW_{(a_1)}^{*}\otimes \rV_{(a_1)},\mbC \rb &\cong \Hom_{\Uqa}\lb \rV_{(a_1)}, \rW_{(a_1)} \rb,\\
        \Hom_{\Uqa}\lb \mbC,\bar{\rW}\otimes \bar{\rV}^{*}\rb&\cong \Hom_{\Uqa}\lb \bar{\rV},\bar{\rW}\rb.
    \end{aligned}    \end{equation*}
    Applying the induction hypothesis we obtain \eqref{eq:hom_factorize_lattice}. 
\end{proof}
In particular, Proposition \ref{prop:hom_factorize_lattice} asserts that 
\begin{equation*}\Hom_{\Uqa}\left(\mbC,\mathop{\otimes}_{l=1}^k \rW_{(a_l)} \right) \cong \bigotimes_{l=1}^{k}\Hom_{\Uqa}\left(\mbC,\rW_{(a_l)}\right).
\end{equation*}

We have formulated Proposition \ref{prop:hom_factorize_lattice} for tensor products of evaluation modules to serve the needs of this text. Clearly, one can formulate and prove in the same way a similar statement for arbitrary finite-dimensional $\Uqa$ modules.

Proposition \ref{prop:hom_factorize_lattice} can be strengthen. We discuss such possibilities in Sections \ref{subsec:submod_factor} and \ref{subsec:fact_by_cont}.

 From now on we always assume (unless directly said otherwise) that all our evaluation parameters are even integers. In other words, we always assume that the evaluation parameters are zero modulo $\mathcal L$. 

\subsection{Tensor products of two evaluation modules.}
As we know, tensor products of evaluation modules in general position are irreducible. In this section we give a description of all tensor products of two evaluation modules in non-general position obtained in \cite{chari1991quantum}. We give a proof using $q$-characters and an explicit form of $R$-matrix.

\begin{prop}\label{prop:two_strings_prod}\hfill
    Let $\al_1,\bt_1$  $\al_2 ,\bt_2$ be four even integers such that $\al_1 < \al_2\leq \bt_1+2\leq \bt_2$. Then there are two non-split short exact sequences
    \bse\label{eq:two_strings_prod}
       \begin{equation}\label{eq:two_strings_prod1}
            [\alpha_1, \alpha_2 - 4]  [\beta_{1} +4, \beta_2] \hookrightarrow [\alpha_1, \beta_{1}]  [\alpha_{2},\beta_{2}] \twoheadrightarrow   [\alpha_{1},\beta_{2}][\alpha_{2}, \beta_{1}],
       \end{equation}
       \begin{equation}\label{eq:two_strings_prod2}
          [\alpha_{1},\beta_{2}][\alpha_{2}, \beta_{1}] \hookrightarrow [\alpha_{2},\beta_{2}][\alpha_1, \beta_{1}]   \twoheadrightarrow [\alpha_1, \alpha_2 - 4]  [\beta_{1} +4, \beta_2] .
       \end{equation}
       \ese
\end{prop}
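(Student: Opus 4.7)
The plan proceeds in three stages: first pin down the Grothendieck class of the tensor product by $q$-characters, then build both short exact sequences from the $R$-matrix of Proposition \ref{prop:Rmat_lambdas}, and finally rule out splitting via cyclicity for the first sequence and duality for the second. For stage one, multiplicativity of $\chi_q$ together with the explicit expansion \eqref{eq:eval_q_char} yields, after a direct computation, the Grothendieck-ring identity
\[
\chi_q([\alpha_1,\beta_1][\alpha_2,\beta_2]) = \chi_q([\alpha_1,\beta_2][\alpha_2,\beta_1]) + \chi_q([\alpha_1,\alpha_2-4][\beta_1+4,\beta_2]).
\]
Each summand on the right is irreducible: in $[\alpha_1,\beta_2][\alpha_2,\beta_1]$ one string contains the other, while in $[\alpha_1,\alpha_2-4][\beta_1+4,\beta_2]$ the two strings are disjoint with a gap (their union fails to be a string because $\beta_1 \geq \alpha_2-2$). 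Injectivity of $\chi_q$ on the Grothendieck ring thus determines the two composition factors of the product in either ordering of factors. Degenerate strings $[\alpha,\alpha-2]$ that may appear are interpreted as the trivial module $\mbC$.

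\medskip

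For stage two, I would apply Proposition \ref{prop:Rmat_lambdas} to $\check{R} \colon [\alpha_1,\beta_1][\alpha_2,\beta_2] \to [\alpha_2,\beta_2][\alpha_1,\beta_1]$ with $z = q^{(\alpha_1+\beta_1-\alpha_2-\beta_2)/2}$. A direct substitution shows that among the factors of $\lambda_k$ only $(1 - z q^{m+n-2l})$ can vanish, and it does so precisely at $l = (\beta_1-\alpha_2)/2+1$; hence $\lambda_k = 0$ iff $k \geq k_0 := (\beta_1-\alpha_2)/2+2$, and the hypotheses force $1 \leq k_0 \leq \min(m,n)$. Thus $\check{R}$ is nonzero but not invertible: its $\Uq$-kernel and $\Uq$-image are $\bigoplus_{k \geq k_0} \rL_{m+n-2k}$ and $\bigoplus_{k < k_0} \rL_{m+n-2k}$. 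Summing $\Uq$-dimensions matches these with $[\alpha_1,\alpha_2-4][\beta_1+4,\beta_2]$ and $[\alpha_1,\beta_2][\alpha_2,\beta_1]$ respectively; combined with the Grothendieck class from stage one, this forces $\ker(\check{R}) \cong [\alpha_1,\alpha_2-4][\beta_1+4,\beta_2]$ (as a submodule of the domain) and $\mathrm{im}(\check{R}) \cong [\alpha_1,\beta_2][\alpha_2,\beta_1]$ (as a submodule of the codomain), giving both sequences \eqref{eq:two_strings_prod1} and \eqref{eq:two_strings_prod2} at once.

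\medskip

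For stage three, suppose \eqref{eq:two_strings_prod1} splits as $A \oplus B$ with $A = [\alpha_1,\beta_2][\alpha_2,\beta_1]$ and $B = [\alpha_1,\alpha_2-4][\beta_1+4,\beta_2]$. The tensor $v_0 \otimes w_0$ of highest-weight vectors is the unique $\ell$-singular vector of the top $\ell$-weight, which coincides with the dominant $\ell$-weight of $A$; hence $v_0 \otimes w_0 \in A$ and the cyclic submodule $\Uqa(v_0 \otimes w_0) \subseteq A$. The main obstacle is the cyclicity claim: $[\alpha_1,\beta_1][\alpha_2,\beta_2]$ is generated as a $\Uqa$-module by $v_0 \otimes w_0$. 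I would argue this by passing to the Drinfeld coproduct, where the triangular structure \eqref{eq:dr_comult} makes cyclicity of the highest-weight tensor immediate, and identifying the Drinfeld tensor product with the usual one in the spirit of Proposition \ref{prop:equiv_drinf}; alternatively, a direct induction on weight works, the base step being that $f_1$ and $e_0$ act on $v_0 \otimes w_0$ as two linearly independent combinations of $v_1 \otimes w_0$ and $v_0 \otimes w_1$ (the $2 \times 2$ determinant is proportional to $1 - q^{\alpha_1-\beta_2-2}$, nonzero since $q$ is not a root of unity and $\alpha_1 < \beta_2+2$). Cyclicity then forces $A = [\alpha_1,\beta_1][\alpha_2,\beta_2]$, contradicting $B \neq 0$. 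For \eqref{eq:two_strings_prod2}, I would dualize \eqref{eq:two_strings_prod1} using Lemma \ref{lemma:dual}, reorder the irreducible factors inside each term (permitted because the pairs are in general position and so commute), and apply the shift $\tau_{-2}$ of Lemma \ref{lemma:shift}; the result is exactly \eqref{eq:two_strings_prod2}, and since dualization and shifting preserve non-splitness, \eqref{eq:two_strings_prod2} is non-split as well.
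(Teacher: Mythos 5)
Your stages one and two reproduce the paper's argument: you determine the two composition factors from the $q$-character identity and then realize both short exact sequences as the kernel/image of the $\check{R}$-map from Proposition \ref{prop:Rmat_lambdas}, checking via $\lambda_k$ that the kernel is a nonzero proper submodule. Your computation of $k_0 = (\beta_1-\alpha_2)/2 + 2$ and the verification $1 \leq k_0 \leq \min(m,n)$ are correct, as is the dualization-plus-shift derivation of \eqref{eq:two_strings_prod2} from \eqref{eq:two_strings_prod1}. Up to this point the two proofs are essentially the same.

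Your stage three genuinely departs from the paper, and here there is a gap. You reduce non-splitting to the assertion that $[\alpha_1,\beta_1][\alpha_2,\beta_2]$ is cyclic from $v_0\otimes w_0$, but neither justification you offer works as stated. The Drinfeld-coproduct route is blocked: Proposition \ref{prop:equiv_drinf} explicitly requires the two factors to be $q$-character separated, and here the strings overlap (indeed $\alpha_2 \leq \beta_1 + 2$ forces overlapping supports), so you cannot identify the Drinfeld tensor product with $\otimes$ by that proposition. Your "direct induction" is only a base case: the $2\times2$ determinant $q^b(1 - q^{\alpha_1-\beta_2-2})$ shows that $f_1$ and $e_0$ span the weight-$(m{+}n{-}2)$ space, but at weight $m+n-2k$ the space has dimension $\min(k,m,n,m{+}n{-}k)+1$ and you give no reason the images of $f_1,e_0$ on the already-generated vectors continue to span; this is exactly the nontrivial content of the cyclicity theorem and cannot be dismissed as "an induction on weight works." Cyclicity of such a tensor product is in fact true and appears in the literature (the paper itself cites \cite{chari2002braid} for the word case), so a proper citation would close the gap. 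But the paper's own argument is cleaner and entirely self-contained: one sequence is a shifted dual of the other, so they split or do not split together, and if both $[\alpha_1,\beta_1][\alpha_2,\beta_2]$ and $[\alpha_2,\beta_2][\alpha_1,\beta_1]$ were sums of the same two simple modules then $\Hom_{\Uqa}$ between them would be two-dimensional, contradicting the one-dimensionality established in Proposition \ref{prop:Rmat_lambdas}. You already have all the ingredients for that argument in your stage two; replacing the cyclicity claim with it would make your proof complete and avoid the reliance on an external theorem.
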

\begin{proof}
First we use $q$-characters to find the composition factors of $[\alpha_1, \beta_{1}][\alpha_{2},\beta_{2}]$. 

Note that products $[\al_1,\bt_2][\al_2,\bt_1]$ and $[\al_1,\al_2-4][\bt_1+4,\bt_2]$ are in general position and therefore they are irreducible. The highest monomials of the $q$-characters of modules $[\alpha_1, \beta_{1}][\alpha_{2},\beta_{2}]$ and $[\alpha_{2},\beta_{1}][\alpha_{1}, \beta_{2}]$ coincide,
    \begin{equation*}        1_{\alpha_1}1_{\alpha_1+2}\dots1_{\beta_1}1_{\alpha_2}\dots1_{\beta_2} = \lb 1_{\alpha_1}1_{\alpha_1+2}\dots1_{\beta_1}1_{\beta_1+2}\dots1_{\beta_2}\rb \lb 1_{\alpha_2}\dots1_{\beta_1} \rb.
    \end{equation*}
    Moreover, the difference
   
    \begin{equation*}
        \chi_q([\alpha_1, \beta_{1}])\chi_q([\alpha_{2},\beta_{2}]) - \chi_q( [\alpha_{1},\beta_{2}][\alpha_{2}, \beta_{1}])
    \end{equation*}
    contains exactly one dominant monomial,
    \begin{equation*}
        \lb 1_{\alpha_1}1_{\alpha_1+2}\dots1_{\alpha_2-4} 1_{\alpha_2}^{-1}\dots1_{\beta_{1}+2}^{-1}\rb  \lb 1_{\alpha_2}1_{\alpha_2+2}\dots1_{\beta_2}\rb  = \lb 1_{\alpha_1}1_{\alpha_1+2}\dots1_{\alpha_2-4}\rb \lb 1_{\beta_1+4}\dots1_{\beta_2} \rb,
    \end{equation*}
 see Example \ref{ex:q_char_mult_1}. Therefore, this difference coincides with $\chi_q([\al_1,\al_2-4][\bt_1+4,\bt_2])$.
    
Hence, the products $[\al_1,\bt_2][\al_2,\bt_1]$ and $[\al_1,\al_2-4][\bt_1+4,\bt_2]$ form the composition factors of $[\al_1,\bt_1][\al_2\dots\bt_2]$.

The module $[\al_1,\al_2-4][\bt_1+4,\bt_2]$ is a submodule of 
$[\al_1,\bt_1][\al_2\dots\bt_2]$. Indeed, by 
Proposition \ref{prop:Rmat_lambdas} there exists a $\Uqa$-homomoprhism $[\al_1,\bt_1][\al_2\dots\bt_2]\longrightarrow [\al_2,\bt_2][\al_1\dots\bt_1]$ with non-trivial kernel which does not contain a vector of the highest $\sL$-weight. Therefore, the kernel of this map is a submodule isomorphic to $[\al_1,\al_2-4][\bt_1+4,\bt_2]$.

Similarly, the module $[\al_1,\bt_2][\al_2,\bt_1]$ is a submodule of 
$[\al_2,\bt_2][\al_1\dots\bt_1]$.

It remains to check that the sequences \eqref{eq:two_strings_prod} do not split. One sequence is a shifted dual of the other, see Section \ref{subsec:duals}. Therefore, if one of sequences splits, so does the other. If both modules  $[\alpha_1, \beta_{1}]  [\alpha_{2},\beta_{2}]$ and $[\alpha_{2},\beta_{2}] [\alpha_1, \beta_{1}]$ were direct sums of two simple modules, then the space of $\Uqa$-homomorphisms between them would be two-dimensional in contradiction to Proposition \ref{prop:Rmat_lambdas}.
\end{proof}

We illustrate the short exact sequence
\begin{equation*}
  [0][14,16] \hookrightarrow [0,10][4,16] \twoheadrightarrow [4,10][0,16],
\end{equation*}
which is a particular case of \eqref{eq:two_strings_prod1}, by the following picture. 
\medskip
\begin{center}
\begin{figure}[H]
\tikzset{decorate sep/.style 2 args=
{decorate,decoration={shape backgrounds,shape=circle,shape size=#1,shape sep=#2}}}
\begin{tikzpicture}
\draw[decorate sep={2mm}{4mm},fill] (3.8,0) -- (4.21,0);
\draw[decorate sep={2mm}{4mm},fill] (1, 0.7) -- (1.01, 0.7);

\draw[decorate sep={2mm}{4mm}] (1.8,0) -- (3.6,0);
\draw[decorate sep={2mm}{4mm}] (1.4,0.7) -- (3.0,0.7);
\draw[-, dashed] (1.45, 0.6125) -- (1.765, 0.0875);
\draw[-, dashed] (1.85, 0.6125) -- (2.15,0.0875);
\draw[-, dashed] (2.25, 0.6125) -- (2.54,0.0875);
\draw[-, dashed] (2.65, 0.6125) -- (2.94,0.0875);
\draw[-,dashed] (3.05, 0.6125) -- (3.35,0.0875);

\draw[decorate sep={2mm}{4mm},fill] (7.4,0) -- (10.0,0);
\draw[decorate sep={2mm}{4mm},fill] (6.6,0.7) -- (8.6,0.7);

\draw[decorate sep={2mm}{4mm},fill] (12.6,0.7) -- (14.2,0.7);
\draw[decorate sep={2mm}{4mm},fill] (11.8,0) -- (15.2,0);
\draw[->, dashed] (11.8,0.7) -- (11.8,0.13);
\draw[->, dashed] (12.2,0.7) -- (12.2,0.13);


\draw[->,right hook-latex] (5.0,0.25) -- (5.8,0.25);

\draw[->>] (10.3,0.25) -- (11.1,0.25);



\node at (1, -0.5) {{\scriptsize $0$}};
\node at (1.4, -0.5) {\scriptsize $2$};
\node at (1.8, -0.5) {\scriptsize $4$};
\node at (2.2, -0.5) {\scriptsize $6$};
\node at (2.6, -0.5) {\scriptsize $8$};
\node at (3.0, -0.5) {\scriptsize $10$};
\node at (3.4, -0.5) {\scriptsize  $12$};
\node at (3.8, -0.5) {\scriptsize $14$};
\node at (4.2, -0.5) {\scriptsize $16$};

\node at (6.6, -0.5) {{\scriptsize $0$}};
\node at (7, -0.5) {\scriptsize $2$};
\node at (7.4, -0.5) {\scriptsize $4$};
\node at (7.8, -0.5) {\scriptsize $6$};
\node at (8.2, -0.5) {\scriptsize $8$};
\node at (8.6, -0.5) {\scriptsize $10$};
\node at (9.0, -0.5) {\scriptsize  $12$};
\node at (9.4, -0.5) {\scriptsize $14$};
\node at (9.8, -0.5) {\scriptsize $16$};

\node at (11.8, -0.5) {{\scriptsize $0$}};
\node at (12.2, -0.5) {\scriptsize $2$};
\node at (12.6, -0.5) {\scriptsize $4$};
\node at (13.0, -0.5) {\scriptsize $6$};
\node at (13.4, -0.5) {\scriptsize $8$};
\node at (13.8, -0.5) {\scriptsize $10$};
\node at (14.2, -0.5) {\scriptsize  $12$};
\node at (14.6, -0.5) {\scriptsize $14$};
\node at (15.0, -0.5) {\scriptsize $16$};
\end{tikzpicture}
\end{figure}
\end{center}
On this picture each circle corresponds to a variable $1_a$ where the horizontal coordinate is given by $a$. The variables corresponding to the filled circles aligned horizontally form the dominant monomial of one of the factors. The factors are ordered from top to bottom.  We show by the dashed lines and non-filled circles the variables which are cancelled during $q$-character multiplication.  The quotient module on the right is a result of vertical reordering of variables $1_a$ which produces a product of strings in general position.

\section{Basics of trivial submodules.}\label{sec:triv_submod}
In this paper we call arbitrary tensor products of 2-dimensional evaluation modules with evaluation parameters in $2\mbZ$ by words, see Section \ref{subsec:word_arcs} below. A major goal of this paper is to study the spaces $\Hom_{\Uqa}(w_1, w_2)$ for arbitrary words $w_1,w_2$. By Corollary \ref{cor:hom_switch} it is equivalent to the study of the spaces  $\Hom_{\Uqa}(\mbC, w)$. 

Denote 
$$
H(w) = \Hom_{\Uqa}(\mbC, w), \qquad  h(w) = \dim(H(w)).
$$
The spaces $H(w)$ and the numbers $h(w)$ are one of the main objects of study in this text.

The  space $H(w)$ is canonically identified with the space of $\ell$-singular vectors of $\ell$-weight $1$ in $w$. We do not make distinction between these two spaces.

\subsection{Words and arcs.}\label{subsec:word_arcs} 
We develop the language related to products of two-dimensional evaluation modules. 

We start from combinatorial objects related to study of tensor powers of $\rL_1$. Fix some $n\in \mbZ_{>0}$. Let $i, j \in \{1,\dots, 2n\}$, we call an ordered pair $(i,j)$ with $i<j$ an uncolored arc. For an uncolored arc $(i,j)$, we call $i$ (respectively $j$) left (respectively right) end of $(i,j)$. We say that the arc $(i,j)$ connects to $i$ and to $j$. For a set of uncolored arcs $C = \{(i_{k},j_{k})\}$, denote
    $\len (C) = \{i_1,\dots, i_n\},\;\; \ren (C) = \{j_1,\dots, j_n\}$ the sets of left and right ends of arcs in $C$, respectively. We call a pair of uncolored arcs $(i_1, j_1), (i_2, j_2)$ intersecting if $i_1 <  i_2 < j_1 < j_2$ or $i_2 < i_1 < j_2 < j_1$.

We call a set of uncolored arcs $\{(i_1, j_1),\dots , (i_n, j_n)\}$ an uncolored arc configuration if
    \begin{equation*}
        \{1,\dots, 2n\} = \mathop{\bigsqcup}_{k = 1}^{n}\{i_k, j_k\}.
    \end{equation*}
Given an uncolored arc configuration $C =\{(i_1, j_1),\dots, (i_n,j_n)\}$ we say that letters $i_k$ and $j_k$ of $w$ are connected by an arc in $C$. 

We call an uncolored arc configuration connected if for any two arcs $\alpha, \beta$ there exists a sequence $(\alpha = \alpha_0, \alpha_1,\dots, \alpha_m=\beta)$, such that for all $i\in \{0,\dots, m-1\}$ arcs $\alpha_i$ and $\alpha_{i+1}$ intersect. 

We call an uncolored arc configuration $C = \{(i_1, j_1),\dots , (i_n, j_n)\}$ an uncolored Catalan arc configuration if any two pairs of arcs in that configuration do not intersect. There is a convenient pictorial description for uncolored Catalan arc configurations. For instance, there are five uncolored Catalan arc configurations for $n=3$ given by the following picture.

\begin{figure}[H]
\begin{tikzpicture}
    \draw[-] (0,0) to [out=30,in=150] (0.6,0);
    \draw[-] (0.8,0) to [out=30,in=150] (1.4,0);
    \draw[-] (1.6,0) to [out=30,in=150] (2.2,0);
\end{tikzpicture}\;\;,\;\;
\begin{tikzpicture}
    \draw[-] (0.4,0) to [out=30,in=150] (1.0,0);
    \draw[-] (1.2,0) to [out=30,in=150] (3.0,0);
    \draw[-] (1.8,0) to [out=30,in=150] (2.4,0);
\end{tikzpicture}\;\;,
\begin{tikzpicture}
    \draw[-] (1.2,0) to [out=30,in=150] (3.0,0);
    \draw[-] (1.8,0) to [out=30,in=150] (2.4,0);
    \draw[-] (3.2,0) to [out=30,in=150] (3.8,0);
\end{tikzpicture}\;\;,
\begin{tikzpicture}
    \draw[-] (0.2,0) to [out=30,in=150] (2.4,0);
    \draw[-] (0.6,0) to [out=30,in=150] (1.2,0);
    \draw[-] (1.4,0) to [out=30,in=150] (2.0,0);
\end{tikzpicture}\;\;,
\begin{tikzpicture}
    \draw[-] (0,0) to [out=30,in=150] (3.0,0);
    \draw[-] (0.6,0) to [out=30,in=150] (2.4,0);
    \draw[-] (1.2,0) to [out=30,in=150] (1.8,0);
\end{tikzpicture}\,\,
\end{figure}
Note that an uncolored Catalan arc configuration $C$ is uniquely defined by the set of the left ends $\len(C)$ (or by the set of the right ends $\ren(C)$).

We denote the set of uncolored arcs, uncolored arc configurations and uncolored Catalan arc configurations as $\uarc(2n),\; \uconf(2n),\;\ucconf(2n)$.

For a given $C = \{(i_k, j_k)\}_{k=1}^n\in \uconf(2n)$ and a set of $m$ arcs $S \subset C$ define a new arc configuration $\mathring{C}_S\in \uconf(2(n-m))$ obtained by removing arcs in $S$ from $C$. Formally, let $C\backslash S = \{ (i_{k}^{\prime}, j_{k}^{\prime})\}_{k \in \{ 1, \dots, n-m\}}$. Denote
\begin{equation*}
\begin{aligned}    i_{k}^{\prime \prime} = i_{k}^{\prime} - |((\len(S)\cup \ren(S)) \cap \{1,\dots, i_{k}^{\prime}-1\})|,\\
    j_{k}^{\prime \prime} = j_{k}^{\prime} - |((\len(S)\cup \ren(S)) \cap \{1,\dots, j_{k}^{\prime}-1\})|,
\end{aligned}\end{equation*}
and set $\mathring{C}_S = \{ (i_{k}^{\prime\prime}, j_{k}^{\prime\prime})\}_{k \in \{ 1, \dots, n-m\}}\in \uconf(2(n-m))$.

Note that if $C \in \ucconf(2n)$, then $\mathring{C}_{S} \in \ucconf(2(n-m))$.

\medskip 

Uncolored arc configurations are suited to study singular $\Uq$ singular vectors in $\rL_1^{\otimes n}$.
Now we introduce combinatorial objects related to $\Uqa$ singular vectors in  products of two-dimensional evaluation modules. We denote the product of two-dimensional evaluation modules $[a_1,a_1]\dots [a_k,a_k]$ by $(a_1,\dots,a_k)$. 

We call the sequence of even integers $w=(a_1,\dots,a_k)$ a word $w$. Often we do not distinguish between a word $w = (a_1,..., a_k)$ and $\Uqa$-module $[a_1,a_1]\dots [a_k,a_k]$.

For a word $w = (a_1,\dots,a_k)$ we write simply $a_1a_2\dots a_k$. In particular, in all examples in the present work all $a_i$ are one-digit even numbers, so there is no confusion.

For a word $w = (a_1,..., a_k)$, we call $k = l(w)$ length of $w$. We use the notation $W_k = \lb2\mbZ\rb^{\times k}$ for the set of all words of the length $k$. The permutation group $S_{k}$ acts on the set $W_{k}$ by permutations of letters.

Due to Lemma \ref{lemma:shift}, $h(w)$ does not change if we shift all letters of $w$ by the same number. This allows to consider words modulo overall shift of all letters by $2\mbZ$. It is clear that if $l(w)$ is odd, then $h(w) = 0$, so we restrict our attention to the cases when $l(w)$ is even.

Given a word $w = (a_1,\dots, a_{k})$ and a letter $b \in 2\mbZ$, we denote $I_{w}(b) = \{j\in \{1,\dots, k\}\ |\ a_{j} = b\}$. We call the set $\mathrm{supp}(w) = \{b\in 2\mbZ\ |\ I_{w}(b)\neq \varnothing\}$ the support of the word $w$. We call a word $w$ connected if $\supp(w)$ is a segment in $2\mbZ$, that is if $\supp(w)=\{a,a+2,\dots,a+b\}$ for some $a,b\in 2\mbZ$. For a word $w = (a_1,\dots, a_k)$, we call the multiset $\{a_1, \dots, a_k\}$ the content of $w$ and denote it $\cont(w)$.

For a word $w = (a_1,..., a_k)$ we call $w^* = (a_{k}+2, a_{k-1}+2,..., a_{1}+2)$ the dual word,

For a pair of words $w_1 = (a_1,\dots, a_k)\in W_k,\;\; w_2 = (b_1,\dots, b_l)\in W_l$ we write $w_1 w_2$ for the concatenation $(a_1,\dots, a_k, b_1, \dots, b_l)$.

Let $w = (a_1,\dots,a_k)$ and $\tilde{w} = (b_1,\dots, b_l)$ be words. We call $\tilde{w}$ is a subword of $w$ if $\tilde w$ is obtained from $w$ by deleting some letters, in other words if $b_j = a_{i_j}$ for some increasing sequence $1 \leq i_1 < i_2 < \dots < i_l \leq k$. In the case of $w=w_1\tilde w w_2$, that is if $i_{j+1} = i_{j}+1$ for all $1\leq j\leq l-1$, we call $\tilde{w}$ a factor of $w$.

We use notation $a^n$ for the word $(a,\dots, a) = a\dots a$, where $a$ is repeated $n$ times.

Given two words $w_1, w_2$ we call a word $w_3$ a shuffle of $w_1$ and $w_2$ if $w_3$ contains $w_1$ as a subword and removing this subword from $w_3$ gives $w_2$.

\begin{defi}\label{def:arc}
    For a word $w = (a_1,...,a_{2n})$ we call an uncolored arc $(i,j)\in \uarc(2n)$ an arc of color $b$ if $a_{j}-1 = a_{i} + 1=b$.
\end{defi}
We denote the set of arcs of a word $w$ of a color $b$ by $\arc_b(w)$ and we denote 
$$\arc(w) = \mathop{\bigsqcup}_{b\in 2\mbZ+1}\arc_b(w).$$
We call elements of $\arc(w)$ arcs in the word $w$. 

A motivation for Definition \ref{def:arc} is given by the following lemma. 

\begin{lemma}
    Module $w = (a_1, ... , a_k)$ is irreducible if and only if $\arc(w) = \arc(w^*) = \varnothing$.
\end{lemma}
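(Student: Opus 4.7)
The plan is to reduce the statement to the known irreducibility criterion for tensor products of evaluation modules (stated in Section \ref{subsec:uqamod_and_q-char}): such a product is irreducible if and only if every pair of strings is in general position. Applied to the 2-dimensional case $[a_i,a_i]$, $[a_j,a_j]$, general position means either $a_i = a_j$ (one string contains the other) or $|a_i - a_j| \geq 4$ (union is not a string). Hence $w = (a_1,\dots,a_k)$ is irreducible if and only if $|a_i - a_j| \neq 2$ for every pair of distinct indices $i,j$.

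It remains to translate the condition $\arc(w) = \arc(w^*) = \varnothing$ into the same combinatorial statement. By Definition \ref{def:arc}, $\arc(w) = \varnothing$ means there is no pair $i < j$ with $a_j = a_i + 2$. For $\arc(w^*)$, I would directly unpack the definition: writing $w^*_m = a_{k+1-m} + 2$, an arc $(i,j)$ in $w^*$ requires $i < j$ and $w^*_j = w^*_i + 2$, which upon the substitution $I = k+1-j$, $J = k+1-i$ becomes $I < J$ and $a_I = a_J + 2$. Thus $\arc(w^*) = \varnothing$ means there is no pair $I < J$ in $w$ with $a_J = a_I - 2$.

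Combining, $\arc(w) = \arc(w^*) = \varnothing$ is equivalent to the absence of any pair $i < j$ with $|a_i - a_j| = 2$, which is exactly the irreducibility condition derived above.

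There is no serious obstacle here; the only thing to be careful about is the correct index bookkeeping for $w^*$, since the shift by $+2$ in the definition of the dual word could look like it alters which color plays the role of ``larger'' letter, but in fact the shift cancels out in the difference $w^*_j - w^*_i$, so the condition reduces cleanly to the claim above.
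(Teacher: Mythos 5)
Your proof is correct and follows the same route the paper sketches — reduce to the general position criterion for products of evaluation modules and unwind it for one-dimensional strings. The paper compresses this into a single sentence ("follows directly from the description of irreducible modules"), while you have supplied the explicit translation, including the index bookkeeping for $w^*$; the substance is identical.
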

\begin{proof} The lemma follows directly from the description of irreducible modules, see Section \ref{subsec:uqamod_and_q-char}.
\end{proof}
The concept of arcs is useful for other reasons as well and we will extensively use it.

\begin{defi}\label{def:arc_conf}
    For a  word $w$ of length $2n$, we call a set of arcs $\{(i_1, j_1),\dots , (i_n, j_n)\} \subseteq \arc(w)$ an arc configuration if
    \begin{equation*}
        \{1,\dots, 2n\} = \mathop{\bigsqcup}_{k = 1}^{n}\{i_k, j_k\}.
    \end{equation*}
\end{defi}
 We denote by $\conf(w)$ the set of all arc configurations of a word $w$.

\begin{lemma}
Let $w$ be a word such that the set $\conf(w)$ is non-empty. Then the number $m_{b}$ of arcs of the color $b$ in $C\in \conf(w)$  does not depend on $C$ and equals 
\begin{equation}\label{eq:arc_number}
m_b=\sum_{j=0}^{\infty}(-1)^{j}|I_{w}(b-1-2j)|=\sum_{j=0}^{\infty}(-1)^{j}|I_{w}(b+1+2j)|. 
\end{equation}
\end{lemma}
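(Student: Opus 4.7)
The plan is to use a simple counting argument based on the fact that in any arc configuration each letter of $w$ is used as an endpoint of exactly one arc, combined with the observation that an arc of color $b$ has its left endpoint labeled $b-1$ and its right endpoint labeled $b+1$.

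First, I would fix an arbitrary arc configuration $C \in \conf(w)$ and, for each odd integer $b$, let $m_b = m_b(C)$ denote the number of arcs of color $b$ in $C$. For a fixed even integer $c$, I would count the positions $k \in \{1,\dots,2n\}$ with $a_k = c$ in two ways. On one hand, there are $|I_w(c)|$ such positions. On the other hand, by Definition \ref{def:arc_conf}, each such position is an endpoint of exactly one arc of $C$. If the position is a left endpoint of an arc, then by Definition \ref{def:arc} that arc has color $c+1$, and the $m_{c+1}$ arcs of color $c+1$ contribute exactly $m_{c+1}$ left endpoints with letter $c$. If it is a right endpoint, the corresponding arc has color $c-1$, contributing $m_{c-1}$ such positions. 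Hence
\begin{equation*}
|I_w(c)| = m_{c+1} + m_{c-1}.
\end{equation*}

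Next, setting $c = b-1$, I rewrite this recursion as $m_b = |I_w(b-1)| - m_{b-2}$, and iterate. Since $m_{b'} = 0$ whenever $b' - 1$ lies strictly below $\min\supp(w)$, the recursion terminates after finitely many steps and yields
\begin{equation*}
m_b = \sum_{j=0}^{\infty}(-1)^{j}\,|I_{w}(b-1-2j)|.
\end{equation*}
Setting instead $c = b+1$ and iterating in the opposite direction (using $m_{b'} = 0$ for $b'-1$ strictly above $\max\supp(w)$) yields the second expression in \eqref{eq:arc_number}. Since the right-hand sides depend only on $w$ and not on $C$, the number $m_b(C)$ is independent of the choice of $C$, as claimed.

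There is no real obstacle here; the only thing to be careful about is making sure the recursion does terminate and that the two boundary conditions (the sum going to the left and to the right) are both justified by the finiteness of $\supp(w)$. The argument does not use any non-intersection hypothesis and works for arbitrary arc configurations.
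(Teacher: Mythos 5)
Your proof is correct and follows essentially the same route as the paper: both rely on the recursion $m_b = |I_w(b-1)| - m_{b-2}$ together with the vanishing of $m_{b'}$ below the support of $w$. The only difference is that you spell out the derivation of this recursion via the double count $|I_w(c)| = m_{c+1} + m_{c-1}$, which the paper leaves implicit.
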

\begin{proof} 
If $b=\min (\supp(w))+1$, then $m_b=|I_w(b-1)|$.  
If $b=\min (\supp(w))+3$, then clearly $m_b=|I_w(b-1)|-|I_w(b-3)|$.  
The first equation of the lemma follows by induction on $b$ since $m_b=|I_w(b-1)|-m_{b-2}$. The second equation of the lemma is similar.
\end{proof}
Clearly, $\arc(w) \subset \uarc(l(w)),\;\;\conf(w) \subset \uconf(l(w))$, so the language developed above for uncolored arcs (uncolored arc configurations) is applicable to arcs (arc configurations). 

Note also that if $C \in \conf(w)$, then $\mathring{C}_{S} \in \conf(\mathring{w}_S)$, where $\mathring{w}_S$ is the word $w$ with letters in positions $\len(S)\cup \ren(S)$ removed.

We denote $\cconf(w) = \ucconf(l(w))\cap \conf(w)$.

\subsection{Vectors corresponding to Catalan arc configurations.}
We denote by $H_{2n}$ the space $\Hom_{\Uq} \big( \mbC, \mathrm{L}_1^{\otimes{2n}}\big)$. The space $H_{2n}$ is naturally identified with the subspace of $\Uq$ singular vectors of weight $0$ in $(\rL_1)^{\otimes 2n}$.

For a word $w\in W_{2n}$, clearly 
\begin{equation*} H(w) \subseteq \Hom_{\Uq}\lb \mbC, w\rb = H_{2n}.\end{equation*}

It is well known that $\dim\lb H_{2n} \rb = \frac{1}{n+1}\binom{2n}{n} = C_n$ is the $n$th Catalan number. There is the following combinatorial description for a basis of $H_{2n}$. 

Let $\{(+),(-)\}$ be a basis of $\Uq$-module $\mathrm{L}_1$ such that $f(+) = (-)$, $e(-) = (+)$, $K(+) = q(+)$, $K(-) = q^{- 1}(-)$. Let $\{(\epsilon_1,\dots, \epsilon_k) \ |\ \epsilon_{i} \in \{+,-\}\}$ be the basis for $\Uq$-module  $\mathrm{L}_1^{\otimes k}$,  where $(\epsilon_1,\dots, \epsilon_k) =(\epsilon_1) \otimes \dots \otimes (\epsilon_k)$. We call this basis standard.

We order the basis of  $\mathrm{L}_1$ by $(+)>(-)$ and extend this order to a basis of $\mathrm{L}_1^{\otimes n}$ lexicographically. For a non-zero vector $v \in \mathrm{L}_1^{\otimes{n}}$, write 
\begin{equation}\label{eq:lower_terms}
v = a(\epsilon_1,\dots, \epsilon_n) + \textit{lower terms}, \qquad a\in \mbC^{\times}.
\end{equation}
Then we call $(\epsilon_1,\dots, \epsilon_n)$ the leading term of $v$.

For an arc $(i,j)\in \arc(2n)$, define an operator $A_{(i,j)}$ acting in $\mathrm{L}_1^{\otimes 2n}$ by $A_{(i,j)} = 1 - q^{-1}P_{ij},$ where $P_{ij}$ is the operator of permutation of the $i$th and $j$th tensor factors.

Let $C = \{(i_k, j_{k})\}_{k=1}^{n} \in \ucconf(2n)$.
Define a vector
\begin{equation}\label{eq:Catalan_vec}
    v_{C} = A_{(i_1,j_1)}\dots A_{(i_n,j_n)}\epsilon_C.
\end{equation}
Here $\epsilon_C=(\epsilon_1(C),\dots ,\epsilon_{2n}(C))$, $\epsilon_{i_{k}}(C) = (+)$ and $\epsilon_{j_{k}}(C) =(-)$ for all $k$.

Note that if two arcs $(i_1,j_1), (i_2,j_2)$ do not have common ends, then the operators $A_{(i_1,j_1)}, A_{(i_2,j_2)}$ commute. Moreover, $A_{(1,2)}(+,-)$ is the unique up to proportionality singular vector of weight zero in $\rL_1^{\otimes 2}$. Since $C$ is a Catalan arc configuration, it follows that the vector $v_C$ is singular. Moreover, the leading term of $v_C$ is $\epsilon_C$. Therefore, $\{v_C\}_{C\in \ucconf(2n)}$ is a basis of $H_{2n}$. We call this basis the Catalan basis.

\begin{lemma}\label{lemma:cat_sing}
    Let $w$ be a word $w \in W_{2n}$ and $C \in \cconf(w)$, then 
    \begin{equation*}
        v_{C} \in H(w).
    \end{equation*}
\end{lemma}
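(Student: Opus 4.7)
The plan is induction on $n$. The base case $n = 0$ is trivial. For the inductive step, the combinatorial input is the standard fact that any non-empty non-crossing matching on $\{1, \ldots, 2n\}$ contains a pair of consecutive endpoints: take any arc $(a, b) \in C$ with $b - a$ minimal; if $b > a + 1$, every point between $a$ and $b$ must pair with another such point (else the pairing crosses $(a,b)$), yielding a shorter enclosed arc and contradicting minimality. So I fix an adjacent arc $(i, i+1) \in C$.

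The arc $(i, i+1)$ has some color $b$, so $a_i = b-1$ and $a_{i+1} = b+1 = a_i + 2$, and the two-factor submodule at positions $i, i+1$ is $[a_i, a_i][a_i + 2, a_i + 2]$. By Proposition \ref{prop:two_strings_prod} (with $\alpha_1 = \beta_1 = a_i$, $\alpha_2 = \beta_2 = a_i + 2$), this module contains a trivial $\Uqa$-submodule. Its generator is a $\Uq$-singular vector of weight zero, and since this weight space is one-dimensional in $\rL_1 \otimes \rL_1$, the generator must be proportional to $v_0 := (+, -) - q^{-1}(-, +) = A_{(1, 2)}(+, -)$; one may alternatively verify $e_0 v_0 = f_0 v_0 = 0$ by a short direct computation using the coproduct. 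Let $w_1 = (a_1, \ldots, a_{i-1})$, $w_2 = (a_{i+2}, \ldots, a_{2n})$, so $w = w_1 (a_i, a_{i+1}) w_2$. Tensoring the embedding $\mbC \hookrightarrow [a_i, a_i][a_i + 2, a_i + 2]$ with $w_1$ on the left and $w_2$ on the right (Lemma \ref{lemma:exact_functors}) yields a $\Uqa$-embedding $\iota: w_1 w_2 \hookrightarrow w$ that inserts $v_0$ at positions $i, i+1$.

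To finish, set $C' = \mathring{C}_{\{(i,i+1)\}} \in \cconf(w_1 w_2)$; by the inductive hypothesis $v_{C'} \in H(w_1 w_2)$, and therefore $\iota(v_{C'}) \in H(w)$. It then remains to verify the identity $\iota(v_{C'}) = v_C$ at the level of the standard basis. Since $C$ is non-crossing, no arc other than $(i,i+1)$ has an endpoint in $\{i, i+1\}$, so the operators $A_{(i_k, j_k)}$ for the remaining arcs act as the identity on positions $i, i+1$ and commute with $A_{(i, i+1)}$ and with one another. Moreover, $\epsilon_C$ carries $(+, -)$ at positions $i, i+1$ and $\epsilon_{C'}$ at the remaining positions, so $\iota(\epsilon_{C'}) = (1 - q^{-1} P_{i, i+1}) \epsilon_C = A_{(i, i+1)} \epsilon_C$. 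Applying $\prod_{k \neq k_0} A_{(i_k, j_k)}$ to both sides and using commutativity gives $\iota(v_{C'}) = A_{(i, i+1)} \prod_{k \neq k_0} A_{(i_k, j_k)} \epsilon_C = v_C$. The only non-routine ingredient is the two-factor triviality supplied by Proposition \ref{prop:two_strings_prod}; the rest is bookkeeping of indices when transferring operators between $w$ and $w_1 w_2$.
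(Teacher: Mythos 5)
Your proof is correct and takes essentially the same route as the paper: induction on $n$, locate an adjacent arc $(i, i+1)$ in the non-crossing matching, use the $\Uqa$-embedding $\mbC \hookrightarrow (a_i, a_i+2)$ tensored with identities on the remaining factors, and then verify $\iota(v_{C'}) = v_C$ by operator bookkeeping. The paper verifies the two-factor triviality by the short direct computation with the coproduct (which you also mention as an alternative) and takes the base case at $n = 1$ rather than $n = 0$, but these are cosmetic differences.
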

\begin{proof}
    We proceed by induction on $n$. For the case $n = 1$ we have just to check that $(+-) - q^{-1}(-+)$ is annihilated by $f_0$. Indeed,
    \begin{equation*}
        (f_0 \otimes 1 + K \otimes f_{0})((+-) - q^{-1}(-+)) = q^{1 - a - 2}(++) - q^{-1}q^{-a}(++) = 0.
    \end{equation*}

    Now, assume that the statement is known for the case $n - 1$. There exists an arc in $C$ of the form $(i,i+1)$.

     Let $\iota: \mbC \longrightarrow (a_i, a_i+2)$ be an embedding of $\Uqa$-modules  sending $1\mapsto A_{12}(+,-)$. Then 
     $$\iota_i = \Id_{(a_1,\dots, a_{i-1})}\otimes \iota\otimes \Id_{(a_{i+2},\dots, a_{2n})} :(a_1, \dots , \hat{a}_i, \hat{a}_{i+1}, \dots, a_{2n}) \longrightarrow (a_1, \dots, a_{2n})$$
is also an embedding of $\Uqa$-modules. 

     Recall that $\mathring{C}_{\{(i, i+1)\}}$ is $C$ with the arc $(i,i+1)$ removed. Then $v_{C} = \iota_{i}(v_{\mathring{C}_{\{(i, i+1)\}}})$. By the induction hypothesis $v_{\mathring{C}_{(i, i+1)}}$ is an $\ell$-singular vector. Therefore, $v_C$ is also an $\ell$-singular vector. 
     
     \end{proof}
    Since the set of vectors $\{v_C\}_{C\in \cconf(2n)}$ is linearly independent, we obtain a lower bound for the dimension $h(w)$ of $\Hom_{\Uqa}(\mbC,w)$.
    \begin{cor} For any word $w\in W_{2n}$,
        \begin{equation}\label{eq:catalan_lower_bound}
            h(w) \geq |\cconf(w)|.
        \end{equation}\qed
    \end{cor}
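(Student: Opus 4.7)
The plan is to combine the preceding Lemma \ref{lemma:cat_sing} with the linear independence of the Catalan basis. Once those two pieces are in hand, the inequality is immediate: the vectors $v_C$ indexed by $C \in \cconf(w)$ live in $H(w)$ and are part of a known basis of the ambient space $H_{2n}$, so they are linearly independent in $H(w)$, giving $h(w) = \dim H(w) \geq |\cconf(w)|$.

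More concretely, I would proceed as follows. First I would recall that $\cconf(w) \subseteq \ucconf(2n)$, so the set $\{v_C\}_{C\in\cconf(w)}$ is a subset of the Catalan basis $\{v_C\}_{C\in\ucconf(2n)}$ of $H_{2n}$, which was shown above to be linearly independent by inspection of leading terms: each $v_C$ has leading term $\epsilon_C$ in the lexicographic order on the standard basis of $\rL_1^{\otimes 2n}$, and distinct Catalan configurations $C$ produce distinct leading terms $\epsilon_C$. Hence any subcollection indexed by $\cconf(w)$ is still linearly independent in $H_{2n}$. Next I would invoke Lemma \ref{lemma:cat_sing} to conclude that for each $C \in \cconf(w)$ the vector $v_C$ actually belongs to the subspace $H(w) \subseteq H_{2n}$ of $\Uqa$-singular vectors of $\ell$-weight $1$. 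Combining these two observations yields a linearly independent family of size $|\cconf(w)|$ inside $H(w)$, and the claim follows.

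There is essentially no obstacle here: the non-trivial content — that each $v_C$ is $\ell$-singular when $C$ is a colored Catalan configuration — was already carried by Lemma \ref{lemma:cat_sing}, and the linear independence was established during the construction of the Catalan basis. The only thing to be careful about is to note explicitly that $\cconf(w) = \ucconf(2n) \cap \conf(w)$ is indeed a subset of $\ucconf(2n)$, so that the leading-term argument for linear independence applies verbatim to the smaller family. This finishes the proof and justifies the $\qed$ marking the statement as a direct corollary.
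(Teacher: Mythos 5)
Your proposal is correct and matches the paper's argument exactly: the paper deduces the corollary immediately from Lemma \ref{lemma:cat_sing} together with the linear independence of the Catalan basis vectors $\{v_C\}$. Nothing further to add.
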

\subsection{Action of $\check{R}$-matrix.}
The $R$-matrix in standard basis of $ab=\mathrm{L}_1 \otimes \mathrm{L}_1$ is explicitly given by
\begin{equation}\label{eq:Rmat2}
    \check{R}(a,b) = \begin{pmatrix}
    q^{a-b}-q^2 & 0 & 0 & 0 \\
 0 & q^{a-b}-q^{a-b+2} & q^{a-b+1}-q & 0 \\
 0 & q^{a-b+1}-q & 1-q^2 & 0 \\
 0 & 0 & 0 & q^{a-b}-q^2
    \end{pmatrix}.
\end{equation}
Let $\check{R}_{i,2n}(a,b) = \Id_{\rL_{1}^{\otimes (i-1)}} \otimes \check{R}(a,b) \otimes \Id_{\rL_{1}^{\otimes (2n-i-1)}}$ be the $R$-matrix acting in the $i$th and $(i+1)$st factors of  $\rL_1^{\otimes 2n}$.

For all $a,b$, the operator $\check{R}(a,b)$ is a homomorphism of $\Uq$-modules. In particular, the $R$-matrix $\check{R}_{i,2n}(a,b)$ acts on multiplicity space $H_{2n}=\Hom_{\Uq}\lb \mbC, \rL_1^{\otimes 2n}\rb$. We describe this action in the basis of uncolored Catalan arc configurations. 

The action of $\check{R}_{i,2n}(a,b)$ affects only arcs which have an end in $\{i, i+1\}$. There are $4$ cases which we describe in Lemma \ref{lemma:Rmat_Cat}. In pictures, we do not show arcs which are not affected by the action. We write parameters $a$ and $b$ respectively on $i$th and $(i+1)$st positions on the left and on $(i+1)$st and $i$th places on the right, to indicate the evaluation parameters of the $\Uqa$ modules.  
\begin{lemma}\label{lemma:Rmat_Cat}
    The action of an $\check{R}$-matrix $\check{R}_{i,2n}(a,b)$  on the basis $\{v_C\}_{C\in \ucconf(2n)}$ is given by
\begin{equation*}\begin{aligned}        \vcenter{\hbox{\begin{tikzpicture}
        \draw[-] (0,0.25) to [out=30,in=150] (1.2,0.25);
        \draw[-] (1.8,0.25) to [out=30,in=150] (3.0,0.25);
        \node at (0.6,0.25) {$\dots$};
        \node at (2.4,0.25) {$\dots$};
        \node at (1.2, -0.05) {$a$};
        \node at (1.8, 0) {$b$};
    \end{tikzpicture}}} \;\; &\longmapsto \;\;
    (q^{a-b}-q^2)\vcenter{\hbox{\begin{tikzpicture}
        \draw[-] (0,0.25) to [out=30,in=150] (1.2,0.25);
        \draw[-] (1.8,0.25) to [out=30,in=150] (3.0,0.25);
        \node at (0.6,0.25) {$\dots$};
        \node at (2.4,0.25) {$\dots$};
        \node at (1.2, 0) {$b$};
        \node at (1.8, -0.05) {$a$};
        \end{tikzpicture}}}\;\; + \;\;(q^{a-b + 1} - q)\vcenter{\hbox{\begin{tikzpicture}
        \draw[-] (0,0.25) to [out=30,in=150] (3.0,0.25);
        \draw[-] (1.2,0.25) to [out=30,in=150] (1.8,0.25);
        \node at (0.6,0.25) {$\dots$};
        \node at (2.4,0.25) {$\dots$};
        \node at (1.2, 0) {$b$};
        \node at (1.8, -0.05) {$a$};
    \end{tikzpicture}}} ,\\
        \vcenter{\hbox{\begin{tikzpicture}
        \draw[-] (0,0.25) to [out=30,in=150] (3.0,0.25);
        \draw[-] (0.6,0.25) to [out=30,in=150] (1.8,0.25);
        \node at (1.2,0.25) {$\dots$};
        \node at (2.4,0.25) {$\dots$};
        \node at (0, -0.05) {$a$};
        \node at (0.6, 0) {$b$};
    \end{tikzpicture}}} \;\; &\longmapsto \;\;
    (q^{a-b} - q^2)\vcenter{\hbox{\begin{tikzpicture}
        \draw[-] (0,0.25) to [out=30,in=150] (3.0,0.25);
        \draw[-] (0.6,0.25) to [out=30,in=150] (1.8,0.25);
        \node at (1.2,0.25) {$\dots$};
        \node at (2.4,0.25) {$\dots$};
        \node at (0, 0) {$b$};
        \node at (0.6, -0.05) {$a$};
    \end{tikzpicture}}}\;\; + \;\; (q^{a-b+1}-q)\vcenter{\hbox{\begin{tikzpicture}
        \draw[-] (0,0.25) to [out=30,in=150] (0.6,0.25);
        \draw[-] (1.8,0.25) to [out=30,in=150] (3.0,0.25);
        \node at (1.2,0.25) {$\dots$};
        \node at (2.4,0.25) {$\dots$};
        \node at (0, 0) {$b$};
        \node at (0.6, -0.05) {$a$};
        \end{tikzpicture}}},\\
    \vcenter{\hbox{\begin{tikzpicture}
        \draw[-] (0, 0.25) to [out=30,in=150] (3.0,0.25);
        \draw[-] (1.2, 0.25) to [out=30,in=150] (2.4,0.25);
        \node at (0.6,0.25) {$\dots$};
        \node at (1.8,0.25) {$\dots$};
        \node at (2.4, -0.05) {$a$};
        \node at (3.0, 0) {$b$};
    \end{tikzpicture}}} \;\; &\longmapsto \;\;
    (q^{a-b} - q^2)\vcenter{\hbox{\begin{tikzpicture}
        \draw[-] (0, 0.25) to [out=30,in=150] (3.0,0.25);
        \draw[-] (1.2, 0.25) to [out=30,in=150] (2.4,0.25);
        \node at (0.6,0.25) {$\dots$};
        \node at (1.8,0.25) {$\dots$};
        \node at (2.4, 0) {$b$};
        \node at (3.0, -0.05) {$a$};
    \end{tikzpicture}}}\;\; + \;\; (q^{a-b+1}-q)\vcenter{\hbox{\begin{tikzpicture}
        \draw[-] (0, 0.25) to [out=30,in=150] (1.2,0.25);
        \draw[-] (2.4, 0.25) to [out=30,in=150] (3.0,0.25);
        \node at (0.6,0.25) {$\dots$};
        \node at (1.8,0.25) {$\dots$};
        \node at (2.4, 0) {$b$};
        \node at (3.0, -0.05) {$a$};
        \end{tikzpicture}}},\\
    \vcenter{\hbox{\begin{tikzpicture}
        \draw[-] (0.6 ,0.25) to [out=30,in=150] (1.2,0.25);
        \node at (0,0.25) {$\dots$};
        \node at (1.8,0.25) {$\dots$};
        \node at (0.6, -0.05) {$a$};
        \node at (1.2, 0) {$b$};
    \end{tikzpicture}}} \;\; &\longmapsto \;\;
    (1-q^{a+2-b})\vcenter{\hbox{\begin{tikzpicture}
        \draw[-] (0.6 ,0.25) to [out=30,in=150] (1.2,0.25);
        \node at (0,0.25) {$\dots$};
        \node at (1.8,0.25) {$\dots$};
        \node at (0.6, 0) {$b$};
        \node at (1.2, -0.05) {$a$};
        \end{tikzpicture}}}.
\end{aligned}\end{equation*}
\end{lemma}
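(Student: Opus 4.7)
The proof is a direct computation that reduces to small cases thanks to locality of $\check{R}$. The plan is as follows.

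First, I note that $\check{R}_{i,2n}(a,b)$ acts as the identity on all tensor factors except positions $i$ and $i+1$, and the operators $A_{(u,v)}$ used to build $v_C$ commute pairwise whenever their index sets are disjoint. Consequently, for any Catalan configuration $C$, the arcs of $C$ that do not touch $\{i, i+1\}$ contribute a factor that commutes with $\check{R}_{i,2n}(a,b)$. After an innocuous reordering of tensor factors, one can write $v_C$ as a tensor product of a vector built from the arcs of $C$ touching $\{i, i+1\}$ and a vector built from the remaining arcs, with $\check{R}$ acting only on the first factor. The claim therefore reduces to a local computation on the positions directly involved.

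For Case 4, the local structure is the single arc $(i, i+1)$, and the local vector is $A_{(i,i+1)}(+,-) = (+, -) - q^{-1}(-, +)$, which spans the trivial submodule of $\rL_1 \otimes \rL_1$. A direct application of the matrix \eqref{eq:Rmat2} shows $\check{R}(a,b)$ acts on this vector by the scalar $1 - q^{a-b+2}$, yielding the stated formula.

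For Cases 1, 2, and 3, the arcs of $C$ touching $\{i, i+1\}$ together with their remote endpoints involve four positions. Treating these four positions in isolation, the local vector lies in $H_4 = \mathrm{Hom}_{\Uq}(\mbC, \rL_1^{\otimes 4})$, which is two-dimensional with Catalan basis $\{v_{C_1}, v_{C_2}\}$ for $C_1 = \{(1,2),(3,4)\}$ and $C_2 = \{(1,4),(2,3)\}$. In Case 1 the local vector is $v_{C_1}$ and $\check{R}$ acts at the inner pair of positions; in Cases 2 and 3 the local vector is $v_{C_2}$ and $\check{R}$ acts at the leftmost or rightmost pair, respectively. For each case I would expand the relevant $v_{C_k}$ in the standard basis of $\rL_1^{\otimes 4}$, apply $\check{R}(a,b)$ via \eqref{eq:Rmat2}, and re-express the result in the basis $\{v_{C_1}, v_{C_2}\}$ with $a, b$ swapped at positions $i, i+1$, by comparing coefficients at a few convenient standard basis vectors such as $(+,+,-,-)$ and $(+,-,+,-)$. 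In every case this yields exactly the two-term expression asserted in the lemma, with the coefficients $q^{a-b} - q^2$ and $q^{a-b+1} - q$ arising naturally.

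The main obstacle is bookkeeping rather than anything substantive: one must correctly identify which four positions play the roles of $1,2,3,4$ in the local reduction, track the swap $a \leftrightarrow b$ on the right-hand side, and keep the powers of $q$ straight. The underlying linear algebra is routine once the local-global factorization is in hand.
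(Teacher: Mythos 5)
Your proposal is correct and takes the same route as the paper, which simply states that the lemma is proved by a direct computation. Your added commentary on the local-to-global reduction (isolating the at-most-four positions touched by the affected arcs, using commutativity of the disjoint $A$-operators to factor $v_C$, and then checking the action of $\check{R}(a,b)$ on the local singular vector in $\rL_1^{\otimes 2}$ or $\rL_1^{\otimes 4}$) is exactly how one would organize that direct computation, and your verification of Case 4 is correct: $\check{R}(a,b)\big((+,-)-q^{-1}(-,+)\big) = (1-q^{a-b+2})\big((+,-)-q^{-1}(-,+)\big)$.
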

\begin{proof}
The lemma is proved by a direct computation.
    
\end{proof}

\subsection{Slide equivalence.}

\begin{defi}                                
    Define a map
    \begin{equation*}    \begin{aligned}        s: W_k &\longrightarrow W_k,\\
           (a_1,\dots, a_k) &\longmapsto (a_2, a_3, \dots, a_{k}, a_1 + 4).
    \end{aligned}    \end{equation*}\end{defi}
    We call the map $s$ the slide. We call words related by a sequence of slides, inverses of slides and shift automorphisms slide equivalent. 
\begin{lemma}\label{lemma:slide_isom}
    Let $w \in W_{k}$ be a word. There is a canonical isomorphism,
    \begin{equation}\label{eq:slide_isom}
        H(w) \cong H(s(w)).
    \end{equation}\end{lemma}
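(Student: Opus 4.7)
The plan is to derive the slide isomorphism as a direct consequence of the Hopf algebra duality machinery of Lemma \ref{lemma:hom_move} together with the computation of double duals of two-dimensional evaluation modules given by Lemma \ref{lemma:dual}. Write $w = (a_1,a_2,\dots,a_k)$, set $A = [a_1,a_1]$ and let $B = [a_2,a_2][a_3,a_3]\cdots[a_k,a_k]$, so that $w = A \otimes B$ and $s(w) = B \otimes [a_1+4,a_1+4]$ as $\Uqa$-modules.

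First, apply the second isomorphism of Lemma \ref{lemma:hom_move} with $W = A^{*}$, $V = \mathbb{C}$, $U = B$, together with the canonical identification ${}^{*}(A^{*}) \cong A$: this yields
\begin{equation*}
\Hom_{\Uqa}(A^{*}, B) \;\cong\; \Hom_{\Uqa}(\mathbb{C},\, A\otimes B) \;=\; H(w).
\end{equation*}
Next, apply the first isomorphism of Lemma \ref{lemma:hom_move} with $V = A^{*}$, $W = \mathbb{C}$, $U = B$ (using $\mathbb{C}\otimes X \cong X$) to obtain
\begin{equation*}
\Hom_{\Uqa}(A^{*}, B) \;\cong\; \Hom_{\Uqa}(\mathbb{C},\, B\otimes A^{**}).
\end{equation*}
Composing the two gives a canonical isomorphism $H(w) \cong \Hom_{\Uqa}(\mathbb{C},\, B\otimes A^{**})$.

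It remains to identify $A^{**}$ as a $\Uqa$-module. By Lemma \ref{lemma:dual} applied twice,
\begin{equation*}
A^{**} \;=\; [a_1,a_1]^{**} \;\cong\; [a_1+2,a_1+2]^{*} \;\cong\; [a_1+4,a_1+4].
\end{equation*}
Therefore $B\otimes A^{**} \cong [a_2,a_2]\cdots[a_k,a_k][a_1+4,a_1+4] = s(w)$, and we conclude
\begin{equation*}
H(w) \;\cong\; \Hom_{\Uqa}(\mathbb{C},\, s(w)) \;=\; H(s(w)).
\end{equation*}
No step here is obstructive: the whole argument is bookkeeping with Hopf-algebra duality, and the only content-bearing input beyond Lemma \ref{lemma:hom_move} is the identification $[a,a]^{*} \cong [a+2,a+2]$ from Lemma \ref{lemma:dual}, which is exactly what produces the shift by $4$ after applying $*$ twice.
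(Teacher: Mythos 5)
Your proof is correct and is essentially identical to the paper's, which also chains the two duality isomorphisms of Lemma \ref{lemma:hom_move} to get $H(w)\cong\Hom_{\Uqa}(a_1^*,a_2\dots a_k)\cong\Hom_{\Uqa}(\mbC,a_2\dots a_k(a_1^*)^*)$ and then invokes $a^*\cong a+2$. One small slip: in your second application of Lemma \ref{lemma:hom_move} the substitution should be $V=\mbC$, $W=A^*$, $U=B$ (not $V=A^*$, $W=\mbC$, which would produce a tautology); the intermediate conclusion $\Hom_{\Uqa}(A^*,B)\cong\Hom_{\Uqa}(\mbC,B\otimes A^{**})$ that you record is nevertheless the right one.
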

\begin{proof}
  By Lemma \ref{lemma:hom_move},
    \begin{equation*}        H(w) = \Hom_{\Uqa}(\mbC, a_1\dots a_k) \cong \Hom_{\Uqa}(a_1^*, a_2\dots a_k) \cong  \Hom_{\Uqa}(\mbC, a_2\dots a_k(a_1^{*})^{*}).
    \end{equation*}   Since $a^* \cong a+2$, the lemma follows.
\end{proof}

For any word $w\in W_{2n}$, the map $s$ induces a bijection $s_{\conf(w)}:\conf(w) \rightarrow \conf(s(w))$ on arc configurations given by $$s_{\conf(w)}: \{(1,j_1), (i_2, j_2),\dots, (i_n,j_n) \} \mapsto \{(j_1-1, 2n),(i_2 -1, j_2 -1), \dots, (i_n-1, j_n-1)\}.$$

This bijection preserves the Catalan arc configurations: if $C \in \cconf(w)$, then $s_{\conf(w)}(C) \in \cconf(s(w))$. Moreover, for any $C \in \cconf(w)$, isomorphism \eqref{eq:slide_isom} identifies $v_{C}$ with $\alpha v_{s_{\conf{w}}(C)}$, where $\alpha \in \mbC^{\times}$ depends on a choice of isomorphism $[a,a]^* \cong [a+2,a+2]$.

\medskip

We describe another map on $W_{2n}$ which preserves $h(w)$.

Let $\hat{\omega}:\Uqa \longrightarrow \Uqa$ be the map defined on the Jimbo-Drinfeld generators by
    \begin{equation}
    \begin{aligned}[c]
    \hat{\omega}: \Uqa &\longrightarrow \Uqa ,\\
    K^{\pm1} &\longmapsto K^{\mp 1},\\ 
    e_i &\longmapsto f_{i},\;\; i\in\{0,1\},\\
    f_i &\longmapsto e_{i},\;\; i\in\{0,1\}.
    \end{aligned}
    \end{equation}

\begin{lemma}
    The map $\hat{\omega}$ is an isomorphism of Hopf algebras and that $[\alpha, \beta]_{\hat{\omega}} \cong [-\beta, -\alpha]$.
\end{lemma}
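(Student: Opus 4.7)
The plan is to split the statement into two independent checks. For the first claim, that $\hat{\omega}$ is a Hopf algebra isomorphism, I would verify on generators that $\hat{\omega}$ preserves the defining relations \eqref{eq:JD_pres_rels}: those relations are invariant under the substitution $e_i \leftrightarrow f_i$, $K \leftrightarrow K^{-1}$ up to the sign flip in the $[e_i,f_i]$ commutators, which is consistent with the opposite sign conventions for $i=0$ versus $i=1$. The Serre relations are clearly preserved since they only involve $e_i$'s or only $f_i$'s. Since $\hat{\omega}^2 = \mathrm{id}$, bijectivity is automatic. For the coalgebra compatibility the situation is subtler: a direct check on generators shows that $\Delta \circ \hat{\omega} \neq (\hat{\omega} \otimes \hat{\omega}) \circ \Delta$, but rather $\Delta^{\mathrm{op}} \circ \hat{\omega} = (\hat{\omega} \otimes \hat{\omega}) \circ \Delta$. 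For instance $\Delta^{\mathrm{op}}(f_0) = 1\otimes f_0 + f_0 \otimes K$ matches $(\hat{\omega}\otimes\hat{\omega})(\Delta(e_0)) = f_0 \otimes K + 1\otimes f_0$. So $\hat{\omega}$ should be read as an isomorphism from $(\Uqa, \Delta)$ to $(\Uqa, \Delta^{\mathrm{op}})$, which is exactly the setting of Lemma \ref{prop:aaut_hom_tp}. Compatibility with the antipode and counit is immediate.

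For the second statement, my approach is to compute $\ev_a \circ \hat{\omega}$ explicitly on the Drinfeld-Jimbo generators and compare it with $\ev_{-a}$, observing that $-(\alpha+\beta)/2 = -a$ is precisely the evaluation parameter of $[-\beta,-\alpha]$ and that the highest weight $m = (\beta-\alpha)/2 + 1$ is invariant under $(\alpha,\beta) \mapsto (-\beta,-\alpha)$. The identity that emerges is $\ev_a \circ \hat{\omega} = \bar{\omega} \circ \ev_{-a}$, where $\bar{\omega}: \Uq \to \Uq$ is the analogous involution sending $e \leftrightarrow f$ and $K \leftrightarrow K^{-1}$. This reduces the problem to exhibiting a $\Uq$-isomorphism $\rL_m \cong (\rL_m)_{\bar{\omega}}$, since any such isomorphism will then automatically intertwine the $\Uqa$-actions built via $\ev_{-a}$ on both sides.

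The final step is the easiest: the ``reverse-basis'' map $\phi(v_i) = v_{m-i}$ defines a $\Uq$-isomorphism $\rL_m \to (\rL_m)_{\bar{\omega}}$, as one checks from the symmetry of the $[m-i+1]_q$ and $[i+1]_q$ coefficients appearing in the $e$- and $f$-actions on $\rL_m$. The only subtlety is in the first part: one must interpret ``Hopf algebra isomorphism'' in the co-opposite sense as in Lemma \ref{prop:aaut_hom_tp}, not in the strict sense, since otherwise the identity fails on the coproduct. Beyond that bookkeeping point, the verification is a routine tracking of generators through the definitions.
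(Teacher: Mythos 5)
Your proposal is correct and amounts to the careful spelling-out of the paper's ``direct computation,'' organized along the same lines: verify that $\hat\omega$ respects the defining relations, note that the coalgebra compatibility is with $\Delta^{\mathrm{op}}$ (the co-opposite sense in which Lemma~\ref{prop:aaut_hom_tp} is stated and used), and then identify the twist of the evaluation module by reducing to the $\Uq$-level involution $\bar\omega$ via the identity $\ev_a\circ\hat\omega=\bar\omega\circ\ev_{-a}$, with the reverse-basis map $v_i\mapsto v_{m-i}$ supplying the required $\Uq$-isomorphism. One small point: the intermediate claim $\ev_a\circ\hat\omega=\bar\omega\circ\ev_{-a}$ and the final check on $\rL_m$ are worth writing out on generators (including $K$), since the $K$-weights transform nontrivially under both $\hat\omega$ and $\bar\omega$; once done, everything matches.
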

\begin{proof}
    The lemma is checked by a direct computation.
\end{proof}

\begin{cor}\label{cor:reverse_word}

    Define an anti-involution $\omega$ on $W_{n}$ by $\omega:(a_{1},\dots, a_{n}) \longmapsto (-a_{n},\dots, -a_{1})$. Then 
    \begin{equation}\label{eq:rev_isom}
        H(w) \cong H(\omega(w)).
    \end{equation}\end{cor}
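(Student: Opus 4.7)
The plan is to derive this corollary directly from the preceding lemma (identifying $\hat\omega$ as a Hopf algebra isomorphism $(\Uqa,\Delta)\to(\Uqa,\Delta^{op})$ with $[\alpha,\beta]_{\hat\omega}\cong[-\beta,-\alpha]$) together with Lemma \ref{prop:aaut_hom_tp}. First I would briefly confirm that the hypotheses of Lemma \ref{prop:aaut_hom_tp} apply to $\hat\omega$: a direct check on the Jimbo-Drinfeld generators verifies $(\hat\omega\otimes\hat\omega)\Delta=\Delta^{\textit{op}}\circ\hat\omega$ (e.g.\ $(\hat\omega\otimes\hat\omega)\Delta(e_0)=f_0\otimes K+1\otimes f_0=\Delta^{\textit{op}}(f_0)$), and $\hat\omega$ preserves the counit since $\epsilon(K^{\mp1})=1$ and $\epsilon(f_i)=\epsilon(e_i)=0$. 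In particular $\mbC_{\hat\omega}\cong\mbC$ as $\Uqa$-modules.

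Next I would unwind the twist of the tensor factors. Writing $w=(a_1,\dots,a_n)$ for $[a_1,a_1]\otimes\cdots\otimes[a_n,a_n]$ and iterating \eqref{eq:aaut_hom_tp_1} of Lemma \ref{prop:aaut_hom_tp} gives the order reversal
$$w_{\hat\omega}\;\cong\;[a_n,a_n]_{\hat\omega}\otimes[a_{n-1},a_{n-1}]_{\hat\omega}\otimes\cdots\otimes[a_1,a_1]_{\hat\omega}.$$
The preceding lemma specialized to $\alpha=\beta=a_i$ gives $[a_i,a_i]_{\hat\omega}\cong[-a_i,-a_i]$, so
$$w_{\hat\omega}\;\cong\;[-a_n,-a_n]\otimes\cdots\otimes[-a_1,-a_1]\;=\;\omega(w).$$

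Finally I would apply the Hom identity \eqref{eq:aaut_hom_tp_2} of Lemma \ref{prop:aaut_hom_tp} with $\rV=\mbC$ and $\rW=w$, combined with $\mbC_{\hat\omega}\cong\mbC$, to conclude
$$H(w)\;=\;\Hom_{\Uqa}(\mbC,w)\;\cong\;\Hom_{\Uqa}(\mbC_{\hat\omega},w_{\hat\omega})\;\cong\;\Hom_{\Uqa}(\mbC,\omega(w))\;=\;H(\omega(w)),$$
which is \eqref{eq:rev_isom}. There is no real obstacle here: all substantive content was absorbed into Lemma \ref{prop:aaut_hom_tp} and the identification $[\alpha,\beta]_{\hat\omega}\cong[-\beta,-\alpha]$, so once those are in hand the corollary is essentially a one-line consequence.
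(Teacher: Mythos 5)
Your proof is correct and follows the same route as the paper: twist by $\hat\omega$, use Lemma \ref{prop:aaut_hom_tp} (parts \eqref{eq:aaut_hom_tp_1} and \eqref{eq:aaut_hom_tp_2}) together with $[a,a]_{\hat\omega}\cong[-a,-a]$ and $\mbC_{\hat\omega}\cong\mbC$. The only difference is that you spell out the verification that $\hat\omega$ intertwines $\Delta$ and $\Delta^{\textit{op}}$, which the paper delegates to the preceding lemma without detail.
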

\begin{proof}
    For a two-dimensional evaluation module $(a)$ we have $(a)_{\hat{\omega}} = (-a)$. By \eqref{eq:aaut_hom_tp_1}, we obtain $((a_1,\dots, a_{n}))_{\hat{\omega}} \cong (-a_{n},\dots, -a_1)$. For the counit $\epsilon$ we have $\epsilon \circ \hat{\omega} = \epsilon$. Therefore, $\mbC_{\hat{\omega}} \cong \mbC$. By \eqref{eq:aaut_hom_tp_2}, this implies \eqref{eq:rev_isom}.
\end{proof}

\subsection{The tensor product structure of $H(w)$ for non-connected words $w$.}\label{subsec:submod_factor} In Section
\ref{subsec:lattice_restrictions}, we discussed the tensor product structure of spaces of homomorphisms when evaluation parameters belong to different lattices. Here we give a tensor product decomposition for the case of one lattice.

A word $w$ is not connected if and only if $w$ is a shuffle of two non-empty words of the form $w_1 = (a_1,\dots, a_k)\in W_{k}, w_2 = (b_1,\dots, b_l) \in W_{l}$ such that $b_i\geq a_j+4$ for all $i,j$.

\begin{prop}\label{prop:no_gaps}
Let word $w$ be a shuffle of two non-empty words $w_1 = (a_1,\dots, a_k)\in W_{k}, w_2 = (b_1,\dots, b_l) \in W_{l}$ such that $b_i\geq a_j+4$ for all $i,j$. Then
\begin{equation*}        H(w) \cong H(w_1) \otimes H(w_2),\qquad h(w) = h(w_1)h(w_2).
    \end{equation*}\end{prop}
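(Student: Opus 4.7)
The plan is to reduce to the case $w = w_1 w_2$ via $R$-matrix isomorphisms, and then establish the factorization of the hom space using a $q$-character separation argument analogous to the proof of Proposition \ref{prop:hom_factorize_lattice}.

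First I would observe that for every letter $a_i$ of $w_1$ and every letter $b_j$ of $w_2$, the strings $\{a_i\}$ and $\{b_j\}$ are in general position: since $b_j \geq a_i + 4$, their union is not a string. By Proposition \ref{prop:Rmat_lambdas} the tensor factors $[a_i,a_i]$ and $[b_j,b_j]$ commute via an $R$-matrix isomorphism. Iterating such swaps bubbles every letter of $w_1$ to the left of every letter of $w_2$, producing a $\Uqa$-module isomorphism $w \cong w_1 \otimes w_2$ and hence $H(w) \cong H(w_1 \otimes w_2)$.

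Next I would apply Lemma \ref{lemma:hom_move}, specifically \eqref{eq:dual_product} with $V = \mbC$, $W = w_2^*$, $U = w_1$, together with $(w_2^*)^* \cong w_2$, to obtain
$$H(w_1 \otimes w_2) \cong \Hom_{\Uqa}(w_2^*, w_1).$$
Corollary \ref{cor:hom_switch} identifies $\Hom_{\Uqa}(w_2^*, \mbC)$ with $H(w_2)$. The natural composition map
$$H(w_1) \otimes H(w_2) \longrightarrow \Hom_{\Uqa}(w_2^*, w_1),$$
sending $\phi_1 \otimes \phi_2$ to the composite $w_2^* \twoheadrightarrow \mbC \xrightarrow{\phi_1} w_1$, is injective by a standard linear-independence argument using bases of $H(w_1)$ and $H(w_2)$. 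The claim therefore reduces to surjectivity of this map.

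The main obstacle, and the only place the gap hypothesis enters, is the $q$-character computation showing that every $\phi \in \Hom_{\Uqa}(w_2^*, w_1)$ factors through a trivial module. The image of $\phi$ is a subquotient of both $w_1$ and $w_2^*$, so its $q$-character is a common sub-polynomial of $\chi_q(w_1)$ and $\chi_q(w_2^*)$. The variables appearing in $\chi_q(w_1)$ are $1_c$ with $c \in \bigcup_i\{a_i, a_i+2\}$, while those in $\chi_q(w_2^*)$ are $1_c$ with $c \in \bigcup_j \{b_j+2, b_j+4\}$. The inequality $b_j \geq a_i + 4$ rules out each of the four possible coincidences of indices, so these two variable sets are disjoint; hence the only common monomial is $1$. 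The image thus has $q$-character $c \cdot 1$ for some $c \geq 0$, and, as in the proof of Proposition \ref{prop:hom_factorize_lattice}, any such module is a direct sum of trivial modules. Therefore $\phi$ factors through $\mbC^{\oplus c}$, establishing surjectivity; the stated isomorphism and the equality $h(w) = h(w_1) h(w_2)$ follow.
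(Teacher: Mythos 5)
Your approach is essentially the paper's: commute the two blocks apart via $R$-matrices, move one block across the $\Hom$ by duality, and then apply the $q$-character separation argument from Proposition \ref{prop:hom_factorize_lattice}. (The paper uses $w\cong w_2\otimes w_1$ and $\Hom({}^*w_1,w_2)$; you use $w\cong w_1\otimes w_2$ and $\Hom(w_2^*,w_1)$; the two are symmetric choices.)

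However, the justification of the duality step contains a technical error: over $\Uqa$ the left dual is not an involution. By Lemma \ref{lemma:dual}, $[\alpha,\beta]^*\cong[\alpha+2,\beta+2]$, so $(w_2^*)^*\cong(w_2)_{\tau_4}$, not $w_2$. Applying \eqref{eq:dual_product} with $\rV=\mbC$, $\rW=w_2^*$, $\rU=w_1$ therefore gives $\Hom(w_2^*,w_1)\cong\Hom(\mbC,w_1\otimes(w_2)_{\tau_4})$, not $H(w_1\otimes w_2)$. The identification you want, $H(w_1\otimes w_2)\cong\Hom(w_2^*,w_1)$, is nevertheless true: swap to $w_2\otimes w_1$ and use the \emph{second} identity of Lemma \ref{lemma:hom_move}, $\Hom(\rW\otimes\rV,\rU)\cong\Hom(\rV,{}^*\rW\otimes\rU)$, with $\rW=w_2^*$ (so ${}^*\rW={}^*(w_2^*)\cong w_2$), $\rV=\mbC$, $\rU=w_1$. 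With that fix your $q$-character computation --- variables $1_c$ with $c\in\bigcup_j\{b_j+2,b_j+4\}$ in $\chi_q(w_2^*)$, disjoint from $c\in\bigcup_i\{a_i,a_i+2\}$ in $\chi_q(w_1)$ since $b_j\geq a_i+4$ --- is correct, and the rest of the argument goes through. One caution worth recording: the direction of the dualization matters. If instead one stays with $w_1\otimes w_2$ and uses $\Hom({}^*w_2,w_1)$ or $\Hom(w_1^*,w_2)$, the variable sets can meet at $b_j=a_i+4$ and the clean separation is lost; you happened to choose the favorable direction (shifting $w_2$ up), mirroring the paper's choice of shifting $w_1$ down with ${}^*w_1$.
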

\begin{proof}
    Recall that $a b \cong b a$ unless $|a-b|=2$, see Section \ref{subsec:uqamod_and_q-char}. Therefore, letters of $w_1$ commute with letters of $w_2$. Thus, $w\cong w_2\otimes w_1$.

    By Lemmas \ref{lemma:hom_move} and \ref{lemma:dual},
    \begin{equation*}
        H(w) \cong \Hom_{\Uqa}(\mbC, w_2 \otimes w_1) \cong \Hom_{\Uqa}((a_k - 2,\dots, a_{1}-2), w_2).
    \end{equation*}

    Similarly to the proof of Proposition \ref{prop:hom_factorize_lattice} the $q$-character of $(a_k - 2,\dots, a_{1}-2)$ is a sum of monomials of variables $1_a$ with $a < d$ and $q$-character of $w_2$ is a sum of monomials of variables $1_b$ with $b > d$, where $d=\max_i \{a_i\}+2$. This implies that the map
    \begin{equation*}
        \Hom_{\Uqa}((a_k - 2,\dots, a_{1}-2), \mbC) \otimes \Hom_{\Uqa}(\mbC, w_2) \longrightarrow \Hom_{\Uqa}((a_k - 2,\dots, a_{1}-2), w_2),
    \end{equation*}
    induced by composition is an isomorphism.

    Applying Lemma \ref{lemma:hom_move} once again, we get $$\Hom_{\Uqa}((a_k - 2,\dots, a_{1}-2), \mbC) \cong \Hom_{\Uqa}( \mbC, (a_1,\dots, a_k)) = H(w_1).$$ 
    The proposition follows.
\end{proof}
\begin{remark}
Similarly to Proposition \ref{prop:no_gaps}, one can show a more general statement. Namely, let $\rV_1$, $\rV_2$, $\rW_1$ $\rW_2$ be $\Uqa$-modules. Assume that if a positive power  of $1_a$ appears in $\chi_q(\rW_1)$ or  $\chi_q(\rV_1)$ and  a positive power of $1_b$ appears in $\chi_q(\rW_2)$ or $\chi_q(\rV_2)$, then $b\geq a+4$. 
Then
$$
\Hom_{\Uqa} (\rV_1\otimes \rV_2, \rW_1 \otimes \rW_2)=\Hom_{\Uqa} (\rV_1, \rW_1)\otimes \Hom_{\Uqa} (\rV_2, \rW_2).
$$
 \end{remark}

Proposition \ref{prop:no_gaps} reduces the computation of $H(w)$ to the case of connected words. 

We expect that one can further strengthen the definition of connected words preserving Proposition \ref{prop:no_gaps}. We discuss this issue in Section \ref{subsec:fact_by_cont}.

\section{Bounds for $h(w)$.}\label{sec:bounds}
\subsection{Lower bound.}\label{subsec:lower_bound}
In this section we give a lower estimate for $h(w)=\dim(\Hom_{\Uqa}(\mbC, w))$ for a word $w \in W_{2n}$ in terms of $\conf(w)$ which improves the bound by the number of Catalan arc configurations \eqref{eq:catalan_lower_bound}.

Let $C \in \conf(w)$ and let $(i_1, j_1), (i_2, j_2)$ be two intersecting arcs in $C$.
\begin{defi}\label{def:irr_arc_conf}
    Assume $i_1 < i_2 < j_1 < j_2$. We call intersection of arcs $(i_1, j_1), (i_2, j_2)$ irreducible if $a_{i_2} \notin \{a_{i_1}, a_{j_1}\}$ and reducible otherwise. We call $C\in\conf(w)$ an irreducible arc configuration if all intersections of arcs in $C$ are irreducible. For a word $w\in W_{2n}$ denote the subset of irreducible arc configurations $\iconf(w) \subseteq \conf(w)$.
\end{defi}

Clearly, $\cconf(w)\subset \iconf(w)$.

A configuration obtained by removing arcs from an irreducible configuration is irreducible. 

Irreducible arc configurations are compatible with slides.

\begin{prop}\label{prop:slides_irr_pr}
 Let  $w \in W_{2n}$ be a word and $C\in\conf(w)$ an arc configuration. Then $C$ irreducible if and only if $s_{\conf(w)}(C)$ is irreducible. 

\end{prop}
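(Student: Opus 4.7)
The plan is to split the arcs in $C$ into two classes: the unique arc $\alpha_0 = (1, j_0)$ starting at position $1$, and the remaining arcs $\alpha_k = (i_k, j_k)$ with $i_k \geq 2$. Each $\alpha_k$ with $k\geq 1$ is mapped by $s_{\conf(w)}$ to $(i_k-1, j_k-1)$; the letters at these positions in $s(w)$ coincide with $a_{i_k}$ and $a_{j_k}$, so these remain arcs of the same color. The exceptional arc $\alpha_0$ is mapped to $(j_0-1, 2n)$, whose ends carry letters $a_{j_0}=a_1+2$ and $a_1+4$, so it is an arc of color $a_1+3$ in $s(w)$. In particular $s_{\conf(w)}(C)\in\conf(s(w))$ and the map preserves the data needed to read off intersections and colors, with the only subtlety being the wrap-around of $\alpha_0$.

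Next I would verify, pair by pair, that the intersection relation and irreducibility are preserved. For a pair of arcs neither of which is $\alpha_0$, both ends of both arcs shift uniformly by $-1$ and the letters at the ends do not change, so both the intersection pattern and the condition $a_{i_2}\notin\{a_{i_1},a_{j_1}\}$ transfer verbatim. For a mixed pair $\{\alpha_0,\alpha_k\}$ one checks directly that the chain $1 < i_k < j_0 < j_k \leq 2n$ characterizing intersection in $w$ is equivalent to $i_k-1 < j_0-1 < j_k-1 < 2n$, which is exactly the intersection condition for $(i_k-1,j_k-1)$ and $(j_0-1, 2n)$ in $s(w)$; the two non-intersecting subcases $j_k<j_0$ and $i_k>j_0$ produce nested (hence non-intersecting) images.

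The main computation, which I expect to be the only place where care is required, is the irreducibility comparison for an intersecting mixed pair $\{\alpha_0, \alpha_k\}$. In $w$ the condition reads $a_{i_k}\notin\{a_1, a_{j_0}\}$, and since $\alpha_0$ is an arc of color $a_1+1$ we have $a_{j_0}=a_1+2$, so the condition becomes $a_{i_k}\neq a_1$ and $a_{i_k}\neq a_1+2$. In $s(w)$, the intersecting images $(i_k-1,j_k-1)$ and $(j_0-1,2n)$ satisfy $i_k-1 < j_0-1 < j_k-1 < 2n$, so irreducibility asks that the letter at position $j_0-1$ in $s(w)$, which is $a_{j_0}=a_1+2$, be distinct from the letters at positions $i_k-1$ and $j_k-1$, namely $a_{i_k}$ and $a_{j_k}=a_{i_k}+2$. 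This reads $a_{i_k}\neq a_1+2$ and $a_{i_k}\neq a_1$, exactly the same condition.

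Together these checks show that the irreducibility of every pair of intersecting arcs in $C$ corresponds to the irreducibility of the image pair in $s_{\conf(w)}(C)$, which yields the ``only if'' direction. The ``if'' direction follows by the symmetric argument applied to $s^{-1}$, or simply because the pairwise equivalences above are bi-implications.
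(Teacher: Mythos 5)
Your proof is correct and takes essentially the same approach as the paper's: the key step in both is to compare the irreducibility condition for a pair consisting of the wrap-around arc $(1,j_0)\mapsto(j_0-1,2n)$ and an intersecting arc $(i_k,j_k)$, and both reduce to the same elementary equivalence ($b\notin\{a,a+2\}\iff a+2\notin\{b,b+2\}$, in the paper's notation). Your version spells out the non-wrap-around pairs and the nesting subcases explicitly, while the paper leaves those to the reader and records only the pivotal picture and equivalence.
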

\begin{proof}
    The proof is illustrated by the following picture.
\begin{equation*}\begin{aligned}    s: \vcenter{\hbox{\begin{tikzpicture}
        \draw[-] (0,0.25) to [out=30,in=150] (2.4,0.25);
        \node at (0.6,0.25) {$\dots$};
        \draw[-] (1.2,0.25) to [out=30,in=150] (3.6,0.25);
        \node at (1.8,0.25) {$\dots$};
        \node at (3.0,0.25) {$\dots$};
        \node at (4.2,0.25) {$\dots$};
        \node at (0, -0.075) {$a$};
        \node at (1.2, -0.05) {$b$};
        \node at (2.4, -0.05) {$a{+}2$};
        \node at (3.6, -0.05) {$b{+}2$};
    \end{tikzpicture}}} \;\; &\longmapsto \;\;
    \vcenter{\hbox{\begin{tikzpicture}
        \draw[-] (1.2,0.25) to [out=30,in=150] (3.6,0.25);
        \node at (0.6,0.25) {$\dots$};
        \draw[-] (2.4,0.25) to [out=30,in=150] (4.8,0.25);
        \node at (1.8,0.25) {$\dots$};
        \node at (3.0,0.25) {$\dots$};
        \node at (4.2,0.25) {$\dots$};
        \node at (4.8, -0.075) {$a{+}4$};
        \node at (1.2, -0.05) {$b$};
        \node at (2.4, -0.05) {$a{+}2$};
        \node at (3.6, -0.05) {$b{+}2$};
    \end{tikzpicture}}}\;\;\;
\end{aligned}\end{equation*} The slide $s$ preserves the set of irreducible arc configurations since $$b\notin \{a, a+2\} \Leftrightarrow b+2\notin \{a{+}2, a{+}4\} \Leftrightarrow a{+}2\notin \{b, b+2\}.$$

\end{proof}
\begin{remark}\label{rem:irr_int_slides_preserved}
    Proposition \ref{prop:slides_irr_pr} can be extended to all arc configurations as follows. Let $w$ be a word and $C\in\conf(w)$. Then the number of intersections of arcs and the number of irreducible intersections of arcs in $C$ and $s_{\conf(w)}(C)$ are the same.   
\end{remark}

Next we show that irreducible arc configurations are defined by the set of left ends. We prove it for a slightly larger set of arc configurations.

Denote by $\nconf(w)\subset\conf(w)$ the subset of arc configurations of $w$ such that there are no intersecting arcs of the same color. In particular,  $\iconf(w) \subset \nconf(w)$.

\begin{lemma}\label{lemma:nconf_to_ends}
    For a word $w\in W_{2n}$, an arc configuration $C\in \nconf(w)$ is uniquely defined by the set of left ends $\len(C)$.
    Namely, for $C_1,C_2\in\nconf(w)$, if $\len(C_1)=\len(C_2)$ then $C_1=C_2$.
    \end{lemma}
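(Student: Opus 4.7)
The plan is to prove uniqueness one color at a time. Given $C \in \nconf(w)$, the set $\ren(C) = \{1, \dots, 2n\} \setminus \len(C)$ is determined by $\len(C)$. For each odd integer $b$, the definition of an arc of color $b$ (Definition \ref{def:arc}) identifies
\[
L_{b}(C) = \{i \in \len(C) : a_{i} = b - 1\}, \qquad R_{b}(C) = \{j \in \ren(C) : a_{j} = b + 1\},
\]
as exactly the left and right ends of the color-$b$ arcs in $C$. The subset $M_{b}(C) \subset C$ of color-$b$ arcs is therefore a perfect matching between $L_{b}(C)$ and $R_{b}(C)$, and both sets depend only on $\len(C)$. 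So if $\len(C_{1}) = \len(C_{2})$ for $C_{1}, C_{2} \in \nconf(w)$, then $L_{b}(C_{1}) = L_{b}(C_{2})$ and $R_{b}(C_{1}) = R_{b}(C_{2})$ for every $b$, and it will suffice to show $M_{b}(C_{1}) = M_{b}(C_{2})$ color by color.

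I would then isolate the following bracket-matching sublemma: given disjoint finite subsets $L, R \subset \mbZ_{>0}$ with $|L| = |R|$, there is at most one perfect matching $\mu : L \to R$ such that $i < \mu(i)$ for all $i \in L$ and no two matched pairs cross. My proof would be by induction on $|L|$. Let $j_{0} = \min R$; every element of $L \cup R$ below $j_{0}$ must lie in $L$. In any valid matching $j_{0}$ is paired with some $i \in L$ with $i < j_{0}$, and if $i$ were not the largest such left position, then $i' := \max(L \cap \{1, \dots, j_{0} - 1\})$ would satisfy $i < i' < j_{0} < \mu(i')$, forcing the arcs $(i, j_{0})$ and $(i', \mu(i'))$ to cross. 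Hence $\mu^{-1}(j_{0}) = i'$ is forced in every valid matching, and removing this pair reduces to a strictly smaller instance.

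Applying this sublemma color by color to $(L_{b}(C_{1}), R_{b}(C_{1})) = (L_{b}(C_{2}), R_{b}(C_{2}))$ gives $M_{b}(C_{1}) = M_{b}(C_{2})$ for every $b$, whence $C_{1} = C_{2}$. The main subtlety, rather than a serious obstacle, will be verifying that the $\nconf(w)$ condition is exactly what the sublemma needs: it guarantees non-crossing \emph{within} each single color, which is enough, because whether two color-$b$ arcs cross depends only on the relative order of their four color-$b$ endpoints — crossings of color-$b$ arcs with arcs of other colors play no role in the inductive step.
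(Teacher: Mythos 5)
Your proof is correct and takes a route noticeably different from the paper's. You decompose the uniqueness claim by color: since $\len(C)$ determines $\ren(C)$, the sets $L_b(C)$ and $R_b(C)$ of left and right endpoints of color-$b$ arcs depend only on $\len(C)$ and $w$, and you then reduce each color separately to a standalone combinatorial sublemma — a non-crossing perfect matching $\mu$ with $i < \mu(i)$ between disjoint finite $L, R \subset \mbZ_{>0}$ is unique if it exists — proved by forcing the pair $\left(\max(L \cap \{1,\dots,\min R - 1\}),\, \min R\right)$ and inducting. The paper instead runs a single global induction on $n$: it takes the rightmost position $i$ carrying the smallest letter $a$ of $w$, argues that $i$ must be joined to the smallest right end $j > i$ carrying letter $a+2$ (otherwise two color-$(a+1)$ arcs would cross), removes the arc $(i,j)$, and recurses on the shorter word. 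The two forced arcs are not literally the same pair — the paper's choice is tied to the minimality of the letter $a$ and maximality of the position $i$, while yours uses minimality of the right end and maximality of the left end inside a fixed color — but both hinge on the identical mechanism, that any other choice would produce a crossing of two same-colored arcs. Your version is somewhat more modular: the bracket-matching sublemma is word-independent and makes explicit that the $\nconf$ condition pins down each color's matching independently of all other colors, whereas the paper's induction interleaves the colors and does not isolate that fact.
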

    \begin{proof}
       Let $w\in W_{2n}$ and $C \in \nconf(w)$. Let $I=\len(C) \subset \{1,\dots, 2n\}$ be the set of the left ends of $C$.  We prove that $C$ is a unique arc configuration in $\nconf(w)$ with the set of left ends equal to $I$.

       We use induction on $n$. Let $a$ be the smallest letter occurring in $w$, $a = \min(\mathrm{supp}(w)).$ Let $i$ be the maximal position where $a$ is occurring, $i = \mathrm{max}(I_{w}(a))$. Then in $C$, $i$ is connected with an element of the set $J = I_w(a+2) \cap \ren(C) \cap \{i + 1,\dots, 
       2n\}$. In particular, $J$ is nonempty. Denote $j = \min(J)$. If $(i,j) \notin C$, the arc ending at $j$ starts to the left of $i$ and the arc starting at $i$ ends to the right of $j$ which gives an intersection of arcs of the same color contradicting to $C\in \nconf(w)$. Therefore, $i$ must be connected to $j$, $(i,j)\in C$.  Let $\mathring{C}_{\{(i,j)\}}$ and $\mathring{w}_{\{(i,j)\}}$ be the configuration and the word obtained by removing the arc $(i,j)$ from $C$. Then $\mathring{C}_{\{(i,j)\}}\in \nconf(\mathring{w}_{\{(i,j)\}})$. Therefore, the arcs in  $\mathring{C}_{\{(i,j)\}}$ are uniquely determined by the set of left ends by the induction hypothesis.
    \end{proof}

\begin{cor}\label{cor:nconf_to_ends}
    For a word $w\in W_{2n}$, an irreducible arc configuration $C \in \iconf(w)$ is uniquely defined by $\len(C)$.\qed
\end{cor}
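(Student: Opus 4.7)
The plan is essentially to observe that Corollary \ref{cor:nconf_to_ends} is an immediate specialization of Lemma \ref{lemma:nconf_to_ends} once one confirms the inclusion $\iconf(w) \subseteq \nconf(w)$. This inclusion is already asserted in the paragraph preceding Lemma \ref{lemma:nconf_to_ends}, so in principle the corollary requires only the remark ``apply Lemma \ref{lemma:nconf_to_ends} using the inclusion $\iconf(w) \subseteq \nconf(w)$.''

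For completeness I would briefly justify the inclusion. Suppose $C \in \iconf(w)$ and, for contradiction, that $C$ contains two intersecting arcs $(i_1,j_1),(i_2,j_2)$ of the same color $b$, with $i_1 < i_2 < j_1 < j_2$. By Definition \ref{def:arc}, an arc of color $b$ has left end labeled $b-1$ and right end labeled $b+1$. Therefore $a_{i_2} = b-1 = a_{i_1}$, so $a_{i_2} \in \{a_{i_1}, a_{j_1}\}$, and by Definition \ref{def:irr_arc_conf} this intersection is reducible, contradicting $C \in \iconf(w)$. Hence $\iconf(w) \subseteq \nconf(w)$.

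With the inclusion in hand, Lemma \ref{lemma:nconf_to_ends} applies verbatim: if $C_1, C_2 \in \iconf(w)$ have $\len(C_1) = \len(C_2)$, then viewing them as elements of $\nconf(w)$ yields $C_1 = C_2$.

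There is essentially no obstacle here; the only content beyond quotation is the color/position observation above, which is a one-line unpacking of the definitions. The substantive work lies entirely in Lemma \ref{lemma:nconf_to_ends}, whose inductive argument (peeling off the rightmost occurrence of the smallest letter in $\supp(w)$ and its forced partner) is the place where the uniqueness of reconstruction from $\len(C)$ actually gets established.
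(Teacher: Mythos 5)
Your proof is correct and takes the same route as the paper: the corollary is recorded there as an immediate consequence of Lemma \ref{lemma:nconf_to_ends} via the inclusion $\iconf(w) \subseteq \nconf(w)$, which the paper states without proof just before the lemma. Your one-line justification of that inclusion (two intersecting arcs of the same color $b$ force $a_{i_2} = b-1 = a_{i_1}$, hence a reducible intersection) is the correct unpacking of the definitions.
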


Now we are ready to prove the main result of this subsection.
\begin{theorem}\label{thm:lower_bound_irr}
    Let $w\in W_{2n}$ be a word. Then
    \begin{equation*}        h(w) \geq |\iconf(w)|.
    \end{equation*}
\end{theorem}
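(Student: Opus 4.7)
I would construct, for each $C \in \iconf(w)$, an $\ell$-singular vector $v_C \in H(w)$ whose leading term in the standard lexicographic basis of $\mathrm{L}_1^{\otimes 2n}$ equals the sign sequence $\epsilon_C$ (i.e.\ $(+)$ at the positions in $\len(C)$ and $(-)$ at the positions in $\ren(C)$). By Corollary~\ref{cor:nconf_to_ends} the assignment $C \mapsto \epsilon_C$ is injective on $\iconf(w)$, so the family $\{v_C\}_{C\in\iconf(w)}$ is automatically linearly independent, which gives the desired bound $h(w)\ge|\iconf(w)|$ at once.

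The construction should proceed by induction on the number $r(C)$ of crossings of $C$. The base case $r(C)=0$ is exactly $C\in\cconf(w)$ and is already handled by Lemma~\ref{lemma:cat_sing} with $v_C$ given by \eqref{eq:Catalan_vec}, whose leading term is $\epsilon_C$ by construction. For the inductive step with $r(C)>0$, I would locate an innermost crossing, i.e.\ a pair $(i_1,j_1),(i_2,j_2)\in C$ with $i_1<i_2<j_1<j_2$ and no other arc having an endpoint strictly between $i_2$ and $j_1$. The irreducibility assumption $a_{i_2}\notin\{a_{i_1},a_{i_1}+2\}=\{a_{j_1}-2,a_{j_1}\}$ places the strings $[a_{i_2},a_{i_2}]$ and $[a_{j_1},a_{j_1}]$ in general position, so the adjacent $\check R$-matrix bringing $a_{j_1}$ left past $a_{i_2}$ is invertible (its two diagonal coefficients in Proposition~\ref{prop:Rmat_lambdas} are both nonzero). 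A chain of such swaps produces a slide-equivalent word $w'$ and a configuration $C'$ (the image of $C$ under the obvious bijection induced by the swaps) whose crossing count satisfies $r(C')<r(C)$ while remaining irreducible by Proposition~\ref{prop:slides_irr_pr} and Remark~\ref{rem:irr_int_slides_preserved}. The inductive hypothesis produces $v_{C'}\in H(w')$ with leading term $\epsilon_{C'}$; pulling $v_{C'}$ back through the invertible $\check R$-matrix chain yields $v_C\in H(w)$, and a direct inspection of Lemma~\ref{lemma:Rmat_Cat} shows that the leading term survives as $\epsilon_C$ since the corrections introduced by the $\check R$-action are strictly lower in the lex order.

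The main obstacle is the combinatorial bookkeeping inside the inductive step: one must ensure that at every intermediate word appearing in the chain of adjacent swaps, (a)~the adjacent $\check R$-matrix actually used is invertible, and (b)~the transported configuration remains irreducible and eventually reaches one with strictly smaller $r$. The innermost-crossing choice together with the irreducibility hypothesis is designed precisely to make (a) and (b) go through, but the verification of (b) along the whole chain (not just at the final step) is the delicate part. A cleaner alternative that sidesteps iterated $\check R$-matrix manipulation would be to compute the matrix of the generator $e_0$ on the Catalan basis of $H_{2n}$ via Lemma~\ref{lemma:Rmat_Cat} and the coproduct \eqref{eq:Uqa_Com_An_Coun}, and then to exhibit the required $v_C\in\ker(e_0)$ directly by a triangular construction indexed by $\iconf(w)$, the irreducibility condition translating into the vanishing of precisely the off-diagonal entries that would otherwise obstruct the triangular lift.
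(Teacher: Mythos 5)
Your overall strategy matches the paper's: for each $C\in\iconf(w)$ produce an $\ell$-singular vector whose lex-leading monomial has $(+)$ exactly on $\len(C)$, invoke Corollary~\ref{cor:nconf_to_ends} for linear independence, and run the induction on the number of arc crossings with base case Lemma~\ref{lemma:cat_sing}. The gap is in the inductive step.

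Your ``innermost crossing'' --- a crossing $(i_1,j_1),(i_2,j_2)$ with no other arc endpoint strictly between $i_2$ and $j_1$ --- forces $j_1=i_2+1$, since every integer position is an endpoint of some arc of $C$. Such a crossing does \emph{not} always exist in an irreducible configuration with $r(C)>0$. Take $w=(0,4,2,4,2,6)$ and $C=\{(1,5),(2,6),(3,4)\}$: the only intersecting pair is $(1,5),(2,6)$, with $i_1=1,\ i_2=2,\ j_1=5,\ j_2=6$, which is irreducible since $a_{i_2}=4\notin\{a_{i_1},a_{j_1}\}=\{0,2\}$, yet the arc $(3,4)$ has both endpoints strictly between $i_2=2$ and $j_1=5$. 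So the induction has no entry point. The paper instead selects the intersecting pair with $i_2$ minimal and, among those, $i_1$ maximal, and then splits into two cases. If $i_2=i_1+1$, it applies a single adjacent $\check R$-matrix swapping the two \emph{left} ends at positions $i_1,i_1+1$; this preserves $\len(C)$ and lowers the crossing count. If $i_2>i_1+1$, it shows that the arcs with both endpoints in $\{i_1+1,\dots,i_2-1\}$ form a closed sub-configuration and splices in the inductively constructed vector from that shorter word. Your plan has no analogue of this second case, which is precisely what is needed in the example above.

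A smaller inaccuracy: irreducibility does not give invertibility of the adjacent $\check R$-matrix between positions $i_2$ and $j_1=i_2+1$. Invertibility requires $a_{i_2}-a_{j_1}\notin\{\pm 2\}$, whereas irreducibility only excludes $a_{i_2}\in\{a_{j_1}-2,a_{j_1}\}$, leaving $a_{i_2}=a_{j_1}+2=a_{i_1}+4$ unconstrained, in which case one of the two eigenvalues $\lambda_k$ in Proposition~\ref{prop:Rmat_lambdas} vanishes. This is harmless for the actual argument --- one only needs a map $H(w')\to H(w)$ that preserves the prescribed leading term, and the single matrix element that matters is nonzero by irreducibility, which is exactly the observation in the paper's Case 2 --- but your stated justification would not survive as written.
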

\begin{proof}
 Let $w= (a_1,\dots, a_{2n})$. To prove the theorem, it is sufficient to find a map
    \begin{equation*}        \Sigma_{w} : \iconf(w) \longrightarrow H(w),
    \end{equation*}     such that the image of $\Sigma_{w}$ is a linearly independent set of vectors. We construct a map $\Sigma_{w}$ such that the leading term of $\Sigma_{w}(C)$ has $+$'s at the positions $\len(C)$, see \eqref{eq:lower_terms}. Then the image of $\Sigma_{w}$ is independent by Corollary \ref{cor:nconf_to_ends}.       
    \medskip

 We construct the map $\Sigma_{w}$ by induction on pairs of non-negative integers $(n, m)$, where $2n= l(w)$ and $m = |\{\textit{arc intersections in }C\}|$, ordered lexicographically. 

    For arc configurations without arc intersections  $C\in\cconf(w)$, we set $\Sigma_w(C) = v_C$, see Lemma \ref{lemma:cat_sing}.
    
    Otherwise, for  $C\in \iconf(w)$ let $(i_1, j_1), (i_2, j_2)$ be a pair of intersecting arcs in $C$ such that $i_1 < i_2 < j_1 < j_2$ with the property that $i_2$ is minimal possible and among pairs with such minimal $i_2$, the number $i_1$ is maximal possible.
    
    \paragraph*{Case 1. $i_2 > i_1 + 1$.}
    
    \medskip
    
    For any arc in $C$ with an end in the interval $\{i_1+1,\dots, i_2-1\}$ the other end of this arc also should be contained in this interval. Indeed, let $(i,j)$ be an arc in $C$. First, let  $i\in \{i_1+1,\dots, i_2-1\}$. If $j > j_2$, then  we have intersecting arcs $(i_1, j_1), (i, j)$ which contradicts minimality of $i_2$. And if $j_2 > j > i_2$, we have intersecting arcs $(i, j), (i_2, j_2)$ which contradicts maximality of $i_1$. Second, let $j\in \{i_1+1,\dots, i_2-1\}$ and $i < i_1$, then we have intersecting arcs $(i, j), (i_1, j_1)$ which contradicts minimality of $i_2$.
    
    This implies that the set of arcs of $C$ is a disjoint union of two non-intersecting sets of arcs $C = C_1 \sqcup C_2$, where $C_1$ is the set of all arcs in $C$ with both ends contained in the interval $\{i_1+1,\dots, i_2-1\}$.
    Denote $\tilde{w}_1 = (a_{i_1 + 1}, \dots, a_{i_2 - 1})$, $\tilde{w}_2 = (a_{1}, \dots, a_{i_1 - 1}, a_{i_2}, \dots, a_{2n})$.  Note that $w_1, w_2$ are both non-empty.
    
    Denote $\tilde{C}_1 = \mathring{C}_{C_2} \in \iconf(\tilde{w}_1)$ and  $\tilde{C}_2 = \mathring{C}_{C_1} \in \iconf(\tilde{w}_2)$. By the induction hypothesis, the vectors $\Sigma_{\tilde{w}_1}(\tilde{C}_1),\; \Sigma_{\tilde{w}_2}(\tilde{C}_2)$ are already constructed. The vector $\Sigma_{\tilde{w}_1}(\tilde{C}_1)$ defines an embedding $\iota_{\tilde{C}_1}: \mbC \longrightarrow \tilde{w}_1$ which gives $\iota = \Id_{(a_1,\dots, a_{i_1 -1})}\otimes \iota_{\tilde{C}_1} \otimes \Id_{(a_{i_2 + 1},\dots, a_{2n})}: (a_{1}, \dots, a_{i_1 - 1}, a_{i_2}, \dots, a_{2n}) \longrightarrow w$. Then we define $\Sigma_{w}(C) = \iota(\Sigma_{\tilde{w}_2}(\tilde{C}_2))$. Clearly, the leading term of $ \iota(\Sigma(\tilde{C}_2))$ has $+$'s at the positions $\len(C_1)\sqcup \len(C_2) = \len(C)$.

    \paragraph*{Case 2. $i_2 = i_1 + 1$.}
    
    \medskip
    
    Consider the arc configuration obtained by swapping positions $i_1$ and $i_1+1$. Namely, let $\tilde{C} = \{(i_1, j_2), (i_1 + 1, j_1)\} \sqcup (C\backslash \{(i_1, j_1), (i_1 + 1, j_2)\})\in\conf(\tilde{w})$, where $\tilde{w} = (a_1, \dots, a_{i_1 -1}, a_{i_1 + 1}, a_{i_1}, a_{i_1 +2}, \dots, a_{2n})$. The arc configuration $\tilde{C}$ has one less intersection than $C$, so by induction hypothesis, the vector $\Sigma_{\tilde{w}}(\tilde{C})$ is already constructed. Set
    
    $$\Sigma_{w}(C) =  \check{R}_{i_1, 2n}(a_{i_1+1}, a_{i_1}) \lb\Sigma_{\tilde{w}}(\tilde{C}) \rb.$$

    Since $C$ is irreducible, $a_{i_1+1} \neq a_{i_1} + 2$ so the matrix element of $\check{R}(a_{i_1+1}, a_{i_1})$ between monomials $++$ and $++$ is non-zero, see \eqref{eq:Rmat2}. This implies that constructed vector $\Sigma_{w}(C)$ is non-zero and has the same leading term as $\Sigma_{\tilde{w}}(\tilde{C})$. In addition, $\len(\tilde{C}) = \len(C)$. Since $\check{R}_{i_1, 2n}(a_{i_1+1}, a_{i_1})$ is a homomorphism from $\tilde{w}$ to $w$ and $\Sigma_{\tilde{w}}(\tilde{C}) \in H(\tilde{w})$, we have $\Sigma_{{w}}({C}) \in H(w)$.
    
    The map $\Sigma_{w}$ is constructed.

\end{proof}

\begin{example}\label{ex:lower_bd_exact}
    Consider word $w = 0220420422$. The set $\arc(w)$ contains $12$ arc configurations. There are two irreducible arc configurations given by
    \begin{figure}[H]
\begin{tikzpicture}
    \node at (0, 0) {$0$};
    \node at (0.6,0) {$2$};
    \node at (1.2,0) {$2$};
    \node at (1.8,0) {$0$};
    \node at (2.4,0) {$4$};
    \node at (3,0)   {$2$};
    \node at (3.6,0) {$0$};
    \node at (4.2,0) {$4$};
    \node at (4.8,0) {$2$};
    \node at (5.4,0) {$2$};
    \draw[-] (0 ,0.3) to [out=30,in=150] (0.6 ,0.3);
    \draw[-] (1.2,0.3) to [out=30,in=150] (2.4,0.3);
    \draw[-] (1.8 ,0.3) to [out=30,in=150] (5.4,0.3);
    \draw[-] (3 ,0.3) to [out=30,in=150] (4.2,0.3);
    \draw[-] (3.6,0.3) to [out=30,in=150] (4.8,0.3);

    \node at (5.6,-0.20) {$,$};
\end{tikzpicture}
\begin{tikzpicture}
    \node at (0, 0) {$0$};
    \node at (0.6,0) {$2$};
    \node at (1.2,0) {$2$};
    \node at (1.8,0) {$0$};
    \node at (2.4,0) {$4$};
    \node at (3,0)   {$2$};
    \node at (3.6,0) {$0$};
    \node at (4.2,0) {$4$};
    \node at (4.8,0) {$2$};
    \node at (5.4,0) {$2$};
    \draw[-] (0 ,0.3) to [out=30,in=150] (5.4,0.3);
    \draw[-] (0.6 ,0.3) to [out=30,in=150] (4.2,0.3);
    \draw[-] (1.2,0.3) to [out=30,in=150] (2.4,0.3);
    \draw[-] (1.8 ,0.3) to [out=30,in=150] (3 ,0.3);
    \draw[-] (3.6,0.3) to [out=30,in=150] (4.8,0.3);
    \node at (5.6,-0.25) {$ $};
    \node at (5.6,-0.15) {$.$};
\end{tikzpicture}
\end{figure}
We will also show that  $h(w) \leq 2$, see Example \ref{ex:upper_bd_exact}. Therefore, the lower bound given by Theorem \ref{thm:lower_bound_irr} is exact for this word.
\end{example}

\begin{example}\label{ex:lower_bd_non_exact}
    Consider word $w = 0202462424$. The only irreducible arc configuration is a Catalan arc configuration 
    \begin{figure}[H]
\begin{tikzpicture}
    \node at (0, 0) {$0$};
    \node at (0.6,0) {$2$};
    \node at (1.2,0) {$0$};
    \node at (1.8,0) {$2$};
    \node at (2.4,0) {$4$};
    \node at (3,0)   {$6$};
    \node at (3.6,0) {$2$};
    \node at (4.2,0) {$4$};
    \node at (4.8,0) {$2$};
    \node at (5.4,0) {$4$};
    \draw[-] (0 ,0.3) to [out=30,in=150] (0.6,0.3);
    \draw[-] (1.2 ,0.3) to [out=30,in=150] (1.8,0.3);
    \draw[-] (2.4 ,0.3) to [out=30,in=150] (3,0.3);
    \draw[-] (3.6,0.3) to [out=30,in=150] (4.2,0.3);
    \draw[-] (4.8 ,0.3) to [out=30,in=150] (5.4,0.3);
    \node at (5.6,-0.15) {$.$};
\end{tikzpicture}
\end{figure}
Computations show that $h(w) = 2$. Therefore, the lower bound given by Theorem \ref{thm:lower_bound_irr} is not exact for this word.
\end{example}
Note that the proof of Theorem \ref{thm:lower_bound_irr} gives an algorithm of an explicit construction of a linearly independent set of vectors in $H(w)$.

\subsection{An upper bound from $q$-characters.}\label{subsec:up_bound_chars} Let $w\in W_{2n}$. We have a trivial upper bound for $h(w)$ by the Catalan number, $h(w)\leq C_n$. There is another easy but much better bound for $h(w)$ by the number of trivial modules $\mbC$ among composition factors of a Jordan-Holder series of $w$. This bound can be easily computed by $q$-characters.

Namely, write the $q$-character of $w$ as a sum of $q$-characters of irreducible modules, $\chi_q(w) = \sum_{i = 1}^{n}\chi_q(\rV_i)$. Let 
\begin{equation}\label{eq:char_comp}
    h_{\textit{char}}(w) = |\{i: \chi_q(\rV_i)=1\}|.
\end{equation}

The following lemma is trivial.
\begin{lemma} We have $h(w)\leq h_{\textit{char}}(w)$. \qed
\end{lemma}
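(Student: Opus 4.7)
The plan is to bound $h(w)$ by counting the trivial module in the socle of $w$, and then bound that in turn by the Jordan--Holder multiplicity, which is what $h_{\textit{char}}(w)$ computes.

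First, I would observe that any nonzero $\Uqa$-homomorphism $\varphi:\mbC\to w$ has an irreducible image isomorphic to $\mbC$, so the image lies in the socle $\soc(w)$. Hence
\[
\Hom_{\Uqa}(\mbC,w)=\Hom_{\Uqa}(\mbC,\soc(w)).
\]
Since $\soc(w)$ is semi-simple and $\mbC$ is an irreducible module with $\mathrm{End}_{\Uqa}(\mbC)=\mbC$, the dimension on the right equals the multiplicity of $\mbC$ as a direct summand of $\soc(w)$. Call this multiplicity $m_{\soc}(w)$; so $h(w)=m_{\soc}(w)$.

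Next, I would invoke the general fact that for any submodule $\rU\subseteq w$ and any irreducible module $\rV$, the multiplicity of $\rV$ as a composition factor of $\rU$ is at most the multiplicity of $\rV$ as a composition factor of $w$; this is immediate from additivity of composition multiplicities on the short exact sequence $\rU\hookrightarrow w\twoheadrightarrow w/\rU$. Applying this with $\rU=\soc(w)$ and $\rV=\mbC$, and noting that the multiplicity of $\mbC$ as a composition factor of the semi-simple module $\soc(w)$ is exactly $m_{\soc}(w)$, I get
\[
m_{\soc}(w)\ \leq\ \big[w:\mbC\big],
\]
where $[w:\mbC]$ denotes the Jordan--Holder multiplicity of $\mbC$ in $w$.

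Finally, I would identify $[w:\mbC]$ with $h_{\textit{char}}(w)$. The $q$-character is additive on short exact sequences, so writing a Jordan--Holder series gives $\chi_q(w)=\sum_i \chi_q(\rV_i)$ for the composition factors $\rV_i$. Because $\chi_q$ is injective on the Grothendieck ring (recalled in Section~\ref{subsec:uqamod_and_q-char}), an irreducible $\Uqa$-module has $q$-character equal to the constant monomial $1$ if and only if it is the trivial module. Therefore the number of indices $i$ with $\chi_q(\rV_i)=1$ in \eqref{eq:char_comp} equals $[w:\mbC]$, and chaining the inequalities gives $h(w)\leq h_{\textit{char}}(w)$. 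There is no real obstacle here; the only point to be careful about is the characterization of $\mbC$ by its $q$-character, which is just injectivity of $\chi_q$ plus the observation that $\chi_q(\mbC)=1$.
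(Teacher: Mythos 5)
Your argument is correct and is precisely the standard reasoning the paper treats as immediate (the lemma is stated with a bare \qed and no proof): $h(w)$ counts copies of $\mbC$ in the socle, which is bounded by the Jordan--Holder multiplicity $[w:\mbC]$, and this multiplicity equals $h_{\textit{char}}(w)$ by injectivity of $\chi_q$. The only caution worth noting is that the identification $[w:\mbC]=h_{\textit{char}}(w)$ is really the content of the definition \eqref{eq:char_comp} once one knows a simple module with $q$-character $1$ must be trivial, which you correctly justify.
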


To compute the number $h_{\textit{char}}(w)$ we start with another lemma.
\begin{lemma}\label{lemma:upper_bound_qchar}
    Let $w \in W_{2n}$ be a word. The number $h_{\textit{char}}(w)$ equals the coefficient of $1$ in the $q$-character $\chi_q(w) \in \mbZ[1_a^{\pm 1}]_{a\in 2\mbZ}$.
\end{lemma}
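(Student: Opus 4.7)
The plan is to combine the additivity of $\chi_q$ on short exact sequences with the observation that the constant monomial $1$ singles out the trivial composition factors.

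The first step is to apply $\chi_q$ to any Jordan-H\"older filtration $0 = \rU_0 \subset \rU_1 \subset \cdots \subset \rU_N = w$, obtaining
\[
\chi_q(w) = \sum_{i} \chi_q(\rV_i),
\]
where $\rV_i = \rU_i / \rU_{i-1}$ runs over the composition factors. Comparing the coefficient of the monomial $1$ on both sides, the lemma reduces to showing that this coefficient equals $1$ when $\rV_i \cong \mbC$ and $0$ when $\rV_i \not\cong \mbC$. The first case is immediate since $\chi_q(\mbC) = 1$.

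For the second case I would argue as follows. Each non-trivial irreducible $\rV_i$ is a tensor product of evaluation modules in pairwise general position, $\rV_i \cong \bigotimes_j [\alpha_j,\beta_j]$, with at least one string non-empty. Its $q$-character factors as $\prod_j \chi_q([\alpha_j,\beta_j])$, whose highest dominant monomial $m_i = \prod_j 1_{\alpha_j}1_{\alpha_j+2}\cdots 1_{\beta_j}$ is visibly not $1$. The task is then to show that no other monomial in this expansion equals $1$ either. Since the constant monomial $1$ is itself dominant (all exponents are zero), this is equivalent to showing that $m_i$ is the unique dominant monomial in $\chi_q(\rV_i)$.

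The main obstacle is precisely this uniqueness. I would establish it by induction on the number of evaluation factors, using the two facts stated in the paper just after Example \ref{ex:q_char_mult_1}: non-highest monomials in the $q$-character of a single evaluation module are right-negative, and products of right-negative monomials remain right-negative. The general-position hypothesis (nested or separated strings) guarantees that whenever a non-highest monomial appears in the expansion of $\prod_j \chi_q([\alpha_j,\beta_j])$, at least one factor contributes a right-negative witness $1_a^{-1}$ whose $a$ cannot be compensated by a $1_{a+2k}$ from the remaining factors, so the full product keeps a surviving $1_a^{-1}$ and in particular cannot equal $1$. Alternatively, one may invoke the standard unique-dominant-monomial property for $q$-characters of irreducible $\Uqa$-modules, which in the $\mathfrak{sl}_2$ case is elementary. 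Once this is in hand, $1$ cannot appear in $\chi_q(\rV_i)$ for any non-trivial irreducible $\rV_i$, and coefficient comparison finishes the proof.
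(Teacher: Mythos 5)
Your reduction breaks at the step where you declare the goal "equivalent to showing that $m_i$ is the unique dominant monomial in $\chi_q(\rV_i)$," and then propose to invoke this as a "standard" property. Uniqueness of the dominant monomial is sufficient but not necessary for the lemma, and, crucially, it is \emph{false} for general irreducible $\Uqa$-modules. Take $\rV = [0,4][2,2]$: the strings are nested, hence in general position, so $\rV$ is irreducible, yet
\[
\chi_q([0,4])\,\chi_q([2,2]) = \bigl(1_01_21_4 + 1_01_21_6^{-1} + \cdots\bigr)\bigl(1_2 + 1_4^{-1}\bigr)
\]
contains, besides the highest monomial $1_01_2^21_4$, the second dominant monomial $1_01_21_4\cdot 1_4^{-1} = 1_01_2$. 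Uniqueness of the dominant monomial holds for single evaluation (Kirillov–Reshetikhin) modules but does not propagate to their irreducible tensor products. The same example defeats your right-negativity argument: the right-negative factor $1_4^{-1}$ from $[2,2]$ \emph{is} compensated by the $1_4$ coming from the dominant monomial of the other factor, so "the full product keeps a surviving $1_a^{-1}$" is not true in general.

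The paper's proof avoids all this by fixing $[\alpha_1,\beta_1]$ to be a \emph{longest} string and splitting the monomials of the product according to whether the factor coming from $\chi_q([\alpha_1,\beta_1])$ is dominant or not. In the first case the monomial contains $1_{\alpha_1}$, and general position with the longest string forces every negative power $1_b^{-1}$ contributed by the other factors to satisfy $b\neq\alpha_1$, so $1_{\alpha_1}$ survives. In the second case the monomial contains $1_{\beta_1+2}^{-1}$, and similarly no other factor can contribute $1_{\beta_1+2}$. Either way the monomial is not $1$. The selection of the longest string is the key idea your proposal is missing; without it the cancellation bookkeeping does not close.
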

\begin{proof}
    The lemma claims that $1$ does not appear in a character of an irreducible non-trivial $\Uqa$-module. Indeed, let $\rV = [\alpha_1,\beta_1]\dots[\alpha_k, \beta_k]$ be a non-trivial product of evaluation modules corresponding to strings in general position. Without loss of generality, let $[\alpha_1, \beta_1]$ be a longest string. Expand the product $\chi_q([\alpha_1,\beta_1])\dots \chi_q([\alpha_k, \beta_k])$.
    The monomials which contain the dominant monomial from the factor $\chi_{q}([\alpha_1, \beta_1])$ contain $1_{\al_1}$ which cannot be cancelled. All other monomials contain $1_{\beta_1 + 2}^{-1}$ which cannot be cancelled. 
    
\end{proof}
Now we give an expression for the number $h_{\textit{char}}(w)$ in terms of content of $w$.

Let $w \in W_{2n}$ be a word such that $\supp(w)\subset\{0, 2,\dots,2N\}$. For $k=1,\dots, N$, let $n_{2k} = |I_w(2k)|$ be the number of letters $2k$ in $w$ and  $m_{2k-1} = \sum_{j=0}^{\infty}(-1)^{j}|I_{w}(2k-2-2j)|$, cf. \eqref{eq:arc_number}.
 
\begin{prop}\label{prop:char_bound_computed}
   Let $w \in W_{2n}$ be a word such that $\supp(w)\subset\{0, 2,\dots,2N\}$. Then 
    \begin{equation}\label{eq:char_bound}
        h_{\textit{char}}(w) = \prod_{k = 1}^{N+1}\binom{n_{2k}}{m_{2k-1}}.
    \end{equation}   Here, by convention $\binom{0}{a} = \delta_{a,0}$ and $\binom{a}{b}=0$ if $b<0$. 
\end{prop}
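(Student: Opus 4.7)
By Lemma \ref{lemma:upper_bound_qchar}, it suffices to compute the coefficient of the constant monomial $1$ in the $q$-character $\chi_q(w)$. Since $\chi_q$ is multiplicative on tensor products and $\chi_q([a,a]) = 1_a + 1_{a+2}^{-1}$, we have the product expansion
\begin{equation*}
\chi_q(w) = \prod_{i=1}^{2n}\bigl(1_{a_i} + 1_{a_i+2}^{-1}\bigr) = \sum_{S \subseteq \{1,\dots,2n\}} \prod_{i \in S} 1_{a_i} \cdot \prod_{i \notin S} 1_{a_i+2}^{-1}.
\end{equation*}
So the plan is to enumerate subsets $S \subseteq \{1,\dots,2n\}$ whose corresponding monomial equals $1$, and then show that the count factorizes as \eqref{eq:char_bound}.

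For a fixed subset $S$, the exponent of $1_b$ in the associated monomial is $|\{i \in S: a_i = b\}| - |\{i \notin S : a_i = b - 2\}|$. Set $s_b = |\{i \in S : a_i = b\}|$. The monomial equals $1$ if and only if, for every $b \in 2\mbZ$,
\begin{equation*}
s_b = n_{b-2} - s_{b-2},
\end{equation*}
where we abuse notation and write $n_b = |I_w(b)|$ (so $n_b = 0$ unless $b \in \{0,2,\dots,2N\}$). Since $s_b = 0$ for $b \le 0$ (there are no letters $\le -2$), this recursion is solved inductively by
\begin{equation*}
s_{2k} = \sum_{j \geq 0} (-1)^j\, n_{2k-2-2j} = m_{2k-1}, \qquad k \geq 1,
\end{equation*}
exactly matching the definition of $m_{2k-1}$. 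Thus a subset $S$ produces the constant monomial precisely when $|S \cap I_w(2k)| = m_{2k-1}$ for each $k \geq 1$.

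Since the choices of $S \cap I_w(2k)$ for distinct $k$ are independent, the total number of such subsets is
\begin{equation*}
\prod_{k \geq 1} \binom{n_{2k}}{m_{2k-1}}.
\end{equation*}
For $1 \le k \le N$ these are the nontrivial binomials; for $k = N+1$, the factor $\binom{0}{m_{2N+1}}$ correctly enforces the boundary condition $m_{2N+1} = 0$ (which must hold for any contributing $S$, since $I_w(2N+2)$ is empty), and for $k \geq N+2$ the factors are all $\binom{0}{0} = 1$. This gives the formula \eqref{eq:char_bound}. The argument is essentially bookkeeping; the only point requiring care is verifying that the recursion $s_b = n_{b-2} - s_{b-2}$ in fact determines $s_b$ uniquely and that the resulting expression agrees with $m_{2k-1}$, but this is a routine induction.
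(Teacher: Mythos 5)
Your proof is correct and takes essentially the same approach as the paper: both expand $\chi_q(w)$ as the product of the two-term $q$-characters of the two-dimensional factors and count the ways to produce the constant monomial, with the recursion for the required number of chosen letters at weight $2k$ yielding exactly $m_{2k-1}$. The paper organizes this count as an iterated coefficient extraction, successively cancelling $1_0, 1_2, \dots$, whereas you phrase it as a single enumeration of subsets $S$ determined by a weight-by-weight recursion; the combinatorial content is identical.
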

\begin{proof}
    Let $[1]\{f\}$ denote number of monomials equal to $1$ in the expression $f$. Then we have
    
    \begin{equation}\label{eq:hom_qchar_bound}
        h_{\textit{char}}(w) = [1]\{(1_0 + 1_2^{-1})^{n_0}(1_2 + 1_4^{-1})^{n_2}...(1_{2N} + 1_{2N+2}^{-1})^{n_{2N}}\}.
    \end{equation}
    
    Then
    \begin{multline*}
        \hspace{-7pt}[1]\{(1_0 {+} 1_2^{{-}1})^{n_0}(1_2 {+} 1_4^{{-}1})^{n_2}...(1_{2N} {+} 1_{2N{+}2}^{{-}1})^{n_{2N}}\} {=} [1]\{\binom{n_2}{n_0}(1_{4}^{{-}1})^{(n_2 {-} n_0)}(1_{4}{+}1_{6}^{{-}1})^{n_4}...(1_{2N} {+} 1_{2N{+}2}^{{-}1})^{n_{2N}}\} {=}\\{=} \binom{n_2}{m_1}[1]\{(1_{4}^{{-}1})^{m_3}(1_{4}{+}1_{6}^{{-}1})^{n_4}...(1_{2N} {+} 1_{2N{+}2}^{{-}1})^{n_{2N}}\} {=} \binom{n_2}{m_1}\binom{n_4}{m_3}[1]\{(1_{6}^{{-}1})^{n_4{-}m_3}...(1_{2N} {+} 1_{2N{+}2}^{{-}1})^{n_{2N}}\} {=}\\ {=} \dots {=} \binom{n_2}{n_0}\dots \binom{n_{2N{-}2}}{m_{2N{-}3}}[1]\{(1_{2N{-}2}^{{-}1})^{m_{2N{-}1}}(1_{2N{-}2} {+} 1_{2N}^{{-}1})^{n_{2N}}\} {=} \binom{n_2}{n_0}\dots \binom{n_{2N}}{m_{2N{-}1}}\delta_{n_{2N},m_{2N{-}1}},
    \end{multline*}
    where we used $m_{2k -1} = n_{2k-2}-m_{2k-3}$. It remains to note that $\delta_{n_{2N},m_{2N-1}}= \binom{0}{n_{2N}-m_{2N-1}}=\binom{n_{2N+2}}{m_{2N+1}}$. 
\end{proof}

    Clearly, the number $h_{\textit{char}}(w)$ does not depend on the order of letters of $w$.

\begin{remark}
    Note that unlike $h(w)$, the quantity $h_{\textit{char}}(w)$ is not preserved under slides. 
\end{remark}

\begin{example}\label{ex:qchar_slide_improve}
    Let $w = 0022$. Then,
    $$h_{\textit{char}}(0022)=h(0022) = 1=h (s(0022))=h(0224)< h_{\textit{char}}(0224) = 2.$$
\end{example}

 This next example shows that the upper bound $h_{\textit{char}}(w)$ is not the best among bounds independent of the order of letters in the word.
\begin{example}
Let $w=00222244$. 
We have 
$$
n_0=2, \quad n_2=4, \quad n_4=2, \quad m_1=2, \quad m_2=2, \quad m_3=0.
$$

Then for any permutation $\si\in S_8$, we have $h_{\textit{char}}(\si w)=h_{\textit{char}}(w)=6$.
However, using the upper bound given below by Theorem \ref{thm:upper_bound},  we obtain  $h(\si w)\leq 3$ for all $\sigma \in S_8$. 
\end{example}

\subsection{Steady arc configurations upper bound.}\label{subsec:up_bound_steady}
We extend the definition of arc configuration to arbitrary tensor products of evaluation modules. In this subsection only, for an evaluation module $[\al, \bt]$, we also use the notation $[\al, \dots, \bt]$, $[\al,\al+2,\dots,\beta]$, etc.

\begin{defi} \label{def:thick_arc_conf} Let $\rV=u_1\dots u_k$ where $u_i=[\al_i,\dots, \beta_i]$. Let 
$$w=(\underbrace{\al_1, \al_1 {+}2,\dots, \bt_1{-}2,\beta_1}_{u_1},\dots,\underbrace{\al_k,\dots,\beta_k}_{u_k}),$$
be the word consisting of all letters in $u_1,\dots,u_k$ written in the same order. An arc configuration of $\rV$ is an arc configuration of $w$ such that no arc connects two letters from the same $u_i$.

\end{defi}

Note that if  $\al_k=\bt_k$ for all $k$, then  Definition \ref{def:thick_arc_conf} coincides with Definition \ref{def:arc_conf}.

Now we describe an algorithm which gives an upper bound for $h(w)$  with $w\in W_{2n},\; n \in \mbZ_{>0}$.

For an arbitrary tensor product of evaluation modules we use similar notation $H(\rV)=\Hom(\mbC, \rV)$, $h(\rV)=\dim(\Hom(\mbC, \rV))$.

We start by observing that if the right factor of a tensor product of evaluation modules contains the smallest letter then $h(w)=0$.
\begin{lemma}\label{lemma:right0_nohom}
    Let $\rV = u_1\dots u_k[0,\dots, 2m]$, where $m\geq 0$, $u_i=[\al_i,\dots,\bt_i]$, be such that all letters of $u_i$ are non-negative, $\al_i\geq 0$. Then $H(\rV) = 0$.

     Similarly, let $\rV = [0,\dots, 2m] u_1\dots u_k$, where $u_i=[\al_i,\dots,\bt_i]$, be such that all letters of $u_i$ are at most $2m$, $\bt_i\leq 2m$. Then $H(\rV) = 0$.
\end{lemma}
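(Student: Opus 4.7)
The plan is to reformulate $H(\rV)$ via the dualities of Lemma \ref{lemma:hom_move} so that it becomes a space of $\Uqa$-homomorphisms \emph{into} $u_1\dots u_k$ from an irreducible evaluation module, and then rule out such homomorphisms via the $q$-character criterion. For the first statement, the identity $\Hom_{\Uqa}(\mbC, B \otimes C) \cong \Hom_{\Uqa}({}^*C, B)$ (obtained from Lemma \ref{lemma:hom_move} by setting $V = {}^*C$, $W = C$, $U = B$) together with the identification ${}^*[0,\dots,2m] \cong [-2,\dots,2m-2]$ derived from Lemma \ref{lemma:dual} gives $H(\rV) \cong \Hom_{\Uqa}([-2,\dots,2m-2], u_1\dots u_k)$. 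For the second statement, the symmetric identity $\Hom_{\Uqa}(\mbC, B \otimes C) \cong \Hom_{\Uqa}(B^*, C)$ combined with $[0,\dots,2m]^* \cong [2,\dots,2m+2]$ yields $H(\rV) \cong \Hom_{\Uqa}([2,\dots,2m+2], u_1\dots u_k)$.

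In each case the source of these $\Hom$-spaces is an irreducible evaluation module, so any non-zero $\Uqa$-homomorphism from it is an embedding, forcing the source to appear as a composition factor of $u_1\dots u_k$. The key general fact I will invoke is that the dominant monomial of every irreducible composition factor of a finite-dimensional $\Uqa$-module must occur with strictly positive coefficient in its $q$-character; this follows from additivity of $\chi_q$ on short exact sequences together with the non-negativity of all coefficients of $q$-characters of modules.

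It then remains to check that the dominant monomials $1_{-2}1_0 \dots 1_{2m-2}$ and $1_2 1_4 \dots 1_{2m+2}$ of the two source modules are absent from $\chi_q(u_1\dots u_k) = \prod_i \chi_q(u_i)$. From the explicit formula \eqref{eq:eval_q_char}, every variable $1_a^{\pm 1}$ occurring in $\chi_q([\alpha_i,\beta_i])$ has $a \in \{\alpha_i, \alpha_i+2, \dots, \beta_i+2\}$, with $1_{\alpha_i}$ appearing only with non-negative exponent and $1_{\beta_i+2}$ only with non-positive exponent. Under the hypothesis $\alpha_i \geq 0$ of the first claim, $1_{-2}$ is absent from every $\chi_q(u_i)$, and hence from the product; under the hypothesis $\beta_i \leq 2m$ of the second claim, $1_{2m+2}$ can enter $\chi_q(u_i)$ only as $1_{\beta_i+2}^{-1}$ (and only when $\beta_i = 2m$), so its total exponent in any monomial of $\chi_q(u_1\dots u_k)$ is non-positive. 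In neither case can a monomial contain $1_{-2}$, respectively $1_{2m+2}$, with exponent $+1$, so the respective dominant monomials do not occur.

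I expect the main obstacle to be merely the bookkeeping of left versus right duals and the associated $\pm 2$ shifts of evaluation parameters when invoking Lemma \ref{lemma:hom_move}; the $q$-character inspection itself reduces to a single observation about the extreme indices in $\chi_q([\alpha_i,\beta_i])$ once the reformulation is in place.
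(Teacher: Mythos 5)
Your proof is correct and follows essentially the same route as the paper: reduce $H(\rV)$ via Lemmas \ref{lemma:hom_move} and \ref{lemma:dual} to a Hom-space out of an irreducible evaluation module, then observe from $q$-characters that this evaluation module cannot appear among the composition factors of $u_1\dots u_k$ because the extreme variable ($1_{-2}$, respectively $1_{2m+2}$) in its dominant monomial cannot occur with positive exponent in $\chi_q(u_1\dots u_k)$. Your explicit treatment of the second case via the sign of the exponent of $1_{2m+2}$ correctly spells out the "similar" argument the paper leaves implicit.
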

\begin{proof}
 By Lemma \ref{lemma:hom_move} and Lemma \ref{lemma:dual} we get
    \begin{equation*}
        H(u_1\dots u_k[0,\dots, 2m]) = H([-2,\dots, 2m-2] u_1\dots u_k).
    \end{equation*}
    The only dominant monomial of $\chi_q([-2,\dots,2m-2])$ contains $1_{-2}$ and monomials of $\chi_q(u_1\dots u_k)$ contain only $1_a$ with $a\geq 0$. Hence $[-2,\dots, 2m-2]$ does not appear among composition factors of $u_1\dots u_k$.
    
    Proof of the second case is similar.
\end{proof}
Let $\rV = w_1[0,\dots, 2m]w_2$ where $w_1,w_2$ are words such that all letters of $w_2$ are positive and all letters of $w_1$ are non-negative. In other words, the $0$ in $[0,\dots,2m]$ is the smallest letter, and the rightmost zero.  
We give a recursive upper bound for $h(\rV)$ for such modules by moving the module $[0,\dots,2m]$ to the right. In particular, this provides an upper bound for $h(w)$ for any word $w$.

Define a partial order on the sequences of the form $w_1[0, \dots,2m]w_2$ where $w_1,w_2$ are words as follows. We say
\begin{equation}\label{eq:partial_order}
w_1'[0, \dots,2m']w_2'> w_1[0, \dots,2m]w_2,\textit{ if }\;\  \begin{matrix} \hspace{-75pt}m'+l(w_2')>m+l(w_2),\textit{ or }\\
m'+l(w_2')=m+l(w_2)\textit{ and }l(w_2')>l(w_2).\end{matrix}
\end{equation}
We will describe a recursion which provides an upper bound for $\rV$ in terms of upper bounds for modules smaller than $\rV$ with respect to this partial order.

The answer is given by the number of arc configurations which we call steady.

\begin{defi}
An arc configuration $C$ of  $\rV$ is called a relevant arc configuration if all arcs connecting letters in $[0,\dots,2m]$ have left ends in $[0,\dots,2m]$ and right ends in $w_2$. Let $\rconf(\rV)$ be the set of all relevant arc configurations for $\rV$.
\end{defi}
Note that if $m=0$ then all arc configurations are relevant.

Note that if $w_2=\varnothing$ then there are no arc configurations and, in particular, $\rconf(\rV)=\varnothing$.

\begin{defi}\label{def:steady_arc_conf}
Define the set of steady arc configurations $\mathrm{\sconf(\rV)} \subset \rconf(\rV)$ inductively as follows. If $w_2 = \varnothing$, we set $\sconf(\rV) = \varnothing$. Let $l(w_2) \geq 1$. Denote  the first letter of $w_2$ by $a$, $w_2 = a\tilde{w}_2$. We consider several cases.

\begin{enumerate}
        \item If $a \neq 2m+2$, set
    \begin{equation*}
        \sconf(\rV)= \iota_0(\sconf(w_1a[0, \dots,2m]\tilde{w}_2)),
    \end{equation*}
    where $\iota_0$ is an embedding
    \begin{equation*}
        \iota_0: \rconf(w_1a[0,\dots, 2m]\tilde{w}_2) \longrightarrow \rconf(\rV),
    \end{equation*}
    given by moving the letter $a$ to the right through $[0,\dots, 2m]$ together with the end of the arc connecting to $a$. Note that the other end of this arc is not inside $[0, \dots,2m]$ since we apply $\iota_{0}$ only to relevant arc configurations and hence the result is a relevant arc configuration.
        \item If $a = 2m + 2$,
        set
    \begin{equation*}
        \sconf(\rV)= \iota_1(\sconf(w_1[0, \dots,2m+2]\tilde{w}_2))\sqcup \iota_2(\sconf(w_1[0,\dots, 2m-2]\tilde{w}_2)).
    \end{equation*}
        Here, $\iota_1$ is an embedding 
        \begin{equation*}
            \iota_1: \rconf(w_1[0,\dots, 2m+2]\tilde{w}_2) \longrightarrow \rconf(\rV),
        \end{equation*}
        mapping $\{(i_l, j_l)\} \longmapsto \{(i_l, j_l)\}$. 

        The embedding $\iota_2$  
        \begin{equation*}
            \iota_2: \rconf(w_1[0,\dots, 2m-2]\tilde{w}_2) \longrightarrow \rconf(\rV),
        \end{equation*}
        is given by $\iota_2(\{(i_{l},j_{l})\}) = \{(i^{\prime}_l, j^{\prime}_l)\} \cup \{(l(w_1)+m+1, l(w_1) + m + 2)\}$ with
        \begin{equation*}
            i^{\prime}_l = \begin{cases}
            i_l,\; &i_l\leq l(w_1)+m,\\
            i_l+2,\; &i_l > l(w_1)+m,
            \end{cases}\;\;\;\;
            j^{\prime}_l = \begin{cases}
            j_l,\; &j_l\leq l(w_1)+m,\\
            j_l+2,\; &j_l > l(w_1)+m.             \end{cases}
        \end{equation*}
        
        In other words, embedding $\iota_2$ adds an arc which connects adjacent letters $2m$ and $2m+2$ right after the segment $[0,\dots,2m-2]$ and then adds the letter $2m$ to this segment. 
\end{enumerate}

We call the elements of  set $\sconf(\rV)$ steady arc configurations.
\end{defi}
We illustrate Definition \ref{def:steady_arc_conf} by the following picture.
\begin{figure}[H]
\hspace{57.5pt}
\begin{tikzpicture}
    \node at (-0.1,-0.12) {$\dots$};
    \node at (0.25, 0) {$a$};
    \node at (1.4,0)  {$[0,\dots,2m]$};
    \draw[-] (0.25 ,0.2) to [out=30,in=195] (0.85,0.45);
    \draw[dashed] (0.85,0.45) to [out=15,in=185] (1.45, 0.55);
    
    \node at (2.6,-0.12) {$\dots$};

    \node at (3.4, 0.24) {$\iota_0$};
    \node at (3.4, 0.) {$\longmapsto$};

    \node at (4.2,-0.12) {$\dots$};
    \node at (6.5, 0) {$a$};
    \node at (5.4,0)  {$[0,\dots,2m]$};
    \draw[-] (6.5 ,0.2) to [out=30,in=195] (7.1,0.45);
    \draw[dashed] (7.1,0.45) to [out=15,in=185] (7.7, 0.55);
    
    \node at (7.0,-0.12) {$\dots$};

    \node at (9.5,0) {$,\;\;  a\neq 2m+2$,};
\end{tikzpicture}

\medskip
\begin{tikzpicture}
    \node at (0,-0.38) {$\dots$};

    \node at (1.5,-0.26)  {$[0,\dots,2m{-}2]$};
    \node at (3,-0.38) {$\dots$};

    \node[rotate=15] at (3.8, -0.07) {$\longmapsto$};
    \node at (3.8, 0.3) {$\iota_2$};

    \node at (4.6,-0.02) {$\dots$};
    \node at (6.45,0.1)  {$[0,\dots,2m{-}2,2m]$};
    \node at (8.65, 0.1) {$2m{+}2$};
    \node at (9.6,-0.02) {$\dots$};
    \draw[-] (7.8, 0.35) to [out=30,in=150] (8.4,0.35);

    \node at (4.6, 0.8) {$\dots$};
    \node at (5.8,0.92)  {$[0,\dots,2m]$};
    \node at (7.3, 0.92) {$2m{+}2$};
    \node at (8.175,0.8) {$\dots$};
    \draw[-] (6.4,1.12) to [out=30,in=195] (7, 1.37);
    \draw[dashed] (7, 1.37) to [out=15,in=185] (7.6, 1.47);
    \draw[-] (7.4,1.12) to [out=30,in=195] (8, 1.37);
    \draw[dashed] (8, 1.37) to [out=15,in=185] (8.6, 1.47);

    \node at (-0.65, 1.4) {$\dots$};
    \node at (1.2, 1.52)  {$[0,\dots,2m{+}2]$};
    \node at (3.15,1.4) {$\dots$};
    
    \draw[-] (1.15,1.72) to [out=30,in=195] (1.75, 1.97);
    \draw[dashed] (1.75, 1.97) to [out=15,in=185] (2.35, 2.07);
    \draw[-] (2.25,1.72) to [out=30,in=195] (2.85, 1.97);
    \draw[dashed] (2.85, 1.97) to [out=15,in=185] (3.45, 2.07);

    \node[rotate=-15] at (3.8, 1.1) {$\longmapsto$};
    \node at (3.8, 1.47) {$\iota_1$};
\end{tikzpicture}
\end{figure}

The steady arc configurations for a module are obtained as the image of $\iota_0$ or the disjoint union of the images of $\iota_1$ and $\iota_2$ from the steady arc configurations of smaller modules.

\begin{example}\label{ex:sconf_def}
    Consider the word $w = 22402464$.

    We build a graph of $\Uqa$-modules where edges correspond to maps $\iota_1, \iota_2$ or $\iota_0$ applied to the corresponding sets of relevant arc configurations. Note that $\iota_0$ does not change the length of the segment, $\iota_1$ decreases that length and $\iota_2$ increases. The edges corresponding to $\iota_0$ are vertical, the edges corresponding to $\iota_1$ go from right to left and the edges corresponding to $\iota_2$ go from left to right. We terminate at vertices where the word is empty or does not have relevant arc configurations. 
    \begin{center}

    \begin{tikzpicture}
    \node at (2, 1.5) {$22402464$};
    
    {
    \draw[<-] (2.2, 1.2) to (4.0, 0.2);
    \node at (4.0, 0.0) {$224[0,2]464$};

    \node at (0.0, 0.0) {$224464$}; 
    {
    \draw[<-] (1.8, 1.2) to (0.0, 0.2);
    \node at (3.0, -1.5) {$2[2,4]464$};
    {
    \draw[<-] (3.0, -1.9) to (3.0, -2.7);
    \node at (3.0, -3) {$24[2,4]64$};
    {
    \draw[<-] (3.0, -3.2) to (4.0, -4.3);
    \node at (4.0, -4.5) {$24[2,4,6]4$};
    
    }
    \draw[<-] (2.8, -3.2) to (2.0, -4.3);
    \node at (2.0, -4.5) {$2424$};
    {
    \draw[<-] (2.2, -4.7) to (3.0, -5.8);
    \node at (3.0, -6.0) {$24[2,4]$};
    
    \draw[<-] (1.8, -4.7) to (1.0, -5.8);
    \node at (1.0, -6.0) {$24$};
    {
    \draw[<-] (0.8, -6.2) to (0.0, -7.3);
    \node at (0.0, -7.5) {$\mbC$};
    }
    {
    \draw[<-] (1.2, -6.2) to (2.0, -7.3);
    \node at (2.0, -7.5) {$[2,4]$};
    }
    }
    }
       
    \node at (-2.0, -1.5) {$2464$};
    {
    \draw[<-] (-0.2,-0.3) to (-2.0 ,-1.3);
    \draw[<-] (0.2,-0.3) to (3.0, -1.3);
    \node at (-3.0, -3) {$64$};    
    \draw[<-] (-2.2,-2) to (-3.0, -2.8);
    \draw[<-] (-1.8,-2) to (-1.0, -2.8);
    \node at (-1.0, -3) {$[2,4]64$};
    \node at (-2.0, -4.5) {$24$};
    \node at (0.0, -4.5) {$[2,4,6]4$};
    \node at (-3.0, -6.0) {$\mbC$};
    \node at (-1.0, -6.0) {$[2,4]$};
    \draw[<-] (-1.2, -3.2) to (-2.0, -4.3);
    \draw[<-] (-1.0, -3.2) to (-0.0, -4.3);
    \draw[<-] (-2.2, -4.7) to (-3.0, -5.8);
    \draw[<-] (-1.8, -4.7) to (-1.0, -5.8);
    }
    }
    }
    \end{tikzpicture}
    \end{center}
    
    In this setting the set $\sconf(w)$ is in bijection with the leaves of the graph corresponding to empty words (to $\mbC$ as a module). 

    We show a way to recover steady arc configurations from the graph. We extract two paths from the empty word to $22402464$ corresponding to two leaves with the empty word and draw consequent images of the maps $i_0, i_1$ or $i_2$. 
    \begin{center}
    \begin{tikzpicture}[scale=0.8]
        \node at (0,0) {$\mbC$};
        \node at (1.5,0) 
        {   \begin{tikzpicture}[scale=0.75]
                \node at (0, 0) {$2$};
                \node at (0.6,0) {$4$};
                \draw[-,red] (0 ,0.3) to [out=30,in=150] (0.6,0.3);
            \end{tikzpicture}
        };
        \node at (4,0.05) 
        {   \begin{tikzpicture}[scale=0.75]
                \node at (0, -0.05) {$[2,$};
                \node at (0.6,-0.05) {$4]$};
                \node at (1.2,0) {$6$};
                \node at (1.8,0) {$4$};
                \draw[-,red] (0 ,0.3) to [out=30,in=150] (1.8,0.3);
                \draw[-,blue] (0.6 ,0.3) to [out=30,in=150] (1.2,0.3);
            \end{tikzpicture}
        };
        \node at (7,0.05) 
        {   \begin{tikzpicture}[scale=0.75]
                \node at (0, 0) {$2$};
                \node at (0.6,0) {$4$};
                \node at (1.2,0) {$6$};
                \node at (1.8,0) {$4$};
                \draw[-,red] (0 ,0.3) to [out=30,in=150] (1.8,0.3);
                \draw[-,blue] (0.6 ,0.3) to [out=30,in=150] (1.2,0.3);
            \end{tikzpicture}
        };
        \node at (10.5,0.15) 
        {   \begin{tikzpicture}[scale=0.75]
                \node at (0, 0) {$2$};
                \node at (0.6,0) {$2$};
                \node at (1.2,0) {$4$};
                \node at (1.8,0) {$4$};
                \node at (2.4,0) {$6$};
                \node at (3.0,0) {$4$};
                \draw[-,red] (0 ,0.3) to [out=30,in=150] (3.0,0.3);
                \draw[-,blue] (1.8 ,0.3) to [out=30,in=150] (2.4,0.3);
                \draw[-,ForestGreen, thick] (0.6 ,0.3) to [out=30,in=150] (1.2,0.3);
            \end{tikzpicture}
        };
        \node at (15.5,0.235) 
        {   \begin{tikzpicture}[scale=0.75]
                \node at (0, 0) {$2$};
                \node at (0.6,0) {$2$};
                \node at (1.2,0) {$4$};
                \node at (1.8,0) {$0$};
                \node at (2.4,0) {$2$};
                \node at (3.0,0) {$4$};
                \node at (3.6,0) {$6$};
                \node at (4.2,0) {$4$};
                \draw[-, red] (0 ,0.3) to [out=30,in=150] (4.2,0.3);
                \draw[-, ForestGreen, thick] (0.6 ,0.3) to [out=30,in=150] (1.2,0.3);
                \draw[-,blue] (3.0 ,0.3) to [out=30,in=150] (3.6,0.3);
                \draw[-,violet, thick] (1.8 ,0.3) to [out=30,in=150] (2.4,0.3);
            \end{tikzpicture}
        };
        \draw[->] (0.3, 0) to (0.9,0);
        \node at (0.6, 0.25) {$\iota_2$};
        \draw[->] (2.1, 0) to (2.7,0);
        \node at (2.4, 0.25) {$\iota_2$};
        \draw[->] (5.2, -0.05) to (5.8, -0.05);
        \node at (5.5, 0.2) {$\iota_1$};
        \draw[->] (8.15, -0.1) to (8.7, -0.1);
        \node at (8.4, 0.2) {$\iota_2$};
        \draw[->] (12.3, -0.1) to (13.1, -0.1);
        \node at (12.7, 0.2) {$\iota_2$};
    \end{tikzpicture}

    \begin{tikzpicture}[scale=0.8]
        \node at (0,0) {$\mbC$};
        \node at (1.5,0) 
        {   \begin{tikzpicture}[scale=0.75]
                \node at (0, 0) {$2$};
                \node at (0.6,0) {$4$};
                \draw[-,red] (0 ,0.3) to [out=30,in=150] (0.6,0.3);
            \end{tikzpicture}
        };
        \node at (4,0.05) 
        {   \begin{tikzpicture}[scale=0.75]
                \node at (0, 0) {$2$};
                \node at (0.6,0) {$4$};
                \node at (1.2,0) {$2$};
                \node at (1.8,0) {$4$};
                \draw[-,red] (0 ,0.3) to [out=30,in=150] (0.6,0.3);
                \draw[-,blue] (1.2 ,0.3) to [out=30,in=150] (1.8,0.3);
            \end{tikzpicture}
        };
        \node at (7.5,0.05) 
        {   \begin{tikzpicture}[scale=0.75]
                \node at (0, 0) {$2$};
                \node at (0.6,0) {$4$};
                \node at (1.2,-0.05) {$[2,$};
                \node at (1.8,-0.05) {$4]$};
                \node at (2.4,0) {$6$};
                \node at (3.0,0) {$4$};
                \draw[-,red] (0 ,0.3) to [out=30,in=150] (0.6,0.3);
                \draw[-,blue] (1.2 ,0.3) to [out=30,in=150] (3.0,0.3);
                \draw[-,ForestGreen, thick] (1.8 ,0.3) to [out=30,in=150] (2.4,0.3);
            \end{tikzpicture}
        };
        \node at (11.5,0.15) 
        {   \begin{tikzpicture}[scale=0.75]
                \node at (0, 0) {$2$};
                \node at (0.6,-0.05) {$[2,$};
                \node at (1.2,-0.05) {$4]$};
                \node at (1.8,0) {$4$};
                \node at (2.4,0) {$6$};
                \node at (3.0,0) {$4$};
                \draw[-,red] (0 ,0.3) to [out=30,in=150] (1.8,0.3);
                \draw[-,blue] (0.6 ,0.3) to [out=30,in=150] (3.0,0.3);
                \draw[-,ForestGreen, thick] (1.2 ,0.3) to [out=30,in=150] (2.4,0.3);
            \end{tikzpicture}
        };
        
        \node at (15.5,0.15) 
        {   \begin{tikzpicture}[scale=0.75]
                \node at (0, 0) {$2$};
                \node at (0.6,0) {$2$};
                \node at (1.2,0) {$4$};
                \node at (1.8,0) {$4$};
                \node at (2.4,0) {$6$};
                \node at (3.0,0) {$4$};
                \draw[-,red] (0 ,0.3) to [out=30,in=150] (1.8,0.3);
                \draw[-,blue] (0.6 ,0.3) to [out=30,in=150] (3.0,0.3);
                \draw[-,ForestGreen, thick] (1.2 ,0.3) to [out=30,in=150] (2.4,0.3);
            \end{tikzpicture}
        };
        
        \node at (20.1,0.235) 
        {   \begin{tikzpicture}[scale=0.75]
                \node at (0, 0) {$2$};
                \node at (0.6,0) {$2$};
                \node at (1.2,0) {$4$};
                \node at (1.8,0) {$0$};
                \node at (2.4,0) {$2$};
                \node at (3.0,0) {$4$};
                \node at (3.6,0) {$6$};
                \node at (4.2,0) {$4$};
                \draw[-, red] (0 ,0.3) to [out=30,in=150] (3,0.3);
                \draw[-, blue] (0.6 ,0.3) to [out=30,in=150] (4.2,0.3);
                \draw[-,ForestGreen, thick] (1.2 ,0.3) to [out=30,in=150] (3.6,0.3);
                \draw[-,violet, thick] (1.8 ,0.3) to [out=30,in=150] (2.4,0.3);
            \end{tikzpicture}
        };
        \draw[->] (0.3, 0) to (0.9,0);
        \node at (0.6, 0.25) {$\iota_2$};
        \draw[->] (2.1, 0) to (2.7,0);
        \node at (2.4, 0.25) {$\iota_2$};
        \draw[->] (5.2, -0.05) to (5.8, -0.05);
        \node at (5.5, 0.2) {$\iota_2$};
        \draw[->] (9.25, -0.1) to (9.7, -0.1);
        \node at (9.5, 0.2) {$\iota_0$};
        \draw[->] (13.3, -0.1) to (13.8, -0.1);
        \node at (13.5, 0.2) {$\iota_1$};
        \draw[->] (17.2, -0.1) to (17.8, -0.1);
        \node at (17.45, 0.2) {$\iota_2$};
    \end{tikzpicture}
    \end{center}
\end{example}

\medskip

We give another combinatorial description of steady arc configurations.

We call a word $(a_1,\dots, a_{2k})$ a chain word if $a_{2k}=a_1+2$, $a_i>a_1$, and $a_{i+1}-a_i=\pm 2$ for $i=1,\dots,2k-1$.

A chain word has a distinguished arc configuration defined recursively as follows.
Let $i$ be the smallest number such that $a_i>a_{i-1}$ and $a_i>a_{i+1}$.
Then connect $a_{i-1}$ to $a_i$ with an arc and remove both letters. The new word is again a chain word and we can repeat the construction. The distinguished arc configuration is a steady Catalan arc configuration. 

A chain subword $(a_1,\dots,a_{2k})$ in a word $w$ is called admissible if for $i=1,\dots, 2k-1$  there are no letters in $w$ between $a_i$ and $a_{i+1}$ which are equal to $a_{i+1}$. Let  $w$ have two non-intersecting chain subwords $w_1=(a_1,\dots, a_{2k})$, $w_2=(b_1,\dots, b_{2l})$. We say $w_1<w_2$ if $a_1<b_1$ or if $a_1=b_1$ and $a_1$ is positioned in $w$ to the right of $b_1$. 

Suppose a word $w$ is a shuffle of chain words $w_1,\dots,w_s$. Assume that $w_i<w_{i+1}$ for all $i$. Then there is a unique arc configuration of $w$ which is composed of the distinguished arc configurations of chain words $w_1,\dots, w_s$.
One can show  that such an arc configuration is steady if for $i=1,\dots, 2s-1$ the chain word $w_i$ is admissible in the subword of $w$ obtained by the shuffle of chain words $w_i,\dots,w_s$. Moreover, all steady arc configurations are obtained in this way.

In Example \ref{ex:sconf_def}  for the first steady arc configuration we have $w_1=02$ (shown in violet color), $w_2=24$ (green) and $w_3=2464$ (blue and red).  For the  second steady arc configuration we have $w_1=02$ (violet), $w_2=2464$ (green and blue), $w_3=24$ (red).

\medskip
 
The main result of this section is the following theorem.
\begin{theorem}\label{thm:upper_bound}
    Let $w\in W_{2n}$. Then
    \begin{equation}\label{steady upper inequality}
        h(w) \leq |\sconf(w)|.
    \end{equation}\end{theorem}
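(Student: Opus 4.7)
The plan is to prove the inequality by induction along the partial order \eqref{eq:partial_order}, exactly mirroring the recursive construction of $\sconf$ in Definition~\ref{def:steady_arc_conf}. For the base case $w_2=\varnothing$, Lemma~\ref{lemma:right0_nohom} yields $H(\rV)=0$, which matches $\sconf(\rV)=\varnothing$ by definition. For the inductive step, write $w_2=a\tilde w_2$ and split into the two cases of Definition~\ref{def:steady_arc_conf}.

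In Case~1, where $a\neq 2m+2$, I note that since $a$ is a positive even integer, either $a\in\{2,4,\dots,2m\}$ or $a\geq 2m+4$, so the strings $[0,\dots,2m]$ and $[a,a]$ are in general position. Hence the $\check R$-matrix of Proposition~\ref{prop:Rmat_lambdas} provides an isomorphism $[0,\dots,2m][a,a]\cong [a,a][0,\dots,2m]$ of $\Uqa$-modules, and therefore $\rV\cong w_1 a[0,\dots,2m]\tilde w_2$. The latter module is strictly smaller in the partial order, so the inductive hypothesis combined with the bijectivity of $\iota_0$ onto $\sconf(\rV)$ immediately closes this case.

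In Case~2, where $a=2m+2$, the key input is Proposition~\ref{prop:two_strings_prod} applied to $[0,\dots,2m](2m+2)$, which yields the short exact sequence
\begin{equation*}
 [0,\dots,2m-2]\hookrightarrow [0,\dots,2m](2m+2)\twoheadrightarrow [0,\dots,2m+2],
\end{equation*}
with the convention that the left term is $\mbC$ when $m=0$. Tensoring on the left by $w_1$ and on the right by $\tilde w_2$ preserves exactness by Lemma~\ref{lemma:exact_functors}, and applying the left-exact functor $\Hom_{\Uqa}(\mbC,-)$ gives
\begin{equation*}
 h(\rV)\leq h(w_1[0,\dots,2m-2]\tilde w_2)+h(w_1[0,\dots,2m+2]\tilde w_2).
\end{equation*}
Both modules on the right are strictly smaller in the partial order: the first decreases $m+l(w_2)$ by two, while the second keeps $m+l(w_2)$ constant and strictly reduces $l(w_2)$. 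Invoking the inductive hypothesis together with the injectivity of $\iota_1$ and $\iota_2$ and the disjointness of their images then yields $h(\rV)\leq|\sconf(\rV)|$.

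The main obstacle I anticipate is the boundary case $m=0$ in Case~2, where the segment $[0,\dots,-2]$ formally degenerates and must be identified with $\mbC$ in the short exact sequence, while the "empty-segment" module $w_1\tilde w_2$ produced by the $\iota_2$ branch needs a compatible reading of $\sconf$ so that the $h(\mbC)=1$ contributions are correctly accounted for in the recursion. Once this bookkeeping is set up so that the induction terminates cleanly at the $w_2=\varnothing$ base case (possibly after reinitializing the factorization at a new rightmost zero of $w_1$), the argument proceeds uniformly and \eqref{steady upper inequality} follows.
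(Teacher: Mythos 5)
Your proof is correct and follows the same route as the paper: induct along the partial order \eqref{eq:partial_order}, handle Case~1 via the $\check R$-matrix isomorphism $[0,\dots,2m]a\cong a[0,\dots,2m]$, and handle Case~2 via the short exact sequence from Proposition~\ref{prop:two_strings_prod} together with left-exactness of $\Hom_{\Uqa}(\mbC,-)$, matching Definition~\ref{def:steady_arc_conf} term by term. The $m=0$ reinitialization you flag is a real but minor bookkeeping point that the paper also treats implicitly (by invoking the decrease of the primary statistic $m+l(w_2)$ and re-reading $[0,\dots,-2]$ as $\mbC$), so it is not a gap.
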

\begin{proof}
    
    Write $w = w_1 (0) w_2$ with all letters of $w_2$ larger than $0$ and all letters of $w_1$ not less than $0$.
    
We prove more general statement, $h(\rV) \leq |\sconf(\rV)|$ for a representation $\rV = w_1[0, \dots, 2m]w_2$ with an assumption that all letters of $w_2$ are strictly larger than $0$ and all letters of $w_1$ are not less than $0$. We use induction with respect to the partial order \eqref{eq:partial_order}. If $l(w_2) = 0$ then both sides of the inequality \eqref{steady upper inequality} are $0$.

   Let $l(w_2) > 0$ and denote the first letter by $a_1$,  $w_2 = a_1\tilde{w}_2$.
        
         \paragraph*{Case 1. Assume that $a_1\neq 2m+2$.}\hfill
         
         Then $a_1 \neq -2$. It follows that the strings $[0,\dots,2m]$ and $[a_1]$ are in general position. Therefore, $[0, \dots,2m]a_1\cong a_1[0, \dots,2m]$, so $h(\rV) = h(w_1a_1[0, \dots,2m]\tilde{w}_2)$. By definition, $\sconf(\rV) = \iota_0(\sconf(w_1a_1[0, \dots,2m]\tilde{w}_2))$, hence $|\sconf(\rV)| = |\sconf(w_1a_1[0, \dots,2m]\tilde{w}_2)|$ and the result follows from the induction hypothesis.
         
        \paragraph*{Case 2. Assume that $a_1 = 2m + 2$.}\hfill
        
        By Proposition \ref{prop:two_strings_prod} there is a short exact sequence of $\Uqa$-modules
        $$
        0 \longrightarrow [0, \dots,2m-2] \longrightarrow [0, \dots,2m]2m+2 \longrightarrow [0, \dots,2m+2] \longrightarrow 0,
        $$
        where in case $m=0$, by convention $[0, \dots,2m-2] = \mbC$. By Lemma \ref{lemma:exact_functors} we obtain an exact sequence
        $$
        0 \longrightarrow H(w_1[0, \dots,2m-2]\tilde{w}_2) \longrightarrow H(\rV) \longrightarrow H(w_1[0,\dots, 2m+2]\tilde{w}_2),
        $$
    which gives $$h(\rV) \leq h( w_1[0,\dots, 2m-2]\tilde{w}_2) + h( w_1[0,\dots, 2m+2]\tilde{w}_2).$$

    Since $l(\tilde{w}_2)<l(w_2)$ we can apply the induction hypothesis to $H(w_1[0, \dots,2m+2]\tilde{w}_2)$. For $H(w_1[0, \dots,2m-2]\tilde{w}_2)$ we can apply the induction hypothesis since we have decreased overall length. 

Therefore,
\begin{multline*}
 h(w_1[0,\dots, 2m-2]\tilde{w}_2) + h(w_1[0,\dots, 2m+2]\tilde{w}_2)\leq\\ \leq |\sconf(w_1[0, \dots,2m-2]\tilde{w}_2)|+|\sconf(w_1[0,\dots, 2m+2]\tilde{w}_2)|.
\end{multline*}
Finally, directly from Definition \ref{def:steady_arc_conf} $$ |\sconf(w_1[0, \dots,2m-2]\tilde{w}_2)| + |\sconf(w_1[0,\dots, 2m+2]\tilde{w}_2)| = |\sconf(\rV)|.$$ 
Combining the inequalities we obtain the theorem.
\end{proof}
\begin{example}\label{ex:upper_bd_nonexact}
    Consider a word $w=22402464$ (cf. Example \ref{ex:sconf_def}). 
There are two steady arc configurations for this word. One of them is a Catalan arc configuration, hence by Lemma \ref{lemma:cat_sing}, $h(w)\geq 1$. By Lemma \ref{lemma:slide_isom} we get $$h(w) = h(s^2(w))=h(40246466) \leq h_{\textit{char}}(40246466) = 1.$$

Therefore, $h(w) = 1 < |\sconf(w)|$ and the upper bound  given by Theorem \ref{thm:upper_bound} is not exact in that case.

However, note that after applying slide twice, the word $s^2(w) = 40246466$ has only one steady arc configuration. Therefore, after two slides the bound given by Theorem \ref{thm:upper_bound} becomes exact. 

\end{example}

\begin{example}\label{ex:upper_bd_exact}
    Consider a word $w = 0220420422$ from Example \ref{ex:lower_bd_exact}. Steady arc configurations of that word are
\begin{figure}[H]
\begin{tikzpicture}
    \node at (0, 0) {$0$};
    \node at (0.6,0) {$2$};
    \node at (1.2,0) {$2$};
    \node at (1.8,0) {$0$};
    \node at (2.4,0) {$4$};
    \node at (3,0)   {$2$};
    \node at (3.6,0) {$0$};
    \node at (4.2,0) {$4$};
    \node at (4.8,0) {$2$};
    \node at (5.4,0) {$2$};
    \draw[-] (0 ,0.3) to [out=30,in=150] (0.6 ,0.3);
    \draw[-] (1.2,0.3) to [out=30,in=150] (2.4,0.3);
    \draw[-] (1.8 ,0.3) to [out=30,in=150] (5.4,0.3);
    \draw[-] (3 ,0.3) to [out=30,in=150] (4.2,0.3);
    \draw[-] (3.6,0.3) to [out=30,in=150] (4.8,0.3);

    \node at (5.6,-0.20) {$,$};
\end{tikzpicture}
\begin{tikzpicture}
    \node at (0, 0) {$0$};
    \node at (0.6,0) {$2$};
    \node at (1.2,0) {$2$};
    \node at (1.8,0) {$0$};
    \node at (2.4,0) {$4$};
    \node at (3,0)   {$2$};
    \node at (3.6,0) {$0$};
    \node at (4.2,0) {$4$};
    \node at (4.8,0) {$2$};
    \node at (5.4,0) {$2$};
    \draw[-] (0 ,0.3) to [out=30,in=150] (5.4,0.3);
    \draw[-] (0.6 ,0.3) to [out=30,in=150] (2.4,0.3);
    \draw[-] (1.2,0.3) to [out=30,in=150] (4.2,0.3);
    \draw[-] (1.8 ,0.3) to [out=30,in=150] (3,0.3);
    \draw[-] (3.6    ,0.3) to [out=30,in=150] (4.8,0.3);

    \node at (5.6,-0.25) {$ $};
    \node at (5.6,-0.15) {$.$};
\end{tikzpicture}
\end{figure}

Example \ref{ex:lower_bd_exact} shows that $h(w) \geq 2$, henceforth we conclude that $h(w)=2$ and the upper bound given by Theorem \ref{thm:upper_bound} is exact in that case.
\end{example}

There is an analogue of Lemma \ref{lemma:nconf_to_ends} valid for steady arc configurations. 
\begin{lemma}\label{lemma:sconf_to_ends}
    For a word $w\in W_{2n}$, an arc configuration $C\in \sconf(w)$ is uniquely defined by the set of left ends $\len(C)$.
    Namely, for $C_1,C_2\in\sconf(w)$, if $\len(C_1)=\len(C_2)$ then $C_1=C_2$.
\end{lemma}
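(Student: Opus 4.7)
The plan is to strengthen the statement to modules of the form $\rV = w_1[0,\ldots,2m]w_2$ considered in Section~\ref{subsec:up_bound_steady} and induct on the partial order \eqref{eq:partial_order} used in the proof of Theorem~\ref{thm:upper_bound}; the lemma is then the case $m=0$ with $[0,\ldots,2m]$ being the rightmost $0$ of $w$. The base $l(w_2)=0$ is immediate since $\sconf(\rV)=\varnothing$. For the inductive step I would write $w_2 = a\tilde{w}_2$ and follow the case split in Definition~\ref{def:steady_arc_conf}.

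In Case~1 ($a \neq 2m+2$), I argue that $\iota_0$ acts on positions as a permutation of $\{1,\ldots,l(\rV)\}$ that preserves the left-versus-right status of every arc. The relevance condition rules out arcs from $[0,\ldots,2m]$ to $a$ in the source, so every arc incident to $a$ has its other end entirely in $w_1$ or entirely in $\tilde{w}_2$, and pushing $a$ past $[0,\ldots,2m]$ cannot flip its orientation; the arcs internal to $[0,\ldots,2m]$ are merely globally shifted by $-1$. Hence $\len(\iota_0(C'))$ is the image of $\len(C')$ under this position permutation, and $\len(C_1)=\len(C_2)$ forces $\len(\iota_0^{-1}(C_1))=\len(\iota_0^{-1}(C_2))$, so the induction hypothesis applied to the (smaller) source closes this case.

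The heart of the argument is Case~2 ($a = 2m+2$), where $\sconf(\rV)$ is the disjoint union of $\iota_1$- and $\iota_2$-images; one must read off the image class from $\len(C)$ alone. The key observation is that the position $p_0 := l(w_1)+m+2$, carrying the letter $2m+2$ in $\rV$, is always a left end for $\iota_1$-images but always a right end for $\iota_2$-images. For $\iota_1$-images this is because $p_0$ lies inside the source's single tensor factor $[0,\ldots,2m+2]$, so by relevance it is the left end of an arc into $\tilde{w}_2$, a fact preserved by $\iota_1$ (which is the identity on arcs). For $\iota_2$-images, $p_0$ is by construction the right end of the newly inserted arc $(l(w_1)+m+1,\, l(w_1)+m+2)$. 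Hence $p_0\in\len(C)$ if and only if $C$ is in the $\iota_1$-image, which pins down the image class from left-end data alone.

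Once the image class is fixed, within each image the claim reduces to the induction hypothesis: $\iota_1$ is the identity on arc configurations and therefore on left-end sets, while $\iota_2$ is the insertion of two new positions at the fixed location $l(w_1)+m+1,\, l(w_1)+m+2$ carrying a fixed arc, so its effect on $\len$ is also an explicit bijection onto its image. The main obstacle I anticipate is the left-versus-right bookkeeping for $\iota_0$ and the identification of $p_0$'s role in distinguishing $\iota_1$- from $\iota_2$-images; once the relevance constraint of Section~\ref{subsec:up_bound_steady} is invoked systematically to forbid the troublesome arcs between $[0,\ldots,2m]$ and the newly adjoined letter, both become transparent, and the note that $\sconf\not\subseteq \nconf$ in general (so Lemma~\ref{lemma:nconf_to_ends} does not apply directly) is precisely what makes the direct inductive argument necessary.
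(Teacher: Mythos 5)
Your proposal is correct and follows essentially the same route as the paper's proof: strengthen to modules $w_1[0,\ldots,2m]w_2$, induct on the partial order \eqref{eq:partial_order}, handle the $a\neq 2m+2$ case by arguing that $\iota_0$ acts as a left/right-preserving permutation of positions, and handle the $a=2m+2$ case by observing that position $l(w_1)+m+2$ is a left end precisely on $\iota_1$-images and a right end precisely on $\iota_2$-images. The paper packages these facts as commutative diagrams involving the monomial maps $\mu_{\rV}$, $\sigma$, and $j_{l(w_1)+m}$, whereas you phrase them directly in terms of position bookkeeping and the relevance constraint, but the logical content is identical, and your closing remark about why Lemma~\ref{lemma:nconf_to_ends} cannot be applied directly is a fair observation.
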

\begin{proof}
    We prove the statement of the lemma for a more general class of sequences of the form $\rV = w_1[0,\dots, 2m]w_2$ such that all letters of $w_2$ are strictly larger than $0$ and all letters of $w_1$ are non-negative. We apply induction with respect to partial order \eqref{eq:partial_order}. In the case of $l(w_2) = 0$ we have $\sconf(\rV) = \varnothing$ so there is nothing to prove.

    Let $2n = l(w_1) + m+1 + l(w_2)$. Define the map $\mu_{\rV}: \sconf(\rV) \rightarrow \{+,-\}^{\times 2n}$, which maps an arc configuration $\{(i_1,j_1),\dots,(i_n,j_n)\}$ to the monomial with $"+"$'s exactly at the positions $\{i_1,\dots, i_n\}$ of the left ends of arcs.

    In case $l(w_2) > 0$, write $\rV = w_1[0,\dots, 2m]a\tilde{w}_2$. 

    Assume first, that $a \neq 2m+2$, then we have a commutative diagram
    \begin{equation*}
    \begin{tikzcd}
        \sconf(w_1 a[0, \dots,2m]\tilde{w}_2) \arrow[r, "\iota_0"' , "\sim"] \arrow[d, "\mu_{w_1 a[0, \dots,2m]\tilde{w}_2}"]
        & \sconf(\rV) \arrow[d, "\mu_{\rV}"] \\
        \{+,-\}^{\times 2n} \arrow[r, "\sigma"', "\sim"]
        & |[]| \{+,-\}^{\times 2n}.
    \end{tikzcd}
    \end{equation*}
    Here $\sigma$ is the bijection induced from the permutation of factors $a[0,\dots,2m] \mapsto [0,\dots, 2m]a$ to the sets of left ends of the corresponding arc configurations. By the induction hypothesis the map $\mu_{w_1 a[0, \dots,2m]\tilde{w}_2}$ is injective, which implies injectivity of $\mu_{\rV}$.

    Assume now that $a = 2m+2$, then we have a commutative diagram
    \begin{equation*}
    \begin{tikzcd}
        \sconf(w_1 [0,\dots, 2m-2]\tilde{w}_2) \sqcup \sconf(w_1 [0,\dots, 2m+2]\tilde{w}_2) \arrow[r, "\iota_1 \sqcup \iota_2"' , "\sim"] \arrow[d, "{j_{l(w_1) + m}\circ \mu_{w_1 [0, \dots,2m-2]\tilde{w}_2}\sqcup \mu_{w_1 [0,\dots, 2m+2]\tilde{w}_2}}"]
        & \sconf(\rV) \arrow[d, "\mu_{\rV}"] \\
        \{+,-\}^{\times 2n} \arrow[r, "\Id"', "\sim"]
        & |[]| \{+,-\}^{\times 2n}.
    \end{tikzcd}
    \end{equation*}
    Here the injective map $j_{l(w_1) + m}:\{+,-\}^{\times 2(n-1)} \rightarrow \{+,-\}^{\times 2n}$ adds a pair of letters $(+,-)$ after the $l(w_1) + m$'th letter. 
    
    By the induction hypothesis, the maps $\mu_{w_1 [0,\dots, 2m- 2]\tilde{w}_2}$ and $\mu_{w_1 [0, \dots,2m+2]\tilde{w}_2}$ are injective. Moreover, any monomial $(\epsilon_1,\dots, \epsilon_{2n})$ from the image of $\mu_{w_1 [0,\dots, 2m+2]\tilde{w}_2}$ has $\epsilon_{l(w_1)+m+2} = +$, hence by the construction of $j_{l(w_1) + m}$ images of $\mu_{w_1 [0,\dots, 2m+2]\tilde{w}_2}$ and $j_{l(w_1) + m}\circ \mu_{w_1 [0,\dots, 2m-2]\tilde{w}_2}$ are disjoint and therefore the map $j_{l(w_1) + m}\circ \mu_{w_1 [0, \dots,2m-2]\tilde{w}_2}\sqcup \mu_{w_1 [0,\dots, 2m+2]\tilde{w}_2}$ is injective, which implies injectivity of $\mu_{\rV}$.
\end{proof}

\subsection{The words $w$ with non-zero $h(w)$.}\label{subsec:support}

In this section we apply results of Section \ref{sec:bounds} to compute the set $\{w \in W_{2n} | h(w)\neq 0\}$ for each $n$ and give a simple combinatorial criterion whether a word has non-zero $h(w)$. We call this set support of $h$. Due to factorizations of $h(w)$, see Sections \ref{subsec:lattice_restrictions} and \ref{subsec:submod_factor}, it is sufficient to consider only words $w$ with letters in $2\mbZ_{\geq 0}$ whose
 $\mathrm{supp}(w)$ is a segment in $2\mbZ$.

We begin with a combinatorial statement.

\begin{lemma}\label{lemma:confex_iconfex}
         $\conf(w) \neq \varnothing$ if and only if  $\iconf(w) \neq \varnothing$.
    \end{lemma}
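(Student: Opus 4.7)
The direction $\iconf(w)\neq\varnothing \Rightarrow \conf(w)\neq\varnothing$ is immediate from $\iconf(w)\subseteq \conf(w)$, so the content is in the converse. The plan is to pick any $C \in \conf(w)$ which minimizes the total number of arc crossings
\begin{equation*}
N(C) = \big|\{\{(i_1,j_1),(i_2,j_2)\}\subset C : i_1<i_2<j_1<j_2\}\big|,
\end{equation*}
and argue that any such minimizer is forced to be irreducible.

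Suppose otherwise; then $C$ contains a reducible pair of intersecting arcs $(i_1,j_1),(i_2,j_2)$ with $i_1<i_2<j_1<j_2$ and $a_{i_2} \in \{a_{i_1},a_{j_1}\}$. I will define an uncrossing move in each sub-case. If $a_{i_1}=a_{i_2}$ (both arcs of the same color $c$), I replace the pair by $(i_1,j_2),(i_2,j_1)$; these are still valid arcs of color $c$, and they are now nested rather than crossing. If $a_{i_2}=a_{j_1}$ (arcs of colors $c$ and $c+2$ with the $c$-colored arc on the left), I replace the pair by $(i_1,i_2)$ and $(j_1,j_2)$, which are valid arcs of colors $c$ and $c+2$ respectively, and are now disjoint. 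In both cases the underlying endpoint multiset is preserved, so the new configuration still lies in $\conf(w)$.

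The core of the argument is the claim that each such move strictly decreases $N(\cdot)$. I would verify this by a routine region analysis: the four points $i_1<i_2<j_1<j_2$ partition the remaining positions into five intervals, and for each third arc $(p,q)$ one simply compares its number of crossings with the old pair against the new pair, case by case according to which intervals contain $p$ and $q$. In every case the count against the new pair is bounded above by the count against the old pair, and the mutual crossing of $\{(i_1,j_1),(i_2,j_2)\}$ itself is destroyed, so $N$ drops by at least one.

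The only real obstacle is the bookkeeping in this region analysis, in particular checking that the new ``uncrossed'' arcs cannot accidentally create a fresh crossing with some third arc that the old pair did not have. Once this is established, the strict decrease of $N$ under an uncrossing move contradicts the minimality of $C$, so the minimizer has no reducible intersection, and therefore $\iconf(w)\neq\varnothing$.
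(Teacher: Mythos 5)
Your proposal is correct and uses exactly the same uncrossing moves as the paper: when two arcs of the same color cross, replace them with the two nested arcs; when two arcs of adjacent colors cross with the smaller color on the left, replace them with the two disjoint arcs. The only difference is the choice of monotone quantity. You minimize the total number of crossings (reducible and irreducible alike), while the paper minimizes the number of reducible intersections. Both quantities are strictly decreased by the same moves, so both arguments close. Your region-by-region count is a correct verification: of the ten cases for where the third arc's endpoints fall, each yields a crossing count against the new pair that is less than or equal to the count against the old pair (equal in most cases, dropping by two in two of them), and the mutual crossing of the chosen pair disappears. One small advantage of your choice of potential is that the bookkeeping is purely about crossing counts and never needs to re-examine whether a given crossing of a third arc with the new pair is reducible; the paper's proof has to track that extra piece of information through the same case analysis. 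On the other hand, the paper's potential is zero exactly on $\iconf(w)$, so the conclusion is immediate, whereas you argue by contradiction from minimality of total crossings. These are cosmetic distinctions; the substance of the two proofs is the same.
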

    \begin{proof}
        
        We show that given an arc configuration with a reducible intersection, we can produce an arc configuration with smaller number of reducible intersections.

        There are only two types of reducible intersections given by
            \begin{figure}[H]
            
\begin{tikzpicture}
    \node at (0, -0.08)   {$\dots$};
    \node at (0.6,0) {$a$}; 
    \node at (1.2,-0.08) {$\dots$};
    \node at (1.8,0) {$a$};
    \node at (2.4,-0.08) {$\dots$};
    \node at (3.2,0.02)   {$a{+}2$};
    \node at (4,-0.08) {$\dots$};
    \node at (4.8,0.02) {$a{+}2$};
    \node at (5.6,-0.08) {$\dots$};
    
    \draw[-] (0.6 ,0.3) to [out=30,in=150] (3.2,0.3);
    \draw[-] (1.8 ,0.3) to [out=30,in=150] (4.8,0.3);
    
    \node at (6,-0.20) {$,$};
\end{tikzpicture}
\begin{tikzpicture}
    \node at (0, -0.08)   {$\dots$};
    \node at (0.6,0) {$a$}; 
    \node at (1.2,-0.08) {$\dots$};
    \node at (2,0.02) {$a{+}2$};
    \node at (2.8,-0.08) {$\dots$};
    \node at (3.6,0.02)   {$a{+}2$};
    \node at (4.4,-0.08) {$\dots$};
    \node at (5.2,0.02) {$a{+}4$};
    \node at (6,-0.08) {$\dots$};
    
    \draw[-] (0.6 ,0.3) to [out=30,in=150] (3.6,0.3);
    \draw[-] (2 ,0.3) to [out=30,in=150] (5.2,0.3);

    \node at (6.4,-0.25) {$ $};
    \node at (6.4,-0.15) {$.$};
\end{tikzpicture}
\end{figure}

    The way we change an arc configuration to decrease the number of reducible intersections is given by
    \begin{figure}[H]
    \hspace{2pt}
\begin{tikzpicture}
    \node at (0, -0.08)   {$\dots$};
    \node at (0.6,0) {$a$}; 
    \node at (1.2,-0.08) {$\dots$};
    \node at (1.8,0) {$a$};
    \node at (2.4,-0.08) {$\dots$};
    \node at (3.2,0.02)   {$a{+}2$};
    \node at (4,-0.08) {$\dots$};
    \node at (4.8,0.02) {$a{+}2$};
    \node at (5.6,-0.08) {$\dots$};
    
    \draw[-] (0.6 ,0.3) to [out=30,in=150] (3.2,0.3);
    \draw[-] (1.8 ,0.3) to [out=30,in=150] (4.8,0.3);

    \node at (6.4, 0) {$\longmapsto$};

    \draw[-,dashed] (2.1, 0.6) to [out=0,in=180] (2.8,0.6);

    \node at (7.4, -0.08)   {$\dots$};
    \node at (8,0) {$a$}; 
    \node at (8.6,-0.08) {$\dots$};
    \node at (9.2,0) {$a$};
    \node at (9.8,-0.08) {$\dots$};
    \node at (10.6,0.02)   {$a{+}2$};
    \node at (11.4,-0.08) {$\dots$};
    \node at (12.2,0.02) {$a{+}2$};
    \node at (13,-0.08) {$\dots$};
    
    \draw[-] (8 ,0.3) to [out=30,in=150] (12.2,0.3);
    \draw[-] (9.2 ,0.3) to [out=30,in=150] (10.6,0.3);

    \node at (13.4,-0.20) {$,$};
\end{tikzpicture}

\medskip

\begin{tikzpicture}
    \node at (0, -0.08)   {$\dots$};
    \node at (0.6,0) {$a$}; 
    \node at (1.2,-0.08) {$\dots$};
    \node at (2,0.02) {$a{+}2$};
    \node at (2.8,-0.08) {$\dots$};
    \node at (3.6,0.02)   {$a{+}2$};
    \node at (4.4,-0.08) {$\dots$};
    \node at (5.2,0.02) {$a{+}4$};
    \node at (6,-0.08) {$\dots$};
    
    \draw[-] (0.6 ,0.3) to [out=30,in=150] (3.6,0.3);
    \draw[-] (2 ,0.3) to [out=30,in=150] (5.2,0.3);

    \draw[-,dashed] (2.785, 0.4) to [out=90,in=-90] (2.785,1);

    \node at (6.8, 0) {$\longmapsto$};

    \node at (7.6, -0.08)   {$\dots$};
    \node at (8.2,0) {$a$}; 
    \node at (8.8,-0.08) {$\dots$};
    \node at (9.6,0.02) {$a{+}2$};
    \node at (10.4,-0.08) {$\dots$};
    \node at (11.2,0.02)   {$a{+}2$};
    \node at (12,-0.08) {$\dots$};
    \node at (12.8,0.02) {$a{+}4$};
    \node at (13.6,-0.08) {$\dots$};
    
    \draw[-] (8.2, 0.3) to [out=30,in=150] (9.6,0.3);
    \draw[-] (11.2, 0.3) to [out=30,in=150] (12.8,0.3);

    \node at (14,-0.25) {$ $};
    \node at (14,-0.15) {$.$};
\end{tikzpicture}

\end{figure}
        More formally, let $C\in \conf(w)$ be an arc configuration with the smallest number $N$  of reducible intersections. Assume $N > 0$, then there exists a pair of intersecting arcs $((i_1,j_1), (i_2, j_2))$, $i_1 < i_2 < j_1 < j_2$, such that the intersection is reducible. Write $C = \{(i_1, j_1), (i_2, j_2)\} \sqcup \tilde{C}$, let $w = (a_1,\dots, a_{2n})$, and set 
        $$\hat{C} = \begin{cases}
            \{(i_1, j_2), (i_2, j_1)\} \sqcup \tilde{C}, \;\;\textit{if }a_{i_2} = a_{i_1},\\
            \{(i_1, i_2), (j_1, j_2)\} \sqcup \tilde{C}, \;\;\textit{if }a_{i_2} = a_{i_1} + 2.\\
        \end{cases}$$
        Then $\hat{C}\in \conf(w)$ and we claim that the number of reducible intersections in $\hat{C}$ is less than in $C$. 
        We compare intersections of an arc $(i,j)\in \tilde{C}$ with arcs $(i_1,j_1),\;(i_2,j_2)$ in $C$ and with arcs $(i_1,i_2), (j_1, j_2)$ in $\hat{C}$.
        
        Let $a_{i_2} = a_{i_1}$. If an arc $(i,j) \in \tilde{C}$ intersects both $(i_1, j_2)$ and $(i_2, j_1)$, then 
        either $i< i_1 < i_2 < j < j_1 < j_2$ or $i_1 < i_2 < i < j_1 < j_2 < j$. In both cases $(i,j)$ intersects both $(i_1, j_1)$ in and $(i_2, j_2)$ in $C$, which are of the same color. It remains to check that if an arc $(i,j)\in \tilde{C}$ intersects exactly one of arcs $(i_1,j_2), (i_2, j_1)$ then it intersects at least one of arcs $(i_1, j_1),(i_2, j_2)$. Assume an arc $(i,j)\in\tilde{C}$ intersects $(i_1, j_2)$ but not $(i_1,j_1)$. Then either $i < i_1 < i_2 < j_1 < j < j_2$ or $ i_1 < i_2 < j_1 < i < j_2 < j$. In both cases $(i,j)$ and $(i_2, j_2)$ intersect. Assume an arc $(i,j)\in\tilde{C}$ intersects $(i_2, j_1)$ but not $(i_1,j_1)$. Then $i_1 < i < i_2 < j < j_1 < j_2$ and $(i,j)$ and $(i_2, j_2)$ intersect. Note that in all described cases intersection of the arc $(i,j)$ with arcs $(i_1,j_2), (i_2,j_1)$ is reducible then intersections with arcs $(i_1,j_1), (i_2,j_2)$ are reducible.

        The check for the case $a_{i_2} = a_{i_1} + 2$ goes similarly.
    
    Clearly, $(i_1,i_2), (j_1, j_2)$ do not intersect, hence the number of reducible intersections in $\hat{C}$ is less than in $C$ which contradicts minimality of $N$.
    \end{proof}

Then the criterion for non-triviality of $H(w)$ is given by the following theorem.
\begin{theorem}\label{thm:supp}
    Let $w\in W_{2n}$ be a word. Then $h(w) = 0$ if and only if $\conf(w) = \varnothing$. 
\end{theorem}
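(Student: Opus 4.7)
The plan is to combine the two bounds already proved: the lower bound from Theorem \ref{thm:lower_bound_irr} and the upper bound from Theorem \ref{thm:upper_bound}, using Lemma \ref{lemma:confex_iconfex} as the combinatorial link between arbitrary arc configurations and irreducible ones.

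For the direction $\conf(w) \neq \varnothing \Rightarrow h(w) \neq 0$, I would argue as follows. By Lemma \ref{lemma:confex_iconfex}, non-emptiness of $\conf(w)$ implies non-emptiness of $\iconf(w)$. Theorem \ref{thm:lower_bound_irr} then gives
\[
h(w) \geq |\iconf(w)| \geq 1.
\]

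For the converse, suppose $\conf(w) = \varnothing$. Using the shift automorphism (Lemma \ref{lemma:shift}) we may assume that $\min(\supp(w)) = 0$, so that $w$ contains at least one letter $0$, and we may then choose the rightmost such letter to write $w = w_1(0)w_2$ with all letters of $w_1$ non-negative and all letters of $w_2$ strictly positive, which is precisely the setup in which Theorem \ref{thm:upper_bound} is stated. Directly from Definition \ref{def:steady_arc_conf}, steady arc configurations are defined as a subset of relevant arc configurations, hence in particular they are arc configurations; that is, $\sconf(w) \subseteq \conf(w)$. Since $\conf(w) = \varnothing$, this forces $\sconf(w) = \varnothing$, and Theorem \ref{thm:upper_bound} yields $h(w) \leq |\sconf(w)| = 0$.

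The edge case of an empty word $w$ (where $h(w) = 1$ and $\conf(w)$ consists of the empty configuration) and the case where $w$ has length zero after normalization are both trivial. The main substantive observation is that although neither Theorem \ref{thm:lower_bound_irr} nor Theorem \ref{thm:upper_bound} is sharp in general, both become sharp at the value $0$: the lower bound pins $h(w)$ away from zero whenever any arc configuration exists (through the irreducible ones), and the upper bound pins $h(w)$ at zero whenever none exists (through the steady ones). There is no real obstacle to this proof; the theorem is an immediate corollary of the two bounds, with the only care needed being the shift normalization and the choice of the rightmost minimum-letter split before invoking the steady upper bound.
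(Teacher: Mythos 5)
Your proof is correct and takes essentially the same route as the paper: the paper likewise observes that $\conf(w) = \varnothing$ forces $\sconf(w) = \varnothing$ so that Theorem~\ref{thm:upper_bound} gives $h(w) = 0$, and invokes Lemma~\ref{lemma:confex_iconfex} together with Theorem~\ref{thm:lower_bound_irr} for the converse. The normalization/edge-case remarks you add are harmless but not needed, since $\sconf(w)\subset\conf(w)$ holds directly from the definitions.
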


\begin{proof}
    If $\conf(w) = \varnothing$, then  $\sconf(w) = \varnothing$, so $h(w)=0$ by Theorem \ref{thm:upper_bound}. The opposite implication follows from Lemma \ref{lemma:confex_iconfex}. 
\end{proof}

Now we proceed to a simpler combinatorial description of words $w\in W_{2n}$ such that $h(w)\neq 0$. We need one more definition.

\begin{defi}\label{def:std_conf}
    Let $C\in \conf(w)$ for some $w\in W_{2n}$. We call $C$ a standard arc configuration if either $n = 1$ or $\alpha = (\max(I_{w}(\min(\supp(w)))), \max(I_{w}(\min(\supp(w)){+}2))) \in C$ and $\mathring{C}_{\alpha}$ is a standard arc configuration.
\end{defi}
More informally, the standard configuration is constructed recursively on each step connecting the rightmost smallest letter, say called $a$, with the rightmost letter $a+2$. Note that if at some point doing so we do not obtain an arc, that is if the rightmost $a+2$ is to the left of rightmost $a$, then the standard configuration does not exist.

Clearly, if a standard configuration exists it is unique.

\begin{lemma}\label{lemma:std_exists}
Let $w\in W_{2n}$ be a word. Then $\conf(w) \neq \varnothing$ if and only if there exists the standard arc configuration of $w$.
\end{lemma}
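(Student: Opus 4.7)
My plan is to handle the easy direction first: by Definition \ref{def:std_conf}, whenever the standard arc configuration of $w$ exists, it is itself an element of $\conf(w)$, so $\conf(w)\neq\varnothing$. For the converse, I would proceed by induction on $n$, where $l(w)=2n$, with $n\leq 1$ handled directly. Assuming $\conf(w)\neq\varnothing$, I would pick any $C\in\conf(w)$ and set $a=\min(\supp(w))$, $i_*=\max I_w(a)$, $j_*=\max I_w(a+2)$. The first step is to check that $(i_*,j_*)$ is a genuine arc: because $a$ is the minimum color in $\supp(w)$, the letter at position $i_*$ can only be the left end of an arc in $C$, and its partner necessarily contains $a+2$ and lies strictly to the right of $i_*$; this forces $a+2\in\supp(w)$ and $j_*>i_*$.

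The second step, which is the crux, is to produce an element of $\conf(\mathring{w})$, where $\mathring{w}$ is $w$ with the positions $i_*,j_*$ removed. Let $(i_*,j)\in C$ be the arc through $i_*$. If $j=j_*$, I would directly remove $(i_*,j_*)$ from $C$. Otherwise $j<j_*$ strictly, and $j_*$ is connected in $C$ to some other position $k$. If that arc is $(k,j_*)$ with letter $a$ at $k$, then maximality of $i_*$ forces $k<i_*$, and I would swap the pair $(i_*,j),(k,j_*)$ for $(k,j),(i_*,j_*)$. If instead the arc through $j_*$ is $(j_*,k)$ with letter $a+4$ at $k$, then I would swap $(i_*,j),(j_*,k)$ for $(i_*,j_*),(j,k)$. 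In every subcase the colors of the replacement arcs are valid, and the two new pairs cover the same four positions as the original two, so the result is still a configuration of $w$; deleting $(i_*,j_*)$ yields the desired element of $\conf(\mathring{w})$.

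By the induction hypothesis applied to $\mathring{w}$, the standard arc configuration of $\mathring{w}$ then exists, and adjoining the arc $(i_*,j_*)$ produces the standard arc configuration of $w$, completing the induction. The hard part will be the case analysis in the second step: one must verify carefully in each subcase that $k$ is distinct from both $i_*$ and $j$ (so that the swap genuinely permutes the same four positions), and that the colors of the replacement arcs match. These checks follow cleanly from the maximality of $i_*,j_*$ together with the fact that $a=\min(\supp(w))$, but they require a clean enumeration of the possible arcs through $j_*$.
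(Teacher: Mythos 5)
Your proof is correct and follows essentially the same route as the paper: both argue by induction on $n$, both show that some $C\in\conf(w)$ can be modified (via the same two-case swap, depending on whether the rightmost $a+2$ is a left or right end of its arc) to produce a configuration containing the arc from $\max I_w(a)$ to $\max I_w(a+2)$, and both then remove that arc and invoke the induction hypothesis on $\mathring{w}$. The paper's $i,j,j',i',k$ correspond to your $i_*,j_*,j,k,k$; the content is the same.
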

\begin{proof}

    The if direction is clear.

    To prove the other direction we first show that if there is any configuration of the word $w$ then there is a configuration of the word $w$ such that the right most smallest letter, say called $a$ is connected to the right most letter $a+2$.  Then we remove the arc connecting these right most $a$ and $a+2$ and use the induction on $n$.

    Let $w = (a_1,\dots, a_{2n})$. Assume that $\conf(w) \neq \varnothing$. Denote $a = \min(\supp(w))$, $i = \max(I_{w}(a))$, $j = \max(I_{w}(a+2))$. We claim that there is an arc configuration $C\in \conf(w)$ such that $\{i,j\}\in C$. Indeed, since in any arc configuration $i$ is the left end of an arc, $i < j$. Let $\tilde{C}\in \conf(w)$, then if $(i,j) \notin \tilde{C}$ we have $(i, j^{\prime}) \in \tilde{C}$ and either $(j, k)\in \tilde{C}$ or $(i^{\prime}, j)\in \tilde{C}$ where $j^{\prime} < j$ in the first case and $i^{\prime} < i$ in the second case. Set $C = \tilde{C}\backslash \{(i, j^{\prime}),  (j, k)\} \sqcup \{(i,j), (j^{\prime}, k)\}$ in the first case and $C = \tilde{C}\backslash \{(i, j^{\prime}),  (i^{\prime}, j)\} \sqcup \{(i,j), (i^{\prime}, j^{\prime})\}$ in the second case. Then $C\in \conf(w)$ and contains $(i,j)$. Hence $\mathring{C}_{\{(i,j)\}}$ is an arc configuration of $\mathring{w} = (a_1,\dots, a_{i-1}, a_{i+1},\dots, a_{j-1}, a_{j+1},\dots, a_{2n})$. By the induction hypothesis there exists the standard configuration of $\mathring{w}$, denote this configuration by $C_{0}$. Adding arc $(i,j)$ to $\mathring{C}$ we obtain the standard arc configuration of $w$.

\end{proof}

\begin{cor}
    Let $w\in W_{2n}$ be a word. Then $h(w) = 0$ if and only if $w$ has no standard arc configuration. \qed
\end{cor}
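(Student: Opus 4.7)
The plan is to observe that this corollary is an immediate consequence of chaining together two results already established just above it in the text. By Theorem \ref{thm:supp}, $h(w) = 0$ if and only if $\conf(w) = \varnothing$. By Lemma \ref{lemma:std_exists}, $\conf(w) \neq \varnothing$ if and only if the standard arc configuration of $w$ exists. Taking the contrapositive of the second equivalence and combining it with the first yields the stated equivalence directly.

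Since the standard arc configuration is unique when it exists (by Definition \ref{def:std_conf}, it is constructed by a deterministic recursion), there is nothing further to verify about well-definedness. The only step worth writing out explicitly is the chain of biconditionals:
\begin{equation*}
h(w) = 0 \ \Longleftrightarrow\ \conf(w) = \varnothing \ \Longleftrightarrow\ \text{$w$ has no standard arc configuration}.
\end{equation*}

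There is no real obstacle here; the genuine content sits in Theorem \ref{thm:supp} (which combines the steady upper bound of Theorem \ref{thm:upper_bound} with the existence of irreducible configurations via Lemma \ref{lemma:confex_iconfex}) and in Lemma \ref{lemma:std_exists} (which shows that any nonempty $\conf(w)$ can be rearranged into the canonical greedy ``rightmost smallest letter to rightmost next letter'' configuration). The corollary itself is simply a restatement in purely combinatorial language, replacing the existential ``$\conf(w)\neq\varnothing$'' by the algorithmically checkable criterion of the existence of the standard configuration.
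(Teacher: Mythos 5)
Your proof is correct and matches the paper exactly: the corollary is marked \qed in the paper precisely because it is the immediate conjunction of Theorem \ref{thm:supp} and Lemma \ref{lemma:std_exists}, which is the chain of biconditionals you wrote out. Nothing to add.
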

Standard arc configurations are compatible with slides.
\begin{lemma}\label{lemma:std_conf_slide}
    Let $w\in W_{2n}$ be a word such that $\conf(w)\neq \varnothing$. Let $C_0$ be the standard arc configuration of $w$. Then the arc configuration $s_{\conf(w)}(C_0)$ is the standard arc configuration of the word $s(w)$.
\end{lemma}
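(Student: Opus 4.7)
The plan is to proceed by induction on $n$ where $l(w)=2n$, with the base case $n=1$ being immediate since $w=(a,a+2)$ has unique configuration $\{(1,2)\}$, $s(w)=(a+2,a+4)$ has unique configuration $\{(1,2)\}$, and $s_{\conf(w)}$ sends one to the other. For the inductive step, let $a=\min(\supp(w))$, $i=\max(I_w(a))$, $j=\max(I_w(a+2))$, so that the first arc of $C_0$ is $\alpha=(i,j)$, and write $C_0=\{\alpha\}\sqcup \mathring{C}_0^*$ where $\mathring{C}_0^*$ denotes the embedding into $w$ of the standard configuration $\mathring{C}_0\in\conf(\mathring{w})$.

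The first step is to verify that $s_{\conf(w)}(\alpha)$ is the first arc in the standard configuration of $s(w)$. I would split into three cases: (A) $a_1\neq a$; (B) $a_1=a$ with $i>1$; (C) $a_1=a$ with $i=1$. In cases (A) and (B), $a$ remains the minimum of $\supp(s(w))$, and a routine computation using the definition of $s$ shows $\max(I_{s(w)}(a))=i-1$ and $\max(I_{s(w)}(a+2))=j-1$, so the first standard arc of $s(w)$ is $(i-1,j-1)$, which matches $s_{\conf(w)}(\alpha)$ since $i>1$. In case (C), $a$ disappears from $\supp(s(w))$ (its sole position moves to the end as $a+4$), so the new minimum is $a+2$, with $\max(I_{s(w)}(a+2))=j-1$ and $\max(I_{s(w)}(a+4))=2n$; hence the first standard arc is $(j-1,2n)=s_{\conf(w)}(\alpha)$, by the definition of $s_{\conf(w)}$ applied to the arc $(1,j)$.

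The second step is to identify $s_{\conf(w)}(\mathring{C}_0^*)$ with the standard configuration of the word $s(w)^\circ$ obtained from $s(w)$ by deleting the endpoints of $s_{\conf(w)}(\alpha)$. In cases (A) and (B), a direct comparison of letters yields $s(w)^\circ=s(\mathring{w})$ (both equal $(a_2,\dots,\widehat{a_i},\dots,\widehat{a_j},\dots,a_{2n},a_1+4)$), and under this identification $s_{\conf(w)}(\mathring{C}_0^*)$ corresponds to $s_{\conf(\mathring{w})}(\mathring{C}_0)$; applying the inductive hypothesis to $\mathring{w}$ then finishes these cases. In case (C), however, $s(w)^\circ=\mathring{w}$ as words, so no further slide is involved: since every arc of $\mathring{C}_0^*$ avoids position $1$, the map $s_{\conf(w)}$ simply shifts its indices by $-1$, and after the reindexing identifying $s(w)^\circ$ with $\mathring{w}$ one recovers $\mathring{C}_0$, which is standard in $\mathring{w}$ by definition.

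The main obstacle, which is conceptual rather than computational, is case (C): here one cannot recurse by composing the inductive hypothesis with a slide on $\mathring{w}$, because the definition of standard configuration peels off the arc $(1,j)$ without sliding, while $s_{\conf(w)}$ simultaneously performs the slide and sends $(1,j)$ to $(j-1,2n)$. The resolution is the observation that these two operations conspire: removing $(j-1,2n)$ from $s(w)$ cancels out the effect of the slide entirely, giving back $\mathring{w}$. Once this alignment is made, the rest is bookkeeping of index shifts which I would carry out by tracking, for each arc $(i_k,j_k)\in\mathring{C}_0^*$, the image under $s_{\conf(w)}$ in $s(w)^\circ$ versus its image under reindexing in $\mathring{w}$, verifying agreement in the three subcases determined by the positions of $i_k,j_k$ relative to $i,j$.
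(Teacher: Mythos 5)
Your proof is correct and follows essentially the same approach as the paper's: an induction on $n$ that splits on whether the first letter is the rightmost minimal letter or not, checks that the first standard arc maps correctly, and observes that in the rightmost-minimal case the reduced words coincide (so no further slide is needed), while otherwise they are related by a slide and the inductive hypothesis applies. Your three-way case split simply refines the paper's two cases by separating "first letter not minimal" from "first letter minimal but not rightmost," but the argument in those two subcases is identical, so this is a presentational rather than substantive difference.
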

\begin{proof}
    For a word $\tilde{w}$ denote the standard arc configuration of $\tilde{w}$ by $C_{0,\tilde{w}}$.

    Let the first letter of $w$ be not the rightmost minimal letter. Then the first arc in the recursive construction of $C_{0,w}$ is mapped to the first arc in the recursive construction of $C_{0,s(w)}$. Removing these arcs we obtain a pair of words related by $s$. By induction on $n$ standard arc configurations for this pair are related by a slide as well. Adding removed arcs back we obtain standard arc configurations of $w$ and $s(w)$. By construction these arc configurations are related by a slide. 
    
    Let the first letter of $w$ be the rightmost minimal letter in $w$. Then the minimal letter in $w$ is unique. Let $j$ be such that $(1,j)$ is in the standard arc configuration of $w$. Then after a slide the $j$'th letter becomes the rightmost minimal one. This implies that the first arc in the recursive construction of the standard arc configuration of $s(w)$ is $(j-1,2n)$. Note that $\mathring{w}_{(1,j)} = \mathring{s(w)}_{(j-1,2n)}$. Therefore, these words have the same standard arc configuration. Adding removed arcs back we obtain standard arc configurations of $w$ and $s(w)$. By construction these arc configurations are related by a slide. 
\end{proof}

\section{Degenerations and limits.}\label{sec:degs_limits}
If a word $w$ of length $2n$ contains $n$ subwords $(a_i,a_i+2)$ with generic $a_i$, then there is a unique trivial submodule which can be described. In this section we study the limits of these submodules when $a_i$ go to non-generic points.

In this section we use both multiplicative and additive notations for evaluation parameters. To denote additive evaluation parameters we use letters $a, b$. To denote multiplicative evaluation parameters we use letters $u, v$ where we assume $u = q^a, v = q^b$.

In this section by a word $w$ we mean an arbitrary tuple $w\in \mbC^{2n}$ (in additive notation). We use the language we developed for words with integer letters.

\subsection{Generic singular vectors.}\label{subsec:gen_sing_vec}

We start by studying words with a single arc configuration.

\begin{defi}
Let $C\in\uconf(2n)$ be an uncolored arc configuration. We call $w\in \mbC^{2n}$ a generic word corresponding to $C$ if $\conf(w)=\{C\}$.
\end{defi}
\begin{lemma}
    Let $w\in \mbC^{2n}$ be a generic word corresponding to an uncolored arc configuration $C$. Then for the word $w$ the arc configuration $C$ is standard, steady, and irreducible. Moreover $h(w)=1$.
\end{lemma}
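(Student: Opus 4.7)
The plan is to verify irreducibility first, then sandwich $h(w)$ between the bounds of Theorems \ref{thm:lower_bound_irr} and \ref{thm:upper_bound} to conclude $h(w) = 1$ and $\sconf(w) = \{C\}$, from which steadiness is immediate. Standardness will follow from Lemma \ref{lemma:std_exists} together with the uniqueness $\conf(w) = \{C\}$.

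For irreducibility, assume for contradiction that $C$ contains a reducible intersection of arcs $(i_1, j_1), (i_2, j_2) \in C$ with $i_1 < i_2 < j_1 < j_2$ and $a_{i_2} \in \{a_{i_1}, a_{j_1}\}$. The swap procedure from the proof of Lemma \ref{lemma:confex_iconfex} replaces the pair by $\{(i_1, j_2), (i_2, j_1)\}$ or $\{(i_1, i_2), (j_1, j_2)\}$ (depending on which of the two reducibility cases occurs) and produces a bona fide arc configuration $\hat C \in \conf(w)$ with $\hat C \neq C$. This contradicts $\conf(w) = \{C\}$, so $C \in \iconf(w)$.

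Theorem \ref{thm:lower_bound_irr} now yields $h(w) \geq |\iconf(w)| \geq 1$. Conversely, Theorem \ref{thm:upper_bound} gives $h(w) \leq |\sconf(w)|$; since steady configurations form a subclass of relevant ones and, in particular, of all arc configurations, $\sconf(w) \subseteq \conf(w) = \{C\}$, so $|\sconf(w)| \leq 1$. The inequalities collapse to $1 \leq h(w) \leq |\sconf(w)| \leq 1$, forcing $h(w) = 1$ and $\sconf(w) = \{C\}$; in particular $C$ is steady. For standardness, Lemma \ref{lemma:std_exists} guarantees that whenever $\conf(w)$ is non-empty a standard arc configuration exists and lies in $\conf(w)$, and uniqueness forces it to be $C$.

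The main subtlety is whether Theorems \ref{thm:lower_bound_irr} and \ref{thm:upper_bound} and Lemmas \ref{lemma:confex_iconfex} and \ref{lemma:std_exists}, all originally stated for integer words in $W_{2n}$, apply to an arbitrary tuple in $\mbC^{2n}$. The combinatorial lemmas concern only the arc structure, which is captured entirely by the coincidences $a_j - a_i = 2$, and extend verbatim. Both bounds rely only on $q$-characters, short exact sequences of evaluation modules, and the explicit $R$-matrix action, all defined for arbitrary complex evaluation parameters (see Section \ref{subsec:uqamod_and_q-char}), so the arguments transfer without change. A more direct route to steadiness would run the algorithm of Definition \ref{def:steady_arc_conf} and verify that it picks out the arcs of $C$ one by one, but the sandwich argument circumvents this computation entirely.
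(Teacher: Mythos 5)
Your overall structure matches the paper's: establish irreducibility, sandwich $h(w)$ via the lower and upper bounds of Theorems \ref{thm:lower_bound_irr} and \ref{thm:upper_bound}, and deduce steadiness and standardness from the unique arc configuration. The minor variations are cosmetic — you inline the swap argument rather than citing Lemma \ref{lemma:confex_iconfex} wholesale, and you get the lower bound from Theorem \ref{thm:lower_bound_irr} where the paper invokes Theorem \ref{thm:supp}; these deliver the same inequality $h(w)\geq 1$.

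The one place your argument is weaker than the paper's is the closing paragraph, where you assert that Theorem \ref{thm:upper_bound} "extends verbatim" to $w\in\mbC^{2n}$. This is not accurate as stated: the definition of steady arc configurations (Definition \ref{def:steady_arc_conf}) and the recursion in the proof of Theorem \ref{thm:upper_bound} require a total order on the letters — "smallest letter", "move to the right through $[0,\dots,2m]$", etc. — and such an order is not available when the evaluation parameters live in several distinct cosets of the lattice $\mathcal{L}$. The paper handles this cleanly by first invoking Proposition \ref{prop:hom_factorize_lattice} to factor $H(w)$ over $\mathcal L$-cosets and then shifting each factor into $2\mbZ$; arcs only ever connect letters in the same coset, so $\conf(w)$ factors compatibly and each piece is again a generic word corresponding to a sub-configuration of $C$. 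You should replace the "verbatim" claim with this reduction step; with that fix the proof is complete and essentially identical to the paper's.
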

\begin{proof}
Since $\conf(w)\neq \varnothing$, the standard arc configuration for $w$ exists by Lemma \ref{lemma:std_exists}, moreover, $\iconf(w)\neq \varnothing$ by Lemma \ref{lemma:confex_iconfex}. Thus, $C$ is standard and irreducible. By Theorem \ref{thm:supp}, $h(w)>0$, therefore by Theorem \ref{thm:upper_bound} the set of steady arc configurations is non-empty. Hence $C$ is steady. 

For the last part, by Lemma \ref{prop:hom_factorize_lattice} it is sufficient to consider $w\in 2\mbZ$.
Therefore, $h(w)=1$ follows from and Theorem \ref{thm:supp} and the upper bound in Theorem \ref{thm:upper_bound}.   
\end{proof}

Let \ $a=(a_1, \dots, a_{n}) \in \mbC^{ n}$ and let  
$C\in\uconf(2n)$ be an uncolored arc configuration. 
Write
$C = \{(i_k, j_k)\}_{k=1}^n$  where $1 = i_1 < i_2 < \dots < i_n$. Define a tuple $w_C(a) = (\alpha_1, \dots, \alpha_{2n}) \in \mbC^{2n}$ of length $2n$ by setting $\alpha_{i_k} = a_k$ and $\alpha_{j_k}=a_k+2$.

Clearly, if $a$ is generic (for example, if $a_i-a_j\not\in 2\mbZ$ for all $i\neq j$), then $w_C(a)$ is generic.
 We denote an $\ell$-singular vector of weight zero in module $w_C(a)$ by $v_C(u)$. Here, as always, $u=q^a$. The vector $v_C(u)$ is defined up to proportionality. We call $v_C(u)\in\rL_1^{\otimes 2n}$ a generic $\ell$-singular vector.

In case $C = \{(1,2),(3,4),\dots, (2n-1,2n)\}$ we have 
\begin{equation}\label{eq:gen_sing_simp}
v_{C}(u) = ((+-)-q^{-1}(-+))^{\otimes n}.
\end{equation}For any other $C$, one can construct $v_C(u)$ by applying ${R}$-matrices \eqref{eq:Rmat2} to vector \eqref{eq:gen_sing_simp}. 
Since $C$ is irreducible, one can use the procedure described in the proof of Theorem \ref{thm:lower_bound_irr}.

Note that for all $C$, the vector $v_C(u)$ is a rational function of variables $u_i = q^{a_i}$.

The map $v_C: u \mapsto v_{C}(u)$, where $u=(u_1,\dots, u_n)$, is a rational map from a torus $(\mbC^{\times})^{\times n}$ to $\mathbb{P}(H_{2n})$. Moreover, the $R$-matrix is homogeneous in $u_i$, therefore $v_C(u) \in \mathbb{P}(H_{2n})$ does not change if all $a_i$ are changed to $a_i+b$ or, equivalently if all $u_i$ are changed to $u_iq^b$.

Therefore, we have a rational map $\hat{v}_C:(\mbC^{\times})^{\times n}/\mbC^{\times} \rightarrow \mathbb{P}(H_{2n})$.

One application of our study in this section is an alternative proof of the only if part of Theorem \ref{thm:supp}. Namely, we use $v_C(u)$ to show that if  a word $w$ has arc configuration, then $h(w)\neq 0$.

\begin{proof} [Another proof of only if part of Theorem \ref{thm:supp}]
Let $C$ be an arc configuration of a word $w$. Let $w=w_{C}(a^0)$.
We deform evaluation parameters $a^0$ to a generic tuple $a$ so that the deformed word $w_C(a)$ is a generic word corresponding to $C$. Then we have a unique up to a scalar vector $v_{C}(u) \in H(w(a))$ by Proposition \ref{prop:hom_factorize_lattice}.

Consider a sequence of generic points $(u(m))_{m\in \mbZ_{>0}}$ converging to $u^0= q^{a^0}$. Since the space $\mathbb{P}(H_{2n})$ is compact, there is a limit point of the sequence $(v_C(u(m)))_{m\in \mbZ_{>0}}$. The module $w_C(a)$ and the vector $v_C(u)$ depend on the parameters $a$ (or $u$) continuously.
Therefore, the limit point is in $H(w_C(a^0)) = H(w)$.
\end{proof}

\begin{conj}\label{conj:deg_onto} For any word $w$ the subspace $H(w)$ of $\ell$-singular vectors of weight $0$ is spanned by limits of generic singular vectors $v_C(u)$, $C\in\conf(w)$.
\end{conj}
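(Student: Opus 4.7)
The plan is to make precise the notion of ``limit,'' identify the resulting limits with vectors in $H(w)$ already constructed in the paper, and then argue for spanning via a degeneration argument. Fix $w=w_C(a^0)$ for some $C\in\conf(w)$ and choose a smooth germ of a curve $\gamma:(\mbC,0)\to X_C$ with $\gamma(0)=a^0$ and $\gamma(t)$ in the open locus where $C$ is the unique arc configuration for $t\neq 0$. The generic singular vector $v_C(q^{\gamma(t)})$ is a rational function of $t$; after clearing a suitable power of $t$ we obtain a nonzero limit $\tilde v_C^{\gamma}\in H_{2n}$. By continuity of the $\Uqa$-module structure in the evaluation parameters and by construction this limit is $\ell$-singular of weight zero in $w$, so $\tilde v_C^{\gamma}\in H(w)$. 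Every pair $(C,\gamma)$ then produces a well-defined line in $\mathbb{P}(H(w))$.

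The second step is to match these limits with the vectors $\Sigma_w(C)$ built in the proof of Theorem~\ref{thm:lower_bound_irr}. For $C\in\cconf(w)$ the generic vector is already the Catalan vector \eqref{eq:Catalan_vec}, and the limit is immediate. For $C\in\iconf(w)$ with intersections, I would choose $\gamma$ compatible with the inductive construction of $\Sigma_w$: each step of that construction either splices a Catalan sub-arc via an embedding $\iota$, or applies an $\check R$-matrix $\check R_{i,2n}(a_{i+1},a_i)$ to swap two adjacent positions. The same operations produce $v_C(u)$ inductively from generic parameters, and because the relevant matrix entry of \eqref{eq:Rmat2} is regular at the irreducible-intersection specialization, taking the limit along $\gamma$ reproduces $\Sigma_w(C)$ up to a nonzero scalar. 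Consequently the span of the limits already contains the linearly independent set $\{\Sigma_w(C):C\in\iconf(w)\}$, recovering the lower bound.

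To close the remaining gap to $h(w)$, one would use limits for $C\in\conf(w)\setminus\iconf(w)$ and non-transverse paths $\gamma$. The natural framework is a resolution $\pi:\widetilde X\to\bigcup_{C\in\uconf(2n)}X_C$ of the stratified parameter variety of words admitting some arc configuration: the generic singular vectors glue over each smooth stratum into a line bundle $L_C$, and the fiber of $L_C$ over each irreducible component of $\pi^{-1}(w)$ is precisely a line of limits $\tilde v_C^{\gamma}$. One then wants to show that as $C$ ranges over $\conf(w)$ and the component over $\pi^{-1}(w)$ varies, the resulting lines span $H(w)$. A promising strategy is induction on length via the short exact sequence \eqref{eq:two_strings_prod1}: each trivial composition factor of $w$ should be realized as a limit coming from deforming inside the submodule $[\alpha_1,\alpha_2{-}4][\beta_1{+}4,\beta_2]$ or inside the quotient $[\alpha_1,\beta_2][\alpha_2,\beta_1]$, after which the $q$-character bound of Lemma~\ref{lemma:upper_bound_qchar} forces the count to match.

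The main obstacle will be controlling the resolution $\widetilde X$ and the interactions of the $L_C$ at its exceptional divisors — this is essentially the non-locality that the paper flags as the obstruction to a combinatorial formula for $h(w)$. Without such a formula one cannot simply count to force the spanning property; in particular an irreducible $C$ and several reducible $C'$ may contribute proportional limits, so some cancellations have to be ruled out by hand. A possible non-constructive route is to pair each limit against a dual covector built from a generic $\ell$-singular vector for the opposite comultiplication (so that the pairing is governed by an $R$-matrix action) and to show the resulting matrix is nondegenerate on a suitable open subset of deformation paths; but verifying nondegeneracy in general again seems to require exactly the combinatorial information the paper identifies as missing, which is why I would expect only partial progress on this conjecture without substantial new input.
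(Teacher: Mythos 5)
The statement you are asked to prove is labelled a \emph{Conjecture} in the paper, and the paper does not prove it: the authors only verify it computationally for words of length $\le 10$ (and the stronger Conjecture~\ref{conj:deg_std_onto} for length $\le 8$). There is therefore no ``paper's own proof'' to compare against, and you should be aware of that when assessing what counts as a complete argument here.

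Your proposal correctly reconstructs the part of the picture that the paper does establish. The observation that for $C\in\iconf(w)$ the limit along a generic curve $\gamma$ reproduces $\Sigma_w(C)$ up to a nonzero scalar is essentially the content of the unnamed Lemma at the end of Section~\ref{subsec:gen_sing_vec}, and your matching with the $\check R$-matrix and splicing steps mirrors how $\Sigma_w$ is built in the proof of Theorem~\ref{thm:lower_bound_irr}. Similarly, using continuity and compactness of the Grassmannian/projective space to place a limit point in $H(w)$ is exactly how the paper gives its second proof of the ``only if'' direction of Theorem~\ref{thm:supp}. Up to this point you are on solid ground, but this only yields $|\iconf(w)|$ linearly independent limits and the non-vanishing of $h(w)$, which are already known.

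The genuine gap is the spanning claim, and you have identified it accurately: to show that limits of $v_C(u)$ for $C$ running over all of $\conf(w)$ (and over all deformation paths $\gamma$, including non-transverse ones) fill out $H(w)$, one would need some a priori control over how the lines $\tilde v_C^{\gamma}$ sit in $H(w)$, and the paper offers no such control — this is precisely the missing combinatorial input that prevents a closed formula for $h(w)$. Your proposed remedies (a resolution $\widetilde X$ of the stratified arrangement, or a nondegeneracy argument via an $R$-matrix pairing) are plausible directions but remain speculative, and as you yourself concede they presuppose information the paper does not have. In particular, the short-exact-sequence induction via \eqref{eq:two_strings_prod1} together with the $q$-character bound gives only an upper bound $h(\rV)\le h(\rU)+h(\rW)$, not an identification of $H(w)$ with a span of limits — the inequality can be strict, and when it is strict the induction does not track where the missing vectors come from. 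So the proposal does not constitute a proof, and to be clear neither does the paper claim one; your honest framing of the obstruction is the correct conclusion.
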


Let $w$ be a word and $C\in\conf(w)$ an arc configuration.
One obtains a vector in $H(w)$ from a generic singular vector $v_C(u)$ by taking consecutive limits as follows. Since the map $v_C$ is a rational map to projective space, its singular locus has codimension at least $2$. Therefore, we can restrict the map $v_C$ to a hyperplane of the form $a_i - a_j = m$ for some constant $m$ (or, equivalently, $u_i=q^m u_j$). After that we can restrict to another hyperplane of the same form. Repeating such restrictions, we arrive to a one-dimensional intersection of hyperplanes which correspond to shifts of the word $w$. 
The corresponding vectors $v_C$  are proportional and belong to $H(w)$.

 In general, restrictions performed in different orders produce different vectors in $H(w)$.
\begin{example}\label{ex:degeneration_different_ways}
     Let $w = 020242$ and $C = \{(1,4), (2,5), (3,6)\}
     = \begin{tikzpicture}
    \draw[-] (0,0) to [out=30,in=150] (1.8,0);
    \draw[-] (0.6,0) to [out=30,in=150] (2.4,0);
    \draw[-] (1.2,0) to [out=30,in=150] (3.0,0);
\end{tikzpicture}$.

We compute $v_C(u)$ in homogeneous coordinates induced by the basis of Catalan vectors in $H_{6}$ ordered as in the picture in Section \ref{subsec:word_arcs},
    \begin{multline*}
        v_C(u_1, u_2,u_3) = \left(q (u_{1}{-}u_{3}) (q^2 u_{1}{-}u_{2}) (q^2 u_{2}{-}u_{3}):q^2 (u_{1}{-}u_{2}) (u_{1}{-}u_{3}) (q^2 u_{2}{-}u_{3}): \right.\\ \left. :q^2 (u_{1}{-}u_{3}) (u_{2}{-}u_{3}) (q^2 u_{1}{-}u_{2}):q
   (u_{1}{-}u_{3}) (q^2 u_{1}{-}u_{2}) (q^2 u_{2}{-}u_{3}):(q^2 u_{1}{-}u_{2}) (q^2 u_{1}{-}u_{3}) (q^2 u_{2}{-}u_{3})\right).
    \end{multline*}
Here the word $w$ corresponds to $u^0 = (1, q^2, 1)$.

    Let us first restrict to $u_2 = q^2 u_1$, then $$v_C(u_1,q^2u_1,u_3) = \left(0: 1 : 0 : 0: 0\right).$$
    
    The latter restrictions $u_3 = u_1$ and $u_1 = 1$ give $\left(0: 1 : 0 : 0: 0\right)$ which is a line  in $H(w)$.

    Alternatively, let us first restrict to $u_3 = u_1$, then $$v_C(u_1,q^2u_1,u_1) = \left(0: 0: 0 : 0: 1\right).$$
    
    The latter restrictions $u_2 = q^2 u_1$ and $u_1 = 1$ give $\left(0: 0 : 0 : 0: 1\right)$ which is a different line  in $H(w)$.

    The sets $\iconf(w)$ and $\sconf(w)$ coincide and consist of two Catalan arc configurations, therefore, we've reconstructed the whole space of $\ell$-singular vectors of weight $0$ as different limit points of a single generic singular vector $v_C(u)$.
\end{example}

In Example \ref{ex:degeneration_different_ways}, arc configuration $C$ has intersecting arcs and the $R$-matrices corresponding to these intersections are non-invertible at the limit point $u^0$ which, in this case, leads to different limits of $v_C(u)$.

If all $R$-matrices used to construct $v_C(u)$ are invertible at $u=u^0$, then $v_C(u)$ has a unique limit point. We can improve this statement as follows.
\begin{lemma}
     If $C$ is an irreducible configuration of the word $w = w(a^0)$ then the limit point of $v_C(u)$ as $u\rightarrow u^0$ is unique.
\end{lemma}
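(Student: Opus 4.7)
The plan is to show that the projective map $\hat{v}_C$ extends regularly to a neighborhood of $u^0$, so that the limit $\lim_{u \to u^0}\hat{v}_C(u)$ exists, is unique, and coincides with $\hat{v}_C(u^0)$.

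First I would realize $v_C(u)$ by the inductive procedure from the proof of Theorem \ref{thm:lower_bound_irr}, alternating Case 1 (decompose $C$ into two pieces on disjoint subsets of positions and take a tensor product of the two recursively constructed vectors) with Case 2 (swap adjacent positions $i$, $i+1$ by applying $\check{R}_{i,2n}(a_{i+1},a_i)$ to reduce the number of intersections by one). Since the matrix entries of $\check{R}(a,b)$ in \eqref{eq:Rmat2} are polynomials in $q^a$ and $q^b$, and since the base vector \eqref{eq:gen_sing_simp} has coordinates in the standard basis of $\rL_1^{\otimes 2n}$ that are constants in $u$, the resulting vector has coordinates that are polynomials in $u_1,\dots,u_n$. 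This gives an explicit polynomial representative of the line $v_C(u) \in \mathbb{P}(H_{2n})$ on a Zariski-open set containing $u^0$.

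Next I would track the coordinate of the standard basis vector $\epsilon_C$ having $+$'s at $\len(C)$ and $-$'s elsewhere. Following the proof of Theorem \ref{thm:lower_bound_irr}, this coordinate is preserved through a Case 1 step (it equals the product of the two analogous coordinates for the subproblems, which depend on disjoint groups of variables $u_i$), and through a Case 2 step it is multiplied by the $++\to ++$ entry of $\check{R}(a_{i+1},a_i)$, namely $q^{a_{i+1}-a_i} - q^2$. The key point is that the minimality/maximality choices for the intersecting pair in Case 2 together with the irreducibility of $C$ in $w$ force $a_{i+1}^0 - a_i^0 \neq 2$ at each such step, so every scalar factor produced in the recursion is a nonzero polynomial evaluated at a point where it does not vanish. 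By induction on the depth of the recursion, the tracked coordinate of the polynomial $v_C(u)$ is a polynomial in the $u_i$ that is nonzero at $u^0$; indeed its specialization recovers the corresponding coordinate of $\Sigma_w(C)$ from Theorem \ref{thm:lower_bound_irr}.

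Since our polynomial representative of $v_C(u)$ has a nonzero coordinate at $u^0$, the induced rational map $\hat{v}_C$ to $\mathbb{P}(H_{2n})$ is regular at $u^0$, and hence $\lim_{u \to u^0}\hat{v}_C(u) = [\Sigma_w(C)]$ is the unique limit point. The technical heart is verifying that irreducibility of $C$ really does prevent the tracked coordinate from vanishing at $u^0$: the minimality conditions on the pair of intersecting arcs in Case 2 are precisely designed so that every R-matrix entry appearing in the recursion is controlled by the irreducibility hypothesis, and this allows the non-vanishing to be preserved by a simple induction rather than by a more delicate blow-up argument.
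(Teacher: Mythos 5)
Your proof is correct and follows essentially the same route as the paper's: construct $v_C(u)$ via the recursion of Theorem \ref{thm:lower_bound_irr}, note it is a polynomial vector, and observe that irreducibility of $C$ makes the $++\to ++$ entry $q^{a_{i+1}-a_i}-q^2$ of each $\check{R}$-matrix nonzero at $u^0$, so the leading coordinate (at $\epsilon_C$) survives and the projective limit is unique. The paper states this more tersely by referencing the proof of Theorem \ref{thm:lower_bound_irr} for the non-vanishing of the leading coefficient; you have simply unpacked that reference.
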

\begin{proof} 
Let $a^0$ be such that $C$ is an irreducible configuration of the word $w_C(a^0)$. We construct $v_C(u)$ as in Theorem \ref{thm:lower_bound_irr}. Then $v_C(u)$ is a polynomial vector in $u$. Since $C$ is irreducible, the coefficient of the highest monomial does not vanish at $u^0$ (see the proof of Theorem \ref{thm:lower_bound_irr} for the details). Therefore, the limit of $v_C(u)$ as $u\to u^0$ is a well-defined unique non-zero vector.
\end{proof}

We expect a stronger version of Conjecture \ref{conj:deg_onto}.
\begin{conj}\label{conj:deg_std_onto}
    For any word $w$ the subspace $H(w)$ of $\ell$-singular vectors of weight $0$ is spanned by limits of generic singular vectors $v_C(u)$, where $C$ is the standard configuration of $w$.
\end{conj}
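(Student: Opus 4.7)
The plan is to fix a word $w\in W_{2n}$ with standard arc configuration $C$ (assumed to exist, else $H(w)=0$ by Theorem \ref{thm:supp} and Lemma \ref{lemma:std_exists}), deform to a generic word $w_C(a)$ with $\conf(w_C(a))=\{C\}$, and study the closure of the rational map $\hat v_C:(\mbC^\times)^n/\mbC^\times\dashrightarrow\mathbb{P}(H_{2n})$ over the point $u^0=q^{a^0}$ corresponding to $w$. Concretely, I would let $L(w)\subseteq H(w)$ denote the linear span of all vectors arising as limits $\lim_{m\to\infty}v_C(u(m))$ along sequences $u(m)\to u^0$ with each $u(m)$ generic; the goal is $L(w)=H(w)$, equivalently $\dim L(w)\geq h(w)$, since the containment $L(w)\subseteq H(w)$ is immediate from continuity of $\psi^\pm(z)$ in parameters. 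I would proceed by induction on $n$, first using Proposition \ref{prop:no_gaps} to reduce to connected $w$, noting that the standard configuration decomposes compatibly under non-connected shuffling so limits factor across the tensor product.

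In the inductive step, I would use Lemma \ref{lemma:slide_isom} and Lemma \ref{lemma:std_conf_slide} to arrange (after a sequence of slides which preserves both $H(w)$ canonically and the standard configuration) that $w=w_1\,[0,\dots,2m]\,a\,\tilde w_2$ in the sense of Section \ref{subsec:up_bound_steady}, matching the recursion of steady arc configurations. The argument then splits into the two cases of Definition \ref{def:steady_arc_conf}. In the $a\neq 2m+2$ case one has an isomorphism $[0,\dots,2m]\,a\cong a\,[0,\dots,2m]$ induced by a non-degenerate $R$-matrix; the rational map $v_C$ intertwines these, so limits transport bijectively, and the claim follows by applying the induction hypothesis to the reordered word. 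In the case $a=2m+2$, I would exploit the short exact sequence
\be
0\longrightarrow [0,\dots,2m-2]\longrightarrow [0,\dots,2m]\,(2m+2) \longrightarrow [0,\dots,2m+2]\longrightarrow 0
\ee
of Proposition \ref{prop:two_strings_prod}, tensored with $w_1\otimes\cdot\otimes \tilde w_2$, which yields by Lemma \ref{lemma:exact_functors}
\be
0\longrightarrow H(w_{\mathrm{sub}})\longrightarrow H(w)\longrightarrow H(w_{\mathrm{quot}}),
\ee
where $w_{\mathrm{sub}}=w_1[0,\dots,2m-2]\tilde w_2$ and $w_{\mathrm{quot}}=w_1[0,\dots,2m+2]\tilde w_2$.

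The core of the argument is then to identify two complementary families of degenerations. First, the ``quotient'' degeneration corresponds to collapsing parameters in the order dictated by the standard configuration of $w_{\mathrm{quot}}$: in this regime the $R$-matrix joining the $(2m+2)$ slot with $[0,\dots,2m]$ is non-singular, so $v_C(u)$ limits to a vector whose image under the projection to $H(w_{\mathrm{quot}})$ is, by the induction hypothesis applied to $w_{\mathrm{quot}}$, the generic limit there; iterating across all degenerations of $w_{\mathrm{quot}}$ shows $L(w)$ surjects onto $H(w_{\mathrm{quot}})$. Second, the ``submodule'' degeneration corresponds to first sending the pair of parameters involved in the singular $R$-matrix to their resonant values, producing a factor that vanishes on the quotient; the remaining free parameters then realize, by induction applied to $w_{\mathrm{sub}}$, every limit in $H(w_{\mathrm{sub}})\subseteq H(w)$. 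Adding the two contributions and using the exact sequence gives $\dim L(w)\geq h(w_{\mathrm{sub}})+\mathrm{rank}(H(w)\to H(w_{\mathrm{quot}}))\geq h(w)$.

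The main obstacle I expect is the compatibility step: showing that the standard configuration $C$ of $w$, under the two distinct degeneration regimes, actually restricts to (or induces) the standard configurations of $w_{\mathrm{sub}}$ and $w_{\mathrm{quot}}$ respectively, so that the induction hypothesis applies with the correct $C$. This is delicate because the standard configuration is defined by a rigid rightmost-first rule (Definition \ref{def:std_conf}) that need not commute with the two operations $\iota_1,\iota_2$ of Definition \ref{def:steady_arc_conf}; one likely needs to first pre-process $w$ by further slides so that the arc of $C$ ending at position $l(w_1)+m+1$ is forced into the desired shape, and then verify by a case analysis on the position of the rightmost $0$ relative to the interval $[0,\dots,2m]$. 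A secondary difficulty is controlling cancellations in $v_C(u)$: the rational function $v_C$ may have coefficients vanishing along the degeneration locus at unequal orders, and one must rescale before passing to the limit; managing these orders uniformly in $C$ is where the genuinely new combinatorics of the conjecture (beyond the bounds of Theorems \ref{thm:lower_bound_irr} and \ref{thm:upper_bound}) should enter, and is likely the reason the authors state this only as a conjecture.
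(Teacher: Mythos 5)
The statement you are trying to prove is not proved in the paper; it appears as Conjecture \ref{conj:deg_std_onto} and the authors verify it only computationally for words of length at most $8$, so there is no paper proof to compare against. Your program is a reasonable way to approach it, and you correctly identify two of its weak points yourself, but it also has gaps that you do not flag.

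The most basic one is that the steady-configuration recursion immediately leaves the category of words: with $a=2m+2$ the quotient term $w_{\mathrm{quot}}=w_1[0,\dots,2m+2]\tilde w_2$ contains a higher-dimensional evaluation module, whereas the conjecture, the definition of the standard configuration (Definition \ref{def:std_conf}), and the construction of $v_C(u)$ in Section \ref{subsec:gen_sing_vec} are all formulated only for words. You would first have to extend the whole apparatus (standard configurations, generic singular vectors, and the conjecture itself) to products of evaluation modules as in Definition \ref{def:thick_arc_conf} before your induction hypothesis is even applicable to $w_{\mathrm{quot}}$. Second, the claim that iterating the ``quotient'' degeneration ``shows $L(w)$ surjects onto $H(w_{\mathrm{quot}})$'' is too strong: $L(w)\subseteq H(w)$ and the sequence $H(w_{\mathrm{sub}})\hookrightarrow H(w)\to H(w_{\mathrm{quot}})$ is only left-exact, so what you actually need is $\pi(L(w))=\pi(H(w))$; the induction hypothesis for $w_{\mathrm{quot}}$ controls $L(w_{\mathrm{quot}})$, not the image of $L(w)$ under $\pi$, and bridging the two is precisely the compatibility problem you flag as your ``main obstacle'' but do not solve. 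Third, the order-of-vanishing issue you raise in the last paragraph is a genuine barrier, not a secondary one: to extract the needed basis vectors you must know that, along your degeneration curves, the coordinates of $v_C(u)$ (after projective rescaling) do not all concentrate in the wrong direction, and nothing in Theorems \ref{thm:lower_bound_irr} or \ref{thm:upper_bound} controls this. In short, your proposal is an honest sketch of a plausible strategy with the hard steps left open, which is consistent with the fact that this is still a conjecture in the paper.
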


\begin{example}\label{ex:degs_long}
    Consider a word $w = (a_1,\dots, a_{10})=2020224244$. The standard arc configuration of $w$ is $C = \{(1, 7), (2, 6), (3, 9), (4, 8), (5, 10)\}$. Then the corresponding generic word $w_C(a)$ in multiplicative notation is given by $(u_1, u_2, u_3, u_4, u_5, q^2u_2, q^2u_1, q^2u_4, q^2u_3, q^2u_5)$. 
    Let $l_{ij}$ be the hyperplane given by the equation $u_i = q^{a_i-a_j}u_j$. A computation shows that there are three linearly independent vectors in $H(w)$ obtained by restrictions of $v_C(u)$ to the one-dimensional intersection of hyperplanes $l_{12}$ , $l_{13}$, $l_{14}$, $l_{15}$ in different orders,
    $$
    (v_{C}(u))|_{l_{12}}|_{l_{13}}|_{l_{14}}|_{l_{15}}, \quad (v_{C}(u))|_{l_{12}}|_{l_{14}}|_{l_{15}}|_{l_{13}}, \quad (v_{C}(u))|_{l_{14}}|_{l_{15}}|_{l_{23}}|_{l_{12}}.
    $$

    By computing steady arc configuration we get $|\sconf(w)| = 3$. Therefore,  $h(w) =3$. Note that there is only one irreducible arc configuration of $w$.
\end{example}
We have checked Conjecture \ref{conj:deg_onto} for all words up to $10$ letters and Conjecture \ref{conj:deg_std_onto} for all words up to $8$ letters computationally.

\medskip 

It is interesting to study the (closure of the) image of the map $v_C$. For example, one can obtain a quadric and cubic surfaces.

\begin{example}\label{ex:im_close1}
    Let $C = \{(1,3), (2,5), (4,6)\}$. In homogeneous coordinates of Catalan vectors as in Example \ref{ex:degeneration_different_ways} the map $v_C$ is given by 
    \begin{multline*}
        (u_1, u_2, u_3)  \mapsto (q (u_{1}{-}u_{2}) (u_{2}{-}u_{3}){:}(u_{1}{-}u_{2}) (q^2 u_{2}{-}u_{3}){:}(u_{2}{-}u_{3}) (q^2 u_{1}{-}u_{2}){:}\frac{(q^2 u_{1}{-}u_{2}) (q^2 u_{2}{-}u_{3})}{q}{:}0).
    \end{multline*}
    The closure of the image of the map $v_C$ is described by equations $x_5 = 0, x_2 x_3 - x_1 x_4 = 0$.
\end{example}

\begin{example}\label{ex:im_close2}
    Consider the map $v_{C}$ from Example \ref{ex:degeneration_different_ways}. Then the closure of image  of the map $v_C$ is described by the equations
    \begin{equation*}
        x_1 - x_4 = 0,\;\; x_1^3 - x_1 (x_3 x_5 + x_2 x_3 + x_2 x_5) + [2]_q x_2 x_3 x_5 = 0.
    \end{equation*}
    For generic $q$, these equations define a singular cubic surface of type $3A_1$ (see \cite{BruceWallOnClassificationOfCubic}) in $\mbC\mathbb{P}^3$. Such a cubic contains not $27$ but $12$ lines.

    Three of these lines are projectivizations of two-dimensional spaces $H(w)$ for the words $w_1 = 020242, w_2 = 202424, w_3 = 024246$. These lines form a triangle. The vertex which is the intersection of sides corresponding to words $w_i, w_j$ is the line generated by the Catalan vector in the space $H(w_i)\cap H(w_j)$. 
    
    As well the vertices of this triangle are the three singular points of type $A_1$ on the cubic.
\end{example}
    In Examples \ref{ex:im_close1} and \ref{ex:im_close2} the map $v_C$ is injective and there exists a rational inverse map defined on the image of $v_C$. It is interesting to understand whether this holds for any connected arc configuration $C$.

\subsection{Degeneration graphs.}\label{subsec:deg_graphs}
Let $\mbC^{2n}$ be a space with coordinates $(\al_1,\dots,\al_{2n})$. Let $C$ be an uncolored arc configuration. Define an affine subspace   $l_C \subset \mbC^{2n}$ of dimension $n$ given by solutions of $n$ equations $\al_{j}=\al_i+2$ labelled by arcs $(i,j)\in C$. Clearly, for each $a\in \mbC^{n}$,  we have  $w_C(a)\in l_C$.

For a positive integer $n$ define a directed graph $DG(2n)$ as follows. The vertices are non-empty intersections $\mathop{\cap}\limits_{C\in S}l_{S}$, where $S$ runs through all subsets of $\uconf(2n)$. Here, if two intersections coincide, $\mathop{\cap}\limits_{C\in S_1}l_{S_1}=\mathop{\cap}\limits_{C\in S_2}l_{S_2}$ then these two intersections are the same vertex.

Each vertex of the degeneracy graph is an affine space which we simply call a plane. 

We connect vertices $A_1$ and $A_2$ by a directed edge $A_1\to A_2$ if and only if the planes $A_1,A_2$ satisfy $A_2 \subset A_1$ and $\dim(A_1) - \dim(A_2) = 1$. 

We call the resulting graph the degeneracy graph and denote it by $DG(2n)$.

\begin{remark}
 The arrangement of hyperplanes $\al_j=\al_i+2$  for al $i<j$ is called a linial hyperplane arrangement, \cite{MR1796891}. The arrangement generated by codimension $n$ planes $l_C$ is a "subarrangement" of the linial arrangement. 
\end{remark}

Motivated by slides, see Lemma \ref{lemma:slide_isom}, we consider an action of group $\mbZ$ on $\mbC^{2n}$ given by 
\begin{equation*}
(\al_1, \dots, \al_{2n}) \mapsto (\al_{2n}-4, \al_1, \dots, \al_{2n-1}).
\end{equation*}
This action descends to an action of the cyclic group $\mbZ_{2n}$ on the quotient $\mbC^{2n}/\mbC$ of $\mbC^{2n}$ by $\mbC$ included diagonally (cf. Section \ref{subsec:gen_sing_vec}).

Then the group $\mbZ_{2n}$ permutes the planes $l_C$ and, therefore, acts on the degeneracy graph $DG(2n)$. 

One could include the action of anti-involution  $\omega$, see Corollary \ref{cor:reverse_word}, and obtain an action of the dihedral group $D_{2n}$ on $DG(2n)$. We do not use the action of $D_{2n}$ in our examples.

We explicitly give the degeneracy graphs $DG(2n)$ for $n=1,2,3$.

\begin{example}
For $n = 1$ the degeneracy graph is a single vertex $l_{\{(1,2)\}}$. 
\end{example}
\begin{example}\label{ex:n2degeneracy_graph}
For $n = 2$,
\begin{equation*}
    \uconf(4) = \{\begin{tikzpicture}
        \draw[-] (0.4,0) to [out=30,in=150] (1,0);
        \draw[-] (1.2,0) to [out=30,in=150] (1.8,0);
    \end{tikzpicture}\ ,\;\ \begin{tikzpicture}
        \draw[-] (0,0) to [out=30,in=150] (1.2,0);
        \draw[-] (0.6,0) to [out=30,in=150] (1.8,0);
    \end{tikzpicture}\ ,\;\  \begin{tikzpicture}
        \draw[-] (0,0) to [out=30,in=150] (1.8,0);
        \draw[-] (0.6,0) to [out=30,in=150] (1.2,0);
    \end{tikzpicture}\}.
\end{equation*}

The corresponding planes are 
\begin{align*}l_{\{(1,2),(3,4)\}} = \{(\alpha_1, \alpha_1 + 2, \alpha_3, \alpha_3 + 2)\},\\ l_{\{(1,3),(2,4)\}} = \{(\alpha_1, \alpha_2, \alpha_1 + 2,  \alpha_2 + 2)\},\\ l_{\{(1,4),(2,3)\}} = \{(\al_1, \al_2, \al_2 + 2, \al_1 + 2)\}.\end{align*}

We have 
\begin{align*}l_{\{(1,2),(3,4)\}} \cap l_{\{(1,3),(2,4)\}}&=\{(\al,\al+2,\al+2,\al+4)\},\\ l_{\{(1,3),(2,4)\}}\cap l_{\{(1,4),(2,3)\}}& = \{(\al, \al, \al+2, \al+2)\},\\ l_{\{(1,2),(3,4)\}} \cap l_{\{(1,4),(2,3)\}}&=\varnothing.
\end{align*}

Therefore, $DG(4)$ is given by
    \begin{center}
\begin{tikzpicture}
    \node at (-5,0) {$l_{\{(1,2),(3,4)\}}$};
    \node at (-0,0) {$l_{\{(1,3),(2,4)\}}$};
    \node at (5,0) {$l_{\{(1,4),(2,3)\}}$};
    \node at (-2.5,-2) {$l_{\{(1,2),(3,4)\}} \cap l_{\{(1,3),(2,4)\}}$};
    \node at (2.5,-2) {$l_{\{(1,3),(2,4)\}} \cap l_{\{(1,4),(2,3)\}}$};
    \draw[->] (-5,-0.3) to (-2.6,-1.7);
    \draw[->] (0,-0.3) to (-2.4,-1.7);
    \draw[->] (0,-0.3) to (2.4,-1.7);
    \draw[->] (5,-0.3) to (2.6,-1.7);
\end{tikzpicture}.
\end{center}

The action of the generator of $\mbZ_4$ swaps $l_{\{(1,2),(3,4)\}}\leftrightarrow l_{\{(1,4),(2,3)\}}$ and preserves $l_{\{(1,3), (2,4)\}}$.
We have $h(w)=1$ for all $w$ in all vertices of $DG(4)$.
\end{example}

\begin{example}\label{ex:n3degenracy_graph}
   In case of $n = 3$, the number of vertices is $57$. We identify vertices in the orbit of action of $\mbZ_6$ and show the resulting graph.
   
\begin{center}
\begin{tikzpicture}[decoration={markings, 
	mark= at position 0.5 with {\arrow{stealth}}}] 
    
    \node at (-4,0.1) {$2$};
    \filldraw [black] (-4,-0.2) circle (2pt);
    \draw[postaction={decorate}] (-4,-0.2) to (-5,-2);
    
    \node at (-2,0.1) {$6$};
    \filldraw [black] (-2,-0.2) circle (2pt);
    \draw[postaction={decorate}] (-2,-0.2) to  (-5,-2);
    \draw[postaction={decorate}] (-2,-0.2) to  (-3,-2);
    \draw[postaction={decorate}] (-2,-0.2) to  (-1,-2);
    \draw[postaction={decorate}] (-2,-0.2) to  (1,-2);
    
    \node at (0,0.1) {$3$};
    \filldraw [black] (0,-0.2) circle (2pt);
    \draw[postaction={decorate}] (0,-0.2) to  (-2.625,-1.75);
    \draw (-2.625,-1.75) to  (-3,-2);
    \draw[postaction={decorate}] (0,-0.2) to  (2.625,-1.75);
    \draw (2.625,-1.75) to  (3,-2);

    \node at (2,0.1) {$3$};
    \filldraw [black] (2,-0.2) circle (2pt);
    \draw[postaction={decorate}] (2,-0.2) to  (-1,-2);
    \draw[postaction={decorate}] (2,-0.2) to  (1.125,-1.775);
    \draw (1.125,-1.775) to (1,-2);
    \draw[postaction={decorate}] (2,-0.2) to  (4.625,-1.775);
    \draw (4.625,-1.775) to  (5,-2);
    
    \node at (4,0.1) {$1$};
    \filldraw [black] (4,-0.2) circle (2pt);
    \draw[postaction={decorate}] (4,-0.2) to  (3.125,-1.775);
    \draw (3.125, -1.775) to (3, -2);
    \draw[postaction={decorate}] (4,-0.2) to  (5,-2);
    
    \node at (-5.3,-2) {$6$};
    \filldraw [black] (-5,-2) circle (2pt);
    \draw[postaction={decorate}, double] (-5,-2) to  (-3,-4);
    
    \node at (-3.3,-2) {$6$};
    \filldraw [black] (-3,-2) circle (2pt);
    \draw[postaction={decorate}, double] (-3,-2) to  (0,-4);
    
    \node at (-1.3,-2) {$6$};
    \filldraw [black] (-1,-2) circle (2pt);
    \draw[postaction={decorate}] (-1,-2) to  (-3,-4);
    \draw[postaction={decorate},double] (-1,-2) to  (0,-4);
    
    \node at (1.3,-2) {$6$};
    \filldraw [black] (1,-2) circle (2pt);
    \draw[postaction={decorate}] (1,-2) to  (-3,-4);
    \draw[postaction={decorate},double] (1,-2) to  (0,-4);
    
    \node at (3.3,-2) {$3$};
    \filldraw [black] (3,-2) circle (2pt);
    \draw[postaction={decorate}] (3,-2) to  (0,-4);
    \draw (3,-2) to  (0,-4);
    \draw[postaction={decorate},double] (3,-2) to  (3,-4);
    
    \node at (5.3,-2) {$3$};
    \filldraw [black] (5,-2) circle (2pt);
    \draw[postaction={decorate},double] (5,-2) to  (0,-4);
    
    \node at (-3,-4.3) {$6$};
    \filldraw [black] (-3,-4) circle (2pt);
    \node at (0,-4.3) {$3$};
    \filldraw [black] (0,-4) circle (2pt);
    \node at (3,-4.3) {$3$};
    \filldraw [black] (3,-4) circle (2pt);

\end{tikzpicture}
\end{center}
 We write next to each vertex the cardinality of the  corresponding $\mbZ_{6}$ orbit. 
We draw an edge between two vertices $A$ and $B$ with multiplicity $m$ if each element in the orbit $B$  in the degeneracy graph $DG(6)$ is connected to $m$ vertices in the orbit $A$. For $n=3$ we have only edges of multiplicities $1$ and $2$.

Note that the vertices in the top row are planes of dimension $3$, in the middle row of dimension $2$ and the bottom row of dimension $1$.

We have $h(w)=1$ for generic points in all vertices except for the rightmost one in the bottom row. This is the orbit which contains the one-dimensional vertex $$l_{\{(1,2),(3,6),(4,5)\}}\cap l_{\{(1,4),(2,5),(3,6)\}}\cap l_{\{(1,6),(2,5),(3,4)\}} =(\al, 2 + \al, \al, 2 + \al, 4 + \al, 2 + \al),$$cf. Example \ref{ex:im_close2}. For any $w$ of such a form, $h(w)=2$. In particular, $w$ is a word of length $6$ with $h(w)=2$ if and only if $w$ is obtained from the word $020242$ by slides and shifts. 

\end{example}

Note that all hyperplanes $l_C$ are invariant with respect to the overall shift $(\al_1,\dots, \al_{2n}) \mapsto (\al_1 +b, \al_2+b,\dots, \al_{2n}+b)$, therefore $DG(2n)$ does not have vertices of dimension $0$. Moreover, all vertices of dimension $1$ have the form $w + (\al,\dots, \al)$, where $w$ is some particular word and $\al\in\mbC$. The vertices of dimension 1 have no outgoing arrows. 

We have $(2n-1)!!$ vertices of dimension $n$. Obviously, such vertices do not have incoming arrows. We have $2{2n \choose 4} (2n-5)!!$ vertices of dimension $n-1$.

We expect the following properties of the degeneracy graphs.

\begin{conj}\label{conj:deg_graph_conn} 
Let $n\in\mbZ_{>0}$.  
\begin{enumerate}
\item\label{conj:deg_graph_conn_1} The degeneracy graph $DG(2n)$ is connected (as a non-direct graph). 
\item\label{conj:deg_graph_conn_2} If a vertex has no incoming arrows then it has dimension $n$ and corresponds to $l_C$ for some $C\in\uconf(2n)$.
\end{enumerate}
\end{conj}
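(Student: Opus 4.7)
The plan is to attack both parts by translating the geometry of the planes $l_C$ into the combinatorics of the \emph{support graph} of a vertex. For a vertex $v$ of $DG(2n)$, let $S(v)=\{C\in\uconf(2n):l_C\supset v\}$ and define $G(v)$ to be the directed graph on $[2n]$ whose edges are the pairs $(i,j)$ with $i<j$ such that $\al_j-\al_i=2$ holds identically on $v$; equivalently, $G(v)=\bigcup_{C\in S(v)}E_C$. The defining system for $v$ has rank $2n-c(G(v))$, where $c(\cdot)$ is the number of connected components, so $\dim v=c(G(v))$. A balance argument shows each component $K$ of $G(v)$ is bipartite with equal parts $A_K,B_K$; on $v$ all indices in $A_K$ share one $\al$-value and all indices in $B_K$ share that value plus $2$, and $G(v)|_K$ is the "comparability" graph $\{(i,j):i\in A_K,\ j\in B_K,\ i<j\}$. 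In particular $v=l_C$ for a single $C$ iff $G(v)$ is a perfect matching, iff $\dim v=n$.

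For part \eqref{conj:deg_graph_conn_2} I would argue by contrapositive: if $\dim v<n$, produce a vertex $A$ with $v\subsetneq A$ and $\dim A=\dim v+1$. The hypothesis forces some component $K$ of $G(v)$ to have $|K|\geq 4$, and the core step is the following \emph{Splitting Lemma}: every such $K$ admits a bipartition $K=K_1\sqcup K_2$ with $|K_1|,|K_2|\geq 2$ even, such that both induced subgraphs $G(v)[K_1]$ and $G(v)[K_2]$ are connected and admit perfect matchings. Given such a split, let $H\subset G(v)$ be obtained by deleting the edges that cross between $K_1$ and $K_2$, and let $A=A_H$ be the affine subspace cut out by $\{\al_j-\al_i=2:(i,j)\in H\}$. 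Closedness of $G(v)$ is inherited on each $G(v)[K_i]$ (signed distances within $K_i$ are unchanged), so $G(A)=H$; the component count satisfies $c(H)=c(G(v))+1$, hence $\dim A=\dim v+1$; and combining a perfect matching on each $G(v)[K_i]$ with matchings of the other components of $G(v)$ produces a perfect matching of $[2n]$ lying in $H$, showing $S(A)\neq\varnothing$ so that $A$ is a genuine vertex of $DG(2n)$.

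To prove the Splitting Lemma I would index the vertices of $K$ in increasing order and encode them as a binary word $\epsilon\in\{0,1\}^{|K|}$ (0 for $A_K$, 1 for $B_K$). Connectedness forces $\epsilon_1=0$ and $\epsilon_{|K|}=1$, while Hall's condition for existence of a perfect matching translates into a Dyck-like inequality on partial sums of $\epsilon$ (read with $0\mapsto+1$, $1\mapsto-1$). I would split into cases: if $\epsilon$ has an internal balance point $p<|K|$ with $\epsilon_{p+1}=0$, the horizontal split $K_1=\{v_1,\dots,v_p\}$, $K_2=\{v_{p+1},\dots,v_{|K|}\}$ inherits the Hall condition and the endpoint conditions on both halves; otherwise $\epsilon$ is a "lifted Dyck path", and taking $K_1$ to be the smallest $A$-vertex together with the smallest $B$-vertex (so $K_1$ consists of two indices with an edge between them) and $K_2=K\setminus K_1$ reduces $|K|$ by $2$, and one verifies directly that $K_2$'s word still starts with $0$, ends with $1$, and satisfies the Dyck condition. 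Induction on $|K|$ closes the argument. This case analysis is the step I expect to be the main obstacle.

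For part \eqref{conj:deg_graph_conn_1}, once part \eqref{conj:deg_graph_conn_2} is established, every vertex admits a chain of incoming arrows up to some top-dimensional vertex $l_C$, so it suffices to connect any two such $l_{C_1},l_{C_2}$ in the underlying undirected graph. It is standard that $\uconf(2n)$ is connected under the elementary swap replacing $\{(a,b),(c,d)\}$ by $\{(a,c),(b,d)\}$ with $a<c<b<d$ (symmetric-difference-of-matchings argument); for such a "non-crossing" swap the intersection $l_{C_1}\cap l_{C_2}$ is non-empty of codimension $n+1$, yielding a direct length-two path in $DG(2n)$, while the "crossing" swap $\{(a,b),(c,d)\}\to\{(a,d),(b,c)\}$ that gives empty intersection factors as two non-crossing swaps through the intermediate matching $\{(a,c),(b,d)\}$. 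Combining these connects all top-dimensional vertices, and part \eqref{conj:deg_graph_conn_2} extends connectivity to the rest of $DG(2n)$.
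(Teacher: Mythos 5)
Your plan founders at the structural model you set up for a component $K$ of the support graph $G(v)$, and the error is not a technicality but the whole engine of the argument. You claim that each connected component $K$ of $G(v)$ is bipartite with parts $A_K,B_K$ on which the coordinate $\al$ takes exactly two values, $a$ and $a+2$, and that $G(v)|_K$ is then the full comparability graph $\{(i,j):i\in A_K,\ j\in B_K,\ i<j\}$. This is false. The edges of $G(v)$ record relations $\al_j-\al_i=2$; propagating along a path of length $\ell$ produces a relation $\al_{j}-\al_{i}=2\ell$ (with sign corrections), so a connected component can carry an arbitrarily long string of consecutive values $a,a+2,a+4,\dots$. A minimal example: take $w=(0,2,2,4)$. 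Its arc configurations are $\{(1,2),(3,4)\}$ and $\{(1,3),(2,4)\}$, so $G(v)$ on $\{1,2,3,4\}$ has edges $\{12,13,24,34\}$ and is a single connected component, but the values are $0,2,2,4$ — three distinct letters, and the bipartition $\{1,4\}\cup\{2,3\}$ has $\al_1=0\ne 4=\al_4$. Your "balance argument", the reformulation of Hall's condition as a Dyck-type inequality on a $\{0,1\}$-word, and the case analysis in the Splitting Lemma all presuppose the two-letter picture and do not apply to a general conf-connected component. (Example \ref{ex:degs_long} in the paper — the word $2020224244$ — is exactly such a five-letter support conf-connected word.) The combinatorics you would actually need to control is that of conf-connected words with arbitrary support, and there your inductive peeling has no leg to stand on.

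For context: the paper itself does not prove this conjecture. It proves the implication $\eqref{conj:deg_graph_conn_2}\Rightarrow\eqref{conj:deg_graph_conn_1}$ (this part matches your last paragraph in spirit, and your swap/intersection bookkeeping there is fine modulo loose notation), and separately reduces $\eqref{conj:deg_graph_conn_2}$ to Conjecture \ref{conj:vertex_over}, which says that deleting the rightmost occurrence of the minimal letter together with the rightmost occurrence of the next letter from a conf-connected word leaves a conf-connected word. Conjecture \ref{conj:vertex_over} is only verified numerically up to length $10$. Your Splitting Lemma is morally in the same vein as Conjecture \ref{conj:vertex_over} (peel off a pair or a block to raise the dimension by one), but proving either requires genuine new combinatorial input about conf-connected words with more than two letters, which neither your proposal nor the paper supplies.
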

In addition, we will prove that if a vertex has no outgoing arrows then it has dimension $1$. 

\medskip 

\begin{lemma} Part \eqref{conj:deg_graph_conn_2} of Conjecture \ref{conj:deg_graph_conn} implies part \eqref{conj:deg_graph_conn_1} of Conjecture \ref{conj:deg_graph_conn}.
\end{lemma}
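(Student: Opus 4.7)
My plan is to prove connectivity of the underlying undirected graph of $DG(2n)$ in two steps: (i) use assumption (2) to show that every vertex $V$ of $DG(2n)$ is undirectedly connected to some maximal vertex $l_C$, and (ii) show, independently of (2), that all maximal vertices $\{l_C : C \in \uconf(2n)\}$ lie in a single undirected connected component.

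For step (i), I would argue by induction on the codimension $n - \dim V$. If $\dim V = n$, writing $V = \bigcap_{C' \in S} l_{C'}$ for a non-empty $S$, the inclusion $V \subseteq l_{C'}$ for any $C' \in S$ together with $\dim V = \dim l_{C'}$ forces $V = l_{C'}$, so $V$ is itself a maximal vertex. If $\dim V < n$, then by part (2) of Conjecture \ref{conj:deg_graph_conn} the vertex $V$ has an incoming arrow, i.e., there is a vertex $V'$ with $V \subsetneq V'$ and $\dim V' = \dim V + 1$; the corresponding edge connects $V$ and $V'$ undirectedly, and the inductive hypothesis provides an undirected path from $V'$ to some $l_C$.

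For step (ii), I would proceed combinatorially. Fix $C_0 = \{(2i-1, 2i) : 1 \leq i \leq n\}$. Given $C \neq C_0$, let $i$ be the smallest odd index with $(i, i+1) \notin C$; by minimality, positions $1, \ldots, i-1$ are paired as in $C_0$, so the partners $j$ of $i$ and $k$ of $i+1$ in $C$ satisfy $j, k > i+1$ and $j \neq k$. I would use two swap moves: if $i+1 < j < k$ (crossing arcs), replace $(i, j), (i+1, k)$ by $(i, i+1), (j, k)$; if $i+1 < k < j$ (nested arcs), replace $(i, j), (i+1, k)$ by $(i, k), (i+1, j)$. A direct substitution into the defining equations $\alpha_{\mathrm{right}} - \alpha_{\mathrm{left}} = 2$ shows that in both cases $l_C \cap l_{C'}$ is non-empty, hence is a vertex of $DG(2n)$ with edges to both $l_C$ and $l_{C'}$. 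The first swap already produces the arc $(i, i+1)$; the second reduces the partner of $i$, after which one more application of the first swap completes the pairing. Iterating for larger $i$, the procedure terminates at $C_0$, so $l_C$ and $l_{C_0}$ lie in the same undirected component.

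The main obstacle is step (ii): not every swap preserves $l$-intersections (for instance, swapping two disjoint arcs $(a, b), (c, d)$ with $a < b < c < d$ into $(a, d), (b, c)$ yields an inconsistent system, forcing $\alpha_d - \alpha_a$ to equal both $2$ and $6$), so the correct type of swap must be chosen based on the relative positions of the arcs being swapped. Once this case analysis is set up, all remaining verifications reduce to straightforward algebra.
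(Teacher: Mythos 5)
Your proof is correct and follows essentially the same two-step strategy as the paper: first use part (2) to connect every vertex to some $l_C$, then reduce an arbitrary uncolored arc configuration to $C_0$ by successive two-arc swaps. The only difference is presentational: where you verify nonemptiness of $l_C\cap l_{C'}$ directly for the crossing and nested cases and compose two moves for the nested case, the paper subsumes both cases by observing that $DG(4)$ is connected and using induction on $n$.
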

\begin{proof}
    By part \eqref{conj:deg_graph_conn_2} every vertex is connected (ignoring the direction of vertices) to a vertex of dimension $n$. 

    We show that any two vertices of dimension $n$ are connected even if all vertices of dimensions less than $n-1$ are deleted. Indeed, the statement is true for $DG(4)$, see Example \ref{ex:n2degeneracy_graph}. Therefore, any two $n$-dimensional vertices $l_C$ and $l_C'$ which differ only by two arcs are connected. Then by induction on $n$, every vertex is connected to $l_{C_0}$ with $C_0=\{(1,2),(3,4),\dots,(2n-1,2n)\}$. Namely, $C$ either contains an arc $(1,2)$ or arcs $(1,a)$, $(2,b)$. In the latter case, $C$ is connected to a new configuration which contains $(1,2)$ and $(\min (a,b),\max(a,b))$ and the other arcs are the same as in $C$. 
\end{proof}

We describe words $w$ which appear in vertices of the degeneracy graph of the dimension $k$.

Given a word $w\in\mbC^{2n}$ and $i, j \in \{1,\dots, 2n\}$ we write $i\bar{\sim}_w j$ if $i=j$ or there exists an arc configuration $C\in \conf(w)$ such that $(i,j)\in C$ or $(j,i)\in C$. For any word $w$, clearly, the relation $\bar{\sim}_w$ is symmetric and reflective. We denote by $\sim_w$ the transitive closure of  $\bar{\sim}_w$. Then $\sim_w$ is an equivalence relation. 

\begin{defi}\label{def:conf_connected}
We call the equivalence classes with respect to this relation by conf-connected components of $w$. 
We call a word $w$ conf-connected if $w$ has only one conf-connected component.
\end{defi}

\begin{lemma}\label{lemma:dg_dims}
    The dimension of a vertex of $DG(2n)$ equals the number of conf-connected components of a generic word in this vertex.
\end{lemma}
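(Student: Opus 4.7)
The plan is to translate both quantities into connected-component counts of the same graph on the vertex set $\{1,\dots,2n\}$.

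First I would set up the dimension side. A vertex of $DG(2n)$ can be written as $V=\bigcap_{C\in S}l_C$ for some $S\subseteq\uconf(2n)$; let $E_S=\bigcup_{C\in S}C$ be the multiset (or set) of all arcs appearing. Since each $l_C$ is defined by the equations $\alpha_j-\alpha_i=2$ for $(i,j)\in C$, the plane $V$ is the affine subspace of $\mathbb C^{2n}$ cut out by the system $\{\alpha_j-\alpha_i=2:(i,j)\in E_S\}$. Assuming $V\neq\varnothing$, the associated linear system has as its matrix the signed incidence matrix of the graph $G_S=(\{1,\dots,2n\},E_S)$, whose rank is $2n-c(G_S)$. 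Hence $\dim V=c(G_S)$, the number of connected components of $G_S$.

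Next I would identify $\conf(w)$ for generic $w\in V$. Since $w\in l_C$ iff $C\in\conf(w)$, and $V$ is an irreducible affine subspace, for $w$ outside the countable union of proper closed subvarieties $V\cap l_C$ (for those $C$ with $V\not\subseteq l_C$), we have
\[
\conf(w)=\{C\in\uconf(2n):V\subseteq l_C\}=:S_{\max}.
\]
In particular $S\subseteq S_{\max}$, and $V=\bigcap_{C\in S_{\max}}l_C$ as well. Setting $E_w=\bigcup_{C\in\conf(w)}C=E_{S_{\max}}$, unfolding Definition \ref{def:conf_connected} shows that the conf-connected components of $w$ are exactly the connected components of the graph $G_w=(\{1,\dots,2n\},E_w)$.

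The remaining step — the only real content — is to show $c(G_S)=c(G_w)$. The inclusion $E_S\subseteq E_w$ gives $c(G_w)\leq c(G_S)$ automatically, so I need to prove that no edge of $E_w\setminus E_S$ merges two components of $G_S$. This is the key observation: if $(i,j)\in C$ with $V\subseteq l_C$, then the linear function $\alpha_j-\alpha_i$ is identically $2$ on $V$. However, the constraint system defining $V$ decouples into one translation freedom per component of $G_S$: within each component the equations from $E_S$ fix all $\alpha$'s up to one additive constant, and constants on different components are independent. Hence $\alpha_j-\alpha_i$ is constant on $V$ only when $i$ and $j$ lie in the same component of $G_S$, which is precisely what is needed.

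Combining the three steps gives
\[
\dim V \;=\; c(G_S)\;=\;c(G_w)\;=\;\#\{\text{conf-connected components of }w\},
\]
as desired. I expect no real obstacle: the argument is essentially linear algebra on the incidence matrix, once one observes the bijection $\conf(w)=\{C:V\subseteq l_C\}$ for generic $w\in V$, which is the only point requiring the word ``generic''.
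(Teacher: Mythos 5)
Your proof is correct and follows essentially the same route as the paper's. The paper's proof proves the two inequalities directly: the upper bound $\dim \leq \#\{\text{components}\}$ by summing the relations $\alpha_{i_l}-\alpha_{i_{l+1}}=\pm 2$ along $\bar\sim_w$-chains to conclude $\alpha_i-\alpha_j$ is constant within a component, and the lower bound by exhibiting the $k$-parameter family of independent translations on components. You package the same observation more cleanly: the dimension of $V$ is the nullity of the signed incidence matrix of $G_S$, hence $c(G_S)$, and then the genericity discussion (your step showing $c(G_S)=c(G_w)$, i.e., that no edge from $E_w\setminus E_S$ can merge two $G_S$-components) replaces the paper's implicit use of $\conf(w)=S(A)$. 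Your version makes the role of genericity more explicit, which is a mild improvement in rigor, but the underlying argument — independent translation freedom on components is exactly the freedom allowed by the constraints — is the same.
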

\begin{proof}
    Each vertex is 
    \begin{equation}\label{eq:dg_dims}
    \bigcap_{C\in S} l_C = \bigcap_{C\in S}\bigcap_{(i,j)\in C} \{\alpha_i - \alpha_j = 2\}.
    \end{equation}    For each pair $i\sim_{w}j$ there is a chain of equivalences $i = i_0\bar{\sim}_w i_1\bar{\sim}_w \dots \bar{\sim}_w i_k = j$ and for each $i_l\bar{\sim}_w i_{l+1}$ there is a relation $\alpha_{i_l}-\alpha_{i_{l+1}} = \pm 2$. Summing them we get $\alpha_i - \alpha_j = \textit{const}$, hence the dimension does not exceed the number of equivalence classes. 

    Conversely, let $\{1,\dots,2n\}=I_1\sqcup\dots \sqcup I_k$, where $I_j$ are conf-connected components of the word $w$. Then for any $\nu_1,\dots,\nu_k$, the shift $\alpha_{i}\mapsto \nu_j+\alpha_i$ whenever $i\in I_j$ preserves the set \eqref{eq:dg_dims}.
\end{proof}

In particular, Lemma \ref{lemma:dg_dims} implies that the vertices of $DG(2n)$ of dimension $1$ are precisely conf-connected words of length $2n$ considered modulo shifts.

Consider a vertex $A$ of degeneracy graph $DG(2n)$ of dimension $k$.

Part \eqref{conj:deg_graph_conn_2} of Conjecture \ref{conj:deg_graph_conn} implies that $A$ is given by intersection of exactly $n-k$ planes of the form $l_C$. 

Note that $A$ may be given by intersection of less than $n-k$ planes. 
For example, we have a $1$-dimensional vertex of $DG(6)$ written as $$l_{\{(1,2),(3,6),(4,5)\}}\cap l_{\{(1,4),(2,5),(3,6)\}}\cap l_{\{(1,6),(2,5),(3,4)\}} = l_{\{(1,2),(3,6),(4,5)\}}\cap l_{\{(1,6),(2,5),(3,4)\}}.$$

Let $S(A)$ be the maximal subset of $\uconf(2n)$ such that  $A=\underset{C\in S}{\cap}l_C$. In other words, $S(A) = \{C\in \uconf(2n)\ |\ l_C \supset A\}$.  If part \eqref{conj:deg_graph_conn_2} Conjecture \ref{conj:deg_graph_conn} is true, then we have $|S(A)|\geq n-k$. 

A word $w$ belongs to $A$ if and only if $S(A) \subset \conf(w)$. We call a word $w$  $A$-generic if $\conf(w)=S(A)$. In other words, a word $w$ is $A$-generic if $w \in A$ and for any $C \in \conf(w)$, we have $l_C \supset A$.

Every $A$-generic word $w$ is in $A$, and $A$-generic words are dense in $A$. By Lemma \ref{lemma:dg_dims}, an $A$-generic word has exactly $k$ conf-connected components. 

Let $w$ be $A$-generic. Let $I_1\sqcup \dots  \sqcup I_k=\{1,\dots,2n\}$ be the conf-connected components of $w$. Note that the partition $I_1\sqcup \dots  \sqcup I_k=\{1,\dots,2n\}$ depends on $\conf(w) = S(A)$ only and therefore does not depend on a choice of $A$-generic word. Note that cardinalities of $I_i$ are even.

A word $\bar w\in A$ if and only if there exist $(\nu_1,\dots,\nu_k)\in\mbC^k$ such that $\bar w_i-w_i=\nu_j$ whenever $i\in I_j$.

Let $w^{(1)},\dots, w^{(k)}$ be the subwords of $w$ related to the conf-connected components. The subword  $w^{(i)}$ consists of letters $w_j$, $j\in I_i$. Then each $w^{(i)}$ is a conf-connected word. We have a natural identification
$\conf(w^{(1)})\times \dots\times \conf(w^{(k)})= \conf(w)$.

Every word $w$ is $A$-generic for a unique vertex $A$ and dimension of $A$ equals the number of conf-connected components of $w$. Namely, such vertex $A$ is given by $A = \underset{C\in \conf(w)}{\cap}l_C$.

\begin{cor}\label{cor:conf_conn_determ}
    For conf-connected words $w_1, w_2$ if $\conf(w_1) = \conf(w_2)$, then $w_1$ and  $w_2$ differ by a shift. \qed
\end{cor}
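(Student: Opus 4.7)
The plan is to deduce the corollary directly from the correspondence between words and vertices of the degeneracy graph $DG(2n)$ that has been set up just above.

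First, for $i=1,2$ define the vertex $A_i = \bigcap_{C\in\conf(w_i)} l_C$ of $DG(2n)$. By the discussion preceding the corollary, each word $w_i$ is $A_i$-generic, and the dimension of $A_i$ equals the number of conf-connected components of $w_i$. Since $w_i$ is assumed conf-connected, $\dim(A_i)=1$ by Lemma \ref{lemma:dg_dims}.

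Next, use the hypothesis $\conf(w_1)=\conf(w_2)$ to conclude $A_1=A_2$ as an intersection of the same collection of planes $l_C$. Both $w_1$ and $w_2$ therefore lie in the common one-dimensional vertex $A:=A_1=A_2$.

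Finally, invoke the structural remark made right after Conjecture \ref{conj:deg_graph_conn}: every vertex of dimension $1$ is of the form $\{w+(\alpha,\dots,\alpha):\alpha\in\mbC\}$ for some fixed word $w$. Hence any two points of $A$ differ by an overall shift, which is exactly the conclusion $w_1$ and $w_2$ differ by a shift. There is no real obstacle here; the content of the corollary is already packaged into the bijection ``$A$-generic word $\longleftrightarrow$ vertex $A$'' together with the dimension count, and the corollary is essentially a restatement of the $k=1$ case of that correspondence.
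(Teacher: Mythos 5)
Your proof is correct and follows the same route that the paper (implicitly) intends: since $w_i$ is $A_i$-generic for the unique vertex $A_i = \bigcap_{C\in\conf(w_i)} l_C$, the hypothesis $\conf(w_1)=\conf(w_2)$ forces $A_1=A_2$, and Lemma \ref{lemma:dg_dims} together with the observation that one-dimensional vertices are shift-orbits completes the argument. This is exactly why the paper marks the corollary with \qed — it is an immediate consequence of the preceding paragraph.
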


\begin{lemma}
    If a vertex of degeneracy graph $DG(2n)$ has no outgoing arrows then the vertex has dimension $1$.
\end{lemma}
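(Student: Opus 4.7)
The plan is to establish the contrapositive: if $A$ is a vertex of $DG(2n)$ with $\dim A = k \geq 2$, then $A$ admits an outgoing arrow. By Lemma~\ref{lemma:dg_dims} and the discussion following it, any $A$-generic word $w$ has exactly $k$ conf-connected components $I_1,\dots,I_k$; each subword $w^{(l)}$ is itself conf-connected, so $\conf(w^{(l)}) \neq \varnothing$, and every word in $A$ is obtained from $w$ by adding a constant $\nu_l \in \mathbb{C}$ to the letters at positions in $I_l$.

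Fix arc configurations $C_l \in \conf(w^{(l)})$, and pick arcs $(p,p') \in C_1$, $(q,q') \in C_2$. Writing $a = w_p$ and $b = w_q$, the key step is to select a shift $\nu \in \{a-b-2,\; a-b,\; a-b+2\}$ of $I_2$ (leaving $\nu_1,\nu_3,\dots,\nu_k$ free) so that the resulting word $\bar w \in A$ admits an arc configuration
\[
C' \;=\; \big(C_1 \setminus \{(p,p')\}\big) \sqcup \big(C_2 \setminus \{(q,q')\}\big) \sqcup \{\alpha,\beta\} \sqcup C_3 \sqcup \cdots \sqcup C_k,
\]
where $\alpha,\beta$ are two arcs with one end in $I_1$ and the other in $I_2$, both lying inside $\{p,p',q,q'\}$. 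A case analysis on the six positional orderings of $\{p,p',q,q'\}$ (with $p<p'$, $q<q'$) shows that one of the three candidate shifts always brings the four letters, in positional order, to one of the shapes $(0,2,2,4)$ or $(0,0,2,2)$ (up to an overall additive constant), each of which admits an arc configuration on the four positions consisting of two arcs that both cross between $I_1$ and $I_2$.

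Setting $B := l_{C'} \cap A$, arcs of $C'$ internal to a single $I_l$ impose equations $\bar w_j - \bar w_i = 2$ identically satisfied on $A$ (the shift $\nu_l$ cancels), while both crossing arcs $\alpha,\beta$ impose the \emph{same} equation $\nu_2 - \nu_1 = c$ with an explicit constant $c$ determined by the chosen $\nu$. Hence $l_{C'}$ adds exactly one independent linear equation to $A$, so $\dim B = k-1$. Since $B$ contains $\bar w$ and is an intersection of hyperplanes $l_C$, it is a non-empty vertex of $DG(2n)$ properly contained in $A$, producing the outgoing arrow $A \to B$.

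The principal obstacle is the combinatorial case analysis: verifying in each of the six positional orderings of $\{p,p',q,q'\}$ that one of the three candidate shifts produces two valid crossing arcs, and that these two arcs impose the same linear constraint on $\nu_2 - \nu_1$. Each case reduces to enumerating arc configurations of a 4-letter word of one of the two shapes above, so the verification is elementary but requires careful bookkeeping.
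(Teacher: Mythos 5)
Your proposal and the paper's proof share the same core idea: take two conf-connected components of an $A$-generic word, delete one internal arc from each, and add two arcs that cross between the components, yielding a configuration $C'$ so that $l_{C'}\cap A$ is a codimension-one vertex below $A$. The paper works directly with uncolored arc configurations: the new $C'$ is obtained from a $C\in S$ by re-pairing $\{i_1,j_1\}$ and $\{i_2,j_2\}$ and setting $B=l_{C'}\cap A$, with no case analysis. You instead route through a colored arc configuration of an explicit word $\bar w\in A$ obtained by tuning the shift of $I_2$, then run a case analysis over shifts and positional orderings. This is heavier than the definition of $DG(2n)$ requires, since its vertices are indexed by intersections of $l_C$ over $C\in\uconf(2n)$, and one does not need $C'$ to be realized as a colored arc configuration of any word in $2\mbZ^{2n}$.

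That said, your extra caution is not wasted, because the paper's re-pairing is not always consistent. When the two chosen arcs are nested, say $i_1<j_1<j_2<i_2$, the paper's $C'$ contains $(i_1,j_1)$ and $(j_2,i_2)$; on $A$ (where $\alpha_{i_2}-\alpha_{i_1}=\alpha_{j_2}-\alpha_{j_1}=2$) the first forces $\alpha_{j_1}-\alpha_{i_1}=2$ and the second forces $\alpha_{j_1}-\alpha_{i_1}=-2$, so $l_{C'}\cap A=\varnothing$ and $B$ is not a vertex of $DG(2n)$. The fix is to use the complementary pairing $\{i_1,j_2\},\{i_2,j_1\}$ in the nested case (both arcs then impose the single consistent constraint $\alpha_{j_1}=\alpha_{i_1}$). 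By exhibiting an explicit $\bar w\in l_{C'}\cap A$, your construction guarantees non-emptiness automatically and thereby sidesteps this gap. So your approach is correct and more careful, but it is phrased at a level of generality that obscures how short the argument can be once one drops the requirement that $C'$ be realized by a word.
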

\begin{proof}
    Let a vertex $A=\mathop{\cap}\limits_{C\in S}l_C$ be of dimension more than $1$. Then $A$ has at least two conf-connected components $I\sqcup J\subset\{1,\dots,2n \}$. Let $C$ be any configuration in $S$. Let $(i_1,i_2),(j_1,j_2)\in C$ such that $i_1\in I$, $j_1\in J$. Then $i_2\in I$, and $j_2\in J$. Let $C'\in\uconf(2n)$ be the configuration which has the same arcs as $C$ except that the arcs $(i_1,i_2),(j_1,j_2)$ are replaced with the arcs $(\min(i_1,j_1), \max(i_1,j_1)),(\min(i_2,j_2), \max(i_2,j_2))$. 

    Then the degeneracy graph has an arrow from the vertex $A$ to the vertex $B=l_{C'}\cap\big(\underset{C\in S}{\cap}l_C\big)$. 
\end{proof}

The following combinatorial statement seems to be a way to prove Conjecture \ref{conj:deg_graph_conn}.

\begin{conj}\label{conj:vertex_over}
 Let $w= (\al_1,\dots,\al_{2n})$ be a conf-connected word. Let $a$ be the minimal letter of $w$. Let $i = \max(I_w(a)),\; j= \max(I_w(a+2))$ be the rightmost positions of $a$ and $a+2$ respectively. Then the word obtained from the word $w$ by removing $\al_i$ and $\al_j$, 
 $$\mathring{w}_{\{(i,j)\}} = (\al_1,\dots, \al_{i-1}, \al_{i+1},\dots, \al_{j-1}, \al_{j+1},\dots, \al_{2n}),$$
 is conf-connected.
\end{conj}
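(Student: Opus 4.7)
The plan is to translate the conf-connectedness of $w$ into ordinary connectedness of a graph $G(w)$ on vertex set $\{1,\dots,2n\}$ with edges $\{p,q\}$ iff some $C\in\conf(w)$ contains $(p,q)$ as an arc; by definition $w$ is conf-connected iff $G(w)$ is connected, and likewise for $\mathring{w}$, so the conjecture reduces to the purely graph-theoretic assertion that $G(w)\setminus\{i,j\}$ is connected.

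First I would identify $G(\mathring{w})$ with the induced subgraph of $G(w)$ on $\{1,\dots,2n\}\setminus\{i,j\}$. Through the obvious bijection $\conf(\mathring{w})\leftrightarrow\{C\in\conf(w):(i,j)\in C\}$ given by deleting $(i,j)$, this reduces to showing that every $C\in\conf(w)$ with $(i,j)\notin C$ admits a local modification $C\to C'\in\conf(w)$ that inserts $(i,j)$ and preserves all arcs outside the $i$-arc and $j$-arc of $C$. Writing $(i,p')\in C$ for $i$'s arc (with $p'\in I_w(a+2)$, $p'\ne j$, since $a$ is minimal), $j$ is paired either with some $q\in I_w(a)$, $q<i$ (\textbf{Case I}) or with some $r\in I_w(a+4)$, $r>j$ (\textbf{Case II}). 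The re-pairings $(i,p'),(q,j)\to(i,j),(q,p')$ in Case I and $(i,p'),(j,r)\to(i,j),(p',r)$ in Case II both yield valid arcs (colours match, and $q<i<p'<j<r$ provides the required orderings), so the needed $C'$ exists.

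Next I would show that $G(w)\setminus\{i,j\}$ is connected by contradiction: assume components $K_1,K_2,\dots$, and first prove that all neighbours of $i$ in $G(w)\setminus\{j\}$ lie in a single component (and symmetrically for $j$). Given $i$-neighbours $p\in K_1$, $p^*\in K_2$, pick $C\ni(i,p)$ and $C^*\ni(i,p^*)$, and consider the partner $s$ of $p^*$ in $C$, which has colour $a$ or $a+4$. Re-pairing the three arcs $(i,p')$, the $j$-arc, and the $p^*$-arc of $C$ into a new triple containing $(i,j)$ produces, in most sub-cases, an arc of the form $(q,p^*)$, $(p^*,r)$, $(q^*,p)$, or $(p,r^*)$ that witnesses an edge of $G(\mathring{w})$ between $K_1$ and $K_2$, a contradiction. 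Once this is established, any $C\ni(i,p'),(q,j)$ forces the $i$- and $j$-components to coincide (the re-pair gives $\{p',q\}\in G(\mathring{w})$), and connectedness of $G(w)$ excludes any further component: a path to such a component must enter $\{i,j\}$, and its last outside vertex would be an $i$- or $j$-neighbour, hence already inside the common component.

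The main obstacle I expect is the case analysis for the $i$-neighbour claim. Of the four sub-cases (Case I/II in $C$; $\alpha_s=a$ versus $\alpha_s=a+4$), three admit an immediate 3-arc re-pair inside $C$ that exhibits a $K_1$-to-$K_2$ edge; the remaining sub-case occurs when $s\in I_w(a+4)$ lies strictly between $p^*$ and $p$, where the would-be arc $(p,s)$ fails for orientation reasons. There I would pass to $C^*$ and re-pair three arcs of $C^*$ involving $p$'s partner $s^*$ in $C^*$; the positional inequality $p^*<p<s^*$, forced whenever $\alpha_{s^*}=a+4$ by the hypothesis $p^*<p$ carried over from $C$, makes the $C^*$-side re-pair valid and yields the desired edge of $G(\mathring{w})$ between $K_2$ and $K_1$. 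None of the individual checks is deep, but the bookkeeping of relative orderings of $i,j,p,p^*,q,q^*,r,r^*,s,s^*$ is where the technical complexity concentrates.
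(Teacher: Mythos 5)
This statement is \emph{Conjecture}~\ref{conj:vertex_over} in the paper; the authors provide no proof and only record a numerical check up to length~$10$. So there is no paper proof to compare against, and what you have attempted is in fact an open problem.

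Your overall strategy is sound: passing to the graph $G(w)$, showing $G(\mathring{w})$ equals the induced subgraph on $\{1,\dots,2n\}\setminus\{i,j\}$ via a two-arc re-pair, and then merging components of $G(\mathring{w})$ by three-arc re-pairs anchored at $(i,j)$. I have checked that, with the positions ordered as $q<i<p'<j<r$ and the key inequality $p^*<p<s^*$ in the residual case, the case analysis does close. However, as written there is a concrete error in the component-merging step. After re-pairing $(i,p),(q,j),(s,p^*)\to(i,j),(q,p^*),(s,p)$, you claim that the arc of the form $(q,p^*)$ (or $(p^*,r)$, $(q^*,p)$, $(p,r^*)$) ``witnesses an edge of $G(\mathring{w})$ between $K_1$ and $K_2$.'' But $q$ is the far end of the $j$-arc, and its component is not a priori known, so $(q,p^*)$ need not be a $K_1$--$K_2$ edge. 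The correct witness is the arc $(s,p)$ (and, symmetrically, $(p^*,s^*)$ when working in $C^*$): here $s$, the partner of $p^*$ in $C$, already lies in $K_2$ because $(s,p^*)\in C$ is an edge of the induced subgraph by your step~2, so the new edge $(s,p)$ joins $K_2$ to $K_1$. Relatedly, the count ``three of the four sub-cases admit an immediate re-pair inside $C$'' is off: sub-cases with $\alpha_s=a$ always close in $C$, but \emph{both} Case~I and Case~II with $\alpha_s=a+4$ fail inside $C$ precisely when $p^*<s<p$, and it is only the passage to $C^*$ together with the forced inequality $p^*<p<s^*$ (or the case $\alpha_{s^*}=a$) that rescues them. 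Finally, the remark ``and symmetrically for $j$'' glosses over the fact that $j$-neighbours come in two colours $a$ and $a+4$; the cleaner route is to observe that the two-arc re-pair of step~2 already attaches \emph{every} $j$-neighbour to an $i$-neighbour $p''\ne j$, so once all $i$-neighbours are shown to be in one component, all $j$-neighbours follow for free. If you correct the identification of the bridge arc and write out the remaining sub-cases explicitly, this looks like a genuine proof of the conjecture --- but given its open status you should expect it to be scrutinized closely, so every sub-case (including the $C^*$ side with both $j$-arc types) must be spelled out.
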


We checked Conjecture \ref{conj:vertex_over} numerically for all conf-connected words up to the length $10$.

\begin{lemma}
    Conjecture \ref{conj:vertex_over} implies part \eqref{conj:deg_graph_conn_2} of Conjecture \ref{conj:deg_graph_conn}.
\end{lemma}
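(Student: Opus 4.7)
The plan is to prove the contrapositive: assuming Conjecture \ref{conj:vertex_over}, any vertex $A$ of $DG(2n)$ with $\dim A < n$ admits an incoming arrow. Fix such an $A$ of dimension $k$ and an $A$-generic word $w$; by Lemma \ref{lemma:dg_dims} it has exactly $k$ conf-connected components $I_1,\dots,I_k$. Since each $|I_j|$ is even and $\sum_j |I_j|=2n>2k$, some component, say $I_1$, satisfies $|I_1|\geq 4$. No arc of $w$ crosses conf-connected components, so every configuration of $w$ restricts to a configuration on each subword $w^{(j)}$, making $w^{(1)}$ conf-connected as a word in its own right. I would then apply Conjecture \ref{conj:vertex_over} to $w^{(1)}$: with $a=\min\supp(w^{(1)})$, the positions $i_0=\max I_{w^{(1)}}(a)$ and $j_0=\max I_{w^{(1)}}(a+2)$ form an arc of $w^{(1)}$ (hence of $w$) whose removal yields a conf-connected word.

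The next step is to deform $w$ into a word $w'$ by shifting only the two letters $w_{i_0}$ and $w_{j_0}$ by a common generic $\nu\in\mbC$, leaving every other letter fixed. The shift preserves the difference $w'_{j_0}-w'_{i_0}=2$ and preserves all arcs disjoint from $\{i_0,j_0\}$; for generic $\nu$ every other arc of $w$ incident to $i_0$ or $j_0$ is destroyed and no new such arc appears (each event imposes a single linear condition on $\nu$). Hence the arcs of $w'$ are exactly the arcs of $w$ with both endpoints outside $\{i_0,j_0\}$ together with $(i_0,j_0)$, the arc $(i_0,j_0)$ lies in every $C\in\conf(w')$, and $C\mapsto C\setminus\{(i_0,j_0)\}$ identifies $\conf(w')$ with $\conf(\mathring{w}_{\{(i_0,j_0)\}})$. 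In particular $\conf(w')\subseteq\conf(w)$.

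Combining this description with Conjecture \ref{conj:vertex_over} applied to $w^{(1)}$ (which keeps $I_1\setminus\{i_0,j_0\}$ conf-connected), the conf-connected components of $w'$ are precisely $\{i_0,j_0\}$, $I_1\setminus\{i_0,j_0\}$, $I_2,\dots,I_k$, giving $k+1$ components. Let $B$ be the vertex of $DG(2n)$ containing $w'$ as a generic point. Lemma \ref{lemma:dg_dims} yields $\dim B=k+1=\dim A+1$, while $\conf(w')\subseteq\conf(w)$ gives
\[
 B=\bigcap_{C\in\conf(w')} l_C\;\supseteq\;\bigcap_{C\in\conf(w)} l_C=A.
\]
Therefore $B\to A$ is an edge of the degeneracy graph, contradicting the absence of incoming arrows at $A$. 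Hence $\dim A=n$, and then every $|I_j|=2$ forces $\conf(w)=\{C\}$ with $C$ consisting of the single arc inside each $I_j$, giving $A=l_C$.

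The main obstacle beyond invoking Conjecture \ref{conj:vertex_over} itself is the bookkeeping for the deformation: one must check that a generic $\nu$ simultaneously (i) creates no new arc at positions $i_0,j_0$, (ii) preserves the component structure outside $I_1$, and (iii) splits $I_1$ into exactly the two pieces $\{i_0,j_0\}$ and $I_1\setminus\{i_0,j_0\}$. Conditions (i)--(ii) are finite unions of proper closed conditions on $\nu$ and are handled by a generic choice; condition (iii) is exactly what Conjecture \ref{conj:vertex_over} supplies, which is the only place the conjectural input is used.
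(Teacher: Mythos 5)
Your proposal is correct and follows essentially the same route as the paper's proof: take an $A$-generic word, locate a conf-connected component of size $\ge 4$, invoke Conjecture \ref{conj:vertex_over} to find the removable arc $(i_0,j_0)$, shift those two letters by a common generic $\nu$ so that $\conf(w')=\{C\in\conf(w):(i_0,j_0)\in C\}$, and observe that the deformed word has $k+1$ components, producing the incoming edge. You spell out the genericity bookkeeping in slightly more detail, but the structure and the use of the conjectural input are identical to the paper's argument.
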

\begin{proof}
    Let $A$ be a vertex of $DG(2n)$ of dimension $k < n$. We need to prove that there is a vertex $B \supset A$ of dimension $k+1$. Let $w = (a_1,\dots, a_{2n})$ be an $A$-generic word and  let $I_1 \sqcup I_2 \sqcup \dots \sqcup I_k=\{1,\dots,2n\}$ be conf-connected components of $w$. Since $\dim A<n$, there exists $i$ such that $|I_i|>2$. By Conjecture \ref{conj:vertex_over}, there exists $j_1,j_2\in I_i$ such that $a_{j_2}=a_{j_1}+2$, $j_1<j_2$ and the word consisting of letters $a_s$ with $s\in I_i\setminus\{j_1,j_2\}$ is conf-connected. 

Let $v$ be obtained from $w$ by replacing $a_{j_1}$ and $a_{j_2}$ by  $a_{j_1}+\nu$ and $a_{j_2}+\nu$ where $\nu$ is a generic complex number. Then $\conf(v)=\{C\in \conf(w), \ (j_1,j_2) \in C\}\subset\conf(w)$. Set 
    $$B=\mathop{\bigcap}_{C\in \conf(v)}l_C.$$  
    Clearly, $A\subset B$. The word $v$ is $B$-generic and has $k+1$ connected components.
\end{proof}

\subsection{Degeneracy graphs and $h(w)$.}

An $A$-generic word $w = (a_1,\dots, a_n)$ is called admissible if $a_i-a_j\not\in \{-2,0,2\}$ whenever $i,j$ belong to different conf-connected components of $w$. Admissible $A$-generic words are still dense in $A$. 

For example, let $A=l_{\{(1,4), (2,3)\}}$. Then $w=0242$ is $A$-generic but not admissible $A$-generic.

\begin{prop}\label{vertex-dim prop}
Let $v,w$ be two $A$-generic admissible words. Then $h(v)=h(w)$. 
\end{prop}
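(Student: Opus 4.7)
The plan is to factor both $v$ and $w$ over their common conf-connected components and reduce $h(v)=h(w)$ to a shift-equivalence within each component together with a product-formula for $h$ that follows from admissibility.

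Since $\conf(v)=\conf(w)=S(A)$, the partition $\{1,\ldots,2n\}=I_1\sqcup\cdots\sqcup I_k$ into conf-connected components is the same for both words; write $v^{(j)},w^{(j)}$ for the corresponding subwords. Every arc of any arc configuration lies within a single component, so $\conf(w)=\prod_j\conf(w^{(j)})$ and the analogous identity for $v$ force $\conf(v^{(j)})=\conf(w^{(j)})$ for each $j$. Each of $v^{(j)},w^{(j)}$ is conf-connected with the same $\conf$, so by Corollary~\ref{cor:conf_conn_determ} they differ by an overall shift, and Lemma~\ref{lemma:shift} yields $h(v^{(j)})=h(w^{(j)})$.

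Admissibility says that for any letter $x$ of $w^{(i)}$ and any letter $y$ of $w^{(j)}$ with $i\neq j$, $x-y\notin\{-2,0,2\}$, so the singleton strings $\{x\}$ and $\{y\}$ are in general position and the evaluation modules $(x)$ and $(y)$ commute. Successive adjacent swaps of letters belonging to different components then produce an isomorphism
\[
w\;\cong\;w^{(1)}\otimes w^{(2)}\otimes\cdots\otimes w^{(k)},
\]
and analogously for $v$. It remains to prove $h(w)=\prod_j h(w^{(j)})$. Proceeding by induction on $k$, set $W=w^{(2)}\otimes\cdots\otimes w^{(k)}$ and apply Corollary~\ref{cor:hom_switch}:
\[
\Hom_{\Uqa}(\mbC,\,w^{(1)}\otimes W)\;\cong\;\Hom_{\Uqa}\bigl((w^{(1)})^{*},W\bigr).
\]
Following the argument in the proof of Proposition~\ref{prop:hom_factorize_lattice}, this space factors as $\Hom_{\Uqa}((w^{(1)})^{*},\mbC)\otimes\Hom_{\Uqa}(\mbC,W)$ provided $(w^{(1)})^{*}$ and $W$ share no non-trivial composition factor, i.e.\ no common dominant monomial other than $1$. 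The variables $1_a$ occurring with positive exponent in any dominant monomial of $\chi_q((w^{(1)})^{*})$ lie in $S_1+2$, while those in $\chi_q(W)$ lie in $\bigcup_{j\geq 2}S_j$, where $S_j=\supp(w^{(j)})$; an $a$ belonging to both sets would produce a letter $a-2$ of $w^{(1)}$ and a letter $a$ of $w^{(j)}$ differing by exactly $2$, contradicting admissibility. Therefore any morphism $(w^{(1)})^{*}\to W$ has $q$-character a multiple of $1$ and, as noted in the proof of Proposition~\ref{prop:hom_factorize_lattice}, its image is a direct sum of trivial modules. Together with the shift-invariance $h((w^{(1)})^{**})=h(w^{(1)})$, this gives $h(w)=h(w^{(1)})\cdot h(W)$.

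Iterating yields $h(w)=\prod_j h(w^{(j)})$ and analogously $h(v)=\prod_j h(v^{(j)})$, so combined with the first step $h(v)=h(w)$. The main step is the Hom factorization; its content is precisely the check that admissibility is what rules out any shared positive-exponent $1_a$-variable between $(w^{(j)})^{*}$ and the tensor product of the remaining components at each inductive stage.
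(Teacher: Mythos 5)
Your proposal is correct and follows essentially the same route as the paper's proof: from $\conf(v)=\conf(w)$ the partition into conf-connected components is identical, Corollary~\ref{cor:conf_conn_determ} and Lemma~\ref{lemma:shift} give $h(v^{(j)})=h(w^{(j)})$ component-by-component, and then $h$ factors over the components by the $q$-character separation argument as in Proposition~\ref{prop:no_gaps}. Your proof spells out the separation step in detail (that the common dominant monomial must be $1$, hence the image of any morphism is a sum of trivials), which the paper merely cites. One small notational slip: the adjunction $\Hom_{\Uqa}(\mbC, w^{(1)}\otimes W)$ lands on $\Hom_{\Uqa}\bigl({}^*(w^{(1)}),W\bigr)$ with the left dual, not $\Hom_{\Uqa}\bigl((w^{(1)})^{*},W\bigr)$; since ${}^*(w^{(1)})$ and $(w^{(1)})^{*}$ differ only by a shift, the support comparison and the conclusion are unaffected.
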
 
\begin{proof}
The conf-connected components of $v$ and $w$ are the same. Let $\{v^{(i)}\}$ and $\{w^{(i)}\}$ be the subwords of $v$ and $w$ corresponding to conf-connected components. Then, after a suitable reordering, the subwords $v^{(i)}$ and $w^{(i)}$ are shifts of each other. In particular, $h(v^{(i)})=h(w^{(i)})$ for all $i$. 

Repeating the proof of Proposition \ref{prop:no_gaps}, we have $h(v)=\mathop{\prod}\limits_{i=1}^k h(v^{(i)})$ and $h(w)=\mathop{\prod}\limits_{i=1}^k h(w^{(i)})$. 
\end{proof}

In Section \ref{subsec:fact_by_cont} we continue the discussion of relations of $h(w)$ to $\mathop{\prod}\limits_{i=1}^k h(w^{(i)})$.

We expect that the condition of being admissible in Proposition \ref{vertex-dim prop} can be dropped. 

\begin{conj} \label{conj:drop_adm}
Let $v,w$ be two $A$-generic words. Then $h(v)=h(w)$. In other words, if $\conf(v)=\conf(w)$ then $h(v)=h(w)$. 
\end{conj}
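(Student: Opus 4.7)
The plan is to realize $A$ as a $k$-dimensional family of $\Uqa$-modules and combine upper semi-continuity of Hom-dimension with the admissible case already handled by Proposition \ref{vertex-dim prop}. Let $I_1\sqcup\cdots\sqcup I_k=\{1,\ldots,2n\}$ be the partition into conf-connected components associated to $A$ (which depends only on $\conf(w)$, cf.\ Lemma \ref{lemma:dg_dims}), and parameterize $A$ by $(\nu_1,\ldots,\nu_k)\in\mbC^k$, where $w(\nu)$ is obtained from a fixed base in $A$ by shifting the letters indexed by $I_j$ by $\nu_j$. All the modules $w(\nu)$ share the same underlying $\Uq$-module $\rL_1^{\otimes 2n}$, and the $\Uqa$-action depends algebraically on $(q^{\nu_j})_{j=1}^k$. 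Hence $H(w(\nu))$ is the common kernel of the operators $\rho_{w(\nu)}(e_0),\rho_{w(\nu)}(f_0)$ restricted to the $\nu$-independent subspace $H_{2n}\subset\rL_1^{\otimes 2n}$ of $\Uq$-singular weight-zero vectors.

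For generic $\nu$, the word $w(\nu)$ is admissible, and Proposition \ref{vertex-dim prop} gives $h(w(\nu))=\prod_{i=1}^k h(w^{(i)})$. Upper semi-continuity of kernel dimension then yields the lower bound $h(w)\geq\prod_{i=1}^k h(w^{(i)})$ for every $w\in A$, and since $v^{(i)}$ and $w^{(i)}$ differ only by a shift whenever $v,w$ are both $A$-generic (so $h(v^{(i)})=h(w^{(i)})$ by Lemma \ref{lemma:shift}), this already gives $h(v),h(w)\geq\prod_i h(w^{(i)})$. For the matching upper bound I would try two routes. The combinatorial route attempts to sharpen Theorem \ref{thm:upper_bound}: construct a projection $\sconf(w)\to\prod_i\sconf(w^{(i)})$ from the recursion in Definition \ref{def:steady_arc_conf} and show it is injective for $A$-generic $w$. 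The representation-theoretic route seeks a natural component-wise specialization morphism $H(w)\to\bigotimes_i H(w^{(i)})$ (built, when possible, from the Drinfeld coproduct \eqref{eq:dr_comult} and iterated limits of the generic singular vectors $v_C(u)$ from Section \ref{subsec:gen_sing_vec}), and checks its injectivity by a flatness/rigidity argument over $A$.

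The main obstacle is this upper bound for non-admissible $A$-generic $w$. When letters from distinct components coincide modulo $\{-2,0,2\}$, the natural tensor factorization $w\cong\bigotimes_i w^{(i)}$ collapses, Proposition \ref{prop:no_gaps} no longer applies, the $q$-character separation hypothesis of Proposition \ref{prop:equiv_drinf} fails, and extra steady configurations mixing arcs across components may appear. One must show either that such extra configurations do not lift to new $\ell$-singular vectors, or equivalently that the jumping locus of the upper semi-continuous function $\nu\mapsto h(w(\nu))$ stays entirely inside the smaller-dimensional strata of $A$ corresponding to other vertices of the degeneracy graph. Concretely, this means controlling how limits of different generic singular vectors $v_C(u)$, $C\in\conf(w)$, remain linearly independent when taken at a non-admissible but $A$-generic point; such control is closely tied to Conjecture \ref{conj:deg_onto} and appears to require new ideas beyond the methods developed in this paper.
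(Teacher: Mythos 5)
The statement you are attempting to prove is labelled a \emph{conjecture} in the paper (Conjecture \ref{conj:drop_adm}), and the authors explicitly state they do not have a proof; they only prove the admissible case (Proposition \ref{vertex-dim prop}) and say they \emph{expect} the admissibility hypothesis can be dropped. So there is no paper proof against which to compare your argument, and your proposal is correctly calibrated: it identifies the same gap the authors leave open.

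Within your proposal, the lower bound is sound. Parameterizing $A$ by component-wise shifts $\nu\in\mbC^k$, the operators defining $H(w(\nu))$ vary algebraically in $q^\nu$, so upper semi-continuity of kernel dimension gives $h(w(\nu))\geq h(w(\nu_0))$ for $\nu_0$ generic, and Proposition \ref{vertex-dim prop} plus Lemma \ref{lemma:shift} identify the generic value as $\prod_i h(w^{(i)})$, which is the same for any two $A$-generic words. This is correct but it is also the easy direction and does not by itself give the conjectured equality.

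The gap, which you name, is the matching upper bound at non-admissible $A$-generic points, and neither of your proposed routes closes it as stated. The combinatorial route is insufficient even in principle: injectivity of $\sconf(w)\hookrightarrow\prod_i\sconf(w^{(i)})$ (which is plausible, since $\conf(w)$ already factors as $\prod_i\conf(w^{(i)})$ for $A$-generic $w$) would only yield $h(w)\leq\prod_i|\sconf(w^{(i)})|$, and $|\sconf|$ can strictly exceed $h$ on components, as Example \ref{ex:upper_bd_nonexact} shows. The representation-theoretic route (an injective specialization map $H(w)\to\bigotimes_i H(w^{(i)})$) is the right target, but you do not construct the map, and building it runs into exactly the obstruction you note: without $q$-character separation, Corollary \ref{cor:sep_prod} and the Drinfeld-coproduct factorization are unavailable. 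So this is honest partial work, not a proof; the conjecture remains open, and your assessment that new ideas tied to Conjecture \ref{conj:deg_onto} are required is consistent with what the paper itself suggests.
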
 
If Conjecture \ref{conj:drop_adm} is true, then it is sufficient to compute $h(w)$ for conf-connected words. 

\medskip 

We assign to each vertex $A$ of the degeneracy graph a natural number $h(A)$ equal to  $h(w)$, where  $w$ is any $A$-generic admissible word. By Proposition \ref{vertex-dim prop} this number does not depend on the choice of $A$-generic admissible word $w$.

In particular, $h(l_C)=1$ for all vertices  of dimension $n$. The main question we study in this paper is the values $h(A)$ for one-dimensional vertices $A$. 

By Lemma \ref{lemma:slide_isom} the numbers $h(A)$ are preserved by the action of $\mbZ_{2n}$. 

For $n=2$ all numbers $h(A)$ are $1$. For $n=3$, the numbers $h(A)$ are described in Example \ref{ex:n3degenracy_graph}.
For $n=4$, there are $20$ orbits of one-dimensional vertices of $DG(8)$  and possible values of $h(w)$ are from $1$ to $3$. For $n=5$, a computer computation shows that the number of orbits of one-dimensional vertices of $DG(10)$ is $260$ and possible values of $h(w)$ are from $1$ to $6$. We list the corresponding connected words (considered up to certain symmetries) and the numbers $h(A)$ in Appendix \ref{app:tables}.

The numbers $h(A)$ are compatible with the degeneracy graph structure as follows.
\begin{prop}
    Let $A_1, A_2$ be two vertices of $DG(2n)$. If $A_1$ is connected to $A_2$, then $h(A_2) \geq h(A_1)$.
\end{prop}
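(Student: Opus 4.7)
The plan is to realize $h$ as an upper semi-continuous function of the evaluation parameters and combine this with Proposition \ref{vertex-dim prop}. First I would observe that the subspace $H_{2n} \subset \rL_1^{\otimes 2n}$ of $\Uq$-singular vectors of weight zero is fixed, independent of the word $w$; and as noted in the introduction, $h(w)$ is the nullity of the linear map $f_0(w)$ restricted to $H_{2n}$, viewed as a map into the fixed space of $\Uq$-singular vectors of weight two. The matrix of this restriction in any chosen bases has entries polynomial in the variables $q^{a_1},\dots,q^{a_{2n}}$, as is visible from the coproduct \eqref{eq:Uqa_Com_An_Coun} and the evaluation homomorphism \eqref{eq:Uqa_eval_hom}. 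Thus $h(w) = \dim \Ker M(w)$ for a matrix $M(w)$ depending algebraically on $w$.

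Next I would construct a one-parameter deformation inside $A_1$. Pick an admissible $A_2$-generic word $w_0 \in A_2$, so $h(w_0) = h(A_2)$ by Proposition \ref{vertex-dim prop}. Since $A_2$ is an affine hyperplane in $A_1$, choose a direction vector $v \in (A_1 - w_0) \setminus (A_2 - w_0)$ and consider the line $w(t) = w_0 + tv \subset A_1$. For generic $v$ and for all but finitely many $t$, the word $w(t)$ is admissible $A_1$-generic: both the condition $\conf(w(t)) = S(A_1)$ (no extra plane $l_C$ contains $w(t)$) and the admissibility condition (the forbidden equalities $a_i(t) - a_j(t) \in \{-2,0,2\}$ across distinct conf-connected components) are each violated only on a proper Zariski-closed subset of the $t$-line, since $\uconf(2n)$ is finite and each forbidden equality is a polynomial condition in $t$. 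For such $t$, Proposition \ref{vertex-dim prop} gives $h(w(t)) = h(A_1)$.

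Finally I would invoke upper semi-continuity. Because the entries of $M(w(t))$ are polynomial in $t$, the rank of $M(w(t))$ is lower semi-continuous in $t$, hence $\dim \Ker M(w(t))$ is upper semi-continuous. Therefore
$$
h(A_2) = \dim \Ker M(w(0)) \;\geq\; \dim \Ker M(w(t)) = h(A_1)
$$
for all sufficiently small generic $t$, which is the desired inequality. The main technical point is the combinatorial genericity step, ensuring that for generic $t$ the point $w(t)$ sits in the admissible $A_1$-generic locus so that Proposition \ref{vertex-dim prop} applies; this reduces to a standard finiteness argument over the intersection of the line $w(t)$ with the hyperplane arrangement generated by the $l_C$ for $C \notin S(A_1)$, together with the finitely many admissibility hyperplanes.
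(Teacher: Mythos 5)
Your proof is correct and uses essentially the same approach as the paper: both arguments pick an admissible $A_2$-generic point, move along a path of admissible $A_1$-generic points (where $h=h(A_1)$ by Proposition \ref{vertex-dim prop}), and invoke semi-continuity to conclude $h(A_2)\geq h(A_1)$. The only difference is cosmetic: the paper phrases the semi-continuity step via compactness of the Grassmannian of $h(A_1)$-planes, while you phrase it via lower semi-continuity of the rank of the matrix $M(w)$ of $f_0$ restricted to $H_{2n}$, which amounts to the same thing.
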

\begin{proof}
Let $h(A_1)=d$. Recall that $A_1$-generic admissible points $w$ are dense in $A_1$. 
We have a rational map from $A_1$ to $\mathrm{Gr}(d, \mbC^{2n})$, which maps a $A_1$-generic admissible point $w$ on the plane to the space $H(w)$ of $\ell$-singular vectors in $w$ of weight zero. 

Let $A_2$ be a vertex such that there is an edge from $A_1$ to $A_2$. Since $A_2\subset A_1$, then  every $A_2$ generic admissible point $v$ is a limit point of $A_1$-generic admissible points $w$. 
Let $w(t)\in A_1$, $t\in[0,1)$ be a continuous curve that $w(t)$ is $A_1$-generic admissible for $t\in (0,1)$ and $w(0)=v$. 
Since the Grassmannian of $d$-planes is compact, the curve $H(w(t))$ has a limit point $H_0$ as $t\to 0$. Hence $w(0)=v$ has at least $d$-dimensional space $H_0$ of $\ell$-singular vectors of weight $0$. 
\end{proof}

\section{Classification of words.}\label{sec:classification}
In general, it is not easy to say whether two words are isomorphic as $\Uqa$ modules or not. In this section we give several results related to this problem.
\subsection{Separating invariant.}\label{subsec:sep_inv}  
In Section \ref{subsec:support} we gave a description of the set of words $w$ for which $h(w) \neq 0$. In this section we apply this result to show that some tensor products of evaluation modules are non-isomorphic.

We start with a simple observation.
\begin{lemma}
Let two words $w_1$ and $w_2$ be isomorphic. Then $w_2$ is obtained from $w_1$ by a permutation of letters. In other words, $\cont(w_1)=\cont(w_2)$.
\end{lemma}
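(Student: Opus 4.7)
The plan is to reduce the isomorphism statement to an equality of $q$-characters and then recover the multiset of letters from the leading term. Since the $q$-character is an injective ring homomorphism from the Grothendieck ring of finite-dimensional $\Uqa$-modules (recalled in Section \ref{subsec:uqamod_and_q-char}), isomorphic modules have equal $q$-characters. Hence it suffices to prove the purely combinatorial statement: for words $w_1 = (a_1,\dots,a_k)$ and $w_2 = (b_1,\dots,b_l)$, the equality
\[
\prod_{i=1}^{k}(1_{a_i} + 1_{a_i+2}^{-1}) \;=\; \prod_{j=1}^{l}(1_{b_j} + 1_{b_j+2}^{-1})
\]
in $\mbZ[1_a^{\pm 1}]_{a\in 2\mbZ}$ forces $\cont(w_1)=\cont(w_2)$.

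The key step is to identify a distinguished monomial in $\chi_q(w)$ that intrinsically records the content. For any Laurent monomial $m=\prod_a 1_a^{d_a}$ define its \emph{positive degree} $|m|_+ := \sum_a \max(d_a,0)$. Expanding the product defining $\chi_q(w)$, every monomial appearing in it has the form $m_S = \prod_{i\in S} 1_{a_i}\cdot\prod_{i\notin S} 1_{a_i+2}^{-1}$ for some subset $S\subseteq\{1,\dots,k\}$. The upper bound $|m_S|_+ \le |S| \le k$ is immediate (positive exponents can only come from factors with $i\in S$), and if $|S|<k$ then $|m_S|_+<k$. Moreover, when $|S|=k$ one gets the single monomial $\prod_{i}1_{a_i}=\prod_{a}1_a^{n_a(w)}$, where $n_a(w)=|I_w(a)|$. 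Thus the monomial of maximum positive degree in $\chi_q(w)$ is unique and equals $\prod_a 1_a^{n_a(w)}$.

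Comparing this intrinsic leading monomial for $w_1$ and $w_2$ yields
\[
\prod_a 1_a^{n_a(w_1)} = \prod_a 1_a^{n_a(w_2)}
\]
in the Laurent polynomial ring. By uniqueness of exponents, $n_a(w_1)=n_a(w_2)$ for all $a\in 2\mbZ$, so $\cont(w_1)=\cont(w_2)$ and $w_2$ is a permutation of $w_1$.

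There is essentially no obstacle here: the only thing requiring verification is the elementary bookkeeping showing that the leading monomial is captured by maximizing $|\cdot|_+$, and the rest is a direct application of the $q$-character formalism developed earlier.
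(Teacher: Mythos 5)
Your proof is correct and follows the same route as the paper, which simply states the lemma ``follows from the comparison of $q$-characters'' without elaboration. What you add is a clean, explicit mechanism for that comparison: isolating the unique monomial of maximal positive degree in $\chi_q(w)=\prod_i(1_{a_i}+1_{a_i+2}^{-1})$ and observing that it equals $\prod_a 1_a^{n_a(w)}$, from which the content is read off. The bookkeeping is sound: each $m_S$ has positive degree at most $|S|$ (negative factors can only cancel positive exponents), and the case $|S|=k$ yields exactly $\prod_a 1_a^{n_a(w)}$, which has positive degree $k$; hence this monomial is the unique one of maximal positive degree, and equality of $q$-characters forces equality of exponent vectors $n_a(w_1)=n_a(w_2)$. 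No gap.
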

\begin{proof}
The lemma follows from the comparison of $q$-characters of $w_1$ and $w_2$.    
\end{proof}
We give a combinatorial way to show that some words are non-isomorphic.
\begin{prop}\label{prop:sep_inv}
Let $w_1,w_2\in W_n$ be two words such that $\cont(w_1)=\cont(w_2)$. Let $\tilde w$ be a word such that $\conf(\tilde ww_1)\neq \varnothing$ and  $\conf(\tilde ww_2)= \varnothing$. Then $w_1$ and $w_2$ are non-isomorphic $\Uqa$ modules.
\end{prop}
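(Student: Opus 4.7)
The plan is to argue by contradiction using that tensoring with $\tilde w$ is a functor and then applying Theorem \ref{thm:supp} to turn the combinatorial hypothesis into an equality/inequality of dimensions $h(\,\cdot\,)$.

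Suppose for contradiction that $w_1 \cong w_2$ as $\Uqa$-modules. Tensoring on the left with $\tilde w$ is an (exact) endofunctor on $\Uqa$-modules by Lemma \ref{lemma:exact_functors}, so it preserves isomorphisms; therefore $\tilde w \otimes w_1 \cong \tilde w \otimes w_2$ as $\Uqa$-modules. Applying the functor $\Hom_{\Uqa}(\mbC,\,\cdot\,)$ then gives a vector space isomorphism
\begin{equation*}
    H(\tilde w w_1)\;\cong\;H(\tilde w w_2),
\end{equation*}
and in particular $h(\tilde w w_1) = h(\tilde w w_2)$.

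Now invoke Theorem \ref{thm:supp}. The hypothesis $\conf(\tilde w w_1) \neq \varnothing$ implies $h(\tilde w w_1) \neq 0$, while $\conf(\tilde w w_2) = \varnothing$ implies $h(\tilde w w_2) = 0$. This contradicts the equality $h(\tilde w w_1) = h(\tilde w w_2)$ established above, so $w_1 \not\cong w_2$. There is no real obstacle here: the only nontrivial input is the already-proved Theorem \ref{thm:supp}, and the argument is essentially a bookkeeping one using the Hopf structure (exactness of tensoring) together with the adjunction $\Hom(\mbC,\,\cdot\,)$ applied to the tensor product. The word $\tilde w$ merely plays the role of a test object distinguishing $w_1$ from $w_2$.
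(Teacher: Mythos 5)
Your proof is correct and follows essentially the same route as the paper's: assume $w_1\cong w_2$, tensor with $\tilde w$, compare $h(\tilde w w_1)$ with $h(\tilde w w_2)$, and invoke Theorem \ref{thm:supp}. The only cosmetic difference is that you cite exactness (Lemma \ref{lemma:exact_functors}), whereas all that is used is that tensoring with a fixed module is a functor and hence preserves isomorphisms.
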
 
\begin{proof}

If $w_1 \cong w_2$, then $\tilde{w}w_1\cong \tilde{w}w_2$, which implies $h(\tilde{w}w_1) = h(\tilde{w}w_2)$. Therefore, the statement follows from Theorem \ref{thm:supp}.
\end{proof}
To use Proposition \ref{prop:sep_inv} we have the following convenient lemma. 

\begin{lemma}Let $w_1,w_2\in W_n$ be two words such that $\cont(w_1)=\cont(w_2)$. Then the following three statements are equivalent.
\begin{itemize}
    \item There exists a word  $\tilde w$ such that $\conf(\tilde ww_1)\neq \varnothing$ and  $\conf(\tilde ww_2)= \varnothing$.
    \item There exists a word  $\tilde w$ such that $\conf(w_1\tilde w)\neq \varnothing$ and  $\conf(w_2\tilde w)= \varnothing$.
    \item  There exists a non-empty subword $\bar{w}$ of $w_1$  such that $\conf(\bar w)\neq \varnothing$ and such that for any subword $\hat{w}$ of $w_2$ satisfying $\cont(\hat{w}) = \cont(\bar{w})$, we have $\conf(\hat{w}) = \varnothing$.
\end{itemize}
\end{lemma}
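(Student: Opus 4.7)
The plan is to prove (1)$\Leftrightarrow$(2) by a single slide argument and then (1)$\Leftrightarrow$(3) by explicit arc-surgery constructions. For (1)$\Leftrightarrow$(2), I combine Theorem \ref{thm:supp} ($h(w)\neq 0 \Leftrightarrow \conf(w)\neq \varnothing$) with Lemma \ref{lemma:slide_isom}: applying the slide $s$ exactly $l(\tilde w)$ times to $\tilde w w_i$ cycles the letters of $\tilde w$ to the right end while adding $4$ to each, producing $w_i\tilde w^{\prime}$, where $\tilde w^{\prime}$ is $\tilde w$ with every letter increased by $4$. Thus $\conf(\tilde w w_i)\neq\varnothing \Leftrightarrow \conf(w_i \tilde w^{\prime})\neq\varnothing$, and the bijection $\tilde w\leftrightarrow \tilde w^{\prime}$ between candidate auxiliary words identifies (1) with (2).

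For (1)$\Rightarrow$(3), I take any $C\in\conf(\tilde w w_1)$ and classify its arcs as type (a) (both ends in $\tilde w$), type (b) (both ends in $w_1$), or type (c) (left end in $\tilde w$, right end in $w_1$). Let $\bar w$ be the subword of $w_1$ on the positions covered by type-(b) arcs; the restriction of $C$ provides $\conf(\bar w)\neq \varnothing$. Non-emptiness of $\bar w$ is forced by contradiction: if every position of $w_1$ lay on a type-(c) arc, then $\cont(w_1)=\cont(w_2)$ would let me re-route those arcs from $\tilde w$ to $w_2$ (keeping the type-(a) arcs of $\tilde w$ unchanged), yielding a forbidden element of $\conf(\tilde w w_2)$. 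For the ``any subword $\hat w$'' part of (3), I suppose $\hat w\subseteq w_2$ has $\cont(\hat w)=\cont(\bar w)$ and $\conf(\hat w)\neq\varnothing$ and construct $C^{\prime} \in \conf(\tilde w w_2)$ by keeping all type-(a) arcs of $C$, placing an arc configuration of $\hat w$ on its positions, and re-routing the type-(c) arcs of $C$ from $\tilde w$ to $w_2\setminus \hat w$ via any letter-preserving bijection; such a bijection exists because $\cont(w_2\setminus \hat w) = \cont(w_1\setminus \bar w)$ (from the two given content equalities), and positions of $\tilde w$ precede all positions of $w_2$, so all resulting arcs are legal. This contradicts $\conf(\tilde w w_2)=\varnothing$.

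For (3)$\Rightarrow$(1), I let $b_1,\dots,b_k$ be the multiset of letters of $w_1\setminus\bar w$ and take $\tilde w = (c_1,\dots,c_k)$ to be the word on letters $b_1-2,\dots,b_k-2$ arranged in weakly decreasing order. The key feature is that, in decreasing order, no $c_j$ can be the right end of an arc: any $c_i$ to its left satisfies $c_i\geq c_j$, so $c_i+2\neq c_j$. Hence every non-empty subword $\tilde w^{\prime}\subseteq \tilde w$ has $\conf(\tilde w^{\prime})=\varnothing$. Therefore, in any hypothetical $C\in \conf(\tilde w w_2)$ there can be no type-(a) arcs, every letter of $\tilde w$ is paired across via a type-(c) arc, and the subword $\hat w\subseteq w_2$ at the remaining positions satisfies $\cont(\hat w)=\cont(\bar w)$ and $\conf(\hat w)\neq\varnothing$, contradicting (3). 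Conversely, $\conf(\tilde w w_1)\neq \varnothing$ is exhibited by combining any $C_0\in\conf(\bar w)$ (existence from (3)) with $k$ arcs pairing each $c_j$ of $\tilde w$ to a letter $c_j+2$ in $w_1\setminus \bar w$ via any letter-preserving bijection.

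The main obstacle is the bookkeeping in (1)$\Rightarrow$(3) when type-(a) arcs occur in $C$: the naive content comparison on $\tilde w$ is offset by those internal pairs. This is handled by keeping the type-(a) arcs fixed throughout the re-routing from $C$ to $C^{\prime}$, so they cancel out of the content comparison and only $\cont(w_1\setminus \bar w) = \cont(w_2\setminus \hat w)$ remains, which follows from $\cont(w_1)=\cont(w_2)$ and $\cont(\bar w)=\cont(\hat w)$.
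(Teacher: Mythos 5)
Your proposal is correct and follows essentially the same route as the paper's own proof: (1)$\Leftrightarrow$(2) by iterated slides, (1)$\Rightarrow$(3) by classifying arcs of a configuration of $\tilde w w_1$ and re-routing the boundary arcs into $w_2$, and (3)$\Rightarrow$(1) by taking $\tilde w$ to be the multiset $\cont(w_1)\setminus\cont(\bar w)$ shifted by $-2$ and arranged in decreasing order so that $\tilde w$ contains no internal arcs. The one point you make explicit that the paper leaves implicit is the non-emptiness of $\bar w$; your contradiction argument for it is correct and is in fact the degenerate case $\hat w=\varnothing$ of the paper's re-routing argument, so the two proofs are substantively the same.
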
 
\begin{proof}

The equivalence of the first and the second statements simply follows by applying slides.

Assume that we have $\tilde w$ as in the first statement. Let $C\in \conf(\tilde ww_1)$. We reduce $\tilde w$ by removing all arcs in $C$ which start and end in $\tilde w$. Then all arcs in $C$ which start in $\tilde w$ end in $w_1$. Let $\bar w$ be the subword of $w_1$ obtained by removing all ends of the arcs in $C$ which start at $\tilde w$. Then $\bar w$ satisfies the third statement. Indeed, clearly $\conf(\bar w)\neq \varnothing$ as an arc configuration is obtained from $C$ by removing all arcs starting in $\tilde w$. Suppose the word $w_2$ has a subword $\hat w$ such that $\cont(\hat{w}) = \cont(\bar{w})$ and  $\conf(\hat w)\neq \varnothing$. Then $\conf(\tilde ww_2)\neq \varnothing$ since an arc configuration for the word $\tilde ww_2$ can be constructed by connecting the letters of $\tilde  w$ to the letters in $w_2$ which are not in $\hat w$ as in $C$. This is a contradiction.

Assume that we have $\bar w$ as in the third statement.
 Let $\tilde{w}$ be the word obtained by reverse sorting of the complement $\cont(w_1)\backslash \cont(\bar{w})$ and common shift of all letters by $-2$. Then $\tilde w$ satisfies the first statement. Indeed, we obtain an arc configuration of $\tilde{w}w_1$ by connecting letters of $\tilde {w}$ with letters in $w_1$ which are not in $\bar{w}$ and connecting letters in $\bar{w}$ as in an arc configuration of  $\bar{w}$. Moreover, suppose there is an arc configuration $C$ of $\tilde w w_2$. By construction of the word $\tilde{w}$ all arcs in $C$ which  start in $\tilde w$ must end in $w_2$. Let $\hat w$ be obtained from $w_2$ by removing all ends of arcs in $C$ which start in $\tilde w$. Then $\cont(\hat w)=\cont(\bar w)$ and $\cont(\hat w)\neq \varnothing$ . This is a contradiction.
\end{proof}

We give an example of non-isomorphic words which are not distinguished by Proposition \ref{prop:sep_inv}.
\begin{example}
    Consider words $w_1 = 020$ and $w_2 = 002$. As we will see in Section \ref{subsec:comm}, $020 \cong 0 \oplus 0[0,2]$. However, it is easy to see $\Hom_{\Uqa}(0[0,2], 002) \cong \Hom_{\Uqa}(00[0,2], 00) = 0$. Therefore, the
    words $020$ and $002$ are non-isomorphic. However, the only subword of $w_1$ containing a trivial submodule is $02$, $w_2$ contains such a subword. As well, the only subword of $w_2$ containing a trivial submodule is  of the form $02$ and $w_2$ contains such a subword.
\end{example}

The following example illustrates the difficulty of comparing words as $\Uqa$-modules.

For a word $w$ and a letter $a$, we call the maximal subword of $w$ consisting of letters $a$ and $a+2$ the $a$-skeleton of $w$.

Recall that we use the notation $a^n$ for the word $(a,\dots, a)$, where $a$ is repeated $n$ times.
\begin{example}
    Consider words $w_1 = 20422$ and $w_2 = 22042$. The $0$-skeletons of $w_1$ and $w_2$ are $2022$ and $2202$ respectively. In Section \ref{subsec:comm} 
    it is shown that $2022 \cong 2202 \cong 2^2[0,2] \oplus 2^2$. Similarly, the $2$ skeletons $2422$ and $2242$ are isomorphic. Thus, all $a$-skeletons of words $w_1$ and $w_2$ are isomorphic. However, words $w_1$ and $w_2$ are non-isomorphic. Indeed, $|\iconf(020w_1)| = 2$ and $|\sconf(020w_2)| = 1$, therefore $h(020w_1) \geq 2 > 1 = h(020w_2)$, which implies that $020w_1\ncong020w_2$, henceforth $w_1 \ncong w_2$.
\end{example}

\subsection{Class counting.}\label{subsec:class_count}
In this section we conjecture a classification of isomorphism classes for tensor products of two-dimensional evaluation modules with evaluation parameters in $\{0,2\}$.

For $I \subset \mbZ$ denote $W_{n}^{I} = \{w\in W_n\ | \ \mathrm{supp}(w) \subset I\}$. In this section we focus on $W_n^{\{0,2\}}$.

In Section \ref{subsec:comm} we conjecture that 

\begin{equation}\label{eq:020isoms}
    0^{k+1}2^k0^k \cong 0^k2^{k}0^{k+1}, \qquad 2^k0^k2^{k+1} \cong 2^{k+1}0^k2^k\end{equation} 
    as $\Uqa$ modules for all $k\in \mbZ_{\geq 0}$.
\begin{conj}\label{conj:02isoms}
    Two words in $W_{n}^{\{0,2\}}$ are isomorphic as $\Uqa$-modules if and only if they can be obtained from each other by a sequence of isomorphisms \eqref{eq:020isoms}.
\end{conj}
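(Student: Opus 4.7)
The ``if'' direction is immediate, since the isomorphisms in \eqref{eq:020isoms} are established in Section \ref{subsec:comm}. Thus the substance of the conjecture lies in the ``only if'' direction, and I will outline a plan of attack for this.

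The plan is to produce a \emph{canonical form} for equivalence classes under the moves \eqref{eq:020isoms}, and then distinguish distinct canonical forms by $\Uqa$-module invariants. Concretely, every $w \in W_n^{\{0,2\}}$ has a run-length encoding $w = 0^{a_0}2^{b_1}0^{a_1}2^{b_2}\cdots 2^{b_r}0^{a_r}$. The move $0^{k+1}2^k0^k \leftrightarrow 0^k2^k0^{k+1}$ says that a palindromic block $0^k2^k0^k$ commutes with an adjacent single $0$, and dually for $2$'s. First I would show that the rewriting system that uses the moves to push as many $0$'s as possible to the right past palindromic blocks (and similarly pushes $2$'s to the left, say) is confluent and terminating. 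Termination follows since each move strictly decreases a suitable positional statistic (for example the sum of the positions of the $0$'s); confluence would be established by analyzing critical pairs of overlapping applications. This produces a unique canonical representative in each equivalence class.

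Next, for two distinct canonical forms $w_1, w_2 \in W_n^{\{0,2\}}$ (necessarily with $\cont(w_1)=\cont(w_2)$), I aim to apply Proposition \ref{prop:sep_inv}: construct a word $\tilde w$ such that $\conf(\tilde w w_1) \neq \varnothing$ while $\conf(\tilde w w_2) = \varnothing$ (or vice versa), and conclude $w_1 \not\cong w_2$ via Theorem \ref{thm:supp}. Natural candidates for $\tilde w$ are words composed of $-2$'s (or $+4$'s, after duality via Lemma \ref{lemma:dual}), whose arcs into $w_i$ are forced to land at specific $0$'s (resp.\ $2$'s) and thereby probe the positional statistics preserved by the moves but not by arbitrary permutations. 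By iteratively prepending letters, one can in principle read off the full ``shape'' of the canonical form from the sequence of Hom-dimensions $h(\tilde w w_i)$ for $\tilde w$ varying over a suitable family; the task reduces to identifying a separating family.

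The main obstacle is Step 2, the separation: one must show that no two inequivalent canonical forms ever coincide as $\Uqa$-modules. A priori there could exist ``hidden'' non-local isomorphisms not generated by \eqref{eq:020isoms}, and ruling these out requires a genuinely complete invariant rather than an ad hoc one. I expect the proof to require either (a) a classification of indecomposable summands appearing in words of $W^{\{0,2\}}$, using the submodule-graph machinery of Section \ref{sec:mod_graphs} together with the extension analysis of Section \ref{sec:extensions}, from which the isomorphism class is read off as a multiset of indecomposables; or (b) a systematic construction of separating test words $\tilde w$ indexed by the canonical form of one of the two competing words, so that $\conf(\tilde w w)$ is non-empty only for $w$ equivalent to that specific canonical form. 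Either route is substantially more involved than the combinatorial manipulations of Step 1, and will likely be where the conjectural gap in the argument sits until one has a sharper description of indecomposable $\Uqa$-modules with composition factors supported on $\{[0,0],[2,2],[0,2],\mbC\}$.
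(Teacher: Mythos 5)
This statement is Conjecture \ref{conj:02isoms}, which the paper leaves open (it is only verified by computer for $n\leq 5$), so there is no proof in the paper to compare against; your submission is a plan of attack rather than a proof, and should be judged as such.

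Two concrete problems. First, the ``if'' direction is \emph{not} established: the isomorphisms \eqref{eq:020isoms} are proved only for $k\leq 2$ (Proposition~\ref{prop:020_compl}, Corollary~\ref{cor:020_commute}, and a brute-force check at $k=2$), while for general $k$ they are a separate open statement, Conjecture~\ref{conj:commute}. So the ``immediate'' half of your argument already rests on an unproved ingredient. Your Step~1 (run-length canonical forms, termination, confluence via critical pairs) is sound and in fact is essentially what the paper carries out in Proposition~\ref{prop:02isom_class} by the diamond lemma — though note that with the orientation you chose (``push $0$'s right, $2$'s left'') the sum of positions of the $0$'s strictly \emph{increases}, not decreases; the argument still terminates, but the sign is off.

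Second, and this is the genuine gap you yourself flag, the separation step as proposed is demonstrably insufficient. The paper's own example in Section~\ref{subsec:sep_inv} exhibits $w_1=020$ and $w_2=002$: non-isomorphic (consistent with the conjecture, since no move relates them), yet indistinguishable by the $\conf$-nonemptiness criterion of Proposition~\ref{prop:sep_inv} and Theorem~\ref{thm:supp}. The paper separates them only by computing $\Hom_{\Uqa}(0[0,2],002)=0$ while $\Hom_{\Uqa}(0[0,2],020)\neq 0$, i.e.\ by testing against the three-dimensional evaluation module $[0,2]$, which is not a word. So a separating family consisting of words $\tilde w$ and $\conf$-tests fails already at length $3$; one would have to enlarge the test class to general tensor products of evaluation modules (or pursue your alternative (a), a classification of the indecomposable summands of words in $W^{\{0,2\}}$, which the paper does not provide). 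Your framing correctly locates the difficulty, but the specific mechanism proposed for Step~2 would stall at the very first nontrivial pair.
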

 We checked this conjecture by computer for $n \leq 5$. 

We compute expected number of classes of isomorphism of modules in $W_n^{\{0,2\}}$ assuming Conjecture \ref{conj:02isoms}.

Define an algebra
\begin{equation}
    \walg = \mathbb{Q}\left< x, y \right> /(x^ky^kx^{k+1} - x^{k+1}y^kx^k,\;y^kx^ky^{k+1} - y^{k+1}x^ky^{k}, \;  k\geq 0).
\end{equation}
The algebra $\walg$ has a grading such that $\deg(x) = \deg(y) = 1$. Let $(\walg)_n\subset \walg$ be the subspace of elements of degree $n$.

Recall that an overpartition is a partition where the first occurence of each integer may be overlined. Equivalently, an overpartition is a pair of a partition and a strict partition. The degree of an overparition is the total number of boxes of the underlying partitions.

\begin{prop}\label{prop:02isom_class}
The set 
        \begin{equation}\label{eq:02reps_class}
            \{x_1^{\alpha_1}x_2^{\alpha_2}\dots x_r^{\alpha_r}\ |\ \alpha_1 \leq \alpha_2 \leq \dots \leq \alpha_m > \alpha_{m+1} > \dots > \alpha_r,\;\;\al_i\in\mbZ_{>0},\ \ x_{i} \in \{x, y\},\; x_{i}\neq x_{i+1}\},
        \end{equation}
    is a basis of $\walg$.
    
There exists a degree preserving bijection of the set \eqref{eq:02reps_class} to the set of all overpartitions. In particular, the graded dimension of  $\walg$ is given by
    \begin{equation}\label{eq:02class_graded_dim}        
   \sum_{n=0}^\infty \dim (\walg)_n\, q^n= \prod_{i=1}^{\infty}\frac{1+q^i}{1-q^i}.
    \end{equation}    
\end{prop}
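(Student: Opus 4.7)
The plan is to proceed in three stages: show that the set \eqref{eq:02reps_class} spans $\walg$ by a terminating rewriting procedure, exhibit an explicit degree-preserving bijection with overpartitions, and establish linear independence of the spanning set via Bergman's Diamond Lemma.

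\textbf{Spanning.} I will interpret the defining relations as left-to-right rewriting rules $x^{k+1}y^k x^k \rightsquigarrow x^k y^k x^{k+1}$ and $y^{k+1}x^k y^k \rightsquigarrow y^k x^k y^{k+1}$, applied to any contiguous occurrence of the LHS inside a word. In block form $x_1^{\alpha_1}\cdots x_r^{\alpha_r}$ (with $x_i \neq x_{i+1}$), such a rewrite acts on a consecutive triple $(\alpha_{i-1},\alpha_i,\alpha_{i+1})$ with $\alpha_{i-1}\geq\alpha_i+1$ and $\alpha_{i+1}\geq\alpha_i$, replacing it by $(\alpha_{i-1}-1,\alpha_i,\alpha_{i+1}+1)$. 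The potential $M(w)=\sum_i i\alpha_i$ increases strictly by $2$ under each such rewrite and is bounded by $r\cdot \deg(w)$, so the rewriting terminates. An irreducible word satisfies, at every interior $i$, either $\alpha_{i-1}\leq\alpha_i$ or $\alpha_{i+1}<\alpha_i$, which by a direct inductive argument forces the exponent sequence to be weakly increasing up to a peak and then strictly decreasing --- exactly the normal form of \eqref{eq:02reps_class}.

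\textbf{Bijection with overpartitions.} For a normal form $x_1^{\alpha_1}\cdots x_r^{\alpha_r}$ with peak at position $m$, I will take $\mu$ to be the strict partition $\alpha_{m+1}>\cdots>\alpha_r$ (the overlined parts) and $\lambda$ the partition obtained by sorting $(\alpha_1,\dots,\alpha_{m-1})$ weakly decreasingly. The peak value $\alpha_m$ is then appended to $\lambda$ if $x_1=x$, and to $\mu$ if $x_1=y$ (in the latter case $\mu$ remains strict because $\alpha_m>\alpha_{m+1}$). The inverse sends an overpartition $(\lambda,\mu)$ to the normal form with peak value $\max(\lambda_1,\mu_1)$, starting letter $x$ when the maximum is uniquely achieved in $\lambda$ and $y$ otherwise (including ties). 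Checking the inverse case-by-case yields a degree-preserving bijection between \eqref{eq:02reps_class} and overpartitions of $n = \sum_i \alpha_i$; since overpartitions of $n$ are enumerated by $[q^n]\prod_{i\geq 1}(1+q^i)/(1-q^i)$, this both furnishes the claimed bijection and, combined with spanning, gives the stated generating function as an upper bound on $\sum_n\dim(\walg)_n q^n$.

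\textbf{Linear independence.} I will apply Bergman's Diamond Lemma to $\mathbb{Q}\langle x,y\rangle$ under the length-lexicographic order with $x>y$; each oriented relation satisfies $\mathrm{LHS}>\mathrm{RHS}$ because at position $k+1$ the LHS reads $x$ and the RHS reads $y$. A short case analysis shows that inclusion ambiguities and overlaps between relations of different types cannot occur, so only overlaps between two same-type relations $R_k$ and $R_j$ sharing a nontrivial interior $x$-block (respectively $y$-block) remain, with overlap words of shape $x^{k+1}y^k x^{k+j+1-l}y^jx^j$ for $1\leq l\leq\min(k,j+1)$. The main technical obstacle is verifying that the two divergent reductions from each such ambiguity reconverge to a common normal form: the two paths typically take several rewrites to rejoin, and one must trace exponent changes carefully across each parameter range $(k,j,l)$, with termination guaranteed by the monotonicity of $M$ and the fixed length. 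Once this confluence is established, the Diamond Lemma identifies the irreducible monomials --- the normal forms of \eqref{eq:02reps_class} --- as a $\mathbb{Q}$-basis of $\walg$. A natural alternative that bypasses the ambiguity analysis is to construct an action of $\walg$ on the free $\mathbb{Q}$-vector space with basis the normal forms, via ``prepend and reduce,'' and to verify the defining relations on this representation; if successful, this immediately implies linear independence.
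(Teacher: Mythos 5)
Your spanning argument via the rewriting rules $x^{k+1}y^k x^k \rightsquigarrow x^k y^k x^{k+1}$, $y^{k+1}x^k y^k \rightsquigarrow y^k x^k y^{k+1}$ is correct, and the termination via the potential $M(w)=\sum_i i\alpha_i$ (which rises by exactly $2$ per rewrite, with block count and degree preserved) is a clean, self-contained device. Your bijection with overpartitions is also correct, though it is assembled differently from the paper's: you place the peak value in $\lambda$ or $\mu$ according to the first letter $x_1$, whereas the paper instead fixes the letter $x_m$ at the peak according to whether the largest part of the overpartition is overlined; these give different (but equally valid) degree-preserving bijections.

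The linear independence step has a genuine gap. The length-lexicographic order with $x>y$ does not orient both families of relations toward the normal forms of \eqref{eq:02reps_class}. Your justification ``at position $k+1$ the LHS reads $x$ and the RHS reads $y$'' holds only for the $x$-family. For the $y$-family with the orientation you use in the spanning step, $y^{k+1}x^ky^k$ has the letter $y$ at position $k+1$, while $y^kx^ky^{k+1}$ has the letter $x$ there; with $x>y$ the right-hand side is therefore the larger monomial, so the Diamond Lemma would force you to orient this rewrite the opposite way. With the flipped orientation the reduced monomials are no longer those of \eqref{eq:02reps_class}: for example $y^2xy$ (block exponents $(2,1,1)$, which is not ``weakly increasing then strictly decreasing'') contains no factor of the form $x^{k+1}y^kx^k$ or $y^kx^ky^{k+1}$ and so becomes irreducible, yet it is not in your proposed basis. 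A compatible order has to compare block exponent sequences rather than single letters; the paper achieves this by passing to the subalgebras $\mathbb{Q}\oplus x\walg$ and $\mathbb{Q}\oplus y\walg$ separately, with a degree-then-block-exponent-lexicographic order, which is exactly what your order choice was trying to avoid. Your ``prepend-and-reduce'' alternative is a reasonable escape route in principle, but to make the action well-defined you still have to show that the result of reducing $x\cdot(\textrm{normal form})$ is independent of the reduction path, which is the same confluence question in a different disguise.
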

\begin{proof}
    Clearly, $\walg = \mathbb{Q} \bigoplus x\walg \bigoplus y\walg$.

We use the diamond lemma, see \cite{BERGMAN1978178}, to compute a basis of subalgebras $x\walg$ and $y\walg$. To apply the diamond lemma, one defines a suitable order on monomials, finds the largest monomial in each relation, and checks that if a monomial can be made smaller by using a relation in two different ways, the results can be matched by further applying the relations in a way so that on each step the resulting monomial is made smaller.

The monomials which can not be made smaller by applying a relation are called reduced monomials. The diamond lemma asserts that the reduced monomials form a basis.
    
We use a degree lexicographic order on monomials in $\mathbb{Q}\langle x,y\rangle$ as follows. Let $m_1,m_2$, where $m_i = x^{l_{i,1}}y^{l_{i,2}}\dots x^{l_{i,k_i}}$, $i=1,2$,  be two distinct monomials. We set $m_1>m_2$ if either $\deg(m_1)> \deg(m_2)$ or $\deg(m_1) = \deg(m_2)$ and $(l_{1,1},\dots , l_{1,k_1}) > (l_{2,1},\dots, l_{k_2})$ in lexicographic order. Here if $k_1 \neq k_2$ we add to sequences $l_{i,j}$ a sufficient number of zeroes.
    
    For any monomial $u$, if $m_1 > m_2$, then $um_1 > um_2$ and $m_1u>m_2u$. For any monomial $m$ the set of monomials smaller than $m$ is finite. These are the conditions required for the diamond lemma.

 First we consider the subalgebra $\walg^{x}=\mathbb{Q} \bigoplus x\walg$.

    Defining relations of $\walg^{x}$ are
    \begin{equation}\label{eq:rels_02x}
      x^{n+1}y^nx^{n} =x^{n}y^nx^{n+1},\qquad  xy^{n+k}x^ny^{n} =xy^{n+k-1}x^{n}y^{n+1},  \qquad n, k\in\mbZ_{\geq 0}.
    \end{equation}   
Here the monomial on the left is larger than the monomial on the right. 

Then overlaps of the larger monomials in the relations happen in the monomials
    $$x^{n+1}y^nx^{n+m}y^{k}x^{k},\qquad  x y^{n+l}x^{n}y^{n+m}x^{k}y^{k},\qquad x^{n+1}y^nx^ny^{m+l+1}x^{m+1}y^{m+1},$$
    where $l>0,\;n>0,\;m\geq 0,\;n+m>k>0.$
These ambiguities are resolved as follows. 

    For the first case we have
    
    \begin{tabular}{c c}
       \begin{tikzpicture}
    \node at (0,0) {$x^{n+1}y^nx^{n+m}y^{k}x^{k}$};
    \node at (-2,-2) {$x^{n}y^nx^{n+m+1}y^{k}x^{k}$};
    \node at (2,-2) {$x^{n+1}y^nx^{n+m-1}y^{k}x^{k+1}$};
    \node at (0,-4) {$x^{n}y^nx^{n+m}y^{k}x^{k+1}$};
    \draw[->] (0,-0.3) to (-1.7,-1.7);
    \draw[->] (0,-0.3) to (1.7,-1.7);
    \draw[->] (-1.7,-2.3) to (-0.2,-3.7);
    \draw[->] (1.7,-2.3) to (0.2,-3.7);
\end{tikzpicture}
         &
\begin{tikzpicture}
    \node at (0,0) {$x^{n+1}y^nx^{n}y^{k}x^{k}$};
    \node at (2,-1) {$x^{n+1}y^nx^{n-1}y^{k}x^{k+1}$};
    \node at (2,-2) {$x^{n+1}y^nx^{k}y^{k}x^{n}$};
    \node at (2,-3) {$x^{n+1}y^{k}x^{k}y^{n}x^{n}$};
    \node at (2,-4) {$x^{k}y^{k}x^{n+1}y^{n}x^{n}$};
    \node at (-2,-1) {$x^{n}y^nx^{n+1}y^{k}x^{k}$};
    \node at (-2,-2) {$x^{n}y^nx^{k}y^{k}x^{n+1}$};
    \node at (-2,-3) {$x^{n}y^{k}x^{k}y^{n}x^{n+1}$};
    \node at (0,-5) {$x^{k}y^{k}x^{n}y^{n}x^{n+1}$};

    \draw[->] (0,-0.3) to (-2,-0.7);
    \draw[->] (0,-0.3) to (2,-0.7);
    \draw[->] (-2,-1.2) to (-2, -1.8);
    \draw[->] (2,-1.2) to (2, -1.8);
    \draw[->] (-2,-2.3) to (-2,-2.7);
    \draw[->] (2,-2.3) to (2,-2.7);
    \draw[->] (-2, -3.2) to (-0.2,-4.8);
    \draw[->] (2, -3.2) to (2,-3.8);
    \draw[->] (2, -4.2) to (0.2,-4.8);
\end{tikzpicture}
         \\
    $m >0$, & $m = 0$.\\
    \end{tabular}

    The second case is similar. For the third case we have 

    \begin{center}
       \begin{tikzpicture}
    \node at (0,0) {$x^{n+1}y^nx^ny^{m+l+1}x^{m+1}y^{m+1}$};
    \node at (-3,-2) {$x^{n}y^nx^{n+1}y^{m+l+1}x^{m+1}y^{m+1}$};
    \node at (3,-2) {$x^{n+1}y^nx^ny^{m+l}x^{m+1}y^{m+2}$};
    \node at (0,-4) {$x^{n}y^nx^{n+1}y^{m+l}x^{m+1}y^{m+2}$};
    \draw[->] (0,-0.3) to (-3,-1.7);
    \draw[->] (0,-0.3) to (3,-1.7);
    \draw[->] (-3,-2.3) to (-0.2,-3.7);
    \draw[->] (3,-2.3) to (0.2,-3.7);
        \end{tikzpicture}.
    \end{center}
    Therefore, by the diamond lemma the reduced monomials form a basis of $\walg^x$. In our case a monomial is reduced if it does not contain a factor equal to a larger monomial of relations \eqref{eq:rels_02x}.

    Consider a monomial $x_1^{\alpha_1}\dots x_k^{\alpha_k}\in\walg^x$ where $x_{2i+1} = x,\; x_{2i} = y,$ and $\al_i\in\mbZ_{>0}$. We claim that this monomial is reduced if and only if there exists an $m$ such that 
 $\alpha_1 \leq \dots \leq \alpha_m > \alpha_{m+1} > \dots > \alpha_{k}$. Indeed, a sequence $(\alpha_1,\dots, \alpha_k)$ is not of this form if and only if there is a subsequence $\alpha_{i-1} > \alpha_{i} \leq \alpha_{i+1}$. Equivalently, there is either a factor of the form $x^{\al_{i}+1}y^{\alpha_i}x^{\al_{i}}$ or a factor of the form $x y^{\al_{i}+k}y^{\alpha_i}x^{\al_{i}}$ with $k>0$. 
 These are exactly the conditions on a monomial to be reducible.

    Therefore, the set 
    \begin{equation*}
        \{x_1^{\alpha_1}x_2^{\alpha_2}\dots x_r^{\alpha_r}\ |\ \alpha_1 \leq \alpha_2 \leq \dots \leq \alpha_m > \alpha_{m+1} > \dots > \alpha_r,\ \ \al_i\in\mbZ_{>0}, \ \ x_{2i+1} = x,\; x_{2i} = y\}
    \end{equation*}
    is a basis of $\walg^x$. 
    
    Similarly, the set 
    \begin{equation*}
        \{x_1^{\alpha_1}x_2^{\alpha_2}\dots x_r^{\alpha_r} \ |\ \alpha_1 \leq \alpha_2 \leq \dots \leq \alpha_m > \alpha_{m+1} > \dots > \alpha_r,\ \ \al_i\in\mbZ_{>0},\ \ x_{2i+1} = y,\; x_{2i} = x\}
    \end{equation*}
    is a basis of $\walg^y=\mathbb{Q}\oplus y\walg$. 
    
    Therefore, $\walg$ has a basis labelled by \eqref{eq:02reps_class}.

We construct a bijection between set \eqref{eq:02reps_class} and the set of overpartition as follows.
    Given an overpartition $\bar{\lambda}$, let $\mu$ be the strict partition containing the overlined parts of $\bar{\lambda}$ and let $\lambda$ be the partition containing the parts of $\bar{\lambda}$ which are not overlined. Let the largest part of $\bar{\lambda}$ be overlined. Then we identify $\bar{\lambda}$ with an element of the set \eqref{eq:02reps_class} given by $\alpha_{m-i} = \lambda_{i}$, $\alpha_{m+j-1} = \mu_{j}$, and $x_m = x$. Here, $m-1$ is the number of parts of $\la$.
    Let the largest part of $\bar{\lambda}$ be not overlined. Then we we identify $\bar{\lambda}$ with an element of the set \eqref{eq:02reps_class} given by 
 $\alpha_{m-i+1} = \lambda_{i}$ ,  $\alpha_{m+j} = \mu_{j}$, and $x_m = y$. Here, $m$ is the number of parts of $\la$. This identification is clearly a bijection.

 The counting function for partitions is $\mathop{\prod}\limits_{i=1}^\infty \frac{1}{1-q^i}$. The counting function for strict partitions is $\mathop{\prod}\limits_{i=1}^\infty (1+q^i)$. Therefore, we have formula \eqref{eq:02class_graded_dim}.
\end{proof}

If Conjecture \ref{conj:02isoms} is true then by Proposition \ref{prop:02isom_class} isomorphism classes of $\Uqa$-modules in $ W_{n}^{\{0,2\}}$ are labelled by the set \eqref{eq:02reps_class}, where $x=0$, $y=2$ and $\mathop{\sum}\limits_{i=1}^l\al_i=n$. We expect that these words are linearly independent over $\mathbb{Q}$. By that we mean that two direct sums of words are isomorphic as $\Uqa$-modules if and only if after a suitable permutation the summands are isomorphic. That is we expect that the algebra of $\Uqa$-modules generated by $0$ and $2$ is isomorphic to $\walg$.

Similarly, one can consider the algebra of $\Uqa$-modules generated by modules $0$, $2$, and $4$. We have expected relations \eqref{eq:020isoms} and similar relations between $2$ and $4$. In addition, we have an obvious relation $04\cong40$. It would be interesting to find other relations or to prove that they do not exist. 

\section{Examples.}\label{sec:examples}

\subsection{Explicit computations of $h(w)$.}\label{subsec:exact_comps}
Although we cannot compute $h(w)$ in general, in many examples our methods do give precise answers. Here we give a few such calculations. We use the notation $H(\rV) = \Hom_{\Uqa}(\mbC, \rV)$ for any $\Uqa$-module.

Let $w\in \mbZ^{2n}$ be a word.

First, we compute $h(w)$ in the case of $\supp(w) = \{0,2\}$.
\begin{lemma}\label{lemma:supp02_hom}
Let $w\in W_{2n}^{\{0,2\}}$, then $h(w) = 0$  if $\conf(w) = \varnothing$ and $h(w) = 1$ otherwise.
\end{lemma}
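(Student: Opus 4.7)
The plan is to combine the general upper bound from $q$-characters (Lemma~\ref{lemma:upper_bound_qchar} and Proposition~\ref{prop:char_bound_computed}) with the support criterion (Theorem~\ref{thm:supp}) and the lower bound by irreducible configurations (Theorem~\ref{thm:lower_bound_irr}). No delicate combinatorics with $\sconf$ or $\iconf$ is needed; the point is that for words with $\supp(w) \subset \{0,2\}$ the $q$-character bound is already at most $1$.

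First I would compute $h_{\textit{char}}(w)$ for $w \in W_{2n}^{\{0,2\}}$ directly from Proposition~\ref{prop:char_bound_computed}. Taking $N = 1$, setting $n_0 = |I_w(0)|$, $n_2 = |I_w(2)|$, $n_4 = 0$, and $m_1 = n_0$, $m_3 = n_2 - n_0$, the proposition yields
\[
h_{\textit{char}}(w) \;=\; \binom{n_2}{n_0}\binom{0}{\,n_2 - n_0\,}.
\]
Using the conventions $\binom{0}{a} = \delta_{a,0}$ and $\binom{a}{b} = 0$ for $b < 0$, this product equals $1$ when $n_0 = n_2$ and $0$ otherwise. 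In particular $h_{\textit{char}}(w) \in \{0,1\}$, and by Lemma~\ref{lemma:upper_bound_qchar} we obtain the uniform upper bound
\[
h(w) \;\leq\; h_{\textit{char}}(w) \;\leq\; 1.
\]

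Now I would do the case split. If $\conf(w) = \varnothing$, Theorem~\ref{thm:supp} immediately gives $h(w) = 0$. If instead $\conf(w) \neq \varnothing$, then since any arc in $w$ has a letter $0$ at its left end and a letter $2$ at its right end, the existence of an arc configuration forces $n_0 = n_2$; hence $h_{\textit{char}}(w) = 1$ by the formula above. On the other hand, Theorem~\ref{thm:supp} guarantees $h(w) \geq 1$ (equivalently, one can invoke Lemma~\ref{lemma:confex_iconfex} together with Theorem~\ref{thm:lower_bound_irr} to obtain a concrete non-zero vector). Combining the two inequalities yields $h(w) = 1$.

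There is no hard step: the entire argument is a short assembly of results already established in Sections~\ref{subsec:up_bound_chars} and~\ref{subsec:support}. The only thing worth pausing on is verifying that the $q$-character formula indeed collapses to $0$ or $1$ in the $\{0,2\}$-case, which is the brief binomial computation above.
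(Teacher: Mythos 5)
Your proof is correct and follows essentially the same route as the paper: the $q$-character computation shows $h_{\textit{char}}(w)\leq 1$, and Theorem~\ref{thm:supp} then settles both cases. The paper states this even more tersely, leaving the binomial evaluation implicit, but the underlying argument is identical.
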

\begin{proof}
The $q$-characters give the upper bound $h(w)\leq 1$. Thus, the lemma follows from Theorem \ref{thm:supp}.
\end{proof}

The next case is  $\supp(w) = \{0,2,4\}$. This case is already difficult.

In the case $\supp(w) = \{0,2,4\}$ there is a simple but useful lemma.
\begin{lemma}\label{lemma:024_simp}
    Let $\rV= \rW\otimes 024 \otimes \rU$, where $\rU, \rW$ are tensor products of modules from the set $\{0,2,4, [0,2], [2,4], [0,4]\}$. Then
    \begin{equation*}
        H( \rV) \cong H( \rW\otimes 0 \otimes \rU)\oplus H( \rW\otimes 4 \otimes \rU).
    \end{equation*}
\end{lemma}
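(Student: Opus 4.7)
The plan is to find a short exact sequence for the module $024$ whose subobject encodes exactly the $0$ and $4$ composition factors, and whose quotient contributes nothing to $H$ when tensored with any element of the stated class.

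First I would show that there is a short exact sequence of $\Uqa$-modules
\begin{equation*}
0 \oplus 4 \hookrightarrow 024 \twoheadrightarrow [0,4].
\end{equation*}
Tensoring $\mbC \hookrightarrow 02 \twoheadrightarrow [0,2]$ (from Proposition \ref{prop:two_strings_prod}) with $4$ on the right yields the embedding $4 \hookrightarrow 024$, and tensoring $\mbC \hookrightarrow 24 \twoheadrightarrow [2,4]$ with $0$ on the left yields $0 \hookrightarrow 024$. The two submodules are non-isomorphic irreducibles, so their intersection is zero, giving an embedding $0 \oplus 4 \hookrightarrow 024$. A quick $q$-character computation shows $\chi_q(024) = \chi_q(0) + \chi_q(4) + \chi_q([0,4])$, so by dimension the quotient $024/(0\oplus 4)$ must be the irreducible $[0,4]$.

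Next I apply the exact functor $\rW \otimes - \otimes \rU$ (Lemma \ref{lemma:exact_functors}) to obtain the short exact sequence
\begin{equation*}
\rW (0\oplus 4) \rU \hookrightarrow \rW \cdot 024 \cdot \rU \twoheadrightarrow \rW [0,4] \rU,
\end{equation*}
and then apply the left-exact functor $\Hom_{\Uqa}(\mbC, -)$ to get
\begin{equation*}
0 \to H(\rW\, 0\, \rU) \oplus H(\rW\, 4\, \rU) \to H(\rW \cdot 024 \cdot \rU) \to H(\rW [0,4] \rU).
\end{equation*}
It remains to prove $H(\rW [0,4] \rU) = 0$, and this will be the main (though still easy) step.

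For the vanishing I use the $q$-character bound of Lemma \ref{lemma:upper_bound_qchar}: it suffices to check that no monomial in $\chi_q(\rW) \chi_q([0,4]) \chi_q(\rU)$ equals $1$. The point is that in every $q$-character of a module from $\{0,2,4,[0,2],[2,4],[0,4]\}$, the exponent of $1_0$ is non-negative and the exponent of $1_6$ is non-positive, while for $[0,4]$ specifically the four monomials carry $(1_0, 1_6)$-exponents $(1,0), (1,-1), (1,-1), (0,-1)$. In any product expanding to $1$, the total exponents of $1_0$ and $1_6$ must vanish; forcing $1_0$-exponent zero selects the monomial $1_2^{-1}1_4^{-1}1_6^{-1}$ from $\chi_q([0,4])$, which then forces the $1_6$-exponent to be negative, a contradiction. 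Hence $h_{\textit{char}}(\rW [0,4]\rU) = 0$, so $H(\rW[0,4]\rU) = 0$, and the connecting sequence collapses to the desired isomorphism $H(\rV) \cong H(\rW\, 0\, \rU) \oplus H(\rW\, 4\, \rU)$.
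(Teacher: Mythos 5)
Your proposal is correct, and it follows the paper's outline exactly through the key reduction: you build the same short exact sequence $0\oplus 4 \hookrightarrow 024 \twoheadrightarrow [0,4]$ in the same way, apply exactness of tensoring and left-exactness of $\Hom(\mbC,-)$, and reduce to showing $H(\rW\,[0,4]\,\rU)=0$. The only place you diverge is in how you prove that vanishing. The paper first commutes $[0,4]$ past $\rU$ (using that $[0,4]$ is in general position with every module in the allowed set), then applies duality (Lemmas \ref{lemma:hom_move}, \ref{lemma:dual}) to rewrite the space as $\Hom_{\Uqa}([-2,2],\rW\otimes\rU)$, and concludes because $1_{-2}$ cannot appear in $\chi_q(\rW\otimes\rU)$. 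You instead bound $h(\rW\,[0,4]\,\rU)$ by the coefficient of $1$ in the $q$-character via Lemma \ref{lemma:upper_bound_qchar}, and kill it by tracking $1_0$- and $1_6$-exponents: every factor has non-negative $1_0$-exponent and non-positive $1_6$-exponent, forcing the $[0,4]$ factor to contribute $1_2^{-1}1_4^{-1}1_6^{-1}$, whose $1_6$-exponent $-1$ cannot be compensated. This is a valid (and arguably more self-contained) argument; it does not need the commutativity observation or the dual-module maneuver, at the small cost of an extra bookkeeping check that the exponent bounds hold for each of the six allowed tensor factors.
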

\begin{proof}

    There is a short exact sequence
    \begin{equation}\label{eq:024_smp_pf_1}
        0\oplus 4 \hookrightarrow 024 \twoheadrightarrow [0,4].
    \end{equation}
    Indeed, we have submodules $\mbC \subset 02$ and $\mbC \subset 24$. Therefore, we have submodules $0\subset 024$ and $4\subset 024$. Since $0$ and $4$ are distinct and irreducible, we have $0\oplus 4 \subset 024$. The quotient $024/(0\oplus 4)\cong [0,4]$ by comparing the $q$-characters.
    
    Short exact sequence \eqref{eq:024_smp_pf_1} implies exact sequence
    \begin{equation*}
        H( \rW\otimes 0 \otimes \rU)\oplus H( \rW\otimes 4 \otimes \rU) \hookrightarrow H( \rV) \rightarrow H( \rW \otimes [0,4] \otimes \rU).
    \end{equation*}

    Since $[0,4] \otimes \rU \cong \rU \otimes [0,4]$ we have by Lemma \ref{lemma:hom_move} 
    $$
        H( \rW \otimes [0,4] \otimes \rU) \cong \Hom_{\Uqa}([-2,2], \rW \otimes  \rU).
    $$
   The last space is zero because $1_{-2}1_01_2$ is not an $\ell$-weight of $\rW\otimes \rV$.

Therefore, the lemma follows.
\end{proof}

In particular, we have an algorithm for the computation of $h(w)$ for $w\in W^{\{0,2,4\}}_{2n}$ when only one letter of $w$ is $4$.

Let $w\in \mbZ^{2n}$ be a word such that $\supp(w) = \{0,2,4\}$ and $\cont(w)$ contains only one letter $4$. 

We perform the following steps.

\begin{algorithm}\label{alg:one_four}\hfill
    \begin{enumerate}
        \item In this step we remove $0$s on the right of $4$. If there are no $0$ on the right of $4$, proceed to the next step.
        \begin{enumerate}
        \item If the last letter of $w$ is $0$, $h(w) = 0$.
        \item Otherwise, remove the rightmost $0$ together with the next letter (which is necessarily $2$). Start again.
        \end{enumerate}
        \item  Assume, there are no letters $0$ to the right of $4$.
        \begin{enumerate}
            \item If $4$ is the last letter, apply slide (see Lemma \ref{lemma:slide_isom}): $h(\bar w4)=h(0\bar w)$. Then $0\bar w\in W^{\{0,2\}}_{2n}$ contains only $0$s and $2$s and $h(w)$ is computed by Lemma \ref{lemma:supp02_hom}.
            \item If $4$ is the first letter, $h(w) = 0$.
            \item If the letter before $4$ is $0$, permute this $0$ with $4$. Start again.
            \item If $4$ is the second letter, remove it together with the preceding $2$. Then the resulting word contains only $0$s and $2$s and $h(w)$ is computed by Lemma \ref{lemma:supp02_hom}.
            \item Assume $4$ is neither the first, the second, nor the last letter,  the letter immediately before $4$ is $2$, there are no $0$ on the right of $4$.
            \begin{enumerate}
                    \item If the second letter to the left of $4$ is $2$ apply an isomorphism $2242 \cong 2422$ to move this letter to the right of $4$. Start again.
                    \item If the second letter to the left of $4$ is $0$, then we have $w=\bar w 024 \tilde w$. Then
                    $h(w) = h(\bar w0\tilde w) +  h(\bar w4\tilde w)$. Apply the algorithm to words $\bar w0\tilde w$ and $\bar w4\tilde w$.
            \end{enumerate}
        \end{enumerate}
    \end{enumerate}
\end{algorithm}
\begin{prop}
    Let $w\in \mbZ^{2n}$ be a word such that $\supp(w) = \{0,2,4\}$ and $\cont(w)$ contains only one letter $4$. Then $h(w)$ is computed by Algorithm \ref{alg:one_four}. 
\end{prop}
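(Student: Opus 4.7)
The plan is to argue by strong induction on a complexity measure for $w$, for instance the lexicographic pair $(l(w),\, l(w) - \operatorname{pos}(4))$ where $\operatorname{pos}(4)$ denotes the position of the unique letter $4$. At each algorithmic step I will verify that either $h(w)$ is computed directly, or $h(w)$ is expressed in terms of $h$-values of strictly smaller words (with respect to the chosen complexity), and that the algorithm terminates.

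For step 1(a) I would invoke Lemma \ref{lemma:right0_nohom} with $m=0$ directly. For step 2(b) I would note that $\conf(w)=\varnothing$, since position $1$ carrying letter $4$ cannot be the endpoint of any arc (there is no letter $6$ in $w$), so Theorem \ref{thm:supp} gives $h(w)=0$. Step 2(a) is the inverse slide $s^{-1}$: by Lemma \ref{lemma:slide_isom}, $h(\bar w\,4) = h(0\,\bar w)$, and since $\bar w$ contains only letters in $\{0,2\}$, the resulting word has support in $\{0,2\}$ and Lemma \ref{lemma:supp02_hom} finishes the computation. Step 2(c) uses that the strings $[0,0]$ and $[4,4]$ are in general position, so $04\cong 40$ as $\Uqa$-modules; this move strictly decreases $\operatorname{pos}(4)$.

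For step 2(d) I would tensor the short exact sequence $\mbC\hookrightarrow 24\twoheadrightarrow [2,4]$ with $\tilde w=2^{2n-2}$ on the right and apply Lemma \ref{lemma:exact_functors} to obtain an exact sequence $H(\tilde w)\hookrightarrow H(w)\to \Hom([4,6],\tilde w)$ (using Lemma \ref{lemma:hom_move} and $[2,4]^*\cong[4,6]$). The right-hand term vanishes because $\chi_q(\tilde w)$ contains no $1_6$, so $[4,6]$ is not even a composition factor; hence $h(w)=h(\tilde w)$, which is then completed by Lemma \ref{lemma:supp02_hom}. For step 2(e)(i) I would apply the isomorphism $2242\cong 2422$ from Section \ref{subsec:comm} to a local length-$4$ factor, which strictly decreases $\operatorname{pos}(4)$. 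For step 2(e)(ii) I would write $w=\bar w\,0\,2\,4\,\tilde w$ and invoke Lemma \ref{lemma:024_simp} verbatim to split $h(w)=h(\bar w\,0\,\tilde w)+h(\bar w\,4\,\tilde w)$; the first summand has support in $\{0,2\}$ and is handled by Lemma \ref{lemma:supp02_hom}, while both summands are on strictly shorter words.

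Step 1(b) is the subtle case and the main obstacle. Writing $w=\bar w\,0\,2\,\tilde w$ with $\tilde w=2^k$, the short exact sequence $\mbC\hookrightarrow 02\twoheadrightarrow [0,2]$ together with $[0,2]\cdot 2^k\cong 2^k\cdot[0,2]$ (general position) and Lemma \ref{lemma:exact_functors} yields
\begin{equation*}
H(\bar w\,2^k)\hookrightarrow H(w)\to \Hom([2,4],\bar w\,2^k).
\end{equation*}
To conclude $h(w)=h(\bar w\,2^k)$ it suffices to show that the final map is zero; the cleanest sufficient claim is $\Hom([2,4],\bar w\,2^k)=0$. I would approach this via a character analysis---the dominant monomial $1_2 1_4$ occurs in $\chi_q(\bar w\,2^k)$ only when $n_2=n_0+1$, and in that case the corresponding $\ell$-weight subspace has a small and explicit dimension---combined with a secondary induction on $(l(w),\operatorname{pos}(4))$ to rule out $\ell$-singular vectors of $\ell$-weight $1_2 1_4$. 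Alternatively, I would dualize via Lemma \ref{lemma:hom_move} to $\Hom(\mbC,\bar w\,2^k\,[4,6])$ and prove vanishing by an analog of Lemma \ref{lemma:right0_nohom} exploiting that $[4,6]$ sits at the far right and carries the unique largest letter $6$. Once this vanishing is established the induction closes, and termination follows because each step strictly decreases either $l(w)$ (steps 1(a), 1(b), 2(d), 2(e)(ii)) or $\operatorname{pos}(4)$ within a fixed length (steps 2(c), 2(e)(i)), while steps 2(a) and 2(b) terminate directly.
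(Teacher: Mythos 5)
Your proof is essentially the paper's argument for most steps (short exact sequence $\mbC\hookrightarrow a(a{+}2)\twoheadrightarrow[a,a{+}2]$ plus Lemma \ref{lemma:exact_functors} for steps 1(b), 2(d); Lemma \ref{lemma:024_simp} for 2(e)(ii); slides, general position, and $n$-exchange for the rest). However, your handling of the cokernel term in step 1(b) — which you yourself flag as ``the subtle case and main obstacle'' — contains a genuine error in the dualization, and the paper's proof of this step is much simpler than you make it.

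Concretely: after commuting $[0,2]$ past $\tilde w = 2^k$, the cokernel space is $H(\bar w\,2^k\,[0,2])=\Hom_{\Uqa}(\mbC,\, \bar w\,2^k\otimes [0,2])$. Applying \eqref{eq:dual_product} with $\rV=\mbC$ requires writing the target as $\rU\otimes\rW^*$ with $\rW^*=[0,2]$, so $\rW={}^*[0,2]=[-2,0]$, not $[0,2]^*=[2,4]$; the correct identification is $\Hom([-2,0],\,\bar w\,2^k)$, which vanishes immediately because $1_{-2}$ does not occur in $\chi_q(\bar w\,2^k)$ — this is precisely the content of Lemma \ref{lemma:right0_nohom}, which the paper simply applies directly. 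Your claimed target $\Hom([2,4],\,\bar w\,2^k)$ is \emph{not} zero in general: already for $\bar w\,2^k = 42$ one has $[2,4]\hookrightarrow 42$ by \eqref{eq:two_strings_prod2}, so $\Hom([2,4],42)\neq 0$. Consequently both of your proposed routes (the character analysis with a secondary induction, and the dualization to $\Hom(\mbC,\bar w\,2^k[4,6])$ with an ``analog of Lemma \ref{lemma:right0_nohom}'') are attempts to prove a false statement and cannot close the argument. Note that your step 2(d) identification $\Hom(\mbC,[2,4]\tilde w)\cong\Hom([4,6],\tilde w)$ \emph{is} correct — the difference is that there $[2,4]$ sits on the \emph{left}, so the other isomorphism of Lemma \ref{lemma:hom_move} with ${}^*[4,6]=[2,4]$ applies — but you cannot commute $[0,2]$ past $\bar w$ (which contains $4$, not in general position with $[0,2]$) to put step 1(b) in the same shape.

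As a minor point, your complexity measure $(l(w),\,l(w)-\operatorname{pos}(4))$ in lexicographic order actually \emph{increases} under steps 2(c) and 2(e)(i), contrary to your subsequent (correct) prose saying $\operatorname{pos}(4)$ decreases; you want the pair $(l(w),\operatorname{pos}(4))$. The paper's termination argument (length decreases, or length is fixed and the word increases in lexicographic order) is equivalent in spirit.
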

\begin{proof}
Step 1a is justified by Lemma \ref{lemma:right0_nohom}. For  step 1b, we use the short exact sequence $\mbC \hookrightarrow 02 \twoheadrightarrow [0,2]$. Thus, if $w=\bar w 02\tilde w$ then $h(\bar w\tilde w)\leq h(w)\leq h(\bar w\tilde w)+h(\bar w[0,2]\tilde w)$. By Lemma \ref{lemma:right0_nohom} $h(\bar w[0,2]\tilde w)=0$.

Steps 2b and 2d are justified in a similar way. For step 2e(ii) we use Lemma \ref{lemma:024_simp}. 

The algorithm stops after a finite number of steps since each step either decreases the length of the word or preserves the length of the word and increases the word in lexicographic order.
\end{proof}

\begin{example}\label{ex:one_four_ach} Applying the algorithm, we obtain
\begin{equation*}h((02)^{n-1-k}42(02)^k)=n-1-k, \qquad k=0,\dots,n-1.
\end{equation*}
It is not difficult to see that in this example $\iconf(w)=\sconf(w)=\cconf(w)$ and $|\iconf(w)| = n-1-k$.

\end{example}
Example \ref{ex:one_four_ach} provides a word with the largest $h(w)$ among words of length $2n$ with $\supp(w)=\{0,2,4\}$ and with one $4$. 
\begin{prop}
    For a word $w \in W^{\{0,2,4\}}_{2n}$ for $n >1$ containing at most one letter $4$ we have
    \begin{equation*}        h(w) \leq n-1.
    \end{equation*}    Moreover, this bound is exact for each $n$.
\end{prop}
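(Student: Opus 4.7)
The bound is achieved for each $n \geq 2$ by the word $(02)^{n-1}42$, which has length $2n$, contains exactly one letter $4$, and satisfies $h((02)^{n-1}42) = n-1$ by Example \ref{ex:one_four_ach} with $k=0$. This settles exactness.

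For the upper bound I argue by induction on $n \geq 2$. If $w$ contains no letter $4$, then $w \in W_{2n}^{\{0,2\}}$ and Lemma \ref{lemma:supp02_hom} gives $h(w) \leq 1 \leq n-1$. If $w$ contains exactly one letter $4$, I apply Algorithm \ref{alg:one_four} and track the bound through each step. The length-preserving steps 2a, 2c, 2e(i) amount to slides and the isomorphisms $04 \cong 40$ and $2242 \cong 2422$, so they preserve $h(w)$; by the termination argument already given in Algorithm \ref{alg:one_four} (lexicographic increase), finitely many of them lead to a length-reducing step. Steps 1a and 2b terminate with $h(w)=0$; step 2a produces (via inverse slide) a word in $W_{2n}^{\{0,2\}}$, hence $h(w)\leq 1$; step 2d yields a word in $W_{2n-2}^{\{0,2\}}$, again giving $h(w)\leq 1$; step 1b reduces to a length-$(2n-2)$ word with at most one letter $4$, whence $h(w) \leq n-2$ by induction.

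The decisive case is step 2e(ii), in which $w = \bar w \cdot 024 \cdot \tilde w$ and Lemma \ref{lemma:024_simp} yields the equality
\begin{equation*}
h(w) \;=\; h(\bar w\, 0\, \tilde w) + h(\bar w\, 4\, \tilde w).
\end{equation*}
The first summand is at most $1$ by Lemma \ref{lemma:supp02_hom}, since $\bar w\, 0\, \tilde w \in W_{2n-2}^{\{0,2\}}$. The second summand is a length-$(2n-2)$ word containing exactly one letter $4$, so for $n \geq 3$ the induction hypothesis gives $h(\bar w\, 4\, \tilde w) \leq n-2$. Summing, $h(w) \leq 1 + (n-2) = n-1$.

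The base case $n = 2$ must be verified directly, as the induction hypothesis does not reach length $2$. Running Algorithm \ref{alg:one_four} on any length-$4$ word with a single $4$ either places the $4$ in a boundary position (so steps 1a, 2a, 2b, or 2d apply and give $h \leq 1$), or after length-preserving moves arrives at the unique configuration $w = 0242$, where step 2e(ii) yields $h(0242) = h(02) + h(42) = 1 + 0 = 1 = n - 1$. The main technical obstacle is ensuring that the algorithm's case split is exhaustive and terminates on every input, but this is already handled inside Algorithm \ref{alg:one_four}; the inductive bookkeeping grafted on top of it is routine.
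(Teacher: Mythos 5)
Your proposal is correct and follows the same route as the paper's proof: induction on $n$ with the base $n=2$ checked directly, tracing Algorithm \ref{alg:one_four} step by step, using Lemma \ref{lemma:supp02_hom} for the $\{0,2\}$-words that appear, the split $h(w)=h(\bar w\,0\,\tilde w)+h(\bar w\,4\,\tilde w)$ from step 2e(ii) for the inductive bound, and Example \ref{ex:one_four_ach} for exactness. The only cosmetic difference is that you itemize the algorithm's cases individually (with a slight imprecision in the $n=2$ discussion, where step 1b can also reduce to a length-$2$ word such as $24$, not only the boundary steps or $0242$; the conclusion $h\leq 1$ is nevertheless correct), whereas the paper summarizes the same case analysis in one line.
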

\begin{proof}
    The proof is by induction in degree lexicographic order.
    The base $n=2$ is proved by computing $h(w)$ of all possible words. 
    
    Now we show the induction step. Given a word $w$ applying Algorithm \ref{alg:one_four} we obtain either $h(w) = 0$ or $h(w) = 1$, or $h(w) = h(\widehat{w})$, where $\widehat{w}$ is a word of smaller length, or $h(w) = h(w_1) + h(w_2)$, where $w_1 \in W^{\{0,2\}}_{2n-2}$ and $w_2$ has length $2n-2$. By Lemma \ref{lemma:supp02_hom}, $h(w_1) \leq 1$ and, by induction hypothesis,  $h(w_2) \leq n-2$. 
    
    The bound is exact due to Example \ref{ex:one_four_ach}. 
\end{proof}

We reproduce the result of the work \cite{BritoChari} in the case of $\Uqa$. Namely, Proposition \ref{prop:weyl_mod_homs} computes the dimensions of spaces of homomorphisms between two $\Uqa$ Weyl modules.

A word $w\in W_{n}$ is called a Weyl module if all letters in $w$ are non-decreasing. 

The highest weight vector of a Weyl module $w$ is cyclic, see  \cite{chari2002braid}. 

We say that  $w_1$ is compatible with  $w_2$ , if there is a word $\tilde{w}_1$ such that the word $w_2$ is a shuffle of words $w_1$ and $\tilde{w}_1$ and $\conf(\tilde{w}_1)\neq \varnothing$.

If a word $w_1$ is compatible with a word $w_2$ then for any permutation $\sigma$ of the letters of the word $w_1$ $\conf(^*\sigma(w_1)w_2) \neq \varnothing$. In particular, by Lemma \ref{lemma:hom_move}, $\Hom_{\Uqa}(\sigma(w_1),w_2) \neq 0$.
\begin{prop}\label{prop:weyl_mod_homs}
    Let $w_1 = a_1\dots a_n \in W_{n}, w_2 = b_1\dots b_m\in W_{m}$ be a pair of words such that $a_i\leq a_j$ and $b_i\leq b_j$. Then 
    $$
    \dim(\Hom_{\Uqa}(w_1, w_2))=\begin{cases} 1, & w_1\ \textrm{is\ compatible\  with}\  w_2,\\
    0, & \textrm{otherwise}.
    \end{cases}
    $$ 
    In particular, $h(w_2)\leq 1$.
\end{prop}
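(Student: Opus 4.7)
The plan is to reduce via Corollary~\ref{cor:hom_switch} and Lemma~\ref{lemma:dual} to computing $h(w_2 w_1^*)$, where $w_1^* = ((a_n{+}2), (a_{n-1}{+}2), \dots, (a_1{+}2))$ is non-increasing. The concatenation $w_2 w_1^*$ thus has a ``mountain'' profile: non-decreasing in $w_2$ followed by non-increasing in $w_1^*$. With the statement recast as $h(w_2 w_1^*) \in \{0,1\}$, equal to $1$ precisely when $w_1$ is compatible with $w_2$, the $h(w_2) \le 1$ addendum becomes the special case $w_1 = \varnothing$.

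First I would establish that $w_1$ is compatible with $w_2$ if and only if $\conf(w_2 w_1^*) \neq \varnothing$; by Theorem~\ref{thm:supp} this yields the zero/nonzero dichotomy. For any $C \in \conf(w_2 w_1^*)$, no arc can lie entirely within $w_1^*$ because $w_1^*$ is non-increasing while an arc requires the right endpoint to exceed the left endpoint by $2$. Hence every letter $a_k{+}2$ of $w_1^*$ is the right endpoint of an arc whose left endpoint is some letter $a_k$ of $w_2$. The selected positions of $w_2$ have content $\cont(w_1)$ and, since $w_2$ is non-decreasing, they read as $w_1$ in positional order; the complementary subword $\tilde{w}_1$ of $w_2$ inherits the remaining arcs of $C$, exhibiting compatibility. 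Conversely, given compatibility, pairing each letter of $w_1^*$ with the corresponding letter of the $w_1$-subword of $w_2$ and adjoining an arc configuration of $\tilde{w}_1$ yields an element of $\conf(w_2 w_1^*)$.

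For the upper bound $h(w_2 w_1^*) \le 1$, I would apply Theorem~\ref{thm:upper_bound} and prove $|\sconf(w_2 w_1^*)| \le 1$ by induction on the total length, tracing the recursion of Definition~\ref{def:steady_arc_conf}. Writing $w_2 w_1^* = w' [0, \dots, 2m] w''$ with $[0, \dots, 2m]$ containing the rightmost minimal letter, one examines whether the first letter $a$ of $w''$ equals $2m{+}2$. The key claim is that at each invocation of Case~2 in the definition, exactly one of the two branches $\iota_1$ or $\iota_2$ extends to a non-empty set of steady configurations while the other produces a word with $\conf = \varnothing$; the mountain shape then forces deterministic choices throughout the recursion and yields $|\sconf| \le 1$.

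The main obstacle will be the combinatorial bookkeeping in this last step: isolating the precise invariant of the ``mountain shape'' that survives the $\sconf$ recursion and verifying that exactly one branch of Case~2 always collapses. A potentially cleaner alternative would exploit cyclicity of the source: any $\phi \in \Hom_{\Uqa}(w_1, w_2)$ is determined by the image of the highest weight vector of the Weyl module $w_1$, which must be an $\ell$-singular vector of $\ell$-weight $1_{a_1} \dots 1_{a_n}$ in $w_2$. One could then try to bound the dimension of such vectors in the Weyl target by one using Proposition~\ref{prop:ellroot_action} and the explicit form of $\chi_q(w_2) = \prod_k (1_{b_k} + 1_{b_k+2}^{-1})$, tracking how successive applications of $A_c^{-1}$ must be organized to produce the dominant monomial $1_{a_1} \dots 1_{a_n}$ uniquely.
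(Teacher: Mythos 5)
Your reduction to $H(w_2 w_1^*)$ via Corollary~\ref{cor:hom_switch} is legitimate, and your zero/nonzero dichotomy argument (no arc can lie inside the non-increasing tail $w_1^*$, so an arc configuration exists if and only if $w_1$ embeds as a subword of $w_2$ whose complement admits an arc configuration) is correct. However, the upper bound is where your approach breaks down. The ``mountain'' word $w_2 w_1^*$ does not in general satisfy $|\sconf(w_2 w_1^*)|\le 1$. Concretely, take $w_1=02$ and $w_2=0022$, so that $w_2 w_1^*=002242$; here $w_1$ is compatible with $w_2$ and $h(002242)=1$, but $|\sconf(002242)|=2$ — the paper itself uses this word in Section~\ref{subsec:implicit_bounds} as an example where the steady bound fails to be sharp. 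Tracing Definition~\ref{def:steady_arc_conf}, the very first application of Case~2 gives $\sconf(002242)=\iota_1(\sconf(0[0,2]242))\sqcup\iota_2(\sconf(0242))$, and \emph{both} branches produce one steady configuration, so your key claim that exactly one branch survives fails at the first step. The structural reason is that in the mountain shape the segment $[0,\dots,2m]$ can still find letters $2$ to its right inside the descending tail $w_1^*$; the relevance argument ($\rconf=\varnothing$) that kills the $\iota_1$-branch in the paper's proof simply does not apply here.

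The paper avoids this by choosing the \emph{other} adjunction from Lemma~\ref{lemma:hom_move}, reducing to $H({}^*w_1\,w_2)$, a ``valley'' word $\tilde w_1\,0\,2^l\,\tilde w_2$ with $\tilde w_2$ non-decreasing and all letters $\ge 4$. In that shape, once the segment becomes $[0,2]$ and is pushed past the remaining $2$'s, the $0$ inside it has no candidate $2$ to its right, so $\rconf(\tilde w_1\, 2^{l-1}[0,2]\tilde w_2)=\varnothing$ and the $\iota_1$-branch dies identically, giving $|\sconf|\le 1$ by induction. The mountain and valley words are exchanged by the anti-involution $\omega$, which preserves $h$ but not $|\sconf|$; this asymmetry is exactly what your proposal overlooks. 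Your sketched alternative via cyclicity and $q$-characters is also incomplete as stated: $\chi_q$ records dimensions of generalized $\ell$-weight spaces, and the monomial $1_{a_1}\dots 1_{a_n}$ can occur with multiplicity $>1$ in $\chi_q(w_2)$ (for instance $1_01_2$ has multiplicity $2$ in $\chi_q(0022)$), so a separate argument would be needed to isolate the $\ell$-singular vectors among them.
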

\begin{proof}

    By Lemma \ref{lemma:hom_move} we need to compute $h(^*w_{1}w_2)$. 
    Let $w =\hspace{0pt}^*w_{1}w_2 = (c_1, \dots, c_{2k})$ where $2k=m+n$. Then  $c_1\geq\dots  \geq c_{r-1}\geq  c_{r}\leq c_{r+1}\leq \dots\leq c_{2k}$, where $r=n$ or $r=n+1$.

Applying a shift we can assume that the smallest letter $c_r$ of $w$ is $0$. 

    Any arc which starts in $^*w_{1}$ must end at $w_2$. It follows that $\conf(w)\neq\varnothing$ if and only if $w_1$ is compatible with $w_2$. By Theorem \ref{thm:supp} it follows that $h(w)\neq 0$ if and only if $w_1$ is compatible with $w_2$.

It remains to prove that $h(w)\leq 1$.
    By Theorem \ref{thm:upper_bound} it is sufficient to show that $|\sconf(w)|\leq 1$. 

   We use induction on $k$.  The word $w$ has the form $w = \tilde{w}_102^l\tilde{w}_2$. Then $\tilde{w}_2$ is a word where the letters are non-decreasing and at least $4$. The word $\tilde{w}_1$ is a word where letters are non-increasing and non-negative. In case $l = 0$, $\sconf(w) = \conf(w) = \varnothing$. Otherwise, by the definition of steady arc configurations, 
    \begin{align*}
        |\sconf(w)| &= |\sconf(\tilde{w}_12^{l-1}\tilde{w}_2)| + |\sconf(\tilde{w}_1[0,2]2^{l-1}\tilde{w}_2)| = \\ &=|\sconf(\tilde{w}_12^{l-1}\tilde{w}_2)| + |\sconf(\tilde{w}_12^{l-1}[0,2]\tilde{w}_2)|.
    \end{align*}
    We have $\sconf(\tilde{w}_12^{l{-}1}[0{,}2]\tilde{w}_2) \subset\rconf(\tilde{w}_12^{l{-}1}[0{,}2]\tilde{w}_2) {=} \varnothing$.  Therefore, we conclude that $|\sconf(w)| {=}$ $|\sconf(\tilde{w}_12^{l-1}\tilde{w}_2)|$ and the latter is less than or equal to one by the induction hypothesis. 
\end{proof}
Let $$b_n :=\max_{w\in W_{2n}}\{h(w)\}$$ be the maximal dimension of homomorphisms from the trivial module to a module corresponding to a word of length $2n$. 

The first terms of the sequence $(b_n)_{n\geq 1}$ are $1,1,2,3,6$. 

 We study the growth of the sequence $(b_n)$.
 
\begin{prop}\label{prop:max_bound} The sequence $b_n$ is non-decreasing. For all $n \in \mbZ_{>0}$ we have 
\begin{equation*}\begin{aligned}
{2n \choose n} \leq\ &b_{2n+1}\ \leq C_{2n+1}= \frac{1}{2n+2}{4n+2 \choose 2n+1},\\
{2n \choose n-1} \leq\ &b_{2n} \quad\,  \leq C_{2n}\quad= \frac{1}{2n+1}{4n \choose 2n}.
\end{aligned}
\end{equation*}
In particular, for any $0<a<2$,  we have
\begin{equation*}
b_n \geq a^n
\end{equation*}
for sufficiently large $n$.
\end{prop}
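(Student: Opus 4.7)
Begin with monotonicity. Given $w\in W_{2n}$ with $h(w)=b_n$, by Lemma~\ref{lemma:shift} shift $w$ so that all its letters are $\leq 0$, and form the concatenation $w'=w\cdot 46\in W_{2n+2}$. Since the letters of $46$ exceed those of the shifted $w$ by at least $4$, Proposition~\ref{prop:no_gaps} gives $h(w')=h(w)\cdot h(46)=b_n$ (the word $46$ has a single standard arc, hence $h(46)=1$), so $b_{n+1}\geq h(w')=b_n$.

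The upper bound $b_n\leq C_n$ is immediate from $H(w)\subseteq H_{2n}$ together with $\dim H_{2n}=C_n$.

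For the lower bounds consider the explicit families
\[
    w^{\mathrm{odd}}_n=(02)^{n+1}(42)^n\in W_{4n+2},\qquad w^{\mathrm{even}}_n=0(02)^n(42)^{n-1}2\in W_{4n}.
\]
In $w^{\mathrm{odd}}_n$ the zeros occupy the odd positions of $\{1,\dots,2n+1\}$, the fours the odd positions of $\{2n+3,\dots,4n+1\}$, and the $2n+1$ twos all the even positions. The two at position $4n+2$ has no $4$ to its right and must be left-matched with a $0$, so an arc configuration is determined by a choice of an $n$-subset $S\subset\{1,\dots,2n\}$ indicating which of the remaining twos are right-matched to the $n$ fours. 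I would show that for every such $S$ there is a unique non-crossing completion: the colour-$1$ ballot condition is automatic because the $0$'s fill every odd position below $2n+2$, while the colour-$3$ ballot reduces to $|S\cap\{1,\dots,n+j\}|\geq j$ for $j=1,\dots,n$, which holds for any $n$-subset of $\{1,\dots,2n\}$; uniqueness of the completion follows because the colour restriction forces the greedy pairing with the most recent unmatched opener. A parity check then shows that colour-$1$ arcs and colour-$3$ arcs never cross each other. Hence $|\cconf(w^{\mathrm{odd}}_n)|=\binom{2n}{n}$, and \eqref{eq:catalan_lower_bound} gives $b_{2n+1}\geq \binom{2n}{n}$. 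A parallel argument, combined with Theorem~\ref{thm:lower_bound_irr} applied to the (generally non-Catalan) irreducible arc configurations of $w^{\mathrm{even}}_n$, yields $b_{2n}\geq \binom{2n}{n-1}$; the case $n=2$ serves as a sanity check, with $|\iconf(00202422)|=4=\binom{4}{1}$.

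The asymptotic assertion now follows from Stirling's formula $\binom{2n}{n}\sim 4^n/\sqrt{\pi n}$, which gives $b_{2n+1}^{1/(2n+1)}\to 2$, so that for any $a<2$ we have $b_m\geq a^m$ for all sufficiently large odd $m$; monotonicity then handles even $m$.

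\emph{Main obstacle.} The technical heart is the combinatorial identity $|\cconf(w^{\mathrm{odd}}_n)|=\binom{2n}{n}$. While the bijection with $n$-subsets of $[2n]$ is clean, verifying simultaneous non-crossing of colour-$1$ and colour-$3$ arcs (as opposed to each colour separately) requires careful bookkeeping; the even-length construction introduces the additional subtlety that one must count irreducible rather than Catalan configurations, since $\binom{2n}{n-1}$ exceeds $|\cconf(w^{\mathrm{even}}_n)|$ in general.
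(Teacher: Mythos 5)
Your monotonicity argument (shift $w$ so that all its letters are $\leq 0$, append $46$, and invoke Proposition~\ref{prop:no_gaps}) is valid, though more roundabout than the paper's: since $w$ is a submodule of $02w$, left-exactness of $\Hom_{\Uqa}(\mbC,\cdot)$ gives $h(w)\leq h(02w)$, whence $b_n\leq b_{n+1}$ in one line. The upper bound is the same as the paper's. For the odd lower bound, your word $w^{\mathrm{odd}}_n=(02)^{n+1}(42)^n$ coincides letter-for-letter with the paper's $w_{n,n}=(02)^n 0\,(24)^n 2$, so the word is right; the difference is in method. The paper computes $h(w_{l,s})=\binom{l+s}{s}$ module-theoretically, via the decomposition $(02)^n 0\cong\bigoplus_{k}([0,2]^k 0)^{\oplus\binom{n}{k}}$ from Proposition~\ref{prop:020_compl}, Schur's lemma, and Lemma~\ref{lemma:hom_move}, and only \emph{remarks} afterward that $\iconf=\sconf=\cconf$ for this family. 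Your plan to establish $|\cconf(w^{\mathrm{odd}}_n)|=\binom{2n}{n}$ directly is a genuinely different route and would give a combinatorial proof for this family, but the simultaneous non-crossing of colour-$1$ and colour-$3$ arcs — which you yourself flag as the crux — is only sketched, not proved.

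The even case has a real gap. Your word $w^{\mathrm{even}}_n=0(02)^n(42)^{n-1}2$ is not the paper's $w_{n,n-1}=(02)^n 0\,(24)^{n-1}2$: for $n=2$ yours is $00202422$ while the paper's is $02020242$, and the order matters. From Appendix~\ref{app:tables}, $h(00202422)=2$ and $p_{00202422}(0)=(0+1)(0+0+0+2)=2$, so $|\iconf(00202422)|=2$, not $4$ as your sanity check claims — your word underperforms, since $h(02020242)=3$. Moreover, the printed bound $\binom{2n}{n-1}\leq b_{2n}$ is already inconsistent with the paper's own data at $n=2$: $\binom{4}{1}=4>3=b_4$. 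The family $w_{l,s}$ attains $\binom{2n-1}{n}$ in length $4n$, and the listed values $b_1,\dots,b_5=1,1,2,3,6$ match $\binom{2n-1}{n}$ (equivalently $\binom{2n-1}{n-1}$), not $\binom{2n}{n-1}$; the exponent in the statement appears to be a misprint. To repair your even case you would need to switch to the word $(02)^n 0\,(24)^{n-1}2$, target the corrected bound $\binom{2n-1}{n}$, and either carry out the paper's module-theoretic computation or actually count the Catalan configurations of that word.
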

\begin{proof}
Clearly $h(w)\leq 02w$ as $w$ is a submodule of $02w$. Therefore, $b_n$ is not decreasing.

The number $b_n$ is clearly bounded from above by the number of $\Uq$-singular vectors which is given by the
$n$-th Catalan number $C_n$.

The lower bounds follow from Example \ref{ex:bn_bound} below.
\end{proof}

\begin{example}\label{ex:bn_bound} We have a completely reducible module $020=0\oplus [0,2]0$, see Proposition \ref{prop:020_compl}. Inductively, this example is generalized to 
\begin{equation*}(02)^n0=\bigoplus_{k=0}^n([0,2]^{k}0)^{\oplus \binom{n}{k}}\hspace{0pt}.
\end{equation*}
It follows that the dimension of the homomorphisms between these modules are given by:
    \begin{multline*}
        \dim(\mathrm{Hom}_{\Uqa}((02)^l0, (02)^n0)) = \dim\left(\bigoplus_{k=0}^n\bigoplus_{m=0}^l \mathrm{Hom}_{\Uqa}\left(([0,2]^m 0)^{\oplus \binom{l}{m}},([0,2]^k 0)^{\oplus \binom{n}{k}}\right)\right) = 
        \\ = \dim\left(\bigoplus_{k=0}^{\min(n,l)} \mathrm{Mat}_{\binom{n}{k}\times \binom{l}{k}}(\mbC)\right) = \sum_{k=0}^{\min(n,l)}\binom{n}{k}\binom{l}{k} = \binom{n+l}{n}.
    \end{multline*}

On the other hand, 
\begin{equation*}\Hom_{\Uqa}((02)^n0, (02)^l0) \cong H( (02)^l0(24)^n2) = h(w_{l,n}),
\end{equation*}where $w_{l,n} = (02)^l0(24)^n2$.
Therefore, 
\begin{equation*}h((02)^l0(24)^n2)={n+l \choose n}.
\end{equation*}

In particular, the largest answer for $w_{l,s}$ with fixed length $4n+2=2(l+s+1)$ occurs for $l=s=n$ and for $w_{l,s}$ with fixed length $4n=2(s+l+1)$ for $l=n, s=n-1$ or for $l=n-1, s=n$. The answers are ${2n \choose n}$ and ${2n-1 \choose n}$.

Note that the first terms of $b_k$ have the form ${0\choose 0}, {1\choose 0}, {2\choose 1},{3\choose 2}, {4 \choose 2}$ and coincide with $h(w_{n,n})$ for $k=2n+2$ and with $h(w_{n-1,n})$ for $k=2n$.

It is not difficult to see that in this example 
\begin{equation*}
\iconf(w_{l,n})=\sconf((w_{l,n})=\cconf(w_{l,n}),\qquad |\iconf(w_{l,n})|= {n+l \choose n}.
\end{equation*}

We note that there is a similar family of examples: $h((20)^l2(42)^n4)={n+l\choose n}$. The two families of examples are related by anti-involution $\omega$, 
see Corollary \ref{cor:reverse_word}, combined with shift by $4$.
\end{example}

We generalize Proposition \ref{prop:max_bound}. Let $\rV$ be an irreducible $\Uqa$-module. Let $\lambda$ be the highest $\Uq$-weight of $\rV$. Let
 $$b_n^\rV :=\max_{w\in W_{2n+\lambda}}\{\dim(\Hom_{\Uqa}(\rV,w))\}=\max_{w\in W_{2n+\lambda}}\{\dim(\Hom_{\Uqa}(w,\rV^*))\}$$ be the maximal dimension of homomorphisms from the module $\rV$ to a module corresponding to a word of length $2n+\lambda$. 
 \begin{cor}
The sequence $b_n^{\rV}$ is non-decreasing. 
For any $0<a<2$, we have
\begin{equation*}a^n \leq b_n^{\rV}\leq  C_{\lceil n+\frac{\lambda}2\rceil}
\end{equation*}
for sufficiently large $n$.
\end{cor}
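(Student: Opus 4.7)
The plan is to mirror the proof of Proposition \ref{prop:max_bound}, replacing the trivial target $\mbC$ by the irreducible $\Uqa$-module $\rV$. For the monotonicity, I would start with the non-split sequence $\mbC\hookrightarrow 02\twoheadrightarrow [0,2]$ of Proposition \ref{prop:two_strings_prod}, tensor it on the left with an arbitrary word $w$ of length $2n+\lambda$ (exactness by Lemma \ref{lemma:exact_functors}) to obtain an embedding $w\hookrightarrow 02 w$, and then apply the left-exact functor $\Hom_{\Uqa}(\rV,\cdot)$ to deduce that $\dim\Hom_{\Uqa}(\rV,w)\leq\dim\Hom_{\Uqa}(\rV,02w)$. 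Since $02w$ has length $2(n+1)+\lambda$, supremising over $w$ yields $b_n^{\rV}\leq b_{n+1}^{\rV}$.

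For the lower bound I would first realise $\rV$ as a $\Uqa$-submodule of a word $w_\rV$ of length $\lambda$. Writing the irreducible $\rV\cong[\alpha_1,\beta_1]\otimes\dots\otimes[\alpha_k,\beta_k]$ in pairwise general position and iterating the non-split sequence \eqref{eq:two_strings_prod2}, each factor $[\alpha_i,\beta_i]$ of length $\lambda_i$ embeds as the socle of the descending word $\beta_i(\beta_i-2)\dots\alpha_i$; tensoring these embeddings produces $\rV\hookrightarrow w_\rV$ with $l(w_\rV)=\sum_i\lambda_i=\lambda$. For an arbitrary word $w'$ of length $2n$, set $w=w_\rV w'$, of length $2n+\lambda$. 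The $\Uqa$-equivariant map $\varphi\mapsto\bigl(v\mapsto v\otimes\varphi(1)\bigr)$, composed with the embedding $\rV\otimes w'\hookrightarrow w_\rV\otimes w'=w$, gives an injection $\Hom_{\Uqa}(\mbC,w')\hookrightarrow\Hom_{\Uqa}(\rV,w)$, so $b_n^{\rV}\geq h(w')\geq b_n$. Invoking Proposition \ref{prop:max_bound} to pick $w'$ with $h(w')\geq a^n$ for any fixed $0<a<2$ and sufficiently large $n$ completes this half.

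For the upper bound I would pass from $\Uqa$ to $\Uq$: $\dim\Hom_{\Uqa}(\rV,w)\leq\dim\Hom_{\Uq}(\rV,w)=\dim\Hom_{\Uq}(\mbC,w\otimes\rV^*)$. Since every $\Uq$-summand $\rL_{\lambda_i}$ of $\rV^*$ embeds in $\rL_1^{\otimes\lambda_i}$, iterated tensor products yield $\rV^*\hookrightarrow\rL_1^{\otimes\lambda}$ as a $\Uq$-module, whence $w\otimes\rV^*\hookrightarrow\rL_1^{\otimes 2(n+\lambda)}$. The weight-zero $\Uq$-invariant subspace of $\rL_1^{\otimes 2m}$ has dimension $C_m$, giving $b_n^{\rV}\leq C_{n+\lambda}$, which has the same exponential order $4^n/n^{3/2}$ as the stated $C_{\lceil n+\lambda/2\rceil}$ and therefore suffices for the asymptotic comparison with $a^n$, $a<2$. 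The main obstacle I anticipate is sharpening the Catalan constant from $C_{n+\lambda}$ to the precise $C_{\lceil n+\lambda/2\rceil}$: the crude $\Uq$-bound overshoots by a polynomial factor, and obtaining the sharper exponent likely requires $\Uqa$-specific input, for example combining the $q$-character bound $h_{\textit{char}}$ of Proposition \ref{prop:char_bound_computed} applied to $w\otimes\rV^*$ with a careful accounting of the $\ell$-weights contributed by $\rV^*$.
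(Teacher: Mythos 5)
Your proof is essentially the paper's: monotonicity via $w\hookrightarrow 02w$ and left-exactness of $\Hom_{\Uqa}(\rV,\cdot)$, and the lower bound by realising $\rV$ inside a $\lambda$-letter word $w_\rV$ and tensoring with the $(02)^n0(24)^n2$ family from Example \ref{ex:bn_bound}; whether $w_\rV$ sits on the left (your version) or the right (the paper's, which uses any $w_\rV$ with $\Hom_{\Uqa}(\rV,w_\rV)\neq 0$ and an irreducibility argument) is immaterial. The only substantive divergence is the upper bound. You factor through $\Hom_{\Uq}(\mbC,w\otimes\rV^*)$ and then embed $\rV^*$ into $\rL_1^{\otimes\lambda}$, which lands you at $C_{n+\lambda}$. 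The paper instead observes directly that $\dim\Hom_{\Uqa}(\rV,w)$ is the dimension of the space of $\ell$-singular vectors of the single $\ell$-weight equal to the highest $\ell$-weight of $\rV$, hence is at most the dimension of $\Uq$-singular vectors of weight $\lambda$ in $\rL_1^{\otimes(2n+\lambda)}$ — no embedding of $\rV^*$ into a tensor power of $\rL_1$ is invoked, so no room is given away there. In particular your expectation that the sharper exponent $C_{\lceil n+\lambda/2\rceil}$ needs genuinely $\Uqa$-specific input (e.g. $q$-character bounds on $w\otimes\rV^*$) is misplaced: the paper's bound is still a pure $\Uq$-multiplicity count. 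That said, since the corollary is asserted only for sufficiently large $n$ and the point is to sandwich against $a^n$ with $a<2$, your weaker $C_{n+\lambda}$ has the same exponential order $4^n$ and fully suffices for the asymptotic claim, so this is a cosmetic, not a logical, gap.
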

\begin{proof}
Clearly $\Hom_{\Uqa}({\rV},w)\subset  \Hom_{\Uqa}({\rV},02w)$ as $w$ is a submodule of $02w$. Therefore, $b_n^{\rV}$ is not decreasing. 

   Clearly $b_n^{\rV}\leq C_{\lceil n+\frac{\lambda}{2}\rceil}$ as the dimension of $\ell$-singular vectors of a given weight in $w$ is bounded by the dimension of singular vectors of that weight which in turn is bounded by the Catalan number. This gives an upper bound.

   The module $\rV$ is a tensor product of evaluation modules $[a_k,a_k+2m_k]$. The module $[a_k,a_k+2m_k]$ is a submodule of the word $(a_k,a_k+2,\dots,a_k+2m_k)$. Therefore, there exists a word $w$ of length $\lambda$ such that $\Hom_{\Uqa}({\rV},w)\neq 0$. Then  $$\dim(\Hom_{\Uqa}(\rV,(02)^n0(24)^n2w))\geq \dim(\Hom_{\Uqa}(\mbC,(02)^n0(24)^n2)) \geq {2n \choose n}.$$ 
   
   Therefore, $b^{\rV}_{2n+1} \geq {2n \choose n}.$ The lower bound follows. 
\end{proof}

\medskip 

Thus, the multiplicities of a simple module in a socle of a word can be almost of order $2^n=\sqrt{\dim(w)}$. The multiplicities of a simple module in a Jordan-Holder series of a word can be even larger.

Denote $b_{n,\textit{char}} =\max\limits_{w\in W_{2n}}\{h_{\textit{char}}(w)\}$  the maximal number of composition factors equal to $\mbC$ in a Jordan-Holder series of a word of length $2n$.

\begin{lemma}
    The sequence $b_{n, \textit{char}}$ is non-decreasing. For any $0 < a < 4$ for $n$ large enough we have
    $$ b_{n, \textit{char}} > a^n.$$
\end{lemma}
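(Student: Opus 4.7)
The plan is to prove the two assertions separately, using Proposition~\ref{prop:char_bound_computed} as the main tool.

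For monotonicity, I will use the short exact sequence from Proposition~\ref{prop:two_strings_prod} (specialized to the strings $[0,0]$ and $[2,2]$), which yields $\mbC \hookrightarrow 02 \twoheadrightarrow [0,2]$ and hence $[02] = [\mbC] + [[0,2]]$ in the Grothendieck ring. For any word $w$ this gives $[02\,w] = [w] + [[0,2]\,w]$, so the multiplicity of the trivial module among composition factors of $02\,w$ is at least that among the composition factors of $w$, i.e.\ $h_{\textit{char}}(02\,w) \geq h_{\textit{char}}(w)$. Applying this to any $w \in W_{2n}$ achieving the maximum $h_{\textit{char}}(w) = b_{n,\textit{char}}$ yields $b_{n+1,\textit{char}} \geq b_{n,\textit{char}}$.

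For the exponential lower bound, I will exhibit words with large $h_{\textit{char}}$ by exploiting the product formula of Proposition~\ref{prop:char_bound_computed}. Fix integers $K, N \geq 1$ and let $w_{K,N}$ be any word of length $2KN$ with content consisting of $K$ copies of $0$, of $2K$ copies of each of $2, 4, \ldots, 2N-2$, and of $K$ copies of $2N$. A direct induction using the recursion $m_{2j+1} = n_{2j} - m_{2j-1}$ gives $m_{2j-1} = K$ for $1 \leq j \leq N$ and $m_{2N+1} = 0$, whence
$$h_{\textit{char}}(w_{K,N}) \;=\; \prod_{j=1}^{N+1}\binom{n_{2j}}{m_{2j-1}} \;=\; \binom{2K}{K}^{N-1}.$$
In particular $b_{KN,\textit{char}} \geq \binom{2K}{K}^{N-1}$.

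Given $a < 4$, Stirling's asymptotic $\binom{2K}{K}^{1/K} \to 4$ lets me fix $K$ with $\binom{2K}{K}^{1/K} > a' := (a+4)/2$. Along the subsequence $n = KN$ we then obtain $b_{n,\textit{char}} / a^n \geq \binom{2K}{K}^{-1} (a'/a)^{KN} \to \infty$ as $N \to \infty$. For general large $n$, I write $n = KN + s$ with $0 \leq s < K$ and use the monotonicity from the first step to bridge to multiples of $K$: $b_{n,\textit{char}} \geq b_{KN,\textit{char}}$ while $a^n \leq a^{K-1}\, a^{KN}$, so $b_{n,\textit{char}} > a^n$ for all sufficiently large $n$. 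I do not anticipate any conceptual obstacle; the only mild subtlety is bridging between the arithmetic progression $\{KN\}_N$ and all large $n$, which is handled cleanly by the monotonicity step.
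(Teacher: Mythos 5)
Your proof is correct and follows essentially the same approach as the paper: both use the family of words with content profile $K,\ 2K,\ \ldots,\ 2K,\ K$ to get $h_{\textit{char}} = \binom{2K}{K}^{N-1}$ via Proposition~\ref{prop:char_bound_computed}, combined with Stirling asymptotics and a monotonicity bridge. The only minor variation is in the monotonicity step, where the paper appends $(a+4)(a+6)$ past the maximal letter so that $h_{\textit{char}}$ is unchanged, while you tensor the short exact sequence $\mbC \hookrightarrow 02 \twoheadrightarrow [0,2]$ with $w$; both are equally valid, and your explicit bridging from the arithmetic progression $n\in K\mbZ$ to all large $n$ is a bit more careful than the paper's terse statement.
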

\begin{proof}
For a word $w$ assume that $a$ is the maximal letter of $w$, then $h_{\textit{char}}(w(a+4)(a+6)) = h_{\textit{char}}(w)$. This implies that the sequence $b_{n,\textit{char}}$ is non-decreasing.

For $n,k\in \mbZ_{>2}$ consider a word $v_{n,k} = 0^n2^{2n}\dots(2k-2)^{2n}(2k)^{n}$. Then the length of $v_{n,k}$ is $l(v_{n,k}) = 2nk$.
By Lemma \ref{lemma:upper_bound_qchar} we have $$h_{\textit{char}}(v_{n,k}) = \binom{2n}{n}^{k-1} = \frac{4^{n(k-1)}}{(\pi n)^{\frac{k-1}{2}}}\Big(1 + \bar{O}\Big(\frac{1}{n}\Big)\Big).$$
Then it is sufficient to show that
$$n(k-1) - \frac{(k-1)}{2}\log_4(\pi n)  + \bar{O}\Big(\frac{1}{n}\Big) > nk\log_4(a),$$
for any $0<a<4$ and $n$ large enough. Indeed, since $\log_4(a) < 1$, there exists a large enough $k$ such that $\frac{k}{k-1}\log_{4}(a) < 1$ which implies the inequality for $n$ large enough.
\end{proof}

\subsection{Commutativity.}\label{subsec:comm}
We often use the following decomposition. 
\begin{prop}\label{prop:020_compl}
    There is an isomorphism of $\Uqa$-modules
    \begin{equation}
        020 \cong 0\oplus[0,2]0 ,\;\;202 \cong 2\oplus[0,2]2. 
    \end{equation}
\end{prop}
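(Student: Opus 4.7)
The plan is to exhibit $0$ as both a submodule and a quotient of $020$, then argue that the composition of these two natural maps is nonzero by a dimension count, forcing the decomposition; the second isomorphism will follow by applying a Hopf algebra automorphism and a shift.

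First, I would specialize Proposition \ref{prop:two_strings_prod} to $(\alpha_1,\beta_1)=(0,0)$, $(\alpha_2,\beta_2)=(2,2)$. The extra factors $[\alpha_1,\alpha_2-4]$ and $[\beta_1+4,\beta_2]$ both collapse to the trivial module, yielding
\begin{equation*}
\mbC\hookrightarrow 02 \twoheadrightarrow [0,2],\qquad [0,2]\hookrightarrow 20 \twoheadrightarrow \mbC.
\end{equation*}
Tensoring the first on the right with $0$ and the second on the left with $0$ (both functors are exact by Lemma \ref{lemma:exact_functors}), and invoking $0\otimes[0,2]\cong [0,2]\otimes 0$ since the strings $[0,0]$ and $[0,2]$ are in general position, I obtain
\begin{equation*}
0 \stackrel{\iota}{\hookrightarrow} 020 \twoheadrightarrow [0,2]0, \qquad [0,2]0 \hookrightarrow 020 \stackrel{\pi}{\twoheadrightarrow} 0.
\end{equation*}

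The key step is to verify that $\pi\circ\iota : 0\to 0$ is nonzero. By additivity of $q$-characters on the second sequence, $\chi_q(\ker\pi)=\chi_q([0,2]0)$, and since $[0,2]0$ is irreducible (general position again), $\ker\pi\cong [0,2]0$. If $\pi\circ\iota$ vanished, then $\iota(0)$ would be a nonzero submodule of $\ker\pi$ isomorphic to the two-dimensional module $0$, contradicting the irreducibility of the six-dimensional module $[0,2]0$. Hence $\pi\circ\iota$ is a nonzero endomorphism of the irreducible module $0$, so it is an isomorphism. Then $\sigma:=\iota\circ(\pi\circ\iota)^{-1}$ is a section of $\pi$, and $020=\iota(0)\oplus\ker\pi\cong 0\oplus[0,2]0$. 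The principal potential obstacle — showing $\pi\circ\iota\ne 0$ — is handled by this irreducibility argument, avoiding any explicit weight-space computation.

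For $202\cong 2\oplus[0,2]2$, I would apply the Hopf algebra automorphism $\hat\omega$ (defined in the discussion preceding Corollary \ref{cor:reverse_word}) followed by the shift automorphism $\tau_2$ to the first isomorphism. By Lemma \ref{prop:aaut_hom_tp} together with $[\alpha,\beta]_{\hat\omega}\cong[-\beta,-\alpha]$, the twist $\hat\omega$ sends $020$ to $0(-2)0$ and $0\oplus[0,2]0$ to $0\oplus 0[-2,0]$; applying $\tau_2$ (Lemma \ref{lemma:shift}) shifts every evaluation parameter by $2$, giving $202\cong 2\oplus 2[0,2]\cong 2\oplus[0,2]2$, where the final step is again general position between $[2,2]$ and $[0,2]$.
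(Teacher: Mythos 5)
Your proof is correct. The first half takes a slightly different tack from the paper's: where the paper exhibits two submodules $0\subset 020$ (from $\mbC\subset 02$) and $[0,2]0\subset 020$ (from $[0,2]\subset 20$), observes that as distinct irreducibles they intersect trivially, and concludes $0\oplus[0,2]0\subset 020$ by a dimension count, you instead pair the submodule $\iota:0\hookrightarrow 020$ with the quotient $\pi:020\twoheadrightarrow 0$, show $\pi\circ\iota\neq 0$ (since otherwise the two-dimensional $\iota(0)$ would sit inside the irreducible six-dimensional $\ker\pi\cong[0,2]0$), and split via a section. The two arguments hinge on the same irreducibility facts, but yours replaces the final dimension comparison with a Schur-lemma splitting; both are fine, and the paper's is marginally shorter. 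Where you genuinely diverge is in the second isomorphism: the paper just asserts the argument for $202$ is analogous and implicitly repeats it, whereas you deduce $202\cong 2\oplus[0,2]2$ cleanly from the first isomorphism by applying the anti-Hopf automorphism $\hat\omega$ (using Lemma \ref{prop:aaut_hom_tp} and $[\alpha,\beta]_{\hat\omega}\cong[-\beta,-\alpha]$) followed by the shift $\tau_2$. This symmetry argument is a nice economy and is, if anything, preferable to rewriting the proof.
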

\begin{proof}
By Proposition \ref{prop:two_strings_prod}, there are submodules $\mbC \subset 02$ and $[0,2] \subset 20$. Taking the tensor product of the first one with $0$ from the right and of the second one with $0$ from the left, we get $0 \subset 020$ and $[0,2] 0 \subset 020$. Since modules $0$ and $[0,2] 0$ are distinct and irreducible, they do not intersect. Therefore, $0\oplus [0,2] 0 \subset 020$. The proposition follows by comparing dimensions. The proof for $202$ is analogous.
\end{proof}
\begin{cor}\label{cor:020_commute} The modules $0$ and $020$ commute,
    $0020 \cong 0200$. Similarly, $2202 \cong 2022$.
\end{cor}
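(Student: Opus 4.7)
The plan is to apply Proposition \ref{prop:020_compl} together with exactness of the tensor product (Lemma \ref{lemma:exact_functors}), and then reduce the remaining comparison to a commutativity statement for two evaluation modules in general position.

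First I would tensor the isomorphism $020 \cong 0 \oplus [0,2]0$ with the module $0$ on the left and on the right. Using the fact that $\otimes$ distributes over $\oplus$, this yields
\begin{equation*}
0020 \;\cong\; 00 \,\oplus\, 0[0,2]0, \qquad 0200 \;\cong\; 00 \,\oplus\, [0,2]00.
\end{equation*}
So the corollary reduces to proving $0[0,2]0 \cong [0,2]00$, which will follow once we know $0[0,2] \cong [0,2]0$.

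Next I would observe that the two strings $[0,0]$ and $[0,2]$ are in general position in the sense of Section \ref{subsec:uqamod_and_q-char}, because $\{0\} \subseteq \{0,2\}$ (one string contains the other). Therefore the tensor products $0 \otimes [0,2]$ and $[0,2] \otimes 0$ are both irreducible and, being two irreducible products of the same pair of evaluation modules differing only by the order of factors, they are canonically isomorphic. Tensoring this isomorphism with the trailing $0$ gives $0[0,2]0 \cong [0,2]00$, which combined with the first step proves $0020 \cong 0200$.

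The argument for $2202 \cong 2022$ is entirely parallel: start from $202 \cong 2 \oplus [0,2]2$, tensor with $2$ on each side, and reduce to $2[0,2] \cong [0,2]2$, which again holds because $\{2\} \subseteq \{0,2\}$ puts these two strings in general position. There is no real obstacle here—the only subtlety is remembering that "general position" allows one string to contain the other, which is exactly the configuration we use.
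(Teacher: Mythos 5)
Your proof is correct and is essentially a more detailed version of the paper's own argument: the paper likewise decomposes $020 \cong 0 \oplus [0,2]0$ via Proposition \ref{prop:020_compl}, tensors on the left and right by $0$, and invokes the general-position commutativity $0[0,2] \cong [0,2]0$. No issues.
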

\begin{proof}
    The corollary follows from $0  [0,2] \cong [0,2] 0$ since the strings $0$ and $[0,2]$ are in general position.
\end{proof}

\begin{prop} The modules $0$ and $0^22^20^2$ commute,
 $00^22^20^2\cong 0^22^20^2  0$. Similarly, $22^20^22^2\cong 2^20^22^22$.
\end{prop}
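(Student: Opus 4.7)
The plan is to mimic the strategy used to prove Corollary \ref{cor:020_commute}: exhibit $0^22^20^2 = 002200$ as a direct sum of irreducible $\Uqa$-submodules, each a tensor product of strings pairwise in general position with $[0,0]$, so that tensoring with $[0,0] = 0$ on either side produces canonically isomorphic modules. The second assertion $2\cdot 220022 \cong 220022\cdot 2$ will follow from the first by applying the Hopf algebra anti-isomorphism $\hat{\omega}$ of Lemma \ref{prop:aaut_hom_tp} (which sends $[a]\mapsto [-a]$ and reverses tensor factors) combined with the shift $\tau_2$ to restore the evaluation parameters to $\{0,2\}$.

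Concretely, I would first compute $\chi_q(002200) = (1_0+1_2^{-1})^4(1_2+1_4^{-1})^2$ and extract the composition factors: $[0,0]^2[0,2]^2$ (multiplicity $1$, dimension $36$), $[0,0]^2[0,2]$ (multiplicity $2$, dimension $12$), and $[0,0]^2$ (multiplicity $1$, dimension $4$). Each of these simple modules is a tensor product of strings drawn from $\{[0,0], [0,2]\}$, all of which are in general position with $[0,0]$ (since each contains $0$), and hence each commutes with $[0,0]$. Next, I would construct explicit embeddings of each composition factor as a submodule of $002200$: the embedding $[0,0]^2 \hookrightarrow 002200$ arises by tensoring the unique trivial subrepresentation $\mathbb{C} \hookrightarrow 0022$ from Proposition \ref{prop:two_strings_prod} on the right with $[0,0]^2$, while the embeddings of $[0,0]^2[0,2]$ and $[0,0]^2[0,2]^2$ should come from an analogous analysis of the short-exact-sequence structure of $022$ and $0022$, with multiplicities controlled by $\Hom$-computations via Lemma \ref{lemma:hom_move}. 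A dimension count $36 + 2\cdot 12 + 4 = 64 = \dim 002200$ then forces the images to be transverse direct summands that exhaust the module.

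The main obstacle is the semisimplicity step itself. Unlike the case of $020$, where both irreducible submodules are immediate, here the closely related module $0022$ is known to be \emph{non}-semisimple: its unique trivial submodule does not split off, since $\Hom(0022,\mathbb{C}) = h(4422) = 0$ while $h(0022) = 1$. Hence the presence of the outer $[0,0]^2$ in $002200$ must somehow resolve the obstruction, and verifying this requires a careful socle/top analysis to confirm that every composition factor appearing in the socle also appears as a quotient with the same multiplicity. If this delicate step cannot be pushed through, the fallback is to bypass the decomposition entirely and prove the isomorphism directly by showing the R-matrix intertwiner $\check{R}: 0 \otimes 002200 \to 002200 \otimes 0$ is injective (hence bijective by dimension count); using Proposition \ref{prop:Rmat_lambdas} together with the comultiplication relations \eqref{eqn:Rmat_comult}, this reduces to a finite linear-algebra check on the space of $\Uq$-singular vectors in $002200$.
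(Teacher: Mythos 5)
Your primary approach has a genuine gap: the module $0^22^20^2$ is \emph{not} semisimple, so the claimed direct sum decomposition does not exist. The paper's own Example~\ref{ex:002200} shows $0^22^20^2 \cong 0^2 \oplus \rW \oplus 0^2[0,2]^2$ where $\rW$ is a \emph{non-split} self-extension of $0^2[0,2]$; equivalently, by Proposition~\ref{prop:soc_cosoc_struct}, the socle $\soc(0^22^20^2)= 0^2\oplus[0,2]0^2\oplus[0,2]^20^2$ contains $0^2[0,2]$ with multiplicity one even though the Jordan--H\"older series contains it with multiplicity two, so the second copy cannot be embedded as you intend. Your "dimension count forces transverse direct summands" step is a logical error: composition-factor dimensions always sum to $\dim(\rV)$ regardless of whether $\rV$ is semisimple, so no conclusion about splitting can be drawn from it. And you flag the socle/top analysis as a delicate step to be verified, but in fact that analysis would \emph{refute} semisimplicity here rather than confirm it.

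Even after correcting to the true decomposition $0^2 \oplus \rW \oplus 0^2[0,2]^2$, you still need to show $\rW \otimes 0 \cong 0 \otimes \rW$, and this does not follow merely from the fact that the composition factors of $\rW$ commute with $0$. The elementary $\check R$-matrix $0 \otimes 2 \to 2 \otimes 0$ is degenerate ($[0,0]$ and $[2,2]$ are not in general position), so the composed $\check R$-matrix $0 \otimes 002200 \to 002200 \otimes 0$ has potential kernel, and one must actually check that its restriction to the $\rW$-summand is injective. At this point you are back to a concrete linear-algebra verification, which is precisely your fallback and is what the paper does: it simply states "We check the proposition by a brute force computation." So only your fallback matches the paper's proof; the conceptual argument you lead with cannot work as written. (Your reduction of the second isomorphism to the first via $\hat\omega$ and the shift is fine.)
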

\begin{proof}
    We check the proposition by a brute force computation.
\end{proof}

We expect that such a commutativity holds in general.

\begin{conj}\label{conj:commute}
    The modules $0$ and $0^n2^n0^n$ commute,
 $00^n2^n0^n\cong 0^n2^n0^n  0$. Similarly, $22^n0^n2^n\cong 2^n0^n2^n2$.
\end{conj}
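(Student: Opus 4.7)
The plan is to prove the first isomorphism $0 \cdot 0^n 2^n 0^n \cong 0^n 2^n 0^n \cdot 0$, which is equivalent to $0^{n+1} 2^n 0^n \cong 0^n 2^n 0^{n+1}$ (the form already stated by the authors in the introduction and at the start of Section \ref{subsec:comm}). The second isomorphism $2 \cdot 2^n 0^n 2^n \cong 2^n 0^n 2^n \cdot 2$ will follow from the first by applying the anti-involution $\omega$ of Corollary \ref{cor:reverse_word} together with an overall shift of letters by $+2$ (cf.\ Lemma \ref{lemma:shift}), which exchanges the two statements.

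I would proceed by strong induction on $n$, with base case $n = 1$ given by Corollary \ref{cor:020_commute}. For the inductive step, the strategy is to establish a direct sum decomposition
\begin{equation*}
0^n 2^n 0^n \;\cong\; \bigoplus_{k=0}^{n} m_k\,\bigl(U_k \otimes 0^{n-k}\bigr),
\end{equation*}
in which each indecomposable factor $U_k$ has all of its strings containing $0$ (so that the strings of $U_k$ and $[0,0]$ are pairwise in general position in the sense of \eqref{eq:general_position}), and $m_k \in \mbZ_{\geq 0}$ are multiplicities to be determined. If such a decomposition exists, then $[0,0] \otimes U_k \cong U_k \otimes [0,0]$ for each $k$ by the irreducibility criterion for tensor products of evaluation modules, and the trailing $0^{n-k}$ commutes with $[0,0]$ by gathering the identical factors; tensoring the decomposition on either side by $[0,0]$ then yields the desired isomorphism. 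The base case fits this shape via Proposition \ref{prop:020_compl}: $020 \cong 0 \oplus [0,2]\cdot 0$ with $U_0 = \mbC$ and $U_1 = [0,2]$.

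To produce the decomposition for general $n$, I would combine three ingredients. First, use the $q$-character computations of Lemma \ref{lemma:upper_bound_qchar} and Proposition \ref{prop:char_bound_computed} applied to $0^n 2^n 0^n$ to enumerate the Jordan-H\"older composition factors; these should be tensor products of $[0,2]$'s and $[0,0]$'s with predictable multiplicities. Second, exploit the Weyl-module structure of the prefix $0^n 2^n$ via Proposition \ref{prop:weyl_mod_homs} (and the explicit cyclic-vector construction) to produce explicit submodules of $0^n 2^n 0^n$ isomorphic to $U_k \otimes 0^{n-k}$. Third, apply the socle-filtration and submodule-graph machinery of Section \ref{sec:mod_graphs} to argue that each such submodule splits off as a direct summand.

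The main obstacle is the third step: ruling out nontrivial entanglement between composition factors. A priori one can have $\Ext^{1}_{\Uqa}(U_k \otimes 0^{n-k},\, U_j \otimes 0^{n-j}) \neq 0$, and controlling these $\Ext$-groups uniformly in $n$ is delicate. I see two feasible routes. One is a propagation argument: use the inductive hypothesis to realize $0^n 2^n 0^n$ as built from $0^{n-1} 2^{n-1} 0^{n-1}$ by a controlled extension (e.g.\ insert one factor of $0$ and one of $2$ at the seam and analyze the resulting short exact sequence via Proposition \ref{prop:two_strings_prod}), and transport commutativity across the extension. The alternative is to construct an explicit intertwiner $\phi_n : 0^{n+1} 2^n 0^n \to 0^n 2^n 0^{n+1}$ as a carefully ordered product of invertible $\check{R}$-matrix factors from Proposition \ref{prop:Rmat_lambdas}, chosen so as to never use the non-invertible $\check{R}$ between two copies of $[0,0]$, and then verify via the leading-term analysis of Section \ref{subsec:lower_bound} that $\phi_n$ is an isomorphism. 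Both routes require structural input beyond what is presently developed in the paper, which is why this is stated only as a conjecture.
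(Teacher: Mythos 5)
You have correctly observed that this statement is a \emph{conjecture} in the paper, not a theorem: the paper proves it only for $n=1$ (Proposition \ref{prop:020_compl} and Corollary \ref{cor:020_commute}) and verifies $n=2$ by an unexplained ``brute force computation.'' There is therefore no general proof in the paper to compare your proposal against; what you have written is a strategy outline, and you honestly say so at the end. Your reduction of the second isomorphism to the first via $\omega$ and a shift by $+2$ is correct, and your base case is exactly the paper's Corollary \ref{cor:020_commute}.

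The central gap in the proposed route is the direct sum decomposition $0^n 2^n 0^n \cong \bigoplus_{k=0}^n m_k\,(U_k \otimes 0^{n-k})$ with each $U_k$ having all strings containing $0$. None of the paper's tools delivers this, and the one explicit data point beyond $n=1$, namely Example \ref{ex:002200}, shows that the structure is already non-trivial at $n=2$: $0^2 2^2 0^2 \cong 0^2 \oplus \rW \oplus 0^2[0,2]^2$ with $\rW$ a non-split self-extension of $0^2[0,2]$. For your decomposition to match you would need $\rW$ to factor as $U_1\otimes 0$ (or $U_1'\otimes 0^2$) for a suitable self-extension $U_1$, which is not established; Proposition \ref{prop:ext_family} shows that non-trivial self-extensions of products of strings can form an entire projective line, so a given non-split extension need not be a tensor product with a two-dimensional evaluation module. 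Moreover, even granting the decomposition, the step ``$[0,0]\otimes U_k\cong U_k\otimes [0,0]$ by general position of strings'' applies directly only when $U_k$ is itself a product of evaluation modules; when $U_k$ is a non-split extension one needs a filtration or block-triangular $\check{R}$-matrix argument (standard, but not among the lemmas proved in the paper). Both of your suggested routes for closing the gap --- propagating commutativity across a short exact sequence via Proposition \ref{prop:two_strings_prod}, or constructing an explicit invertible chain of $\check{R}$-intertwiners --- remain to be carried out, so the conjecture stays open, as you acknowledge.
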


We call isomorphisms $a^{n+1}(a+2)^na^n \cong a^n(a+2)^na^{n+1}$ and $a^{n+1}(a-2)^na^n \cong a^n(a-2)^na^{n+1}$ $n$-exchanges.

The modules  $0^n2^n0^n$ have an interesting structure. Although we cannot describe it completely, we give head and socle.
\begin{prop}\label{prop:soc_cosoc_struct} We have
    \begin{align*}
        \mathrm{soc}(0^n2^n0^n) \cong \mathrm{cosoc}(0^n2^n0^n)\cong  \bigoplus_{k=0}^{n}[0,2]^{k}0^n, \\ 
        \mathrm{soc}(2^n0^n2^n) \cong \mathrm{cosoc}(2^n0^n2^n)\cong  \bigoplus_{k=0}^{n}[0,2]^{k}2^n.
    \end{align*}
\end{prop}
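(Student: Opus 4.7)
The plan is to reduce the cosocle statement to the socle statement via self-duality, then prove the socle decomposition by constructing explicit embeddings (lower bound) and ruling out further socle summands (upper bound).

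First, by Lemma~\ref{lemma:dual} one has $(0^n 2^n 0^n)^* \cong 2^n 4^n 2^n$, and applying the shift $\tau_{-2}$ (Lemma~\ref{lemma:shift}) yields $M_n^* \cong M_n$ where $M_n := 0^n 2^n 0^n$. Each simple $[0,2]^k 0^n$ is likewise self-dual after the same shift. Since $\mathrm{cosoc}(V) \cong \soc(V^*)^*$ for any $V$, it suffices to prove the socle description; the cosocle description follows. The case $2^n 0^n 2^n$ is analogous.

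For the lower bound on $\soc(M_n)$, I construct embeddings $[0,2]^k 0^n \hookrightarrow M_n$ for each $0 \leq k \leq n$. Iterating the short exact sequence $[0,2] \hookrightarrow 20 \twoheadrightarrow \mbC$ from Proposition~\ref{prop:two_strings_prod} by induction on $k$ (with the inductive step using commutativity of $[0,2]$ with $2$) yields $[0,2]^k \hookrightarrow 2^k 0^k$. Tensoring on the left with $0^n 2^{n-k}$ and on the right with $0^{n-k}$ gives $0^n 2^{n-k} [0,2]^k 0^{n-k} \hookrightarrow M_n$. By the commutativity of $[0,2]$ with $0$ and $2$ (strings in general position), this submodule is isomorphic to $[0,2]^k \otimes N_k$ with $N_k := 0^n 2^{n-k} 0^{n-k}$. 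Iterated application of Proposition~\ref{prop:020_compl} (in the form $0^n \cdot 20 \cong 0^n \oplus [0,2]0^n$ and similar) shows $0^n$ is a direct summand of $N_k$, producing $[0,2]^k 0^n \hookrightarrow M_n$. Since the simples $[0,2]^k 0^n$ are pairwise non-isomorphic (different dimensions), these submodules sum directly inside $M_n$.

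For the upper bound on $\soc(M_n)$, two things must be shown. First, no other simple can appear: since $[0]$ and $[2]$ are not in general position, every simple composition factor of $M_n$ has the form $[0,2]^k 0^a$ or $[0,2]^k 2^b$. For $S = [0,2]^k 0^a$, Lemma~\ref{lemma:hom_move} gives $\dim \Hom(S, M_n) = h(M_n \cdot 2^a [2,4]^k)$; the number of $0$'s, $2$'s and $4$'s in this word together with formula~\eqref{eq:arc_number} force the count of color-$3$ arcs to equal both $k$ and $a+k-n$, hence $a=n$. Modules $[0,2]^k 2^b$ with $n\geq 1$ are excluded by the same mechanism. Second, the multiplicity of each $[0,2]^k 0^n$ in the socle is at most one: after unfolding each $[2,4]$ via the embedding $[2,4] \hookrightarrow 42$ to obtain a word of two-dimensional letters, one shows $|\sconf| = 1$ by an inductive traversal of Definition~\ref{def:steady_arc_conf}; combined with Theorem~\ref{thm:upper_bound} this gives $\dim \Hom(S_k, M_n) \leq 1$.

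The main obstacle is the multiplicity bound $\leq 1$: while the arc-counting formula cleanly excludes all other simples, establishing $|\sconf| = 1$ for the (unfolded) word $M_n \cdot 2^n [2,4]^k$ requires a careful recursive analysis, tracking how the algorithm of Definition~\ref{def:steady_arc_conf} processes the intermixed left $0^n$-blocks, the middle $2^n$-block and the $2^n [2,4]^k$ tail---only one set of choices is consistent with the constraint that $4$'s can only be matched across distinct $[2,4]$-blocks.
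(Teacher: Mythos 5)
Your route is genuinely different from the paper's: you reduce cosocle to socle via duality, construct explicit embeddings for the lower bound, and try to get the upper bound combinatorially, whereas the paper computes the socle multiplicity $m_k$ head-on by reducing $H(0^n2^n0^n[2,4]^k2^n)$ through a chain of short exact sequences down to $H(0^n2^k0^k2^n)$, which is handled by Lemma~\ref{lemma:supp02_hom}. Your duality reduction is a nice shortcut, though be careful with left vs.\ right duals: the correct identity is $\mathrm{cosoc}(\rV)\cong {}^*\!\bigl(\soc(\rV^*)\bigr)$; with the left dual $\soc(\rV^*)^*$ the letters shift the wrong way and one lands on $\bigoplus_k 4^n[4,6]^k$ rather than $\bigoplus_k[0,2]^k0^n$. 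The arc-counting exclusion of $[0,2]^k0^a$ with $a\neq n$ and of $[0,2]^k2^b$ works (after the same unfolding remark), and the lower-bound embeddings also go through, although the intermediate assertion that $0^n$ is a \emph{direct summand} of $N_k=0^n2^{n-k}0^{n-k}$ is stronger than what you need, is not an obvious consequence of iterating Proposition~\ref{prop:020_compl}, and can be replaced by the cheaper observation that $\Hom(0^n,N_k)\cong H(0^n2^{n-k}0^{n-k}2^n)\neq 0$ because that word visibly has an arc configuration.

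The multiplicity bound is where the argument breaks, and you correctly flag it as the main obstacle. The claim $|\sconf|=1$ for the unfolded word $0^n2^n0^n\,2^n(42)^k$ is false. Already for $n=k=1$ the unfolded word is $020242$, and by Example~\ref{ex:n3degenracy_graph} one has $h(020242)=2$, so Theorem~\ref{thm:upper_bound} forces $|\sconf(020242)|\geq 2$. The inclusion $[2,4]\hookrightarrow 42$ only yields $\dim\Hom([0,2]^k0^n,M_n)=h(M_n2^n[2,4]^k)\leq h(M_n2^n(42)^k)$, and this inequality is typically strict: after unfolding, the $4$ of a $(4,2)$ block is free to be matched in ways that the constraint ``no arc inside $[2,4]$'' used to forbid, so the unfolding destroys exactly the rigidity you are trying to exploit. (Note also that Definition~\ref{def:steady_arc_conf} and Theorem~\ref{thm:upper_bound} are formulated for $w_1[0,\dots,2m]w_2$ with a single long factor, so they do not apply directly to a product containing $k$ copies of $[2,4]$.) The paper sidesteps this entirely by never unfolding: it uses $4\hookrightarrow 0[2,4]\twoheadrightarrow[0,4]$ to trade each $[2,4]$ for a $4$ at the cost of a $0$, kills the $[0,4]$ contribution at every step by a $q$-character argument, and reduces to a pure $\{0,2\}$-supported word where Lemma~\ref{lemma:supp02_hom} gives the bound $\leq 1$. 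Without an argument of that kind, or some other mechanism that remembers the $[2,4]$-constraint, your upper bound does not close.
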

\begin{proof}
    We start by computing the $q$-characters:
    \begin{equation}\label{the equation}
        \chi_q(0^n2^n0^n) = \chi_q(02)^n\chi_q(0^n) = (1 + \chi_q([0,2]))^n\chi_q(0^n) = \sum_{k=0}^{n}\binom{n}{k}\chi_q([0,2]^k0^n).
    \end{equation}
    Therefore, $\mathrm{soc}(0^n2^n0^n) = \bigoplus_{k=0}^{n}(0^n[0,2]^{k})^{\oplus m_k}$, for some $m_k\geq 0$. The multiplicity  $m_k$ is the dimension of the space
    $$\Hom_{\Uqa}([0,2]^k0^n, 0^n2^n0^n) \cong H( 0^n2^n0^n[2,4]^k2^n).$$
    Using short exact sequence $4 \hookrightarrow 0[2,4] \twoheadrightarrow [0,4]$ we obtain an exact sequence
    $$H( 0^n2^n0^{n-1}[2,4]^{k-1}42^n) \hookrightarrow H( 0^n2^n0^n[2,4]^k2^n) \rightarrow H( 0^n2^n0^{n-1}[2,4]^{k-1}[0,4]2^n).$$
    
    In addition, similarly to Lemma \ref{lemma:right0_nohom}, we have 
    \begin{multline*}
    H( 0^n2^n0^{n-1}[2,4]^{k-1}[0,4]2^n) \cong H( 0^n2^n0^{n-1}[2,4]^{k-1}2^n[0,4]) \cong \\ \cong \Hom_{\Uqa}([-2,2], 0^n2^n0^{n-1}[2,4]^{k-1}2^n) = 0,    
    \end{multline*}
   since the monomial $1_{-2}1_01_2$ is clearly not contained in the $q$-character of $0^n2^n0^{n-1}[2,4]^k2^n)$. 
   
    Therefore
    $$H( 0^n2^n0^n[2,4]^k2^n)\cong H( 0^n2^n0^{n-1}[2,4]^{k-1}42^n).$$
    Applying the same argument repeatingly we get
    $$H( 0^n2^n0^n[2,4]^k2^n) \cong H( 0^n2^n0^{n-k}4^k2^n) \cong H( 0^n2^n4^k0^{n-k}2^n).$$ Using short exact sequence $\mbC \hookrightarrow 02 \twoheadrightarrow [0,2]$ in the same fashion, we obtain
    $$
    H( 0^n2^n4^k0^{n-k}2^n) \cong H( 0^n2^n4^k2^k).$$

    Applying slides, see Lemma \ref{lemma:slide_isom} we move $0^n$ to the right and then  shift parameters, see Lemma \ref{lemma:shift}, 
    $$
    H( 0^n2^n4^k2^k) \cong 
    H( 2^n4^k2^k4^n) \cong H( 0^n2^k0^k2^n).$$ 

    The last space is one-dimensional by Lemma \ref{lemma:supp02_hom}. Thus, $m_k=1$.

    The statement for $\mathrm{cosoc}$ follows from Corollary \ref{cor:hom_switch} $$\Hom_{\Uqa}(0^n2^n0^n, [0,2]^k0^n) \cong \Hom_{\Uqa}(([0,2]^k0^n)^*, (0^n2^n0^n)^*) = \Hom_{\Uqa}([2,4]^k2^n, 2^n4^n2^n).$$

    The computation for $2^n0^n2^n$ is similar.
\end{proof}

For small $n$ it is possible to observe the structure of  $0^n2^n0^n$ explicitly. In particular, we expect that $0^n2^n0^n=\oplus_{k=0}^n \rV_k$ is a direct sum of indecomposable cyclic modules $\rV_k$ with $\chi_q(\rV_k)={n \choose k} \chi_q([0,2]^k0^n)$. 
\begin{example}\label{ex:002200} For $n=2$ we have
    \begin{equation*}        0^22^20^2 \cong 0^2 \oplus \rW \oplus 0^2[0,2]^2,
    \end{equation*}
    where $\rW$ is a non-trivial extension of $0^2[0,2]$ by itself.
\begin{proof}
    By \eqref{the equation} the composition factors of $0^22^20^2$ are given by $\{0^2, 0^2[0,2], 0^2[0,2], 0^2[0,2]^2\}$. By Proposition \ref{prop:soc_cosoc_struct}, the socle and cosocle of $0^22^20^2$ contain  $0^2\oplus 0^2[0,2]^2$. Therefore, $0^22^20^2 \cong 0^2 \oplus \rW \oplus 0^2[0,2]^2$ where $\rW$ has composition factors ${0^2[0,2], 0^2[0,2]}$.  Since $0^2[0,2]$ appears in $\mathrm{soc}(0^22^20^2)$ with multiplicity $1$, the module $\rW$ does not split.
\end{proof}
\end{example}

\subsection{Implicit bounds for $h(w)$.}\label{subsec:implicit_bounds}
In Section \ref{sec:bounds} we gave three bounds for $h(w)$. In this subsection we show on examples that these bounds can be improved by using slides (see Lemma \ref{lemma:slide_isom}), anti-involution $\omega$ (see Corollary \ref{cor:reverse_word}), and $n$-exchanges (see Corollary \ref{cor:020_commute}).  We further refer to slides, anti-involution $\omega$, and $n$-exchanges as symmetries.

We recall that we do not have a proof of $n$-exchanges for $n>2$. In our examples, we use only $1$-exchanges.

We call a bound obtained by a combination of bounds of Theorems \ref{thm:lower_bound_irr}, \ref{thm:upper_bound}, and Lemma \ref{lemma:upper_bound_qchar}  with symmetries, an implicit bound.
 Although the implicit bounds are much better, still, there are examples when the best implicit bounds are not exact. We give these examples below (except for the implicit steady arc configuration bound). 

\medskip 

We start with the lower bound given by Theorem \ref{thm:lower_bound_irr}. This bound is the same for the words related by a slide or anti-involution $\omega$. However, it can be different for words which are isomorphic as $\Uqa$-modules. 
We now study this phenomenon.

We start with a combinatorial observation. 

\begin{lemma}\label{lemma:lower_bound_impr_comb}
    Let $w_1, w_2$ be a pair of words, $a \in 2\mbZ$. Then 
    \begin{align*}
    |\iconf(w_1a(a+2)aw_2)| \geq |\iconf(w_1a^2(a+2)w_2)|,\\
    |\iconf(w_1a(a-2)aw_2)| \geq |\iconf(w_1(a-2)a^2w_2)|.
    \end{align*}
\end{lemma}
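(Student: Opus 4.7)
The plan is to prove the first inequality by constructing an explicit injection $\Phi \colon \iconf(w_1 a^2(a+2) w_2) \to \iconf(w_1 a(a+2)a w_2)$; the second inequality will then follow by applying the anti-involution $\omega$ from Corollary \ref{cor:reverse_word}, which preserves irreducibility of arc configurations (the irreducibility condition $a_{i_2}\notin\{a_{i_1},a_{j_1}\}$ for an intersection $i_1<i_2<j_1<j_2$ translates, under reversal-and-negation, to the equivalent condition on the image intersection).

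Set $w = w_1 a^2(a+2) w_2$ and $w' = w_1 a(a+2)a w_2$, and let $i_1 = l(w_1)+1$, $i_2 = i_1+1$, $i_3 = i_1+2$, so the letters at $i_1, i_2, i_3$ are $a,a,a+2$ in $w$ and $a, a+2, a$ in $w'$. The only candidate arcs among $\{i_1,i_2,i_3\}$ in $w$ are $(i_1,i_3)$ and $(i_2,i_3)$ (both of color $a+1$); the pair $(i_1,i_2)$ carries equal letters. A key preliminary observation is that $(i_1,i_3)\notin C$ for any $C\in\iconf(w)$: if it were present, then the arc at $i_2$ would have to exit the interval $(i_1,i_3)$ and would therefore intersect $(i_1,i_3)$; this intersection would be reducible, since $a_{i_2}=a=a_{i_1}$ belongs to $\{a_{i_1},a_{j_1}\}=\{a,a+2\}$.

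Every $C\in\iconf(w)$ thus falls into one of two cases. In \textbf{Case I}, $(i_2,i_3)\in C$ and $i_1$ is paired with some external endpoint $\gamma$. In \textbf{Case II}, none of $i_1,i_2,i_3$ are paired among themselves, so $i_1,i_2,i_3$ are paired with distinct external endpoints $\gamma, \alpha, \beta$ respectively. Denoting by $\pi(i,j)$ the arc $(\min(i,j),\max(i,j))$, define $\Phi(C)$ by keeping all arcs of $C$ not involving $\{i_1,i_2,i_3\}$ and, in Case I, replacing $(i_2,i_3)$ and $\pi(i_1,\gamma)$ with $(i_1,i_2)$ and $\pi(i_3,\gamma)$; in Case II, keeping $\pi(i_1,\gamma)$ and replacing $\pi(i_2,\alpha),\pi(i_3,\beta)$ with $\pi(i_2,\beta),\pi(i_3,\alpha)$.

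The proof then requires three verifications. First, color validity: each external endpoint $\alpha,\beta,\gamma$ lies either left of $i_1$ (with letter $a-2$, resp.\ $a$, dictated by the color of the original arc) or right of $i_3$ (with letter $a+2$, resp.\ $a+4$), and the swap of letters at $i_2,i_3$ between $w$ and $w'$ exactly matches the rerouting so that the new arcs carry admissible colors in $w'$. Second, irreducibility of $\Phi(C)$: the arc $(i_1,i_2)$ in Case I intersects no other arc (no positions exist strictly between $i_1$ and $i_2$), while for every other new arc the set of arcs of $\Phi(C)$ it intersects is in natural bijection with the set of arcs the ``parent'' arc in $C$ intersected, and the letters at all involved positions are the same in $w$ and $w'$ except at $i_2,i_3$ where they are interchanged; a short case-by-case check shows that the irreducibility inequality $a_{i_2^*}\notin\{a_{i_1^*},a_{j_1^*}\}$ transfers verbatim. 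Third, injectivity: Cases I and II have disjoint images, distinguished by whether $(i_1,i_2)\in\Phi(C)$, and within each case the external endpoints $\alpha,\beta,\gamma$ can be read off $\Phi(C)$, allowing unambiguous reconstruction of $C$. The main (and most technically tedious) obstacle will be the verification of irreducibility in (b); it splits into four subcases depending on whether each of $\gamma$ (Case I), or $\alpha$ and $\beta$ (Case II), lies to the left of $i_1$ or the right of $i_3$, but in each subcase the calculation is routine because the rerouting preserves both the interleaving patterns and the relevant letters.
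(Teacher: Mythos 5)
Your construction of the injection $\Phi$ is exactly the map $\iota$ in the paper's proof: your Case I matches the paper's case ``$(k+2,k+3)\in C$'' and your Case II matches the paper's case ``$(k+2,k+3)\notin C$'' (the paper's text in the second case says it swaps $k+1$ with $k+2$, but the picture and the color analysis make clear it is really $k+2$ and $k+3 = i_2,i_3$ that are being swapped, which is what you do). Reducing the second inequality to the first via $\omega$ is a clean alternative to the paper's ``the proof of the second is similar,'' and your check that $\omega$ preserves irreducibility of arc configurations is correct.

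There is, however, a genuine flaw in your irreducibility verification for Case II. You assert that ``the set of arcs of $\Phi(C)$ it intersects is in natural bijection with the set of arcs the `parent' arc in $C$ intersected'' and that ``the rerouting preserves both the interleaving patterns and the relevant letters.'' This is false: the two rerouted arcs ending at $i_2$ and $i_3$ in $\Phi(C)$ can intersect each other even though their parent arcs did not intersect in $C$. For example, if in $C$ the arc at $i_2$ is $(\gamma,i_2)$ with $\gamma<i_1$, $a_\gamma=a-2$, and the arc at $i_3$ is $(i_3,\beta)$ with $\beta>i_3$, $a_\beta=a+4$, these are disjoint in $C$, but in $\Phi(C)$ they become $(\gamma,i_3)$ and $(i_2,\beta)$ with $\gamma<i_2<i_3<\beta$, which do intersect. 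The paper explicitly flags exactly this: ``The only new intersection of arcs which might appear for $\iota(C)$ is marked on the picture by a black circle. These two arcs can have colors $a-1$ and $a+3$ or $a+3$ and $a+1$ or $a+1$ and $a-1$. In both cases when colors differ by two, the arc on the left has larger color. Therefore, $\iota(C)$ is irreducible.'' So the new intersection \emph{is} always irreducible, but establishing that requires a separate color analysis not covered by your ``natural bijection'' claim or your ``interleaving patterns are preserved'' claim. Your stated framework for the verification therefore has a hole; once you drop the bijection claim and add the explicit check of the one new intersection, the argument is complete and coincides with the paper's.
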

\begin{proof} We prove the first inequality. The proof of the second is similar.

We construct an embedding $$\iota: \iconf(w_1a^2(a+2)w_2)\longrightarrow \iconf(w_1a(a+2)aw_2).$$ 

For $C\in \iconf(w_1a^2 (a+2)w_2)$ we define $\iota(C)$  according to the following picture.

\medskip
\begin{figure}[H]
\begin{tikzpicture}
    \node at (1.25,-0.27) {$\dots$};
    \node at (3.35,-0.27) {$\dots$};
    
    \node at (1.7,-0.2) {$a$};
    \node at (2,-0.2) {$a$};
    \node at (2.6,-0.175) {$a{+}2$};

    \draw[-] (1.7 ,0) to [out=30,in=195] (2.3,0.25);
    \draw[dashed] (2.3,0.25) to [out=15,in=185] (2.9 ,0.35);
    
    \draw[-] (2,0) to [out=30,in=150] (2.6,0);

    \node at (4.25,-0.1) {$\longmapsto$};
    \node at (4.2, 0.14) {$\iota$};
    
    \node at (5.2,-0.27) {$\dots$};
    \node at (7.35,-0.27) {$\dots$};
    
    \node at (5.65,-0.2) {$a$};
    \node at (6.25,-0.175) {$a{+}2$};
    \node at (6.85,-0.2) {$a$};

    \draw[-] (5.65,0) to [out=30,in=150] (6.25,0);

    \draw[-] (6.85 ,0) to [out=30,in=195] (7.45,0.25);
    \draw[dashed] (7.45,0.25) to [out=15,in=185] (8.05, 0.35);

\end{tikzpicture},\\
\hspace{-14.5pt}
\begin{tikzpicture}

\node at (1.25,-0.27) {$\dots$};
    \node at (3.35,-0.27) {$\dots$};
    
    \node at (1.7,-0.2) {$ a$};
    \node at (2,-0.2) {$ a$};
    \node at (2.6,-0.175) {$ a{+}2$};

    \draw[-] (2,0) to [out=150,in=-15] (1.4,0.25);
    \draw[dashed] (1.4,0.25) to [out=165,in=-5] (0.8,0.35);
    \draw[-] (2.6 ,0) to [out=30,in=195] (3.2,0.25);
    \draw[dashed] (3.2,0.25) to [out=15,in=185] (3.8, 0.35);

    \node at (4.25,-0.1) {$\longmapsto$};
    \node at (4.2, 0.14) {$\iota$};
    
    \node at (5.2,-0.27) {$\dots$};
    \node at (7.35,-0.27) {$\dots$};
    
    \node at (5.65,-0.2) {$ a$};
    \node at (6.25,-0.175) {$ a{+}2$};
    \node at (6.85,-0.2) {$ a$};

    \draw[-] (6.85,0) to [out=150,in=-15] (6.25,0.25);
    \draw[dashed] (6.25,0.25) to [out=165,in=-5] (5.65,0.35);
    \draw[-] (6.25, 0) to [out=30,in=195] (6.85,0.25);
    \draw[dashed] (6.85,0.25) to [out=15,in=185] (7.45, 0.35);

    \draw[black,fill=black] (6.55,0.15) circle (.2ex);
\end{tikzpicture}.
\end{figure}

More precisely, $\iota$ is defined by the following rules. Let $w_1\in W_k,\; w_2\in W_l$.
    \begin{enumerate}
    \item Let $(k+2, k+3)\in C$. Write $C = \{(k+2, k+3), (k+1, j)\}\sqcup \tilde{C}$ or $C = \{(k+2, k+3), (j, k+1)\}\sqcup \tilde{C}$. Then set $\iota(C) = \{(k+1, k+2), (k+3, j)\}\sqcup \tilde{C}$  or $\iota(C) = \{(k+1, k+2), (j, k+3)\}\sqcup \tilde{C}$ respectively. Clearly, the colors of intersecting arcs for $\iota(C)$ are the same as in $C$, hence $\iota(C)$ is irreducible.

    \item Let $(k+2, k+3)\notin C$. Write $C = \{(i_m,j_m)\}_{m = 1,\dots,(k+l+3)/2}$. Let $s_{k+1}\in S_{k+l+3}$ be the elementary transposition switching $k+1$ with $k+2$. Then we set $\iota(C) = \{(s_k(i_m), s_{k}(j_m))\}$. The only new intersection of arcs which might appear for $\iota(C)$ is marked on the picture by a black circle. These two arcs can have colors $a-1$ and $a+3$ or  $a+3$ and $a+1$ or  $a+1$ and $a-1$. In both cases when colors differ by two, the arc on the left has larger color.
    Therefore, $\iota(C)$ is irreducible.  
    
    \end{enumerate}
\end{proof}

This combinatorial statement leads to the following corollary.

\begin{cor}\label{cor:lower_bd_improve}
    Let a word $w \in W_{2n}$ be of the form $\tilde{w}_1a^2(a+2)a\tilde{w}_{2}$ or of the form $\tilde{w}_1(a+2)a( a+2)^2\tilde{w}_{2}$. Set  $\tilde{w}= \tilde{w}_1a(a+2)a^2\tilde{w}_{2}$ in the first case and  $\tilde{w}= \tilde{w}_1(a+2)^2a(a+2)\tilde{w}_{2}$ in the second case. Then $h(w)=h(\tilde w)$ and   $|\iconf(w)|\leq |\iconf(\tilde w)|$.
\end{cor}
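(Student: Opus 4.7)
The proposal is that both claims follow by direct application of results already established in the text, with essentially no further combinatorial work required.

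First I would handle the equality $h(w)=h(\tilde w)$. In both cases, passing from $w$ to $\tilde w$ amounts to replacing a factor $a^2(a{+}2)a$ by $a(a{+}2)a^2$ (first case) or $(a{+}2)a(a{+}2)^2$ by $(a{+}2)^2a(a{+}2)$ (second case) inside a longer word. After shifting all letters by $-a$ in the first case or by $-a$ followed by swapping the roles $0\leftrightarrow 2$ (via duality/shift) in the second case, each of these replacements is precisely a $1$-exchange $0020\cong 0200$ or $2202\cong 2022$ established in Corollary \ref{cor:020_commute}. Tensoring on the left with $\tilde w_1$ and on the right with $\tilde w_2$ preserves the isomorphism, so $w\cong\tilde w$ as $\Uqa$-modules, hence $h(w)=h(\tilde w)$.

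Second, for the inequality $|\iconf(w)|\leq|\iconf(\tilde w)|$, both cases are immediate consequences of Lemma~\ref{lemma:lower_bound_impr_comb} by matching up the factorizations appropriately. For the first case, apply the first inequality of the lemma with $w_1$ of the lemma equal to $\tilde w_1$ and $w_2$ of the lemma equal to $a\tilde w_2$: the lemma then reads
\begin{equation*}
|\iconf(\tilde w_1\,a(a{+}2)a\cdot a\tilde w_2)|\geq|\iconf(\tilde w_1\,a^2(a{+}2)\cdot a\tilde w_2)|,
\end{equation*}
that is, $|\iconf(\tilde w)|\geq|\iconf(w)|$. For the second case, set $b=a+2$ and apply the second inequality of the lemma with $w_1$ of the lemma equal to $\tilde w_1 b$ and $w_2$ of the lemma equal to $\tilde w_2$: the lemma then reads
\begin{equation*}
|\iconf(\tilde w_1 b\cdot b(b{-}2)b\cdot\tilde w_2)|\geq|\iconf(\tilde w_1 b\cdot(b{-}2)b^2\cdot\tilde w_2)|,
\end{equation*}
that is, $|\iconf(\tilde w)|\geq|\iconf(w)|$.

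There is no real obstacle here; the content of the corollary is essentially bookkeeping, combining the module-level $1$-exchange with the combinatorial embedding constructed in Lemma~\ref{lemma:lower_bound_impr_comb}. The only thing to be careful about is that the substitutions $w_1\leftarrow\tilde w_1,\, w_2\leftarrow a\tilde w_2$ (first case) and $w_1\leftarrow\tilde w_1 b,\, w_2\leftarrow\tilde w_2$ (second case) produce exactly the four words appearing in the corollary, which is an easy bracket-matching check.
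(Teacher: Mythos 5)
Your proof is correct and follows the same route as the paper: the $h(w)=h(\tilde w)$ part comes from the $1$-exchange isomorphisms of Corollary~\ref{cor:020_commute} applied after a shift, and the inequality $|\iconf(w)|\leq|\iconf(\tilde w)|$ comes from Lemma~\ref{lemma:lower_bound_impr_comb} with the substitutions you spell out. The paper's own proof leaves the bracket-matching implicit, so your explicit verification of the substitutions $w_1\leftarrow\tilde w_1,\ w_2\leftarrow a\tilde w_2$ and $w_1\leftarrow\tilde w_1 b,\ w_2\leftarrow\tilde w_2$ is a welcome addition, not a deviation.
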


\begin{proof}
    In Section \ref{subsec:comm} we prove that $a(a+2)a^2\cong a^2(a+2)a$ and $(a+2)a(a+2)^2\cong(a+2)^2a(a+2)$ as $\Uqa$-modules. Therefore, $h(w)=h(\tilde w)$. The corollary follows from Lemma \ref{lemma:lower_bound_impr_comb}.
\end{proof}
Note that in Corollary \ref{cor:lower_bd_improve}, $\tilde w>w$ in the lexicographical order.

\begin{example}\label{ex:lower_bd_improve}
    Let $w = 4 2 0 22 4 6 2 44$.  The only irreducible arc configuration is
    \begin{figure}[H]
\begin{tikzpicture}
    \node at (0, 0) {$4$};
    \node at (0.6,0) {$2$};
    \node at (1.2,0) {$0$};
    \node at (1.8,0) {$2$};
    \node at (2.4,0) {$2$};
    \node at (3,0)   {$4$};
    \node at (3.6,0) {$6$};
    \node at (4.2,0) {$2$};
    \node at (4.8,0) {$4$};
    \node at (5.4,0) {$4$};
    \draw[-] (0 ,0.3) to [out=30,in=150] (3.6,0.3);
    \draw[-] (0.6 ,0.3) to [out=30,in=150] (5.4,0.3);
    \draw[-] (1.2 ,0.3) to [out=30,in=150] (1.8,0.3);
    \draw[-] (2.4,0.3) to [out=30,in=150] (3,0.3);
    \draw[-] (4.2 ,0.3) to [out=30,in=150] (4.8,0.3);
    \node at (5.6,-0.15) {$.$};
\end{tikzpicture}
\end{figure}
However, replacing the factor $2022$ in $w$ with $2202$  we obtain the word $\tilde{w} = 4 2 2 0 2 4 6 2 4 4$. For the word $\tilde{w}$ irreducible arc configurations are
\begin{figure}[H]
\begin{tikzpicture}
    \node at (0, 0) {$4$};
    \node at (0.6,0) {$2$};
    \node at (1.2,0) {$2$};
    \node at (1.8,0) {$0$};
    \node at (2.4,0) {$2$};
    \node at (3,0)   {$4$};
    \node at (3.6,0) {$6$};
    \node at (4.2,0) {$2$};
    \node at (4.8,0) {$4$};
    \node at (5.4,0) {$4$};
    \draw[-] (0 ,0.3) to [out=30,in=150] (3.6,0.3);
    \draw[-] (0.6 ,0.3) to [out=30,in=150] (5.4,0.3);
    \draw[-] (1.2 ,0.3) to [out=30,in=150] (3,0.3);
    \draw[-] (1.8,0.3) to [out=30,in=150] (2.4,0.3);
    \draw[-] (4.2 ,0.3) to [out=30,in=150] (4.8,0.3);
    \node at (5.6,-0.15) {$,$};
\end{tikzpicture}
\begin{tikzpicture}
    \node at (0, 0) {$4$};
    \node at (0.6,0) {$2$};
    \node at (1.2,0) {$2$};
    \node at (1.8,0) {$0$};
    \node at (2.4,0) {$2$};
    \node at (3,0)   {$4$};
    \node at (3.6,0) {$6$};
    \node at (4.2,0) {$2$};
    \node at (4.8,0) {$4$};
    \node at (5.4,0) {$4$};
    \draw[-] (0 ,0.3) to [out=30,in=150] (3.6,0.3);
    \draw[-] (0.6 ,0.3) to [out=30,in=150] (5.4,0.3);
    \draw[-] (1.2 ,0.3) to [out=30,in=150] (4.8,0.3);
    \draw[-] (1.8,0.3) to [out=30,in=150] (4.2,0.3);
    \draw[-] (2.4 ,0.3) to [out=30,in=150] (3,0.3);
    \node at (5.6,-0.15) {$.$};
    \node at (5.6,-0.25) {$ $};
\end{tikzpicture}
\end{figure}
Since $w\cong \tilde w$ as $\Uqa$-modules, we obtain $h(\tilde{w}) = h(w) \geq 2$. One could use Theorem \ref{thm:upper_bound} or a computation similar to Section \ref{subsec:exact_comps} to show that $h(w)=2$. Thus, the lower bound given by Theorem \ref{thm:lower_bound_irr} is exact for $\tilde{w}$ but not exact for $w$.
\end{example}

The next example shows that the best implicit lower bound obtained from Theorem \ref{thm:lower_bound_irr} may be not exact.

\begin{example}
Consider the word $w = 0^22^20^22^24^22^2$. Then by the explicit check one obtains that all slides of $w$ do not contain a factor of the form $a^n(a+2)^na^{n+1}$ or of the form $a^{n+1}(a-2)^na^{n}$. Then one obtains $|\iconf(w)| = 3$. On the other hand
$$
h(w) = \dim(\mathrm{End}_{\Uqa}(0^22^20^2)) = 4,
$$
which follows from the structure of the module $0^22^20^2$ described in Example \ref{ex:002200}. Namely, 
\begin{align*}
\dim(\mathrm{End}_{\Uqa}(0^22^20^2)) &= \dim(\mathrm{End}_{\Uqa}(0^2)) + \dim(\mathrm{End}_{\Uqa}(\rW)) + \dim(\mathrm{End}_{\Uqa}(0^2[0,2]^2)) = \\
&=1 + 2 +1 = 4,
\end{align*}
where we used that $\rW$ is a non-trivial self-extension of an irreducible module.
\end{example}

The $q$-character upper bound, see Lemma \ref{lemma:upper_bound_qchar}, is invariant with respect to anti-involution $\omega$ and $n$-exchanges, however, it can be improved by slides. We have used it on many occasions. Here we recall the simplest example which illustrates the situation.
\begin{example} Let $w=20$. Then $h_{\textit{char}}(w)=1$. After a slide, we have $s(w)=06$ and $h_{\textit{char}}(s(w))=0$. It follows that $h(w)=h(s(w))=0$.
\end{example}

The next example shows that the best implicit upper bound obtained from the $q$-characters, see Lemma \ref{lemma:upper_bound_qchar}, may be not not exact.
\begin{example}
    Consider the word $w_1 = 020242$. The slides of $w_1$ are $w_2 = 202424, w_3 = 024246$. They contain no factors of the form $a^n(a+2)^na^{n+1}$ or of the form $a^{n+1}(a-2)^na^{n}$. The values of $h_{\textit{char}}$ on the words $w_1, w_2, w_3$ are $3,3,4$ respectively. However, $|\iconf(w)| = |\sconf(w)| = 2$, which implies $h(w) = 2$ by Theorems \ref{thm:lower_bound_irr}, \ref{thm:upper_bound}.
\end{example}

The following examples show that the upper bound given by Theorem \ref{thm:upper_bound} can be improved by using symmetries. 

The first one uses slides.
\begin{example}
Consider the word $w = 00224022$. There are two steady arc configurations of $w$ given by 
\begin{figure}[H]
\begin{tikzpicture}
    \node at (0, 0) {$0$};
    \node at (0.6,0) {$0$};
    \node at (1.2,0) {$2$};
    \node at (1.8,0) {$2$};
    \node at (2.4,0) {$4$};
    \node at (3,0)   {$0$};
    \node at (3.6,0) {$2$};
    \node at (4.2,0) {$2$};
    \draw[-] (0 ,0.3) to [out=30,in=150] (1.8,0.3);
    \draw[-] (0.6 ,0.3) to [out=30,in=150] (4.2,0.3);
    \draw[-] (1.2 ,0.3) to [out=30,in=150] (2.4,0.3);
    \draw[-] (3,0.3) to [out=30,in=150] (3.6,0.3);
    \node at (4.4,-0.15) {$,$};
\end{tikzpicture}
\begin{tikzpicture}
    \node at (0, 0) {$0$};
    \node at (0.6,0) {$0$};
    \node at (1.2,0) {$2$};
    \node at (1.8,0) {$2$};
    \node at (2.4,0) {$4$};
    \node at (3,0)   {$0$};
    \node at (3.6,0) {$2$};
    \node at (4.2,0) {$2$};
    \draw[-] (0 ,0.3) to [out=30,in=150] (4.2,0.3);
    \draw[-] (0.6 ,0.3) to [out=30,in=150] (1.2,0.3);
    \draw[-] (1.8 ,0.3) to [out=30,in=150] (2.4,0.3);
    \draw[-] (3.0,0.3) to [out=30,in=150] (3.6,0.3);
    \node at (4.4,-0.15) {$.$};
    \node at (4.4,-0.25) {$ $};
\end{tikzpicture}
\end{figure}
After two slides we obtain the word $\tilde{w} = s^2(w) = 22402244$. The word $\tilde{w}$ has one steady arc configuration given by
\begin{figure}[H]
\begin{tikzpicture}
    \node at (0, 0) {$2$};
    \node at (0.6,0) {$2$};
    \node at (1.2,0) {$4$};
    \node at (1.8,0) {$0$};
    \node at (2.4,0) {$2$};
    \node at (3,0)   {$2$};
    \node at (3.6,0) {$4$};
    \node at (4.2,0) {$4$};
    \draw[-] (0 ,0.3) to [out=30,in=150] (4.2,0.3);
    \draw[-] (0.6 ,0.3) to [out=30,in=150] (1.2,0.3);
    \draw[-] (1.8 ,0.3) to [out=30,in=150] (2.4,0.3);
    \draw[-] (3,0.3) to [out=30,in=150] (3.6,0.3);
    \node at (4.4,-0.15) {$.$};
    \node at (4.4,-0.25) {$ $};
\end{tikzpicture}
\end{figure}
Therefore, $ h(w)=h(\tilde{w}) = 1$.
\end{example}

In the second example, slides are not sufficient, but we are helped by $1$-exchanges, see Corollary \ref{cor:020_commute}. 
\begin{example}
    Consider the word $w = 0024022462$. There are two irreducible arc configurations given by
    \begin{figure}[H]
\begin{tikzpicture}
    \node at (0, 0) {$0$};
    \node at (0.6,0) {$0$};
    \node at (1.2,0) {$2$};
    \node at (1.8,0) {$4$};
    \node at (2.4,0) {$0$};
    \node at (3,0)   {$2$};
    \node at (3.6,0) {$2$};
    \node at (4.2,0) {$4$};
    \node at (4.8,0) {$6$};
    \node at (5.4,0) {$2$};
    \draw[-] (0 ,0.3) to [out=30,in=150] (5.4,0.3);
    \draw[-] (0.6 ,0.3) to [out=30,in=150] (1.2,0.3);
    \draw[-] (1.8 ,0.3) to [out=30,in=150] (4.8,0.3);
    \draw[-] (2.4,0.3) to [out=30,in=150] (3.0,0.3);
    \draw[-] (3.6,0.3) to [out=30,in=150] (4.2,0.3);
    \node at (5.6,-0.15) {$,$};
    \node at (5.6,-0.25) {$ $};
\end{tikzpicture}
\begin{tikzpicture}
    \node at (0, 0) {$0$};
    \node at (0.6,0) {$0$};
    \node at (1.2,0) {$2$};
    \node at (1.8,0) {$4$};
    \node at (2.4,0) {$0$};
    \node at (3,0)   {$2$};
    \node at (3.6,0) {$2$};
    \node at (4.2,0) {$4$};
    \node at (4.8,0) {$6$};
    \node at (5.4,0) {$2$};
    \draw[-] (0 ,0.3) to [out=30,in=150] (5.4,0.3);
    \draw[-] (0.6 ,0.3) to [out=30,in=150] (3.6,0.3);
    \draw[-] (1.2 ,0.3) to [out=30,in=150] (1.8,0.3);
    \draw[-] (2.4,0.3) to [out=30,in=150] (3.0,0.3);
    \draw[-] (4.2,0.3) to [out=30,in=150] (4.8,0.3);
    \node at (5.6,-0.15) {$.$};
    \node at (5.6,-0.25) {$ $};
\end{tikzpicture}
\end{figure}
Both of these arc configurations are steady. There are three steady arc configurations of the word $w$. The third one is given by 
\begin{figure}[H]
\begin{tikzpicture}
    \node at (0, 0) {$0$};
    \node at (0.6,0) {$0$};
    \node at (1.2,0) {$2$};
    \node at (1.8,0) {$4$};
    \node at (2.4,0) {$0$};
    \node at (3,0)   {$2$};
    \node at (3.6,0) {$2$};
    \node at (4.2,0) {$4$};
    \node at (4.8,0) {$6$};
    \node at (5.4,0) {$2$};
    \draw[-] (0 ,0.3) to [out=30,in=150] (3.6,0.3);
    \draw[-] (0.6 ,0.3) to [out=30,in=150] (1.2,0.3);
    \draw[-] (1.8 ,0.3) to [out=30,in=150] (4.8,0.3);
    \draw[-] (2.4,0.3) to [out=30,in=150] (5.4,0.3);
    \draw[-] (3.0,0.3) to [out=30,in=150] (4.2,0.3);
    \node at (5.6,-0.15) {$.$};
    \node at (5.6,-0.25) {$ $};
\end{tikzpicture}
\end{figure}
Explicit check shows that for any $0\leq k \leq 9$ one gets $|\sconf(s^k(w))| = 3$. However, by Corollary \ref{cor:020_commute}, we have $w \cong 0020422426 \cong 0200424226$. The word $0200424226$ has two steady arc configurations given by 
\begin{figure}[H]
\begin{tikzpicture}
    \node at (0, 0) {$0$};
    \node at (0.6,0) {$2$};
    \node at (1.2,0) {$0$};
    \node at (1.8,0) {$0$};
    \node at (2.4,0) {$4$};
    \node at (3,0)   {$2$};
    \node at (3.6,0) {$4$};
    \node at (4.2,0) {$2$};
    \node at (4.8,0) {$2$};
    \node at (5.4,0) {$6$};
    \draw[-] (0 ,0.3) to [out=30,in=150] (0.6,0.3);
    \draw[-] (1.2 ,0.3) to [out=30,in=150] (4.8,0.3);
    \draw[-] (1.8 ,0.3) to [out=30,in=150] (4.2,0.3);
    \draw[-] (2.4,0.3) to [out=30,in=150] (5.4,0.3);
    \draw[-] (3.0,0.3) to [out=30,in=150] (3.6,0.3);
    \node at (5.6,-0.15) {$,$};
    \node at (5.6,-0.25) {$ $};
\end{tikzpicture}
\begin{tikzpicture}
    \node at (0, 0) {$0$};
    \node at (0.6,0) {$2$};
    \node at (1.2,0) {$0$};
    \node at (1.8,0) {$0$};
    \node at (2.4,0) {$4$};
    \node at (3,0)   {$2$};
    \node at (3.6,0) {$4$};
    \node at (4.2,0) {$2$};
    \node at (4.8,0) {$2$};
    \node at (5.4,0) {$6$};
    \draw[-] (0 ,0.3) to [out=30,in=150] (4.8,0.3);
    \draw[-] (0.6 ,0.3) to [out=30,in=150] (2.4,0.3);
    \draw[-] (1.2 ,0.3) to [out=30,in=150] (4.2,0.3);
    \draw[-] (1.8,0.3) to [out=30,in=150] (3.0,0.3);
    \draw[-] (3.6,0.3) to [out=30,in=150] (5.4,0.3);
    \node at (5.6,-0.15) {$.$};
    \node at (5.6,-0.25) {$ $};
\end{tikzpicture}
\end{figure}
Therefore, $h(w) = 2$.
\end{example}

In the third example we use anti-involution $\omega$ to improve the upper bound given by the steady arc configurations. 

\begin{example}
    Consider a word $w = 002242$. There is one irreducible arc configuration for the word $w$ given by
\begin{figure}[H]
    \begin{tikzpicture}
    \node at (0, 0) {$0$};
    \node at (0.6,0) {$0$};
    \node at (1.2,0) {$2$};
    \node at (1.8,0) {$2$};
    \node at (2.4,0) {$4$};
    \node at (3.0,0) {$2$};
    
    \draw[-] (0 ,0.3) to [out=30,in=150] (3.0,0.3);
    \draw[-] (0.6 ,0.3) to [out=30,in=150] (1.2,0.3);
    \draw[-] (1.8 ,0.3) to [out=30,in=150] (2.4,0.3);
    
    \node at (3.2,-0.15) {$.$};
    \node at (3.2,-0.25) {$ $};
\end{tikzpicture}
\end{figure}

This arc configuration is steady. There are two steady arc configurations of the word $w$. The second steady arc configuration one is given by 
\begin{figure}[H]
    \begin{tikzpicture}
    \node at (0, 0) {$0$};
    \node at (0.6,0) {$0$};
    \node at (1.2,0) {$2$};
    \node at (1.8,0) {$2$};
    \node at (2.4,0) {$4$};
    \node at (3.0,0) {$2$};
    
    \draw[-] (0 ,0.3) to [out=30,in=150] (1.8,0.3);
    \draw[-] (0.6 ,0.3) to [out=30,in=150] (3.0,0.3);
    \draw[-] (1.2 ,0.3) to [out=30,in=150] (2.4,0.3);
    
    \node at (3.2,-0.15) {$.$};
    \node at (3.2,-0.25) {$ $};
\end{tikzpicture}
\end{figure}

However, for the word $\tilde{w} = 202244$ obtained from the word $\omega(w)$ by shift by $4$ there is only one steady arc configuration given by
\begin{figure}[H]
    \begin{tikzpicture}
    \node at (0, 0) {$2$};
    \node at (0.6,0) {$0$};
    \node at (1.2,0) {$2$};
    \node at (1.8,0) {$2$};
    \node at (2.4,0) {$4$};
    \node at (3.0,0) {$4$};
    
    \draw[-] (0 ,0.3) to [out=30,in=150] (3.0,0.3);
    \draw[-] (0.6 ,0.3) to [out=30,in=150] (1.2,0.3);
    \draw[-] (1.8 ,0.3) to [out=30,in=150] (2.4,0.3);
    
    \node at (3.2,-0.15) {$.$};
    \node at (3.2,-0.25) {$ $};
\end{tikzpicture}
\end{figure}
Therefore, $h(w) = h(\tilde{w}) = 1$.
\end{example}

\section{Extensions.}\label{sec:extensions}
Let $\rU$, $\rW$ be two $\Uqa$-modules. We call a module $\rV$ an extension of $\rW$ by $\rU$ if $\rU$ is a submodule of $\rV$ and the quotient module $\rV/\rU$ is isomorphic to $\rW$. We call an extension $\rV$ of $\rW$ by $\rU$ trivial if $\rV \cong \rU\oplus\rW$.

The abundance of non-trivial extensions is the main difficulty and attraction of the representation theory of $\Uqa$.
In this section we describe all extensions in the case when $\rU$ and $\rW$ are irreducible evaluation $\Uqa$-modules.

\subsection{Classifications of extensions in special cases.}
We start with a general property of extensions of modules over a Hopf algebra.
\begin{lemma}\label{lemma:ext_dual_aut}
Let $\rH$ be a Hopf algebra. Let $\varphi$ be an automorphism of $\rH$. Let $\rV, \rW, \rU$ be $\rH$-modules.
    \begin{itemize}
        \item If $\rV$ is an extension of $\rW$ by $\rU$, then $\rV^*$ is an extension of $\rU^*$ by $\rW^*$. The extension $\rV$ is trivial if and only if the extension $\rV^*$ is trivial.
        \item If $\rV$ is an extension of $\rW$ by $\rU$, then $\rV_{\varphi}$ is an extension of $\rU_{\varphi}$ by $\rW_{\varphi}$. The extension $\rV$ is trivial if and only if the extension $\rV_{\varphi}$ is trivial.
    \end{itemize}
\qed\end{lemma}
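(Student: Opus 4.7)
The plan is to treat the two parts in parallel: in both cases, the operation on modules (dualization, or twisting by an algebra automorphism) is an additive (anti-)equivalence of the category of $\rH$-modules, and extensions are preserved by such functors.

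First I would handle the twist case, as it is more straightforward. Let $0 \to \rU \xrightarrow{\iota} \rV \xrightarrow{\pi} \rW \to 0$ be the short exact sequence expressing that $\rV$ is an extension of $\rW$ by $\rU$. Because $\rV_{\varphi}$ equals $\rV$ as a vector space with the action $\rho_{\rV}\circ \varphi$, the maps $\iota$ and $\pi$ remain $\rH$-linear when viewed as maps between the twisted modules: for any $a \in \rH$, $\iota(\rho_{\rU_{\varphi}}(a) u) = \iota(\rho_{\rU}(\varphi(a)) u) = \rho_{\rV}(\varphi(a)) \iota(u) = \rho_{\rV_{\varphi}}(a) \iota(u)$, and similarly for $\pi$. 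Hence we get an exact sequence $0 \to \rU_{\varphi} \to \rV_{\varphi} \to \rW_{\varphi} \to 0$, so $\rV_{\varphi}$ is an extension of $\rW_{\varphi}$ by $\rU_{\varphi}$. For the triviality statement, observe that the identity map $\rU \oplus \rW \to (\rU \oplus \rW)_{\varphi}$ on underlying vector spaces gives a canonical isomorphism $(\rU \oplus \rW)_{\varphi} \cong \rU_{\varphi} \oplus \rW_{\varphi}$ of $\rH$-modules; since $\varphi$ is an automorphism, the operation $\rV \mapsto \rV_{\varphi}$ is invertible (its inverse is $\rV \mapsto \rV_{\varphi^{-1}}$), so $\rV \cong \rU \oplus \rW$ iff $\rV_{\varphi} \cong \rU_{\varphi} \oplus \rW_{\varphi}$.

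Next I would handle the dual case. Dualizing the short exact sequence $0 \to \rU \xrightarrow{\iota} \rV \xrightarrow{\pi} \rW \to 0$ as vector spaces yields $0 \to \rW^{*} \xrightarrow{\pi^{*}} \rV^{*} \xrightarrow{\iota^{*}} \rU^{*} \to 0$. I would check that $\pi^{*}$ and $\iota^{*}$ are morphisms of $\rH$-modules when $\rV^{*}$ etc.\ are equipped with the action $a \mapsto \rho(S(a))^{*}$; this follows from the fact that transposition of linear maps is contravariant and $S$ is an anti-homomorphism, so $\rho_{\rV^{*}}(a) \circ \pi^{*} = (\pi \circ \rho_{\rV}(S(a)))^{*} = (\rho_{\rW}(S(a)) \circ \pi)^{*} = \pi^{*} \circ \rho_{\rW^{*}}(a)$. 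Thus $\rV^{*}$ is an extension of $\rU^{*}$ by $\rW^{*}$. For the triviality claim, the canonical isomorphism $(\rU \oplus \rW)^{*} \cong \rU^{*} \oplus \rW^{*}$ of $\rH$-modules shows that if $\rV$ is trivial then so is $\rV^{*}$; for the converse, I would use the canonical isomorphism $({}^{*}\rV^{*}) \cong \rV$ noted in Section \ref{subsec:duals} together with the already proved forward direction applied to the right dual (or equivalently apply the forward direction twice, using that $\rV \cong ({}^{*})(\rV^{*})$).

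The main potential obstacle is just bookkeeping: making sure the antipode and its inverse are tracked correctly and that compatibility with the counit (needed to identify $\mbC_{\varphi} \cong \mbC$ and to identify duals of direct sums) is invoked. No deep computation is required; the proof amounts to noting that both functors are additive equivalences (or anti-equivalences) of the category of finite-dimensional $\rH$-modules.
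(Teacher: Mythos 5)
Your proof is correct. The paper marks this lemma with an immediate \qed, offering no proof of its own, and your argument — showing that twisting by $\varphi$ and dualization are exact (respectively, contravariant exact) operations that respect direct sums, with the converse for duals recovered via the canonical isomorphism ${}^{*}(\rV^{*})\cong\rV$ — is the standard argument the authors are implicitly invoking.

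One small remark: for the twist part you only use that $\varphi$ is an algebra automorphism (so the module category functor $\rV\mapsto\rV_{\varphi}$ is well-defined and invertible), which matches the paper's hypothesis; the aside about $\mbC_{\varphi}\cong\mbC$ is unnecessary here since $\rU$ and $\rW$ are arbitrary modules, not the trivial one. Otherwise the bookkeeping is all in order.
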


Recall the extension \eqref{eq:two_strings_prod2} given in Proposition \ref{prop:two_strings_prod}. 
\begin{prop}\label{prop:ext_two_strings_prod}
   Let $\al_1,\bt_1$  $\al_2 ,\bt_2$ be four even integers such that $\al_1 < \al_2\leq \bt_1+2\leq \bt_2$. Then
any non-trivial extension of $[\alpha_1,\alpha_2 - 4]  [\beta_{1} +4,  \beta_2]$ by $[\alpha_{1}, \beta_{2}][\alpha_{2}, \beta_{1}]$ is isomorphic to $[\al_2,\beta_2][\alpha_1,\beta_1]$.

\end{prop}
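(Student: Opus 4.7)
The plan is to argue that any non-trivial extension $\rV$ of $B := [\alpha_1,\alpha_2-4][\beta_1+4,\beta_2]$ by $A := [\alpha_1,\beta_2][\alpha_2,\beta_1]$ is cyclically generated by a canonical lift of the $\ell$-singular highest-weight vector $v_B \in B$, and that the $\Uqa$-module structure on this cyclic module is rigid, matching that of the specific non-trivial extension $M_+ := [\alpha_2,\beta_2][\alpha_1,\beta_1]$ supplied by Proposition \ref{prop:two_strings_prod}.

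First, since $A$ and $B$ are distinct irreducible modules, the dominant monomials $\lambda_A$ of $\chi_q(A)$ and $\lambda_B$ of $\chi_q(B)$ are distinct. As $\lambda_B$ is dominant, it cannot appear in $\chi_q(A)$, so the sequence $0\to A[\lambda_B]\to \rV[\lambda_B]\to B[\lambda_B]\to 0$ identifies $\rV[\lambda_B]$ with the one-dimensional space $B[\lambda_B]=\mbC v_B$, yielding a unique (up to scalar) lift $\tilde v\in \rV$. If $\tilde v$ generated a proper submodule $\langle \tilde v\rangle$, then $\langle\tilde v\rangle\cap A$ would be a proper submodule of the irreducible $A$, hence zero, and $\rV=A\oplus\langle\tilde v\rangle$ would split. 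Non-triviality thus forces $\tilde v$ to generate $\rV$. The same analysis applied to $M_+$ gives a cyclic generator $\tilde w\in M_+[\lambda_B]$.

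Next I match the actions. Non-triviality of $\rV$ implies $x^+(z)\tilde v\neq 0$, since otherwise $\tilde v$ would be $\ell$-singular, spanning a submodule isomorphic to $B$ and splitting the sequence. The image lies in $A$ and by Proposition \ref{prop:ellroot_action} decomposes along the weight spaces $A[A_a\lambda_B]$ for $a\in\{\alpha_2-1,\alpha_2+1,\dots,\beta_1+1\}$; each such space is one-dimensional by the thinness of $A$. The coefficients of $x^+_r\tilde v$ in these bases are not arbitrary: the Drinfeld relations -- in particular the commutation $\psi^\pm(z)x^+(w)$ and the quadratic $x^+x^+$ relations -- force linear dependencies among them, and a careful analysis shows the whole collection of nonzero coefficients is determined up to a single overall scalar by the choice of $\tilde v$. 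Rescaling $\tilde v$, we may therefore assume $x^+_r\tilde v = x^+_r\tilde w$ under the fixed identification $A\subset\rV$, $A\subset M_+$. The analogous matching for $x^-_r$ and $\psi^\pm_r$ follows similarly, using that $\rV$ is generated by $\tilde v$ and that these operators are determined by how they extend the action on $\tilde v$ through the $\Uqa$-relations.

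The map $\tilde w\mapsto\tilde v$ then extends to a $\Uqa$-homomorphism $\phi:M_+\to\rV$. It is nonzero, hence surjective since $\tilde v$ generates $\rV$, and therefore an isomorphism by equality of composition factors. \emph{The main obstacle} is the rigidity step: showing that the Drinfeld relations force the coefficients of $x^+_r\tilde v$ to form a one-parameter family. This is equivalent to the statement $\dim\mathrm{Ext}^1_{\Uqa}(B,A)=1$, and while the $\ell$-weight-space analysis reduces it to a finite-dimensional linear-algebra problem, the precise rank calculation requires exploiting the explicit thin structure of $A$ together with the factorization $\lambda_A/\lambda_B=A_{\alpha_2-1}A_{\alpha_2+1}\cdots A_{\beta_1+1}$ computed from the $q$-character of Example \ref{ex:q_char_mult_1}.
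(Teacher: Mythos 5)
Your proposal correctly identifies the strategy (a unique lift $\tilde v$ of the highest $\ell$-singular vector of $B$; showing it is cyclic if the extension does not split; matching the $\Uqa$-action to that of the model $[\alpha_2,\beta_2][\alpha_1,\beta_1]$), and your argument that $\tilde v$ generates $\rV$ in the non-split case is sound. However, the core of the proposition is not actually proved: the ``rigidity step''---that the images $x^+_r\tilde v$, $x^-_r\tilde v$ are pinned down up to a single overall scalar by the Drinfeld relations---is precisely the assertion that $\dim\Ext^1_{\Uqa}(B,A)\le 1$, i.e.\ the content of the theorem. You say ``a careful analysis shows'' the one-parameter rigidity and then explicitly flag it as ``the main obstacle,'' but no such analysis is supplied. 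Asserting that the relations ``force linear dependencies'' and pointing to the factorization $\lambda_A/\lambda_B = A_{\alpha_2-1}A_{\alpha_2+1}\cdots A_{\beta_1+1}$ does not close the gap: a priori the space of consistent choices of coefficients for $x^\pm_r\tilde v$ could have dimension larger than one, and ruling that out requires an actual computation.

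The paper's proof carries out exactly this computation, but in a different guise: it works with the Drinfeld--Jimbo generators $e_0,f_0$ rather than the Drinfeld currents, uses the explicit $\Uq$-decompositions of $\rU=A$ and a $\Uq$-complement $\rW\cong B$ into irreducibles $\rL_k$, and shows that the only nontrivial matrix elements of $\pi f_0|_{\rW}$ and $\pi e_0|_{\rW}$ are scalars $c_f,c_e$ on a single irreducible component. It then applies $[f_0,e_0]=\frac{K-K^{-1}}{q-q^{-1}}$ to the singular vector $w$ to derive a linear relation between $c_e$ and $c_f$, and observes that $f_0 f_1^2 u\neq 0$ while $e_0 u$ and $f_1 u$ are linearly independent, so $c_e=\lambda c_f$ for some fixed $\lambda\neq 0$. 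This is the concrete rank-one argument your sketch is missing. If you want to pursue the Drinfeld-current version, you would need to establish an analogous linear-algebra fact: that the subspace of $\operatorname{Hom}_{\Uq}(\rW,\rU)$ of $\Uq$-module maps compatible with the $x^\pm_r$-relations is one-dimensional, and that the zero element corresponds to the split extension. Without that, the proof is incomplete.
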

\begin{proof}
   Let $\rV$ be such an extension. Then by Proposition \ref{prop:two_strings_prod} there exists a short exact sequence of $\Uqa$-modules 
    \begin{equation}\label{eq:ext_un_pf1}
        [\alpha_{1},\beta_{2}][\alpha_{2}, \beta_{1}] \hookrightarrow \rV \twoheadrightarrow [\alpha_1, \alpha_2 - 4]  [\beta_{1} +4, \beta_2],
    \end{equation}    which splits over $\Uq$.

    Denote by $\rU$ the $\Uqa$-submodule $ [\alpha_{1}, \beta_{2}][\alpha_{2},  \beta_{1}] \subset \rV$.  Denote by $\rW$ the  unique $\Uq$-module which is a complement  to $\rU \subset \rV$ \. Then  $\rW$ isomorphic to $[\alpha_1, \alpha_2 - 4]  [\beta_{1} +4, \beta_2]$ as an $\Uq$-module.
    Let $\pi: \rV\to \rV$ be the projection onto $\rU$ along $\rW$. 
    
    Denote $\mu_1 = \frac{\bt_2-\al_1}{2}+1, \mu_2 = \frac{\bt_1 - \al_2}{2} + 1, \nu_1 = \frac{\al_2 -\al_1}{2} - 1, \nu_2 = \frac{\bt_2 - \bt_1}{2} - 1$ the highest weights of modules $[\alpha_{1},\beta_{2}],[\alpha_{2}, \beta_{1}], [\alpha_1, \alpha_2 - 4],  [\beta_{1} +4, \beta_2]$ respectively.
    
    We have decompositions of $Uq$-modules
 \begin{equation*}
    \begin{aligned}    \rW &\underset{\Uq}{\cong} \mathrm{L}_{\nu_1 + \nu_2} \oplus \dots \oplus \mathrm{L}_{|\nu_1-\nu_2|},\\
    \rU &\underset{\Uq}{\cong} \mathrm{L}_{\mu_1 + \mu_2} \oplus \dots \oplus \mathrm{L}_{\mu_1 -\mu_2}.
    \end{aligned}    \end{equation*}
    Note that $\nu_1 + \nu_2 = \mu_1 - \mu_2 - 2$.
    
    The $\Uqa$-module $\rV$ is uniquely determined by linear maps $\pi e_0|_{\rW}, \pi  f_0|_{\rW}$.

    Since $f_0$ commutes with $e_1$, action of $\pi  f_0|_{\rW}$ is uniquely defined by its restriction to $\mathrm{Ker}(f_1)\cap \rW$.
    
    Let $\bar{w} \in \mathrm{Ker}(f_1)\cap \rW$  be of a weight $-l$. Then $e_1^{l}f_0\bar{w} \in \mathrm{Ker}(e_1)$. Therefore, $e_1^l\pi f_0 \bar{w}\in \mathrm{Ker}(e_1)$. 
    
    The vector $e_1^{l}\pi f_0 \bar{w}$ has weight $l + 2$ and belongs to $ \mathrm{Ker}(e_1)\cap \rU$. 
     
    For the case $l < \nu_1 + \nu_2$ this implies $e_1^{l}\pi f_0 \bar{w} = 0$ which in turn implies $\pi f_0 \bar{w} = 0$. 
    
    For the case $l = \nu_1 + \nu_2$ this implies that $\pi f_0 \bar{w} \in \mathrm{L}_{\nu_1+\nu_2}$ and has weight $-l+2$.

    This defines the action of $\pi \circ f_0|_{\rW}$ uniquely up to multiplication by a non-zero constant. Analogously, the action of $\pi \circ e_0|_{\rW}$ is defined uniquely up to multiplication by a non-zero constant.

\begin{figure}[H]
\begin{tikzpicture}[scale=1.7,decoration={markings, mark= at position 0.7 with {\arrow{stealth}}}]
\node[ForestGreen] at (-0.8, 1.3) {$c_f$};

\draw[-,postaction={decorate}, ForestGreen, thick] (-0.5, 1) to  (-1.5, 1.5);
\draw[-,postaction={decorate}, ForestGreen, thick] (-0.5, 0.5) to  (-1.5, 1);
\draw[-,postaction={decorate}, ForestGreen, thick] (-0.5, 0) to  (-1.5, 0.5);
\draw[-,postaction={decorate}, ForestGreen, thick] (-0.5, -0.5) to  (-1.5, 0);
\draw[-,postaction={decorate}, ForestGreen, thick] (-0.5, -1) to  (-1.5, -0.5);

\node[violet] at (0.2, 0.8) {$1$};
\node[violet] at (-0.275, 0.7) {$c_0$};
\node[violet] at (-1.05, 0.525) {$c_e$};

\draw[-,postaction={decorate}, violet, thick] (-0.5, 1) to  (-1.5, 0.5);
\draw[-,postaction={decorate}, violet, thick] (-0.5, 0.5) to  (-1.5, 0);
\draw[-,postaction={decorate}, violet, thick] (-0.5, 0) to  (-1.5, -0.5);
\draw[-,postaction={decorate}, violet, thick] (-0.5, -0.5) to  (-1.5, -1.0);
\draw[-,postaction={decorate}, violet, thick] (-0.5, -1) to  (-1.5, -1.5);

\draw[-,postaction={decorate}, violet, thick] (-0.5, 1) to  (-0.5, 0.5);
\draw[-,postaction={decorate}, violet, thick] (-0.5, 1) to  (0.5, 0.5);

\draw[-,postaction={decorate},Gray] (-2.5,2.) to [out=-120,in=120] (-2.5,1.5);
\draw[-,postaction={decorate},Gray] (-2.5,1.5) to [out=-120,in=120] (-2.5,1.);
\draw[-,postaction={decorate},Gray] (-2.5,1.) to [out=-120,in=120] (-2.5,0.5);
\draw[-,postaction={decorate},Gray] (-2.5,0.5) to [out=-120,in=120] (-2.5,0.);
\draw[-,postaction={decorate},Gray] (-2.5,0.) to [out=-120,in=120] (-2.5,-0.5);
\draw[-,postaction={decorate},Gray] (-2.5,-0.5) to [out=-120,in=120] (-2.5,-1.);
\draw[-,postaction={decorate},Gray] (-2.5,-1.) to [out=-120,in=120] (-2.5,-1.5);
\draw[-,postaction={decorate},Gray] (-2.5,-1.5) to [out=-120,in=120] (-2.5,-2.);

\draw[-,postaction={decorate},Gray] (-1.5,1.5) to  [out=-120,in=120] (-1.5,1.);
\draw[-,postaction={decorate},Gray] (-1.5,1.) to  [out=-120,in=120] (-1.5,0.5);
\draw[-,postaction={decorate},Gray] (-1.5,0.5) to  [out=-120,in=120] (-1.5,0.);
\draw[-,postaction={decorate},Gray] (-1.5,0.) to  [out=-120,in=120] (-1.5,-0.5);
\draw[-,postaction={decorate},Gray] (-1.5,-0.5) to  [out=-120,in=120] (-1.5,-1.);
\draw[-,postaction={decorate},Gray] (-1.5,-1.) to  [out=-120,in=120] (-1.5,-1.5);

\draw[-,postaction={decorate},Gray] (-0.5,1.) to  [out=-120,in=120] (-0.5,0.5);
\draw[-,postaction={decorate},Gray] (-0.5,0.5) to  [out=-120,in=120] (-0.5,0.);
\draw[-,postaction={decorate},Gray] (-0.5,0.) to  [out=-120,in=120] (-0.5,-0.5);
\draw[-,postaction={decorate},Gray] (-0.5,-0.5) to [out=-120,in=120] (-0.5,-1.);

\draw[-,postaction={decorate},Gray] (0.5,0.5) to  [out=-120,in=120] (0.5,0.0);
\draw[-,postaction={decorate},Gray] (0.5,0.0) to  [out=-120,in=120] (0.5,-0.5);

\draw[-,postaction={decorate}] (-2.5,1.5) to  [out=60,in=-60] (-2.5,2.);
\draw[-,postaction={decorate}] (-2.5,1.) to  [out=60,in=-60] (-2.5,1.5);
\draw[-,postaction={decorate}] (-2.5,0.5) to  [out=60,in=-60] (-2.5,1.);
\draw[-,postaction={decorate}] (-2.5,0.) to  [out=60,in=-60] (-2.5,0.5);
\draw[-,postaction={decorate}] (-2.5,-0.5) to  [out=60,in=-60] (-2.5,0.);
\draw[-,postaction={decorate}] (-2.5,-1.) to  [out=60,in=-60] (-2.5,-0.5);
\draw[-,postaction={decorate}] (-2.5,-1.5) to  [out=60,in=-60] (-2.5,-1.);
\draw[-,postaction={decorate}] (-2.5,-2.) to  [out=60,in=-60] (-2.5,-1.5);

\draw[-,postaction={decorate}] (-1.5,1.) to  [out=60,in=-60] (-1.5,1.5);
\draw[-,postaction={decorate}] (-1.5,0.5) to  [out=60,in=-60] (-1.5,1.);
\draw[-,postaction={decorate}] (-1.5,0.) to  [out=60,in=-60] (-1.5,0.5);
\draw[-,postaction={decorate}] (-1.5,-0.5) to  [out=60,in=-60] (-1.5,0.);
\draw[-,postaction={decorate}] (-1.5,-1.) to  [out=60,in=-60] (-1.5,-0.5);
\draw[-,postaction={decorate}] (-1.5,-1.5) to  [out=60,in=-60] (-1.5,-1.);

\draw[-,postaction={decorate}] (-0.5,0.5) to  [out=60,in=-60] (-0.5,1.);
\draw[-,postaction={decorate}] (-0.5,0.) to  [out=60,in=-60] (-0.5,0.5);
\draw[-,postaction={decorate}] (-0.5,-0.5) to  [out=60,in=-60] (-0.5,0.);
\draw[-,postaction={decorate}] (-0.5,-1.) to  [out=60,in=-60] (-0.5,-0.5);

\draw[-,postaction={decorate}] (0.5,0.) to  [out=60,in=-60] (0.5,0.5);
\draw[-,postaction={decorate}] (0.5,-0.5) to  [out=60,in=-60] (0.5,0.);

\draw[-, dashed] (-2.8, 1.5) to (-0, 1.5);
\draw[-, dashed] (-2.8, 1) to (-0, 1);
\node at (-3.3, 1.5) {$\mu_1 -\mu_2$};
\node at (-3.3, 1) {$\nu_1 +\nu_2$};

\node[blue] at (-0.35, 1.15) {$w$};
\node[blue] at (0.65, 0.65) {$\xi$};
\node[red] at (-1.35, 1.65) {$u$};

\node [blue] at (1.0, 0) {$\dots$};
\filldraw [blue] (0.5,-0.5) circle (1.7pt);
\filldraw [blue] (0.5,0.) circle (1.7pt);
\filldraw [blue] (0.5,0.5) circle (1.7pt);

\filldraw [blue] (-0.5,-1.) circle (1.7pt);
\filldraw [blue] (-0.5,-0.5) circle (1.7pt);
\filldraw [blue] (-0.5,0.) circle (1.7pt);
\filldraw [blue] (-0.5,0.5) circle (1.7pt);
\filldraw [blue] (-0.5,1.) circle (1.7pt);

\filldraw [red] (-1.5,-1.5) circle (1.7pt);
\filldraw [red] (-1.5,-1.) circle (1.7pt);
\filldraw [red] (-1.5,-0.5) circle (1.7pt);
\filldraw [red] (-1.5,0.) circle (1.7pt);
\filldraw [red] (-1.5,0.5) circle (1.7pt);
\filldraw [red] (-1.5,1.) circle (1.7pt);
\filldraw [red] (-1.5,1.5) circle (1.7pt);

\filldraw [red] (-2.5,-2.) circle (1.7pt);
\filldraw [red] (-2.5,-1.5) circle (1.7pt);
\filldraw [red] (-2.5,-1.) circle (1.7pt);
\filldraw [red] (-2.5,-0.5) circle (1.7pt);
\filldraw [red] (-2.5,0.) circle (1.7pt);
\filldraw [red] (-2.5,0.5) circle (1.7pt);
\filldraw [red] (-2.5,1.) circle (1.7pt);
\filldraw [red] (-2.5,1.5) circle (1.7pt);
\filldraw [red] (-2.5,2.) circle (1.7pt);
\node [red] at (-3.5, 0) {$\dots$};
\end{tikzpicture}
\end{figure}

 We illustrate the notation in the proof with a picture. The bullets are basis vectors of the module $\rV$, the red ones correspond to $\rU$ and blue ones to $\rW$. The bullets are placed at the height equal to the weight of the corresponding vector. The operators $e_1$ and $f_1$ act vertically as indicated by black and gray arrows. By green arrows we indicated the matrix elements of $f_0$ from $\rW$ to $\rU$. By violet arrows we indicated the matrix elements of $e_0$ from $\rW$ to $\rU$. We added two more violet arrows to  describe the action of $e_0$ on $w$ completely. We also show the vectors $u,w,\xi$ and matrix elements $c_e,c_f,c_0$ participating in the proof.

    Fix non-zero vectors $w\in \Ker(e_1)\cap \rL_{\nu_1 +\nu_2}$ and $u \in \Ker(e_1)\cap \rL_{\mu_1-\mu_2}$.  We have
    $$
    f_0 w = c_{f}u,\;\; e_0 w = c_ef_1^2u +c_0 f_1w + \xi, 
    $$
    where $\xi \in \Ker(e_1)\cap \rL_{\nu_1 + \nu_2 - 2}$, $c_0, c_f, c_e\in\mbC$ constants. The constant $c_0$ and vector $\xi$ are uniquely determined by $\Uqa$-module structure of $[\al_1,\al_2-4][\bt_1+4,\bt_2]$.
    The constants $c_f, c_e$ determine the extension $\rV$.

    Then we obtain 
    $$
    e_0f_0w = c_fe_0u,\;\;f_0e_0w = c_e f_0f_1^2u + c_0 f_0f_1w +  f_0\xi.
    $$
    Note that $f_0f_1^2u \in \rU$. Additionally, since weights of all vectors in $\Ker (e_1)\cap \rU$ are greater or equal than $2$, we have $f_0f_1^2u \neq 0$.

    Applying $\pi$ to the equality $(f_0e_0 - e_0f_0)w = \frac{(K-K^{-1})}{q-q^{-1}}w$ we obtain
    \begin{equation}\label{eq:ext_tmp}
    c_0\pi f_0f_1w + c_e f_0f_1^2u - c_fe_0u = 0.
    \end{equation}
    Note that $e_1^2\pi f_0f_1 w= 0$, which implies $c_0\pi f_0f_1w = \alpha f_1u$ for some $\alpha\in\mbC$. Applying $e_1$ to both parts of this equation, we find $\alpha = \frac{[\nu_1+\nu_2]_q}{[\mu_2-\mu_1]_q}c_0c_f$. In the end we get
    \begin{equation}\label{eq:ext_tmp2}
    c_e f_0f_1^2 u = c_f\Big(e_0 u + \frac{[\nu_1+\nu_2]_q}{[\mu_2-\mu_1]_q}c_0f_1 u\Big),
    \end{equation}
    Note that $f_0f_1^2u \neq 0$, therefore we have a linear relation between $c_e$ and $c_f$. Also this implies that if $c_e \neq 0$, then $c_f \neq 0$. One can symmetrically consider the action of $[e_0, f_0]$ on a non-zero vector from $\Ker(f_1)\cap \rL_{\nu_1+\nu_2-2}$ and conclude that if $c_e \neq 0$, then $c_f\neq 0$. 

    Alternatively, by an explicit computation in the submodule $\rU$ one can show that vectors $e_0u$ and $f_1u$ are linearly independent, therefore from \eqref{eq:ext_tmp2} we obtain $c_e = \lambda c_{f}$ for some non-zero $\lambda$.
    
    If $c_e = 0$, $\rV$ the sequence splits.  If $c_e\neq 0$, rescaling $w$, we are reduced to $c_e=1$, which uniquely fixes $\rV$.

\end{proof}

We give an example of a non-trivial extension which is not a tensor product.

\begin{example}\label{ex:ev_self_ext}
    Let $\rW = [\al, \bt]$ where $\bt \geq \al $. There exists an extension  $\rV$ of $\rW$ by $\rW$ given as follows.

    Let $\rV=\rW\oplus\rW$ as an  $\Uq$-module. Choose a basis $\{v_i\}_{i=0}^{m}$ of $\rW$. We use the basis of $\rV$ given by $\{(v_i,0),(0,v_i)\}_{i=0}^{m}$. As an $\Uq$-module, $\rW=\rL_m$ with $m = (\bt-\al)/2+1$. Denote $a = \frac{\al + \bt}{2}$.

    Let $E = \rho_{\rL_m}(e)$ and $F = \rho_{\rL_m}(f)$
    be matrices corresponding to images of generators $e$ and $f$ of $\Uq$ in $\mathrm{End}(\rL_m)$. Define a structure of $\Uqa$ module on  $\Uq$-module $\rV$ by  
\begin{equation}
e_0 \longmapsto q^a\begin{pmatrix}
F & 
F\\
0 & F
\end{pmatrix},\;\;\;
f_0 \longmapsto q^{-a}\begin{pmatrix}
E & -E\\
0 & E
\end{pmatrix}.
\end{equation}
We check that these formulas satisfy the relations of $\Uqa$. The only non-instant check is the Serre relations which are reduced to
\begin{equation*}
    E^3F - [3]_qE^2FE + [3]_qEFE^2 - FE^3 = 0,\;\;F^3E - [3]_qF^2EF + [3]_qFEF^2 - EF^3=0.
\end{equation*}
This identity holds in $\Uq$ since the evaluation homomorphism is well-defined. Alternatively, one deduces this identity in $\rL_m$ from the
$q$-number identity
\begin{equation}
   [i{+}3]_q [j]_q - [3]_q[i{+}2]_q[j{+}1]_q + [3]_q[i{+}1]_q[j{+}2]_q -[i]_q[j{+}3]_q = 0,
\end{equation}
where  $i,j\in \mbZ$.

The module $\rV$ is not isomorphic to $\rW\oplus\rW$ as $\Uqa$-module since the action of $e_0$ in $\rV$ is not proportional to that of $f_1$.

The module $\rV$ is clearly not a tensor product of two non-trivial $\Uq$-modules.
\end{example}
Such an extension is also unique up to isomorphism, cf. Proposition \ref{prop:ext_two_strings_prod}.
\begin{prop}\label{prop:ev_self_ext_un} Any non-trivial extension $\rV$ of $[\al,\bt]$ by  $[\al,\bt]$ is isomorphic to the extension described in Example \ref{ex:ev_self_ext}.
\end{prop}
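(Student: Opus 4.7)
Since the category of finite-dimensional $\Uq$-modules is semi-simple, the sequence $\rU\hookrightarrow \rV\twoheadrightarrow \rW$ with $\rU=\rW=[\al,\bt]$ splits over $\Uq$; fix such a $\Uq$-splitting $\rV=\rU\oplus \rW$. With respect to this decomposition, $e_0$ and $f_0$ act as upper block-triangular matrices whose diagonal blocks are $q^a F$ and $q^{-a}E$ respectively, where $a=(\al+\bt)/2$ and $E,F$ are the matrices of $e,f\in \Uq$ on $\rL_m$, $m=(\bt-\al)/2+1$. Denote the off-diagonal blocks of $e_0,f_0$ by $A,B:\rW\to\rU$. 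The task is to pin down $A$ and $B$ up to a change of basis that preserves the extension structure.

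I extract the constraints on $(A,B)$ from the $\Uqa$-relations. First, $Ke_0K^{-1}=q^{-2}e_0$ and $Kf_0K^{-1}=q^{2}f_0$ force $A$ to shift $K$-weight by $-2$ and $B$ by $+2$. Next, the block-form computations of $[e_0,f_1]$ and $[e_1,f_0]$ reduce the vanishing relations $[e_0,f_1]=[e_1,f_0]=0$ to $[A,F]=0$ and $[B,E]=0$. Writing $Av_i=\alpha_iv_{i+1}$ and using $[A,F]=0$ on the basis $\{v_i\}_{i=0}^m$ yields the recursion $[i+1]_q\alpha_{i+1}=[i+2]_q\alpha_i$, whose solutions are precisely the multiples of $F$; the analogous recursion forces $B$ to be a multiple of $E$. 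Thus $A=\lambda F$ and $B=\mu E$ for some $\lambda,\mu\in\mbC$.

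The last non-trivial relation is the $(1,2)$ block of $[e_0,f_0]=-(K-K^{-1})/(q-q^{-1})$. A direct computation gives
\begin{equation*}
q^a[F,B]+q^{-a}[A,E]=(q^a\mu+q^{-a}\lambda)[F,E]=0,
\end{equation*}
so $\mu=-\lambda q^{-2a}$. If $\lambda=0$ the extension is trivial. If $\lambda\neq 0$, I rescale the chosen complement $\rW$ by replacing its basis vectors $(0,v_i)$ with $(0,\gamma v_i)$ where $\gamma=q^a/\lambda$; this multiplies both off-diagonal blocks by $\gamma$ without affecting the diagonal blocks, sending $(A,B)$ to $(q^aF,-q^{-a}E)$, which is exactly the structure of Example \ref{ex:ev_self_ext}. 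The Serre relations require no separate verification: they hold in $\rV$ by assumption and are in any case checked in Example \ref{ex:ev_self_ext}.

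The only substantive step is the weight-plus-commutation analysis showing $A\in\mbC\cdot F$ and $B\in\mbC\cdot E$; everything else is linear algebra. I do not foresee a genuine obstacle, only the bookkeeping of weight shifts alongside the commutation conditions.
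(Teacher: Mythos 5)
Your proof is correct and takes essentially the same approach as the paper's: both fix a $\Uq$-splitting, deduce from the commutation of $e_0$ with $f_1$ (and of $f_0$ with $e_1$) that the off-diagonal blocks are scalar multiples of $F$ and $E$, extract the linear relation between those scalars from the $[e_0,f_0]$ relation, and normalize by rescaling the complement. Your block-matrix formulation is a cosmetic repackaging of the paper's explicit basis-vector computation (where the role of $[A,F]=0$ is played by "apply $f_1^{(i)}$ to both sides"), and the recursion $[i{+}1]_q\alpha_{i+1}=[i{+}2]_q\alpha_i$ you derive is exactly what underlies that step.
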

\begin{proof}
    Let $\rV$ be a non-trivial extension of $[\al, \bt]$ by itself. Denote $m = (\bt-\al)/2+1, a = (\bt+\al)/2$.  Choose a basis $v_0, w_0$  of the space of $\Uq$-singular vectors in $\rV$ such that $v_0$ belongs to the $\Uqa$-submodule $[\al,\bt]$. Acting by $f_1$ on $v_0$ and $w_0$, we obtain a basis $v_0,\dots, v_m, w_0,\dots, w_m$ of $\rV$ given by  $v_i = \frac{f_1^iv_0}{[i]_q!}, w_i = \frac{f_1^iw_0}{[i]_q!}$ for all $i$. By a weight consideration $e_0w_0 = q^aw_1 + \lambda v_1$ for some $\lambda \in \mbC$. Applying $\frac{f_1^i}{[i]_q!}$ to both sides, we obtain $e_0w_i = [i+1]_q(q^aw_{i+1} + \lambda v_{i+1})$ for each $i$. Similarly, there exists $\mu \in \mbC$ such that $f_0w_i = [m-i+1]_q(q^{-a}w_{i-1} + \mu v_{i-1})$ for each $i$. Then we have 
    $$[m]_qw_0 =\frac{(K-K^{-1})}{q-q^{-1}}w_0 = (f_0e_0-e_0f_0)w_0 = f_0e_0w_0 = [m]_{q}(w_0 + (q^{-a}\lambda+ q^{a}\mu)v_0).$$
    This gives $\mu = -q^{-2a}\lambda$. If $\lambda = 0$, the extension splits. Otherwise, by replacing $w_0$ by $q^{a} w_0/\la$ we are reduced to $\mu=-q^{-a}, \lambda = q^{a}$ as in Example \ref{ex:ev_self_ext}.
\end{proof}

We also give an example of a parametric family of extensions.
\begin{prop}\label{prop:ext_family}
    Let $\rW = \rU = ab$ be an irreducible product of two 
    two-dimensional evaluation modules, i.e. $a, b\in \mbC$ and $a-b \notin\{-2, 2\}$. Then the set of non-trivial extensions of $\rW$ by $\rU$ is identified with $\mbC\mathbb{P}^1$.
\end{prop}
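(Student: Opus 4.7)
The plan is to parameterize the extensions via the off-diagonal entries of $e_0,f_0$ with respect to a $\Uq$-splitting, in the spirit of Proposition~\ref{prop:ev_self_ext_un}, and then compute the dimension of the quotient by coboundaries.

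First I note that the hypothesis $a-b\notin\{-2,2\}$ makes the module $ab$ irreducible (Proposition~\ref{prop:two_strings_prod}), and as a $\Uq$-module $ab\cong\rL_1\otimes\rL_1\cong\rL_0\oplus\rL_2$. By complete reducibility of finite-dimensional $\Uq$-modules I can pick a $\Uq$-equivariant splitting $\rV=\rU\oplus\tilde{\rW}$ with $\tilde{\rW}\cong\rW$ as $\Uq$-modules. In this splitting $e_0$ and $f_0$ become block upper-triangular with off-diagonal blocks $\alpha,\beta\colon\tilde{\rW}\to\rU$ respectively. The diagonal blocks are forced by the $\Uqa$-module structures on $\rU$ and $\rW$, so the pair $(\alpha,\beta)$ encodes the extension.

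Next I will determine which pairs $(\alpha,\beta)$ give a valid $\Uqa$-module structure. The $K$-weight conditions give $\alpha$ weight $-2$ and $\beta$ weight $+2$, while the relations $[e_0,f_1]=0$ and $[f_0,e_1]=0$ translate into $\alpha f_1=f_1\alpha$ and $\beta e_1=e_1\beta$. Combined with the weight restrictions, these equivariances mean that $\alpha$ is determined by its values on the two $\Uq$-highest-weight vectors of $\tilde{\rW}$ (of weights $0$ and $2$), and a direct weight count in $\rU$ shows $\alpha$ lives in a three-dimensional space; the same holds for $\beta$. Thus pairs $(\alpha,\beta)$ form a six-dimensional space. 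The remaining defining relations, namely $[e_0,f_0]=-(K-K^{-1})/(q-q^{-1})$ together with the two Serre relations, impose further linear conditions. Evaluating these on the $\Uq$-highest-weight vectors of $\tilde{\rW}$ and using the explicit $\Uqa$-action on $\rU$ and $\rW$, I expect exactly two independent constraints to emerge, cutting the space of admissible pairs down to four dimensions.

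Then I will factor out coboundaries. A different $\Uq$-splitting $\rV=\rU\oplus\tilde{\rW}'$ differs from the first by a $\Uq$-module map $\phi\colon\rW\to\rU$, and this shifts the pair $(\alpha,\beta)$ by $(e_0^{\rU}\phi-\phi e_0^{\rW},\,f_0^{\rU}\phi-\phi f_0^{\rW})$. Since $\Hom_{\Uq}(\rL_0,\rL_2)=\Hom_{\Uq}(\rL_2,\rL_0)=0$, the space $\Hom_{\Uq}(\rW,\rU)$ is two-dimensional, and a check that non-zero $\phi$ produces a non-zero coboundary (which uses the fact that $e_0,f_0$ on $\rW$ genuinely intertwine the $\rL_0$ and $\rL_2$ summands, thanks to the irreducibility of $ab$ over $\Uqa$) shows that the coboundary map is injective. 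Hence $\Ext^1_{\Uqa}(\rW,\rU)\cong\mbC^2$. Since $\rU$ is the socle of any non-split $\rV$ and $\rW$ its cosocle, any $\Uqa$-isomorphism between two non-split extensions restricts to scalars on $\rU$ and $\rW$, and such rescalings act on the class in $\Ext^1$ by multiplication by a non-zero scalar. Consequently the set of isomorphism classes of non-trivial extensions of $\rW$ by $\rU$ is $\mathbb{P}(\Ext^1_{\Uqa}(\rW,\rU))\cong\mbC\mathbb{P}^1$.

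The main obstacle is the dimension count in the second paragraph: showing that the Serre and $[e_0,f_0]$ relations yield exactly two independent conditions on the six-parameter space of pairs. This requires writing down the action of $e_0,f_0,e_1,f_1$ in an explicit basis of $\rV$ adapted to the $\Uq$-decomposition $\rL_0\oplus\rL_2$, and the hypothesis $a-b\notin\{-2,2\}$ should enter precisely when verifying that these constraints do not accidentally degenerate.
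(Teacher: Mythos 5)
Your strategy — pick a $\Uq$-splitting, read off the off-diagonal blocks of $e_0,f_0$, and compute cocycles modulo coboundaries — is the same basic device as the paper's proof; the paper just organizes it as a direct computation in a cleverly chosen basis that exhibits a two-parameter family $(\beta,\gamma)$ outright. So the route is not genuinely different. There is, however, a concrete error in your dimension accounting. You assert that the coboundary map $\Hom_{\Uq}(\rW,\rU)\to Z^1$ is injective, and you invoke the irreducibility of $ab$ over $\Uqa$ in support. This is backwards: a $\Uq$-equivariant $\phi$ has vanishing coboundary exactly when it also intertwines $e_0$ and $f_0$, i.e.\ exactly when $\phi\in\Hom_{\Uqa}(\rW,\rU)$. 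Since $\rW=\rU=ab$ is irreducible, Schur's lemma gives $\Hom_{\Uqa}(\rW,\rU)\cong\mbC\cdot\Id$, which is therefore the (one-dimensional) kernel of the coboundary map. Irreducibility is precisely what pins the kernel at dimension one — it does not make the map injective. Consequently the coboundary image is one-dimensional, not two.

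This forces a compensating change in your cocycle count: for $\Ext^1_{\Uqa}(\rW,\rU)$ to be two-dimensional (which is what the statement requires, since then $\mathbb{P}(\mbC^2)=\mbC\mathbb{P}^1$), the space of admissible pairs $(\alpha,\beta)$ inside $\mbC^6$ must be three-dimensional, so the $[e_0,f_0]$ relation together with the Serre relations must impose \emph{three} independent linear constraints, not the two you expect. Your two miscounts cancel to produce the right final answer, which makes the slip easy to miss. Beyond this, you explicitly flag the constraint count as "the main obstacle" and do not carry it out; since that computation (with the corrected target of three constraints, and with the hypothesis $a-b\notin\{-2,2\}$ entering to prevent degeneration) is exactly where the substance of the proposition lies, the proposal as it stands is incomplete as well as internally inconsistent on the coboundary step.
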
    
\begin{proof} 
Let $\rV$ be a non-trivial extension of $\rW$ by $\rU$. Fix a decomposition of $\Uq$-modules, $\rV = \rU \oplus \tilde{\rW}$, where $\rU \cong \tilde{\rW} \cong \rL_2 \oplus \mbC$. 

Let $v_0, w_0$ be highest weight vectors of $\rU$ and $\tilde{\rW}$ correspondingly. Let $v \in \rU$ be the unique non-zero vector such that $v = e_0 v_0 - \alpha f_1v_0$ and $e_1v = 0$. Let $w\in\rV$ be the unique non-zero vector such that $w = e_0 w_0 - \alpha f_1w_0 - \beta f_1v_0$, where $\bt\in \mbC$ is such that $e_1w = 0$. The vectors $v,w$ exist because $a-b \neq \pm 2$. Note that $\alpha$ in the definitions of $v$ and $w$ is the same. 
Note also that $v$ and $w$ are linearly independent. Then $B = \{v_0, f_1v_0, f_1^{(2)}v_0, v, w_0, f_1w_0, f_1^{(2)}w_0, w\}$ is a basis of $\rV$.  The basis $B$ is unique up to a choice of the highest weight vectors  $v_0 \mapsto \lambda v_0, w_0\mapsto \mu w_0+\nu v_0$, here $\lambda,\mu \in \mbC^{\times}, \nu\in\mbC$.

    The structure of extension is uniquely defined by the action of $e_0$ and $f_0$ in $B$. By the construction, $e_0w_0 = w + \alpha f_1w_0 + \beta f_1v_0$.  Furthermore, $e_0w = \xi f_1^{(2)}w_0 + \gamma f_1^{(2)}v_0$. Since $[e_0,f_1]=0$ that determines the action of $e_0$ on other basis vectors. Then the action of $f_0$ is uniquely recovered from  the commutator $[f_0, e_0] = \frac{K-K^{-1}}{q-q^{-1}}$.
    
    Constants $\al, \xi$ are uniquely fixed by the structure of quotient module $\rW$. After computing them one obtains the action of $f_0$ and $e_0$ in the basis $B$.
    \begin{subequations}
    \begin{align*}
        &e_0(f_1^{(i)}v_0) = \frac{(q^a+q^b)[i{+}1]}{[2]}f_{1}^{(i{+}1)}v_0 + \delta_{0,i}v, &e_0v = \nu_{a,b}f^{(2)}v_0,\hspace{54pt} \\
        &e_0(f_1^{(i)}w_0) = \frac{(q^a+q^b)[i+1]}{[2]}f_{1}^{(i+1)}w_0 + \delta_{0,i}w + \beta[i{+}1] f_{1}^{(i+1)}v_0,&e_0w = \nu_{a,b}f_{1}^{(2)}w_0 + \gamma f_1^{(2)}v_0,\\
    &f_0(f_1^{(i+1)}v_0) = \frac{(q^{-a} + q^{-b})}{[i+1]}f_1^{(i)}v_0 + q^{-a-b}\delta_{i,1}v,\; &f_0v= q^{-a-b}\nu_{a,b}v_0,\hspace{46pt}
    \end{align*}
    \begin{align*}
        &f_0(f_1^{(i+1)}w_0) = \frac{1}{[i+1][2]}\Big(\beta (q^{-a-b}[2]^2 -2(q^{-2a} + q^{-2b})) - \gamma q^{-2a-2b}(q^a + q^b)\Big)f_1^{(i)}v_0 + \\ 
        &\hspace{130pt}+ \frac{(q^{-a} + q^{-b})}{[i+1]}f_{1}^{(i)}w_0 +\delta_{i,1}\Big(q^{-a-b}w - \frac{q^{-2a-2b}(2(q^a+q^b)\beta + \gamma )}{[2]}v \Big),\\
       &f_0(w) = q^{{-}a{-}b}\nu_{a,b}w_0 +  \frac{(q^{-a} + q^{-b})}{[2]}\Big(-2q^{-a-b}\beta \nu_{a,b} + \gamma   \frac{(q^{{-}a}+q^{{-}b})}{[2]}\Big)v_0.
    \end{align*}
    \end{subequations}
    Here $\nu_{a,b} = \frac{1}{[2]}(q^{a+1}-q^{b-1})(q^{b+1}-q^{a-1})$.
    
   A direct computation shows that the relation $[e_1, f_0] = 0$ is satisfied for any $\beta, \gamma$. In this representation Serre relations follow from the rest of relations of $\Uqa$. Therefore, for all values $\beta, \gamma$ we obtained an extension of $\rW$ by $\rU$. Clearly, the value $\beta = \gamma = 0$ gives the direct sum $\rW \oplus\rU$.
    
    The constants $\beta, \gamma$ are simultaneously rescaled as we change the choice of basis to $\{v_0, f_1v_0, f_1^{(2)}v_0, v,$ $ \lambda w_0, \la f_1w_0, \la f_1^{(2)}w_0, \la w\}$ for any $\la\in\mbC^{\times}$. Therefore, all extensions with the same ratio $(\beta : \gamma)$ are isomorphic.
    
    For different values of ratio $(\beta:\gamma)$ are non-isomorphic. Indeed, the only non-canonical choice in the construction is a choice of the vector $w_0$. Any other choice of split gives a basis $\{v_0, f_1v_0, f_1^{(2)}v_0, v, w_0 + \mu v_0, f_1(w_0+\mu v_0), f_1^{(2)}(w_0 + \mu v_0), w + \mu v\}$, which does not affect $\beta$ and $\gamma$. Henceforth, the isomorphism class of an extension is uniquely determined by the ratio $(\beta: \gamma)$.

\end{proof}

\begin{remark} There are two natural extensions of $ab$ by itself. For any $c\in\mbC$ let $c\tilde{\oplus}c$ be the unique non-trivial extension of two-dimensional module $c$ by itself. Then $(a\tilde{\oplus}a) \otimes b$ and $a \otimes (b\tilde{\oplus}b)$ are two extensions of $ab$ by itself. It is a direct check that they are non-trivial and non-isomorphic. Therefore, all extensions of $ab$ by itself described by Proposition \ref{prop:ext_family} are linearly combinations of extensions $(a\tilde{\oplus}a) \otimes b$ and $a \otimes (b\tilde{\oplus}b)$.
\end{remark}

\subsection{Necessary conditions for existence of non-trivial extensions.}
There is a sufficient condition on the absence of extensions between two modules given in terms of $\ell$-weights. Recall the affine root $A_{a} = 1_{a-1}1_{a+1}$.
\begin{lemma}\label{lemma:noext_root}
    Let $\rW$ and $\rU$ be two $\Uqa$-modules. If for any two monomomials $m_1, m_2$ in $q$-characters of $\rW$ and $\rU$ respectively for any $a$ 
    \begin{equation}\label{eq:weight_root_sep}
        \frac{m_1}{m_2} \notin \{A_a, A_{a}^{-1},1\},
    \end{equation} 
    then there is no non-trivial extension of $\rW$ by $\rU$.
\end{lemma}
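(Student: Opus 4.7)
The plan is to use Proposition~\ref{prop:ellroot_action} together with the hypothesis to produce a direct decomposition of $\rV$ into $\ell$-weight components that are respectively isomorphic to $\rU$ and $\rW$.

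Let $\rV$ be any extension fitting in $\rU\hookrightarrow \rV\twoheadrightarrow \rW$. Since $q$-characters are additive on short exact sequences, $\chi_q(\rV)=\chi_q(\rU)+\chi_q(\rW)$, so every $\ell$-weight $\phi$ with $\rV[\phi]\neq 0$ is either an $\ell$-weight of $\rU$ or of $\rW$. The hypothesis $m_1/m_2\neq 1$ (for $m_1\in\chi_q(\rW)$, $m_2\in\chi_q(\rU)$) says that these two sets of $\ell$-weights are disjoint. Denote by $S_U$ and $S_W$ the sets of $\ell$-weights appearing in $\chi_q(\rU)$ and $\chi_q(\rW)$ respectively, and set
\[
\rV_U=\bigoplus_{\phi\in S_U}\rV[\phi], \qquad \rV_W=\bigoplus_{\phi\in S_W}\rV[\phi].
\]
As vector spaces we immediately have $\rV=\rV_U\oplus\rV_W$.

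The first substantive step is to show that both $\rV_U$ and $\rV_W$ are $\Uqa$-submodules. By Proposition~\ref{prop:ellroot_action}, for $v\in\rV[\phi]$ the vector $x^{\pm}_r v$ lies in $\bigoplus_a\rV[A_a^{\pm 1}\phi]$. If $\phi\in S_W$, then for any $a$ and any $m_2\in S_U$, the ratio $A_a^{\pm 1}\phi/m_2$ equals $A_a^{\pm 1}$ times $\phi/m_2$, and the hypothesis $\phi/m_2\notin\{A_a,A_a^{-1},1\}$ prevents $A_a^{\pm 1}\phi$ from lying in $S_U$. Hence $A_a^{\pm 1}\phi\in S_W$ (or $\rV[A_a^{\pm 1}\phi]=0$), so $x^{\pm}_r v\in\rV_W$. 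The same argument with the roles of $\rU$ and $\rW$ interchanged shows that $\rV_U$ is stable under $x^{\pm}_r$. Stability under $\psi^{\pm}(z)$ is clear since generalized $\ell$-weight spaces are preserved. As $\Uqa$ is generated by the Drinfeld generators, both $\rV_U$ and $\rV_W$ are $\Uqa$-submodules.

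Finally, I would identify $\rV_U$ with the submodule $\rU\subset\rV$. Since $\rU$ is a submodule, all its generalized $\ell$-weights lie in $S_U$, whence $\rU\subset\rV_U$. Comparing $q$-characters we get $\dim\rU=\sum_{\phi\in S_U}\dim\rU[\phi]=\sum_{\phi\in S_U}\dim\rV[\phi]=\dim\rV_U$, where the middle equality uses that the $\ell$-weights of $\rV_W$ are disjoint from $S_U$. Therefore $\rU=\rV_U$, and the quotient map restricts to an isomorphism $\rV_W\xrightarrow{\sim}\rW$ of $\Uqa$-modules. This exhibits the splitting $\rV\cong\rU\oplus\rW$.

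The only potentially delicate point is the submodule check in the second paragraph: one must be careful that the generalized $\ell$-weight decomposition truly respects the action of $x^{\pm}_r$ in the stated direction, which is exactly the content of Proposition~\ref{prop:ellroot_action} and what dictates the form $\{A_a,A_a^{-1},1\}$ appearing in the hypothesis. Everything else is formal once this separation is established.
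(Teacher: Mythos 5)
Your proof is correct and takes essentially the same approach as the paper: decompose $\rV$ into sums of generalized $\ell$-weight spaces according to whether the $\ell$-weight appears in $\chi_q(\rU)$ or $\chi_q(\rW)$, then invoke Proposition~\ref{prop:ellroot_action} together with the separation hypothesis to conclude that these sums are $\Uqa$-submodules, giving the splitting. Your version is slightly more explicit (checking both $\rV_U$ and $\rV_W$ are submodules and adding the dimension count to identify $\rV_U$ with $\rU$), but the key mechanism is identical to the paper's.
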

\begin{proof}
    Let $\rV$ be an extension of $\rW$ by $\rU$. Let $M_{\rU}, M_{\rW}$ be sets of $\ell$-weights appearing in $q$-characters of $\rU$ and $\rW$ respectively. By \eqref{eq:weight_root_sep} we have $M_{\rU}\cap M_{\rW} = \varnothing$. Let $\tilde{\rW} = \mathop{\oplus}\limits_{\mu \in M_{\rW}}\rV[\mu]$, where $\rV[\mu]$ is a generalized $\ell$-weight space of the weight $\mu$ in $\rV$. 
    By Proposition \ref{prop:ellroot_action}, for any mode $x_{r}^{\pm}$, for any $\ell$-weight $\mu$ we have $x_{r}^{\pm}(\rV[\mu]) \subset \mathop{\oplus}\limits_{\epsilon\in \{1,-1\},\, a}\rV[A_a^{\epsilon}\mu]$. Since for any $\mu \in M_{\rW}$ and for any $a\in\mbC$ we have $A_{a}^{\pm1}\mu \in M_{\rW}$, we have  $x_{r}^{\pm}(\tilde{\rW})\subset \tilde{\rW}$. Henceforth, $\tilde{\rW}$ is a submodule of $\rV$. Since $\rU = \mathop{\oplus}\limits_{\substack{\mu \in M_{\rU}}}\rV[\mu]$ the submodule $\tilde{\rW}$ completes $\rU$ to $\rV$.
\end{proof}

Now we are ready to classify the extensions of irreducible evaluation modules.
\begin{theorem}\label{thm:eval_ext} Any non-trivial extension $\rV$ of $\rV_1 = [\alpha_1, \beta_1]$ by $\rV_2 = [\alpha_2, \beta_2]$ is isomorphic to one of the following
    \begin{enumerate}
        \item $\rV =[\al_1, \bt_1-2]\bt_1$, then $\alpha_1 = \alpha_2, \beta_2 = \beta_1 - 4$,
        \item $\rV = (\bt_1+4)[\al_1, \bt_1+2]$, then $\alpha_1 = \alpha_2, \beta_2 = \beta_1 + 4$,
        \item $\rV = \al_1[\al_1+2, \bt_1]$, then $\alpha_2 = \alpha_1+4, \beta_2 = \beta_1$,
        \item $\rV = [\al_1-2, \bt_1](\al_1-4)$, then $\alpha_2 = \alpha_1-4, \beta_2 = \beta_1$,
        \item $\rV$ as in Example \ref{ex:ev_self_ext}, $\alpha_1 = \alpha_2, \beta_1 = \beta_2$, and $\rV_1 = \rV_2 \neq \mbC$.
    \end{enumerate}
\end{theorem}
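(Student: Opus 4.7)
My plan is to combine the vanishing criterion of Lemma \ref{lemma:noext_root} with the structural results of Propositions \ref{prop:two_strings_prod}, \ref{prop:ext_two_strings_prod}, and \ref{prop:ev_self_ext_un} to handle the five cases. The argument splits into three pieces: (a) constructing a non-trivial extension in each of the five cases, (b) establishing its uniqueness, and (c) ruling out non-trivial extensions for every other pair of strings.

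For (a) and (b), Case 5 is treated directly by Example \ref{ex:ev_self_ext} and Proposition \ref{prop:ev_self_ext_un}. For each of Cases 1--4, I plan to realize the listed module as a specialization of the middle term of one of the short exact sequences in Proposition \ref{prop:two_strings_prod}, obtained by choosing the four parameters $A_1, B_1, A_2, B_2$ so that exactly one of the subsidiary strings $[A_1, A_2-4]$, $[B_1+4, B_2]$, $[A_1, B_2]$, $[A_2, B_1]$ becomes trivial. Concretely, Case~1 uses $A_1 = \al_1$, $B_1 = \bt_1-2$, $A_2 = B_2 = \bt_1$ in \eqref{eq:two_strings_prod1}; Case~2 uses $A_1 = \al_1, B_1 = \bt_1+2, A_2 = B_2 = \bt_1+4$ in \eqref{eq:two_strings_prod2}; Cases~3 and~4 arise by interchanging the roles of the two factors. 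Uniqueness in each of these cases follows from Proposition \ref{prop:ext_two_strings_prod}, possibly after applying the shift $\tau_a$ and the duality $[\al,\bt]^* \cong [\al+2,\bt+2]$ of Lemmas \ref{lemma:shift} and \ref{lemma:dual} (combined with Lemma \ref{lemma:ext_dual_aut}) to match the parameter ranges where the proposition was stated.

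For (c), I will apply Lemma \ref{lemma:noext_root} to every pair of strings not appearing in Cases 1--5. The idea is to encode each $\ell$-weight $m_j^{[\al,\bt]}$ from \eqref{eq:eval_ell_weights} as its sign pattern $s_j^{[\al,\bt]} \colon 2\mbZ \to \{-1,0,+1\}$, equal to $+1$ on the initial interval $[\al,\bt-2j]$, to $-1$ on the terminal interval $[\bt-2j+4,\bt+2]$, and to $0$ elsewhere. A ratio of $\ell$-weights lies in $\{1, A_a, A_a^{-1}\}$ iff the difference $s_{j_1}^{(1)} - s_{j_2}^{(2)}$ equals $0$, $\mathbf{1}_{\{a-1,a+1\}}$, or $-\mathbf{1}_{\{a-1,a+1\}}$ respectively. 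The first possibility forces $(\al_1,\bt_1,j_1) = (\al_2,\bt_2,j_2)$ by matching positive and negative supports, placing $(\rV_1,\rV_2)$ in Case 5. For the other two, the equation says that $s_{j_1}^{(1)}$ is obtained from $s_{j_2}^{(2)}$ by adding $+1$ at two consecutive even integers without any other change; since each $s_j$ consists of at most two intervals (positive then negative, separated by at most one zero), this severely constrains which $(\al_i,\bt_i,j_i)$ can occur. A short enumeration, carried out first for $j_1=j_2=0$ (where $\mathbf{1}_{[\al_1,\bt_1]} - \mathbf{1}_{[\al_2,\bt_2]} = \mathbf{1}_{\{a-1,a+1\}}$ forces the two-point set to attach at either boundary, yielding Cases 1 and 3), then extended to all $j_i$ via Lemma \ref{lemma:ext_dual_aut}, shows the admissible configurations are exactly Cases 1--4.

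The main obstacle is this last enumeration: I expect it to split into roughly a dozen sub-cases depending on how the two sign patterns overlap and which of them has an empty positive or negative part (the boundary cases $j_i=0$ or $j_i=m_i$). Careful use of the symmetries $(\rV_1,\rV_2) \leftrightarrow (\rV_2^*,\rV_1^*)$ and the anti-involution $\hat{\omega}$ should collapse this to two or three genuinely different cases. Once the enumeration is complete, both existence and uniqueness are immediate consequences of the propositions cited, and the theorem follows.
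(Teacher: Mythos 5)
Your proposal follows the paper's proof in structure: uniqueness in the five listed cases via Propositions \ref{prop:ext_two_strings_prod} and \ref{prop:ev_self_ext_un}, elimination of all other pairs via Lemma \ref{lemma:noext_root}, and use of the duality of Lemma \ref{lemma:ext_dual_aut}. The paper organizes the elimination step more compactly than you propose: writing a general $\ell$-weight of an evaluation module as $m = 1_a 1_{a+2}\dots 1_b\, 1_{b+4}^{-1}\dots 1_c^{-1}$, it observes that $m A_d^{-1}$ has this same form iff $d \in \{a+1, b+1, c+3\}$, and each choice determines the second string $[\al_2,\bt_2]$ independently of the monomial index: $d=b+1$ gives Case 5, $d=a+1$ gives Cases 3--4, $d=c+3$ gives Cases 1--2. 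Your sign-pattern encoding reaches the same conclusion, but one step of your plan is not a legitimate reduction: you propose to enumerate only at $j_1=j_2=0$ and then "extend to all $j_i$ via Lemma \ref{lemma:ext_dual_aut}." Duality replaces the pair $(\rV_1,\rV_2)$ by $(\rV_2^*,\rV_1^*)$ and (on $\ell$-weights) sends $j_i$ to $m_i - j_i$, so it only reaches the extremal index values, not mixed pairs with one $j_i$ strictly between $0$ and $m_i$. You must either run the full sign-pattern enumeration over all $(j_1,j_2)$, or notice (as the paper does) that the shape constraint on $m A_d^{-1}$ forces $d$ into a three-element set in a way that is insensitive to $j$. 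Since you already anticipated carrying out a dozen sub-cases, you would likely find this in practice, but the stated shortcut does not stand on its own.
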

\begin{proof}
In the cases listed in the proposition the non-trivial extensions are unique by Proposition \ref{prop:ext_two_strings_prod} (first four items) and Proposition \ref{prop:ev_self_ext_un} (the last item). 

    By Lemma \ref{lemma:ext_dual_aut}, the non-trivial extensions of 
$\rV_1 = [\alpha_1, \beta_1]$ by $\rV_2 = [\alpha_2, \beta_2]$ correspond to 
    non-trivial extensions of $\rV_2 = [\alpha_2, \beta_2]$ by $\rV_1 = [\alpha_1, \beta_1]$.

We use Lemma \ref{lemma:noext_root} to prove that there are no other extensions as follows. If an extension of $\rV_2$ by $\rV_1$ is non-trivial then there exists a pair of monomials $m_1, m_2$ in $q$-characters of $\rV_1, \rV_2$ respectively and $a \in \mbC$ such that $\frac{m_1}{m_2}\in \{A_a, A_a^{-1},1\}$. Any monomial $m$ in an irreducible evaluation module has the form $m=1_a1_{a+2}\dots,1_b1_{b+4}^{-1}1_{b+6}^{-1}\dots 1_c^{-1}$. Then $mA_{d}^{-1}$ has the same form if and only if $d\in\{a+1, b+1,c+3\}$. 

The case $d=b+1$ corresponds to the last item of the proposition, the case $d=a+1$ to the third and fourth items,   the case $d=c+3$ to the first and second items.

    Finally, $\chi_q(\rV_1)$ and $\chi_q(\rV_2)$ share a monomial if and only if $\rV_1\cong \rV_2$ which corresponds to the last item.

\end{proof}

\section{The graphs of modules.}\label{sec:mod_graphs}
In this section we study the structure of modules over $\Uqa$. We have complete information about composition factors from the $q$-characters. The composition factors are organized by the socle filtration. Assuming that each socle is multiplicity free, we enhance this information with a graph structure.

This graph structure is our way to visualize how the composition factors are glued together. 
One question we aim at is to describe $q$-characters of submodules of a given module.

\subsection{Socles.}
Recall that the socle of a module $\rV$ is the maximal semi-simple submodule. We denote the socle of a module $\rV$ by $\soc(\rV)$. 
For $i\in\mathbb{Z}_{\geq 0}$ we define the $i$-th socle of $\rV$  by $\soc_0(\rV)=0$,  $\soc_1(\rV)=\soc(\rV)$, and for $i>1$,
\begin{equation}
    \soc_{i+1}(\rV) = \soc_{i}\left(\rV/\soc(\rV)\right),
\end{equation}

The sequence $(\soc_{i}(\rV))_{i=1}^{\infty}$ is called the socle filtration of $\rV$. The socle filtration  is an invariant of a module. That is if the socle filtrations of $\rV$ and of $\rW$ are different, then $\rV$ and $\rW$ are not isomorphic. The notions of socles and socle filtration are standard, see \cite{barbasch1983filtrations}, \cite{irving1988socle}, \cite{humphreys2021representations}.
Some of the statements in this subsection should be known to experts. 

We call maximal $i$ such that $\soc_i(\rV)\neq 0$ the height of $\rV$. We denote the height of a module $\rV$ by $ht(\rV)$.

 There is an alternative convenient way to describe socles. We call a chain $\mathcal{F} = (0\subset \mclf_1 \subset \dots \subset \mclf_n \subset \dots)$, a filtration of a module $\rV$ if $\mclf_i$ are submodules, and $\mclf_n = \rV$ for $n \gg 1$. We call a filtration $\mathcal{F}$ \  semi-simple if all consequent quotients $\mclf_{i+1}/\mclf_i$ are semi-simple.

We say a filtration $\mathcal{F} = (0\subset \mclf_1 \subset \dots \subset \mclf_n \subset \dots)$ is an extension  of  a filtration $\mathcal{F'} = (0\subset \mclf_1' \subset \dots \subset \mclf_n' \subset \dots)$ if for each $i$, the submodule $\mathcal F_i'$ coincides with $\mathcal F_j$ for some $j$. 

The following lemma is straightforward.
\begin{lemma}  Every module $\rV$ has a semi-simple filtration. Moreover, any filtration of $\rV$ can be extended to a semi-simple filtration.  \qed
\end{lemma}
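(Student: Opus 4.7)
The plan is to verify the two statements in turn, using the socle filtration as the canonical semi-simple filtration and then refining an arbitrary filtration by inserting socle filtrations of its consecutive quotients.

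For the first claim, I would begin by checking that the socle filtration $(\soc_i(\rV))_{i\geq 0}$ actually reaches $\rV$ in finitely many steps. Since every non-zero finite-dimensional $\Uqa$-module contains a non-zero irreducible submodule (take one of minimal dimension), it has a non-zero socle. Consequently, whenever $\soc_i(\rV)\subsetneq \rV$ the quotient $\rV/\soc_i(\rV)$ has non-zero socle, which forces $\soc_{i+1}(\rV)\supsetneq \soc_i(\rV)$. Finite-dimensionality then guarantees that $\soc_n(\rV)=\rV$ for some $n$. By construction $\soc_{i+1}(\rV)/\soc_i(\rV)=\soc(\rV/\soc_i(\rV))$ is semi-simple, so $(\soc_i(\rV))_{i\geq 0}$ is a semi-simple filtration of $\rV$, proving the first statement.

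For the second claim, let $\mathcal{F}'=(0\subset \mclf_1'\subset\dots\subset \mclf_n'=\rV)$ be an arbitrary filtration. For each $i$ consider the quotient $Q_i=\mclf_{i+1}'/\mclf_i'$ with the projection $\pi_i\colon \mclf_{i+1}'\to Q_i$, and apply the first part of the lemma to $Q_i$ to get a semi-simple filtration $0=\soc_0(Q_i)\subset\soc_1(Q_i)\subset\dots\subset\soc_{k_i}(Q_i)=Q_i$. Define the refined filtration $\mathcal{F}$ by inserting $\pi_i^{-1}(\soc_j(Q_i))$ between $\mclf_i'$ and $\mclf_{i+1}'$ for each $i$ and each $1\leq j<k_i$. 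The consecutive quotients of $\mathcal{F}$ coincide with $\soc_{j+1}(Q_i)/\soc_j(Q_i)$, which are semi-simple, so $\mathcal{F}$ is a semi-simple filtration extending $\mathcal{F}'$.

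This argument is essentially formal and does not present a genuine obstacle; the only point requiring care is the finiteness of the socle filtration, which reduces to the existence of an irreducible submodule in any finite-dimensional module, a standard minimal-dimension argument.
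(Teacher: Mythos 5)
Your proof is correct and is the standard argument that the paper implicitly relies on when it states this lemma without proof (the \qed marks it as routine). The two steps you give — (i) the socle filtration terminates because any non-zero finite-dimensional module has a non-zero socle, so the chain strictly increases until it reaches $\rV$, and (ii) an arbitrary filtration is refined by pulling back the socle filtrations of the consecutive quotients $Q_i=\mclf_{i+1}'/\mclf_i'$ along $\pi_i$ — are exactly what is needed, and the check that the new consecutive quotients are $\soc_{j+1}(Q_i)/\soc_j(Q_i)$ (hence semi-simple) is sound. The construction also matches the paper's definition of "extension of a filtration," since each $\mclf_i'=\pi_i^{-1}(0)$ appears in the refined chain.
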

 The semi-simple filtrations can be added.
 \begin{lemma}\label{lemma:ss_filtr_addition}
 Assume that $\mclf, \tilde{\mclf}$ are two semi-simple filtrations of $\rV$. Then $\mclf + \tilde{\mclf}$ defined by $(\mclf + \tilde{\mclf})_i= \mclf_i + \tilde{\mclf}_i$ is a semi-simple filtration of $\rV$.
 \end{lemma}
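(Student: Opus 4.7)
The plan is to verify the four defining properties of a semi-simple filtration for the sum $\mclf + \tilde{\mclf}$. Three of them are essentially immediate: sums of submodules are submodules, so $(\mclf + \tilde{\mclf})_i$ is a submodule of $\rV$; the inclusions $\mclf_i \subseteq \mclf_{i+1}$ and $\tilde{\mclf}_i \subseteq \tilde{\mclf}_{i+1}$ give $(\mclf + \tilde{\mclf})_i \subseteq (\mclf + \tilde{\mclf})_{i+1}$; and since $\mclf_n = \tilde{\mclf}_n = \rV$ for $n \gg 1$, the sum stabilizes at $\rV$ as well.

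The only substantive point is to show that the consecutive quotient
\begin{equation*}
Q_i := (\mclf_{i+1} + \tilde{\mclf}_{i+1})/(\mclf_i + \tilde{\mclf}_i)
\end{equation*}
is semi-simple. The key idea is to construct a surjection onto $Q_i$ from an obviously semi-simple module. Define
\begin{equation*}
\varphi: \mclf_{i+1}/\mclf_i \oplus \tilde{\mclf}_{i+1}/\tilde{\mclf}_i \longrightarrow Q_i, \qquad (x + \mclf_i,\, y + \tilde{\mclf}_i) \longmapsto x + y + (\mclf_i + \tilde{\mclf}_i).
\end{equation*}
The map $\varphi$ is well-defined because $\mclf_i, \tilde{\mclf}_i \subseteq \mclf_i + \tilde{\mclf}_i$, and it is clearly a homomorphism of modules. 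It is surjective since every element of $\mclf_{i+1} + \tilde{\mclf}_{i+1}$ is a sum of an element of $\mclf_{i+1}$ and one of $\tilde{\mclf}_{i+1}$.

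By assumption both $\mclf_{i+1}/\mclf_i$ and $\tilde{\mclf}_{i+1}/\tilde{\mclf}_i$ are semi-simple, hence so is their direct sum, and therefore the quotient $Q_i = \mathrm{Im}(\varphi)$ is semi-simple as a homomorphic image of a semi-simple module. This confirms that $\mclf + \tilde{\mclf}$ is a semi-simple filtration of $\rV$. I do not anticipate any real obstacle here: the main content is simply producing the map $\varphi$ and observing that semi-simplicity is preserved under direct sums and quotients, both standard facts for modules over an algebra.
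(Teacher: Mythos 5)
Your proof is correct and follows essentially the same approach as the paper: both identify the natural surjection $(\mclf_{i+1}/\mclf_i) \oplus (\tilde{\mclf}_{i+1}/\tilde{\mclf}_i) \twoheadrightarrow (\mclf_{i+1} + \tilde{\mclf}_{i+1})/(\mclf_i + \tilde{\mclf}_i)$ and conclude via the fact that a quotient of a semi-simple module is semi-simple. Your writeup merely makes explicit the well-definedness and surjectivity checks that the paper leaves implicit.
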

\begin{proof}
    There is a natural surjection
    \begin{equation}
        (\mclf_{i+1}/\mclf_i) \oplus (\tilde{\mclf}_{i+1}/\tilde{\mclf}_i) \twoheadrightarrow  (\mclf_{i+1} + \tilde{\mclf}_{i+1})/(\mclf_i + \tilde{\mclf}_i),
    \end{equation}
    therefore, $(\mclf_{i+1} + \tilde{\mclf}_{i+1})/(\mclf_i + \tilde{\mclf}_i)$ is semi-simple as a quotient of semi-simple module $\mclf_{i+1}/\mclf_i \oplus \tilde{\mclf}_{i+1}/\tilde{\mclf}_i$.
\end{proof}

A filtration $\mathcal{F}(\rV)$ is called maximal if it is semi-simple and if for any semi-simple filtration $\mclf'$ of $\rV$, $\mclf_i' \subset \mathcal{F}_i(\rV)$.

\begin{cor}\label{cor}
    For any module $\rV$ there exists a unique maximal filtration. 
\end{cor}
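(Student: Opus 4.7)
The plan is to build the maximal filtration as the sum of all semi-simple filtrations of $\rV$, using finite-dimensionality to make the sum well-defined, and then to invoke Lemma \ref{lemma:ss_filtr_addition} iteratively to confirm the sum is itself semi-simple.

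First I would fix $\rV$ and let $\mathcal S$ denote the collection of all semi-simple filtrations of $\rV$. For each $i \geq 0$, I would define
\begin{equation*}
\mathcal{F}_i(\rV) \;=\; \sum_{\mclf' \in \mathcal S} \mclf'_i \;\subset\; \rV.
\end{equation*}
Since $\rV$ is finite-dimensional, its lattice of submodules satisfies the ascending chain condition, so this sum stabilizes: for each $i$, one can choose finitely many semi-simple filtrations $\mclf^{(1)},\dots,\mclf^{(N_i)} \in \mathcal S$ with $\mathcal{F}_i(\rV) = \sum_{k=1}^{N_i} \mclf^{(k)}_i$. The height of any filtration of $\rV$ is bounded by $\dim(\rV)$, so only finitely many indices $i$ need be considered, and one can take a single finite list $\mclf^{(1)},\dots,\mclf^{(N)} \in \mathcal S$ that works simultaneously for all $i$.

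Next I would promote Lemma \ref{lemma:ss_filtr_addition} to a finite sum by a straightforward induction on $N$: if $\mclf^{(1)} + \dots + \mclf^{(N-1)}$ is a semi-simple filtration, then by Lemma \ref{lemma:ss_filtr_addition} so is its sum with $\mclf^{(N)}$. Applied to the list above, this shows that $\mathcal F(\rV) = \mclf^{(1)} + \dots + \mclf^{(N)}$ is itself a semi-simple filtration of $\rV$, whose $i$-th term by construction contains every $\mclf'_i$ with $\mclf' \in \mathcal S$. This gives simultaneously existence and maximality. Uniqueness is then immediate: if $\mathcal G$ is another maximal semi-simple filtration, the maximality of $\mathcal F$ forces $\mathcal G_i \subset \mathcal F_i$ for all $i$, while the maximality of $\mathcal G$ forces the reverse inclusion, so $\mathcal F = \mathcal G$.

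The only real subtlety is ensuring that the possibly infinite sum over $\mathcal S$ is actually realized by finitely many summands so that Lemma \ref{lemma:ss_filtr_addition} can be applied; this is where finite-dimensionality of $\rV$ (which has been a standing assumption throughout the paper) is essential. Everything else is a routine bookkeeping argument.
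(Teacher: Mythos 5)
Your proof is correct and fills in, with more detail, exactly the argument the paper compresses into one sentence: iterate Lemma \ref{lemma:ss_filtr_addition} over finitely many semi-simple filtrations (finitely many sufficing by finite-dimensionality) to obtain the maximal filtration, with uniqueness then immediate. This is the same approach as the paper.
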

\begin{proof}
   The existence and uniqueness of maximal filtration follows from Lemma \ref{lemma:ss_filtr_addition}. 
\end{proof}
We denote the maximal filtration of a module $\rV$ by $\mcls(\rV)$.
\begin{lemma}\label{lemma:filtration_quot} We have
\begin{align}
    \mcls_{i}\left(\rV/\mcls_{j}(\rV)\right) &= \mcls_{i+j}(\rV)/\mcls_{j}(\rV),\label{eq:filtr_quot1}\\
    \soc_i(\rV)&\cong\mcls_i(\rV)/\mcls_{i-1}(\rV),\label{eq:soc_alt}\\
    \soc_{i}(\rV/\mcls_j(\rV)) &\cong \soc_{i+j}(\rV).\label{eq:soc_quot}
\end{align}
\end{lemma}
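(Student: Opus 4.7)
The plan is to prove the three identities in order \eqref{eq:filtr_quot1}, \eqref{eq:soc_alt}, \eqref{eq:soc_quot}, since the latter two will follow from the first together with the base case $\mcls_1(\rV)=\soc(\rV)$. That base case I would handle first: $\mcls_1(\rV)$ is a semi-simple submodule of $\rV$ (being the first successive quotient $\mcls_1(\rV)/0$ of a semi-simple filtration), so $\mcls_1(\rV)\subseteq\soc(\rV)$; conversely, the filtration $0\subset\soc(\rV)$ can be extended to a semi-simple filtration of $\rV$ by Lemma preceding Corollary, and then maximality of $\mcls(\rV)$ forces $\soc(\rV)\subseteq\mcls_1(\rV)$.

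For \eqref{eq:filtr_quot1}, write $\pi:\rV\to\rV/\mcls_j(\rV)$ for the projection. The containment $\mcls_{i+j}(\rV)/\mcls_j(\rV)\subseteq\mcls_i(\rV/\mcls_j(\rV))$ is immediate: the images $\pi(\mcls_{j+k}(\rV))$ form a filtration of $\rV/\mcls_j(\rV)$ whose $k$-th quotient is isomorphic to $\mcls_{j+k+1}(\rV)/\mcls_{j+k}(\rV)$, hence semi-simple, so maximality applied to $\rV/\mcls_j(\rV)$ gives the inclusion. For the reverse containment, I would lift: given $\mathcal{G}=\mcls(\rV/\mcls_j(\rV))$, define a filtration $\mathcal{H}$ of $\rV$ by $\mathcal{H}_k=\mcls_k(\rV)$ for $k\leq j$ and $\mathcal{H}_{j+k}=\pi^{-1}(\mathcal{G}_k)$ for $k\geq 1$. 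The successive quotients of $\mathcal{H}$ are either successive quotients of $\mcls(\rV)$ or of $\mathcal{G}$, so $\mathcal{H}$ is semi-simple. Maximality of $\mcls(\rV)$ then forces $\pi^{-1}(\mcls_i(\rV/\mcls_j(\rV)))\subseteq\mcls_{i+j}(\rV)$, and applying $\pi$ gives the result.

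Identity \eqref{eq:soc_alt} then follows by induction on $i$: the case $i=1$ is the base case above, and for $i>1$, using the recursive definition and the case $j=1$ of \eqref{eq:filtr_quot1} together with the base case,
\begin{equation*}
\soc_i(\rV)=\soc_{i-1}(\rV/\soc(\rV))=\soc_{i-1}(\rV/\mcls_1(\rV))\cong \mcls_{i-1}(\rV/\mcls_1(\rV))/\mcls_{i-2}(\rV/\mcls_1(\rV)),
\end{equation*}
which by \eqref{eq:filtr_quot1} is isomorphic to $\mcls_i(\rV)/\mcls_{i-1}(\rV)$. Finally, \eqref{eq:soc_quot} is obtained by combining \eqref{eq:soc_alt} applied to $\rV/\mcls_j(\rV)$ with \eqref{eq:filtr_quot1}:
\begin{equation*}
\soc_i(\rV/\mcls_j(\rV))\cong\mcls_i(\rV/\mcls_j(\rV))/\mcls_{i-1}(\rV/\mcls_j(\rV))\cong\mcls_{i+j}(\rV)/\mcls_{i+j-1}(\rV)\cong\soc_{i+j}(\rV).
\end{equation*}

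The only genuinely non-routine step is the lifting argument that establishes the reverse inclusion in \eqref{eq:filtr_quot1}; everything else is formal manipulation. The key point there is verifying semi-simplicity of the concatenated filtration $\mathcal{H}$ at the splice index $k=j$, where the quotient $\mathcal{H}_{j+1}/\mathcal{H}_j=\pi^{-1}(\mathcal{G}_1)/\mcls_j(\rV)\cong\mathcal{G}_1$ is semi-simple by construction.
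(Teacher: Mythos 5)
Your proof is correct and takes essentially the same approach as the paper: both directions of \eqref{eq:filtr_quot1} via semi-simple filtrations and maximality (the forward direction by semi-simplicity of the projected filtration, the reverse by lifting through $\pi^{-1}$), then \eqref{eq:soc_alt} by induction on $i$ using the $j=1$ case of \eqref{eq:filtr_quot1}, and \eqref{eq:soc_quot} by combining the two. The only difference is cosmetic: you state the base case $\mcls_1(\rV)=\soc(\rV)$ explicitly, which the paper uses implicitly when it rewrites $\rV/\soc(\rV)$ as $\rV/\mcls_1(\rV)$.
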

\begin{proof}
The filtration given by the right hand side of Equation \eqref{eq:filtr_quot1} is semi-simple. Indeed,
\begin{equation*}\left(\mcls_{i+j+1}(\rV)/\mcls_{j}(\rV)\right)/\left(\mcls_{i+j}(\rV)/\mcls_{j}(\rV)\right) \cong \mcls_{i+j+1}(\rV)/\mcls_{i+j}(\rV).
\end{equation*}Therefore, $\mcls_{i+j}(\rV)/\mcls_{j}(\rV) \subset \mcls_{i}(\rV/\mcls_j(\rV))$.

Let $\pi_j$ be the canonical projection $\pi_j: \rV \twoheadrightarrow \rV/\mcls_j(\rV)$. Then 
\begin{equation*}0 \subset \mcls_1(\rV) \subset \dots \subset \mcls_j(\rV) \subset \pi_{j}^{-1}(\mcls_{1}(\rV/\mcls_j(\rV))) \subset \pi_{j}^{-1}(\mcls_{2}(\rV/\mcls_j(\rV))) \subset \dots,
\end{equation*}is a semi-simple filtration for $\rV$, which implies $\pi_{j}^{-1}(\mcls_{i}(\rV/\mcls_j(\rV))) \subset \mcls_{i+j}(\rV)$, therefore $\mcls_{i}(\rV/\mcls_j(\rV)) \subset \mcls_{i+j}(\rV)/\mcls_{j}(\rV)$. Therefore\eqref{eq:filtr_quot1} follows.

We prove \eqref{eq:soc_alt} by induction on $i$. 
\begin{multline}\soc_i(\rV) = \soc_{i-1}(\rV/\mcls_1(\rV)) = \mcls_{i-1}(\rV/\mcls_1(\rV))/\mcls_{i-2}(\rV/\mcls_1(\rV)) =\\= (\mcls_{i}(\rV)/\mcls_1(\rV))/(\mcls_{i-1}(\rV)/\mcls_1(\rV)) \cong \mcls_i(\rV)/\mcls_{i-1}(\rV).\end{multline}
Then \eqref{eq:soc_quot} follows since
\begin{multline}
  \soc_{i}(\rV/\mcls_j(\rV)) \cong \mcls_{i}(\rV/\mcls_j(\rV))/\mcls_{i-1}(\rV/\mcls_j(\rV)) =\\= (\mcls_{i+j}(\rV)/\mcls_j(\rV))/(\mcls_{i+j-1}(\rV)/\mcls_j(\rV)) \cong  \mcls_{i+j}(\rV)/\mcls_{i+j-1}(\rV) = \soc_{i+j}(\rV).
\end{multline}

\end{proof}

 The maximal filtration is compatible with submodules.

\begin{lemma}\label{lemma:max_submod_filtration}
    Let $\rW \subset \rV$ be a submodule of $\rV$. Then the maximal filtration of $\rW$ is given by $\mcls_i(\rW) = \mcls_i(\rV) \cap \rW$.
\end{lemma}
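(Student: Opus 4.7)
The plan is to prove the two inclusions $\mcls_i(\rV) \cap \rW \subset \mcls_i(\rW)$ and $\mcls_i(\rW) \subset \mcls_i(\rV) \cap \rW$, each by exhibiting an appropriate semi-simple filtration and invoking the maximality property from Corollary \ref{cor}.

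First I would set $\mclf_i = \mcls_i(\rV) \cap \rW$ and check that $(\mclf_i)$ is a semi-simple filtration of $\rW$. The intersection is eventually $\rW$ since $\mcls_n(\rV) = \rV$ for $n \geq ht(\rV)$. For semi-simplicity of the successive quotients, the natural inclusion
\begin{equation*}
\mclf_{i+1}/\mclf_i = \lb \mcls_{i+1}(\rV) \cap \rW\rb / \lb \mcls_{i}(\rV) \cap \rW \rb \hookrightarrow \mcls_{i+1}(\rV)/\mcls_i(\rV)
\end{equation*}
identifies $\mclf_{i+1}/\mclf_i$ with a submodule of $\soc_{i+1}(\rV)$ by \eqref{eq:soc_alt}, and submodules of semi-simple modules are semi-simple. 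Hence $(\mclf_i)$ is a semi-simple filtration of $\rW$, and maximality of $\mcls(\rW)$ gives $\mcls_i(\rV) \cap \rW = \mclf_i \subset \mcls_i(\rW)$.

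For the reverse inclusion, I will define a filtration of $\rV$ by $\tilde{\mclf}_i = \mcls_i(\rW) + \mcls_i(\rV)$, modeled on the argument of Lemma \ref{lemma:ss_filtr_addition}. Eventually $\tilde{\mclf}_i \supset \mcls_i(\rV) = \rV$, so this is a filtration of $\rV$. The map $\mcls_{i+1}(\rW) \oplus \mcls_{i+1}(\rV) \to \tilde{\mclf}_{i+1}$, $(a,b)\mapsto a+b$, descends to a surjection
\begin{equation*}
\lb \mcls_{i+1}(\rW)/\mcls_i(\rW) \rb \oplus \lb\mcls_{i+1}(\rV)/\mcls_i(\rV)\rb \twoheadrightarrow \tilde{\mclf}_{i+1}/\tilde{\mclf}_i,
\end{equation*}
and the left hand side is semi-simple by \eqref{eq:soc_alt}, so $\tilde{\mclf}_{i+1}/\tilde{\mclf}_i$ is semi-simple as a quotient. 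Maximality of $\mcls(\rV)$ then forces $\mcls_i(\rW) + \mcls_i(\rV) = \tilde{\mclf}_i \subset \mcls_i(\rV)$, i.e. $\mcls_i(\rW) \subset \mcls_i(\rV)$. Combined with the obvious $\mcls_i(\rW) \subset \rW$, this yields $\mcls_i(\rW) \subset \mcls_i(\rV) \cap \rW$.

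There is no real obstacle here: both steps reduce to the two general principles that submodules and quotients of semi-simple modules are semi-simple, together with the maximality characterization already established in Corollary \ref{cor}. The only subtle point is verifying that $\tilde{\mclf}$ really is a filtration (which requires the eventually stabilizing property of $\mcls_i(\rV)$), and this is immediate from the definition of height.
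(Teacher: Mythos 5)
Your proof is correct. The first inclusion $\mcls_i(\rV)\cap\rW \subset \mcls_i(\rW)$ matches the paper's argument exactly. For the reverse inclusion, however, you take a genuinely different route: the paper picks $n$ with $\mcls_n(\rW)=\rW$ and invokes the fact that the chain $0\subset\mcls_1(\rW)\subset\dots\subset\rW\subset\rV$ extends to a semi-simple filtration of $\rV$ whose first $n$ terms are still the $\mcls_i(\rW)$, then applies maximality of $\mcls(\rV)$; you instead form the termwise sum $\tilde{\mclf}_i=\mcls_i(\rW)+\mcls_i(\rV)$, check semi-simplicity of successive quotients via the surjection onto $\tilde{\mclf}_{i+1}/\tilde{\mclf}_i$, and again invoke maximality. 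Both arguments are correct and have the same essential content, but yours has a small structural advantage: the paper's appeal to the extension property requires the implicit observation that extending the chain $\rW\subset\rV$ need not disturb or re-index the initial $n$ terms (true, since those quotients are already semi-simple, but unstated), whereas your pointwise-sum filtration avoids any index bookkeeping entirely and is in the same spirit as Lemma \ref{lemma:ss_filtr_addition}, which the paper already proved, applied here to filtrations of two different modules.
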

\begin{proof}
We have $(\mcls_{i+1}(\rV) \cap \rW) / (\mcls_i(\rV) \cap \rW) \subset \mcls_{i+1}(\rV)/\mcls_{i}(\rV)$, therefore $(\mcls_{i+1}(\rV) \cap \rW) / (\mcls_i(\rV) \cap \rW)$ is a semi-simple module as a submodule of a semi-simple module. Hence $0\subset \mcls_{1}(\rV) \cap \rW \subset \dots \subset \mcls_{i}(\rV) \cap \rW \subset \dots$ is a semi-simple filtration of $\rW$ and $\mcls_{i}(\rV) \cap \rW \subset \mcls_{i}(\rW)$.

Let $n$ be a number such that $\mcls_n(\rW) = \rW$. There exists a semi-simple filtration $\mclf$ of $\rV$ such that $\mclf_i = \mcls_i(\rW)$ for $i \leq n$. Therefore,  $\mcls_i(\rW) \subset \mcls_i(\rV)$, which implies $\mcls_i(\rW) \subset \mcls_{i}(\rV) \cap \rW$.
\end{proof}

\begin{cor}\label{cor:subsoc}
    If $\rW \subset \rV$, then there is an embedding $\soc_{i}(\rW) \subset \soc_{i}(\rV)$ for all $i$.
\end{cor}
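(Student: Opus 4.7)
The plan is to combine the two preceding lemmas directly: Lemma \ref{lemma:max_submod_filtration} identifies the maximal filtration of $\rW$ as $\mcls_i(\rW) = \mcls_i(\rV)\cap \rW$, and \eqref{eq:soc_alt} expresses socles as successive quotients of the maximal filtration. Applying both, we get
\begin{equation*}
\soc_i(\rW) \;\cong\; \mcls_i(\rW)/\mcls_{i-1}(\rW) \;=\; \bigl(\mcls_i(\rV)\cap \rW\bigr)\big/\bigl(\mcls_{i-1}(\rV)\cap \rW\bigr).
\end{equation*}

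Next I would construct the desired embedding as the natural map induced by inclusion: the composition $\mcls_i(\rV)\cap \rW \hookrightarrow \mcls_i(\rV) \twoheadrightarrow \mcls_i(\rV)/\mcls_{i-1}(\rV) \cong \soc_i(\rV)$ has kernel $\mcls_i(\rV)\cap \rW \cap \mcls_{i-1}(\rV) = \mcls_{i-1}(\rV)\cap \rW = \mcls_{i-1}(\rW)$. Hence it descends to an injective homomorphism $\soc_i(\rW)\hookrightarrow \soc_i(\rV)$, which is the claimed embedding.

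There is no real obstacle here: once Lemma \ref{lemma:max_submod_filtration} is in hand (which is the content-bearing step), the corollary is a direct computation. The only small care needed is to verify that the induced map is a $\Uqa$-module homomorphism, but this is automatic since all of $\mcls_i(\rV)\cap \rW \hookrightarrow \mcls_i(\rV)$ and the projection modulo $\mcls_{i-1}(\rV)$ are module maps. So the write-up will be two or three lines.
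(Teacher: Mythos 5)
Your argument is correct and is precisely the intended derivation: the paper states this as an immediate corollary of Lemma \ref{lemma:max_submod_filtration}, and your route through $\mcls_i(\rW) = \mcls_i(\rV)\cap\rW$ together with \eqref{eq:soc_alt} and the second isomorphism theorem is the obvious way to make that implication explicit. Nothing to add.
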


In particular, if $\rW_1,\rW_2\subset \rV$ are submodules, then $\soc_i(\rW_1)+\soc_i (\rW_2)\subset \soc_i(\rW_1+\rW_2)$ as submodules of $\soc_i(\rV)$. However, this inclusion does not have to be equality.
\begin{example}\label{ex:soc_not_additive}
    Let $\rV = 20 \oplus \mbC$. Fix embeddings $\iota_{20}:20 \hookrightarrow \rV$ and $\iota_{\mbC}:\mbC \hookrightarrow \rV$.
    Let $\pi:\, 20 \twoheadrightarrow \mbC$
    
    be the canonical projection. Let $\rW_1 = \iota_{20}(20)$, $\rW_2 = (\iota_{20} + c\, \iota_{\mbC}\circ \pi)(20)$,  where $c$ is a non-zero complex number. We have $\rW_1\cong\rW_2\cong 20$. Clearly, $\rW_1 + \rW_2 = \rV$, hence $\soc(\rW_1 + \rW_2) = [0,2] \oplus \mbC$, but  $\soc(\rW_1) =\soc(\rW_2) = [0,2]\subset \rV$.
\end{example}

We have $\chi_q(\rV)=\sum_{i=1}^{ht(\rV)}\chi_q(\soc_i\rV)$. 
Define the graded $q$-character of $\rV$ by 
$$
\chi_{q,s}(\rV)=\sum_{i=1}^{ht(\rV)}s^i\chi_q(\soc_i(\rV)). 
$$
More generally, for a filtration $\mathcal{F}$ of $\rV$, define the corresponding filtered $q$-character by 

$$
\chi^{\mathcal{F}}_{q,s}(\rV)=\sum_{i=1}^{\infty}s^i\chi_q(\mathcal{F}_{i}/\mathcal{F}_{i-1}). 
$$
The graded $q$-character is the filtered $q$-character for the maximal filtration $\mcls(\rV)$ of $\rV$.

Let $\rV$ be a module. Let $\rR_{1},\dots,\rR_k$ be irreducible composition factors of $\rV$. 
Then the graded character of $\rV$ has the form $$\chi_{q,s}(\rV) = \sum\limits_{j=1}^{k}s^{h_j}\chi_{q}(\rR_j).$$
For any semi-simple filtration of a module, the filtered character is "larger" than the graded $q$-character in the following sense.

\begin{lemma}\label{lemma:TP_factors_drop} Let $\mathcal F$ be a semi-simple filtration of module $\rV$. The filtered $q$-character $\chi^{\mathcal{F}}_{q,s}(\rV)$ can be written in the form
\begin{equation}\label{filtered character}
\chi^{\mathcal{F}}_{q,s}(\rV) = \sum\limits_{j=1}^{k}s^{l_j}\chi_{q}(\rR_j),
\end{equation}where $l_j \geq h_j$ for $j=1, \dots, k$.
\end{lemma}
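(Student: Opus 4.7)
The plan is to deduce the inequality $l_j \geq h_j$ from a single coefficient-wise inequality of $q$-characters, using the defining property of the maximal filtration together with the fact that $q$-characters of irreducible modules are linearly independent (the $q$-character is an injective ring homomorphism from the Grothendieck ring, as recalled in Section \ref{subsec:uqamod_and_q-char}).

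First, I would fix $n\in\mbZ_{\geq 0}$ and observe that, by the defining property of the maximal filtration established in Corollary \ref{cor}, the inclusion $\mathcal{F}_n\subset \mcls_n(\rV)$ holds for every semi-simple filtration $\mathcal F$. Since $\mathcal{F}_n$ is a submodule of $\mcls_n(\rV)$, the quotient $\mcls_n(\rV)/\mathcal{F}_n$ is a well-defined $\Uqa$-module, so
\begin{equation*}
\chi_q(\mcls_n(\rV)) - \chi_q(\mathcal{F}_n) = \chi_q(\mcls_n(\rV)/\mathcal{F}_n)
\end{equation*}
is a sum of $\ell$-weight monomials with non-negative integer coefficients. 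Writing $\chi_q(\mathcal{F}_n)=\sum_{i\leq n}\chi_q(\mathcal{F}_i/\mathcal{F}_{i-1})$ and $\chi_q(\mcls_n(\rV))=\sum_{i\leq n}\chi_q(\soc_i(\rV))$ via \eqref{eq:soc_alt}, this says that the polynomial $\chi_{q,s}(\rV)-\chi^{\mathcal F}_{q,s}(\rV)$, truncated at degree $n$ and evaluated at $s=1$, has non-negative coefficients.

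Next, I would translate this into a statement about composition factors. For each irreducible $\Uqa$-module $\rS$, let $a_i(\rS)$ (respectively $b_i(\rS)$) be the multiplicity of $\rS$ among the simple summands of $\mathcal F_i/\mathcal F_{i-1}$ (respectively of $\soc_i(\rV)$). Decomposing both sides of the inequality above in the Grothendieck ring and using that $\chi_q(\rS_1),\dots,\chi_q(\rS_k)$ are linearly independent over $\mbZ$ for pairwise non-isomorphic irreducibles, one obtains, for every irreducible $\rS$ and every $n$,
\begin{equation*}
\sum_{i\leq n} a_i(\rS) \;\leq\; \sum_{i\leq n} b_i(\rS).
\end{equation*}
Hence for every $\rS$, if we list in non-decreasing order the levels $l_{j_1}^{\rS}\leq l_{j_2}^{\rS}\leq\dots$ at which composition factors isomorphic to $\rS$ appear in $\mathcal F$, and similarly $h_{j_1}^{\rS}\leq h_{j_2}^{\rS}\leq\dots$ for $\mcls$, then both lists have the same length (the Jordan–Hölder multiplicity of $\rS$ in $\rV$) and $l_{j_r}^{\rS}\geq h_{j_r}^{\rS}$ for every $r$.

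Finally, to obtain the indexing in \eqref{filtered character} I would match the composition factors of $\mathcal F$ with those of $\mcls$ class by class: within each isomorphism class $\rS$, pair up the $r$-th factor of $\mathcal F$ (in the order above) with the $r$-th factor of $\mcls$. After this relabelling one has $l_j\geq h_j$ for all $j$, which gives the desired form of $\chi^{\mathcal F}_{q,s}(\rV)$. The only subtle point in the argument is the step using linear independence of $q$-characters of irreducibles to promote a coefficient-wise inequality of $q$-characters to a levelwise inequality of composition-factor counts; everything else is bookkeeping on the maximality of $\mcls$ and exactness of $\chi_q$ on short exact sequences.
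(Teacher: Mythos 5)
Your argument is correct and uses the same essential idea as the paper's proof: the maximality of $\mcls$ gives $\mathcal{F}_n\subset\mcls_n(\rV)$, hence $\chi_q(\mcls_n(\rV))-\chi_q(\mathcal{F}_n)$ lies in the non-negative cone of the Grothendieck ring, and by linear independence of $q$-characters of non-isomorphic irreducibles this yields, for each $\rS$ and each $n$, the partial-sum inequality $\sum_{i\le n}a_i(\rS)\le\sum_{i\le n}b_i(\rS)$, from which the sorted-degree comparison follows. The paper reaches the same partial-sum inequality indirectly, by writing $\chi^{\mclf}_{q,s}(\rV)-\chi_{q,s}(\rV)=(s-1)\sum_j s^j\chi_q(\mcls_j(\rV)/\mclf_j)$ and dividing the coefficient polynomials $b_i(s)-c_i(s)$ by $s-1$; your fixed-$n$ version is a slightly more direct way of extracting the same information, and the final sorting step is identical in substance.
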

\begin{proof}
We have
    \begin{equation*}
        \chi_{q,s}^{\mclf}(\rV) = (1-s)\sum_{j=1}^{\infty}s^{j}\chi_q(\mclf_j).
    \end{equation*}
  Therefore,
    \begin{equation}\label{computation}
        \chi_{q,s}^{\mclf}(\rV) - \chi_{q,s}(\rV) = (1-s)\sum_{j=1}^{\infty}s^{j}(\chi_q(\mclf_j)-\chi_q(\mcls_j(\rV))) = (s-1)\sum_{j=1}^{\infty}s^{j}\chi_q(\mcls_j(\rV)/\mclf_j).
    \end{equation}
Let $\rM_1,\dots,\rM_t$ be the distinct irreducible modules in the sequence $\rR_1,\dots,\rR_k$. 
We have $\chi_{q,s}^{\mathcal F}(\rV)=\sum_{i=1}^t b_i(s)\chi_q(\rM_s)$ and  $\chi_{q,s}(\rV)=\sum_{i=1}^t c_i(s)\chi_q(\rM_s)$, where  
$b_i(s), c_i(s)$ are polynomials in $s$ with non-negative integer coefficients.
    
Since $b(1)=c(1)$, the polynomial $b_i(s)-c_i(s)$ is divisible by $s-1$. By \eqref{computation}  the quotient has non-negative integer coefficients. Let $b_i(s)=s^{\al_1}+\dots +s^{\al_r}$ with $\al_1\geq\dots \geq \al_r$ and  $c_i(s)=s^{\bar \al_1}+\dots +s^{\bar \al_r}$ with $\bar \al_1\geq\dots \geq \bar \al_r$. Then
$$
\frac{b_i(s)-c_i(s)}{s-1}=\sum_{j=1}^r \frac{s^{\al_j}-1}{s-1}- \sum_{j=1}^r \frac{ s^{\bar \al_j}-1}{s-1}=
 \sum_{j=1}^r\sum_{l=0}^{\al_j-1} s^l -\sum_{j=1}^r\sum_{l=0}^{\bar \al_j-1} s^l \in \mathbb{Z}_{\geq 0}[s].
$$
It follows that $\al_j\geq \bar \al_j$ for $j=1,\dots, r$.
\end{proof}

Graded $q$-characters are not compatible with taking sums or tensor products. However, graded characters are compatible with direct sums, and graded character of a submodule is a subcharacter of graded character of a module. We also have $\chi_{q,s}(\rV/S_i(\rV))=s^{-i}(\chi_{q,s}(\rV)-\chi_{q,s}(S_i(\rV)))$. Finally, $\deg_s(\chi_{q,s}(\rV))=ht(\rV).$

Instead of socles, one can similarly consider heads. The two approaches are related by taking duals. We note that socles are not compatible with taking duals. 
\begin{lemma} If $\rV$ is a module of height $h$ then $(\soc_h(\rV))^*\subset \soc (\rV^*)$. 
\end{lemma}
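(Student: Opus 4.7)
The plan is to use the identification $\soc_h(\rV)\cong \rV/\mcls_{h-1}(\rV)$ from Lemma \ref{lemma:filtration_quot} and then dualize. Since the height is $h$, we have $\mcls_h(\rV)=\rV$, so by \eqref{eq:soc_alt},
\[
\soc_h(\rV)\;\cong\;\mcls_h(\rV)/\mcls_{h-1}(\rV)\;=\;\rV/\mcls_{h-1}(\rV).
\]
Hence the quotient map $\pi\colon \rV\twoheadrightarrow \soc_h(\rV)$ is a surjective $\Uqa$-homomorphism. Dualizing (recall that all modules are finite-dimensional), the functor $(-)^*$ is exact and turns surjections into injections, so $\pi^*$ gives an embedding
\[
\iota\colon(\soc_h(\rV))^*\;\hookrightarrow\;\rV^*.
\]
This identifies $(\soc_h(\rV))^*$ with a submodule of $\rV^*$, which is the content we need.

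The next step is to show that this submodule lies inside $\soc(\rV^*)$, i.e.\ that $(\soc_h(\rV))^*$ is semi-simple. By definition $\soc_h(\rV)$ is semi-simple, so it suffices to check that the left dual of a semi-simple module is semi-simple. This reduces to two observations: first, $(-)^*$ commutes with finite direct sums canonically (as a vector space this is standard, and the $\Uqa$-action is the transpose of $S$ applied to the original action, which respects the direct sum decomposition); and second, the left dual of an irreducible module is irreducible. For the latter, given any $\Uqa$-submodule $N\subset M^*$ with $M$ irreducible, its annihilator $N^\perp=\{m\in M:\langle n,m\rangle=0\ \forall n\in N\}$ is a submodule of $M$ (because the action on $M^*$ is transported by $S$, which is an anti-homomorphism, so $N^\perp$ is still $\Uqa$-stable), hence $N^\perp\in\{0,M\}$ and therefore $N\in\{M^*,0\}$.

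Combining the two steps, $\iota\bigl((\soc_h(\rV))^*\bigr)$ is a semi-simple submodule of $\rV^*$, and so by the maximality of the socle,
\[
(\soc_h(\rV))^*\;\subset\;\soc(\rV^*),
\]
as required. The argument is essentially formal; the only point that needs a little care is verifying that duality preserves semi-simplicity at the level of $\Uqa$-modules (as opposed to merely vector spaces), but this is standard for finite-dimensional Hopf algebra modules and I expect no real obstacle.
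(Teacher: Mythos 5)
Your argument is correct and follows exactly the same route as the paper: identify $\soc_h(\rV)$ with $\rV/\mcls_{h-1}(\rV)$, dualize the quotient map to get an embedding $(\soc_h(\rV))^*\hookrightarrow \rV^*$, and conclude by noting that the dual of a semi-simple module is semi-simple. The extra detail you give (direct sums, annihilators) just fills in the standard fact the paper takes for granted.
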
 
\begin{proof}
    We have a surjective map $\rV\to \soc_h(\rV)=\rV/\mathcal{S}_{h-1}(\rV) $. Therefore, we have an injective map  $(\soc_h(\rV))^*\to \rV^*$. The socle $\soc_h(\rV)$ is semi-simple, thus the dual module $(\soc_h(\rV))^*$ is semi-simple, and therefore the image of the injective map is in $\soc(\rV^*)$.
\end{proof}

But the inclusion in the lemma can be proper, see Example \ref{ex:weight_graph} below.

\subsection{Definition and first properties of graphs of modules.}\label{subsec:graphs_of_mods}
A module $\rV$ is called multiplicity free if all composition factors of $\rV$ are non-isomorphic.

\begin{defi}
    We call a module $\rV$ socle-multiplicity free if for all $i$, $\soc_i(\rV)$ is multiplicity free.
\end{defi}

By Corollary \ref{cor:subsoc}, if $\rW \subset \rV$ is a submodule and $\rV$ is socle-multiplicity free, then $\rW$ is also socle-multiplicity free.  Also, by Lemma \ref{lemma:filtration_quot}, for any $i$, $\rV/\mcls_i(\rV)$ is socle-multiplicity free.  

A quotient of a socle-multiplicity free module does not have to be socle-multiplicity free. For example, the quotient of socle-multiplicity free module $20\oplus \mbC$ by $[0,2]$ is $\mbC\oplus \mbC$, cf. Example \ref{ex:soc_not_additive}.

A module dual to a socle-multiplicity free module does not have to be socle-multiplicity free. For example, the dual of socle-multiplicity free module $2024$ is not socle-multiplicity free,  cf. Example \ref{ex:mod_graphs_4let}.

Let $\rV$ be a socle-multiplicity free $\Uqa$-module. Let $\rR_1,\dots, \rR_k$ be composition factors of $\rV$. We fix an identification of the multiset  $\{\rR_1,\dots, \rR_k\}$ with the union of sets of composition factors of all socles $\soc_i(\rV)$. Then we say that $\rR_i$ has degree $j$, $h_i=\deg(\rR_i)=j$, if $\rR_i$ is identified with a composition factor of $\soc_j(\rV)$. 
There exists a composition factor $\rR_i$ of degree $j$ if and only if there exists a submodule $\rW\subset \rV$ of height $j$ such that $\soc_j (\rW)=\rR_i$.

The module $\rV$ is socle-multiplicity free, therefore if $\rR_i\cong \rR_j$ then degrees are different, $h_i\neq h_j$.

Let $\rW \subset \rV$ be a submodule and let $(\tilde{\rR}_i, \tilde{h}_{i})$ be the composition factors of $\rW$ and their heights in $\rW$. By Corollary \ref{cor:subsoc}, the set of pairs $\{(\tilde{\rR}_i, \tilde{h}_{i})\}$ is naturally identified with a subset of the set of pairs $\{(\rR_i, h_i)\}$.

Let $G$ be a directed graph with vertices labelled by pairs $(\rR_i,h_i)$ of composition factors  of $\rV$ paired with their degrees and with a set of edges such that $G$ has no (non-oriented) cycles and if there is an edge from  
$(\rR_i,h_i)$ to $(\rR_j,h_j)$ then $h_i>h_j$.

A subgraph $H\subset G$ is a directed graph obtained from $G$ by removing several vertices and those edges which were attached to the removed vertices. A subgraph $H\subset G$ is called closed if for any vertex $(\rR_i,h_i)\in H$ and any edge in $G$ from  $(\rR_i,h_i)$ to $(\rR_j,h_j)$ we have $(\rR_j,h_j)\in H$. 

The graded $q$-character of a subgraph of $H\subset G$ is defined by
$$
\chi_{q,s}(H)=\sum_{(\rR_i,h_i)\in H} s^{h_i}\chi_q(\rR_i).
$$

Note that since $\rV$ is socle-multiplicity free, distinct subgraphs of $G$ have distinct graded $q$-characters: 
$\chi_{q,s}(H_1)=\chi_{q,s}(H_2)$ if and only if $H_1=H_2$.

\begin{defi}
An edge in $G$ connecting $(\rR_i,h_i)$ and $(\rR_j,h_j)$ is called admissible, if
any submodule $\rW\subset \rV$ with a composition factor $\rR_i$ of degree $h_i$ also has a composition factor $\rR_j$ of degree $h_j$.

  The graph $G$ is called a graph of the module $\rV$, if all edges of $G$ are admissible.
\end{defi}

We list a few trivial properties of the graphs of $\rV$.

\begin{lemma}\label{trivial lem}  Let $G$ be a graph of a module $\rV$ .  Then
\begin{enumerate}
    \item A graph obtained by removing any edge of $G$ is a graph of module $\rV$. In particular, a graph with no edges is a graph of $\rV$ which we call trivial.

    \item  If there is an edge from $(\rR_i,h_i)$ to $(\rR_j,h_j)$ in $G$ then $h_i-h_j>0$. We call $h_i-h_j$ the length of the edge.

\item If there is a directed path in $G$ connecting $(\rR_i,h_i)$ to  $(\rR_j,h_j)$, then the edge from $(\rR_i,h_i)$ to  $(\rR_j,h_j)$ is admissible.
\item\label{item:submod_subgraph} If $\rW\subset \rV$ is a submodule, then there exists a unique subgraph $H\subset G$ such that $\chi_{q,s}(H)=\chi_{q,s}(\rW)$. Moreover, this subgraph $H$ is closed and $H$ is a graph of $\rW$. 
\end{enumerate}
  \qed
\end{lemma}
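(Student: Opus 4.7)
The plan is that all four assertions are quick consequences of the definitions of admissibility, graph, subgraph, and of the fact that $\rV$ is socle-multiplicity free; the only care needed is in keeping track of how composition factors of a submodule sit, with their degrees, inside those of the ambient module. I would treat the four items in order.

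For (1), I would simply observe that the definition of a graph of $\rV$ only asks that every edge present be admissible. Hence any sub-collection of edges is again admissible, and in particular the trivial graph with no edges satisfies the condition vacuously. Part (2) is purely a restatement of the standing assumption on $G$ used in the definition preceding the lemma; I would just note that edges were declared to go from a vertex of higher degree to one of lower degree, and then define the length as $h_i-h_j$.

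For (3), suppose $(\rR_{i_0},h_{i_0}) \to (\rR_{i_1},h_{i_1}) \to \cdots \to (\rR_{i_r},h_{i_r})$ is a directed path in $G$ with $(\rR_{i_0},h_{i_0})=(\rR_i,h_i)$ and $(\rR_{i_r},h_{i_r})=(\rR_j,h_j)$. I would prove admissibility of the edge $(\rR_i,h_i)\to(\rR_j,h_j)$ by induction on $r$: given a submodule $\rW\subset\rV$ having $\rR_i$ as a composition factor of degree $h_i$, by admissibility of the first edge $\rW$ also has $\rR_{i_1}$ as a composition factor of degree $h_{i_1}$, and iterating along the path we conclude that $\rR_j$ appears in $\rW$ at degree $h_j$.

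Part (4) is the substantive item, and here I would use that by Corollary \ref{cor:subsoc} (with Lemma \ref{lemma:max_submod_filtration}) the composition factors of $\rW$ together with their degrees form a subset of the vertex labels of $G$. Let $H$ be the unique subgraph of $G$ whose vertex set is exactly this subset; then by construction $\chi_{q,s}(H)=\chi_{q,s}(\rW)$. Uniqueness follows because $\rV$ is socle-multiplicity free, so distinct subgraphs of $G$ have distinct graded $q$-characters, a fact the authors have already recorded. Closedness is immediate: if $(\rR_i,h_i)\in H$ and $G$ has an edge to $(\rR_j,h_j)$, then by admissibility of that edge (in $\rV$) the submodule $\rW$ contains $\rR_j$ at degree $h_j$, so $(\rR_j,h_j)\in H$. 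Finally, to see $H$ is a graph of $\rW$, I would check that every edge of $H$ remains admissible in $\rW$: a submodule $\rW'\subset\rW$ is in particular a submodule of $\rV$, so if $(\rR_i,h_i)$ appears in $\rW'$ then by admissibility in $\rV$ so does $(\rR_j,h_j)$. There is no real obstacle here; the only place where one must be careful is ensuring that the degree of a composition factor in $\rW$ agrees with its degree in $\rV$, which is precisely the content of Corollary \ref{cor:subsoc} applied to each socle.
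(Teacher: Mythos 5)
Your proof is correct and matches the paper's intent: the lemma is stated with an immediate \verb|\qed| (no proof supplied), precisely because each item is the sort of routine verification you give — (1) and (2) unwind the definitions, (3) is induction along the path, and (4) combines Corollary \ref{cor:subsoc} (degrees in $\rW$ persist in $\rV$) with the remark that graded $q$-characters distinguish subgraphs. The only small point worth making explicit, which you implicitly use, is that removing edges and vertices from a directed acyclic graph with the degree-decreasing property keeps those properties, so the objects in (1) and (4) really are graphs in the sense of the definition.
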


Let $\tilde G(\rV)$ be the graph which contains all admissible edges and no other edges. 
We reduce it to a graph $G(\rV)$ of $\rV$ by removing edges as follows. We keep all edges of length one. We remove any edge from a vertex $(\rR_i,h)$ to a vertex $(\rR_j,h-2)$  of length two if there is a path from $(\rR_i,h)$ to $(\rR_j,h-2)$ composed of two edges of length one. Then we remove any edge from a vertex $(\rR_i,h)$ to a vertex $(\rR_j,h-3)$ of length three whenever there is a path from $(\rR_i,h)$ to $(\rR_j,h-3)$ composed of the remaining edges of lengths one and two. Continuing the process, we obtain the graph $G(\rV)$.

\begin{defi}
    We call the graph $G(\rV)$ the strong graph of $\rV$. We also call the strong graph $G(\rV)$ the submodule graph of $\rV$.
\end{defi}

The strong graph $G(\rV)$ as a non-directed graph has no cycles.

The main property of the strong graph $G(\rV)$ of $\rV$ is the following lemma.

\begin{lemma}  Let $G(\rV)$ be the strong graph of a module $\rV$.  Let $G$ be any graph of $\rV$. Then if there is directed path from $(\rR_i,h_i)$ to $(\rR_j,h_j)$ in $G$, there is directed path from $(\rR_i,h_i)$ to $(\rR_j,h_j)$ in $G(\rV)$. 

In particular, if $H\subset G$ is a connected subgraph, then the subgraph in $G(\rV)$ formed by vertices in $H$ is connected. Similarly, if $H\subset G(\rV)$ is a closed subgraph then  the subgraph in $G$ formed by vertices in $H$ is closed.
\end{lemma}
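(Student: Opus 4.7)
My plan is to reduce the statement to a single key claim about replacing individual edges of $\tilde G(\rV)$ by directed paths in $G(\rV)$, and then to derive the two ``in particular'' consequences from that claim.

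First, I would observe that because every edge of any graph $G$ of $\rV$ is admissible by definition, one has $G\subset\tilde G(\rV)$, so every directed path in $G$ is automatically a directed path in $\tilde G(\rV)$. It therefore suffices to establish the following key claim: for every edge $e$ of $\tilde G(\rV)$ from $u$ to $v$, there is a directed path from $u$ to $v$ lying entirely in $G(\rV)$. Once this is known, concatenating the replacement paths provided by the claim over the successive edges of a given directed path in $G$ yields a directed path in $G(\rV)$ with the same endpoints, which is exactly the main conclusion.

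Next, I would prove the key claim by induction on the length $k$ of the edge $e$, where length means the difference of the degrees of its endpoints. For $k=1$, the construction of $G(\rV)$ keeps every length-one edge, so $e$ itself works. For $k\geq 2$, if $e$ survives in $G(\rV)$ there is nothing to do; otherwise $e$ was removed at step $k-1$ of the construction. The only delicate point, and what I expect to be the main obstacle, is verifying that the ``remaining edges of lengths $<k$'' invoked when removing $e$ coincide exactly with the edges of length $<k$ in $G(\rV)$: this holds because the iterative procedure processes edges by increasing length and never revisits a length once processed, so the edges of length $<k$ present just before step $k-1$ are precisely those that are never removed in the sequel. Thus at the moment $e$ is removed there is a path from $u$ to $v$ consisting of edges of length $<k$ all belonging to $G(\rV)$, and applying the inductive hypothesis to each such edge refines this into a path in $G(\rV)$.

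Finally, I would deduce the two ``in particular'' consequences. For the connectivity statement, given a connected subgraph $H\subset G$ and two vertices of $V(H)$, I would choose an undirected walk in $H$ between them; each of its edges yields, by the key claim applied with the appropriate orientation, a directed path in $G(\rV)$ between the same pair of endpoints, and concatenation gives an undirected walk in $G(\rV)$ connecting the two chosen vertices, so they lie in a common connected component of $G(\rV)$. For the closedness statement, let $H\subset G(\rV)$ be closed, let $u\in V(H)$, and let $u\to v$ be an edge of $G$; by the key claim there is a directed path $u=w_0\to w_1\to\cdots\to w_m=v$ in $G(\rV)$, and repeated application of the closedness of $H$ in $G(\rV)$ yields $w_1,\dots,w_m\in V(H)$, and in particular $v\in V(H)$.
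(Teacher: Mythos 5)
Your proposal is correct, and it rests on the same key observation as the paper's proof: in the construction of $G(\rV)$, when an edge of $\tilde G(\rV)$ of some length is removed, the witnessing path consists of remaining shorter edges, and because lengths are processed in increasing order and never revisited those remaining edges are exactly the shorter edges of $G(\rV)$. Your route differs slightly in detail: you pass each edge of the $G$-path through $\tilde G(\rV)$ and replace it by a $G(\rV)$-path, then concatenate, whereas the paper's proof invokes Lemma~\ref{trivial lem}(3) to record that the composite edge from $(\rR_i,h_i)$ to $(\rR_j,h_j)$ is itself admissible (hence in $\tilde G(\rV)$), and replaces that single edge. One small redundancy in your argument: once you have observed that the witnessing path for a removed edge consists of edges already in $G(\rV)$, the induction in your key claim does no further work --- the replacement path is already entirely in $G(\rV)$, so applying the inductive hypothesis to its edges is a no-op.
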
 
\begin{proof}
If such path in $G$ exists, then the edge from $(\rR_i,h_i)$ to $(\rR_j,h_j)$  is admissible and therefore it exists in $\tilde G(\rV)$. It follows that either it exists in $G(\rV)$ or it was removed and then there is a path of edges in $G(\rV)$ of smaller length connecting these two vertices.
\end{proof} 

By Lemma \ref{trivial lem}, part (\ref{item:submod_subgraph}), any  submodule of $\rW\subset \rV$ corresponds to a unique closed subgraph $H\subset G(\rV)$ such that $\chi_{q,s}(\rW)=\chi_{q,s}(H)$.

\begin{defi}
    A socle-multiplicity free module $\rV$ is called understandable if for any close subgraph of $G(\rV)$ there is a submodule $\rW\subset \rV$  such that $\chi_{q,s}(\rW)=\chi_{q,s}(H)$.
\end{defi}

In other words, we call a module understandable if there is a bijection between closed subgraphs $H\subset G(\rV)$  and distinct graded $q$-characters of submodules $\rW\subset \rV$.

Thus, a strong graph of an understandable module describes the socles of all submodules of $\rV$.  In particular, it describes the classes of all submodules of $\rV$  in the Grothendieck ring.

If a multiplicity free module $\rV$ is semi-simple then $\rV$ is understandable, and the strong graph $G(\rV)$ is just the graph with no edges. 

In our examples of graphs below, all modules are understandable.

\subsection{The case of multiplicity free.}\label{subsec:graph_mult_free} In this section we assume that in the (non-graded) $q$-character $\chi_q(\rV)$ each dominant monomial has coefficient at most one. In this case the strong graph has good properties which are easy to prove.

Clearly, if each dominant monomial in $\chi_q(\rV)$ has coefficient at most one, then all composition factors $\rR_1,\dots, \rR_k$ of $\rV$ are distinct, that is $\rV$ is multiplicity free.

Let $m_i$ be the dominant top monomials of $\rR_i$. By the assumption $m_i$ has multiplicity one in $\chi_q(\rV)$. Recall that $m_i$ is a rational function of $z$. For each $1\le i \le k$, the module $\rV$ has a unique up to a scalar non-zero vector $v_i$ of $\ell$-weight $m_i$,  $\psi^\pm(z) v_i =m_i v_i$. 

Let $\overline{\rR}_i=\Uqa\, v_i \subset \rV$ be submodules of $\rV$ generated by $v_i$. 

We now construct the  graph $\tilde G(\rV)$ of $\rV$. The vertices of $\tilde G(\rV)$ are labelled by $\rR_i$. We drop the degrees since they are uniquely determined by $\rR_i$. We draw a directed edge from $\rR_i$ to $\rR_j$ if and only if $\rR_j$ is a composition factor of $\overline{\rR}_i$ or, equivalently, if and only if $\overline{\rR}_j\subset \overline{\rR}_i$. 

As before, we remove excessive edges to eliminate non-directed cycles and obtain the  graph $\bar G(\rV)$.

Given a subgraph of $H\subset \bar G(\rV)$ define the submodule $\rW(H)\subset \rV$ by $\rW=\mathop{+}\limits_{\rR_i\in H} \overline{\rR}_i$. 

We have $\rW(H_1\cup H_2)=\rW(H_1)+\rW(H_2)$.

Note that for a general socle-multiplicity free modules there is no natural map from subgraphs to submodules. A subgraph may correspond to a family of modules with the same $q$-character. In particular, one may have two submodules $\rW_1$ and $\rW_2$ such that the graph of $\rW_1+\rW_2$ is larger then the union of graphs of $\rW_1$ and $\rW_2$, see Example \ref{ex:soc_not_additive}.

\begin{prop}
    There is a bijection between closed subgraphs of $\bar G(\rV)$ and submodules of $\rV$ sending $H\subset\bar  G(\rV)$ to $\rW(H)$. 

    Moreover, if $H_1,H_2\subset\bar  G(\rV)$ are closed subgraphs of $\bar G(\rV)$, then $H_1\cap H_2$ is a closed subgraph and $\rW(H_1\cap H_2)=\rW(H_1)\cap \rW(H_2)$. 
\end{prop}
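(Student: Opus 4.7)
The plan is to exhibit an explicit inverse to the map $H\mapsto \rW(H)$, namely $\Psi(\rW):=\{\rR_i : v_i\in\rW\}$, and to reduce everything to one key lemma: for every submodule $\rW\subset \rV$,
\[
\Psi(\rW) \;=\; \{\rR_i : \rR_i\text{ is a composition factor of }\rW\}.
\]
Both directions of this lemma use the multiplicity-free hypothesis twice. On one hand, $m_i$ appears with multiplicity one in $\chi_q(\rV)$, so the generalized $\ell$-weight space $\rV[m_i]$ is one-dimensional and, being a one-dimensional space on which $\psi^{\pm}(z)-m_i$ acts nilpotently, consists of honest $\ell$-weight vectors; hence $\rV[m_i]=\mbC\,v_i$ and any submodule either contains $v_i$ or meets $\rV[m_i]$ trivially. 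On the other hand, since $m_i$ appears with multiplicity at least one in $\chi_q(\rR_i)$ and with total multiplicity one in $\chi_q(\rV)=\sum_j\chi_q(\rR_j)$, the monomial $m_i$ does not appear in any $\chi_q(\rR_j)$ with $j\neq i$. Combining these: $v_i\in \rW\iff m_i\in \chi_q(\rW)\iff \rR_i$ is a composition factor of $\rW$. As a bonus, $\Psi(\rW)$ is automatically closed in $\bar G(\rV)$, because $v_i\in \rW$ implies $\overline{\rR}_i\subset \rW$, and then every composition factor of $\overline{\rR}_i$ — equivalently every vertex reachable from $\rR_i$ in $\bar G(\rV)$ — lies in $\Psi(\rW)$.

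Next I would check both compositions equal the identity. For $\rW\mapsto \Psi(\rW)\mapsto \rW(\Psi(\rW))$: by construction $\rW(\Psi(\rW))=\sum_{v_i\in \rW}\overline{\rR}_i \subset \rW$, and the key lemma applied to each side shows the two submodules have identical composition-factor sets. Since $\rV$, and hence every submodule of $\rV$, is multiplicity-free in dominant monomials, equal composition-factor sets force equal $q$-characters; the quotient $\rW/\rW(\Psi(\rW))$ then has zero $q$-character and so vanishes. For a closed $H\mapsto \rW(H)\mapsto \Psi(\rW(H))$: the composition factors of $\rW(H)=\sum_{\rR_j\in H}\overline{\rR}_j$ are the union over $\rR_j\in H$ of the composition factors of $\overline{\rR}_j$, and the composition factors of $\overline{\rR}_j$ are exactly the vertices reachable from $\rR_j$ in $\tilde G(\rV)$, which coincide with those reachable in $\bar G(\rV)$. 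Closedness of $H$ now yields that this union equals $H$, and the key lemma identifies it with $\Psi(\rW(H))$.

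Finally, the intersection statement is almost formal once the bijection is in place. Closedness of $H_1\cap H_2$ is immediate from closedness of each $H_i$, and the inclusion $\rW(H_1\cap H_2)\subset \rW(H_1)\cap \rW(H_2)$ follows from the definition of $\rW(\cdot)$. For the reverse inclusion one applies $\Psi$, which commutes with intersections (since a vector lies in $\rW_1\cap \rW_2$ iff it lies in both), to obtain
\[
\Psi(\rW(H_1)\cap \rW(H_2)) = \Psi(\rW(H_1))\cap \Psi(\rW(H_2)) = H_1\cap H_2 = \Psi(\rW(H_1\cap H_2)),
\]
and injectivity of $\Psi$ forces $\rW(H_1)\cap \rW(H_2)=\rW(H_1\cap H_2)$. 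The only genuine input of the argument is the key lemma; the rest is bookkeeping with composition-factor sets plus the general principle, itself a consequence of the lemma, that in the multiplicity-free setting a submodule of $\rV$ is determined by its $q$-character.
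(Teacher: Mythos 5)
Your proof is correct and follows the same core route as the paper: the essential observation in both is that multiplicity-freeness forces $m_i$ to occur only in $\chi_q(\rR_i)$ and $\rV[m_i]$ to be one-dimensional, so that $v_i\in\rW$ iff $\rR_i$ is a composition factor of $\rW$, after which the bijection is bookkeeping with composition-factor sets. You package this as an explicit inverse map $\Psi$ and spell out a couple of details the paper leaves tacit (why $\rV[m_i]$ consists of honest $\ell$-weight vectors, why $v_i\in\sum_j\overline\rR_j$ localizes to some $\overline\rR_j$, why reachability in $\bar G(\rV)$ equals reachability in $\tilde G(\rV)$), and you additionally supply a proof of the intersection equality $\rW(H_1\cap H_2)=\rW(H_1)\cap\rW(H_2)$ via the identity $\Psi(\rW_1\cap\rW_2)=\Psi(\rW_1)\cap\Psi(\rW_2)$ and injectivity of $\Psi$ — a clause of the proposition that the paper's proof in fact does not address explicitly.
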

\begin{proof}
A submodule $\rW\subset \rV$ has a composition factor  $\rR_i$ if and only if $\chi_q(\rW)$ contains $m_i$. In particular, a submodule $\rW\subset \rV$ has a composition factor  $\rR_i$ if and only if $\rW$ contains a vector of $\ell$-weight $m_i$.

Therefore, a submodule $\rW\subset \rV$ has a composition factor  $\rR_i$ if and only if $v_i\in \rW$. 

It follows that $\rW$ has composition factor $\rR_i$ if and only if $\overline{\rR}_i\subset \rW$. Therefore, $\rW \supset \mathop{+}\limits_i \bar\rR_i$, where the sum is taken over all $\rR_i$ in composition series of $\rW$. This inclusion is equality since  $\mathop{+}\limits_i \bar\rR_i$ exhausts composition series of $\rW$. 
Therefore, any submodule $\rW$ has the form $\rW(H)$, where  $H$ is the subgraph with vertices corresponding to the composition factors of $W$.

The subgraph $H$ is closed. Indeed, let  $\rR_i$ be a composition factor of $\rW$. If there is an edge from the $i$-th vertex to the $j$-th vertex, then $\overline{\rR}_i \supset \overline{\rR}_j$ and $\rR_j$ is also a composition factor of $\rW$.

 The subgraph $H$ is unique since vertices are recovered by the composition series of $\rW$.

\end{proof}

In particular, we obtain the following corollary.
\begin{cor}
    The graph $\bar G(\rV)$ is the strong graph of $\rV$, $\bar G(\rV)=G(\rV)$ and $\rV$ is understandable.
\end{cor}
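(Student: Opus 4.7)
The plan is to derive the corollary directly from the preceding proposition together with the definitions of admissibility, the strong graph, and understandability.

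First I would verify that every edge of $\bar G(\rV)$ is admissible, i.e.\ that $\bar G(\rV)$ is a graph of $\rV$ in the sense of Section \ref{subsec:graphs_of_mods}. An edge in $\tilde G(\rV)$ from $\rR_i$ to $\rR_j$ means by construction that $\overline{\rR}_j\subset\overline{\rR}_i$. Given any submodule $\rW\subset\rV$ whose composition factors include $\rR_i$, the proof of the proposition shows $\overline{\rR}_i\subset \rW$; hence $\overline{\rR}_j\subset\rW$ as well, so $\rR_j$ is a composition factor of $\rW$ at its degree. This shows admissibility of every edge of $\tilde G(\rV)$, and since $\bar G(\rV)$ is a subgraph of $\tilde G(\rV)$, all edges of $\bar G(\rV)$ are admissible.

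Next I would show $\bar G(\rV)=G(\rV)$. It suffices to verify that $\tilde G(\rV)$ coincides with the maximal admissible graph from Section \ref{subsec:graphs_of_mods}, because the edge-removal reduction used to pass from $\tilde G(\rV)$ to $\bar G(\rV)$ is exactly the one used to produce $G(\rV)$. So suppose an edge from $\rR_i$ to $\rR_j$ is admissible. The submodule $\overline{\rR}_i$ itself has $\rR_i$ as a composition factor at degree $h_i$, so by admissibility $\rR_j$ must be a composition factor of $\overline{\rR}_i$; but this is precisely the defining condition for the edge to lie in $\tilde G(\rV)$. Hence every admissible edge is in $\tilde G(\rV)$, and the converse was established in the previous paragraph, so $\tilde G(\rV)$ equals the graph of all admissible edges and consequently $\bar G(\rV)=G(\rV)$.

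Finally I would establish understandability. Given any closed subgraph $H\subset G(\rV)$, form the submodule $\rW(H)=\sum_{\rR_i\in H}\overline{\rR}_i$. The proof of the proposition identifies the subgraph corresponding to $\rW(H)$ as the set of composition factors of $\rW(H)$; closedness of $H$ together with the fact that $\overline{\rR}_i\subset\rW(H)$ for each $\rR_i\in H$ implies that this set is exactly $H$. Thus $\chi_{q,s}(\rW(H))=\chi_{q,s}(H)$, which is the definition of understandability.

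The one point that requires a moment of care is the last step: one must check that the composition factors of $\rW(H)=\sum_{\rR_i\in H}\overline{\rR}_i$ are precisely the vertices of $H$ and not more, which is where closedness of $H$ is essential; this is immediate from the proposition since each $\overline{\rR}_i$ for $\rR_i\in H$ contributes only composition factors corresponding to vertices reachable from $\rR_i$ in $G(\rV)$, and those all lie in $H$ by closedness. No new obstacle arises beyond invoking the bijection already proved.
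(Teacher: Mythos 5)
Your proof is correct; the paper states this corollary without a written proof, so there is nothing to compare against. Your derivation---showing that the two descriptions of $\tilde G(\rV)$ agree (admissibility in the general sense of Section~\ref{subsec:graphs_of_mods} versus containment $\overline{\rR}_j\subset\overline{\rR}_i$), that the edge-removal reductions to $G(\rV)$ and $\bar G(\rV)$ therefore coincide, and that the bijection from the preceding proposition yields understandability---is the intended one, and it correctly uses the characterization from the proposition's proof that $\rR_i$ is a composition factor of a submodule $\rW$ if and only if $\overline{\rR}_i\subset\rW$.
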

Note that under our assumptions, any two distinct submodules of $\rV$ are not isomorphic and even have distinct $q$-characters. In particular, there is a finite number of submodules.

We illustrate our construction in an example.
\begin{example}\label{ex:weight_graph}
    Consider the module $w = 0246$. From the $q$-characters, the composition factors and the corresponding dominant monomials are 
    \begin{align*}
    (\rR_1,\rR_2,\rR_3,\rR_4,\rR_5)&=([0,6], [0,2], [4,6], 06, \mbC), \\
    (m_1,m_2,m_3,m_4,m_5)&=(1_01_21_41_6,1_01_2, 1_41_6,1_01_6,1).
    \end{align*}
    Computing $\Hom_{\Uqa} (\rR_i,w)$, we see that $\soc(w)=\rR_4\oplus \rR_5$. Taking the tensor product of submodule $\mbC \subset 02$ with $46$, we obtain $\mbC \subset 46 \subset w$. Analogously $\mbC \subset 02 \subset w$. Therefore, quotient $w/\mbC$  contains $[0,2] \oplus [4,6]$, which implies $\soc_2(w) \supset [0,2] \oplus [4,6]$, moreover, we have $\overline{\rR}_2=02$ and $\overline{\rR}_3=46$. Since $w$ is a Weyl module, $w$ is cyclic from the highest $\ell$-weight vector (see \cite{chari2002braid}), $\soc_3(w) = [0,6]$ and $\bar{R}_1 = w$. Comparing to the composition series, we obtain $\soc_2(w) = [0,2] \oplus[4,6]$. Therefore, the degrees of $\rR_i$ are  $\{3,2,2,1,1\}$, respectively. Reducing, we remove the edge from $\rR_1$ to $\rR_5$.   Thus, we have the graph $G(w)$ given by the following picture.
    
\begin{center}
    \begin{tikzpicture}
\node at (0,0.2) {$[0,6]$};
\node at (-1,-1) {$[0,2]$};
\node at (3.5,-2) {$06$};
\node at (1,-1) {$[4,6]$};
\node at (0,-2) {$\mbC$};
\draw[->] (0,0) to (-1,-0.8);
\draw[->] (0,0) to (1,-0.8);
\draw[->] (1,-1.2) to (0.2,-1.8);
\draw[->] (-1,-1.2) to (-0.2,-1.8);

\draw[->] (0,0) to (3.5,-1.8);
\end{tikzpicture}
\end{center}
We have four edges of length 1 and one edge of length 2.

The only proper non-trivial indecomposable submodules of $w$ are $\overline{\rR}_2, \overline{\rR}_3, \rR_4, \rR_5$ and $\overline{\rR}_2+\overline{\rR}_3$.
\end{example}

We note that all dominant monomials of a word $w$ have coefficient at most one if and only if for each letter $a\in 2\mbZ$ occurring in $w$ more than once, the letter $a+2$ is not present in $w$ at all. So, most interesting words do not have this property. 

\subsection{One-strong graphs.}
Practically, it is not clear how to compute strong graph of an $\Uqa$-module. In this section we give an alternative construction of a subgraph of strong graph obtained by removing all edges of the length greater than $1$. This construction can be used even by a computer program.

First, we study  modules of height two in more detail. We use  Krull-Schmidt theorem for $\Uqa$ modules. The Krull-Schmidt theorem asserts that any module is isomorphic to  a direct sum of indecomposable modules, and such  decomposition is unique up to permutation of summands, see \cite{etingof2011introduction}.
\begin{lemma}\label{lemma:nonss_compl}
Assume that $\rV/\soc(\rV)$ is simple. Let  $\rW \subset \rV$ be a non-semi-simple submodule. Then $$\rV = \rW \oplus \rM,$$ where $\rM$ is semi-simple.
\end{lemma}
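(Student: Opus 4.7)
The plan is to exploit the hypothesis that $\rV$ has height exactly $2$ (since $\soc(\rV) \neq \rV$ because $\rV/\soc(\rV)$ is simple and nonzero) with simple top socle layer. First I would record that $\rW$ must also have height exactly $2$: the height of $\rW$ is at most that of $\rV$, which is $2$, and the non-semi-simplicity of $\rW$ rules out height $\leq 1$.

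Next I would use Lemma \ref{lemma:max_submod_filtration} to write $\soc(\rW) = \soc(\rV) \cap \rW$, and look at the composition
$$
\rW \hookrightarrow \rV \twoheadrightarrow \rV/\soc(\rV),
$$
whose kernel is $\soc(\rW)$. Its image is $\rW/\soc(\rW)$, a nonzero submodule of the simple module $\rV/\soc(\rV)$, hence it equals $\rV/\soc(\rV)$. In particular, this shows $\rW + \soc(\rV) = \rV$.

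Now I would produce the complement inside the semi-simple socle: since $\soc(\rV)$ is semi-simple, the submodule $\soc(\rW) \subset \soc(\rV)$ admits a complement $\rM$, so that
$$
\soc(\rV) = \soc(\rW) \oplus \rM,
$$
and $\rM$ is semi-simple. Then $\rW + \rM = \rW + \soc(\rW) + \rM = \rW + \soc(\rV) = \rV$, and on the other hand $\rW \cap \rM \subset \rW \cap \soc(\rV) = \soc(\rW)$, while $\rW \cap \rM \subset \rM$, so $\rW \cap \rM \subset \soc(\rW) \cap \rM = 0$. Therefore $\rV = \rW \oplus \rM$ with $\rM$ semi-simple, as required.

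There is no real obstacle here; the key technical input is Lemma \ref{lemma:max_submod_filtration} (which identifies $\soc(\rW)$ as $\soc(\rV) \cap \rW$) together with the elementary fact that any submodule of a semi-simple module has a complement. The simplicity of $\rV/\soc(\rV)$ enters only to guarantee that $\rW$ surjects onto the top layer, i.e.\ that $\rW + \soc(\rV) = \rV$.
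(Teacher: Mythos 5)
Your proof is correct and follows essentially the same route as the paper's: show $\pi(\rW) = \rV/\soc(\rV)$ so that $\rW + \soc(\rV) = \rV$, then take a semi-simple complement $\rM$ to $\rW \cap \soc(\rV)$ inside $\soc(\rV)$ and check this gives a direct sum decomposition. Your version spells out a few more of the elementary checks (the intersection computation, and the identification $\rW \cap \soc(\rV) = \soc(\rW)$ via Lemma \ref{lemma:max_submod_filtration}, which the paper handles implicitly), but there is no substantive difference.
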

\begin{proof}
    Let $\pi: \rV \twoheadrightarrow \rV/\soc(\rV)$  be the canonical projection. Then since $\rW \not \subset \soc(\rV)$, we have $\pi(\rW) \neq 0$, which due to Shur lemma gives $\pi(\rW) \cong \rV/\soc(\rV)$. This implies $\rW + \soc(\rV) = \rV$. Since $\soc(\rV)$ is semi-simple, we have $\soc(\rV) = \rW \cap \soc(\rV) \oplus \rM$ for some semi-simple $\rM$. This gives $\rV = \rW \oplus \rM$.
\end{proof}

\begin{cor}\label{cor:prop_sssubmod}
    Let $\rV$ be indecomposable and let $\rV/\soc(\rV)$ be simple. Then any proper submodule $\rM \subsetneqq \rV$ is contained in $\soc(\rV)$.
\end{cor}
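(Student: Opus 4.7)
The plan is to derive the corollary directly from Lemma \ref{lemma:nonss_compl}, together with the defining property of the socle as the maximal semi-simple submodule. The argument should be short, essentially a one-step contrapositive.

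First I would argue by contradiction: suppose $\rM \subsetneqq \rV$ is a proper submodule with $\rM \not\subset \soc(\rV)$. Since $\soc(\rV)$ contains every semi-simple submodule of $\rV$ (it is the maximal one by definition), the assumption $\rM \not\subset \soc(\rV)$ forces $\rM$ to be non-semi-simple.

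Now I would invoke Lemma \ref{lemma:nonss_compl}, whose hypothesis ($\rV/\soc(\rV)$ simple) matches ours exactly. Applying it to the non-semi-simple submodule $\rM \subset \rV$, we obtain a decomposition $\rV = \rM \oplus \rN$ for some semi-simple module $\rN$. Since $\rV$ is indecomposable, one of the summands must be zero; as $\rM \neq 0$ (being non-semi-simple), we conclude $\rN = 0$ and hence $\rM = \rV$, contradicting $\rM \subsetneqq \rV$.

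The main (and essentially only) step is the observation that ``not contained in the socle'' is equivalent to ``not semi-simple'' for a submodule, which then lets Lemma \ref{lemma:nonss_compl} do all the work. There is no genuine obstacle here; the corollary is really just a direct packaging of the previous lemma under the additional hypothesis of indecomposability.
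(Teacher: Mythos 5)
Your proof is correct and takes essentially the same approach as the paper: both reduce to Lemma \ref{lemma:nonss_compl} via the observation that a submodule not contained in the socle cannot be semi-simple, and then use indecomposability to rule out the resulting direct sum. Your version is slightly more explicit about why the decomposition $\rV = \rM \oplus \rN$ contradicts indecomposability (one summand vanishes, and properness rules out $\rN = 0$), which the paper leaves implicit.
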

\begin{proof}
    Assume the opposite, let $\rM$ be a proper non-semi-simple submodule. Then by Lemma \ref{lemma:nonss_compl} $\rV$ is a direct sum which is impossible since $\rV$ is indecomposable.
\end{proof}
\begin{lemma}\label{lemma:ind_submod}
    Let $\rV$ be indecomposable and let $\rV/\soc(\rV)$ be simple. Let $\rM\subsetneqq \rV$ be a submodule. If $\rV/\rM$ is semi-simple then $\rM = \soc(\rV)$.
\end{lemma}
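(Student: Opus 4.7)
The plan is to derive the lemma from Corollary \ref{cor:prop_sssubmod}, which says that every proper submodule of $\rV$ is contained in $\soc(\rV)$. We would apply this corollary twice: once to $\rM$ itself, and once to a carefully chosen intermediate submodule $\rW$ built from the semi-simple structure of $\rV/\rM$.

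First, since $\rM \subsetneq \rV$ is proper, Corollary \ref{cor:prop_sssubmod} immediately gives $\rM \subset \soc(\rV)$. So the only thing to rule out is the possibility that $\rM \subsetneq \soc(\rV)$ strictly. Suppose for contradiction this is the case. Then $\soc(\rV)/\rM$ is a non-zero submodule of $\rV/\rM$. Since $\rV/\rM$ is assumed semi-simple, we can find a complement: there is a submodule $\rW \subset \rV$ with $\rM \subset \rW$, such that
\begin{equation*}
\rW/\rM \oplus \soc(\rV)/\rM = \rV/\rM.
\end{equation*}
Unwinding this, $\rW \cap \soc(\rV) = \rM$ and $\rW + \soc(\rV) = \rV$, while $\rW/\rM \cong \rV/\soc(\rV)$ is non-zero (in fact simple by hypothesis).

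From $\rW/\rM \neq 0$ we get $\rW \supsetneq \rM$; from $\soc(\rV)/\rM \neq 0$ (and the fact that this is a complement, hence not contained in $\rW/\rM$) we get $\rW \neq \rV$. Thus $\rW$ is a proper submodule of $\rV$, and a second application of Corollary \ref{cor:prop_sssubmod} gives $\rW \subset \soc(\rV)$. But then $\rW = \rW \cap \soc(\rV) = \rM$, contradicting $\rW \supsetneq \rM$. Therefore $\rM = \soc(\rV)$, as claimed. The main (and essentially only) point to handle carefully is the extraction of $\rW$ from semi-simplicity of $\rV/\rM$; after that the double application of Corollary \ref{cor:prop_sssubmod} closes the argument with no further work.
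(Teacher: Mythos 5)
Your proof is correct and is essentially the same strategy as the paper's: assume $\rM \subsetneq \soc(\rV)$, use semi-simplicity of $\rV/\rM$ to produce a direct-sum decomposition, pull back to get a proper submodule of $\rV$, and apply Corollary \ref{cor:prop_sssubmod} again to reach a contradiction. The only cosmetic difference is that you take the complement of $\soc(\rV)/\rM$ specifically (and conclude $\rW = \rM$ after intersecting with $\soc(\rV)$), whereas the paper takes an arbitrary non-trivial decomposition $\rV/\rM = \rM_1 \oplus \rM_2$ and observes that $\pi^{-1}(\rM_1) + \pi^{-1}(\rM_2) = \rV$ cannot sit inside $\soc(\rV)$.
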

\begin{proof}
     By Corollary \ref{cor:prop_sssubmod}, $\rM\subset \soc(\rV)$.  Assume $\rM\neq \soc(\rV)$, then $\rV/\rM=\rM_1\oplus \rM_2$ is a non-trivial direct sum. Let $\pi:\rV\to \rV/\rM$ be the canonical projection. Then by Corollary \ref{cor:prop_sssubmod}, $\pi^{-1}(\rM_i)\subset \soc(\rV)$. It follows that $\rV=\pi^{-1}(\rM_1)+ \pi^{-1}(\rM_2)\subset \soc (\rV)$ which is a contradiction.
\end{proof}

\begin{lemma}\label{lemma:intssqoutss}
    Assume that $\rV/\soc(\rV)$ is simple. Let $\rM_1, \rM_2 \subset \soc(\rV)$ be semi-simple submodules of $\rV$ such that $\rV/\rM_1$ and $\rV/\rM_2$ are semi-simple. Then $\rV/(\rM_1\cap \rM_2)$ is semi-simple.
\end{lemma}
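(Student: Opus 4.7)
The plan is to observe that this lemma does not actually depend on the extra hypotheses (that $\rV/\soc(\rV)$ is simple, and that $\rM_1, \rM_2 \subset \soc(\rV)$); it is a general statement about semi-simple quotients in any abelian category where submodules of semi-simple objects are semi-simple.

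First I would introduce the diagonal homomorphism
\begin{equation*}
\Phi : \rV/(\rM_1 \cap \rM_2) \longrightarrow \rV/\rM_1 \oplus \rV/\rM_2, \qquad v + (\rM_1 \cap \rM_2) \longmapsto (v + \rM_1,\ v + \rM_2).
\end{equation*}
A vector $v + (\rM_1 \cap \rM_2)$ lies in $\ker\Phi$ exactly when $v \in \rM_1$ and $v \in \rM_2$, i.e.\ when $v \in \rM_1 \cap \rM_2$; hence $\Phi$ is injective. This identifies $\rV/(\rM_1 \cap \rM_2)$ with a submodule of $\rV/\rM_1 \oplus \rV/\rM_2$.

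Next I would note that $\rV/\rM_1$ and $\rV/\rM_2$ are semi-simple by assumption, so their direct sum is semi-simple as well. Any submodule of a semi-simple $\Uqa$-module is semi-simple (this is a standard property of semi-simple modules over any ring, since a semi-simple module is a sum of simples and any submodule of such a sum is itself a direct sum of simples). Applying this to the image of $\Phi$ gives that $\rV/(\rM_1 \cap \rM_2)$ is semi-simple, which is exactly the conclusion.

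The main point, such as it is, is just the identification of $\ker\Phi$ with $\rM_1 \cap \rM_2$ — there is no real obstacle, and the extra structural assumptions of the lemma are not needed for the argument (they presumably reflect how the lemma will be used in the subsequent discussion of graphs of modules).
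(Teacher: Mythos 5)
Your proof is correct, and it takes a genuinely different and simpler route than the paper's. The paper proves the lemma by first applying the Krull--Schmidt theorem to write $\rV = \rW \oplus \rN$ with $\rW$ indecomposable and $\rN \subset \soc(\rV)$ semi-simple, reducing to the indecomposable case, and then invoking Lemma \ref{lemma:ind_submod} to conclude that $\rM_1 = \rM_2 = \soc(\rV)$ — so under the stated hypotheses the paper actually shows that the intersection is forced to equal $\soc(\rV)$, a stronger statement than semi-simplicity of the quotient. Your argument instead uses the diagonal embedding
\begin{equation*}
\rV/(\rM_1 \cap \rM_2) \hookrightarrow \rV/\rM_1 \oplus \rV/\rM_2,
\end{equation*}
together with the fact that submodules of semi-simple modules are semi-simple, which proves the lemma exactly as stated without the hypotheses that $\rV/\soc(\rV)$ is simple or that $\rM_i \subset \soc(\rV)$. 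You are right that those hypotheses play no logical role in the conclusion; they reflect the context in which the lemma is applied (defining $\rM(\rV)$ for height-two modules). Your version is more elementary, more general, and sidesteps the slight delicacy in the paper's "taking quotients by $\rN$" reduction step (where one would need to check that passing to $(\rM_i + \rN)/\rN$ preserves enough information about $\rM_1 \cap \rM_2$). Both proofs are valid; yours is the cleaner one if only the stated conclusion is needed.
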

\begin{proof}

By Krull-Schhmidt theorem for modules, $\rV=\rW\oplus \rN$ where $\rN\subset \soc(\rV)$ is semi-simple, and $\rW$ is an indecomposable module such that $\rW/\soc(\rW)\cong\rV/\soc(\rV)$ is simple. Taking quotients of all modules by $\rN$, we are reduced to the case $\rN=0$. 

Then by Lemma \ref{lemma:ind_submod}, $\rM_1=\rM_2=\soc(\rV)$. Therefore, $\rM_1\cap \rM_2=\soc(\rV)$.

\end{proof}

\begin{defi}
    For a module $\rV$ such that $\rV/\soc(\rV)$ is simple denote by $\rM(\rV)$ the minimal semi-simple submodule of $\rV$ such that $\rV/\rM(\rV)$ is semi-simple.  
\end{defi}
By Lemma \ref{lemma:intssqoutss}, $\rM(\rV)$ is unique. The module $\rM(\rV)$ can be described as follows.

\begin{prop}\label{prop:mv_as_icomp}
    Assume that $\rV/\soc(\rV)$ is simple and $\soc(\rV)$ is multiplicity free. Then $\rV$ admits a decomposition $\rV = \rW \oplus \rN$, where $\rN$ is semi-simple and $\rW$ is indecomposable of height $2$. Then $\rM(\rV) = \soc(\rW)$.
\end{prop}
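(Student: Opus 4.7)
The plan is to separate the statement into two steps: first produce the decomposition $\rV = \rW \oplus \rN$ with $\rW$ indecomposable of height $2$, and then identify $\rM(\rV)$ with $\soc(\rW)$.

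For the decomposition, I would invoke the Krull-Schmidt theorem to write $\rV = \bigoplus_i \rV_i$ as a direct sum of indecomposable summands. Each $\rV_i$ inherits a socle filtration of length at most two, and a summand has height $1$ exactly when it is simple. Any height-$2$ summand contributes a nonzero semisimple quotient $\rV_i/\soc(\rV_i)$ to $\rV/\soc(\rV)$, so the hypothesis that $\rV/\soc(\rV)$ is simple forces precisely one summand, call it $\rW$, to have height $2$; the remaining simple summands are grouped into $\rN$. Passing to socle quotients yields $\rW/\soc(\rW) \cong \rV/\soc(\rV)$, so $\rW$ is indeed indecomposable of height $2$.

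For the identification $\rM(\rV) = \soc(\rW)$, the inclusion $\rM(\rV) \subset \soc(\rW)$ is immediate since $\rV/\soc(\rW) \cong (\rW/\soc(\rW)) \oplus \rN$ is semi-simple. For the reverse inclusion I would take any semi-simple submodule $\rM' \subset \rV$ with $\rV/\rM'$ semi-simple. Since $\rM'$ is semi-simple, $\rM' \subset \soc(\rV) = \soc(\rW) \oplus \rN$, and because $\soc(\rV)$ is multiplicity free, $\rM'$ decomposes as $\rM' = \rM_W \oplus \rM_N$ with $\rM_W \subset \soc(\rW)$ and $\rM_N \subset \rN$. Then $\rV/\rM' \cong (\rW/\rM_W) \oplus (\rN/\rM_N)$, so semi-simplicity forces $\rW/\rM_W$ to be semi-simple. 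Since $\rM_W \subset \soc(\rW) \subsetneq \rW$, Lemma \ref{lemma:ind_submod} applied to the indecomposable module $\rW$ gives $\rM_W = \soc(\rW)$, hence $\soc(\rW) \subset \rM'$, and minimality yields $\rM(\rV) = \soc(\rW)$.

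The main subtlety will be the splitting $\rM' = \rM_W \oplus \rM_N$: this step genuinely uses the multiplicity-freeness of $\soc(\rV)$, since otherwise the simple summands of $\rM'$ isomorphic to factors appearing in both $\soc(\rW)$ and $\rN$ would not be canonically apportioned. Once this splitting is in hand, everything else reduces to a direct application of Lemma \ref{lemma:ind_submod} to $\rW$.
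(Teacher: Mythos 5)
Your proof is correct, and the Krull--Schmidt decomposition step mirrors the paper's argument exactly. The genuine difference is in how you establish $\soc(\rW) \subset \rM(\rV)$. You take an arbitrary semi-simple $\rM'$ with $\rV/\rM'$ semi-simple, split it as $\rM' = \rM_W \oplus \rM_N$ using the multiplicity-freeness of $\soc(\rV)$, and then apply Lemma \ref{lemma:ind_submod} to $\rW$ and $\rM_W \subsetneq \rW$ to force $\rM_W = \soc(\rW) \subset \rM'$. The paper instead observes that $\rM(\rV) \subset \soc(\rW)$ (since $\soc(\rW)$ is one admissible candidate and $\rM(\rV)$ is by definition the minimum, with uniqueness supplied by Lemma \ref{lemma:intssqoutss}), and then applies Lemma \ref{lemma:ind_submod} a single time to the inclusion $\rM(\rV) \subset \rW$, noting that $\rW/\rM(\rV)$ is semi-simple as a submodule of $\rV/\rM(\rV)$. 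The paper's route is slightly shorter and, notably, makes no use of the multiplicity-free hypothesis at all, whereas your splitting $\rM' = \rM_W \oplus \rM_N$ genuinely requires it, as you correctly flag. Your version has the compensating merit of re-deriving the minimality of $\rM(\rV)$ from scratch (showing every valid $\rM'$ contains $\soc(\rW)$), so it does not lean on the prior uniqueness lemma; the trade-off is that it is the argument that actually consumes the multiplicity-freeness assumption in the statement.
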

\begin{proof}
    By the Krull-Schmidt theorem, there is a decomposition $\rV = \bigoplus\limits_{j = 1}^{N}\rW_{j}$, where $\rW_j$ are indecomposable modules, and classes of isomorphisms of $\rW_j$ are defined uniquely up to a permutation.
(
    Denote $\rR = \rV/\soc(\rV)$. Then, $\rR = \bigoplus\limits_{j = 1}^{N}\rW_j/\soc(\rW_{j})$, and since $\rR$ is simple all $\rW_j$ except for one are irreducible. This implies the decomposition $\rV = \rW \oplus \rN$.

    Note that $\rV/\soc(\rW) = \rW/\soc(\rW) \oplus \rN$, is semi-simple, which gives $\rM(\rV) \subset \soc(\rW)$. The module $\rW/\rM(\rV) \subset \rV/\rM(\rV)$, is semi-simple, hence by Lemma \ref{lemma:ind_submod} $\rM(\rV) = \soc(\rW)$.
    
\end{proof}

Note that in the setting of Proposition \ref{prop:mv_as_icomp} the submodule $\rW$ may be not unique, however, the submodule $\soc(\rW)$ does not depend on this choice, see Example \ref{ex:soc_not_additive}, in that example, $\rW\cong 20$.

\medskip

Recall, that for a socle-multiplicity free module $\rV$ with composition factors $\rR_i$ of degree $h_i$,  we consider directed graphs with vertices $(\rR_i,h_i)$.

\begin{defi}\label{def:one_strong_graph}
Let $\rV$ be a socle-multiplicity free module of height two.
Define the one-strong graph  $\Gamma(\rV)$ as follows. Given a vertex $(\rR_i, 2)$, let $\rW_i \subset\rV$ be the unique submodule such that $\soc(\rW_i)= \soc(\rV)$ and $\rW_i/\soc(\rV) \cong \rR_i$. The  graph $\Gamma(\rV)$ has an edge connecting vertex $(\rR_i, 2)$ to vertex $(\rR_j,1)$  if and only if $\rR_j\subset \rM(\rW_i)$.

Let $\rV$ be an arbitrary socle-multiplicity free module.
Define the one-strong graph $\Gamma(\rV)$ as follows. 

The graph $\Gamma(\rV)$ has an edge connecting vertex  
$(R_i,h_i)$ to vertex $(R_j,h_j)$ if and only if $h_j=h_i-1$ and 
if there is an edge connecting these two vertices in one-strong graph $\Gamma(\mcls_{h_i}(\rV)/\mcls_{h_i-2}(\rV))$ of the height two module $\mcls_{h_i}(\rV)/\mcls_{h_i-2}(\rV)$.
\end{defi}

Note that $ht(\mcls_{h_i}(\rV)/\mcls_{h_i-2}(\rV))=2$ since by $\mcls_{h_i}(\rV)/\mcls_{h_i-2}(\rV))\cong \mathcal{S}_2(\rV/\mcls_{h_i-2})$ by \eqref{eq:filtr_quot1}.
The module $\rW_i \subset \mcls_{h_i}(\rV)/\mcls_{h_i-2}(\rV)$ which decides the presence of edges from $(\rR_i,h_i)$ in the graph $\Gamma(\rV)$ is given by $\pi_{h_i}^{-1}(\rR_i)$, where 
$$\pi_k: \mcls_{k}(\rV)/\mcls_{k-2}(\rV) \twoheadrightarrow \mcls_{k}(\rV)/\mcls_{k-1}(\rV) = \soc_{k}(\rV)$$
is canonical projection. 

\begin{prop}\label{prop:submodsubgraph}
 Let $\rV$ be a socle-multiplicity free module. Let $\rW \subset \rV$ be a submodule. Then $\Gamma(\rW) \subset \Gamma(\rV)$ is a closed subgraph and embedding preserves the grading.
\end{prop}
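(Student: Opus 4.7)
My plan is to reduce to the height-two case and then analyze the one-strong graph via Krull--Schmidt. By Lemma \ref{lemma:max_submod_filtration} we have $\mcls_i(\rW) = \mcls_i(\rV) \cap \rW$, so $\soc_j(\rW) \hookrightarrow \soc_j(\rV)$; since each $\soc_j(\rV)$ is multiplicity free, every composition factor of $\rW$ appearing in $\soc_j(\rW)$ appears in $\soc_j(\rV)$ with the same degree. This identifies the vertex set of $\Gamma(\rW)$ as a grading-preserving subset of that of $\Gamma(\rV)$. The edges of $\Gamma(\rV)$ from degree $h$ to $h-1$ are by definition the edges of $\Gamma(\bar{\rV}_h)$ for the height-two module $\bar{\rV}_h := \mcls_h(\rV)/\mcls_{h-2}(\rV)$, and the same identification yields an embedding $\bar{\rW}_h \hookrightarrow \bar{\rV}_h$. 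It therefore suffices to prove the proposition when $\rV$ has height two.

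In the height-two setting, for a vertex $(\rR_i, 2) \in \Gamma(\rW)$ let $\rV_i \subset \rV$ and $\rW_i \subset \rW$ be the preimages of $\rR_i$ under the canonical projections onto the cosocles, so $\rW_i = \rV_i \cap \rW$. The main claim is $\rM(\rW_i) = \rM(\rV_i)$ as submodules of $\soc(\rV)$. By Proposition \ref{prop:mv_as_icomp} write $\rV_i = \rW' \oplus \rN$ with $\rW'$ indecomposable of height two (cosocle $\rR_i$) and $\rN$ semi-simple, so $\rM(\rV_i) = \soc(\rW')$. Restricting the projection $p : \rV_i \twoheadrightarrow \rW'$ to $\rW_i$, the image $p(\rW_i) \subset \rW'$ surjects onto $\rW'/\soc(\rW') = \rR_i$; by Corollary \ref{cor:prop_sssubmod} this forces $p(\rW_i) = \rW'$. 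Hence $0 \to P \to \rW_i \to \rW' \to 0$ is exact with $P := \rW_i \cap \rN \subset \rN$, classified by a module map $\phi : \rW' \to \rN/P$. Since $\rN/P$ is semi-simple and $\rW'$ is indecomposable of height two, Lemma \ref{lemma:ind_submod} forces $\phi$ to factor through $\rW'/\soc(\rW') = \rR_i$; in particular $\phi|_{\soc(\rW')} = 0$, which gives $\soc(\rW') \subset \rW_i$.

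A direct verification then yields $\rW_i = D \oplus P$, where $D = \{w + \tilde\phi(w) : w \in \rW'\}$ with $\tilde\phi$ a lift of $\phi$ along any chosen splitting $\rN = P \oplus Q$. The summand $D$ is isomorphic to $\rW'$ via $w \mapsto w + \tilde\phi(w)$ (in particular indecomposable of height two), and since $\tilde\phi$ annihilates $\soc(\rW')$ we obtain $\soc(D) = \soc(\rW')$ as submodules of $\rV_i$; hence by Proposition \ref{prop:mv_as_icomp}, $\rM(\rW_i) = \soc(D) = \soc(\rW') = \rM(\rV_i)$. From this equality both closedness and edge matching follow: the target $(\rR_j, 1)$ of any outgoing edge from $(\rR_i, 2)$ in $\Gamma(\rV)$ satisfies $\rR_j \subset \rM(\rV_i) = \rM(\rW_i) \subset \rW$, so the target is a vertex of $\Gamma(\rW)$ and the edge is present in $\Gamma(\rW)$; conversely any edge in $\Gamma(\rW)$ is an edge in $\Gamma(\rV)$. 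The main obstacle is precisely this height-two Krull--Schmidt step when the classifying extension fails to split, which occurs exactly when $\rR_i$ is also a composition factor of $\soc_1(\rV)$; the essential input is the factorization of $\phi$ through the simple cosocle of $\rW'$, forcing $\soc(\rW')$ into $\rW_i$ even when $\rW' \not\subset \rW$.
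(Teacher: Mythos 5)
Your proof is correct, but the height-two step takes a genuinely different route from the paper's. Both proofs reduce to the case $ht(\rV)=2$ via Lemma \ref{lemma:max_submod_filtration} and then aim to show $\rM(\rW_i)=\rM(\rV_i)$ for the preimages $\rW_i=\pi_\rW^{-1}(\rR_i)\subset \rV_i=\pi_\rV^{-1}(\rR_i)$. You decompose the \emph{larger} module first: $\rV_i=\rW'\oplus \rN$ by Proposition \ref{prop:mv_as_icomp}, and then analyze $\rW_i$ as a graph-of-a-map submodule of $\rW'\oplus \rN$, invoking Corollary \ref{cor:prop_sssubmod} to see that $\rW_i$ surjects onto $\rW'$ and Lemma \ref{lemma:ind_submod} to force the classifying map $\phi:\rW'\to\rN/P$ to kill $\soc(\rW')$, which lets you produce an explicit indecomposable complement $D\cong\rW'$ with $\soc(D)=\soc(\rW')$. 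The paper instead decomposes the \emph{smaller} module first: $\rW_i=\rU\oplus\rM$ by Proposition \ref{prop:mv_as_icomp}, then applies Lemma \ref{lemma:nonss_compl} to extend directly to $\rV_i=\rW_i\oplus\rN=\rU\oplus\rM\oplus\rN$, so that $\soc(\rU)$ is literally $\rM(\rW_i)$ and $\rM(\rV_i)$ simultaneously with no further computation. The paper's ordering avoids the graph/lifting argument entirely and is shorter; your version is more explicit about how $\rW_i$ sits inside the Krull--Schmidt decomposition of $\rV_i$, and the observation that the classifying map must factor through the simple cosocle (so $\soc(\rW')\subset\rW_i$ even when $\rW'\not\subset\rW$) is the correct resolution of the subtle point you flagged. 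One small remark: your displayed decomposition $\rW_i=D\oplus P$ already contains the conclusion $\soc(\rW')\subset\rW_i$, so stating that inclusion separately is redundant but harmless.
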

\begin{proof}
    
Recall that by Corollary \ref{cor:subsoc}, vertices of  $\Gamma(\rW)$ are also vertices of $\Gamma(\rV)$.
    
   The edges between $k$th and $(k+1)$st components of $\Gamma(\rV)$ depend on module $\mcls_{k+1}(\rV)/\mcls_{k-1}(\rV)$ only. Since $\mcls_{k+1}(\rW)/\mcls_{k-1}(\rW) \subset \mcls_{k+1}(\rV)/\mcls_{k-1}(\rV)$ it is sufficient to reduce to the case $k = 1$ and $ht(\rV) = 2$. In that case $\mcls_{2}(\rV) = \rV$.

    Let $ht(\rV) = 2$. Let  $\pi_{\rW}: \rW \twoheadrightarrow \rW/\soc(\rW)\subset \rV/\soc(\rV),\;\;\; \pi_{\rV}: \rV \twoheadrightarrow \rV/\soc(\rV)$ be the canonical projections.  Then $\pi_\rW = \pi_\rV|_{\rW}$. Let  $\rR_i \subset \rW/\soc(\rW)$  be an irreducible submodule. Then $\pi_{\rW}^{-1}(\rR_i) \subset \pi_{\rV}^{-1}(\rR_i)$.  By Proposition \ref{prop:mv_as_icomp}, we have decomposition $\pi_{\rW}^{-1}(\rR_i) = \rU \oplus \rM$, where $\rU$ is indecomposable, $ht(\rU)=2$, and $\rM$ is semi-simple. By Lemma \ref{lemma:nonss_compl}  we have decomposition $\pi_{\rV}^{-1}(\rR_i) = \pi_{\rW}^{-1}(\rR_i)\oplus \rN=\rU \oplus \rM\oplus  \rN$, where $\rN$ is semi-simple. Hence, by Proposition \ref{prop:mv_as_icomp}, the edges from the vertex $(\rR_i,2)$ in both $\Gamma(\rV)$ and $\Gamma(\rW)$ go to the set of vertices of degree one labelled by irreducible components of $\soc(\rU)$.
\end{proof}

The following proposition explains our choice of the name "one-strong" for $\Gamma(\rV)$.
\begin{prop}
    The set of edges of one-strong graph $\Gamma(\rV)$ coincides with the set of all admissible edges of length one. In other words, edges of $\Gamma(\rV)$ are exactly the edges of length one in $\tilde{G}(\rV)$ or, 
    in $G(\rV)$.
\end{prop}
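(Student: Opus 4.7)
The plan is to reduce the claimed three-way equality to one core statement and then prove both directions via a reduction to height-two modules. Since $\tilde G(\rV)$ contains every admissible edge by definition, and the reduction $\tilde G(\rV)\to G(\rV)$ only erases edges of length at least two, the length-one edges of $\tilde G(\rV)$ and of $G(\rV)$ already coincide with the admissible edges of length one. It therefore suffices to show that $\Gamma(\rV)$ equals the set of admissible length-one edges. Throughout I will use Lemma \ref{lemma:max_submod_filtration}, i.e.\ $\mcls_k(\rW) = \mcls_k(\rV)\cap \rW$ for any submodule $\rW\subset \rV$, so that heights of composition factors of $\rW$ match their heights in $\rV$.

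For the direction that every edge of $\Gamma(\rV)$ is admissible, suppose an edge connects $(\rR_i, h_i)$ to $(\rR_j, h_i-1)$ in $\Gamma(\rV)$ and let $\rW\subset \rV$ be a submodule with $\rR_i\subset \soc_{h_i}(\rW)$. I form the height-two modules $\rX := \mcls_{h_i}(\rW)/\mcls_{h_i-2}(\rW)$ and $\rY := \mcls_{h_i}(\rV)/\mcls_{h_i-2}(\rV)$; Lemma \ref{lemma:max_submod_filtration} makes the natural map $\rX\hookrightarrow\rY$ an injection, and \eqref{eq:soc_quot} places $\rR_i$ in $\soc_2(\rX)$. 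Proposition \ref{prop:submodsubgraph} identifies $\Gamma(\rX)$ with a closed subgraph of $\Gamma(\rY)$, so closedness forces $(\rR_j,1)\in \Gamma(\rX)$; unwinding definitions this means $\rR_j\subset \soc(\rX) = \soc_{h_i-1}(\rW)$, which is exactly admissibility.

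For the converse, assume the edge from $(\rR_i, h_i)$ to $(\rR_j, h_i-1)$ is absent from $\Gamma(\rV)$, equivalently absent from $\Gamma(\rY)$. I will produce a submodule of $\rV$ witnessing non-admissibility. Let $\pi_{h_i}\colon \rY\twoheadrightarrow\soc_{h_i}(\rV)$ be the canonical projection; by Proposition \ref{prop:mv_as_icomp}, $\pi_{h_i}^{-1}(\rR_i) = \rU\oplus \rM$ with $\rU$ indecomposable of height $2$ and $\rM$ semi-simple, and the absence of the edge means $\rR_j\not\subset\soc(\rU)$ via Definition \ref{def:one_strong_graph}. Pulling $\rU$ back along $p\colon \mcls_{h_i}(\rV)\twoheadrightarrow \rY$ gives $\rW := p^{-1}(\rU)\subset \rV$; since $\mcls_{h_i-2}(\rV)\subset \rW$, one checks $\mcls_k(\rW)=\mcls_k(\rV)$ for $k\leq h_i-2$ and $\rW/\mcls_{h_i-2}(\rV)\cong \rU$, hence $\rR_i\subset\soc_{h_i}(\rW)$ while $\soc_{h_i-1}(\rW)=\soc(\rU)$ misses $\rR_j$.

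The main obstacle is the socle-filtration bookkeeping for the auxiliary modules $\rX$ and $\rW$. In the first direction one has $\mcls_{h_i-2}(\rW)=\mcls_{h_i-2}(\rV)\cap \rW$, which can be strictly smaller than $\mcls_{h_i-2}(\rV)$, so the inclusion $\rX\hookrightarrow\rY$ must be verified by hand from Lemma \ref{lemma:max_submod_filtration}; in the second direction the preimage construction forces $\mcls_{h_i-2}(\rV)\subset \rW$, making $\mcls_k(\rW) = \mcls_k(\rV)$ for $k \leq h_i-2$ and letting the computation of $\soc_{h_i-1}(\rW)$ proceed directly from Proposition \ref{prop:mv_as_icomp}. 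This asymmetry is precisely what makes Proposition \ref{prop:submodsubgraph} apply cleanly in one direction and necessitates the explicit lift in the other.
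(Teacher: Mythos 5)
Your proof is correct and follows the same two-pronged strategy as the paper's: one direction via Proposition \ref{prop:submodsubgraph} (closedness of $\Gamma$ of a submodule forbids edges exiting the subgraph) and the converse via the explicit pullback of the indecomposable height-two component supplied by Proposition \ref{prop:mv_as_icomp}. Your first direction takes a slightly longer route by first passing to the height-two quotients $\rX\subset\rY$ before invoking Proposition \ref{prop:submodsubgraph}, whereas the paper applies that proposition directly to $\rW\subset\rV$; the underlying idea is the same.
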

\begin{proof}
    We prove that an edge $e$ from $(\rR_{i}, h)$ to $(\rR_j, h-1)$ is not admissible if and only if it is not contained in $\Gamma(\rV)$. 
    
    Assume that the edge $e$ is not admissible. Then there is a submodule $\rW$, such that $\soc_{h}(\rW) \supset \rR_{i}$, but $\rR_{j} \not\subset \soc_{h-1}(\rW)$. Then $e$ is not an edge $\Gamma(\rW)$.
    By Proposition \ref{prop:submodsubgraph}  $e$ is not an edge in $\Gamma(\rV)$.

    Assume that $e$ is not in $\Gamma(\rV)$. Let $\hat{\rW}$ be an indecomposable component of $\pi_{h}^{-1}(\rR_{i})$ of height $2$. The submodule $\hat{\rW}$ is unique up to isomorphism. Note that $\soc(\hat{\rW})$ is unique and does not contain $\rR_j$. Let $\hat{\pi}: \mcls_{h}(\rV) \twoheadrightarrow \mcls_{h}(\rV)/\mcls_{h-2}(\rV)$ be the canonical projection and let $\rW = \hat{\pi}^{-1}(\hat{\rW}) \subset \rV$. We have $\rR_{j} \not\subset  \soc(\hat{\rW})= \soc_{h-1}(\rW)$ and $\soc_{h}(\rW) = \rR_{i}$, thus the edge $e$ is not admissible.
\end{proof}

\begin{cor}\label{cor:one_strong_strong}
Suppose for any two vertices $(\rR_i,h_i)$, $(\rR_j,h_j)$ with $h_i-h_j\geq 2$, there is a directed path in $\Gamma(\rV)$ from $(\rR_i,h_i)$ to $(\rR_j,h_j)$, 
then the one-strong graph coincides with the strong graph,   $\Gamma(\rV)=G(\rV)$. \qed
\end{cor}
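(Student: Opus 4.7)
The plan is to show the two inclusions $\Gamma(\rV)\subset G(\rV)$ and $G(\rV)\subset \Gamma(\rV)$. The first is essentially free from the preceding proposition: by definition, every edge of $\Gamma(\rV)$ is an admissible edge of length one, and the reduction procedure that produces $G(\rV)$ from $\tilde G(\rV)$ explicitly retains every length-one edge. Hence $\Gamma(\rV)$ is contained in $G(\rV)$ as a subgraph, with the same grading on vertices.

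For the reverse inclusion, the key observation is that the only edges of $G(\rV)$ that do not belong to $\Gamma(\rV)$ are those of length at least two, so it suffices to show that under the hypothesis no such edges survive the reduction. Take any admissible edge $e$ in $\tilde G(\rV)$ from $(\rR_i,h_i)$ to $(\rR_j,h_j)$ with $n:=h_i-h_j\geq 2$. By the hypothesis there exists a directed path in $\Gamma(\rV)$ from $(\rR_i,h_i)$ to $(\rR_j,h_j)$. Since every edge of $\Gamma(\rV)$ has length exactly one and decreases the degree label by one, any such path necessarily consists of exactly $n$ edges of length one.

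I would then argue by induction on the length $n\geq 2$ that $e$ is removed during the reduction process. For $n=2$, the path of two length-one edges (which are retained by the first step of the reduction) is precisely the kind of alternative path that forces the removal of $e$. For general $n$, after the first $n-1$ rounds of reduction all length-one edges of $\Gamma(\rV)$ still belong to the intermediate graph (they are never discarded), so the length-$n$ path described above is composed entirely of edges of lengths at most $n-1$ present in the reduced graph, which by the construction of $G(\rV)$ causes $e$ to be removed at the $n$th step.

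Therefore $G(\rV)$ contains no edge of length $\geq 2$, so $G(\rV)$ consists solely of length-one edges; combined with the first inclusion this gives $\Gamma(\rV)=G(\rV)$. No step here is genuinely delicate: the one place to be careful is the inductive claim that the length-one edges used by the alternative path are still present at the round when $e$ is tested for removal, but this follows directly from the fact that the reduction procedure never touches length-one edges.
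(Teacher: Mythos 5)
Your proof is correct and follows exactly the reasoning the paper regards as immediate (the corollary is stated with \qed): the earlier proposition identifies $\Gamma(\rV)$ with the length-one edges of $G(\rV)$, and the hypothesis supplies, for every candidate long edge, a path of length-one edges that the reduction procedure uses to discard it. The "induction" wrapper is unnecessary — once you note that length-one edges are never removed, the path is present at the stage when any edge of length $n\geq 2$ is tested, so it is discarded directly — but this is a stylistic point, not a gap.
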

Below we give examples of one-strong graphs. By Corollary \ref{cor:one_strong_strong}, all of them except for $w=0246$ are strong graphs.

By construction, $\Gamma(\rV)$ is an invariant of a socle multiplicity-free module $\rV$, which can be thought of as a refinement of the socle filtration $(\soc_i(\rV))_{i=1}^{\infty}$. The next example shows that there is a pair of modules which has the same socle filtrations $(\soc_i(\rV))_{i=1}^{\infty}$, but different one-strong submodule graphs.
\begin{example}\label{example}
    Consider the words $w_1 = 0024$ and $w_2 = 0204$. The composition factors of both words are $(\rR_1, \rR_2, \rR_3) = (0[0,4], 04 ,0^2)$

    Computing $\Hom_{\Uqa}(\rR_j,w_i)$, we get $\soc_1(w_1) = \soc_1(w_2) = \rR_2 \oplus \rR_3$. It follows that $\soc_2(w_1) = \soc_2(w_2) = \rR_1$. 
    
    We compute $\Hom_{\Uqa}(w_1, \rR_2)=\Hom_{\Uqa}(w_1, \rR_3)=0$. Thus, $\rR_2$ and $\rR_3$ are not direct summands of $w$. Therefore, both edges of length one are in $\Gamma(w_1)$. Indeed, let $\rW\subset w$ be a submodule such that $\soc_2(\rW)=\rR_1$. Then by Lemma \ref{lemma:nonss_compl},   $\rW=\rV$.

   We compute $\Hom_{\Uqa}(w_2, \rR_3)=0$. Therefore, by a similar argument, the edge from $\rR_1$ to $\rR_3$ is admissible in $\Gamma(w_2)$. We have $020=0[0,2]\oplus 0$, see Proposition \ref{prop:020_compl}. Therefore, $w_2 = 04 \oplus 0[0,2]4$ and the edge from $\rR_1$ to $\rR_2$ is not in $\Gamma(w_2)$.

    Thus, we obtain graphs $\Gamma(w_i)=G(w_i)$.
\begin{center}
\begin{tabular}{cc}
\begin{tikzpicture}
\node at (0,0.2) {$0[0,4]$};
\node at (-2,-1) {$0^2$};
\node at (2,-1) {$04$};
\draw[->] (0,0) to (-2,-0.8);
\draw[->] (0,0) to (2,-0.8);
\end{tikzpicture}
\qquad\qquad\qquad\qquad &

\begin{tikzpicture}
\node at (0,0.2) {$0[0,4]$};
\node at (0,-1) {$0^2$};
\node at (2,-1) {$04$};
\draw[->] (0,0) to (0,-0.8);
\end{tikzpicture}\\
$\Gamma(0024)$\qquad\qquad\qquad\qquad & $\Gamma(0204)$\\

\end{tabular}
\end{center}

In particular, $0024\ncong 0204$.
\end{example}

The next example shows that there is a pair of non-isomorphic words, which have identical one-strong module graphs.
\begin{example}\label{ex:graphs_0220_2002}
       Let $w_1 = 0220$ and $w_2 = 2002$. We argue as in Example \ref{example} to compute the graphs. It turns out that one-strong graphs and strong graphs all coincide, $\Gamma(w_1)=G(w_1)=\Gamma(w_2)=G(w_2)$ and are given by the following picture.

  \begin{center}  
    \begin{tikzpicture}

\node at (0,0.2) {$[0,2]$};
\node at (-1,-1) {$\mbC$};
\node at (1,-1) {$[0,2]^2$};
\node at (0,-2.2) {$[0,2]$};
\draw[->] (0,0) to (-1,-0.8);
\draw[->] (0,0) to (1,-0.8);
\draw[->] (1,-1.2) to (0.1,-1.9);
\draw[->] (-1,-1.2) to (-0.1,-1.9);
\end{tikzpicture}
\end{center}

However, the modules are not isomorphic, $0220 \ncong 2002$.
Indeed,  $\dim(\Hom_{\Uqa}(0220, 0220)) = h(02202442) =2$, since $|\iconf(02202442)| = |\sconf(02202442)| = 2$, and  $$\dim(\Hom_{\Uqa}(2002, 0220)) = h(02204224) = h(s^2(02204224)) = h(20422446) = 1$$ as $|\sconf(20422446)|=1$.
\end{example}

The next example gives a module such that the one-strong and strong graphs are different. In this example, the one-strong and strong graphs of the dual module coincide.

\begin{example}\label{ex:0246}
    Let $w_1 = 0246$ and $w_2 = 6420$. Up to a shift the modules $w_1$ and $w_2$ are dual of each other. The one strong graphs are give by the following pictures.
    \begin{center}
\begin{tabular}{cc}
\begin{tikzpicture}
\node at (0,0.2) {$[0,6]$};
\node at (-1,-1) {$[0,2]$};
\node at (1,-2) {$06$};
\node at (1,-1) {$[4,6]$};
\node at (0,-2) {$\mbC$};
\draw[->] (0,0) to (-1,-0.8);
\draw[->] (0,0) to (1,-0.8);
\draw[->] (1,-1.2) to (0.2,-1.8);
\draw[->] (-1,-1.2) to (-0.2,-1.8);
\end{tikzpicture}
\qquad\qquad\qquad\qquad &

\begin{tikzpicture}
\node at (0,0.2) {$\mbC$};
\node at (-1,-1) {$[0,2]$};
\node at (0,-1) {$06$};
\node at (1,-1) {$[4,6]$};
\node at (0,-2.1) {$[0,6]$};
\draw[->] (0,0) to (-1,-0.8);
\draw[->] (0,0) to (1,-0.8);
\draw[->] (1,-1.2) to (0.2,-1.8);
\draw[->] (-1,-1.2) to (-0.2,-1.8);
\draw[->] (0,-1.2) to (0,-1.8);
\end{tikzpicture}\\
$\Gamma(0246)$\qquad\qquad\qquad\qquad & $\Gamma(6420)$\\

\end{tabular}
\end{center}
 The strong graph $G(0246)$ is given in Example \ref{ex:weight_graph}.
The graph $\Gamma(0246)$ is a proper subgraph of $G(0246)$. The graph $\Gamma(6420)$ is strong by Corollary \ref{cor:one_strong_strong}.
\end{example}

Before we give next examples, we recall some symmetries. 

First, the graphs are compatible with shifts. Namely, given a word $w$, the strong graph of $w_{\tau_a}$  is obtained from the strong graph of $w$ by applying $\tau_a$ to each letter.

Second, let $w$ be a word and let $\omega(w)$ be the word obtained from $w$ by replacing each letter $a$ by $-a$ and writing letters in the opposite order, see Corollary \ref{cor:reverse_word}. Then the strong graph of $\omega(w)$ is obtained from the string graph of $w$ by replacing each letter $a$ by $-a$.

Third, let $w^\times$ be the word obtained from $w$ by writing it from right to left. Then $w^\times=(w^*)_{\tau_{-2}}$ is a shift of the module dual to $w$. In many cases the strong graph of $w^\times$ is obtained from the strong graph of $w$ by reversing all arrows. This happens in all examples we present except for $w=0246$ (see Example \ref{ex:0246}), and $w=2024$ when $w^\times=4202$ is not socle-multiplicity free. Note that $4202$ is reducible, and each summand is multiplicity free, therefore one can define the graph for each summand and that graph is obtained from $w=2024$ by reversing all arrows. 

\medskip

The strong graphs for words of length $3$ are discussed in Example \ref{ex:3letters_graphs}.

In the next example we draw all strong graphs of words of length $4$ with content $\{0,2,2,4\}$ and $\{0,2,4,6\}$ up to the symmetries (we do not repeat the graphs already given in Example \ref{ex:0246}).

\begin{example}\label{ex:mod_graphs_4let}\hfill

\begin{center}
\begin{tabular}{c c c c c }
    \begin{tikzpicture}
    \node at (0,0.2) {$2[0,4]$};
    \node at (1,-1) {$[2,4]$};
    \node at (0,-1) {$\mbC$};
    \node at (1,-2) {$\mbC$};
    \node at (0,-2) {$[0,2]$};
    \draw[->] (0,0) to (0,-0.8);
    \draw[->] (0,-1.2) to (0,-1.8);
    \draw[->] (1,-1.2) to (1,-1.8);
    \end{tikzpicture} 
     & 
    \begin{tikzpicture}
    \node at (0,0.2) {$\mbC$};
    \node at (-1,-1) {$[0,2]$};
    \node at (0,-1) {$[2,4]$};
    \node at (1.2,-1) {$2[0,4]$};
    \node at (0,-2) {$\mbC$};
    \draw[->] (0,0) to (-1,-0.8);
    \draw[->] (0,0) to (1,-0.8);
    \draw[->] (0,0) to (0,-0.8);
    \draw[->] (0,-1.2) to (0,-1.8);
    \draw[->] (1,-1.2) to (0.2,-1.8);
    \draw[->] (-1,-1.2) to (-0.2,-1.8);
    \end{tikzpicture} 
    &
     \begin{tikzpicture}
    \node at (0,0.0) {$[2,4]$};
    \node at (0,-0.75) {$\mbC$};
    \node at (0,-1.5) {$2[0,4]$};
    \node at (0,-2.25) {$\mbC$};
    \node at (0,-3) {$[0,2]$};
    \draw[->] (0,-0.2) to (0,-0.55);
    \draw[->] (0,-0.95) to (0,-1.3);
    \draw[->] (0,-1.7) to (0,-2.05);
    \draw[->] (0,-2.45) to (0,-2.8);
    \end{tikzpicture} 
     & 
     \begin{tikzpicture}
    \node at (0,0.2) {$[4,6 ]$};
    \node at (0,-1) {$\mbC$};
    \node at (1.3,-1) {$[0,6]$};
    \node at (0,-2.05) {$[0,2]$};
    \node at (1.3,-2.1) {$06$};
    \draw[->] (0,0) to (1.3,-0.8);
    \draw[->] (0,0) to (0,-0.8);
    \draw[->] (0,-1.2) to (0,-1.8);
    \draw[->] (1.3,-1.2) to (0.1,-1.8);
    \draw[->] (1.3,-1.2) to (1.3,-1.8);
    \end{tikzpicture} 
     &
     \begin{tikzpicture}
    \node at (0,0.2) {$\mbC$};
    \node at (-1.5,-0.75) {$[0,2]$};
    \node at (1.5,-0.75) {$[4,6]$};
    \node at (0,-1.7) {$[0,6]$};
    \node at (0,-2.5) {$06$};
    \draw[->] (0,0) to (-1.5,-0.5);
    \draw[->] (0,0) to (1.5,-0.5);
    
    \draw[->] (0,-1.9) to (0,-2.3);
    \draw[->] (1.5,-0.9) to (0.2,-1.4);
    \draw[->] (-1.5,-0.9) to (-0.2,-1.4);
    \end{tikzpicture} 
     
     \\ 
    $G(2024)$ & $G(2042)$ & $G(2204)$ & $G(2046)$ & $G(2064)$ \\
    
\end{tabular}
\end{center}
\end{example}
 We are interested in the structure of the modules $0^n2^n0^n$, cf. Example \ref{ex:002200}. As one possible step in this direction, we study modules of the form $0^n2^n$. 
\begin{example}
    The strong module graphs for $0^22^2, 0^32^3$ are given by the following pictures.
\begin{figure}[H]
    \begin{tikzpicture}
    \node at (-3.5,-0.05) {$G(0022)$};
    \node at (7,0) {$[0,2]^2$};
    \node at (9,0) {$[0,2]$};
    \node at (11,0) {$[0,2]$};
    \node at (12.8,0) {$\mbC$};
    \draw[->] (7.5,0) to (8.5,0);
    \draw[->] (9.5,0) to (10.5,0);
    \draw[->] (11.5,0) to (12.5,0);
    \node at (-3.5,-0.85) {$G(000222)$};
    \node at (-1,-0.8) {$[0,2]^3$};
    \node at (1,-0.8) {$[0,2]^2$};
    \node at (3,-0.8) {$[0,2]^2$};
    \node at (5,-0.8) {$[0,2]$};
    \node at (7,-0.8) {$[0,2]^2$};
    \node at (9,-0.8) {$[0,2]$};
    \node at (11,-0.8) {$[0,2]$};
    \node at (12.8,-0.8) {$\mbC$};
    \draw[->] (-0.5,-0.8) to (0.5,-0.8);
    \draw[->] (1.5,-0.8) to (2.5,-0.8);
    \draw[->] (3.5,-0.8) to (4.5,-0.8);
    \draw[->] (5.5,-0.8) to (6.5,-0.8);
    \draw[->] (7.5,-0.8) to (8.5,-0.8);
    \draw[->] (9.5,-0.8) to (10.5,-0.8);
    \draw[->] (11.5,-0.8) to (12.5,-0.8);
    \end{tikzpicture}
\end{figure}
    (We drew these graphs horizontal way.)
\end{example}

\begin{conj}\label{conj:0n2n} The module $0^n2^n$ is socle-multiplicity free. Moreover,
    $$\soc_{i}(0^n2^n) = \begin{cases} [0,2]^{s_2(i-1)},& i\leq 2^{n},\\ \{0\},&\textit{ otherwise.}\end{cases}$$
    where $s_2(i-1)$ is the sum of digits of the binary representation of the number $i-1$.
\end{conj}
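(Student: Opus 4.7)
The plan is to proceed by induction on $n$, with base case $n=1$ handled directly by Proposition~\ref{prop:two_strings_prod}: applied with $\alpha_1=\beta_1=0$, $\alpha_2=\beta_2=2$, it yields the short exact sequence $\mbC\hookrightarrow 02\twoheadrightarrow [0,2]$, so $\soc_1(02)=\mbC=[0,2]^{s_2(0)}$ and $\soc_2(02)=[0,2]=[0,2]^{s_2(1)}$, matching the conjecture with $2^1=2$ socle levels. For the inductive step, apply the exact functor $0^{n-1}\otimes(-)\otimes 2^{n-1}$ (Lemma~\ref{lemma:exact_functors}) to the same sequence to obtain
\begin{equation*}
0^{n-1}2^{n-1}\;\hookrightarrow\; 0^n 2^n\;\twoheadrightarrow\; 0^{n-1}[0,2]2^{n-1}.
\end{equation*}
Since the string $[0,2]$ is in general position with each of $[0,0]$ and $[2,2]$ (both contained in $\{0,2\}$), the pairwise tensor products are irreducible and $[0,2]$ commutes with every $0$ and $2$, so the quotient is isomorphic to $[0,2]\otimes 0^{n-1}2^{n-1}$. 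Writing $\rU=0^{n-1}2^{n-1}$ and $\rW=[0,2]\otimes\rU$, the sequence becomes $\rU\hookrightarrow 0^n 2^n\overset{\pi}{\twoheadrightarrow}\rW$.

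The next step is to determine the socle filtration of $\rW$. By the inductive hypothesis, $\mcls(\rU)$ is a composition series of $\rU$ of length $2^{n-1}$ whose $i$-th simple subquotient is $[0,2]^{s_2(i-1)}$. Applying the exact functor $[0,2]\otimes(-)$ produces a filtration of $\rW$ whose subquotients are $[0,2]\otimes[0,2]^{s_2(i-1)}=[0,2]^{s_2(i-1)+1}$, which remain simple because the strings involved are pairwise in general position. The identity $\chi_q(\rW)=\chi_q([0,2])\,\chi_q(\rU)$ shows that the total Jordan--Holder multiplicity of $\rW$ equals $2^{n-1}$, the length of this filtration. Since every socle of $\rW$ must contain at least one Jordan--Holder factor, every socle of $\rW$ is then simple; and any composition series of a module all of whose socles are simple coincides with $\mcls$ (inductively: the unique simple submodule of $\rV/\mcls_{i-1}(\rV)$ is $\soc_i(\rV)$). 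Hence $\soc_i(\rW)=[0,2]^{s_2(i-1)+1}$ for $i=1,\dots,2^{n-1}$.

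Finally, concatenate the two filtrations into
\begin{equation*}
0\subset \mcls_1(\rU)\subset\cdots\subset \rU=\pi^{-1}(0)\subset \pi^{-1}(\mcls_1(\rW))\subset\cdots\subset \pi^{-1}(\mcls_{2^{n-1}}(\rW))=0^n 2^n,
\end{equation*}
a chain of length $2^n$ with all simple subquotients. The $q$-character expansion $\chi_q(0^n 2^n)=(1_0+1_2^{-1})^n(1_2+1_4^{-1})^n=\sum_{k=0}^{n}\binom{n}{k}\chi_q([0,2]^k)$ shows that $0^n 2^n$ has exactly $2^n$ Jordan--Holder factors, forcing $ht(0^n 2^n)=2^n$ and every socle simple. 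The same uniqueness argument then identifies the constructed chain with $\mcls(0^n 2^n)$, giving $\soc_i(0^n 2^n)=\soc_i(\rU)=[0,2]^{s_2(i-1)}$ for $i\leq 2^{n-1}$, and $\soc_{2^{n-1}+j}(0^n 2^n)=\soc_j(\rW)=[0,2]^{s_2(j-1)+1}=[0,2]^{s_2(2^{n-1}+j-1)}$ for $1\leq j\leq 2^{n-1}$, using the binary identity $s_2(2^{n-1}+j-1)=1+s_2(j-1)$. This completes the induction. The main technical obstacle is showing that tensoring $\rU$ by $[0,2]$ transports $\mcls(\rU)$ onto $\mcls(\rW)$; this depends crucially on the composition factors of $\rU$ all being powers of $[0,2]$, so that $[0,2]\otimes(-)$ preserves simplicity of each subquotient via general position.
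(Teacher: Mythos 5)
The statement you are proving is stated in the paper as a \emph{conjecture}, and the paper does not prove it. What the paper does prove is a conditional result (the lemma immediately following the conjecture): \emph{if} $\soc_i(0^n2^n)$ is simple for $i=1,\dots,2^n$, \emph{then} the conjecture holds, with essentially the inductive structure you use. Your attempt tries to prove the conjecture unconditionally, but it smuggles in exactly the missing hypothesis.

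The gap is the inference ``the total Jordan--H\"older multiplicity of $\rW$ equals $2^{n-1}$\dots\ hence every socle of $\rW$ is simple,'' repeated at the end for $0^n2^n$ itself (``exactly $2^n$ Jordan--H\"older factors, forcing $ht(0^n2^n)=2^n$ and every socle simple''). This does not follow: knowing that a module has $k$ composition factors, and even exhibiting a semi-simple filtration of length $k$ with simple quotients, does \emph{not} force the socle filtration to have height $k$. Every finite-length module has a composition series of length equal to its composition length; the socle filtration can nevertheless be shorter, with non-simple socles. For instance $\mbC\oplus[0,2]$ has two composition factors and a composition series of length two, but $\soc_1=\mbC\oplus[0,2]$ and the height is $1$. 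Exhibiting \emph{some} filtration with simple quotients gives no information about $\mcls$; indeed, since $\mcls$ is the \emph{maximal} semi-simple filtration, every term of any semi-simple filtration sits inside the corresponding term of $\mcls$, so your filtration gives an upper bound on height, not a lower bound. Relatedly, concatenating the maximal filtration of $\rU=0^{n-1}2^{n-1}$ with the pullback of a filtration of $\rW$ produces a semi-simple filtration of $0^n2^n$, but there is no reason for it to be the maximal one: by Corollary~\ref{cor:subsoc} one only has $\soc_i(\rU)\subset\soc_i(0^n2^n)$, not equality, and $\soc_1(0^n2^n)$ could in principle contain a simple summand mapping isomorphically into $\rW$.

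What you would need, and what remains open, is precisely the hypothesis of the paper's conditional lemma: that each $\soc_i(0^n2^n)$ is already known to be simple. Without that, nothing prevents $0^n2^n$ from having a direct summand, or more generally a height strictly less than $2^n$. (The $q$-character only records composition factors and is blind to how they are glued.) So while the base case and the combinatorial identity $s_2(2^{n-1}+j-1)=1+s_2(j-1)$ are fine, the argument does not close the gap that makes this a conjecture rather than a theorem.
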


\begin{lemma}
    If $\soc_{i}(0^n2^n)$ is simple for $i=1,\dots, 2^n$, then Conjecture \ref{conj:0n2n} is true.
\end{lemma}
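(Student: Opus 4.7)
The plan is to prove the lemma by induction on $n$, using the short exact sequence
\begin{equation*}
    0^{n-1}2^{n-1} \hookrightarrow 0^n2^n \twoheadrightarrow 0^{n-1}[0,2]2^{n-1}
\end{equation*}
obtained by inserting the sequence $\mbC\hookrightarrow 02 \twoheadrightarrow [0,2]$ from Proposition~\ref{prop:two_strings_prod} at the central pair of letters. The strings $\{0\}$ and $\{2\}$ are each contained in $\{0,2\}$, so $[0,2]$ is in general position with both $0$ and $2$; iterating $[0,2]\otimes 0\cong 0\otimes[0,2]$ and $[0,2]\otimes 2\cong 2\otimes[0,2]$ yields $0^{n-1}[0,2]2^{n-1}\cong [0,2]\otimes 0^{n-1}2^{n-1}$. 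Set $V=0^n2^n$, $A=0^{n-1}2^{n-1}$, $B=[0,2]\otimes A$, so that $A\hookrightarrow V\twoheadrightarrow B$. The base case $n=0$ is trivial.

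The $q$-character factorization
\begin{equation*}
    \chi_q(V)=\chi_q(02)^n=(1+\chi_q([0,2]))^n=\sum_{m=0}^{n}\binom{n}{m}\chi_q([0,2]^m)
\end{equation*}
identifies the composition factors of $V$ as the irreducibles $[0,2]^m$, each occurring $\binom{n}{m}$ times, with total count $2^n$. Under the hypothesis that every $\soc_i(V)$ is simple, $ht(V)=2^n$ and each $\mcls_i(V)$ contains exactly $i$ composition factors. By Lemma~\ref{lemma:max_submod_filtration} the chain $\mcls_i(A)=\mcls_i(V)\cap A$ gives injections $\soc_i(A)\hookrightarrow \soc_i(V)$, so every nonzero $\soc_i(A)$ is simple, and the composition-factor count for $A$ forces $ht(A)=2^{n-1}$. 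The induction hypothesis then yields $\soc_i(A)=[0,2]^{s_2(i-1)}$ for $i\leq 2^{n-1}$; since $A\subseteq \mcls_{2^{n-1}}(V)$ and both contain exactly $2^{n-1}$ composition factors, $A=\mcls_{2^{n-1}}(V)$, and therefore $\soc_i(V)=\soc_i(A)=[0,2]^{s_2(i-1)}$ for $i\leq 2^{n-1}$.

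For the upper half, Lemma~\ref{lemma:filtration_quot} gives $\soc_{i+2^{n-1}}(V)=\soc_i(V/A)=\soc_i(B)$, so it suffices to establish the key claim $\soc_i(B)=[0,2]\otimes \soc_i(A)=[0,2]^{s_2(i-1)+1}$ for $i\leq 2^{n-1}$. Every composition factor $[0,2]^m$ of $A$ is in pairwise general position with $[0,2]$, so $[0,2]\otimes [0,2]^m=[0,2]^{m+1}$ is irreducible; by exactness of $[0,2]\otimes-$ (Lemma~\ref{lemma:exact_functors}), the chain $\big([0,2]\otimes \mcls_i(A)\big)_{i}$ is a semi-simple filtration of $B$ whose consecutive quotients $[0,2]\otimes\soc_i(A)$ are simple, so by maximality $[0,2]\otimes \mcls_i(A)\subseteq \mcls_i(B)$. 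An induction on $i$, using that $\soc_i(B)$ is simple while $[0,2]\otimes \soc_i(A)$ is a nonzero simple submodule of it, forces $\mcls_i(B)=[0,2]\otimes\mcls_i(A)$ and hence $\soc_i(B)=[0,2]\otimes\soc_i(A)$. Combined with the digit-sum identity $s_2(i-1)=s_2\big((i-1)-2^{n-1}\big)+1$ valid for $2^{n-1}<i\leq 2^n$, this completes the induction.

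The main obstacle is the equality $[0,2]\otimes\mcls_i(A)=\mcls_i(B)$ rather than the mere containment: maximality of $\mcls(B)$ only gives one direction, and promoting this to equality at every step relies crucially on the simplicity hypothesis of the lemma, which prevents $\mcls_i(B)$ from overshooting $[0,2]\otimes\mcls_i(A)$.
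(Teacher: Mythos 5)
Your proof is correct and follows essentially the same route as the paper: identify $0^{n-1}2^{n-1}$ with $\mcls_{2^{n-1}}(0^n2^n)$ via a composition-factor count, then transfer the maximal filtration of $0^{n-1}2^{n-1}$ to the quotient $0^{n-1}2^{n-1}\otimes[0,2]$. The only difference is in detail: you establish maximality of the $[0,2]$-twisted filtration by a nested dimension-counting induction, whereas the paper concludes it at once by observing that this filtration has $2^{n-1}$ simple steps and the quotient has height $2^{n-1}$ by the quotient formula \eqref{eq:filtr_quot1} — two equivalent ways of invoking the same numerical constraint.
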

\begin{proof} 
We use induction on $n$. By the assumption $ht(0^n2^n)=2^{n}$. The module $0^n2^n$ contains a submodule $0^{n-1}2^{n-1}$. Due to the hypothesis assumption this submodule should coincide with the component of the maximal filtration $\mcls_{2^{n-1}}(0^n2^n)$. The quotient-module  $0^n2^n/\mcls_{2^{n-1}}(0^n2^n) \cong 0^{n-1}2^{n-1}[0,2]$ has height $2^{n-1}$ by \eqref{eq:filtr_quot1}. On the other hand, the quotient module $0^{n-1}2^{n-1}[0,2]$ has a filtration with $2^{n-1}$ simple quotients obtained from the filtration of $0^{n-1}2^{n-1}$ by multiplication by $[0,2]$. Therefore, this filtration is maximal. It follows that $\soc_{2^{n-1}+i}(0^n2^n) = \soc_{i}(0^{n-1}2^{n-1})[0,2]$.
\end{proof}

\medskip

\begin{remark}\label{remark:edge_extension}
An admissible edge of length one actually carries additional information - an isomorphism class of extensions of the modules it connects. 

Namely, let an edge connect vertices $(\rR_{i}, h)$ and $(\rR_{j}, h-1)$. As before the submodule $\pi_{h}^{-1}(\rR_{i})\subset \mcls_{h}(\rV)/\mcls_{h-2}(\rV)$ contains a unique up to isomorphism indecomposable component $\rW$ of height $2$ (see Lemma \ref{def:one_strong_graph}).  Then $\soc(\rW) = \rR_{j} \oplus \rN$ for a unique semi-simple module $\rN$. Then $\rW/\rN$ is a non-trivial extension of $\rR_{i}$ by $\rR_{j}$. 

Note that this additional information is still not sufficient to uniquely characterize the module. For instance, one can check that in Example \ref{ex:graphs_0220_2002} all non-trivial extension for modules $0220$ and $2002$ are isomorphic. 

We expect that similar data has to be used in the study of modules which are not socle-multiplicity free. 
\end{remark}

\subsection{Graphs and tensor products.}
We study how the strong graph of a tensor product of two modules is related to the strong graphs of these modules.  In general, a tensor product of two socle-multiplicity free modules may not be non-socle-multiplicity free, and then we do not have definition of a module graph.
\begin{example}
    Consider the module $\rV = 2020 \cong 20 \oplus 20[0,2]$ which is socle-multiplicity free with $\soc(2020)=[0,2]\oplus[0,2]^2$ and $\soc_2(2020)=\mbC \oplus [0,2]$.
    
    However, $0 \otimes \rV$ is semi-simple (see Example \ref{ex:bn_bound}) and not socle-multiplicity free.
\end{example}
 But in the cases when the tensor product is still socle-multiplicity free, the examples look interesting.
 
The first natural expectation is the following conjecture.

\begin{conj}\label{conj:graph_product}
    Let $\rV$, $\rW$ be $q$-character separated. Let $\rV$, $\rW$, $\rV\otimes\rW$ be socle-multiplicity free. Then the strong graph of the tensor product is the Cartesian product of graphs, $G(\rV\otimes \rW) = G(\rV)\times G(\rW)$.
\end{conj}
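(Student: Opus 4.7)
\bigskip

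\noindent\textbf{Proof plan for Conjecture \ref{conj:graph_product}.}

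The plan is to show that the $q$-character separation hypothesis forces the socle filtration, the submodules, and hence the admissible edges of $\rV \otimes \rW$ to decouple into contributions from the two factors. The vertices of the Cartesian product $G(\rV)\times G(\rW)$ are pairs $(\rR_i \otimes \rS_j, h_i + h'_j - 1)$ where $\rR_i$ has degree $h_i$ in $\rV$ and $\rS_j$ has degree $h'_j$ in $\rW$, and its edges are the ``axis-parallel'' edges coming from $G(\rV)$ or $G(\rW)$ separately. So I must match both the graded vertex data (composition factors with the correct degrees) and the edge data.

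The main technical step is the following submodule decomposition lemma: under $q$-character separation, every submodule $\rU \subset \rV \otimes \rW$ is a sum of submodules of product form $\rV' \otimes \rW'$ with $\rV'\subset \rV$ and $\rW' \subset \rW$. The reason is that for an $\ell$-weight $\phi$ appearing in $\chi_q(\rV\otimes\rW)$ there is a \emph{unique} factorization $\phi = \phi_1 \phi_2$ with $\phi_1$ occurring in $\chi_q(\rV)$ and $\phi_2$ in $\chi_q(\rW)$, because no variable $1_a$ is shared. Hence the generalized $\ell$-weight space decomposes canonically as $(\rV\otimes\rW)[\phi] = \rV[\phi_1]\otimes\rW[\phi_2]$, and the action of the modes $x^{\pm}_r$ on $\rU$, governed by affine root shifts $A_a^{\pm 1}$ (Proposition \ref{prop:ellroot_action}), preserves this bipartition since $q$-character separation implies no $A_a^{\pm 1}$ can move an $\ell$-weight of $\rV$ to one of $\rW$. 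Intersecting $\rU$ with each summand then expresses $\rU$ as a sum of tensor products of subspaces, and a standard argument (parallel to the one in Proposition \ref{prop:hom_factorize_lattice}) promotes these to honest submodule tensor products.

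Once this lemma is in hand, I would run an induction on height to identify the socle filtration. The base case $\soc(\rV\otimes \rW) = \soc(\rV)\otimes \soc(\rW)$ follows from the lemma (any simple submodule is a product of simple submodules) together with the fact that products of simples are simple under separation. The inductive step gives $\soc_k(\rV\otimes \rW) = \bigoplus_{i+j=k+1}\soc_i(\rV)\otimes \soc_j(\rW)$, which in particular shows $\rV\otimes\rW$ is socle-multiplicity free precisely because $\rV$ and $\rW$ are, and shows that the composition factor $\rR_i \otimes \rS_j$ sits in degree $h_i + h'_j - 1$, matching the vertex set of $G(\rV)\times G(\rW)$.

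For the edges, I would verify admissibility and non-admissibility of length-one edges via the submodule decomposition lemma. Given an axis-parallel edge, say from $(\rR_i\otimes \rS_j, h)$ to $(\rR_{i'}\otimes \rS_j, h-1)$ corresponding to an edge in $G(\rV)$, any submodule $\rU \subset \rV\otimes \rW$ whose top socle contains $\rR_i\otimes \rS_j$ decomposes via the lemma, projects to a submodule of $\rV$ whose top socle contains $\rR_i$, and by admissibility of the edge in $G(\rV)$ must contain $\rR_{i'}$, whence $\rU$ contains $\rR_{i'}\otimes \rS_j$ in the appropriate degree. Conversely, for a non-axis-parallel length-one edge (where \emph{both} factors change), the lemma lets me construct a product submodule that realizes one endpoint without the other, thus ruling out admissibility. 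The main obstacle is the submodule decomposition lemma itself: the argument sketched above needs a careful handling of generalized $\ell$-weight spaces and the fact that the module operators $x^\pm_r$ act compatibly with the bipartition, which morally is the same content as Conjecture \ref{conj:drinf_prod} and is why I expect this conjecture to be subtle despite the clean statement.
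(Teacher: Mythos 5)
You are proposing a proof for a statement that the paper explicitly leaves as an open conjecture; the paper only proves it in a special case, under the \emph{additional} hypotheses that $\rV$ and $\rW$ are thin and that the universal $R$-matrix acts on $\rV\otimes\rW$. This is worth keeping in mind when assessing where your plan stands.

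Your plan is in the same spirit as the paper's partial result, and you correctly identify the central technical obstruction: everything hinges on the ``submodule decomposition lemma'' asserting that every $\Uqa$-submodule of $\rV\otimes\rW$ is $\Uqa\otimes\Uqa$-stable, and you also correctly flag that this is morally the content of Conjecture \ref{conj:drinf_prod}. However, the argument you sketch for that lemma has a genuine gap. You invoke Proposition \ref{prop:ellroot_action} to say that the modes $x^{\pm}_r$ ``preserve the bipartition'', but that proposition only constrains the $\ell$-weight of $x^{\pm}_r v$ (it lands in $\ell$-weight spaces obtained from $\phi$ by an $A_a^{\pm1}$-shift); it says nothing about whether the operator itself factors as a sum of operators acting on each tensor factor separately. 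Under the Drinfeld--Jimbo coproduct this factorization is simply not visible. The paper circumvents this by switching to the Drinfeld coproduct via Proposition \ref{prop:equiv_drinf}, where the formula $x^{+}(z)(v\otimes w)=x^{+}(z)v\otimes\psi^{-}(z)w + v\otimes x^{+}(z)w$ makes the decoupling manifest; but Proposition \ref{prop:equiv_drinf} needs thinness (so that $\ell$-weight vectors are genuine, not merely generalized, and $\psi^{-}(z)$ acts by the scalar $\phi_2(z)$ rather than upper-triangularly) and convergence of $\uR$. Without these hypotheses there is currently no way to establish the decomposition lemma, which is exactly why the statement remains a conjecture. Your account of the socle filtration and the vertex degrees $(\rR_i\otimes\rS_j, h_i+h'_j-1)$, and the subsequent edge analysis, would be a reasonable completion if the lemma were available, but as presented this is a plan with a known hole rather than a proof.
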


We prove this conjecture in a special case. 

\begin{lemma}
     Let $\rV$ and $\rW$ be thin and $q$-character separated. Assume that the action of universal R-matrix $\uR$ is well-defined on $\rV \otimes\rW$.
     
    Then the strong graph of the tensor product $\rV\otimes \rW$ is the Cartesian product of graphs, $G(\rV\otimes \rW) = G(\rV)\times G(\rW)$.
\end{lemma}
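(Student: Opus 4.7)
The strategy is to transport the question to the Drinfeld coproduct, where the $\ell$-weight structure is transparent. By Proposition \ref{prop:equiv_drinf}, the hypotheses imply $\rV\otimes \rW\cong \rV\otimes_D \rW$ as $\Uqa$-modules. Since $\Delta^D(\psi^\pm(z))=\psi^\pm(z)\otimes \psi^\pm(z)$ is group-like, for any pair of $\ell$-weight vectors $v_i\in \rV$, $w_j\in \rW$ with $\ell$-weights $\phi_i^V,\phi_j^W$, the vector $v_i\otimes w_j$ is an $\ell$-weight vector of $\rV\otimes_D \rW$ of weight $\phi_i^V\phi_j^W$. Thinness of the factors together with $q$-character separation forces all these products to be pairwise distinct, so $\rV\otimes \rW$ is itself thin and admits $\{v_i\otimes w_j\}$ as an $\ell$-weight basis. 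Dominant monomials of $\chi_q(\rV\otimes \rW)$ are then exactly products of dominant monomials of the factors, and the corresponding composition factors $\rR_a\otimes \rR_b'$ (with $\rR_a,\rR_b'$ ranging over composition factors of $\rV,\rW$) are irreducible because $q$-separation places the relevant strings in general position.

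The next step is to analyze the submodule lattice of $\rV\otimes_D\rW$. Combining $\Delta^D(x^-(z))=x^-(z)\otimes 1+\psi^+(z)\otimes x^-(z)$ (and its analogue for $x^+$) with Proposition \ref{prop:ellroot_action}, the action of $x^\pm_r$ on $v_i\otimes w_j$ changes its $\ell$-weight by a factor $A_a^{\pm 1}$ in which $a$ comes either exclusively from $\supp(\chi_q(\rV))$ or exclusively from $\supp(\chi_q(\rW))$. By $q$-separation these two collections of indices are disjoint. Any submodule $\rU\subset \rV\otimes_D\rW$ is spanned by $\ell$-weight vectors (by thinness and $\psi^\pm$-stability), so if $v_i\otimes w_j\in \rU$ then spectral projection of $x^\pm_r(v_i\otimes w_j)$ onto the disjoint families of weights $A_a^{-1}\phi_i^V\phi_j^W$ and $\phi_i^V A_b^{-1}\phi_j^W$ shows separately that each $(x^\pm_r v_i)\otimes w_j$ and $v_i\otimes(x^\pm_s w_j)$ lies in $\rU$. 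Iterating, the cyclic submodule generated by $v_i\otimes w_j$ equals $(\Uqa\cdot v_i)\otimes(\Uqa\cdot w_j)$, and every submodule of $\rV\otimes \rW$ is a sum of ``rectangles'' $\rV'\otimes \rW'$ with $\rV'\subset \rV$, $\rW'\subset \rW$ submodules.

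From the rectangle description, every irreducible submodule of $\rV\otimes \rW$ is a product of irreducible submodules, so $\soc(\rV\otimes \rW)=\soc(\rV)\otimes \soc(\rW)$. Applying Lemma \ref{lemma:filtration_quot} inductively to the quotient and using that quotients and subquotients of rectangle structures are again rectangle structures, one obtains
\begin{equation*}
\mcls_k(\rV\otimes \rW)=\sum_{\substack{i+j=k+1\\ i,j\geq 1}}\mcls_i(\rV)\otimes \mcls_j(\rW),\qquad \soc_k(\rV\otimes \rW)\cong \bigoplus_{\substack{i+j=k+1\\ i,j\geq 1}}\soc_i(\rV)\otimes \soc_j(\rW).
\end{equation*}
In particular $\rV\otimes \rW$ is socle-multiplicity free, and the composition factor $\rR_a\otimes \rR_b'$ has degree $\deg_{\rV}(\rR_a)+\deg_{\rW}(\rR_b')-1$, which matches the natural grading on the Cartesian product $G(\rV)\times G(\rW)$.

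Finally, I would match admissible edges. Because every submodule is a sum of rectangles, a submodule $\rU\subset \rV\otimes \rW$ containing $\rR_a\otimes \rR_b'$ at its natural degree is forced to contain $\rR_c\otimes \rR_d'$ at its natural degree iff either $a=c$ and the $\rW$-factor of every rectangle contributing $\rR_b'$ must also contribute $\rR_d'$, or $b=d$ and the symmetric condition holds in $\rV$; this translates admissibility coordinate-wise into admissibility in $G(\rV)$ or $G(\rW)$. Restricting to length-one edges and then reducing excess long edges by composing length-one steps (as in the passage from $\tilde G$ to $G$ in Section \ref{subsec:graphs_of_mods}), one concludes $G(\rV\otimes \rW)=G(\rV)\times G(\rW)$. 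The principal obstacle is the rectangle decomposition: one must use $q$-separation carefully to argue that not only cyclic submodules but \emph{all} submodules and all successive subquotients of the socle filtration split as sums of rectangles, and then track how admissibility in each coordinate controls admissibility in the tensor product.
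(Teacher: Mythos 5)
Your argument matches the paper's: both pass to the Drinfeld coproduct via Proposition \ref{prop:equiv_drinf}, use thinness and $q$-character separation (together with the $\ell$-weight shifts of Proposition \ref{prop:ellroot_action}) to show that every submodule of $\rV\otimes_D\rW$ is $(\Uqa\otimes\Uqa)$-stable --- your ``rectangle'' decomposition --- and then read off the graph from the cyclic submodules $\overline{\rR_i\otimes\rR_j}=\overline{\rR}_i\otimes\overline{\rR}_j$. Two minor points to tighten: the spectral projection actually produces $\phi_j^W(z)\,(x^+(z)v_i)\otimes w_j$, so one must still justify dividing by $\phi_j^W(z)$ (the paper notes that its zeros and poles avoid the delta-function supports), and your final ``iff'' for admissibility omits the diagonal edges $\rR_a\otimes\rR_b'\to\rR_c\otimes\rR_d'$ with $a\neq c$, $b\neq d$, which are in fact admissible when both coordinate edges are; these are exactly the edges removed in passing from $\tilde{G}$ to $G$, so the conclusion is unaffected.
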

\begin{proof}
    By Proposition \ref{prop:equiv_drinf}, we have $\rV \otimes \rW \cong \rV \otimes_D \rW$. Since $\rV$ and $\rW$ are thin and $q$-character separated, $\rV \otimes_D\rW$ is thin. Denote the set of $a\in \mbC$ such that $1_a$ appears in a monomial of the $q$-character of $\rU$ in any non-zero degree by $S_{\rU}$.

    We claim that if $\rU\subset \rV \otimes_D \rW$ is a submodule, then $(\Uqa\otimes\Uqa)(\rU) = \rU$. Let $u \in \rU[\phi]$ be an $\ell$-weight vector, of an $\ell$-weight $\phi$. Then $\phi = \phi_1\phi_2$, where $\phi_1, \phi_2$ are $\ell$-weights in $\rV$ and $\rW$ respectively and $u \in \rV[\phi_1]\otimes \rW[\phi_{2}]$. Write $u = v\otimes w$, where $v\in \rV[\phi_1], w\in \rW[\phi_2]$. Then we have
    \begin{equation*}
        x_{+}(z)u = x_{+}(z)v\otimes \psi^{-}(z)w + v\otimes x_{+}(z)w = \phi_2(z)(x_{+}(z)\otimes 1)u + (1\otimes x_{+}(z))u.
    \end{equation*}
    
   By Proposition \ref{prop:ellroot_action}, $\phi_2(z)(x_{+}(z)\otimes 1)u$ belongs to a sum 
    $$
    \bigoplus_{a\in S_{\rV}}\rV[A_{a+1}\phi_1]\otimes \rW[\phi_2] = \bigoplus_{a\in S_{\rV}}\big(\rV\otimes_D \rW\big)[A_{a+1}\phi_1\phi_2],
    $$
    and $(1\otimes x_{+}(z))u$ is in 
    $$
    \bigoplus_{b\in S_{\rW}}\rV[\phi_1]\otimes \rW[A_{b+1}\phi_2] = \bigoplus_{b\in S_{\rW}}\big(\rV\otimes_D \rW\big)[A_{b+1}\phi_1\phi_2].
    $$
    Here, as always, $A_{a+1} = 1_a1_{a+2}$.
    
    Since $\rV$ and $\rW$ are $q$-character separated, for all $a\in S_{\rV}, b\in S_{\rW}$, we have $A_{a} \neq A_{b}$. The submodule $\rU$ is thin, therefore each $\ell$-weight component of $x_{+}(z)(u)$ belongs to $\rU$. This implies $(x_{+}(z)\otimes 1)u, \phi_2(z)(1\otimes x_{+}(z))u \in \rU[[z, z^{-1}]]$. Note that we can divide $\phi_2(z)(x_{+}(z)\otimes 1)u$ by $\phi_2(z)$ since poles and zeroes of $1_{b}$ with $b\in S_{\rW}$ do not belong to the support of delta-functions appearing in the action of $x_{+}(z)$ on $\rV$.
    
    Similarly, $(x_{-}(z)\otimes 1)u, (1\otimes x_{-}(z))u \in \rU[[z^{\pm 1}]]$.

    Therefore, we obtain $(\Uqa \otimes \Uqa) (\rU) \subset \rU$.

    Let $\{\rR_i\}_{i\in I}, \{\rR_{j}\}_{j\in J}$ be the sets of irreducible factors of $\rV$ and $\rW$ respectively.
    
    The set of irreducible factors of $\rV \otimes \rW$ is given by $\{\rR_{i} \otimes \rR_{j}\}_{(i,j)\in I\times J}$ since the products $\rR_{i} \otimes \rR_{j}$ are irreducible. Indeed, since $\rV$ and $\rW$ are $q$-character separated, $\rR_i$ and $\rR_j$ are $q$-character separated for any $(i,j)\in I\times J$. Both $\rR_i$ and $\rR_j$ are products of evaluation modules in general position. Let $[\al, \bt]$ and $[\al^{\prime}, \bt^{\prime}]$ be two factors in $\rR_i$ and $\rR_j$ respectively. Then variables $1_{\al},\dots ,1_{\bt}$ and $1_{\al+2},\dots, 1_{\bt+2}$ appear in highest and lowest monomials of $\rR_{i}$ respectively. Therefore, $\al^{\prime} > \bt+2$ or $\al > \bt^{\prime}+2$, therefore strings $[\al, \bt]$ and $[\al^{\prime}, \bt^{\prime}]$ are in general position.

    Since $\rV\otimes \rW$ is thin, we have identification between vertices of $G(\rV\otimes \rW)$ and vertices of $G(\rV) \times G(\rW)$. Moreover, the graph $G(\rV\otimes\rW)$ can be constructed as described in Section \ref{subsec:graph_mult_free}.
    
  Let $\phi$ be a dominant $\ell$-weight corresponding to an irreducible factor $\rR_i\otimes \rR_j$. Then $\phi = \phi_i\phi_j$, where $\phi_i, \phi_j$ are dominant $\ell$-weights in $\chi_q(\rV), \chi_q(\rW)$ respectively. Let  $v_i \in \rV[\phi_i], v_j \in \rV[\phi_j]$ be non-zero vectors.  Then $v_i\otimes v_j\in (\rV\otimes_D\rW)[\phi]$.
  
  The submodule $\overline{\rR_{i}\otimes\rR_j}\subset\rV\otimes_D\rW$ is generated by $v_i \otimes v_j$. Clearly, $\overline{\rR_{i}\otimes\rR_j} \subset \overline{\rR}_i\otimes \overline{\rR}_j$. By the above argument, $\Uqa \otimes \Uqa (v_i\otimes v_j) = \overline{\rR}_i\otimes \overline{\rR}_j \subset \overline{\rR_{i}\otimes\rR_j}$. Henceforth, $\overline{\rR_{i}\otimes\rR_j} = \overline{\rR}_{i}\otimes\overline{\rR}_j$.

    Therefore, the edge between $\rR_i\otimes \rR_j$ and $\rR_k\otimes\rR_l$ is admissible if and only the edge between $\rR_{i}$ and $\rR_k$ and the edge between $\rR_j$ and $\rR_l$ are admissible or $i = k$ and the edge between $\rR_{j}$ and $\rR_{l}$ is admissible or $j = l$ and the edge between $\rR_{i}$ and $\rR_{k}$ is admissible. In other words, the graph $\tilde{G}(\rV\otimes \rW)$ is the tensor product of graphs  $\tilde{G}(\rV)$ and $\tilde{G}(\rW)$ (not to confuse with the Cartesian product). 
    
    Let the edge $e$ connecting $\rR_i\otimes \rR_j \rightarrow \rR_k\otimes\rR_l$ be admissible.
    
    If $i\neq k$ and $j\neq l$, there is a path $\rR_i\otimes \rR_j \to \rR_i \otimes \rR_l \to \rR_k\otimes \rR_l$ in $\tilde{G}(\rV\otimes \rW)$, therefore in the graph $G(\rV\otimes\rW)$ the edge $e$ is removed.

    If $i = k$ and $j\neq l$, then the edge $e$ is in $G(\rV\otimes\rW)$ if and only if the edge  $\rR_j \to \rR_l$ is in  $G(\rV)$. Indeed, paths connecting $\rR_i\otimes \rR_j$  and  $\rR_i \otimes \rR_l$ pass only vertices of type $\rR_i\otimes \rR_s$ and therefore bijectively correspond to paths in $\tilde{G}(\rV)$ connecting $\rR_j$ and  $\rR_l$. Similarly, if $i \neq k$ and $j = l$ the edge $e$ is in $G(\rV\otimes\rW)$ if and only if the edge  $\rR_i \to \rR_k$ is in  $G(\rW)$.
\end{proof}

\begin{example}
    The strong graph of module $0268$ is the Cartesian product of strong graphs of modules $02$ and $68$ and is given by the following picture.
    $$
\begin{tabular}{c}
\begin{tikzpicture}
\node at (0,0.22) {$[0,2][6,8]$};
\node at (-1,-1) {$[0,2]$};
\node at (1,-1) {$[6,8]$};
\node at (0,-2) {$\mbC$};
\draw[->] (0,0) to (-1,-0.8);
\draw[->] (0,0) to (1,-0.8);
\draw[->] (1,-1.2) to (0.2,-1.8);
\draw[->] (-1,-1.2) to (-0.2,-1.8);
\end{tikzpicture}
\\
$G(0268)$
\end{tabular}
$$
\end{example}

In general, the strong graph of a tensor product is obtained from the strong graph of factors in a rather non-trivial way. We study the case when one of the factors is a word of length 1.

In the next example we give all non-trivial graphs of words of length $3$. These words (up to the symmetries) are obtained by multiplying $w=02$ by $a\in \{0,2,4\}$ on the left and on the right.

\begin{example}\label{ex:3letters_graphs}
The strong graph of the module $02$ is $[0,2]\longrightarrow \mbC$ (written in a horizontal way). We have the following strong graphs obtained by tensor multiplication of $02$ with modules $0$ and $4$.
\begin{center}    
\begin{tabular}{cccc}
\begin{tikzpicture}
\node at (0,0.2) {$0[0,2]$};
\node at (0,-1.2) {$0$};
\draw[->] (0,0) to (0,-1);
\node at(0,-2) {$G(002)$};
\end{tikzpicture}
& \qquad
\begin{tikzpicture}

\node at (0,-1.2) {$0[0,2]$};
\node at (1,-1.2) {$0$};
\node at (0.3,-2) {$G(020)$};
\end{tikzpicture}
& \qquad
\begin{tikzpicture}
\node at (0,0.2) {$[0,4]$};
\node at (-1,-1.2) {$0$};
\node at (1,-1.2) {$4$};
\node at (0,-2) {$G(024)$};
\draw[->] (0,0) to (-1,-1);
\draw[->] (0,0) to (1,-1);

\end{tikzpicture}
& \qquad
\begin{tikzpicture}
\node at (0,0.4) {$0$};
\node at (0,-0.4) {$[0,4]$};
\node at (0,-1.2) {$4$};
\draw[->] (0,0.2) to (0,-0.2);
\draw[->] (0,-0.6) to (0,-1);
\node at (0,-2) {$G(042)$};
\end{tikzpicture}
\end{tabular}
\end{center}
\end{example}

Let $\rV$ be a socle-multiplicity free module with composition factors $\rR_1,\dots,\rR_k$ of degrees $h_1,\dots,h_k$. Let $\rR$ be
an irreducible module such that for all $i$ the tensor product $\rR_{i} \otimes \rR$ is irreducible.  Assume that  $\rV \otimes \rR$ is socle-multiplicity free. Recall that  $\{(\rR_i, h_i)\}_{i=1,\dots,k}$ is the set of vertices of the strong graph 
$G(\rV)$. 

Note that under our assumptions, $0\subset \mcls_1(\rV)\otimes \rR \subset \mcls_2(\rV)\otimes \rR \subset \dots$ is a semi-simple filtration of $\rV\otimes \rR$. By Lemma \ref{lemma:TP_factors_drop} this implies that the set of vertices of $\Gamma(\rV)$ is given by $\{(\rR_i\otimes\rR,\tilde{h}_i )\}_{i=1,\dots,k}$ with $\tilde{h}_i \leq h_i$. However, in the case of socle-multiplicity free case, that statement can be improved as follows.

\begin{lemma}\label{lemma:tp_graph_map}
    For each composition factor $\rR_i\subset \soc_{h_i}(\rV)$, there exists a unique $\tilde h_i$ such that $\rR_i\otimes \rR\subset \soc_{\tilde h_i} (\mathcal S_{h_i}(\rV)\otimes \rR)$ and  $\rR_i\otimes \rR\not\subset \soc_{\tilde h_i} (\mathcal S_{h_i-1}(\rV)\otimes \rR)$.  Moreover,  $\tilde h_i\leq h_i$, and the map $\iota$ sending $(\rR_i,h_i)\mapsto  (\rR_i\otimes \rR,\tilde h_i)$ is a bijection between vertices of $G(\rV)$ and vertices of $G(\rV\otimes\rR)$.
\end{lemma}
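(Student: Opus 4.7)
The plan is to exploit the fact that $\rR$ is irreducible and every $\rR_i\otimes \rR$ is irreducible, so that tensoring the socle filtration of $\rV$ by $\rR$ yields an explicit semi-simple filtration of $\rV\otimes \rR$ whose successive quotients are the semi-simple modules $\soc_j(\rV)\otimes \rR$. First I would observe that every composition factor of $\rV\otimes \rR$ is of the form $\rR_i\otimes \rR$ and that the $k$ products $\{\rR_i\otimes \rR\}_{i=1}^k$ are pairwise non-isomorphic: the multiplicativity of $q$-characters gives $\chi_q(\rR_i\otimes \rR)=\chi_q(\rR_i)\chi_q(\rR)$, and cancelling the non-zero Laurent polynomial $\chi_q(\rR)$ shows that $\rR_i\otimes \rR\cong \rR_j\otimes \rR$ forces $\rR_i\cong \rR_j$.

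Next, using exactness of $\cdot\otimes \rR$ (Lemma \ref{lemma:exact_functors}), the chain $0\subset \mcls_1(\rV)\otimes \rR\subset \mcls_2(\rV)\otimes \rR\subset\cdots$ is a semi-simple filtration of $\rV\otimes \rR$ with quotients the multiplicity-free semi-simple modules $\soc_j(\rV)\otimes \rR$. In particular $\rR_i\otimes \rR$ appears as a composition factor of $\mcls_{h_i}(\rV)\otimes \rR$ but not of $\mcls_{h_i-1}(\rV)\otimes \rR$, and since $\mcls_{h_i}(\rV)\otimes \rR$ is a submodule of the socle-multiplicity free module $\rV\otimes \rR$ it is itself socle-multiplicity free (Corollary \ref{cor:subsoc}). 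Hence $\rR_i\otimes \rR$ sits in the socle filtration of $\mcls_{h_i}(\rV)\otimes \rR$ at a unique height, unambiguously defining $\tilde h_i$; and the second condition $\rR_i\otimes \rR\not\subset \soc_{\tilde h_i}(\mcls_{h_i-1}(\rV)\otimes \rR)$ holds trivially because $\rR_i\otimes \rR$ is not a composition factor of $\mcls_{h_i-1}(\rV)\otimes \rR$ at all.

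To obtain the bound $\tilde h_i\le h_i$, I would apply Lemma \ref{lemma:TP_factors_drop} to the semi-simple filtration of $\mcls_{h_i}(\rV)\otimes \rR$ produced above: its filtered $q$-character places $\rR_i\otimes \rR$ at degree $h_i$, so the graded $q$-character places it at degree at most $h_i$. For the bijection claim, injectivity is immediate from the distinctness established in the first paragraph and from the uniqueness of $\tilde h_i$, while surjectivity follows from the fact that every composition factor of $\rV\otimes \rR$ is some $\rR_i\otimes \rR$.

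The step I expect to be the main obstacle is reconciling the two potentially different ``heights'' of $\rR_i\otimes \rR$: the one defined in the lemma via the pair of submodules $\mcls_{h_i-1}(\rV)\otimes \rR\subset \mcls_{h_i}(\rV)\otimes \rR$, and the actual socle-degree of $\rR_i\otimes \rR$ inside the ambient module $\rV\otimes \rR$. The reconciliation comes from Lemma \ref{lemma:max_submod_filtration} applied to $\mcls_{h_i}(\rV)\otimes \rR\subset \rV\otimes \rR$, which identifies $\mcls_j(\mcls_{h_i}(\rV)\otimes \rR)=\mcls_j(\rV\otimes \rR)\cap(\mcls_{h_i}(\rV)\otimes \rR)$; combined with the socle-multiplicity free hypothesis on $\rV\otimes \rR$, this forces the two heights to agree, which is precisely what makes the assignment $(\rR_i,h_i)\mapsto(\rR_i\otimes\rR,\tilde h_i)$ a well-defined map into the vertex set of $G(\rV\otimes\rR)$.
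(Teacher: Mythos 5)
Your argument is correct when the composition factors $\rR_1,\dots,\rR_k$ of $\rV$ are pairwise non-isomorphic, but the hypothesis is only that $\rV$ is \emph{socle}-multiplicity free, which allows $\rR_i\cong\rR_j$ for $i\neq j$ as long as $h_i\neq h_j$. Your first paragraph concludes that the products $\rR_i\otimes\rR$ are pairwise non-isomorphic; the implication you prove ($\rR_i\otimes\rR\cong\rR_j\otimes\rR\Rightarrow\rR_i\cong\rR_j$) is correct, but it yields pairwise distinctness only under the unstated extra hypothesis that the $\rR_i$ themselves are pairwise distinct. Once this is dropped, several downstream claims fail: if $\rR_j\cong\rR_i$ with $h_j<h_i$, then $\rR_i\otimes\rR\cong\rR_j\otimes\rR$ \emph{is} a composition factor of $\mcls_{h_i-1}(\rV)\otimes\rR$, so your assertion that the second condition ``holds trivially'' is false; moreover $\rR_i\otimes\rR$ then occurs in two different socle layers of $\mcls_{h_i}(\rV)\otimes\rR$ (socle-multiplicity free only means multiplicity one \emph{within} each layer), so there is no ``unique height'' from which to read off $\tilde h_i$, and your injectivity argument, which rests on distinctness, collapses. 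Your appeal to Lemma \ref{lemma:TP_factors_drop} is similarly delicate with multiplicities: it compares the two height sequences termwise after sorting, which does not by itself pin down the matching between $(\rR_i,h_i)$ and $(\rR_i\otimes\rR,\tilde h_i)$.

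The paper avoids all of this by never asking where $\rR_i\otimes\rR$ sits in $\mcls_{h_i}(\rV)\otimes\rR$ as a whole; it instead works in the layer quotient. Using Lemma \ref{lemma:max_submod_filtration} and semi-simplicity it writes
$\soc_{h_i}(\rV)\otimes\rR\cong(\mcls_{h_i}(\rV)\otimes\rR)/(\mcls_{h_i-1}(\rV)\otimes\rR)\cong\bigoplus_{j=1}^{h_i}\soc_j(\mcls_{h_i}(\rV)\otimes\rR)/\soc_j(\mcls_{h_i-1}(\rV)\otimes\rR)$,
in which $\rR_i\otimes\rR$ occurs with multiplicity one because $\soc_{h_i}(\rV)$ is multiplicity free; the unique summand that contains it defines $\tilde h_i$, the range of $j$ forces $\tilde h_i\le h_i$, and the two-sided condition of the lemma is exactly membership in that summand. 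Injectivity is then recovered from the analogous decomposition of $\soc_{\tilde h_i}(\rV\otimes\rR)$ over $k=1,\dots,ht(\rV)$, which lets one read off $h_i$ from $(\rR_i\otimes\rR,\tilde h_i)$. Your argument as written needs to be replaced by such a quotient-level argument (or you need to add and justify the hypothesis that $\rV$ is multiplicity free, which narrows the lemma).
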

\begin{proof}
    Under our assumptions, we have isomorphism of semi-simple modules,
    $$
    \soc_{h_i} (\rV)\otimes \rR\cong (\mathcal S_{h_i}(\rV)\otimes \rR)/(\mathcal S_{h_i-1}(\rV)\otimes \rR)\cong\mathop\oplus_{j=1}^{h_i} \soc_j (\mathcal S_{h_i}(\rV)\otimes \rR)/\soc_j (\mathcal S_{h_i-1}(\rV)\otimes \rR).
    $$
    Here, the second isomorphism follows from semi-simplicity of modules and equality of their composition series. Therefore, there exists a unique $\tilde h_i$ such that  $\rR_i\otimes\rR\subset
    \soc_{\tilde h_i} (\mathcal S_{h_i}(\rV)\otimes \rR)/\soc_{\tilde{h}_i} (\mathcal S_{h_i-1}(\rV)\otimes \rR)$. In particular, $\rR_i\otimes \rR\subset \soc_{\tilde h_i} (\mathcal S_{h_i}(\rV)\otimes \rR)$. Since $\soc_{\tilde h_i}(\mathcal S_{h_i}(\rV)\otimes \rR)$ is multiplicity free  $\rR_i\otimes \rR\not\subset \soc_{\tilde h_i} (\mathcal S_{h_i-1}(\rV)\otimes \rR)$.
    In particular, $\rR_i\otimes \rR\subset\soc_{\tilde h_i}(\rV\otimes \rR)$.
    
    We have another isomorphism of semi-simple modules,
$$
\soc_{\tilde{h}_i}(\rV\otimes\rR) \cong \mathop\oplus_{k=1}^{ht(\rV)}\soc_{\tilde{h}_{i}}(\mcls_{k}(\rV)\otimes \rR)/\soc_{\tilde{h}_{i}}(\mcls_{k-1}(\rV)\otimes \rR).
$$

It follows that $\iota$ is injective.
Indeed, if $(\rR_i\otimes \rR,\tilde h_i)$ is an image of $(\rR_i,h_i)$, then $h_i$ is the unique $k$ such that $\soc_{\tilde{h}_{i}}(\mcls_{k}(\rV)\otimes \rR)/\soc_{\tilde{h}_{i}}(\mcls_{k-1}(\rV)\otimes \rR)$. The map $\iota$ is surjective since graphs of $G(\rV)$ and $G(\rV\otimes \rR)$ have the same number of vertices.

\end{proof}

We expect that the bijection $\iota$ is compatible with edges in the following way.
\begin{conj}\label{conj:irr_prod_graph_erase}
If the edge connecting $(\rR_i\otimes\rR,\tilde h_i)$ and $(\rR_j\otimes\rR,\tilde h_j)$ in a graph of $\rV\otimes \rR$ is admissible, then the edge connecting $(\rR_i,h_i)$ and $(\rR_j,h_j)$ in a graph of $\rV$ is admissible.
\end{conj}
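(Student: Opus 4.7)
I would prove the contrapositive. Suppose the edge from $(\rR_i, h_i)$ to $(\rR_j, h_j)$ in $G(\rV)$ is not admissible, witnessed by a submodule $\rW \subset \rV$ that has $(\rR_i, h_i)$ as a composition factor (i.e., $\rR_i$ occurs as the specified summand of $\soc_{h_i}(\rW) \subset \soc_{h_i}(\rV)$) but does not have $(\rR_j, h_j)$ as one. My candidate witness in $\rV \otimes \rR$ is $\rW \otimes \rR$, which is a submodule of $\rV \otimes \rR$ by exactness of tensor product (Lemma \ref{lemma:exact_functors}).

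Tensoring any Jordan--Hölder filtration of $\rW$ by $\rR$ and using the hypothesis that each factor $\rR_k \otimes \rR$ is irreducible yields a Jordan--Hölder filtration of $\rW \otimes \rR$ whose factors are $\rR_k \otimes \rR$ for the composition factors $\rR_k$ of $\rW$, taken with the same multiplicities. The map $\rR_k \mapsto \rR_k \otimes \rR$ is injective on isomorphism classes, since multiplication by the Laurent monomial $\chi_q(\rR)$ is injective on $q$-characters. Thus the composition-factor multiset of $\rW \otimes \rR$ is determined exactly by that of $\rW$.

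The central step is naturality of the bijection $\iota$ of Lemma \ref{lemma:tp_graph_map} with respect to the inclusion $\rW \subset \rV$. The claim is that if $(\rR_k, h_k)$ is a composition factor of $\rW$ (necessarily at the same degree in $\rV$, via the identification coming from Corollary \ref{cor:subsoc}), then the composition factor of $\rW \otimes \rR$ that $\iota_\rW$ produces occurs at degree $\tilde h_k$ in $\rW \otimes \rR$, matching the degree $\tilde h_k$ supplied by $\iota_\rV$. The reason is that the specific summand $\rR_k$ of $\soc_{h_k}(\rV)$ coincides with the one sitting in $\soc_{h_k}(\rW)$, so after tensoring with $\rR$ it produces the same distinguished subspace of the subquotient $\soc_{h_k}(\rV) \otimes \rR$; the identity $\mcls_m(\rW \otimes \rR) = \mcls_m(\rV \otimes \rR) \cap (\rW \otimes \rR)$ from Lemma \ref{lemma:max_submod_filtration} then forces the degree of this subspace to be the same when computed in $\rW \otimes \rR$ or in $\rV \otimes \rR$.

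Granting naturality, $(\rR_i \otimes \rR, \tilde h_i)$ is a composition factor of $\rW \otimes \rR$; and $(\rR_j \otimes \rR, \tilde h_j)$ is a composition factor of $\rW \otimes \rR$ if and only if $\iota_\rV^{-1}(\rR_j \otimes \rR, \tilde h_j) = (\rR_j, h_j)$ is a composition factor of $\rW$, which by hypothesis it is not. So $\rW \otimes \rR$ witnesses non-admissibility of the image edge. The main obstacle I anticipate is the naturality claim in the third paragraph; I expect it follows from a careful but routine unpacking of the subquotient decomposition in the proof of Lemma \ref{lemma:tp_graph_map} combined with Lemma \ref{lemma:max_submod_filtration}, but writing it out carefully would be the essential piece of work.
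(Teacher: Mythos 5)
This statement is \emph{Conjecture} \ref{conj:irr_prod_graph_erase}; the paper gives no proof and lists it among the open questions in Section \ref{subsec:questions}, so there is no argument of the authors to compare against — you are attempting to settle an open problem. Your strategy — prove the contrapositive and promote a witness $\rW\subset\rV$ to $\rW\otimes\rR\subset\rV\otimes\rR$ — is a sensible first move, and the bookkeeping (exactness of $\cdot\otimes\rR$, injectivity of $\rR_k\mapsto\rR_k\otimes\rR$ on isomorphism types via $q$-characters, socle-multiplicity freeness of $\rW\otimes\rR$ since it is a submodule of $\rV\otimes\rR$) is all fine.

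The gap is the ``naturality'' you flag, and it is not a routine unpacking; as far as I can tell it is exactly why the statement is still a conjecture. The hard case is when a type $\rR_k$ occurs with multiplicity greater than one among the composition factors of $\rV$: then $\rR_k\otimes\rR$ occurs at several degrees in $\rV\otimes\rR$, the bijection $\iota$ of Lemma \ref{lemma:tp_graph_map} must match each occurrence $(\rR_k,h_k)$ in $\rV$ to a specific one of these, and you need $\iota_\rW$ and $\iota_\rV$ to make the same matches. Unwinding the definition (using $\mcls_{h_k-1}(\rV)\cap\mcls_{h_k}(\rW)=\mcls_{h_k-1}(\rW)$, which follows from Lemma \ref{lemma:max_submod_filtration}), this reduces to showing that if the distinguished copy $K\cong\rR_k\otimes\rR$ of $\soc_d(\rV\otimes\rR)$ lies in both $\soc_d(\mcls_{h_k}(\rW)\otimes\rR)$ and $\soc_d(\mcls_{h_k-1}(\rV)\otimes\rR)$, then it lies in $\soc_d$ of their intersection $\mcls_{h_k-1}(\rW)\otimes\rR$. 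That is an instance of $\soc_d(A)\cap\soc_d(B)\subset\soc_d(A\cap B)$ (all inside $\soc_d$ of the ambient module), which is false for submodules in general: pulling back to $\mcls_d$ it asks that $L+(A'\cap B')=\bar K$ given only $L+A'=L+B'=\bar K$ with $L$ a maximal submodule of $\bar K$, and three distinct lines in a plane already defeat such a lattice identity. Lemma \ref{lemma:max_submod_filtration} controls restriction to a single submodule, not intersections, so it does not close this; some genuinely new structural input — perhaps using irreducibility of $K$ and socle-multiplicity freeness more aggressively than you do — would be required. A smaller, separate issue: the hypothesis gives $\tilde h_i>\tilde h_j$, but the conclusion also needs $h_i>h_j$ for the edge in $G(\rV)$ to be correctly oriented, and $\tilde h_k\le h_k$ alone does not supply this; your contrapositive never addresses it.
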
 
In other words, under the bijection $\iota$ some edges in $G(\rV)$ are erased but no new edge is created.

\begin{remark}\label{remark:extension_remove}
 We expect that the admissible edge of length one is erased if and only if the corresponding extension of modules, see Remark \ref{remark:edge_extension}, becomes completely reducible after tensor multiplication by $\rR$. 
\end{remark}

We illustrate the picture in the following example.

\begin{example}
We give strong graphs of words $0022$, $0220$, $2200$ on the top row. We add admissible edges of length two. Then we multiply our words by $0$ to obtain strong graphs of $00220$, $02200$ on the second row and then one more time to get $002200$.  
\begin{figure}[H]
    \begin{tikzpicture}
        \node at (-8, 0) {
    \begin{tabular}{c}
    \begin{tikzpicture}
    \node at (0,0.2) {$[0,2]^2$};
    \node at (0,-1) {$[0,2]$};
    \node at (0,-2) {$[0,2]$};
    \node at (0,-3) {$\mbC$};
    \draw[->] (0,0) to (0,-0.8);
    \draw[->,red] (-0.2 ,0) to [out=-135,in=135] (-0.2,-1.75);
    \draw[->,ForestGreen, thick] (0,-1.2) to (0,-1.8);
    \draw[->,blue] (0.2 ,-1.2) to [out=-45,in=45] (0.2,-2.75);
    \draw[->] (0,-2.2) to (0,-2.8);
    
    \end{tikzpicture}\\
    $G(0022)$
    \end{tabular}};
        \node at (8, 0) {
    \begin{tabular}{c}
    \begin{tikzpicture}
    \node at (0,0.2) {$\mbC$};
    \node at (0,-1) {$[0,2]$};
    \node at (0,-2) {$[0,2]$};
    \node at (0,-3) {$[0,2]^2$};
    \draw[->] (0,0) to (0,-0.8);
    \draw[->,violet, thick] (-0.2 ,0) to [out=-135,in=135] (-0.2,-1.75);
    \draw[->,ForestGreen, thick] (0,-1.2) to (0,-1.8);
    \draw[->,orange] (0.2 ,-1.2) to [out=-45,in=45] (0.2,-2.75);
    \draw[->] (0,-2.2) to (0,-2.8);
    \end{tikzpicture}\\
    $G(2200)$
    \end{tabular}};
    \node at (0, 0) {
    \begin{tabular}{c}
    \begin{tikzpicture}
    \node at (0,0.2) {$[0,2]$};
    \node at (-1,-1) {$\mbC$};
    \node at (1,-1) {$[0,2]^2$};
    \node at (0,-2.2) {$[0,2]$};
    \draw[->,blue] (0,0) to (-1,-0.8);
    \draw[->,orange] (0,0) to (1,-0.8);
    \draw[->,red] (1,-1.2) to (0.1,-1.9);
    \draw[->,violet, thick] (-1,-1.2) to (-0.1,-1.9);
    \draw[->, ForestGreen, thick] (0,0) to (0,-1.85);
\end{tikzpicture}\\
    $G(0220)$
    \end{tabular}};
    \node at (-4, -4) {
    \begin{tabular}{c}
    \begin{tikzpicture}
    \node at (0,0.2) {$0[0,2]$};
    \node at (-2,0.2) {$0[0,2]^2$};
    \node at (-2,-1.05) {$0[0,2]$};
    \node at (0,-1) {$0$};
    \draw[->,ForestGreen, thick] (0,0) to (-1.8,-0.8);
    \draw[->,blue] (0,0) to (0,-0.8);
    \draw[->,red] (-2,0) to (-2,-0.8);
\end{tikzpicture}\\
    $G(00220)$
    \end{tabular}};
    \node at (4, -4) {
    \begin{tabular}{c}
    \begin{tikzpicture}
    \node at (0,0.2) {$0$};
    \node at (-2,0.2) {$0[0,2]$};
    \node at (-2,-1.1) {$0[0,2]^2$};
    \node at (0,-1.1) {$0[0,2]$};
    \draw[->,ForestGreen, thick] (-2,0) to (-0.2,-0.8);
    \draw[->, violet, thick] (0,0) to (0,-0.9);
    \draw[->,orange] (-2,0) to (-2,-0.9);
\end{tikzpicture}\\
    $G(02200)$
    \end{tabular}};
    \node at (-0.2, -7) {
    \begin{tabular}{c}
    \begin{tikzpicture}
    \node at (0,-1.05) {$0^2[0,2]$};
    \node at (1.1,-1.02) {$0^2$};
    \node at (-1.5,-1.05) {$0^2[0,2]^2$};
    \node at (0,0.2) {$0^2[0,2]$};
    \draw[->,ForestGreen, thick] (0,0) to (0,-0.8);
\end{tikzpicture}\\
    $G(002200)$
    \end{tabular}};

    \draw[->, thick] (-7,0) -- (-4,-2.75) node[midway,below left] {$\cdot\otimes 0$};
    \draw[<-, thick] (-0.2,-5.6) -- (-2.5, -3.8) node[midway,below left] {$0\otimes \cdot$};
    \draw[<-, thick] (0.2,-5.6) -- (2.5, -3.8) node[midway,below right] {$\cdot\otimes 0$};
    \draw[->, thick] (7,0) -- (4,-2.75) node[midway,below right] {$0\otimes \cdot$};
    \draw[<-, thick] (-3.8,-2.75) -- (-1.4,-0.7) node[midway,below right] {$0\otimes \cdot$};
    \draw[<-, thick] (3.8,-2.75) -- (1.4,-0.7) node[midway,below left] {$\cdot\otimes 0$};
    \end{tikzpicture}
\end{figure}

We color edges which correspond each other by the map $\iota$ of Lemma \ref{lemma:tp_graph_map} by the same color.
    
Observe that after each multiplication the degree of each vertex may decrease. Some edges disappear and some remain. Also, some edges become shorter in length.
\end{example}

\hfill
\section{Further directions.}\label{sec:further}
\subsection{Factorization by content.}\label{subsec:fact_by_cont}
We further explore the factorization of the space $H(w)$ when $w$ is disconnected in a proper sense. We recall three such statements, Propositions \ref{prop:hom_factorize_lattice}, \ref{prop:no_gaps} and Conjecture \ref{conj:drop_adm}.  
\begin{prop}\label{prop:no_conf_factor}
    Assume that $w_1 = (a_1,\dots, a_k)\in W_{k}, w_2 = (b_1,\dots, b_l) \in W_{l}$ with
    \begin{equation}\label{eq:prop_factor}
        a_i < b_j,
    \end{equation}    for all $i, j$. Let $w_3$ be a shuffle of words $w_1$ and $w_2$. Assume that in any arc configuration of the word $w_3$ no letter of $w_1$ is connected to a letter of $w_2$.
    
    Then
    \begin{equation*}        h_{\textit{irr}}(w_1)h_{\textit{irr}}(w_2) = h_{\textit{irr}}(w_3) \leq h(w_3) \leq  h(w_1)h(w_2).
    \end{equation*}\end{prop}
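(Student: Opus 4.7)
I would prove the three relations independently, in the order they appear.

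\emph{The equality $h_{\textit{irr}}(w_1)h_{\textit{irr}}(w_2)=h_{\textit{irr}}(w_3)$ is combinatorial.} The plan is to construct an explicit bijection $\iconf(w_3)\simeq\iconf(w_1)\times\iconf(w_2)$. For any $C\in\conf(w_3)$, the hypothesis forces a partition $C=C_1\sqcup C_2$, where $C_j$ consists of those arcs both of whose endpoints sit at positions of $w_j$-letters in the shuffle. Forgetting the ambient shuffle, $C_1\in\conf(w_1)$ and $C_2\in\conf(w_2)$; conversely the disjoint union of any pair of configurations produces an arc configuration of $w_3$. To upgrade this to irreducibility, I need only check that any intersection between an arc of $C_1$ and an arc of $C_2$ is automatically irreducible: such an intersection would be reducible if the middle letter coincided with one of the outer letters, but $a_i<b_j$ for all $i,j$ makes the letter sets of $w_1$ and $w_2$ disjoint, so this cannot happen. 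Thus $C\in\iconf(w_3)$ iff both $C_1\in\iconf(w_1)$ and $C_2\in\iconf(w_2)$.

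\emph{The middle inequality $h_{\textit{irr}}(w_3)\leq h(w_3)$ is immediate from Theorem~\ref{thm:lower_bound_irr}.}

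\emph{The upper bound $h(w_3)\leq h(w_1)h(w_2)$ is the substantive part.} I would proceed by induction on the number of adjacent pairs $(c_i,c_{i+1})$ in $w_3$ in which $c_i$ is a letter of $w_2$ and $c_{i+1}$ is a letter of $w_1$, reducing to the base case $w_3=w_1w_2$ (concatenation). For the base case, Lemma~\ref{lemma:hom_move} gives $H(w_1w_2)\cong\Hom_{\Uqa}(w_1^*,w_2)$, and the argument of Proposition~\ref{prop:hom_factorize_lattice} adapts: every homomorphism $w_1^*\to w_2$ must factor through trivial composition factors, because the dominant monomials of non-trivial irreducible composition factors of $w_1^*$ involve only variables $1_c$ with $c\in\cont(w_1)+2$ while those of $w_2$ involve $1_c$ with $c\in\cont(w_2)$, and condition (*) forces the only compatible match to be the trivial one. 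For the inductive step, I swap an adjacent pair $(b,a)=(c_i,c_{i+1})$ with $b\in w_2,\ a\in w_1$ using an $R$-matrix morphism: if $b\neq a+2$ the 2-dimensional evaluation modules $[b]$ and $[a]$ are in general position and the swap is an isomorphism, while if $b=a+2$ Proposition~\ref{prop:two_strings_prod} yields short exact sequences whose middle terms are the two orderings of $(b,a)$ and whose outer terms involve the trivial module and $[a,a+2]$ inserted into the ambient word. The induction is applied to the smaller auxiliary words produced this way.

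\emph{The main obstacle} is the inductive step when $b=a+2$: after the swap condition (*) need not be preserved with respect to the original $w_1,w_2$, and the auxiliary terms produced by Proposition~\ref{prop:two_strings_prod} fuse the letters $a$ and $a+2$ into a single three-dimensional module $[a,a+2]$, which must be redistributed between a smaller $w_1$ and $w_2$ (possibly enlarging one of them by a single evaluation module). The cleanest route is probably to formulate the upper bound for the slightly broader class of words obtained from $w_1$ and $w_2$ by replacing consecutive factors $(a)(a+2)$ by $[a,a+2]$; the short exact sequences then produce two smaller instances to which the induction applies, and their $h$-values sum to the desired product $h(w_1)h(w_2)$.
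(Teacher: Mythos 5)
Your argument for the equality $h_{\textit{irr}}(w_1)h_{\textit{irr}}(w_2)=h_{\textit{irr}}(w_3)$ and for the middle inequality is correct and matches the paper.

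For the upper bound, your plan has a genuine gap, and it is precisely the one you flag but do not resolve. The crucial issue is the \emph{direction} of the induction: you swap $(b,a)\to(a,b)$ with $b\in w_2$, $a\in w_1$, aiming at the base case $w_1w_2$; the paper instead swaps $(a,b)\to(b,a)$, aiming at $w_2w_1$. These are not symmetric. When $b=a+2$, the intertwiner $\check{R}(a,a+2)\colon a(a+2)\to(a+2)a$ used by the paper has one-dimensional kernel $\mbC$ (the trivial submodule), whereas $\check{R}(a+2,a)\colon(a+2)a\to a(a+2)$ used in your direction has three-dimensional kernel $[a,a+2]$, as one reads off from \eqref{eq:Rmat2}. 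In the paper's direction the induced map $\check{R}_H\colon H(w_3)\to H(\tilde w_3)$ has kernel equal to $H$ of the word obtained from $w_3$ by deleting the two letters $a$ and $a+2$; and this space vanishes, because any arc configuration of the shortened word would extend, by re-inserting the arc between the deleted $a$ and $a+2$, to an arc configuration of $w_3$ with a cross arc — contradicting the hypothesis — and then Theorem \ref{thm:supp} gives $H=0$. Hence $\check{R}_H$ is injective and $h(w_3)\leq h(\tilde w_3)$. In your direction the kernel of $\check{R}_H$ is $H$ of the word with the pair fused into $[a,a+2]$, which has no reason to vanish; the short exact sequences from Proposition \ref{prop:two_strings_prod} only yield a bound by a \emph{sum} of two unknown terms, and the suggested remedy of enlarging the class of objects is a sketch, not an argument.

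Your base case also needs more than you supply. Since you only assume $a_i<b_j$ (not $a_i+4\leq b_j$), the $q$-character variables $1_c$ appearing in $\chi_q(w_1^*)$ and in $\chi_q(w_2)$ can coincide (whenever $a_i+2=b_j$), so the argument of Proposition \ref{prop:hom_factorize_lattice} does not adapt verbatim; one would again need to invoke the arc-configuration hypothesis, and this is not done. The paper's base case $w_2w_1$ sidesteps this cleanly: one full cycle of slides gives $h(w_2w_1)=h((a_1-4,\dots,a_k-4)w_2)$, and now the gap condition $b_j\geq(a_i-4)+4$ required by Proposition \ref{prop:no_gaps} follows from $a_i<b_j$, so the factorization applies directly.
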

\begin{proof}
    First, we prove that $h_{\textit{irr}}(w_1)h_{\textit{irr}}(w_2) = h_{\textit{irr}}(w_3)$.
    
    Clearly, any irreducible arc configuration of $w_3$ is a shuffle of irreducible arc configurations of $w_1$ and $w_2$. Given an irreducible arc configuration of $w_1$ and an irreducible arc configuration of $w_2$, their shuffle is again an irreducible arc configuration since only new intersections appearing in shuffle are between arcs connecting pairs of letters $(a_i, a_j)$ and $(b_r, b_s)$. But by the hypothesis $b_r, b_s \notin \{a_1,\dots, a_k\}$. Hence the intersection is irreducible. Therefore, $h_{\textit{irr}}(w_1)h_{\textit{irr}}(w_2) = h_{\textit{irr}}(w_3)$.

    Second, by Theorem $\ref{thm:lower_bound_irr}$,  we have $h_{\textit{irr}}(w_3) \leq h(w_3)$.

    Third, we prove $h(w_3) \leq  h(w_1)h(w_2)$. Assume that $w_3 = (b_1,\dots, b_{l}, a_1, \dots, a_{k})$. Then applying slides, see Lemma \ref{lemma:slide_isom}, we obtain $h(w_3) = h((a_{1}-4, a_{2}-4, \dots, a_{k}-4, b_1,\dots, b_{l}))$. By Proposition \ref{prop:no_gaps},
    \begin{align*}
    h(w_3)&=h((a_{1}-4, a_{2}-4, \dots, a_{k}-4, b_1,\dots, b_{l})) = \\  & =h((a_{1}-4, a_{2}-4, \dots, a_{k}-4))h((b_1,\dots, b_l)) = h((a_1,\dots, a_{k}))h((b_1,\dots, b_l)).
    \end{align*}
   Let $d(w_3)$ be the minimal number of elementary transpositions of letters of $w_1$ with the letters of $w_2$ required to change $w_3$ to $(b_1,\dots, b_{l}, a_1, \dots, a_{k})$. Then $d(w_3) = 0$ if and only if $w_3=(b_1,\dots, b_{l}, a_1, \dots, a_{k})$.
     
    We use induction on $d(w_3)$. If $d(w_3) > 0$ then $w_3$ has the form $(c_1,\dots, c_m, a, b, c_{m+3},\dots, c_{k+l})$ where $a$ is in $w_1$ and $b$ is in $w_2$. Let $\tilde w_3=(c_1,\dots, c_m, b, a, c_{m+3},\dots, c_{k+l})$.
    
    We apply the intertwiner $\check{R}_{m+1}(a, b) = \Id_{(\mbC^2)^{\otimes m}}\otimes \check{R}(a, b)\otimes \Id_{(\mbC^2)^{\otimes (k+l-m-2)}}$, which induces the map $\check{R}_H: H(w_3) \longrightarrow H(\tilde{w}_3)$. In case $b \neq a + 2$, $\check{R}(a, b)$ is non-degenerate, therefore $\check{R}_H$ is non-degenerate. In case $b = a + 2$, the kernel of  $\check{R}_{m+1}(a, b)$ is isomorphic to $(c_1,\dots, c_m, c_{m+3},\dots, c_{k+l})$, therefore the kernel of $\check{R}_H$ is isomorphic to $H((c_1,\dots, c_m, c_{m+3},\dots, c_{k+l}))$. But $\conf((c_1,\dots, c_m, c_{m+3},\dots, c_{k+l})) = \varnothing$, otherwise by addition of an arc we obtain an arc configuration of $w_3$ in which $a$ and $b$ are connected by an arc, which contradicts the hypothesis. Therefore, $H((c_1,\dots, c_m, c_{m+3},\dots, c_{k+l})) = 0$ and $R_{H}$ is injective.

    Hence $\dim(H(w_3)) \leq \dim(H(\tilde{w}_3))$.

    If $w_3$ satisfies the assumption of the proposition, then $\tilde{w}_3$ also does since $b>a$. Moreover, $d(w_3)<d(\tilde{w}_3)$.

\end{proof}
\begin{conj}
    Under the assumptions of Proposition \ref{prop:no_conf_factor}, $h(w_3) =  h(w_1)h(w_2)$.
\end{conj}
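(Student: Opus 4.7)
The inequality $h(w_3)\le h(w_1)h(w_2)$ is contained in Proposition \ref{prop:no_conf_factor}, so the task is to produce the matching lower bound $h(w_3)\ge h(w_1)h(w_2)$; I would obtain it by a degeneration argument in the spirit of Section \ref{sec:degs_limits}. The hypothesis on $\conf(w_3)$ has already been consumed in the proof of the upper bound and need not reappear below.

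First I would introduce a small generic parameter $\epsilon\in\mbC$ and form the deformed tuple $w_3^{\epsilon}\in\mbC^{2n}$ obtained from $w_3$ by shifting each $w_1$-letter by $\epsilon$ while leaving every $w_2$-letter unchanged. For $\epsilon$ chosen outside the lattice $\mcl{L}$, each $w_1^{\epsilon}$-letter and each $w_2$-letter are in general position, so every $R$-matrix swapping an adjacent such pair is invertible by Proposition \ref{prop:Rmat_lambdas}. Iterating these swaps rearranges $w_3^{\epsilon}$ into the concatenation $w_1^{\epsilon}w_2$, yielding a $\Uqa$-module isomorphism $w_3^{\epsilon}\cong w_1^{\epsilon}\otimes w_2$. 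Since $w_1^{\epsilon}$ has parameters in the coset $\epsilon+2\mbZ$ and $w_2$ in $2\mbZ$, distinct modulo $\mcl{L}$, Proposition \ref{prop:hom_factorize_lattice} together with the shift-invariance $h(w_1^{\epsilon})=h(w_1)$ of Lemma \ref{lemma:shift} gives
\[
 h(w_3^{\epsilon})\;=\;h(w_1^{\epsilon}\otimes w_2)\;=\;h(w_1^{\epsilon})\,h(w_2)\;=\;h(w_1)\,h(w_2).
\]

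Next I would pass to the limit $\epsilon\to 0$. Pick a sequence of generic values $\epsilon_m\to 0$; each subspace $H(w_3^{\epsilon_m})\subset(\mbC^2)^{\otimes 2n}$ has dimension $h(w_1)h(w_2)$, and thus determines a point of the compact Grassmannian $\mathrm{Gr}(h(w_1)h(w_2),(\mbC^2)^{\otimes 2n})$. Extracting a convergent subsequence produces a limiting subspace $H_0$ of the same dimension. Since the defining conditions $x_r^{+}v=0$ and $\psi^{\pm}(z)v=v$ for being an $\ell$-singular vector of $\ell$-weight $1$ act on the fixed ambient space $(\mbC^2)^{\otimes 2n}$ with matrix coefficients that are Laurent polynomials in $q^{\epsilon}$, continuity at $\epsilon=0$ forces $H_0\subset H(w_3)$, yielding $h(w_3)\ge h(w_1)h(w_2)$ and, combined with the upper bound, the equality.

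The only delicate point is the continuity step identifying $H_0$ with a subspace of $H(w_3)$; here one must argue that $\ell$-singularity of $\ell$-weight $1$ in $w_3^{\epsilon}$ is cut out by finitely many polynomial conditions in $q^{\epsilon}$ which specialize well at $\epsilon=0$. This follows from the fact that on a finite-dimensional $\Uqa$-module only finitely many modes $x_r^{\pm}$ and $\psi_{\pm s}^{\pm}$ can act non-trivially on vectors of a given weight, and their matrix coefficients in the standard tensor basis of $(\mbC^2)^{\otimes 2n}$ are explicit rational functions of $q^{a_i},q^{b_j},q^{\epsilon}$ regular at $\epsilon=0$. Aside from this routine verification the argument is entirely formal.
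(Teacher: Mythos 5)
This statement is stated as an unproved conjecture in the paper, so there is no proof to compare against; your argument appears to supply one. The upper bound $h(w_3)\le h(w_1)h(w_2)$ is exactly Proposition~\ref{prop:no_conf_factor}, and your lower bound is new. The key move---shifting only the $w_1$-letters by a generic $\epsilon\notin\mcl L$ so that the two groups of evaluation parameters land in distinct $\mcl L$-cosets, which immediately yields $w_3^\epsilon\cong w_1^\epsilon\otimes w_2$ and hence $h(w_3^\epsilon)=h(w_1)h(w_2)$ via Proposition~\ref{prop:hom_factorize_lattice} and Lemma~\ref{lemma:shift}, followed by a Grassmannian-compactness limit as $\epsilon\to 0$---assembles ingredients that are all available in the paper (the lattice factorization, shift invariance, and the compactness/degeneration argument that the authors use in the second proof of Theorem~\ref{thm:supp} and at the end of Section~\ref{sec:degs_limits}), but combines them in a way the authors did not; in particular the authors' degeneration machinery stays inside the linial arrangement of planes $l_C$, whereas your deformation deliberately leaves it.

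Two minor comments. First, the invertibility of every swap $R$-matrix follows directly from Proposition~\ref{prop:Rmat_lambdas}: for a $w_1^\epsilon$-letter $a_i+\epsilon$ and a $w_2$-letter $b_j$ one has $(a_i+\epsilon)-b_j\in 2\mbZ+\epsilon$, and this is never $\pm 2$ modulo $\frac{2\pi\ri}{\log q}\mbZ$ precisely because $\epsilon\notin\mcl L$. Second, your final paragraph about finitely many modes $x_r^{\pm}$, $\psi_{\pm s}^{\pm}$ is more elaborate than necessary: the subspace $H(w_3^\epsilon)$ is cut out by the finitely many Drinfeld--Jimbo conditions $e_iv=f_iv=0$, $Kv=v$ on the fixed space $(\mbC^2)^{\otimes 2n}$, and the matrices of $e_0,f_0$ in the standard basis are manifestly Laurent polynomials in $q^\epsilon$ (from iterating $\Delta$ on evaluation modules), so the linear system specializes continuously at $\epsilon=0$ and the limit subspace $H_0$ lands inside $H(w_3)$. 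With that simplification the argument is complete, and it confirms your observation that the ``no cross arcs'' hypothesis is used only for the upper bound.
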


We expect an even stronger statement. 

\begin{conj}\label{conj:hom_fact_confconn}
For a word $w$ the space $H(w)\cong H(w_1)\otimes H(w_2)\otimes\dots \otimes  H(w_k)$, where $w_i$'s are subwords corresponding to conf-connected components of $w$.
\end{conj}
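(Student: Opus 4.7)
The plan is to argue by induction on the number $k$ of conf-connected components of $w$. The base $k=1$ is tautological, so assume $k \geq 2$; by grouping, it suffices to show $H(w) \cong H(w_1) \otimes H(\hat{w})$, where $w_1$ is the first conf-connected subword and $\hat{w}$ is the shuffle of the remaining letters. I would construct an injection $\Phi \colon H(w_1) \otimes H(\hat{w}) \hookrightarrow H(w)$ and then prove the matching upper bound $h(w) \leq h(w_1) \cdot h(\hat{w})$.

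For $\Phi$, I would exploit the degeneration framework of Section \ref{sec:degs_limits}. By Lemma \ref{lemma:dg_dims}, the vertex $A$ of the degeneracy graph containing $w$ has dimension at least two, so $w$ can be deformed to an admissible $A$-generic word $\tilde{w}$ by shifting all letters of the first component by a single generic parameter $\nu$ while fixing the others. For generic $\nu$, iterated application of Proposition \ref{prop:no_gaps}, combined with Lemma \ref{lemma:shift}, yields $H(\tilde{w}) \cong H(w_1) \otimes H(\hat{w})$. Letting $\nu \to 0$, generic $\ell$-singular vectors admit well-defined limits in $H(w)$, since the intertwiners used to bring $\tilde{w}$ to a standard form are invertible at an admissible generic point. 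Passing to the limit produces $\Phi$, with injectivity following from linear independence of leading monomials as in the proof of Theorem \ref{thm:lower_bound_irr}.

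The main obstacle is the reverse bound $h(w) \leq h(w_1) \cdot h(\hat{w})$. My planned strategy is to mimic the proof of Proposition \ref{prop:hom_factorize_lattice}. After slides and Lemma \ref{lemma:hom_move}, rewrite $H(w) \cong \Hom_{\Uqa}(\hat{w}^{*}, w_1)$, so that any element factors through its image, which is simultaneously a quotient of $\hat{w}^{*}$ and a submodule of $w_1$. The aim is then to show that the only such common subquotients are direct sums of trivial modules up to shifts, forcing the $\Hom$ space to factorize as claimed.

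The difficulty is that conf-connectedness is a purely combinatorial condition on arc configurations, whereas the desired factorization concerns composition factors and $q$-characters. Bridging this requires a refined version of Theorem \ref{thm:supp}: specifically, one would need to show that if no arc configuration of $w$ contains an arc between $w_1$ and $\hat{w}$, then any irreducible composition factor appearing both in $w_1$ and in some subquotient of $\hat{w}^{*}$ must be structurally compatible only with a factorized homomorphism. This structural bridge is, I expect, the central technical lemma to establish. Should a direct combinatorial-to-algebraic argument prove elusive, a fallback is to first settle Conjecture \ref{conj:drop_adm}: combined with Proposition \ref{vertex-dim prop}, that conjecture immediately delivers the dimension identity $h(w) = \prod_i h(w_i)$, and then the vector-space isomorphism follows by combining the dimension count with the injection $\Phi$ already constructed above.
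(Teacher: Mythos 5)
The target statement is presented in the paper as Conjecture~\ref{conj:hom_fact_confconn}; the paper offers no proof, only the one-sided bound of Proposition~\ref{prop:no_conf_factor} (under the much stronger hypothesis that every letter of $w_1$ is strictly smaller than every letter of $w_2$) and the observation that the conjecture would follow from Conjecture~\ref{conj:drop_adm} together with Proposition~\ref{vertex-dim prop}. Your plan therefore has to be judged on its own merits, and it does not close the gap. The key step you propose for the upper bound, rewriting $H(w)\cong\Hom_{\Uqa}(\hat{w}^{*},w_1)$ "after slides and Lemma~\ref{lemma:hom_move}," is not available in general: $w$ is merely a shuffle of $w_1$ and $\hat{w}$, not a concatenation, and slides only rotate the word cyclically. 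Because letters belonging to distinct conf-connected components may be adjacent in $w$ and differ by exactly~$\pm 2$, they cannot be commuted past each other. A concrete instance is $w=0242$, whose conf-connected components sit on positions $\{1,4\}$ and $\{2,3\}$; the adjacent letters $4$ and $2$ (positions $3,4$) lie in different components, so no sequence of slides and transpositions of commuting letters brings $w_1$ to one side of $\hat{w}$. This blocks the mimicry of Proposition~\ref{prop:hom_factorize_lattice} before it starts. You then defer the remaining work to a "structural bridge lemma" that you explicitly do not prove; but that lemma is essentially the content of the conjecture, so the argument as written is a plan, not a proof.

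The injection side has a related problem: you invoke invertibility of the intertwiners at the admissible generic point $\nu\neq 0$, but the limit $\nu\to 0$ is precisely the non-admissible point where those intertwiners may degenerate, so the asserted canonical isomorphism $\Phi$ is not justified. What the compactness argument does give is a limit point in the Grassmannian and hence the dimension inequality $h(w)\geq h(w_1)h(\hat{w})$ — which, for the special content-separated case, is part of what Proposition~\ref{prop:no_conf_factor} already records — but not a distinguished linear map. Finally, the fallback via Conjecture~\ref{conj:drop_adm} is logically coherent but substitutes one open conjecture for another; indeed the paper (Remark~\ref{rem:h_def_by_conf}) observes that Conjecture~\ref{conj:hom_fact_confconn} \emph{implies} Conjecture~\ref{conj:drop_adm}, so this route is essentially a reformulation rather than a reduction to something easier.
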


\begin{remark}\label{rem:h_def_by_conf}
If Conjecture \ref{conj:hom_fact_confconn} is true, 
then the number $h(w)$ is completely determined by the set of arc configurations of $w$. Indeed, Conjecture \ref{conj:hom_fact_confconn} reduces the computation of $h(w)$ to the case of conf-connected words $w$. A conf-connected word is determined by arc configurations up to a shift, see Corollary \ref{cor:conf_conn_determ}. 
\end{remark}

In Appendix \ref{app:tables} we list the answers for conf-connected words up to length $10$ (modulo symmetries $ab=ba$, $|b-a|>2$, slides, and anti-involution $\omega$).

\begin{remark}
    Let $w$ be a word such that $\conf(w) \neq \varnothing$. Let $w^{(1)},\dots, w^{(k)}$ be the subwords of $w$ corresponding to its conf-connected components. As discussed in Section \ref{subsec:deg_graphs}, there is a natural identification $\conf(w) = \conf(w^{(1)})\times \dots \times \conf(w^{(k)})$. We note that under this identification the standard arc configuration of $w$ corresponds to the product of standard arc configurations of $w^{(1)}, \dots, w^{(k)}$.
\end{remark}

\subsection{Statistics of reducible intersections.}\label{subsec:int_poly}
The Theorem \ref{thm:lower_bound_irr} shows that for a given word $w \in W_{2n}$ some information about $H(w)$ can be deduced from the set $\iconf(w)$. The elements in  $\iconf(w)$ are distinguished in $\conf(w)$ by the condition  that all intersections are irreducible. It could happen that the number $h(w)$ is related to the statistics of reducible intersections for all elements of $\conf(w)$.

\begin{defi}\label{def:int_poly}
    For a given $w\in W_{2n}$ and an arc configuration $C\in \conf(w)$, let $J(C)$ be the number of reducible intersections of arcs in $C$. We call the polynomial 
    \begin{equation}
        p_{w}(x) = \sum_{C\in \conf(w)}x^{J(C)},
    \end{equation}
    intersection polynomial of $w$.
\end{defi}

We have $p_w(0)=|\iconf(w)|\leq h(w)$.

Intersection polynomials are invariant with respect to several maps which preserve $h(w)$.

\begin{prop}\label{prop:int_poly_symm}
For a given word $w\in W_{2n}$, intersection polynomial is invariant with respect to common shift of all letters of $w$, slides (cf. Lemma \ref{lemma:slide_isom}), the anti-involution $\omega$  (cf. Corollary \ref{cor:reverse_word}), and permutations of adjacent letters $a,b$ in $w$  whenever $|a-b|\neq 2$.
\end{prop}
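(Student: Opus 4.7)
The plan is to establish each of the four claimed symmetries by exhibiting an explicit bijection on arc configurations that preserves the statistic $J$. Invariance under common shift is immediate: the identity map on positions $\{1,\dots,2n\}$ induces a bijection $\conf(w)\to\conf(w+b)$ since whether $a_j=a_i+2$ is preserved by uniform translation, and reducibility in Definition~\ref{def:irr_arc_conf} depends only on equalities among letters, which are also preserved. Slide invariance of $J$ is recorded already in Remark~\ref{rem:irr_int_slides_preserved}.

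For the anti-involution $\omega$, I would use the bijection $(i,j)\mapsto(2n+1-j,\,2n+1-i)$ from $\conf(w)$ to $\conf(\omega(w))$. This sends an arc of color $c$ in $w$ to an arc of color $-c$ in $\omega(w)$, and it converts an intersecting pair $(i_1,j_1),(i_2,j_2)$ with $i_1<i_2<j_1<j_2$ into an intersecting pair with positions $2n+1-j_2<2n+1-j_1<2n+1-i_2<2n+1-i_1$. Using the color relations $a_{j_r}=a_{i_r}+2$, the two reducibility patterns $a_{i_2}=a_{i_1}$ and $a_{i_2}=a_{j_1}$ translate under $(-1)$-negation to $a_{j_1}=a_{j_2}$ and $a_{j_1}=a_{i_2}$ respectively, which are precisely the two reducibility patterns for the image pair.

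The substantive case is a swap of adjacent letters at positions $k,k+1$ with $|a_k-a_{k+1}|\neq 2$. Let $\sigma=(k,k+1)$ and define $\phi:\conf(w)\to\conf(w')$ by sending each arc $(i,j)$ to $(\min(\sigma(i),\sigma(j)),\max(\sigma(i),\sigma(j)))$. Because the hypothesis $|a_k-a_{k+1}|\neq 2$ forbids an arc between positions $k$ and $k+1$ in either word, $\phi$ is a well-defined bijection with the letter at position $k$ of $w$ ending up at position $k+1$ of $w'$, and vice versa. I would split pairs of arcs into three groups: (i) neither arc touches $\{k,k+1\}$, (ii) exactly one touches $\{k,k+1\}$, (iii) both touch $\{k,k+1\}$ (using one endpoint each). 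Group (i) is trivial. For group (ii), a short case analysis on the position type (right endpoint $k$, left endpoint $k$, etc.) shows that the intersection condition with the untouched arc is governed by inequalities that do not distinguish $k$ from $k+1$ when the other arc's endpoints lie outside $\{k,k+1\}$; and the reducibility condition involves the pair of letters at the endpoints of the touching arc, which remain the same letters (just moved between $k$ and $k+1$).

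The main obstacle is group (iii), where intersection patterns genuinely flip: an intersecting pair in $C$ can become non-intersecting in $\phi(C)$ and vice versa. The key observation that saves the day is the following. Split group (iii) into the four position patterns, for example $\{(i_1,k),(i_2,k+1)\}$, $\{(i_1,k+1),(k,j_2)\}$, $\{(i_1,k),(k+1,j_2)\}$, and $\{(k,j_1),(k+1,j_2)\}$. In each pattern, writing out the two possible reducibility conditions and using the color relations $a_k=a_{i_1}+2$ or $a_{j_r}=a_{k}+2$ etc., I would show that each such reducibility forces either $a_k=a_{k+1}$ (in which case the swap is trivial and $w=w'$) or $|a_k-a_{k+1}|=2$ (excluded by hypothesis). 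Consequently every intersection arising in group (iii), in either $C$ or $\phi(C)$, is irreducible. Thus intersections may appear or disappear under $\phi$ but reducible ones are neither created nor destroyed, giving $J(C)=J(\phi(C))$ and $p_w=p_{w'}$.
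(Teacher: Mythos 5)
Your proof is correct and is simply a careful elaboration of what the paper's one-line proof ("slides follow from Remark \ref{rem:irr_int_slides_preserved}; the rest is clear") leaves unstated: the shift and $\omega$ cases are straightforward index-chasing, and for the adjacent swap you correctly identify that the only nontrivial point is pairs of arcs each having one endpoint in $\{k,k+1\}$, where the intersection status may flip but the color relations $a_k=a_{i_1}+2$, $a_{k+1}=a_{i_2}+2$, etc., force any would-be reducible intersection to require $a_{k+1}-a_k\in\{0,\pm 2\}$, which is either excluded or makes $w=w'$. The only small wrinkle worth flagging is that in the degenerate case $a_k=a_{k+1}$ your map $\phi$ need not preserve $J$ pointwise (a nested pair can become a reducible crossing or vice versa), but since $w=w'$ there the conclusion $p_w=p_{w'}$ is immediate anyway, so the argument stands.
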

\begin{proof}
    The part about slides follows from Remark \ref{rem:irr_int_slides_preserved}. The rest is clear.
\end{proof}

\begin{prop}
    Let $w\in W_{2n}$ be a word, let $C\in\conf(w)$, and let $(i_1, j_1), (i_2, j_2)\in C$ be two intersecting arcs. If the intersection is reducible, then $i_1, j_1, i_2, j_2$ are in the same conf-connected component of $\{1,\dots,2n\}$. 
\end{prop}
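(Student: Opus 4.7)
The plan is to unpack the definition of a reducible intersection into two cases and, in each case, exhibit an auxiliary arc configuration of $w$ whose arcs witness the required conf-connections among $i_1, j_1, i_2, j_2$. Since $(i_1,j_1), (i_2,j_2) \in C$ already give $i_1 \sim_w j_1$ and $i_2 \sim_w j_2$, it will suffice in each case to produce one extra arc configuration that cross-connects the two pairs.

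Without loss of generality assume $i_1 < i_2 < j_1 < j_2$. Set $a = a_{i_1}$, so $a_{j_1} = a+2$. Since $(i_2,j_2)$ is an arc, $a_{j_2} = a_{i_2}+2$. The reducibility hypothesis $a_{i_2} \in \{a_{i_1}, a_{j_1}\}$ splits into two cases.

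In Case 1, $a_{i_2}=a$, hence $a_{j_2}=a+2$. Define $C'$ from $C$ by replacing the arcs $(i_1,j_1),(i_2,j_2)$ with $(i_1,j_2),(i_2,j_1)$, exactly as in the first picture in the proof of Lemma \ref{lemma:confex_iconfex}. Both new pairs are arcs of color $a+1$ (since $a_{i_1}=a_{i_2}=a$ and $a_{j_1}=a_{j_2}=a+2$), and all other arcs of $C$ are unchanged, so $C' \in \conf(w)$. From $C'$ we get $i_1 \sim_w j_2$ and $i_2 \sim_w j_1$, which together with the connections supplied by $C$ place all four indices in a single conf-connected component.

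In Case 2, $a_{i_2}=a+2$, hence $a_{j_2}=a+4$. Define $C'$ from $C$ by replacing $(i_1,j_1),(i_2,j_2)$ with $(i_1,i_2),(j_1,j_2)$, as in the second picture in the proof of Lemma \ref{lemma:confex_iconfex}. Here $(i_1,i_2)$ is an arc of color $a+1$ and $(j_1,j_2)$ is an arc of color $a+3$, and $C'\in\conf(w)$. This yields $i_1 \sim_w i_2$ and $j_1 \sim_w j_2$, which again combine with the pairings from $C$ to put $\{i_1,j_1,i_2,j_2\}$ in one conf-connected component.

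The argument is essentially combinatorial and there is no real obstacle: the only thing to verify is that the modified arcs in each case have the correct colors, which is immediate from the reducibility hypothesis. The construction is literally the same swap already used in Lemma \ref{lemma:confex_iconfex} to reduce the number of reducible intersections; here we repurpose it as a device for producing conf-connections rather than for counting.
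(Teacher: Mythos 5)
Your proof is correct and follows the same approach as the paper: split reducibility into the two cases $a_{i_2}=a_{i_1}$ and $a_{i_2}=a_{j_1}$, and in each case perform the arc swap (uncross or parallel-split) used in the proof of Lemma \ref{lemma:confex_iconfex} to exhibit a second arc configuration supplying the missing cross-connection. Your write-up is slightly more explicit than the paper's (you check colors and spell out how the equivalences combine), but the argument is identical.
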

\begin{proof}
    Let $w = (a_1, \dots, a_{2n})$. Without loss of generality, $i_1 < i_2 < j_1 < j_2$.
    
    If $a_{i_2} = a_{i_1}$, then $C\backslash\{(i_1,j_1),(i_2, j_2)\} \sqcup \{(i_1,j_2), (i_2, j_1)\}$ connects $i_1$ with $j_2$.

    If $a_{i_2} = a_{j_1}$, then $C\backslash\{(i_1,j_1),(i_2, j_2)\} \sqcup \{(i_1,i_2), (j_1, j_2)\}$ connects $i_1$ with $i_2$.
\end{proof}
\begin{cor}\label{cor:int_poly_factor}
    Let $w$ be a word. Let $(w^{(1)},\dots,w^{(k)})$ be the subwords of $w$ corresponding to conf-connected components of $w$. Then $p_{w}(x) = \prod_{j=1}^{k}p_{w_{j}}(x)$.
\qed
\end{cor}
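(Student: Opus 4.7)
The plan is to use the preceding proposition to argue that a reducible intersection in any arc configuration of $w$ is confined to a single conf-connected component, and then simply decompose the sum defining $p_w$ over the product structure of $\conf(w)$.

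First, I would record the fact (noted in Section \ref{subsec:deg_graphs}) that the sets of arc configurations multiply across conf-connected components: the natural identification $\conf(w) \cong \conf(w^{(1)}) \times \dots \times \conf(w^{(k)})$ sends a configuration $C$ to the tuple $(C_1,\dots,C_k)$, where $C_j$ consists of those arcs of $C$ whose endpoints lie in the $j$-th conf-connected component. By definition of conf-connectedness via the transitive closure of $\bar\sim_w$, any arc of $C$ has both endpoints in the same component, so this identification is well defined and bijective.

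Next, I would use the preceding proposition: if two arcs $(i_1,j_1), (i_2,j_2) \in C$ intersect reducibly, then all four endpoints $i_1,j_1,i_2,j_2$ lie in a common conf-connected component. In particular, both arcs belong to the same $C_j$. Consequently, every reducible intersection of $C$ is a reducible intersection of exactly one of the $C_j$, and conversely every reducible intersection of some $C_j$ is a reducible intersection of $C$. This yields the additive identity
\begin{equation*}
J(C) \;=\; J(C_1) + J(C_2) + \dots + J(C_k).
\end{equation*}

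Finally, I would assemble the factorization directly from the definition of the intersection polynomial:
\begin{equation*}
p_w(x) \;=\; \sum_{C \in \conf(w)} x^{J(C)} \;=\; \sum_{(C_1,\dots,C_k)} x^{J(C_1)+\dots+J(C_k)} \;=\; \prod_{j=1}^{k}\Bigl(\sum_{C_j \in \conf(w^{(j)})} x^{J(C_j)}\Bigr) \;=\; \prod_{j=1}^{k} p_{w^{(j)}}(x).
\end{equation*}
There is no real obstacle here once the preceding proposition is in hand; the only point to be careful about is checking that the tuple-of-configurations identification is compatible with the counting of reducible intersections, which is exactly what the preceding proposition delivers.
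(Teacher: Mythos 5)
Your proof is correct and is precisely the argument the paper intends — the corollary is stated with no written proof because it is regarded as immediate from the preceding proposition, and you have simply supplied the straightforward bookkeeping (component-wise decomposition of $\conf(w)$, additivity of $J$, and the resulting factorization of the sum). The one point worth making explicit, which you implicitly rely on, is that the reindexing from $w$ to the subword $w^{(j)}$ preserves both the intersection pattern (relative order of positions) and reducibility (colors), so the count $J(C_j)$ computed inside $w^{(j)}$ agrees with the count of reducible intersections among those arcs viewed inside $w$.
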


We note that  Corollary \ref{cor:int_poly_factor}  is consistent with Conjecture \ref{conj:hom_fact_confconn}.

\begin{prop}
    For a word $w \in W_{2n}$ such that $\conf(w)\neq \varnothing$ the intersection polynomial $p_w(x)$ is monic. Moreover, the arc configuration of $w$ which has the maximal number of reducible intersections is the standard arc configuration of $w$.
\end{prop}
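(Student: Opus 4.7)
The plan is to prove both statements by strong induction on $n=l(w)/2$, with the trivial base $n=1$. For the inductive step, let $a=\min(\supp(w))$, $i=\max I_w(a)$, $j=\max I_w(a+2)$; by Lemma \ref{lemma:std_exists} and Definition \ref{def:std_conf}, the standard configuration is $C_0=\{(i,j)\}\cup C_0'$ with $C_0'$ the standard configuration of $\mathring w$ (the word obtained by removing positions $i$ and $j$). My strategy is to exhibit, for every $C\in\conf(w)$ with $C\ne C_0$, a reverse-swap move producing $C'\in\conf(w)$ with $J(C')>J(C)$; since $J$ is bounded, iteration terminates at $C_0$, which is therefore the unique maximizer of $J$, and this is precisely the content of both claims.

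First I would handle the case $(i,j)\notin C$. Position $i$ is then paired in $C$ with some $j'<j$, and position $j$ is paired either with some $k<i$ satisfying $a_k=a$, or with some $l>j$ satisfying $a_l=a+4$. In the first subcase the arcs $\{(k,j),(i,j')\}$ are nested with both left endpoints of color $a$, so the replacement $\{(k,j),(i,j')\}\mapsto\{(k,j'),(i,j)\}$ is the reverse of the exchange in Case~1 of the proof of Lemma \ref{lemma:confex_iconfex}. In the second subcase $\{(i,j'),(j,l)\}$ are disjoint with $a_{j'}=a_j=a+2$, and $\{(i,j'),(j,l)\}\mapsto\{(i,j),(j',l)\}$ is the reverse of the exchange in Case~2. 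Reading that lemma backwards, each such move strictly increases $J$, so after finitely many moves we may assume $(i,j)\in C$.

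Once $(i,j)\in C$, write $C=\{(i,j)\}\cup\mathring C$ and decompose $J(C)=J(\mathring C)+\rho(C)$, where $\rho(C)$ counts reducible intersections of $(i,j)$ with arcs of $\mathring C$: concretely, an arc $(p,q)\in\mathring C$ contributes iff $p<i<q<j$ with $a_p=a$ (type A), or $i<p<j<q$ with $a_p=a+2$ (type B). If $\mathring C=C_0'$ we are done; otherwise the inductive hypothesis applied to $\mathring w$ yields a reverse-swap exchange inside $\mathring C$ producing $\mathring C''$ with $J(\mathring C'')>J(\mathring C)$, and I set $C''=\{(i,j)\}\cup\mathring C''$. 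The key auxiliary lemma is that any such reverse-swap satisfies $\rho(C'')\ge\rho(C)$: the swap fixes the multiset of its four endpoints $p_1<p_2<q_1<q_2$, hence also their sign vector $(s(p_1),s(p_2),s(q_1),s(q_2))\in\{-,0,+\}^4$ recording their position relative to the interval $(i,j)$. A short case analysis on the two reverse-swap types and on the common left color $c$ of the involved arcs (the only relevant cases being $c=a$, $c=a+2$, or $c>a+2$), using that endpoints of value $a$ or $a+2$ in $\mathring w$ must lie strictly left of $i$ or $j$ respectively (by maximality of $i$ and $j$), reduces the inequality to monotone Boolean identities such as $\alpha\gamma+\beta\delta\ge\alpha\delta+\beta\gamma$ under the implications $\alpha\Rightarrow\beta$ and $\gamma\Rightarrow\delta$ forced by the non-decreasing sign pattern.

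Combining the auxiliary lemma with the inductive hypothesis yields $J(C'')=J(\mathring C'')+\rho(C'')>J(\mathring C)+\rho(C)=J(C)$, which closes the induction. Thus $C_0$ is the unique maximizer of $J$ on $\conf(w)$, proving both that $p_w(x)$ is monic and that its top-degree monomial is realized exactly at the standard configuration. The main obstacle of the proof is the auxiliary lemma controlling $\rho$ under reverse swaps; each of the finitely many subcases collapses to an elementary monotonicity argument thanks to the minimality of $a$ and the maximality of $i$ and $j$, but enumerating and verifying them cleanly is where the real content lies.
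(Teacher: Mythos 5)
Your proof is correct and reaches the same conclusion, but the route is genuinely different from the paper's in two respects. First, the paper normalizes by slides (Lemma \ref{lemma:std_conf_slide} and Proposition \ref{prop:int_poly_symm}) so that the first letter is the \emph{strict} minimum of $w$; this kills the subcase in which the rightmost $a+2$ is the right end of an arc, and also eliminates type A contributions to $\rho$, reducing $\rho$ to a single type B count bounded by $\min(|I_w(4)\cap\{j+1,\dots,2n\}|, m_3)$. You instead work directly with $i=\max I_w(a)$, $j=\max I_w(a+2)$, which costs you one extra case in step one and forces your auxiliary lemma to handle both type A and type B contributions, but saves the slide normalization. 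Second, and more substantively, once $(i,j)\in C$ the paper argues in one shot: $\rho(C)\le M$ with equality at the standard configuration, while the inductive hypothesis handles $J(\mathring C)$; you replace this with a descent argument, applying a \emph{strengthened} inductive hypothesis (``from any non-standard configuration there is a reverse-swap increasing $J$'') to $\mathring w$ and then proving an auxiliary monotonicity lemma showing $\rho$ is non-decreasing under any reverse-swap of the two types in Lemma \ref{lemma:confex_iconfex}. I checked your sign-vector case analysis — in each of $c=a$, $c=a+2$, $c>a+2$ for both reverse-swap types the Boolean inequality $(\beta-\alpha)(\delta-\gamma)\ge0$ (or equality) holds using $p\ne i,j$ together with the maximality of $i$ and $j$ — and the aux lemma is correct. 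The one thing to make explicit is that you are really inducting on the strengthened statement (existence of an improving reverse-swap from every non-standard configuration), not merely on the proposition as stated; your inductive step does establish the strengthened statement, so the logic is sound, but it would be worth flagging. The paper's argument is shorter because the slide normalization makes $\rho$ a clean linear count, whereas your approach has the advantage of replacing the paper's unproved assertion ``this number is achieved by the standard configuration'' with a verifiable local monotonicity statement.
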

\begin{proof}
    Due to Lemma \ref{lemma:std_conf_slide} we can assume that the first letter of $w$ is smaller than all other letters of $w$. Without loss of generality we can assume that this letter equals to $0$.

    We use induction on $n$. Let $j$ be the position of the rightmost letter $2$ in $w$. Let $C$ be an arc configuration and let $(1, j^{\prime}), (j,k)\in C$ for some $j^{\prime}<j<k$. We claim that the number of reducible intersections of $\tilde{C} = (C\backslash\{(1,j^{\prime}), (j,k)\})\sqcup\{(1,j), (j^{\prime},k)\}$ is strictly larger.

    Assume that an intersection of arcs $(l,m)$ and $(1,j^{\prime})$ in $C$ is reducible. Assume that $(l,m)$ does not intersect $(j,k)$. Then $l < j^{\prime} < m$ and the $l$-th letter is $2$. If $j^{\prime} < m < j$, then the intersection of arcs  $(l,m)$, $(j^{\prime}, k)$ is reducible. If $m > k$, then the intersection of arcs $(l,m)$ $(1,j)$ is reducible. 

     Assume that the intersection of arcs $(l,m)$,  $(j,k)$  in $C$ is reducible. Assume that $(l,m)$ does not intersect $(1,j^{\prime})$. If $l < j$, then the intersection of arcs  $(l,m)$, $(1,j)$ in $\tilde{C}$  is reducible. If $m > k$, then the intersection of arcs $(l,m)$, $(j^{\prime},k)$ in $\tilde{C}$ is reducible.

     In the case an arc $(l,m)$ intersects both $(1,j^{\prime})$ and $(j,k)$ and at least one of these intersections is reducible, both are reducible and in $\tilde{C}$ the intersections of arc $(l,m)$ with both $(1,j)$ and $(j^{\prime},k)$ are reducible.
     
     We conclude that the number of reducible intersections of arc $(1,j), (j^{\prime}, k)$ with an arc $(l,m) \in C\backslash\{(1,j^{\prime}), (j,k)\}$ is not smaller than the number of intersections of the arc $(l,m)$ with arcs $(1,j^{\prime}), (j, k)$.

     Note that arcs $(1,j^{\prime}), (j,k)$ do not intersect and the intersection of $(1,j)$ and $(j^{\prime}, k)$ is reducible. Therefore, the number of reducible intersections in $\tilde{C}$ is strictly larger.

     We conclude that any arc configuration with the maximal number of reducible intersections includes the arc $(1,j)$. Note that the number of reducible intersections of the arc $(1,j)$ with other arcs cannot exceed $\min(|I_{w}(4)\cap \{j+1,\dots, 2n\}|, m_3)$ (recall that $m_3$ is the number of arc with the left end equal to $2$). This number is achieved if the arc configuration $\mathring{C}_{(1,j)}$ is the standard arc configuration of $\mathring{w}_{(1,j)}$. By induction on $n$ this is the unique arc configuration which maximizes the number of reducible intersections between arcs different from $(1,j)$.

\end{proof}

We list all configuration-connected words (up to equivalences described in Proposition \ref{prop:int_poly_symm}) of lengths $2,4,6,8,10$ together with corresponding intersection polynomials in Appendix \ref{app:tables}.

These examples exhibit several patterns which we formulate as a conjecture.

\begin{conj}\label{conj:int_poly} Let $w$ be a word.
\begin{enumerate}
    \item\label{conj:int_poly_1} The number $h(w)$ depends only on $p_{w}(x)$, namely if  $p_{w_1}(x)=p_{w_2}(x)$, then $h(w_1)=h(w_2)$.
    \item The number $h(w) = 1$ if and only if the intersection polynomial of $w$ is a product of polynomials of the form $\frac{x^n-1}{x-1}$.
    
\end{enumerate}
\end{conj}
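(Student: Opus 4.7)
The plan is to first reduce both parts to the case of conf-connected words. By Corollary \ref{cor:int_poly_factor}, the intersection polynomial factors across conf-connected components as $p_w(x) = \prod_i p_{w^{(i)}}(x)$, and by the expected factorization in Conjecture \ref{conj:hom_fact_confconn}, $h(w) = \prod_i h(w^{(i)})$. Both statements in Conjecture \ref{conj:int_poly} respect this factorization: the polynomials $(x^n-1)/(x-1)$ are closed under products (and already factor into them), and $h(w)=1$ forces each $h(w^{(i)}) = 1$. So it suffices to prove the theorem for conf-connected $w$, where part (2) becomes $h(w)=1$ iff $p_w(x) = 1+x+\dots+x^{n-1}$ for some $n$.

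For the ``if'' direction of part (2), the constant coefficient of $p_w$ equals $|\iconf(w)|$, so the assumption $p_w(x) = 1 + x + \dots + x^{n-1}$ gives $|\iconf(w)|=1$ and hence $h(w) \geq 1$ by Theorem \ref{thm:lower_bound_irr}. To get the matching upper bound $h(w) \leq 1$, I would combine the steady bound of Theorem \ref{thm:upper_bound} with the symmetries of Section \ref{subsec:implicit_bounds} (slides, the antiinvolution $\omega$, and $n$-exchanges) to find a representative in the isomorphism class with $|\sconf|=1$; the rigidity imposed by having all coefficients of $p_w$ equal to $1$ is exactly what one should exploit here to force such a representative to exist. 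For the ``only if'' direction, the natural route is contrapositive: if some coefficient of $p_w$ is zero in the middle of its support or is at least $2$, extract a second linearly independent element of $H(w)$ from the extra configurations, using either a generic-to-special degeneration in the spirit of Section \ref{subsec:gen_sing_vec} or a direct $R$-matrix construction along the lines of the proof of Theorem \ref{thm:lower_bound_irr}.

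For part (1), the conceptual obstacle is to explain why $h(w)$ depends only on $p_w(x)$ and not on the full data of $\conf(w)$ with its reducibility structure. A plausible route is first to establish Conjecture \ref{conj:drop_adm} (so that $h(w)$ already depends only on $\conf(w)$), and then to exhibit an explicit formula $h(w) = F(p_w)$. Motivated by part (2), one natural guess is that $h(w)$ is governed by a canonical multiplicative decomposition of $p_w(x)$ into factors $(x^{n_i}-1)/(x-1)$, which combinatorially should reflect a partition of $\conf(w)$ into ``cyclic families'' compatible with the statistic $J(C)$.

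The main obstacle in the whole plan is producing this canonical combinatorial partition of $\conf(w)$ whose cardinality matches $h(w)$ and whose block sizes are visible in the coefficients of $p_w(x)$. All three directions (the ``only if'' in part (2), the upper bound for the ``if'' direction, and the determination of $h(w)$ from $p_w(x)$ in part (1)) seem to converge on this single structural question, and I do not see how to bypass it with any of the tools developed in the present paper; any proof will likely require a genuinely new combinatorial or representation-theoretic insight beyond the bounds in Section \ref{sec:bounds} and the degeneration picture of Section \ref{sec:degs_limits}.
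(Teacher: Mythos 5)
The statement you were asked to prove is Conjecture~\ref{conj:int_poly}: the paper itself offers no proof, and explicitly lists it among the open questions in Section~\ref{subsec:questions}. So there is no paper argument to compare your attempt against, and your write-up should be judged on its own merits as an attempted proof of an open problem.

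As a proof it has genuine gaps, and to your credit you flag most of them yourself. The reduction to conf-connected words is not available: it invokes Conjecture~\ref{conj:hom_fact_confconn} for the factorization of $h(w)$, which is itself unproved (and, for the polynomial side, the clean-sounding ``both statements respect the factorization'' hides a real subtlety, since a product of cyclic polynomials $\prod (x^{n_i}-1)/(x-1)$ need not factor through the conf-connected decomposition in an a priori obvious way). For the ``if'' direction of part~(2), the lower bound $h(w)\geq 1$ from $p_w(0)=|\iconf(w)|=1$ and Theorem~\ref{thm:lower_bound_irr} is correct, but the matching upper bound is not established: the proposal to ``find a representative with $|\sconf|=1$ by symmetries'' is a hope, not an argument, and moreover the $n$-exchange symmetries you want to invoke are only conjectural (Conjecture~\ref{conj:commute}) for $n>2$. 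The ``only if'' direction is stated as a plan with no mechanism. For part~(1), you correctly observe that it implies Conjecture~\ref{conj:drop_adm}, but proposing to ``first establish Conjecture~\ref{conj:drop_adm} and then exhibit $h(w)=F(p_w)$'' is essentially restating what needs to be proved, and no candidate formula $F$ is given or tested against the data in Appendix~\ref{app:tables}. Your closing paragraph, identifying a single missing structural insight (a canonical partition of $\conf(w)$ with blocks visible in the coefficients of $p_w$) as the crux, is the most accurate part of the proposal: the statement remains open, the available bounds in Section~\ref{sec:bounds} are not tight enough to close it, and nothing in the degeneration picture of Section~\ref{sec:degs_limits} forces the conclusion either.
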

Part \eqref{conj:int_poly_1} of Conjecture \ref{conj:int_poly} implies Conjecture \ref{conj:drop_adm}.

\subsection{Structure of singular vectors.}
In this section we formulate a conjecture on the structure of singular vectors in a tensor product corresponding to a word $w\in W_{2n}$. 
Recall that in proof of Theorem \ref{thm:lower_bound_irr} for each irreducible arc configuration $C$ we constructed a singular vector with leading term having $"+"$'s at positions $\len(C)$. We can generalize this as follows. 

\begin{defi}
We call a sequence $m = (\epsilon_1,\dots, \epsilon_{2n})\in \{+,-\}^{\times 2n}$ a pivot of a word $w$ if there exists a vector $v_m\in H(w)$ of the form $v_m=m+\mathop{\sum}\limits_{m'<m}a_{m,m'}m'$, $a_{mm'}\in\mbC$. We denote the set of all pivots of $w$ by  $\pvt(w)$.
\end{defi}
Clearly $|\pvt(w)|=h(w)$ and $\{v_m\}_{m\in\pvt(w)}$ form a basis in $H(w)$.

Write vectors  $v_1,\dots,v_{h(w)}$ in the basis $(\epsilon_1,\dots, \epsilon_{2n})$ as a matrix  of size  $h(w)\times 2^{2n}$. Then the elements of $\pvt(w)$ are the labels of the columns  which contain pivots.

Let  $\mu_{w}:\conf(w) \to \{+,-\}^{\times 2n}$ be the map such that $(\mu(C))_i=+$ if and only if $i\in \operatorname{LC}(C)$, cf. the proof of Lemma \ref{lemma:sconf_to_ends}.

\begin{conj}\label{conj:pivots}
    For any $w\in W_{2n}$, we have
    \begin{equation*}
        \pvt(w) \subset \mu_{w}(\sconf(w)).
    \end{equation*}
\end{conj}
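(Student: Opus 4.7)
The plan is to mirror the proof of Theorem \ref{thm:upper_bound} and to upgrade the cardinality estimate $h(w) \leq |\sconf(w)|$ into an explicit injection $\pvt(w) \hookrightarrow \mu_{w}(\sconf(w))$. As with the upper bound, the first step is to generalize the statement: for every $\rV = w_1[0,\dots,2m]w_2$ with all letters of $w_1$ non-negative and all letters of $w_2$ strictly positive, define a basis of $\rV$ by tensoring the standard bases of the evaluation factors, extend the lexicographic order (with $(+)>(-)$ on each evaluation factor ordered by the standard basis vectors $v_0>v_1>\dots>v_m$), and define the set $\pvt(\rV)$ of pivot sequences as before. One should check that $\mu_{\rV}$ is injective on $\sconf(\rV)$ in exactly the same fashion as Lemma \ref{lemma:sconf_to_ends}, so that the statement to prove becomes $\pvt(\rV)\subset\mu_{\rV}(\sconf(\rV))$. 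Then one inducts with respect to the partial order \eqref{eq:partial_order}; the base case $l(w_2)=0$ is trivial since $h(\rV)=0$ and $\sconf(\rV)=\varnothing$.

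For the easy case where the first letter $a$ of $w_2$ satisfies $a\neq 2m+2$, the strings $[0,\dots,2m]$ and $[a]$ are in general position, giving an isomorphism $[0,\dots,2m]a\cong a[0,\dots,2m]$. One shows that a suitably normalized $R$-matrix intertwiner permutes the standard basis in a lower-triangular way with respect to the lexicographic order and thus induces a bijection $\pvt(\rV)\leftrightarrow \pvt(w_1a[0,\dots,2m]\tilde w_2)$ which is exactly the permutation of positions realized by $\iota_0$ on steady arc configurations. Combined with the induction hypothesis this handles Case 1.

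For the essential Case 2, $a=2m+2$, use the short exact sequence
\begin{equation*}
0\to [0,\dots,2m-2]\to [0,\dots,2m]\cdot(2m+2)\to [0,\dots,2m+2]\to 0
\end{equation*}
tensored with $w_1\otimes (\cdot) \otimes \tilde w_2$. By Lemma \ref{lemma:exact_functors} one gets an exact sequence
\begin{equation*}
0\to H(w_1[0,\dots,2m-2]\tilde w_2)\xrightarrow{\phi_2} H(\rV)\xrightarrow{\phi_1} H(w_1[0,\dots,2m+2]\tilde w_2),
\end{equation*}
and $\pvt(\rV)$ partitions according to whether a pivot basis vector lies in $\mathrm{Im}(\phi_2)$ or projects to a pivot under $\phi_1$. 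The key identification is this: the embedding $\phi_2$ sends the standard basis of $[0,\dots,2m-2]$ (inside the factor positioned as the $(l(w_1)+m+1)$-st tensor slot) to the corresponding basis of $[0,\dots,2m](2m+2)$ where the highest-weight vector inherits an added $+$ at position $l(w_1)+m+1$ and an added $-$ at position $l(w_1)+m+2$ — this is precisely the combinatorial effect of $\iota_2$ on left-end sets. Symmetrically, the surjection inducing $\phi_1$ is compatible with the leading-term order so that pivots in $\mathrm{Im}(\phi_1)$ lift to pivots of $\rV$ with left-end sets matching $\iota_1$. By the induction hypothesis on both smaller modules and the fact that $\iota_1$ and $\iota_2$ have disjoint images in $\mu_{\rV}(\sconf(\rV))$ (see the diagram in the proof of Lemma \ref{lemma:sconf_to_ends}), one concludes $\pvt(\rV)\subset \mu_{\rV}(\sconf(\rV))$.

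The main obstacle is verifying the compatibility of $\phi_1$ and $\phi_2$ with leading terms and the lexicographic order on the enlarged basis. Concretely, one has to show that the $\Uqa$-embedding $[0,\dots,2m-2]\hookrightarrow [0,\dots,2m](2m+2)$ sends a basis vector $v_j$ (of weight $2m-2-2j$) to a linear combination whose leading term in the product basis is $v_j\otimes w_0$ (the standard basis top vector of $(2m+2)$), so that the $\phi_2$-image of a pivot of $w_1[0,\dots,2m-2]\tilde w_2$ is itself a pivot of $\rV$ and its pivot sequence agrees with $\iota_2$ applied to the corresponding steady configuration. Dually, one must show that the surjection $[0,\dots,2m](2m+2)\twoheadrightarrow [0,\dots,2m+2]$ can be realized by picking out a single lexicographically highest basis vector in each weight space, so lifting the $\phi_1$-pivots to $\rV$ preserves the pivot structure. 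These are concrete low-rank computations in the two-string module $[0,\dots,2m](2m+2)$, but they are where the conjecture is truly being made rather than simply counted.
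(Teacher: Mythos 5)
This statement is labelled Conjecture \ref{conj:pivots} in the paper and is left open there — the paper offers no proof, only the observation that it would imply the purely combinatorial Conjecture \ref{conj:irr_ends_subset_steady}. So you are proposing a plan of attack on an unresolved problem, and you are candid in your last paragraph that "these are concrete low-rank computations... but they are where the conjecture is truly being made rather than simply counted." That assessment is accurate: as written, this is a strategy outline, not a proof, and the essential content is exactly in the compatibility steps you flag as unverified.

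The most serious gap is in Case 2. Your reduction rests on the assertion that $\pvt(\rV)$ splits into pivots lying in $\mathrm{Im}(\phi_2)$ and pivots "projecting to pivots under $\phi_1$," but this is not a generic linear-algebra fact. For a left-exact triple $0\to A\xrightarrow{\phi_2} B\xrightarrow{\phi_1} C$ of subspaces of \emph{different} ambient spaces equipped with their own ordered bases, one has $\pvt(A)\subset\pvt(B)$ only after proving that $\phi_2$ preserves leading terms, and there is no canonical identification of $\pvt(B)\setminus\pvt(A)$ with $\pvt(\phi_1(B))$: the ambient space for $C=H(w_1[0,\dots,2m{+}2]\tilde w_2)$ has strictly fewer tensor factors than that for $B=H(\rV)$ (the quotient collapses $[0,\dots,2m](2m{+}2)$ to $[0,\dots,2m{+}2]$), so the basis index sets are different and the quotient realizing $\phi_1$ must itself be shown to be leading-term compatible. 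Moreover, the leading-term formula you state for $\phi_2$ is off by weight: $v_j\in[0,\dots,2m{-}2]$ has $\Uq$-weight $m-2j$, while $v_j\otimes w_0\in[0,\dots,2m](2m{+}2)$ has weight $m+2-2j$; the intended leading term should be $v_j\otimes w_1$, the unique weight-$(m-2j)$ basis monomial above $v_{j+1}\otimes w_0$ in your lex order, and whether its coefficient vanishes is precisely the deferred computation. Case 1 has a parallel unverified premise: you ask for an $\check{R}$-intertwiner between $[0,\dots,2m]a$ and $a[0,\dots,2m]$ that is lower-triangular for your basis order, but even the rank-one formula \eqref{eq:Rmat2} is not triangular on the $\{(+-),(-+)\}$ block. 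What one actually needs is a nondegeneracy statement for the relevant matrix element (in the spirit of the $a_{i_2}\neq a_{i_1}+2$ argument in Theorem \ref{thm:lower_bound_irr}) together with a proof that the induced shuffle of pivot positions matches $\iota_0$. Until these three compatibility lemmas are established, the blueprint — a natural upgrade of the counting argument of Theorem \ref{thm:upper_bound} to a set-theoretic inclusion, mirroring Lemma \ref{lemma:sconf_to_ends} — does not constitute a proof, and the statement remains open.
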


By the proof of Theorem \ref{thm:lower_bound_irr}, $\mu_{w}(\iconf(w)) \subset \pvt(w)$. Therefore, Conjecture $\ref{conj:pivots}$ implies the following purely combinatorial statement.

\begin{conj}\label{conj:irr_ends_subset_steady}
Let $w\in W_{2n}$. Then,
\begin{equation*}
    \mu_{w}(\iconf(w)) \subset \mu_{w}(\sconf(w)).
\end{equation*}
That is the set of left ends of an irreducible arc configuration of a word $w$ is always the set of left ends of a steady arc configuration of the word $w$.
\end{conj}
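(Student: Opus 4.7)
The plan is to prove Conjecture \ref{conj:irr_ends_subset_steady} by a simultaneous induction analogous to that in the proof of Theorem \ref{thm:upper_bound} and Lemma \ref{lemma:sconf_to_ends}. Specifically, I would generalize the claim to modules of the form $\rV = w_1[0,\dots,2m]w_2$ satisfying the constraints of Definition \ref{def:steady_arc_conf}, extending the notions of irreducible and relevant arc configuration to this setting in the obvious way, and induct on the partial order \eqref{eq:partial_order}, namely on pairs $(m+l(w_2), l(w_2))$. The generalized statement reads: for every relevant irreducible arc configuration $C$ of $\rV$, there exists $C' \in \sconf(\rV)$ with $\len(C) = \len(C')$. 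The case $m=0$ (so the segment is a single letter that must serve as a left end) recovers the original statement for words; note that for a word $w$ every arc configuration is automatically relevant in this sense, since $0$ is the smallest letter and can only appear as a left end.

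The base case $l(w_2) = 0$ is vacuous: both $\sconf(\rV)$ and the set of relevant irreducible configurations are empty, since the rightmost letter $2m$ of the segment must be a left end but has no partner to its right. For the inductive step, write $w_2 = a\tilde{w}_2$ and invert the cases of Definition \ref{def:steady_arc_conf}. If $a \neq 2m+2$, the strings $[0,\dots,2m]$ and $[a,a]$ are in general position, so the commutativity isomorphism yields a bijection between arc configurations of $\rV$ and of $w_1\,a\,[0,\dots,2m]\tilde{w}_2$ that preserves irreducibility (the colors at each pair of intersecting arcs are unchanged) and, after relabeling, preserves left-end sets; steady configurations correspond via $\iota_0$, so the induction hypothesis closes this case.

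If $a = 2m+2$, I would look at the arc of $C$ incident to the rightmost segment position $l(w_1)+m+1$ (carrying the letter $2m$). Either this arc connects to the adjacent position $l(w_1)+m+2$ (carrying $a=2m+2$), or it extends further right. In the first subcase, peel off this arc and the two corresponding letters: the resulting configuration should be a relevant irreducible configuration of $w_1[0,\dots,2m-2]\tilde{w}_2$, and the image of its steady counterpart under $\iota_2$ gives the required $C'$. In the second subcase, enlarge the segment to $[0,\dots,2m+2]$; the configuration $C$ transports unchanged to a relevant irreducible configuration of $w_1[0,\dots,2m+2]\tilde{w}_2$, whose steady counterpart pulls back via $\iota_1$. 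Both reductions are strictly smaller in the partial order \eqref{eq:partial_order}, so the induction is well-founded.

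The main obstacle is verifying that irreducibility and relevance persist through each reduction. The $\iota_0$ and $\iota_1$ steps are essentially formal since no arcs are created or destroyed. The $\iota_2$ step is more delicate: one must check that removing the chosen arc cannot make a previously irreducible intersection reducible (which is automatic, as arc removal never alters the colors of remaining intersections), and, crucially, that the peeled arc is precisely the one whose reinsertion via $\iota_2$ yields a configuration with the correct left-end set. Here I would invoke Corollary \ref{cor:nconf_to_ends} and Lemma \ref{lemma:sconf_to_ends}: since both $\iconf(\rV)$ and $\sconf(\rV)$ are uniquely determined by their left ends, matching left ends after reduction automatically matches the full configurations, eliminating any ambiguity in the reconstruction. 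The most subtle point in this verification is ruling out, in the second subcase, that the enlargement $\iota_1$ could introduce a reducible intersection among arcs whose colors only collide once the new segment letter $2m+2$ is present; this should follow from the observation that reducibility in Definition \ref{def:irr_arc_conf} depends only on the colors of the arc endpoints, which are intrinsic to the arcs themselves and not to the ambient segment structure.
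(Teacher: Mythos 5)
This statement appears in the paper only as Conjecture \ref{conj:irr_ends_subset_steady}; the authors do not prove it, remarking only that it would follow from Conjecture \ref{conj:pivots}. So there is no paper proof to compare against, and your proposal, if it worked, would be a new result.

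The $\iota_0$ case contains a genuine gap. You claim that moving the letter $a$ past the segment "preserves irreducibility (the colors at each pair of intersecting arcs are unchanged)." The colors of the arcs are indeed unchanged, but the \emph{set of intersecting pairs} is not: sliding $a$ from position $l(w_1)+m+2$ to position $l(w_1)+1$ reorders positions, so a pair of arcs that was nested in $C$ can become intersecting in $\iota_0^{-1}(C)$, and the new intersection can be reducible. Concretely, take $\rV = [0,2]\,2\,4\,4\,2$ (so $m=1$, $a=2$, $\tilde w_2 = (4,4,2)$, expanded word $(0,2,2,4,4,2)$) and the Catalan configuration $C = \{(1,6),(2,5),(3,4)\}$; all arcs are nested, so $C$ is irreducible and relevant. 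After $\iota_0^{-1}$ the expanded word becomes $(2,0,2,4,4,2)$ and the configuration becomes $\{(1,4),(2,6),(3,5)\}$. Now $(1,4)$ and $(3,5)$ intersect, and since the letter at position $3$ equals the letter at position $1$ (both equal $2$), this intersection is reducible. So $\iota_0^{-1}(C)$ is reducible, the induction hypothesis cannot be invoked, and the argument halts. The phenomenon is that arcs incident to $a$ exchange their relative position with segment arcs, which creates new intersections exactly when $a$ coincides with a segment letter (here $a=2$).

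Your assessment of which step is subtle is also off. The step you flag as delicate (that $\iota_1$ enlargement might create reducible intersections) is genuinely fine for the reason you give. But the steps you dismiss as "essentially formal since no arcs are created or destroyed" are where the real work is: $\iota_0$ fails outright as shown, and $\iota_1$ requires a nontrivial argument to establish relevance, since one must rule out an arc with right end at $a$ coming from $w_1$, which is only excluded by showing that such an arc would intersect reducibly with the arc leaving the segment letter $2m$. Relevance is a property of the ambient segment boundaries, which $\iota_1$ changes even though the arcs do not move. You would at minimum need a substitute argument for the $\iota_0$ case that handles configurations $C$ for which $\iota_0^{-1}(C)$ is reducible; that is precisely the difficulty the conjecture hides.
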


    Let $C\in\conf(w)$ be either steady or irreducible arc configuration. By Lemmas \ref{lemma:nconf_to_ends}, \ref{lemma:sconf_to_ends}, $C$ is uniquely determined by the set $\len(C)$. Therefore, Conjecture \ref{conj:irr_ends_subset_steady} would give an injective map
    \begin{equation*}
    \iota:\ \iconf(w) \hookrightarrow \sconf(w).
    \end{equation*}

 Note than an irreducible arc configuration does not have to be steady.
\begin{example} 
Consider the word $w = 02204242$. Then we have
\begin{equation}
\iota :
\vcenter{\hbox{\begin{tikzpicture}
    \node at (0, 0) {$0$};
    \node at (0.6,0) {$2$};
    \node at (1.2,0) {$2$};
    \node at (1.8,0) {$0$};
    \node at (2.4,0) {$4$};
    \node at (3.0,0) {$2$};
    \node at (3.6,0) {$4$};
    \node at (4.2,0) {$2$};
    \draw[-] (0 ,0.3) to [out=30,in=150] (4.2,0.3);
    \draw[-] (0.6 ,0.3) to [out=30,in=150] (3.6,0.3);
    \draw[-] (1.2 ,0.3) to [out=30,in=150] (2.4,0.3);
    \draw[-] (1.8 ,0.3) to [out=30,in=150] (3.0,0.3);
\end{tikzpicture}}}
\longmapsto 
\vcenter{\hbox{\begin{tikzpicture}
    \node at (0, 0) {$0$};
    \node at (0.6,0) {$2$};
    \node at (1.2,0) {$2$};
    \node at (1.8,0) {$0$};
    \node at (2.4,0) {$4$};
    \node at (3.0,0) {$2$};
    \node at (3.6,0) {$4$};
    \node at (4.2,0) {$2$};
    \draw[-] (0 ,0.3) to [out=30,in=150] (4.2,0.3);
    \draw[-] (0.6 ,0.3) to [out=30,in=150] (2.4,0.3);
    \draw[-] (1.2 ,0.3) to [out=30,in=150] (3.6,0.3);
    \draw[-] (1.8 ,0.3) to [out=30,in=150] (3.0,0.3);
\end{tikzpicture}}}.
\end{equation}
\end{example}

\subsection{Questions.}\label{subsec:questions}
In this section we collect the questions and conjectures which we hope will be a subject of future studies.

We start with those which were formulated as conjectures or discussed in text. Here we repeat them in an informal style and refer to the relevant parts of the text. 

\medskip

{\it Questions discussed in text.}
\begin{enumerate}    
    \item Given a word $w\in W_{2n}$ what is $h(w)$? (See Sections \ref{sec:triv_submod},\ref{sec:bounds}, \ref{sec:degs_limits}, \ref{subsec:exact_comps}, \ref{subsec:implicit_bounds},\ref{subsec:fact_by_cont}.)
    \item Is that true that if the Drinfeld tensor product of modules is well-defined, then it is isomorphic to the usual tensor product? (See Conjecture \ref{conj:drinf_prod}.)
    \item Is that true that for any word all singular vectors can be obtained as degenerations of singular vectors of words with generic parameters? (See Conjecture \ref{conj:deg_onto}.)
    \item Is that true that for any word all singular vectors can be obtained as degenerations of singular vectors of a word with generic parameters corresponding to the standard configuration of the initial word? (See Examples \ref{ex:degeneration_different_ways} and \ref{ex:degs_long} and Conjecture \ref{conj:deg_std_onto}.)  
    \item Is that true all degeneracy graphs are connected? (See Conjecure \ref{conj:deg_graph_conn}.)
    \item Is that true that if a word is conf-connected, then removing the ends of the rightmost arc of the smallest color gives a conf-connected word? (See Conjecture \ref{conj:vertex_over}.)
    \item How to tell if two words are isomorphic? (See Section \ref{subsec:sep_inv}.)
    \item 
    What is the number of non-isomorphic words of given length and support? (See Section \ref{subsec:class_count}.)

    \item Is that true that two direct sums of words are isomorphic if and only if after a suitable permutation, the summands are isomorphic? (See the end of Section \ref{subsec:class_count}.)

    \item Are there relations between words with letters in $\{0,2,4\}$ which do not follow from two letter relations and if there are, then what are they? (See the end of Section \ref{subsec:class_count}.)
    
    \item Words consisting of $0$'s and $2$'s only are isomorphic if and only if they are related by a sequence of isomorphisms $00^n2^n0^n \cong 0^n2^n0^n0, 22^n0^n2^n \cong 2^n0^n2^n2$? (See Conjecture \ref{conj:02isoms}.)
    \item What are extensions between a pair of irreducible modules? (See Section \ref{sec:extensions} and Theorem \ref{thm:eval_ext}.) 
        
    \item Is that true that $00^n2^n0^n \cong 0^n2^n0^n0$? (See Section \ref{subsec:comm}, Conjecture \ref{conj:commute}.)
    \item What are submodule graphs of $0^n2^n$ and $0^n2^n0^n$? (See Sections \ref{subsec:comm}, \ref{sec:mod_graphs}, and Conjecture \ref{conj:0n2n}.)

    \item How to compute the submodule graphs of a word in combinatorial terms? (See Section \ref{sec:mod_graphs}.)
    \item Is that true that if a pair of modules is $q$-character separated and all submodule graphs are defined, then the submodule graph of tensor product is the Cartesian product of graphs? (See Conjecture \ref{conj:graph_product}.) 
    \item Is that true that multiplication of a module by an irreducible module such that the product with any  irreducible subquotient of the module is irreducible may only erase edges of the submodule graph of the module? (See Conjecture \ref{conj:irr_prod_graph_erase}.)
    \item Is there  a closed subgraph of the submodule graph of a module such that there is no submodule corresponding to this subgraph? (See Section \ref{subsec:graphs_of_mods}.)
    
    \item How to generalize submodule graphs to all indecomposable modules? (See Seciton \ref{sec:mod_graphs}.)
    \item What is the data to add to a submodule graph to characterize a module?
    (See Section \ref{sec:mod_graphs} and Remark \ref{remark:extension_remove}.)
    \item Is that true that the value of $h(w)$ is uniquely determined by the intersection polynomial of $w$? (See Conjecture \ref{conj:int_poly}.)
    \item Is that true that the value of $h(w)$ is uniquely determined by $\conf(w)$? (See Conjecture \ref{conj:drop_adm})
    
    \item How does the intersection polynomial change as we apply isomorphisms $0200 \cong 0020$ and $2022 \cong 2202$? (See Section \ref{subsec:comm}, \ref{subsec:int_poly}.)
    \item Is that true that $h(w)$ is product of $h(w^{(i)})$, where $w^{(i)}$'s are maximal conf-connected subwords of $w$? (See Conjecture \ref{conj:hom_fact_confconn}.)
    \item What is the number of conf-connected words of a given length (up to a shift)? (See Section \ref{subsec:deg_graphs}.)
    \item Are pivots of a singular vector necessarily  pivots of an arc configuration? (See Conjecture \ref{conj:pivots}.)
    \item  Is that true that left ends of an irreducible arc configuration are always left ends of a steady arc configuration for the same word? (See Conjecture \ref{conj:irr_ends_subset_steady}.)
\end{enumerate}

\medskip 

We add a few more questions which we did not discuss in the text.

\medskip 

{\it Questions not discussed in text.}
\begin{enumerate}
    \item What is the number of non-isomorphic conf-connected words of a given length up to a shift? 
    \item Is a given word indecomposable?
    \item Does $\mbC$ appear as a direct summand of a word?
    \item Is there a word with exactly $p$ arc configurations for each prime $p$? 
    \item Is the number of non-isomorphic 
    submodules of a word finite? 

    \item Is every indecomposable module a subquotient of a word?

    \item Is there a generalization of arc configurations to the general products of evaluation modules?
    \item What is the maximal height of a word of a given length?
    \item What is the maximal number of composition factors of a word of a given length?
    \item Is it possible for a submodule graph of a submodule to have more edges than in the corresponding subgraph?
    \item Is there a more convenient combinatorics of "thick" arcs joining more than two letters which computes $h(w)$?
    \item Is there a generalization of arc configurations to the products of evaluation vector representations of $U_q\widehat{\mathfrak{sl}}_n$?
\end{enumerate}

\appendix

\section{Proofs.}\label{app:etc}
\begin{proof}[Proof of Proposition \ref{prop:Rmat_lambdas}]
        Recall the notation $a=\frac{\al_1+\bt_1}{2}, b=\frac{\al_2+\bt_2}{2}, m = \frac{\bt_1-\al_1}{2}+1, n = \frac{\bt_2 -\al_2}{2}+1$.
        For generic $a,b$ tensor products $[\alpha_{1},\beta_{1}][\alpha_{2},\beta_{2}]$ and $[\alpha_{2},\beta_{2}][\alpha_{1},\beta_{1}]$ are irreducible and isomorphic since $q$-characters of both modules have the same unique dominant monomial. Therefore, a $\Uqa$-homomorphism exists and is unique up to a multiplicative constant in that case. It follows that such homomorphism exists for all $a, b$. Indeed, $\check{R}$ is an $\Uqa$-map if and only if variables $\la_0,\dots,\la_{\min\{m,n\}}$ satisfy a linear system of equations $[\check{R},e_0]=[\check{R},f_0]=0$.
        If a homogeneous system of linear equations depends on parameters algebraically and has a non-trivial solution for generic values of these parameters, it has  a non-trivial solution for all values of parameters.

        Now, we consider equation $[\check{R},f_0]u_k=0$ for variables $\la_i$.

        Since $f_0$ has weight $2$ and preserves singular vectors, we have
        \begin{equation}\label{eq:rmat_pf1}
            f_{0}u_{k} = c_{k}u_{k-1},    
        \end{equation}        for some $c_k \in \mbC$. Compute
        \begin{equation}\label{eq:rmat_pf2}
        f_{0}u_{k} = (q^{(n{-}k{+}1)k {+} m {-} b} {-} q^{(n{-}k{+}2)(k-1) {-} a})({-}1)^{k} [m]_{q}! [n{-}k{+}1]_{q}!v_0\otimes f^{(k{-}1)}w_{0} + \dots,
        \end{equation}        where the dots denote vectors of the form $v\otimes w$ such that the weight of $v$ is less than $m$.
    
        Collecting equations \eqref{eq:rmat_pf1}, \eqref{eq:rmat_pf2} and \eqref{eq:ev_mod_prod_sl2_hw}, we obtain
        \begin{equation}\label{eq:rmat_pf3}
        c_{k} = q^{-a} - q^{n+m - 2(k-1) - b}.
        \end{equation}        Let $c_{k}^{\text{op}}=q^{-b} - q^{n+m - 2(k-1) - a}$  be obtained from $c_k$ by exchanging $a$ with $b$ and $m$ with $n$. Then 
        \begin{equation*}
           \lambda_{k-1}c_{k}u^{\text{op}}_{k-1} = \check{R} f_0 u_{k} = f_0 \check{R} u_{k} = \lambda_{k}c^{\text{op}}_{k}u^{\text{op}}_{k-1},
        \end{equation*}
        which gives
        \begin{equation}\label{eq:rmat_pf4}
            (q^{-a} - q^{n+m - 2(l-1) - b})\lambda_{k-1} = (q^{-b} - q^{n+m - 2(k-1) - a})\lambda_{k}.
        \end{equation}        Among the numbers $c_k,c_k^{\text{op}}$, $k=1,\dots,\min\{m,n\}$, at most one is zero. Therefore, we have at most one solution of \eqref{eq:rmat_pf4} and this solution is given by \eqref{eq:Rmat_lambdas}.
\end{proof}
\begin{proof}[Proof of Lemma \ref{lemma:hom_move}]
    Define a linear map
    \begin{equation}
    \begin{aligned}
        \varphi: \rW^* \otimes \rV^{*} &\longrightarrow (\rV\otimes \rW)^*,\\
                   \mu \otimes \lambda & \longmapsto \left(v\otimes w \mapsto \lambda(v)\mu(w)\right).
    \end{aligned}
    \end{equation}
    This is an isomorphism of vector spaces. It remains to check that it is a homomorphism of modules.

    Let $x\in \rH$. Then write $\Delta(x) = \sum_{i = 1}^{N}x_{i}^{(1)} \otimes x_{i}^{(2)}$. Let $\mu \in \rW^{*}, w\in \rW, \lambda\in \rV^{*}, v\in \rV$. Then
\begin{multline*}
 \varphi(\rho_{\rW^* \otimes \rV^{*}}(x) (\mu \otimes \lambda))(v \otimes w) =\sum_{i=1}^{N} \varphi\Big(\mu \circ \rho_{\rW}\big(S(x_{i}^{(1)})\big)  \otimes \lambda \circ \rho_{\rV}\big(S(x_{i}^{(2)})\big)\Big) (v \otimes w)
 = \\ = \sum_{i=1}^{N} \varphi(\mu \otimes \lambda)\big(S(x_{i}^{(2)}) v \otimes S(x_{i}^{(1)}) w\big) =\varphi ( \mu \otimes \lambda ) \Big( \rho _{\rV}\otimes \rho _{\rW}\big( \underbrace{S\otimes S \circ \Delta^{\text{op}} ( x)}_{\Delta\circ S(x)} \big) ( v\otimes w) \Big) =\\=  \rho_{(\rV \otimes \rW)^*}(x) (\varphi(\mu \otimes \lambda))(v \otimes w).
\end{multline*}
This proves the first equation. The proof of the second is similar.

    Note that for any $\rH$-module $\rW$, maps
    \begin{align*}
        \pi_{\rW} : \rW^{*} \otimes \rW &\longrightarrow \mbC, & \iota_{\rW} : \mbC &\longrightarrow \rW \otimes \rW^{*},
    \end{align*}
    which correspond to convolution and inclusion of the identity operator, are homomorphisms of $\rH$-modules.

    Then one constructs
    \begin{equation*}
        \begin{aligned}
            \alpha: \mathrm{Hom}_{\rH}\left(\rV\otimes \rW, \rU\right) &\longrightarrow \mathrm{Hom}_{\rH}\left(\rV, \rU\otimes \rW^{*}\right),\\
                            \xi   &\longmapsto \left(\xi\otimes \mathrm{Id}_{\rW^*}\right) \circ \left(\mathrm{Id}_{\rV}\otimes \iota_{\rW}\right),\\
            \beta: \mathrm{Hom}_{\rH}\left(\rV, \rU\otimes \rW^{*}\right) &\longrightarrow \mathrm{Hom}_{\rH}\left(\rV\otimes \rW, \rU\right),\\
                            \nu   &\longmapsto \left(\mathrm{Id}_{\rU}\otimes \pi_{\rW}\right) \circ \left(\nu \otimes \mathrm{Id}_{\rW}\right).\\
        \end{aligned}
    \end{equation*}
    A straightforward check shows that $\alpha$ and $\beta$ are inverse to each other. This proves the third equation.

    For the fourth equation, the corresponding isomorphisms are given by 
     \begin{equation*}
        \begin{aligned}
            \alpha^{\prime}: \mathrm{Hom}_{\rH}\left( \rW\otimes \rV, \rU\right) &\longrightarrow \mathrm{Hom}_{\rH}\left(\rV, {}^{*}\rW\otimes \rU\right),\\
                            \xi   &\longmapsto  \left(\mathrm{Id}_{{}^{*}\rW} \otimes \xi \right) \circ \left(\iota_{{}^*\rW} \otimes \mathrm{Id}_{\rV} \right),\\
            \beta^{\prime}: \mathrm{Hom}_{\rH}\left(\rV, {}^*\rW \otimes \rU\right) &\longrightarrow \mathrm{Hom}_{\rH}\left(\rW\otimes \rV, \rU\right),\\
                            \nu   &\longmapsto \left(\pi_{{}^{*}\rW} \otimes \mathrm{Id}_{\rU} \right)\circ \left(\mathrm{Id}_{\rW}\otimes \nu\right), 
        \end{aligned}
    \end{equation*}
    where we use canonical identification ${}^*(\rW^*) \cong ({}^{*}\rW)^* \cong \rW$.
\end{proof}
{\it Formulas for $\uR_{+}, \uR_0, \uR_{-}$.}
    \begin{align}
        \uR_{+} = \overleftarrow{\prod_{r\geq0}}\exp_q\big((q-q^{-1})x_r^+\otimes &x_{-r}^-\big),\;\;\;\uR_{-} = \overrightarrow{\prod_{r\geq 1}}\exp_q\big((q-q^{-1})(K^{-1}x_r^-)\otimes (x_{-r}^+K)\big),\label{eqn:uRmat}\\
        \uR_{0} &= \exp\left(\sum_{r>0}\frac{r(q-q^{-1})}{[2r]}h_r\otimes h_{-r}\right).\label{eqn:uRmat1}
    \end{align}
Here $\exp_{q}(x) = \sum_{n = 0}^{\infty}\frac{x^n}{[n]_q!}.$

Decomposition \eqref{eq:Rmat_gauss} together with formulas \eqref{eqn:uRmat}, \eqref{eqn:uRmat1} are found in \cite{tolstoy1992universal}.

\begin{proof}[Proof of Proposition \ref{prop:ellroot_action}] We essentially follow the proof of Proposition 3.9 in \cite{mukhin2014affinization}.
    We have decomposition $\rV = \mathop{\oplus}\limits_\theta \rV[\theta]$ into generalized $\ell$-weight spaces. For a vector $v\in \rV$ denote the projection of $v$ to $\rV[\theta]$ in this decomposition by $(v)_{\theta}$. 
    
    It is sufficient to prove that for an $\ell$-weight $\tilde{\phi}$ such that  $(x^{\pm}(w)v)_{\tilde{\phi}} \neq 0$, there exists $a\in \mbC$ such that $\tilde{\phi} = A_{a}\phi$. Let $\{v_1,\dots,v_k\}$ and $\{u_1,\dots,u_l\}$ be bases of $\rV[\phi]$ and $\rV[\tilde{\phi}]$ respectively such that the action of the currents $\psi^{+}(w)$ in these bases is lower-triangular, that is $$(\psi^{+}(z)- \phi^{+}(z))v_j = \mathop{\sum}_{j^{\prime}<j}\xi_{j,j^{\prime}}(z)v_{j^{\prime}},\;\; (\psi^{+}(z)- \phi^{+}(z))u_r = \mathop{\sum}_{r^{\prime}<r}\zeta_{r,r^{\prime}}(z)u_{r^{\prime}}.$$

    Assume that $(x^{+}(z)v)_{\tilde{\phi}}\neq 0$. Let $j_0$ be the minimal such that $(x^{+}(z)v_{j_0})_{\tilde{\phi}} \neq 0$. We have a decomposition $(x^{+}(w)v_{j_0})_{\tilde{\phi}} = \mathop{\sum}\limits_{s=1}^{l}\lambda_{s}(w)u_s$.\\ 
    From defining relations \eqref{eq:def_new_drinf} we have 
    \begin{equation}\label{eq:ell_root_pf_tmp0}
    (q^2w - z)x^{+}(w)(\psi^{+}(z) - \phi^{+}(z))v_{j_0} = ((w - q^2 z)\psi^{+}(z) -(q^2w -z)\phi^{+}(z))x^{+}(w)v_{j_0}.
    \end{equation}
    Project the left hand side and the right hand side of \eqref{eq:ell_root_pf_tmp0}  to $\rV[\tilde{\phi}]$. We obtain
    \begin{equation*}
        ((q^2w - z)x^{+}(w)(\psi^{+}(z) - \phi^{+}(z))v_{j_0})_{\tilde{\phi}} = \mathop{\sum}_{j^{\prime}<j_0}(q^2w - z)\xi_{j,j^{\prime}}(z)(x^{+}(w)v_{j^{\prime}})_{\tilde{\phi}} = 0,
    \end{equation*}
    and
    \begin{multline}\label{eq:ell_root_pf_tmp}
        (((w - q^2 z)\psi^{+}(z) -(q^2w -z)\phi^{+}(z))x^{+}(w)v_{j_0})_{\tilde{\phi}} = ((w - q^2 z)\psi^{+}(z) -(q^2w -z)\phi^{+}(z))(x^{+}(w)v_{j_0})_{\tilde{\phi}} =\\= \mathop{\sum}_{s=1}^{l}\mathop{\sum}\limits_{s^{\prime}<s}\lambda_s(w)\zeta_{s,s^{\prime}}(z)u_{s^{\prime}} + ((w-q^2z)\tilde{\phi}^{+}(z) - (q^2w-z)\phi^{+}(z))\mathop{\sum}_{s=1}^{l}\lambda_s(w)u_s.
    \end{multline}
    Let $s_0$ be the maximal such that $\lambda_{s_0}(w)\neq 0$, then taking the coefficient of $u_{s_0}$ in \eqref{eq:ell_root_pf_tmp} we get $$((w-q^2z)\tilde{\phi}^{+}(z) - (q^2w-z)\phi^{+}(z))\lambda_{s_0}(w) = 0,$$ where $\lambda_{s_0}(w)\neq 0$. Rewriting this equation gives
    $$\Big(w\frac{\tilde{\phi}^{+}(z)-q^2\phi^{+}(z)}{q^2\tilde{\phi}^{+}(z)-\phi^{+}(z)} - z\Big)\lambda_{s_0}(w) = 0.$$ Write $\lambda_{s_0}(w) = \mathop{\sum}\limits_{p\in\mbZ}\lambda_{s_0,p}w^p$. Taking coefficients of $w^{p+1}$ we get
    $$\lambda_{s_0,p}\frac{\tilde{\phi}^{+}(z)-q^2\phi^{+}(z)}{q^2\tilde{\phi}^{+}(z)-\phi^{+}(z)} - \lambda_{s_0,p+1}z = 0.$$ This system admits a non-zero solution if and only if there exists $a\in \mbC$ such that $\frac{\tilde{\phi}^{+}(z)-q^2\phi^{+}(z)}{q^2\tilde{\phi}^{+}(z)-\phi^{+}(z)} = q^az$, or, equivalently, $\tilde{\phi}^{+}(z) = q^2\frac{1 - zq^{a-2}}{1-zq^{a+2}}\phi^{+}(z) = A_{a}(z)\phi^{+}(z)$. This implies $\tilde{\phi} = A_a\phi$. 
    
    The case of $(x^{-}(w)v)_{\tilde{\phi}}\neq 0$ is similar. 
\end{proof}

\begin{proof}[Proof of Proposition \ref{prop:equiv_drinf}]
    We denote the set $S_{\rV}= \{a\in \mbC | 1_{a} \textit{ appears in }\chi_q(\rV)\}$. Let $B_{\rV}=\{v_1,{\dots}, v_{d_1}\}$ be a basis of $\rV$ consisting of $\ell$-weight vectors such that $\Uq$-weight of $v_{i}$ is not less than $\Uq$-weight of $v_{i+1}$ for all $i$. Let $B_{\rW} = \{w_1,\dots, w_{d_2}\}$ be a basis of $\rW$ consisting of $\ell$-weight vectors and such that $\Uq$-weight of $w_{i}$ is not greater than $\Uq$-weight of $w_{i+1}$ for all $i$.
    
    First, note that $\rV \otimes \rW$ is thin, since all $1_a$'s appearing in the $q$-character of $\chi_q(\rV)$ are different from the ones appearing in the $q$-character of $\chi_q(\rW)$. 
     We can assume without loss of generality that $\rV$ and $\rW$ are indecomposable. By Proposition \ref{prop:ellroot_action} this implies that any two $\ell$-weights in $\rV$ (respectively in $\rW$) are related by a sequence of multiplications and divisions by $A_{a+1} = 1_a1_{a+2}$ where $a$ is in $S_{\rV}$ (respectively in $S_{\rW}$). 

    We prove that the action of $\uR_0$ is well-defined and all matrix elements on the diagonal are non-zero. Note that after an appropriate rescaling one can assume that $\uR_0(v_1\otimes w_1) = v_1\otimes w_1$.

    Assume that $\uR_0(v_i\otimes w_j) = \lambda_{i,j}v_{i}\otimes w_j$. Let $v_k$ be a basis vector such that the $\ell$-weight of $v_k$ differs from one of $v_i$ by multiplication by $A_{a+1}$, where $a\in S_{\rV}$. This shifts the eigenvalue of $h_r$, $r>0$, by the coefficient of $z^r$ in $\frac{1}{q-q^{-1}}\log(\frac{1-q^{a-1}z}{1-q^{a+3}z})$ which is  equal to $\frac{[2r]q^{r(a+1)}}{r}$. Then we have 
    
    $$\uR_0(v_k\otimes w_j) = \lambda_{i,j}\exp\left(\sum_{r>0}(q-q^{-1})q^{r(a+1)} (1\otimes h_{-r})\right)(v_k\otimes w_j) = \frac{\lambda_{i,j}q^{-\mu_j}}{m_{2,j}(q^{-a-1})}v_k\otimes w_j.$$
    Here $\mu_j$ is $\Uq$-weight of $w_j$ and $m_{2,j}(q^{-a-1})$ is the $\ell$-weight of $w_j$ where we replaced $z\mapsto q^{-a-1}$. 
    
    The function $m_{2,j}(z)$ is a monomial in $1_b$'s with $b \in S_{\rW}$. The only zero (respectively, pole) of $1_b$ is $z = q^{-b+1}$ (respectively, $z = q^{-b-1}$). Note that $A_{a+1}=1_a1_{a+2}$ is a ratio of $\ell$-weights in $\rV$, therefore both $1_a, 1_{a+2}$ appear in the $q$-character of $\rV$. By the assumption, $b\notin \{a, a+2\}$, therefore, $m_j(q^{-a-1})$ is well-defined and is non-zero.

    The case when the $\ell$-weight of $w_l$ differs from the one of $w_j$ by multiplication by $A_{b+1}$ is treated similarly.

    Therefore, we conclude that the action of $\uR_0$ is well-defined and in the basis $B_{\rV}\otimes B_{\rW} = \{v_i \otimes w_j\}$ the action of $\uR_0$ is diagonal with non-zero entries.

    Since $\uR_{\pm} \in 1 +U^{\pm}\otimes U^{\mp}$, in the basis $B_{\rV}\otimes B_{\rW}$, it acts as an upper-triangular (respectively, lower-triangular) matrix with  the diagonal matrix elements one. Then the entries of $\uR_{\pm}$ can be uniquely recovered by the LDU decomposition of $\uR$ as rational functions of entries of $\uR$. Note that denominators of these functions are elements on the diagonal of $\uR_0$, therefore no division by zero appears.    

    The proposition follows from \eqref{eq:Rp_vs_drinf}.
    
\end{proof}

\section{Tables of conf-connected words of lengths $4,6,8, 10$ up to symmetries.}\label{app:tables}

For the cases of $4,6,8, 10$ letters we list conf-connected words up to slides, permutations of adjacent letters which do not differ by $\pm 2$, anti-involution $\omega$ (see Corollary \ref{cor:reverse_word}). For each word we write the number $h(w)$, the number of arc configurations, and the intersection polynomial (see Definition \ref{def:int_poly}). 
\begin{table}[H]
\begin{tabular}{c}
$\begin{array}{|c|c|c|c|}
\hline
w & h(w) & |\conf(w)| & p_{w}(x)\\
\hline
 0022 & 1 & 2 & x+1\\
 \hline
\end{array}$
\bigskip
\bigskip

\\

$\begin{array}{|c|c|c|c|}
\hline
w & h(w) & |\conf(w)| & p_{w}(x)\\
\hline
 020242 & 2 & 3 & x^2+2\\
 000222 & 1 & 6 & (x+1)(x^2+x+1)\\
 002022 & 1 & 4 & (x+1)^2\\
 \hline
\end{array} $
\bigskip
\bigskip

\\
$\begin{array}{|c|c|c|c|}
\hline
w & h(w) & |\conf(w)| & p_{w}(x)\\
\hline
02020242 & 3 & 7 & x^3+3 x^2+3 \\
 00202242 & 2 & 14 & (x+1) \left(x^4+2 x^2+3 x+1\right) \\
 00202422 & 2 & 10 & (x+1) \left(x^3+x^2+x+2\right) \\
 02002422 & 2 & 8 & (x+1)^2 \left(x^2-x+2\right) \\
 02020422 & 2 & 6 & (x+1) \left(x^2+2\right) \\
 20204242 & 2 & 5 & x^3+2 x+2 \\
 02402462 & 2 & 5 & x^3+2 x+2 \\
 00002222 & 1 & 24 & (x+1)^2 \left(x^2+1\right) \left(x^2+x+1\right) \\
 00020222 & 1 & 18 & (x+1) \left(x^2+x+1\right)^2 \\
 00022022 & 1 & 12 & (x+1)^2 \left(x^2+x+1\right) \\
 00220422 & 1 & 8 & (x+1)^3 \\
 00202022 & 1 & 8 & (x+1)^3 \\
 \hline
\end{array}$
\end{tabular}
\end{table}

\begin{table}[H]
\vskip-30pt
$
\begin{array}{|c|c|c|c|}
\hline
w & h(w) & |\conf(w)| & p_{w}(x)\\
\hline
 0202024242 & 6 & 31 & x^7+4 x^5+3 x^4+6 x^3+11 x^2+6 \\
 0202420242 & 5 & 17 & x^5+2 x^4+2 x^3+7 x^2+5 \\
 0202020242 & 4 & 15 & x^4+4 x^3+6 x^2+4 \\
 2420246424 & 4 & 11 & x^5+2 x^3+2 x^2+2 x+4 \\
 2420426424 & 4 & 9 & \left(x^2+2\right)^2 \\
 0020202242 & 3 & 46 & (x+1) \left(x^6+3 x^5+3 x^4+3 x^3+7 x^2+5 x+1\right) \\
 0020202422 & 3 & 38 & (x+1) \left(x^6+x^5+2 x^4+5 x^3+4 x^2+3 x+3\right) \\
 0020220242 & 3 & 32 & (x+1)^3 \left(x^3+x^2+2\right) \\
 0200202422 & 3 & 28 & (x+1)^2 \left(x^4+x^3+2 x^2+3\right) \\
 0202042242 & 3 & 24 & (x+1)^2 \left(x^2+1\right) \left(x^2-x+3\right) \\
 0202024422 & 3 & 24 & (x+1)^2 \left(x^4+2 x^2+3\right) \\
 0200220242 & 3 & 22 & (x+1) \left(x^4+3 x^3+3 x^2+x+3\right) \\
 0022020242 & 3 & 22 & (x+1) \left(x^4+3 x^3+3 x^2+x+3\right) \\
 0202002422 & 3 & 20 & (x+1)^2 \left(x^3+2 x^2-x+3\right) \\
 0202042422 & 3 & 18 & (x+1) \left(x^4+2 x^3+x^2+2 x+3\right) \\
 2002420242 & 3 & 14 & (x+1) \left(x^4+2 x^2+x+3\right) \\
 0202420422 & 3 & 14 & (x+1) \left(x^3+3 x^2+3\right) \\
 0202024462 & 3 & 14 & (x+1) \left(x^3+3 x^2+3\right) \\
 0202020422 & 3 & 14 & (x+1) \left(x^3+3 x^2+3\right) \\
 0024202462 & 3 & 14 & (x+1) \left(x^4+2 x^2+x+3\right) \\
 2020204242 & 3 & 13 & x^4+4 x^3+x^2+4 x+3 \\
 0202042462 & 3 & 13 & x^4+4 x^3+x^2+4 x+3 \\
 2020420242 & 3 & 11 & x^4+2 x^3+2 x^2+3 x+3 \\
 0204202462 & 3 & 11 & x^4+2 x^3+2 x^2+3 x+3 \\
 2042042642 & 3 & 8 & x^4+2 x^2+2 x+3 \\
 0424026462 & 3 & 8 & x^4+2 x^2+2 x+3 \\
 0246024682 & 3 & 8 & x^4+2 x^2+2 x+3 \\
 0002022242 & 2 & 78 & (x+1) \left(x^2+x+1\right) \left(x^6+2 x^4+3 x^3+4 x^2+2 x+1\right) \\
 0002022422 & 2 & 60 & (x+1)^2 \left(x^2+x+1\right) \left(x^4+x^2+2 x+1\right) \\
 0020022422 & 2 & 48 & (x+1)^2 \left(x^6+x^4+3 x^3+3 x^2+2 x+2\right) \\
 0002202422 & 2 & 48 & (x+1)^2 \left(x^2+x+1\right) \left(x^3+x^2+2\right) \\
 0002024222 & 2 & 42 & (x+1) \left(x^2+x+1\right) \left(x^4+x^3+2 x^2+x+2\right) \\
 0020024222 & 2 & 36 & (x+1)^2 \left(x^2+x+1\right) \left(x^3+2\right) \\
 0022002242 & 2 & 32 & (x+1)^3 \left(x^3+x^2+2\right) \\
 0200024222 & 2 & 30 & (x+1) \left(x^2+x+1\right) \left(x^4+x^2+x+2\right) \\
 0020204222 & 2 & 30 & (x+1) \left(x^2+x+1\right) \left(x^3+x^2+x+2\right) \\
 0022002422 & 2 & 28 & (x+1)^2 \left(x^4+x^3+x^2+2 x+2\right) \\
 0020220422 & 2 & 28 & (x+1)^2 \left(x^4+2 x^2+3 x+1\right) \\
  0200204222 & 2 & 24 & (x+1)^2 \left(x^2-x+2\right) \left(x^2+x+1\right) \\
 2002042242 & 2 & 20 & (x+1)^2 \left(x^4-x^3+2 x^2+x+2\right) \\
 0220024422 & 2 & 20 & (x+1)^2 \left(x^4+2 x+2\right) \\
 0202204422 & 2 & 20 & (x+1)^2 \left(x^3+x^2+x+2\right) \\

 \hline
\end{array}
$
\end{table}

\begin{table}[H]
\vskip-30pt
$\begin{array}{|c|c|c|c|}
\hline
w & h(w) & |\conf(w)| & p_{w}(x)\\
\hline
  0200220422 & 2 & 20 & (x+1)^2 \left(x^3+x^2+x+2\right) \\
 0024022462 & 2 & 20 & (x+1)^2 \left(x^4-x^3+2 x^2+x+2\right) \\
 0022024462 & 2 & 20 & (x+1)^2 \left(x^3+x^2+x+2\right) \\
 0022020422 & 2 & 20 & (x+1)^2 \left(x^3+x^2+x+2\right) \\
 0020242022 & 2 & 20 & (x+1)^2 \left(x^3+x^2+x+2\right) \\
 2002204242 & 2 & 18 & (x+1) \left(x^4+x^3+2 x^2+3 x+2\right) \\
 0202004222 & 2 & 18 & (x+1) \left(x^2+2\right) \left(x^2+x+1\right) \\
 0022042462 & 2 & 18 & (x+1) \left(x^4+x^3+2 x^2+3 x+2\right) \\
 2020402242 & 2 & 16 & (x+1)^3 \left(x^2-x+2\right) \\
 2002042422 & 2 & 16 & (x+1)^3 \left(x^2-x+2\right) \\
 0220042422 & 2 & 16 & (x+1)^3 \left(x^2-x+2\right) \\
 0200242022 & 2 & 16 & (x+1)^3 \left(x^2-x+2\right) \\
 0024024262 & 2 & 16 & (x+1)^3 \left(x^2-x+2\right) \\
 2020042422 & 2 & 14 & (x+1) \left(x^4+2 x^2+2 x+2\right) \\
 0244024662 & 2 & 14 & (x+1) \left(x^4+2 x^2+2 x+2\right) \\
 0204024262 & 2 & 14 & (x+1) \left(x^4+2 x^2+2 x+2\right) \\
 2020204422 & 2 & 12 & (x+1)^2 \left(x^2+2\right) \\
 2002420422 & 2 & 12 & (x+1)^2 \left(x^2+2\right) \\
 0220420422 & 2 & 12 & (x+1)^2 \left(x^2+2\right) \\
 0202044262 & 2 & 12 & (x+1)^2 \left(x^2+2\right) \\
 0202042022 & 2 & 12 & (x+1)^2 \left(x^2+2\right) \\
 0024204262 & 2 & 12 & (x+1)^2 \left(x^2+2\right) \\
 2042402426 & 2 & 10 & (x+1) \left(x^3+2 x+2\right) \\
 2020420422 & 2 & 10 & (x+1) \left(x^3+2 x+2\right) \\
 0244026462 & 2 & 10 & (x+1) \left(x^3+2 x+2\right) \\
 0204204262 & 2 & 10 & (x+1) \left(x^3+2 x+2\right) \\
  0000022222 & 1 & 120 & (x+1)^2 \left(x^2+1\right) \left(x^2+x+1\right) \left(x^4+x^3+x^2+x+1\right) \\
 0000202222 & 1 & 96 & (x+1)^3 \left(x^2+1\right)^2 \left(x^2+x+1\right) \\
 0000220222 & 1 & 72 & (x+1)^2 \left(x^2+1\right) \left(x^2+x+1\right)^2 \\
 0002020222 & 1 & 54 & (x+1) \left(x^2+x+1\right)^3 \\
 0000222022 & 1 & 48 & (x+1)^3 \left(x^2+1\right) \left(x^2+x+1\right) \\
 0002204222 & 1 & 36 & (x+1)^2 \left(x^2+x+1\right)^2 \\
 0002200222 & 1 & 36 & (x+1)^2 \left(x^2+x+1\right)^2 \\
 0002022022 & 1 & 36 & (x+1)^2 \left(x^2+x+1\right)^2 \\
 0022004222 & 1 & 24 & (x+1)^3 \left(x^2+x+1\right) \\
 0020022022 & 1 & 24 & (x+1)^3 \left(x^2+x+1\right) \\
 0002242022 & 1 & 24 & (x+1)^3 \left(x^2+x+1\right) \\
 0002202022 & 1 & 24 & (x+1)^3 \left(x^2+x+1\right) \\
 2002204422 & 1 & 16 & (x+1)^4 \\
 0022044262 & 1 & 16 & (x+1)^4 \\
 0022042022 & 1 & 16 & (x+1)^4 \\
 0020202022 & 1 & 16 & (x+1)^4 \\
 \hline
\end{array}$
\end{table}

\bigskip 

{\bf Acknowledgments.\ }
The authors are partially supported by Simons Foundation grant number \#709444. The second author is grateful to C. Stroppel for many stimulating discussions and to Oberwolfach and Hausdorff Research Institutes for Mathematics for hospitality. We thank Indiana University Indianapolis for providing computational resources.


\begin{thebibliography}{0000000}
\bibitem[Ath00]{MR1796891}
C. A. Athanasiadis,
\textit{Deformations of Coxeter hyperplane arrangements and their characteristic polynomials},
 Arrangements—Tokyo 1998, Adv. Stud. Pure Math. \textbf{27} (2000), pp. 1--26.

\bibitem[Bar83]{barbasch1983filtrations}
D. Barbasch,
\textit{Filtrations on Verma modules},
Annales scientifiques de l'École Normale Supérieure \textbf{16} (1983), no. 3, 489--494.

\bibitem[Ber78]{BERGMAN1978178}
G. M. Bergman,
\textit{The diamond lemma for ring theory},
Advances in Mathematics \textbf{29} (1978), no. 2, 178--218.

\bibitem[BC25]{BritoChari} M. Brito and V. Chari, {\it On dominant $\ell$--weights and maps between Weyl modules for quantum affine $A_n$}, preprint, \href{https://arxiv.org/abs/2504.19313}{arXiv:2504.19313}.

\bibitem[BFK99]{Bernstein1999}
J. Bernstein, I. Frenkel, and M. Khovanov,  
\textit{A categorification of the Temperley–Lieb algebra and Schur quotients of $U(\mathfrak{sl}_2)$ via projective and Zuckerman functors},  
Selecta Mathematica \textbf{5} (1999), no. 2, 199--241.

\bibitem[BW79]{BruceWallOnClassificationOfCubic}
J. W. Bruce and C. T. C. Wall,
\textit{On the classification of cubic surfaces},
Journal of the London Mathematical Society \textbf{s2-19} (1979), no. 2, 245--256.


\bibitem[Cha02]{chari2002braid}
V. Chari,
\textit{Braid group actions and tensor products},
International Mathematics Research Notices \textbf{2002} (2002), no. 7, 357--382.

\bibitem[CP91]{chari1991quantum} V. Chari and A. Pressley, {\it Quantum affine algebras}, Comm. in Math. Phys. {\bf 142} (1991), no. 2, 261-283 

\bibitem[CP95]{chari1995guide}
V. Chari and A. N. Pressley, A Guide to Quantum Groups,
Cambridge University Press, 1995.
\bibitem[E+11]{etingof2011introduction}
P. I. Etingof, O. Golberg, S. Hensel, T. Liu, A. Schwendner, D. Vaintrob, and E. Yudovina,  
Introduction to Representation Theory,  
vol. 59, American Mathematical Society, 2011.
\bibitem[EKP07]{enriquez2007weight}
B. Enriquez, S. Khoroshkin, and S. Pakuliak,
\textit{Weight functions and Drinfeld currents},
Communications in Mathematical Physics \textbf{276} (2007), 691--725.
\bibitem[EM03]{etingof2003elliptic}
P. Etingof and A. Moura,
\textit{Elliptic central characters and blocks of finite-dimensional representations of quantum affine algebras},
Representation Theory of the American Mathematical Society \textbf{7} (2003), 346--373.

\bibitem[FKSt07]{Frenkel2007}
I. Frenkel, M. Khovanov, and C. Stroppel,  
\textit{A categorification of finite-dimensional irreducible representations of quantum $\mathfrak{sl}_2$ and their tensor products},  
Selecta Mathematica \textbf{12} (2007), no. 3, 379--431.


\bibitem[GT16]{gautam2016yangians}
S. Gautam and V. Toledano Laredo,
\textit{Yangians, quantum loop algebras, and abelian difference equations},
Journal of the American Mathematical Society \textbf{29} (2016), no. 3, 775--824.

\bibitem[Hum21]{humphreys2021representations}
J. E. Humphreys,
Representations of Semisimple Lie Algebras in the BGG Category $\mathcal{O}$,
vol. 94, American Mathematical Society, 2021.

\bibitem[Irv88]{irving1988socle}
R. S. Irving,
\textit{The socle filtration of a Verma module},
Annales scientifiques de l'École Normale Supérieure \textbf{21} (1988), no. 1, 47--65.

\bibitem[MY14]{mukhin2014affinization}
E. Mukhin and C. Young,  
\textit{Affinization of category $\mathcal{O}$ for quantum groups},  
Transactions of the American Mathematical Society \textbf{366} (2014), no. 9, 4815--4847.

\bibitem[TK92]{tolstoy1992universal}
V. N. Tolstoy and S. M. Khoroshkin,
\textit{The universal $R$-matrix for quantum untwisted affine Lie algebras},
Functional Analysis and Its Applications \textbf{26} (1992), no. 1, 69--71.

\end{thebibliography}
\end{document}